\title{
{\Huge \textbf{\MakeUppercase{On the growth of}}} \\[5pt]
{\Huge \textbf{\MakeUppercase{permutation classes}}} \\[84pt]
\textbf{\MakeUppercase{David Ian Bevan}~} \\
M.A. M.Sc. B.A. \\[96pt]
a thesis submitted to \\
THE OPEN UNIVERSITY \\
for the degree of \\
DOCTOR OF PHILOSOPHY \\
in Mathematics
}
\author{}
\date{\LARGE May 2015}
\begin{document}

\frontmatter

\maketitle
\cleardoublepage

\newcommand{\myref}[1]{\ref{#1}}
\newcommand{\mypageref}[1]{\pageref{#1}}
%\newcommand{\myref}[1]{\textbf{00}}
%\newcommand{\mypageref}[1]{\textbf{00}}

% LaTeX file

% ================================================================
\chapter{Abstract}

We study aspects of the enumeration of permutation classes, sets of permutations closed downwards under the subpermutation order.

First, we consider monotone {grid classes} of permutations. We present procedures for calculating the generating function
of any class whose matrix has dimensions $m \times 1$ for some~$m$, and of acyclic and unicyclic classes of {gridded permutations}.
We show that almost all large permutations in a grid class have the same shape, and determine this limit shape.

We prove that the growth rate of a grid class is given by the square
of the spectral radius of an associated graph and deduce some facts relating to the set of grid class growth rates.
In the process, we establish a new result concerning tours on graphs.
We also prove a similar result relating
the growth rate of a {geometric} grid class to the {matching polynomial} of a graph, and
determine the effect of edge
subdivision on the matching polynomial.
We characterise the growth rates of
geometric grid classes in terms of the spectral radii of trees.

We then investigate the set of growth rates of permutation classes and establish a new upper bound on the value above which every real number is the growth rate of some permutation class.
In the process, we prove new results concerning expansions of real numbers in non-integer bases in which the digits are drawn from sets of allowed values.

Finally, we introduce a new enumeration technique,
based on associating a graph with each permutation, and
determine the generating functions for some previously unenumerated classes.
We conclude by using this approach to provide an improved lower bound on the growth rate of the class of permutations avoiding the pattern $\pdiamond$.
In the process, we prove that, asymptotically, patterns in
{\L}uka\-sie\-wicz paths exhibit a concentrated Gaussian distribution.

% ================================================================
\cleardoublepage
 
  % Abstract
% LaTeX file

% ================================================================
\chapter{Acknowledgements}

\begin{center}
\emph{J\'esus-Christ est l'objet de tout, et le centre o\`u tout tend. \\
Qui le conna\^it conna\^it la raison de toutes choses.}\footnote{Jesus Christ is the goal of everything, and the centre to which everything tends. Whoever knows Him knows the reason behind all things.}
\end{center}
\begin{flushright}
\vspace{-9pt}
  --- Pascal, \emph{Pens\'ees}~\cite{Pascal1670}
\end{flushright}
\vspace{27pt}

My journey (back) into mathematics research began with reading \emph{Proofs from THE BOOK}, the delightful book by Aigner \& Ziegler~\cite{AZ2014}. I am grateful to G\"unter Ziegler for his encouragement at that time, which resulted in my first published mathematics paper~\cite{Bevan2006}.

Lunchtimes at the Open University have been enlivened by numerous interesting stories
from Phil Rippon. I am thankful to both him and Gwyneth Stallard for much helpful advice, and also for a couple of hundred cups of coffee!
Thanks too, to other members of the department for friendship and support, including
Robert, Toby, Ian, Ben, Tim and Jozef, and the other research students: Grahame, Rob,
Matthew, Mairi, Rosie, Vasso and David.

I received a warm welcome and encouragement from many members of the
permutation patterns community and other combinatorialists.
Particular thanks are due to Vince Vatter for detailed feedback on some of my papers and for suggesting that it might be worthwhile investigating
whether his %conjecture concerning the set of permutation class growth rates was in fact false.
upper bound on $\lambda$ could be improved.
I am also grateful to Mireille Bousquet-M\'elou
for valuable discussions relating to my (at that time, very woolly) ideas about using Hasse graphs for enumeration, during the Cardiff Workshop on Combinatorial Physics. % in December 2013.
Thanks are also due to
Mike Atkinson,
Michael Albert, Einar Steingr\'imsson,
Jay Pantone,
Dan Kr{\'a}l', and
Doron \mbox{Zeilberger}, %(whose academic great grandson I have just become),
among others.

Robert Brignall has been an excellent supervisor, guiding me with a light touch, and doing a first-rate job of teaching me how to write mathematics so that it can be understood by those trying to read it.
Thank you!

Michael Albert's \emph{PermLab} software~\cite{PermLab} was of particular benefit in helping to visualise and
explore the structure of permutations in different permutation classes.
I have also made extensive use of
\emph{Mathematica}~\cite{Mathematica} throughout my research.

Three years ago, my wife Penny encouraged me to pursue my dream,
and
has accompanied me on this latest adventure in our life together, bearing with good grace the (frequent) times when
I'm in
``maths world'' and present only physically.
She has also willingly taken on the primary bread-winning role during this season.
Words are insufficient to express my gratitude.

\vspace{9pt}
Above all else,
like Blaise Pascal, I am convinced that the crux to understanding
%``life, the universe and everything''~\cite{Adams1995},
life, the universe and everything --- %, %\footnote{Adams~\cite{Adams1995}},
including the ontology, epistemology and
%``unreasonable effectiveness''~\cite{Wigner1960}
``unreasonable effectiveness''\footnote{See Wigner~\cite{Wigner1960}}
of mathematics --- %,
lies in the life, death and resurrection
of a certain itinerant Jewish rabbi and miracle-worker.
%in ``the middle of time''~\cite{Conzelmann1954}.
%in the middle of time\footnote{Conzelmann~\cite{Conzelmann1954}}.
I thus conclude:

\vspace{9pt}
\begin{center}
\emph{To the only wise God \\
be glory forever \\
through Jesus Christ!}\footnote{Romans 16:27}
\end{center}
%\begin{flushright}
%\vspace{-15pt}
%  {\small --- Romans 16:27}
%\end{flushright}

\newpage

\begin{figure}[p]
\begin{center}
  \vspace{15pt}
  \includegraphics[scale=0.349]{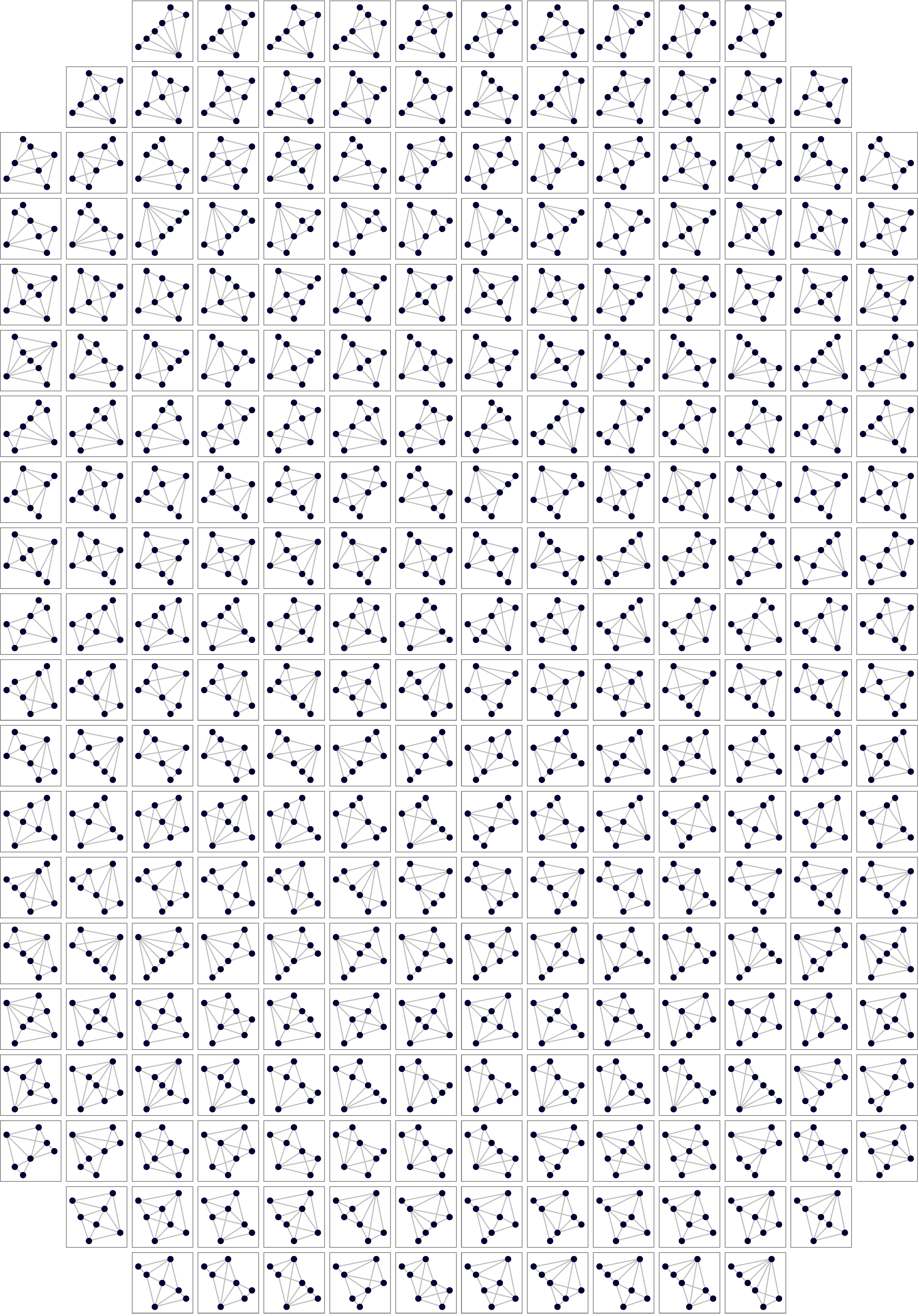}\label{figFrontispiece}

  \vspace{15pt}
  {\small
  Permutation class art \\
  ~\\
  The permutations of length 7 in $\av(\mathbf{21\bar{3}54},\mathbf{45\bar{3}12})$ \\
  that are both sum and skew indecomposable}
\end{center}
\end{figure}

% ================================================================
\cleardoublepage
 
  % Acknowledgements

%\HIDE{
\begin{spacing}{0}
\setcounter{tocdepth}{1}
\tableofcontents
\end{spacing}
\cleardoublepage
%} %\HIDE

\mainmatter

%\HIDE{
% LaTeX file

% ================================================================
\chapter{Enumerating permutation classes}\label{chap01}

Doron Zeilberger begins his expository article~\cite{Zeilberger2008}
on enumerative and algebraic combinatorics
in
\emph{The Princeton
Companion to Mathematics} by observing that
\begin{quote}
\vspace{-11pt}
\begin{flushleft}
  ``\emph{Enumeration}, otherwise known as \emph{counting}, is the oldest mathematical subject''. %, \textellipsis{}.
\end{flushleft}
\vspace{-11pt}
\end{quote}

Subsequently,
he declares,
\begin{quote}
\vspace{-11pt}
\begin{flushleft}
  ``The fundamental theorem of enumeration, independently discovered by several anonymous cave dwellers, states that
  \begin{equation}\label{eqCounting}
  |A| \;=\; \sum_{a\in A} 1 .
  \end{equation}
  \textellipsis{} While this formula is still useful after all these years, enumerating specific finite sets is no longer considered mathematics.''
\end{flushleft}
\vspace{-11pt}
\end{quote}
In this study, we participate in the millennia-old pursuit of counting,
and contribute to the growth of mathematical knowledge,
by addressing the question of the enumeration of certain infinite sets,
called permutations classes.

In the rest of this introductory section, we first consider what it means to enumerate a class of combinatorial objects and describe the techniques we use.
Then we introduce permutation classes and give an overview of research concerning their enumeration.
Finally, we present an outline of the contents of the three parts of this thesis.

\section{Enumeration}

A \emph{combinatorial class} $\AAA$ is a countable (i.e.~finite or countably infinite) set endowed with a \emph{size} function, such that
\begin{bullets}
  \item the size of each element of $\AAA$ is a non-negative integer, and
  \item the number of elements of $\AAA$ of any given size is finite.
\end{bullets}
The size of of an element $\alpha\in\AAA$ is denoted by $|\alpha|$.
The finite set of elements in $\AAA$ whose size is $n$ is written $\AAA_n$.
We consistently use calligraphic uppercase letters, e.g.~$\RRR,\SSS,\TTT$, for combinatorial classes and lowercase Greek letters, e.g.~$\rho,\sigma,\tau$, for their members.

Given some combinatorial class, or family of combinatorial classes, the goal of enumerative combinatorics is to
determine the number of elements of each size in the class or classes.
As discussed by Wilf~\cite{Wilf1982}, there are a number of possible ways of answering the question, ``How many things are there?''.
While claiming that~\eqref{eqCounting}, applied to each $\AAA_n$, provides
a simple formula that ``answers'' all such questions at once, Wilf rejects such an answer since it is just a restatement of the question.

\subsection*{Generating functions}

For the most part, the answers we give make use of \emph{generatingfunctionology}. According to Wilf, who coined this neologism as the title of his classic book~\cite{Wilf2006},
\begin{quote}
\vspace{-11pt}
\begin{flushleft}
  ``A generating function is a clothesline on which we hang up a sequence
of numbers for display.''
\end{flushleft}
\vspace{-11pt}
\end{quote}

The (ordinary univariate) \emph{generating function} for a combinatorial class $\AAA$ is defined to be the formal power series
$$
A(z)
\;=\;
\sum_{n\geqslant0} |\AAA_n| z^n
\;=\;
\sum_{\alpha\in\AAA} z^{|\alpha|} .
$$
Thus, each element $\alpha\in\AAA$ makes a contribution of $z^{|\alpha|}$, the result being that, for each~$n$, the coefficient of $z^n$ is the number of elements of size $n$.
In generating functions, we consistently use the variable $z$ to mark the size of objects.

As an elementary example, the number of distinct ways of tossing a coin $n$ times is clearly $2^n$, hence the generating function for sequences of coin tosses, in which the size of a sequence is the number of tosses, is
$$
S(z) \;=\;
2z + 4\+z^2 +8\+z^3 + \ldots \;=\; \frac{2\+z}{1-2\+z} .
$$

Given the generating function $F(z)$ for a combinatorial class, we use the notation $\big[z^n\big]F(z)$ to denote the operation of extracting the coefficient of $z^n$ (i.e. the number of elements of the class with size $n$) from the formal power series $F(z)$. Thus, in our example, %for all $n\geqslant1$,
$$
\big[z^n\big]S(z) \;=\; 2^n , \qquad \text{if~~} n\geqslant1 .
$$

In addition to recording the size of objects, it is often useful to keep track of additional parameters in a \emph{multivariate} generating function.
For example, if $\omega:\AAA\rightarrow\mathbb{N}_0$ is a parameter of elements of combinatorial class $\AAA$, then
the (ordinary) \emph{bivariate} generating function
for $\AAA$, in which $w$ marks the parameter $\omega$, is:
    $$
    A(z,w)
    \;=\;
    \sum_{\alpha\in\AAA}z^{|\alpha|}\+w^{\omega(\alpha)}
    \;=\;
    \sum_{n\,\geqslant\,0} \+ \sum_{k\,\geqslant\,0} a_{n,k}\+ z^n\+w^k ,
    $$
where $a_{n,k}$ is the number of elements $\alpha\in\AAA$ of size $n$ for which $\omega(\alpha)=k$.
This can be generalised for multiple parameters.
Observe that $A(z,1)$ is the ordinary univariate generating function.

For example, if we use $t$ to mark the number of tails in a sequence of coin tosses, the corresponding bivariate generating function is
$$
S(z,t)
\;=\;
\sum_{n=1}^\infty \+ \sum_{k=0}^n \tbinom{n}{k}\+ z^n\+t^k
\;=\;
\frac{(1+t)\+z}{1-(1+t)\+z} .
$$

Why do we choose to use generating functions?
Their extraordinary
utility is
beautifully elucidated in the consummate \emph{{magnum} opus} of Flajolet \& Sedgewick, \emph{Analytic Combinatorics}~\cite{FS2009}.
Three aspects are pre-eminent in this work:

Firstly, it is possible to translate \emph{directly} from a structural definition of a combinatorial class to equations that define
the generating function for the class. It may then be possible to solve these functional equations to yield an explicit form for the generating function.

Secondly, generating functions enable us to answer questions concerning the \emph{growth} of combinatorial classes.
Given a generating function, $F(z)=\sum a_n\+z^n$, the asymptotic behaviour of the sequence $(a_n)$ can be determined by treating
$F(z)$ analytically as a complex function.
The singularities of $F$ provide full information on the asymptotic behaviour of its coefficients.
Typically, for large $n$, $\big[z^n\big]F(z)$ behaves like $\gamma^n\+\theta(n)$, for some $\gamma>0$ and
subexponential function $\theta$.
The location of the singularities of $F$ dictates the exponential growth rate, $\gamma$, of its coefficients.
The nature of the singularities of $F$ then determines the subexponential factor, $\theta(n)$.

Thirdly, it is possible to extract from multivariate generating functions precise information concerning the limiting distribution of parameter values. Thus, if $\omega$ is some parameter of elements of a combinatorial class, the asymptotic probability distribution
$\mathbb{P}_n[\omega(\alpha)\leqslant k]$ for large objects of size $n$ can be established.

A further benefit of possessing a generating function for a combinatorial class
is that the nature of the generating function immediately reveals something of the
structure of objects in the class.
Specifically, the extent to which a class is ``well-behaved''
depends on whether its generating function is
rational,
algebraic,
D-finite, or %is
not D-finite.
\vspace{6pt}

A \emph{rational} function is the ratio of two polynomials, such as $S(z)$ above.
Rational functions are the
generating functions of deterministic finite state automata
or, equivalently, of
\emph{regular} languages (see~\cite[Section I.4]{FS2009}).
Objects in a class with a
rational generating function therefore tend to
exhibit a structure
similar to the linear structure of words in a regular language.
A function $F(z)=\sum f_nz^n$ is rational if and only if
its coefficients satisfy a linear recurrence relation with constant coefficients.

\begin{figure}[t] %[ht]
\begin{center}
  $\qquad$ \includegraphics[scale=0.225]{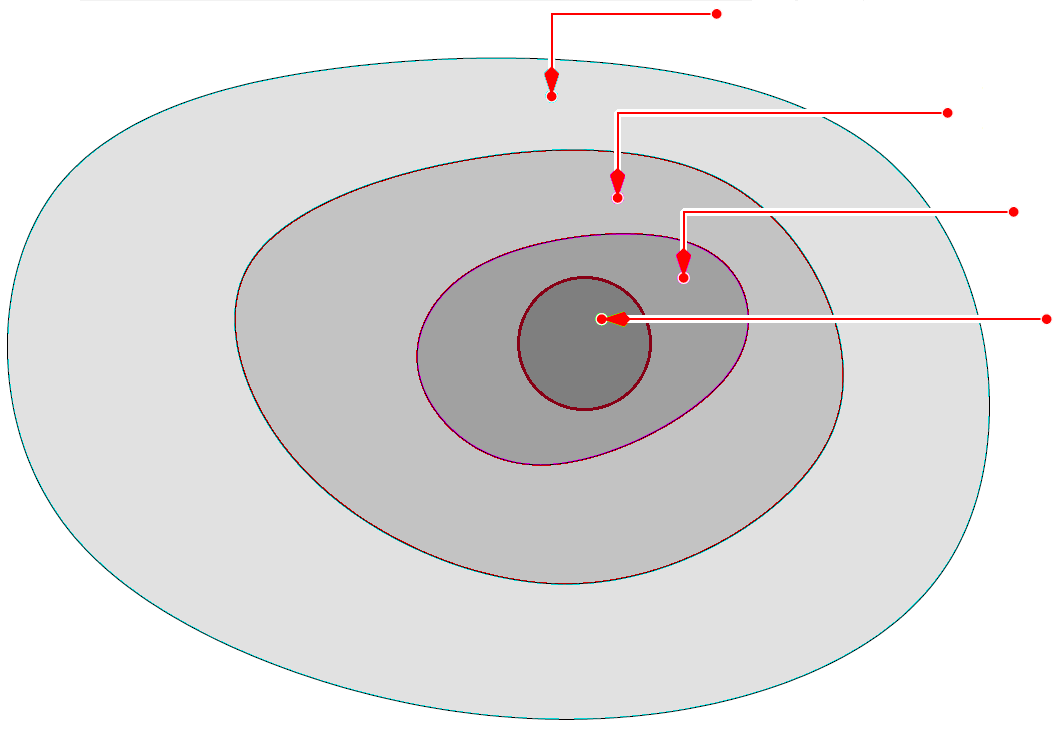}
  \raisebox{118pt}{\hspace{-56pt}\small All power series}
  \raisebox{101.5pt}{\hspace{-35pt}\small D-finite}
  \raisebox{84.5pt}{\hspace{-26pt}\small Algebraic}
  \raisebox{66.5pt}{\hspace{-40pt}\small Rational}
  \vspace{-9pt}
\end{center}
\caption{The hierarchy of families of generating functions}
\end{figure}
A larger family than the rational functions is the family of \emph{algebraic} functions.
A function $F(z)$ is {algebraic} if it can be defined as the root of a polynomial equation.
That is, there exists a %(non-null)
bivariate polynomial $P(z, y)$ such that $P(z,F(z)) = 0$.
Algebraic functions are the
generating functions of unambiguous \emph{context-free} languages (Chomsky \& Sch\"utzenberger~\cite{CS1963}).
Objects in a class with an algebraic generating function
tend to
exhibit a branching structure
similar to that of trees
(see Bousquet-M\'elou~\cite{Bousquet-Melou2006}).
There is no simple characterization of the coefficients of algebraic generating functions.
However, the coefficients of an algebraic function can be expressed in closed form as a finite linear combination of multinomial coefficients, which can be determined directly from the defining polynomial~$P(z,y)$.
This result is due to Flajolet \& Soria; a proof
can be found in~\cite[Theorem 8.10]{FS2001}; see also the presentation by
Banderier \& Drmota~\cite[Theorem 1]{BD2013}, and~\cite[Note VII.34, p.495]{FS2009}.

A more general family than that of rational or algebraic functions is the family of \emph{D-finite}\label{defDFinite} (``differentiably finite'') functions, also known as \emph{holonomic} functions.
A function $F(z)$ is D-finite
if it satisfies a differential equation with coefficients that are polynomials in $z$.
A power series is D-finite if and only if its
coefficients satisfy a linear recurrence relation with polynomial coefficients.
A sequence of numbers satisfying such a recurrence is said to be \emph{P-recursive} (``polynomially recursive'').

Beyond D-finite power series lie those that are not D-finite.
Given that there are only countably many D-finite functions,
almost all of the uncountably many combinatorial classes are enumerated by non-D-finite generating functions.
Such functions are considerably less amenable to analysis.
One way of determining whether a generating function is D-finite or not is to make use of the fact that
a D-finite function has only finitely many singularities.
For more on ways of determining D-finiteness,
see the papers by Guttmann~\cite{Guttmann2005} and
Flajolet, Gerhold \& Salvy~\cite{FGS2005}.

%Sometimes it is not possible to determine a closed form for the generating function of a class.
%However, it is often possible to provide functional equations that define the generating function, ...

%We now present the generatingfunctionological notation, definitions and standard results that we make use of in our study.

%\newpage
\subsection*{The symbolic method}

There is a natural direct correspondence between the structure of combinatorial classes
and their generating functions, as reflected in Table~\ref{tabSymbolic}.

\begin{table}[ht]
\mybox{
\begin{tabular}{l|l|l|l}
 $\;$ \emph{Structure}   $\;$ & $\;$ \emph{Construction} $\;$ & $\;$ \emph{Generating function}              $\;$ & $\;$ \emph{Condition} \\[8pt]
 $\;$ Atom               $\;$ & $\;$ $\ZZZ\;=\;\{\bullet\}$     $\;$  & $\;$ $Z(z)\;=\;z$                    $\;$ &                       \\[8pt]
 $\;$ Disjoint union     $\;$ & $\;$ $\AAA\;=\;\BBB+\CCC$       $\;$  & $\;$ $A(z)\;=\;B(z)+C(z)$            $\;$ &                       \\[8pt]
 $\;$ Cartesian product  $\;$ & $\;$ $\AAA\;=\;\BBB\times\CCC$  $\;$  & $\;$ $A(z)\;=\;B(z)\+C(z)$           $\;$ &                       \\[6pt]
 $\;$ Sequence           $\;$ & $\;$ $\AAA\;=\;\seq{\BBB}$      $\;$  & $\;$ $A(z)\;=\;\dfrac{1}{1-B(z)}$    $\;$ & $\;$ $B(0)=0$         \\[12pt]
 $\;$ Non-empty sequence $\;$ & $\;$ $\AAA\;=\;\seqplus{\BBB}$  $\;$  & $\;$ $A(z)\;=\;\dfrac{B(z)}{1-B(z)}$ $\;$ & $\;$ $B(0)=0$         \\[12pt]
 $\;$ Pointing           $\;$ & $\;$ $\AAA\;=\;\Theta\BBB$      $\;$  & $\;$ $A(z)\;=\;z\+\partial_z B(z)$   $\;$ &                       \\[8pt]
 $\;$ Marking            $\;$ & $\;$ $\AAA\;=\;u\+\BBB$         $\;$  & $\;$ $A(z)\;=\;u\+B(z)$              $\;$ &
\end{tabular}
} %\mybox
\caption{The correspondence between structure and generating functions}
\label{tabSymbolic}
\end{table}

%\vspace{6pt}
We always use $\ZZZ$ to denote the \emph{atomic class} consisting of a single element of size 1.

We use $\BBB+\CCC$ or $\BBB\uplus\CCC$ to denote the \emph{disjoint union} of classes $\BBB$ and $\CCC$.

The \emph{Cartesian product} of two classes contains all ordered pairs of elements of the classes.
For example, suppose elements of class $\AAA$ consist of
ordered pairs of objects, $\alpha=(\beta,\gamma)$, where
$\beta$ and $\gamma$ drawn from classes $\BBB$ and $\CCC$ respectively, and
the size of $\alpha$ is given  by
$|\alpha|=|\beta|+|\gamma|$.
Then the generating function for $\AAA$ is given by
  $$
    A(z)
    \;=\;
    \sum_{\alpha\in\BBB\times\CCC} z^{|\alpha|}
    \;=\;
    \sum_{\beta\in\BBB} \sum_{\gamma\in\CCC} z^{|\beta|+|\gamma|}
    \;=\;
    \sum_{\beta\in\BBB} z^{|\beta|} \sum_{\gamma\in\CCC} z^{|\gamma|}
    \;=\;
    B(z)\+C(z),
  $$
where $B(z)$ and $C(z)$ are the generating functions for $\BBB$ and $\CCC$ respectively.

We use
$\seq{\BBB}$ to represent the class of (possibly empty) sequences of elements of $\BBB$, and
$\seqplus{\BBB}$ to represent the class of non-empty sequences of elements of~$\BBB$.
The size of such a sequence is the sum of the sizes of its components.
For example, if $\AAA$ consists of non-empty sequences of elements of $\BBB$, then
$$
\AAA\;=\;\seqplus{\BBB}\;=\;\BBB\:+\:\BBB^2\:+\:\BBB^3\:+\:\ldots,
$$
where we write $\BBB^2$ for $\BBB\times\BBB$, etc.
Thus,
$$
A(z) \;=\; B(z)\:+\:B(z)^2\:+\:B(z)^3\:+\:\ldots \;=\; \dfrac{B(z)}{1-B(z)}.
$$

The \emph{pointing} construction\label{defPointing}, denoted here by $\Theta$, represents the idea of ``pointing at a distinguished atom''.
For example, if $\AAA=\Theta\BBB$, then the generating function of $\AAA$ is %given by
$$
A(z)
  \;=\;
  \sum_{n\geqslant0}a_n\+ z^n
  \;=\;
  \sum_{n\geqslant0}n\+b_n\+ z^n
  \;=\;
  z\+\partial_z B(z),
$$
where $\partial_z$ is used to denote the differential operator $\frac{d}{dz}$.

\emph{Marking} enables us to record combinatorial parameters.
We use a lowercase letter for marking in structural equations, corresponding to the variable in the generating function that is used to mark the parameter.
For example, the class of sequences of coin tosses, with $t$ marking tails has the following structure:
$$
\SSS \;=\; \seqplus{\ZZZ + t\+\ZZZ},
$$
where the first term in the disjoint union corresponds to a throw of heads, and the second to a (marked) throw of tails.

\subsection*{Functional equations}

Typically, to determine the generating function of a combinatorial class, we define the structure recursively.
The prototypical example is that of \emph{rooted plane trees}, where the size is the number of vertices.

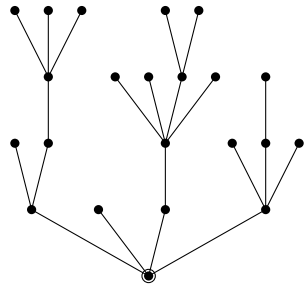
\begin{figure}[ht]
  $$
  \begin{tikzpicture}[scale=0.22]
    \plotpermnobox{}{12,8,12,0,20,8,16,0,4,8,16,20,16,12,0,8,0,12}
    \plotpermnobox{}{20,0,16,0,0,0,0,0,16,12,0,0,0,0,0,12}
    \plotpermnobox{}{0,0,20,0,0,0,0,0,0,20,0,0,0,0,0,16}
    \draw[thin] (6,8)--(9,4)--(2,8)--(1,12);
    \draw[thin] (2,8)--(3,12)--(3,20);
    \draw[thin] (1,20)--(3,16)--(5,20);
    \draw[thin] (16,16)--(16,8)--(9,4)--(10,8)--(10,12)--(7,16);
    \draw[thin] (14,12)--(16,8)--(18,12);
    \draw[thin] (9,16)--(10,12)--(13,16);
    \draw[thin] (11,16)--(10,12);
    \draw[thin] (10,20)--(11,16)--(12,20);
    \draw [thin] (9,4) circle [radius=0.4];
  \end{tikzpicture}
  \vspace{-6pt}
  $$
  \caption{A rooted plane tree}%\label{figPlaneTree}
\end{figure}
Rooted plane trees consist of a root vertex joined to a (possibly empty) sequence of subtrees.
Thus the class, $\TTT$, satisfies the recursive structural equation
$$
\TTT \;=\; \ZZZ \times \seq{\TTT} .
$$
So, by the correspondence between structure and generating functions, we know that the generating function for rooted plane trees, $T(z)$, satisfies the equation
$$
T(z) \;=\; z \+ \frac{1}{1-T(z)} .
$$
This equation has two roots, but one of them has negative coefficients and so can be rejected. Hence,
$$
T(z) \;=\; \thalf  \big(1-\sqrt{1-4\+z}\big). % \;=\; z+z^2+2\+z^3+5\+z^4+14\+z^5+42\+z^6+\ldots,
$$
Thus, extracting coefficients, $T(z) = z+z^2+2\+z^3+5\+z^4+14\+z^5+42\+z^6+\ldots$,
rooted plane trees being enumerated by the Catalan numbers.

Often,
in order to derive the univariate generating function
for a combinatorial class, multivariate functions are used,
involving additional ``catalytic'' variables that record certain parameters of the objects in the class.
These additional variables make it possible to establish functional equations, which can sometimes be solved to yield the required
generating function.
Typically, when employing a multivariate generating function, it is common to treat it simply as a function of the relevant catalytic variable, writing, for example, $F(u)$ rather than $F(z,u)$.

Another common technique is the use of \emph{linear operators} on generating functions, for which we use the symbol $\oper$.
Let us illustrate this by briefly considering the class of \emph{stepped \mbox{parallelogram} polyominoes}, which we denote $\PPP$.

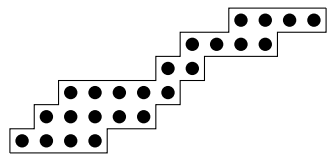
\begin{figure}[ht]
  $$
  \begin{tikzpicture}[scale=0.32]
    \plotpermnobox{}{0,0,0,0,0,0,0,0,0,6,6,6,6}
    \plotpermnobox{}{0,0,0,0,0,0,0,5,5,5,5}
    \plotpermnobox{}{0,0,0,0,0,0,4,4}
    \plotpermnobox{}{0,0,3,3,3,3,3}
    \plotpermnobox{}{0,2,2,2,2,2}
    \plotpermnobox{}{1,1,1,1}
    \draw[thin] (.5,.5)--(.5,1.5)--(1.5,1.5)--(1.5,2.5)--(2.5,2.5)--(2.5,3.5)--
                (6.5,3.5)--(6.5,4.5)--(7.5,4.5)--(7.5,5.5)--(9.5,5.5)--(9.5,6.5)--(13.5,6.5);
    \draw[thin] (.5,.5)--(4.5,.5)--(4.5,1.5)--(6.5,1.5)--(6.5,2.5)--(7.5,2.5)--
                (7.5,3.5)--(8.5,3.5)--(8.5,4.5)--(11.5,4.5)--(11.5,5.5)--(13.5,5.5)--(13.5,6.5);
  \end{tikzpicture}
  \vspace{-6pt}
  $$
  \caption{A stepped parallelogram polyomino} % with width 13}
  \label{figParaPolyomino}
\end{figure}
A stepped parallelogram polyomino is constructed from one or more rows, each consisting of a contiguous sequence of cells.
Except for the bottom row,
the leftmost cell of a row must occur to the right of the leftmost cell of the previous row but not to the right of the rightmost cell of the previous row, and the rightmost cell of a row must occur to the right of the rightmost cell of the previous row.
See Figure~\ref{figParaPolyomino} for an example.

For our illustration, we consider the size of a polyomino to be given by its \emph{width}.
Let $P(u)=P(z,u)$ be the bivariate generating function for stepped parallelogram polyominoes, in which $z$ marks the width and $u$ marks the number of cells in the top row.
Thus a polyomino that has $k$ cells in its top row contributes a term to
$P(u)$ in which $u$ has exponent~$k$.
The generating function for elements of $\PPP$ consisting of a single row is
$$
P_1(u) \;=\;
\seqplus{u\+\ZZZ} \;=\; \frac{z\+u}{1-z\+u},
$$
where, by mild abuse of notation, we identify the structure and the generating function.

The addition of a new row on top of a row with length $k$ is reflected by the linear operator $\oper_\RR$ defined by
$$
\oper_\RR\big[u^k\big]
\;=\;
(u^{k-1} + u^{k-2} + \ldots + u)\+\seqplus{u\+\ZZZ}
\;=\;
\frac{z\+u}{(1-u)(1-z\+u)}\+(u-u^k),
$$
in which $\seqplus{u\+\ZZZ}$ corresponds to the ``overhanging'' cells to the right.

Hence,
the bivariate generating function $P(u)$ for stepped parallelogram polyominoes is defined by the recursive functional equation
$P(u)=P_1(u) + \oper_\RR\big[P(u)\big]$. That is,
\begin{equation}\label{eqPolyoFunEq}
P(u) \;=\; \frac{z\+u}{1-z\+u} \:+\: \frac{z\+u}{(1-u)(1-z\+u)}\+\big(u\+P(1)-P(u)\big) ,
\end{equation}
an equation that relates $P(u)$ to $P(1)$.

\subsection*{The kernel method}\label{sectKernelMethod}
Equations such as \eqref{eqPolyoFunEq} can sometimes be solved by making use of what is known as the \emph{kernel method}.
For examples of its use, see
the paper by Banderier, Bousquet-M\'elou, Denise, Flajolet, Gardy \& Gouyou-Beauchamps~\cite{BBDFGG2002}
and the expository article by Prodinger~\cite{Prodinger2004}.
We illustrate how the kernel method works by deriving the generating function for the class of stepped parallelogram polyominoes.

To start, we express $P(u)$ in terms of $P(1)$,
by expanding and rearranging~\eqref{eqPolyoFunEq} to give
\begin{equation*}%\label{eqPKernel1}
P(u)  \;=\;   \frac{z\+u \+\big(1-u+u\+P(1)\big)}{1-u+z\+u^2}.
\end{equation*}
Equivalently, we have the equation
$$
(1-u+z\+u^2)\+P(u)  \;=\;   z\+u \+\big(1-u+u\+P(1)\big) .
$$
Now, if we set $u$ to be a root of the multiplier of $P(u)$ on the left, then we obtain a linear equation for $P(1)$.
This is known as ``cancelling the kernel'' (the multiplier being the kernel).
The appropriate root to use can be identified from
the combinatorial requirement that the series expansion of $P(1)$ contains no negative exponents and
has only non-negative coefficients.

In this case, the correct root is $u=(1-\sqrt{1-4\+z})/2\+z$, which yields
the univariate generating function for $\PPP$,
$$
P(1) \;=\; \thalf(1-\sqrt{1-4\+ z}).
$$
This turns out to be the same as that for rooted plane trees, since stepped parallelogram polyominoes, counted by width, are also enumerated by the Catalan numbers (see Stanley~\cite{Stanley2013}).

\subsection*{Analytic combinatorics}\label{sectAnalyticCombin}
It is a remarkable fact that the asymptotic behaviour of the coefficients of a generating function $F(z)$ can be determined by considering the analytic properties of $F$ considered as a complex function, that is, as a mapping of the complex plane to itself.
We briefly present here the key facts.

We are often interested in determining how the number of objects in a combinatorial class grows with size.
A fundamental quantity of interest is the exponential \emph{growth rate}.
The {growth rate} of a class $\CCC$ is defined to be the limit
$$
\gr(\CCC) \;=\;
\liminfty{|\CCC_n|}^{1/n},
$$
if it exists.
We define the \emph{upper} and \emph{lower} growth rates similarly:
$$
\grup(\CCC)=\limsupinfty{|\CCC_n|}^{1/n}
\qquad\qquad
\grlow(\CCC)=\liminfinfty\+{|\CCC_n|}^{1/n} .
$$
Observe that combinatorial classes whose enumeration differs only by a polynomial factor have the same (upper/lower) growth rate. This fact
follows directly from the definition of the growth rate.

Fundamental to determining exponential growth rates is Pringsheim's Theorem.
This result is concerned with the location of the singularities of
functions with non-negative coefficients. Such functions include all enumerative generating functions.
\begin{lemma}[{Pringsheim~\cite[Theorem IV.6]{FS2009}}]\label{lemmaPringsheim}
  If the power series for $F(z)$ has non-negative coefficients and radius of convergence $\rho$, then $z=\rho\in \mathbb{R}^+$ is a singularity of $F(z)$. %, known as the {dominant singularity}.
\end{lemma}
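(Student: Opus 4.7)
The plan is to argue by contradiction, exploiting the non-negativity of the coefficients to propagate convergence past the alleged point of analyticity at $\rho$.

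Suppose, for contradiction, that $F$ is analytic at $z = \rho$. Then $F$ admits an analytic continuation to some open disk of radius $\varepsilon > 0$ about $\rho$. My first step would be to pick a real base point $z_0 = \rho - \delta$ on the positive real axis, with $0 < \delta < \varepsilon/2$, and expand $F$ as its Taylor series around $z_0$:
$$F(z) \;=\; \sum_{k\geqslant 0} \frac{F^{(k)}(z_0)}{k!}\+(z-z_0)^k.$$
Because $F$ is analytic on the union of the open disk $|z|<\rho$ and the open disk $|z-\rho|<\varepsilon$, standard complex analysis guarantees that this Taylor series has radius of convergence strictly greater than $\delta$; in particular, it converges at the real point $z = \rho + \eta$ for all sufficiently small $\eta > 0$.

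Next, I would compute the derivatives termwise inside the disk of convergence of the original series, where differentiation term-by-term is valid, obtaining
$$\frac{F^{(k)}(z_0)}{k!} \;=\; \sum_{n\geqslant k} \binom{n}{k}\+ a_n\+ z_0^{n-k}.$$
Since $a_n \geqslant 0$ and $z_0 > 0$, every term here is non-negative, so each Taylor coefficient of $F$ at $z_0$ is non-negative. Evaluating the Taylor expansion at $z = \rho + \eta$ therefore gives a convergent double sum of non-negative reals:
$$F(\rho + \eta) \;=\; \sum_{k\geqslant 0} \sum_{n\geqslant k} \binom{n}{k}\+ a_n\+ z_0^{n-k}\+(\rho+\eta - z_0)^k.$$

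Because all terms are non-negative, Tonelli's theorem allows the order of summation to be exchanged without affecting convergence or value. Reindexing and applying the binomial theorem to the inner sum yields
$$F(\rho + \eta) \;=\; \sum_{n\geqslant 0} a_n \sum_{k=0}^{n}\binom{n}{k}\+ z_0^{n-k}\+(\rho+\eta - z_0)^k \;=\; \sum_{n\geqslant 0} a_n\+(\rho+\eta)^n.$$
This is a convergent series with value $F(\rho+\eta)$, contradicting the assumption that $\rho$ is the radius of convergence of $\sum a_n z^n$. Hence no analytic continuation to $\rho$ can exist, and $\rho$ must be a singularity of $F$.

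The main obstacle, as I see it, is justifying the interchange of summation cleanly: one must be sure that the Taylor expansion at $z_0$ genuinely reaches beyond $\rho$ on the positive real axis (which uses the geometric fact that the two open disks of analyticity overlap in a neighbourhood of $z_0$), and that the non-negativity of all summands legitimately allows Tonelli's theorem to convert the double sum back into the original power series. Everything else is bookkeeping.
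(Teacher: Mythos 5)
The paper offers no proof of this lemma: it is stated as a classical result and cited directly to Flajolet \& Sedgewick, so there is no argument of the paper's to compare against. Your proof is the standard textbook proof of Pringsheim's theorem, and it is correct: re-expanding about an interior real point $z_0=\rho-\delta$, using the overlap of the two discs of analyticity to push the new Taylor series past $\rho$, observing that all its coefficients are non-negative, and then invoking Tonelli and the binomial theorem to collapse the double sum back to $\sum a_n(\rho+\eta)^n$, contradicting the definition of $\rho$. The only cosmetic caveat is the implicit assumption $0<\rho<\infty$, which is exactly the regime in which the lemma is applied.
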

The least singularity on the positive real axis is known as the
\emph{dominant singularity}.
It is all that is required to determine the (upper) growth rate of a combinatorial class, as the following lemma reveals.
\begin{lemma}[{Exponential Growth Formula~\cite[Theorem IV.7]{FS2009}}]\label{lemmaExponentialGrowthFormula}
  If $F(z)$ is analytic at 0 with non-negative coefficients and $\rho$ is its dominant singularity, then \vspace{-3pt}
  $$
  \limsupinfty \left(\big[z^n\big]F(z)\right)^{1/n} \;=\; \rho^{-1} . % \;\sim\; \rho^{-n}\+\theta(n)
  %\vspace{-9pt}
  $$
  %with $\limsup\limits_{n\rightarrow\infty}\theta(n)^{1/n}=1$.
  %(e.g. $\theta(n)=n^r,\, \log n,\, \log n(1+ \cos^2\pi\sqrt{n}),\,\ldots$).
\end{lemma}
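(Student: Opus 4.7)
The plan is to combine two classical facts from complex analysis: the Cauchy--Hadamard formula relating the radius of convergence of a power series to the limsup of the $n$th roots of its coefficients, and Pringsheim's Theorem (invoked as Lemma~\ref{lemmaPringsheim}), which pins a positive real singularity at the radius of convergence for series with non-negative coefficients.

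First, I would apply the Cauchy--Hadamard formula to $F(z) = \sum_{n\geqslant 0} a_n z^n$. This gives that the radius of convergence $R$ of the series satisfies $R^{-1} = \limsup_{n\to\infty} |a_n|^{1/n}$. Since $F$ is analytic at $0$, we have $R>0$, and because $a_n = [z^n]F(z) \geqslant 0$ the absolute values may be dropped, yielding $\limsup_{n\to\infty} (\,[z^n]F(z)\,)^{1/n} = R^{-1}$.

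Second, I would identify $R$ with the dominant singularity $\rho$. On the one hand, $F$ is analytic throughout the open disk $|z|<R$, so no singularity of $F$ can lie in that disk; hence $\rho \geqslant R$. On the other hand, by Pringsheim's Theorem, because $F$ has non-negative coefficients and radius of convergence $R$, the point $z=R$ is itself a singularity of $F$ lying on the positive real axis; since $\rho$ is by definition the least such singularity, we get $\rho \leqslant R$. Thus $\rho = R$, and substituting into the Cauchy--Hadamard identity yields the claimed equality $\limsup_{n\to\infty} (\,[z^n]F(z)\,)^{1/n} = \rho^{-1}$.

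There is no genuine obstacle: the whole argument is a two-line packaging of Cauchy--Hadamard together with Pringsheim. The only point that requires a moment's care is the verification that the dominant singularity coincides with the radius of convergence, and this is exactly what Pringsheim's Theorem is designed to supply in the non-negative-coefficient setting.
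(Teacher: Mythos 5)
Your argument is correct: Cauchy--Hadamard gives $\limsup_n ([z^n]F(z))^{1/n} = R^{-1}$ for the radius of convergence $R$, and Pringsheim's Theorem (Lemma~\ref{lemmaPringsheim}) together with analyticity on $|z|<R$ forces the dominant singularity $\rho$ to equal $R$. The thesis states this lemma without proof, citing Flajolet \& Sedgewick, and your two-step packaging is precisely the standard derivation underlying that cited result, so there is nothing to fault and no divergence of approach to report.
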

Thus, the upper growth rate of a combinatorial class is equal to the
reciprocal of the dominant singularity of its generating function.

Analytic combinatorics can give us more information than this.
The \emph{nature} of the dominant singularity prescribes the {subexponential factor}.
To state the relevant results, we use the notation $F(n)\sim G(n)$ to denote the fact that asymptotically $F(n)$ and $G(n)$ are ``approximately equal''. Formally,
$$
F(n)\;\sim\; G(n) \qquad \text{if} \quad \liminfty \tfrac{F(n)}{G(n)} \;=\; 1.
$$
For
meromorphic functions (i.e., functions whose singularities are poles), we have the following:
\begin{lemma}[{see \cite[Theorems IV.10 and VI.1]{FS2009}}]
  When the dominant singularity $\rho$ of $F(z)$ is a pole of order~$r$, then %\vspace{-3pt}
  $$
  [z^n]F(z) \;\sim\; c{\rho^{-n}}\+n^{r-1} %\vspace{-3pt}
  \qquad \text{where} \quad
  c \;=\; \dfrac{\rho^{-r}}{(r-1)!} \+ \lim\limits_{z\rightarrow\rho} \+ (\rho-z)^r\+F(z) .
  $$
\end{lemma}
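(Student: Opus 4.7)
The plan is to exploit the partial fraction style decomposition of $F$ near its dominant singularity, combined with direct coefficient extraction on the resulting simple pieces. Since $\rho$ is a pole of order $r$, the function $(\rho-z)^r F(z)$ is analytic and nonzero at $z=\rho$, so we may write
$$
F(z) \;=\; \frac{A_r}{(\rho-z)^r} \:+\: \frac{A_{r-1}}{(\rho-z)^{r-1}} \:+\: \cdots \:+\: \frac{A_1}{\rho-z} \:+\: G(z),
$$
where $A_r = \lim_{z\to\rho}(\rho-z)^r F(z)$ (and in general $A_k$ is extracted from the Laurent expansion at $\rho$), and $G(z)$ is analytic on a disk centred at the origin whose radius strictly exceeds $\rho$. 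For the standard statement one assumes $\rho$ is the unique singularity of smallest modulus; I would state this assumption explicitly, since without it one needs to sum analogous contributions from each singularity of modulus $\rho$.

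Next I would compute the coefficients of each elementary pole. Expanding in a geometric series,
$$
[z^n]\+\frac{1}{(\rho-z)^k}
\;=\;
\rho^{-k}\+[z^n](1-z/\rho)^{-k}
\;=\;
\rho^{-k-n}\+\binom{n+k-1}{k-1},
$$
and the binomial coefficient satisfies $\binom{n+k-1}{k-1} = \frac{n^{k-1}}{(k-1)!}\bigl(1+O(1/n)\bigr)$ as $n\to\infty$. Hence the order-$r$ pole contributes
$$
[z^n]\+\frac{A_r}{(\rho-z)^r}
\;\sim\;
\frac{A_r\+\rho^{-r}}{(r-1)!}\;\rho^{-n}\+n^{r-1},
$$
while the lower-order poles contribute $O(\rho^{-n}\+n^{r-2})$, and the analytic remainder $G(z)$ contributes $O(\rho'^{-n})$ for some $\rho' > \rho$ by Pringsheim's theorem (Lemma~\ref{lemmaPringsheim}) applied to the domain of analyticity of $G$.

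Summing these contributions, the leading pole dominates, giving exactly
$$
[z^n]F(z) \;\sim\; \frac{\rho^{-r}}{(r-1)!}\+\Bigl(\lim_{z\to\rho}(\rho-z)^r F(z)\Bigr)\,\rho^{-n}\+n^{r-1},
$$
which is the claimed formula with $c$ as stated. The only real subtlety is justifying that $G$ is analytic on a strictly larger disk; this is immediate once the pole contributions have been subtracted off, because after subtraction $F$ has no singularity on the closed disk of radius $\rho$ other than the (now removed) one at $\rho$ itself, and so one may enlarge the disk of analyticity up to the next singularity. The routine asymptotic of $\binom{n+k-1}{k-1}$ and the bookkeeping of error terms are the only calculations; there is no deep obstacle, so the main care point is simply stating the hypotheses (uniqueness of the dominant singularity, meromorphy on a slightly larger disk) under which the result holds as written.
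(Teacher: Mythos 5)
The paper does not prove this lemma: it is quoted directly from Flajolet \& Sedgewick (Theorems IV.10 and VI.1) as part of the background on analytic combinatorics, so there is no in-paper proof to compare against. Your argument is the standard proof from that reference --- subtract the principal part of the Laurent expansion at $\rho$, extract coefficients of $(\rho-z)^{-k}$ via the binomial series, and observe that the remainder has exponentially smaller coefficients --- and it is correct, including your (appropriate) remark that one must assume $\rho$ is the unique singularity of minimal modulus, else the contributions of all singularities on that circle must be summed. One small slip: the bound $[z^n]G(z)=O(\rho'^{-n})$ does not follow from Pringsheim's theorem, which concerns series with non-negative coefficients (a property $G$ need not have after the subtraction); what you want is simply Cauchy's coefficient inequality applied on a circle of radius $\rho'$ inside the disk of analyticity of $G$. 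With that citation corrected, the proof is complete.
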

More generally, when the dominant singularity is not a pole, the following result can be employed:
\begin{lemma}[{see \cite[Theorem VI.1]{FS2009}}]
%  \vspace{-9pt}
  $$
  %\qquad\qquad\qquad\quad
  [z^n](1-z/\rho)^{-\alpha} \;\sim\; \frac{\rho^{-n}\+n^{\alpha-1}}{\Gamma(\alpha)} ,
  \qquad\quad
  \alpha\neq0,-1,-2,\ldots ,
  %\vspace{-3pt}
  $$
  where $\Gamma(k)=(k-1)!$ and $\Gamma\big(k+\thalf\big)=\frac{(2k)!}{4^k\+k!}\sqrt{\pi}$, if $k\in\mathbb{N}$.
\end{lemma}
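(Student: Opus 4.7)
The plan is to reduce the statement to the case $\rho=1$ and then apply the generalised binomial theorem together with an asymptotic estimate for ratios of Gamma values. First I would observe that, since $[z^n](1-z/\rho)^{-\alpha}=\rho^{-n}\,[z^n](1-z)^{-\alpha}$, it suffices to prove that $[z^n](1-z)^{-\alpha}\sim n^{\alpha-1}/\Gamma(\alpha)$. The scaling factor $\rho^{-n}$ then reappears in the statement as required, and all subsequent work takes place with the simpler function $(1-z)^{-\alpha}$.

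Next I would expand $(1-z)^{-\alpha}$ as a generalised binomial series, valid for arbitrary complex $\alpha$, namely
$$
(1-z)^{-\alpha} \;=\; \sum_{n\geqslant 0}\binom{n+\alpha-1}{n}z^n \;=\; \sum_{n\geqslant 0}\frac{\Gamma(n+\alpha)}{\Gamma(\alpha)\,\Gamma(n+1)}\,z^n.
$$
This already gives the exact coefficient $[z^n](1-z)^{-\alpha}=\Gamma(n+\alpha)/(\Gamma(\alpha)\,\Gamma(n+1))$, and the condition $\alpha\neq 0,-1,-2,\ldots$ is precisely what guarantees that $\Gamma(\alpha)$ is finite and non-zero in the denominator, so that the coefficient is well defined and non-trivial.

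The remaining ingredient is the classical asymptotic
$$
\frac{\Gamma(n+\alpha)}{\Gamma(n+1)} \;\sim\; n^{\alpha-1} \qquad (n\rightarrow\infty),
$$
which follows from Stirling's formula applied to both Gamma functions: writing $\Gamma(n+\alpha)\sim\sqrt{2\pi}\,(n+\alpha)^{n+\alpha-1/2}e^{-(n+\alpha)}$ and similarly for $\Gamma(n+1)$, the exponential and square-root factors cancel, and expanding $(1+\alpha/n)^{n}$ and $(1+\alpha/n)^{\alpha-1/2}$ yields $n^{\alpha-1}$ in the limit. Dividing through by $\Gamma(\alpha)$ gives the claim.

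The main (minor) obstacle is really just taking care with the Stirling-based estimate: one must verify that the approximation holds for all admissible real (or complex) $\alpha$, not merely positive integers, which is why the excluded set of non-positive integers reappears in the hypothesis. Everything else is formal manipulation of the binomial series and a rescaling. No new combinatorial idea is needed, and the result slots directly into the analytic-combinatorics framework sketched earlier in the chapter.
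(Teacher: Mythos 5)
Your argument is correct. Note that the paper itself gives no proof of this lemma: it is quoted directly from Flajolet \& Sedgewick~\cite[Theorem VI.1]{FS2009}, so there is no in-paper proof to compare against. Your route — factoring out $\rho^{-n}$ by rescaling, writing the exact coefficient as $[z^n](1-z)^{-\alpha}=\Gamma(n+\alpha)/\big(\Gamma(\alpha)\,\Gamma(n+1)\big)$ via the generalised binomial theorem, and then applying Stirling to obtain $\Gamma(n+\alpha)/\Gamma(n+1)\sim n^{\alpha-1}$ — is the standard elementary derivation and is sound for every real $\alpha$ outside $\{0,-1,-2,\ldots\}$ (the excluded values being exactly where $\Gamma(\alpha)$ has a pole and the series truncates to a polynomial). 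It is worth knowing that the cited Theorem VI.1 is proved in~\cite{FS2009} by Hankel contour integration, a heavier method whose payoff is a full asymptotic expansion and the extension to functions with logarithmic factors, neither of which is needed for the bare statement here; for this lemma your explicit-coefficient approach is entirely adequate and arguably cleaner. The one point to be slightly careful about, which you flag yourself, is that Stirling's formula must be invoked for $\Gamma(n+\alpha)$ with $\alpha$ an arbitrary fixed real (or complex) shift, not just an integer; since $n+\alpha\rightarrow\infty$ along a ray this is covered by the standard form of Stirling's asymptotic, and the constant $e^{\alpha-1}$ arising from $(1+\alpha/n)^{n}$ cancels against the leftover $e^{-(\alpha-1)}$ from the exponential factors, leaving exactly $n^{\alpha-1}$.
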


This concludes our brief exposition of combinatorial enumeration. As mentioned above, it is also possible to extract distributional information about parameter values from generating functions. We make use of this in Chapter~\myref{chap1324}. The relevant results are presented there.

\HIDE{ % ----------------------------------------
{Distribution of parameter values}

  {Mean}

    For a class $\SSS$ and parameter $\omega$ with BGF $S(z,w)$, the expected value of $\omega$ for an object of size $n$ is
    $$
    \mathbb{E}_n[\omega] \;=\;
    \frac{[z^n]\partial_w S(z,w)|_{w=1}}{[z^n]S(z,1)}
    \;=\;
   \frac{ \sum_{k\geqslant0}k\+s_{n,k}}{s_n}.
    $$

 {Variance}

    The variance of $\omega$ for an object of size $n$ is
    $$
    \mathbb{V}_n[\omega] \;=\;
    \frac{[z^n]\partial^2_w S(z,w)|_{w=1}}{[z^n]S(z,1)}
    + \mathbb{E}_n[\omega] - \mathbb{E}_n[\omega]^2.
    $$
} %\HIDE ----------------------------------------

\section{Classes of permutations}

The combinatorial classes that we study in this thesis are classes of permutations.
We begin by presenting some standard definitions, before giving a brief historical overview of research
addressing
enumerative questions concerning
permutation classes.

\subsection*{Permutations}
We consider a permutation to be simply an arrangement of the numbers $1,2,\ldots,k$ for some positive $k$.
We use $|\sigma|$ to denote the length of permutation $\sigma$.
It can be helpful to consider permutations graphically.
If $\sigma=\sigma_1,\ldots,\sigma_k$ is a permutation, its \emph{plot} consists of the the points $(i,\sigma_i)$ in the Euclidean plane, for $i=1,\ldots,k$.
Often a permutation is identified with its plot.

\begin{figure}[ht]
  $$
  \begin{tikzpicture}[scale=0.3]
    \plotpermgrid{8}{3,1,5,6,7,4,8,2}
    \draw [thin] (1,3) circle [radius=0.36];
    \draw [thin] (4,6) circle [radius=0.36];
    \draw [thin] (6,4) circle [radius=0.36];
    \draw [thin] (7,8) circle [radius=0.36];
  \end{tikzpicture}
  \vspace{-3pt}
  $$
  \caption{The plot of permutation $\mathbf{31567482}$ with a $\pdiamond$ subpermutation marked}\label{figPermutation}
\end{figure}
A permutation $\tau$ is said to be \emph{contained} in, or to be a \emph{subpermutation} of, another permutation $\sigma$ if $\sigma$ has a subsequence whose terms are order isomorphic to (i.e.~have the same relative ordering as)
$\tau$.
From the graphical perspective,
$\sigma$ contains $\tau$ if the plot of $\tau$ results from erasing some points from the plot of $\sigma$ and then ``shrinking'' the axes appropriately.
We write $\tau\leqslant\sigma$ if $\tau$ is a subpermutation of $\sigma$.
For example, $\mathbf{31567482}$ contains $\pdiamond$
because the subsequence $\mathbf{3648}$ (among others) is ordered
in the same way as $\pdiamond$ (see Figure~\ref{figPermutation}).
If $\sigma$ does not contain $\tau$, we say that $\sigma$ \emph{avoids} $\tau$.
For example, $\mathbf{31567482}$ avoids $\mathbf{1243}$ since it has no subsequence ordered in the same way as $\mathbf{1243}$.
In the context of containment and avoidance, a permutation is often called a \emph{pattern}.

\begin{figure}[ht]
  $$
  \begin{tikzpicture}[scale=0.21]
    \plotpermnobox{14}{7,10,1,4,9,14,2,11,3,13,12,6,8,5}
    \draw[] (0.2,0.2) rectangle (14.8,14.8);
    \draw [thick] (1,7) circle [radius=0.4];
    \draw [thick] (2,10) circle [radius=0.4];
    \draw [thick] (6,14) circle [radius=0.4];
    \draw [thick] (3,1) circle [radius=0.4];
    \draw [thick] (7,2) circle [radius=0.4];
    \draw [thick] (9,3) circle [radius=0.4];
    \draw [thick] (14,5) circle [radius=0.4];
  \end{tikzpicture}
  \qquad\qquad\qquad
  \begin{tikzpicture}[scale=0.21]
    \plotpermnobox{14}{7,10,1,4,9,14,2,11,3,13,12,6,8,5}
    \draw[] (0.2,0.2) rectangle (14.8,14.8);
    \draw [thick] (1,7) circle [radius=0.4];
    \draw [thick] (3,1) circle [radius=0.4];
    \draw [thick] (6,14) circle [radius=0.4];
    \draw [thick] (10,13) circle [radius=0.4];
    \draw [thick] (11,12) circle [radius=0.4];
    \draw [thick] (13,8) circle [radius=0.4];
    \draw [thick] (14,5) circle [radius=0.4];
  \end{tikzpicture}
  \vspace{-3pt}
  $$
  \caption{The three left-to-right maxima and four right-to-left minima, and the two left-to-right minima and five right-to-left maxima, of a permutation}
  \label{figPermExtrema}
\end{figure}
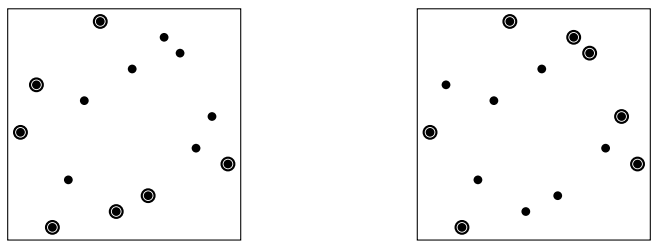
Sometimes we want to refer to the extremal points in a permutation.
A value in a permutation is called a \emph{left-to-right maximum} if it is larger than all the values to its left.
\emph{Left-to-right minima}, \emph{right-to-left maxima} and \emph{right-to-left minima} are defined analogously.
See Figure~\ref{figPermExtrema} for an illustration.

\begin{figure}[ht]
  $$
  \begin{tikzpicture}[scale=0.275]
    \plotpermnobox{}{2,4,1,3,8,6,7,5}
    \draw[] (0.5,0.5) rectangle (4.5,4.5);
    \draw[] (4.5,4.5) rectangle (8.5,8.5);
  \end{tikzpicture}
  \qquad\qquad
  \begin{tikzpicture}[scale=0.275]
    \plotpermnobox{}{6,8,5,7,4,2,3,1}
    \draw[] (4.5,0.5) rectangle (8.5,4.5);
    \draw[] (0.5,4.5) rectangle (4.5,8.5);
  \end{tikzpicture}
  \qquad\qquad
  \begin{tikzpicture}[scale=0.275]
    \plotpermnobox{}{2,1,3,6,5,4,8,7}
    \draw[] (0.5,0.5) rectangle (2.5,2.5);
    \draw[] (2.5,2.5) rectangle (3.5,3.5);
    \draw[] (3.5,3.5) rectangle (6.5,6.5);
    \draw[] (6.5,6.5) rectangle (8.5,8.5);
  \end{tikzpicture}
  \vspace{-3pt}
  $$
  \caption{The direct sum $\mathbf{2413}\oplus\mathbf{4231}$, the skew sum $\mathbf{2413}\ominus\mathbf{4231}$, and the layered permutation $\mathbf{21}\oplus\mathbf{1}\oplus\mathbf{321}\oplus\mathbf{21}$}
  \label{figPermSums}
\end{figure}
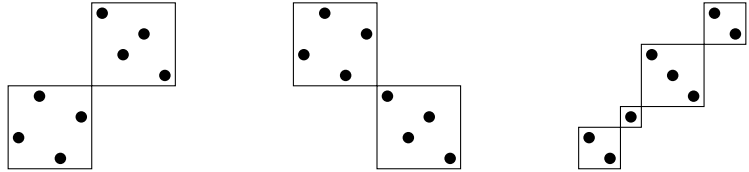
Given two permutations $\sigma$ and $\tau$ with lengths $k$ and $\ell$ respectively, their \emph{direct sum}\label{defDirectSum} $\sigma\oplus\tau$ is the permutation of length $k+\ell$ consisting of $\sigma$ followed by a shifted copy of $\tau$:
$$
(\sigma\oplus\tau)(i) \;=\;
\begin{cases}
  \sigma(i)   & \text{if~} i\leqslant k , \\
  k+\tau(i-k) & \text{if~} k+1 \leqslant i\leqslant k+\ell .
\end{cases}
$$
The \emph{skew sum} $\sigma\ominus\tau$ is defined analogously.
See Figure~\ref{figPermSums} for an illustration.

A permutation is called \emph{sum indecomposable} or just \emph{indecomposable} if it cannot be expressed as the direct sum of two shorter permutations.
For brevity, we call an indecomposable permutation simply an \emph{indecomposable}.
A permutation is \emph{skew indecomposable} if it cannot be expressed as the skew sum of two shorter permutations.
Note that every permutation has a unique representation as the direct sum of a sequence of indecomposables (and also as the skew sum of a sequence of skew indecomposables). % permutations.
If each indecomposable in a permutation is a decreasing sequence, then we say that the permutation is \emph{layered}.
See Figure~\ref{figPermSums} for an example.

%\newpage % ???
\newcommand{\myArc}[2] {\draw [thick] (#1,1.25) arc (0:180:#2);} % {startpos(clockwise)}{radius[x and y]}
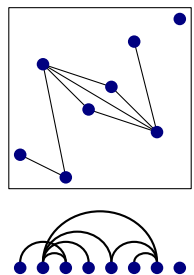
\begin{figure}[ht]
  $$
  \begin{tikzpicture}[scale=0.3,line join=round]
    %\plotperm{8}{2,6,1,4,5,7,3,8}
    \draw[] (0.5,4.5) rectangle (8.5,12.5);
    %\draw [thin] (1,2)--(3,1)--(2,6)--(4,4)--(7,3)--(2,6)--(5,5)--(7,3)--(6,7);
    \draw [thin] (1,6)--(3,5)--(2,10)--(4,8)--(7,7)--(2,10)--(5,9)--(7,7)--(6,11);
    \plotpermnobox[blue!50!black]{}{6,10,5,8,9,11,7,12}
    \myArc{3}{1 and .9}
    \myArc{3}{.5 and .4}
    \myArc{4}{1 and .9}
    \myArc{5}{1.5 and 1.35}
    \myArc{7}{2.5 and 2.25}
    \myArc{7}{1 and .9}
    \myArc{7}{.5 and .4}
    \plotpermnobox[blue!50!black]{8}{1,1,1,1,1,1,1,1}
  \end{tikzpicture}
  \vspace{-3pt}
  $$
  \caption{A permutation and its ordered inversion graph}\label{figPermGraph}
\end{figure}
Sometimes it helps to consider permutations in a slightly broader context.
A permutation can be considered to be a particular type of \emph{ordered graph}.
An ordered graph is a graph with a linear order on its vertices (it is natural to number the vertices from $1$ to $n$).
To each permutation we associate an ordered graph.\label{defGraphOfPerm}
The (ordered) (inversion) \emph{graph} of a permutation $\sigma$ of length~$n$ has
%as vertex set the set of points $(i,\sigma(i))$ in the Euclidean plane
vertex set $\{1,\ldots,n\}$
with an edge between vertices $i$ and~$j$ %$(i,\sigma(i))$ and $(j,\sigma(j))$
if $i<j$ and $\sigma(i)>\sigma(j)$.
A pair of terms in a permutation with this property is called an \emph{inversion}.
Thus, the graph of a permutation contains one edge for each inversion in the permutation.
Note that a permutation graph is transitively closed.
See Figure~\ref{figPermGraph} for an illustration.
We use $G_\sigma$ to denote the graph of $\sigma$.
An \emph{induced ordered subgraph} of an ordered graph $G$ is an induced subgraph of $G$ that inherits
its vertex ordering.
%the vertex ordering of $G$.
It is easy to see that
%permutations and their graphs are related in the following way:
%\begin{obs}
%the permutation $\tau$ is a subpermutation of another permutation $\sigma$
$\tau\leqslant\sigma$
if and only if
$G_\tau$ is an induced ordered subgraph of $G_\sigma$.

\subsection*{Permutation classes}
The subpermutation relation is a partial order on the set of all permutations.
A classical \emph{permutation class}, sometimes called a \emph{pattern class}, is a set of permutations closed downwards (a {down-set}) under this partial order.
Thus, if $\sigma$ is a member of a permutation class $\CCC$ and $\tau$ is contained in $\sigma$, then it must be the case that $\tau$ is also a member of $\CCC$.
From a graphical perspective, this means that erasing points from
the plot of
a permutation in $\CCC$ always results in
the plot of
another permutation in $\CCC$
when the axes are rescaled appropriately.

It is common in the study of
classical permutation classes to reserve the word ``class'' for %such classes,
sets of permutations closed downwards under containment, and to use the mathematical synonyms ``set'', ``collection'' and ``family'' for other combinatorial classes. We do not follow this convention rigorously in this thesis.

It is natural to define a permutation class ``negatively'' by stating the minimal set of permutations that it avoids.
This minimal forbidden set of patterns is known as the \emph{basis} of the class.
The class with basis $B$ is denoted $\av(B)$, and we use $\av_n(B)$ for the permutations of length $n$ in $\av(B)$.
As a trivial example, $\av(\mathbf{21})$ is the class of increasing permutations (i.e.~the identity permutation of each length).
As another simple example, the $\mathbf{123}$-avoiders, $\av(\mathbf{123})$, consists of those permutations that can be partitioned into two decreasing subsequences.
The basis of a permutation class is an {antichain} (a set of pairwise incomparable elements)
under the containment order,
and may be infinite.
Classes for which the basis consists of a single permutation are called \emph{principal} classes.

Suppose that $\SSS$ is a set of indecomposables % permutations
that is downward closed under the subpermutation %containment
order (so if $\sigma\in\SSS$, $\tau\leqslant\sigma$ and $\tau$ is indecomposable, then $\tau\in\SSS$).
%\todob{} [emphasize only indecomposables]
The \emph{sum closure}\label{defSumClosure} of $\SSS$, denoted~$\sumclosed\SSS$, is then the class of permutations of the form
$\sigma_1\oplus\sigma_2\oplus\ldots\oplus\sigma_r$, where each $\sigma_i\in\SSS$.
It is simple to check that $\sumclosed\SSS$ is, indeed, a permutation class.
Such a class is \emph{sum-closed}\label{defSumClosed}: if $\sigma,\tau\in\sumclosed\SSS$ then $\sigma\oplus\tau\in\sumclosed\SSS$.
Furthermore, every sum-closed class is the sum closure of its set of indecomposables.
It can easily be seen that a class is sum-closed if and only if all its basis elements are indecomposable.

In the context of graphs, a set
%of ordered graphs
closed under taking induced %ordered
subgraphs is known as a \emph{hereditary class}.
It is an easy exercise using mathematical induction to prove that the graph of a permutation is an ordered graph that avoids the two induced ordered subgraphs
  $
  \!\raisebox{-0pt}{
  \begin{tikzpicture}[scale=0.225,line join=round]
    \draw [thick] (1,1)--(3,1);
    \plotpermnobox{}{1,1,1}
  \end{tikzpicture}
  }\!
  $
  and
  $
  \!\raisebox{-0pt}{
  \begin{tikzpicture}[scale=0.225,line join=round]
    \myArc{3}{1 and .8}
    \plotpermnobox{}{1,1,1}
  \end{tikzpicture}
  }\!
  $.
Each permutation class is thus isomorphic to a hereditary class of ordered graphs, and results can be transferred between the two domains.

We are interested in the enumeration of permutation classes. One natural question is to determine whether two classes, $\CCC$ and $\DDD$, are equinumerous, i.e.~$|\CCC_n|=|\DDD_n|$ for every~$n$.
Two classes that are equinumerous are said to be \emph{Wilf-equivalent}
%(a term introduced by Babson \& West~\cite{BW2000})
and the equivalence classes are called \emph{Wilf classes}.
From the graphical perspective, it is clear that classes related by symmetries of the square are Wilf-equivalent.
Thus, for example, $\av(\mathbf{132})$, $\av(\mathbf{231})$, $\av(\mathbf{213})$ and $\av(\mathbf{312})$ are equinumerous.
However, not all Wilf-equivalences are a result of these symmetries. Indeed $\av(\mathbf{123})$ and $\av(\mathbf{132})$ are Wilf-equivalent.

Classical permutation classes are not the only sets of permutations that are of interest. A specific focus is classes avoiding certain \emph{barred patterns}.\label{defBarredPattern}
%Barred patterns were introduced by West~\cite{West1993}.
A barred pattern is specified by a permutation with some entries barred ($\mathbf{4\bar{3}12}$, for example).
For a permutation $\sigma$ to avoid a barred pattern $\hat{\pi}$, whose underlying permutation is $\pi$, every occurrence in $\sigma$ of the
permutation order isomorphic to the non-barred entries in $\hat{\pi}$ ($\mathbf{312}$ in the example) must feature %appear
as part of an occurrence of $\pi$.
Note that the class of permutations avoiding a barred pattern is not normally closed downwards under the subpermutation order.
A combinatorial class of permutations that is not closed downwards
is called a \emph{non-classical} class.

%Barred patterns are not the only type of non-classical permutation patterns

\section{Historical overview}

We now present a brief, and somewhat selective, overview of the development of research into enumerative aspects of permutation patterns, a subject which now has a voluminous bibliography.
For a more detailed survey of the same area, see Vatter's chapter~\cite{Vatter2014} in the forthcoming \emph{Handbook of Enumerative Combinatorics}. The topic is presented as part of a broader picture in the books by Kitaev~\cite{Kitaev2011} and B\'ona~\cite{Bona2012}. Other useful sources include the volume of papers from the Permutation Patterns conference held in 2007~\cite{LRV2010} and
Steingr\'imsson's survey article produced for the 2013 British Combinatorial Conference~\cite{Steingrimsson2013}.
\vspace{6pt}

\subsubsection*{Stacks, queues and deques}
The study
of permutation classes can reasonably be said to have begun with Knuth's investigations
in the 1960s
into sorting using stacks and deques (double-ended queues), published
in the first volume of his encyclopedic monograph, \emph{The Art of Computer Programming}~\cite{Knuth1969}.
Knuth observed that a permutation can be sorted by a stack if and only if it does not contain the pattern $\mathbf{231}$, and showed that this class is counted by the Catalan numbers.
He also proved that the
class of permutations sorted by an \emph{input-restricted deque} (i.e.~a deque with the push operation restricted to one end)
is $\av(\mathbf{4231},\mathbf{3241})$ and enumerated the class with what was possibly the first use of the kernel method.
Knuth nicely described his approach in terms of railway ``switchyard networks'', and this perspective was taken up and developed in subsequent papers by Even \& Itai~\cite{EI1971}, Tarjan~\cite{Tarjan1972}, and Pratt~\cite{Pratt1973}, which investigated networks of stacks, queues and deques.

\begin{figure}[ht] %[ht]
  \begin{center}
  \includegraphics[scale=0.33]{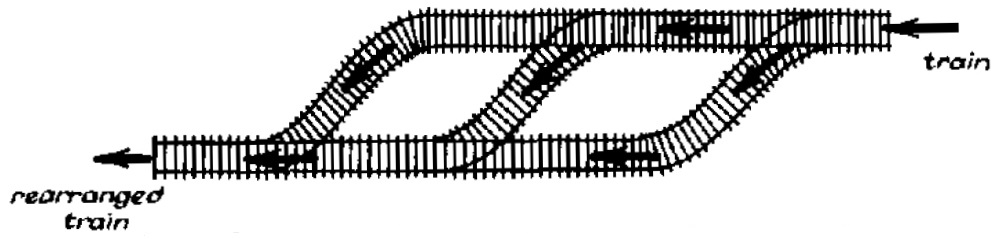}
  \vspace{-15pt}
  \end{center}
  \caption{A figure of a switchyard network from Tarjan's paper~\cite{Tarjan1972}}%\label{figSwitchyard}
\end{figure}
The investigation of stack sorting was continued in the work of West~\cite{West1990,West1993}.
He considered the class of permutations that could be sorted by passing twice through a stack
while requiring the contents of the stack to remain ordered (as in the Tower of Hanoi puzzle).
These permutations, which now tend to be referred to as the West-2-stack-sortable permutations, constitute the non-classical class $\av(\mathbf{2341},\mathbf{3\bar{5}241})$.
West conjectured that this class had the same enumeration as non-separable planar maps.
This conjecture was first proved
by Zeilberger~\cite{Zeilberger1992}, using a computer to solve a complicated functional equation.
Subsequently, Dulucq, Gire \& West~\cite{DGW1996} and Goulden \& West~\cite{GW1996}
found bijective proofs.
An alternative approach to sorting with two ordered stacks was subsequently investigated by Atkinson, Murphy \& Ru\v{s}kuc~\cite{AMR2002a}, who determined both the infinite basis of the class and its algebraic generating function.

Despite this activity, most problems related to Knuth's switchyard
networks have turned out to be very hard.
Fundamental questions %concerning sorting with stacks and deques
remain unanswered,
including the enumeration of permutations sortable by two stacks in parallel,
the enumeration of permutations sortable by two stacks in series, and
the enumeration of permutations sortable by a general deque.
Albert, Atkinson \& Linton~\cite{AAL2010} calculated lower and upper bounds on the growth rates of each of these classes.
More recently, Albert \& Bousquet-M\'elou~\cite{ABM2015}, in a paper that is a \emph{tour de force} of analytic combinatorics, gave a pair of
functional equations that characterise the generating function of
permutations that can be sorted with two parallel stacks.
For more on sorting with stacks, queues and deques, see the surveys by B\'ona~\cite{Bona2003} and Atkinson~\cite{Atkinson2010}.

\subsubsection*{Conjectures}
Much of the research into permutation patterns has been driven by certain conjectures.
The first of these was the {Stanley--Wilf} conjecture that every permutation class $\CCC$ (excluding the class of all permutations) has a finite upper growth rate $\grup(\CCC)$, i.e.~that for any permutation $\sigma$ there exists a constant $c_\sigma$ such that, for every~$n$, $|\av_n(\sigma)|\leqslant {c_\sigma}^{\!n}$.
Arratia~\cite{Arratia1999} observed that the Stanley--Wilf conjecture implies that every sum-closed class $\CCC$, and hence every principal class, has a growth rate $\gr(\CCC)$.
Alon \& Friedgut~\cite{AF2000} managed to prove a result very close to the conjecture: that for any permutation $\sigma$ there exists a constant $c_\sigma$ such that, for every~$n$, $\av_n(\sigma)\leqslant {c_\sigma}^{n\gamma(n)}$, where $\gamma$ is a function that grows extremely slowly.
B\'ona~\cite{Bona1999,Bona2004} established that the conjecture was true for layered patterns.
Meanwhile, Klazar~\cite{Klazar2000} showed that the Stanley--Wilf conjecture was implied by a conjecture of F\"uredi \& Hajnal~\cite{FH1992} concerning
0-1 matrices.
Finally, Marcus \& Tardos~\cite{MT2004} gave a proof of the F\"uredi--Hajnal conjecture, thus confirming the Stanley--Wilf conjecture.
Thus, every principal class has a growth rate.
There are no known examples of permutation classes that do not have a growth rate
and it is widely believed that $\gr(\CCC)$ exists for every permutation class $\CCC$ (see the first conjecture in~\cite{Vatter2014}).

A second conjecture is that of Noonan \& Zeilberger~\cite{NZ1996} that every {finitely based} permutation
class has a D-finite generating function.
Clearly, this is not the case for every permutation
class since there are uncountably many permutation classes with
distinct generating functions, but only countably many D-finite generating functions.
This conjecture remains open. However, it is now generally believed to be false, Zeilberger (see~\cite{EV2005}) counter-conjecturing
that the Noonan--Zeilberger conjecture is false, and, in
particular, the sequence $\av_n(\pdiamond)$ is not P-recursive.
Recent work of Conway \& Guttmann~\cite{CG2015} strongly suggests that $\av(\pdiamond)$ does indeed have a non-D-finite generating
function.\footnote{Following submission of this thesis, at the AMS meeting in Washington, DC, in March 2015, Scott Garrabrant announced a proof that the conjecture is false, the result of joint work with Igor Pak~\cite{GP2015a}.}
% Rechnitzer~\cite{Rechnitzer2003,Rechnitzer2006,Rechnitzer2006a}

How fast can a permutation class grow?
The proof, by Marcus and Tardos, of the Stanley--Wilf conjecture means that every principal class has a growth rate.
Marcus and Tardos' proof yielded a doubly exponential upper bound on $\gr(\av(\beta))$ in terms of the length of $\beta$.
Cibulka~\cite{Cibulka2009} was able to reduce it to the order of $2^{k \log k}$. % $2^{O(k \log k)}$.
Arratia~\cite{Arratia1999} conjectured that the upper bound was much lower and
that, for every permutation $\beta$ of length $k$, $\gr(\av(\beta)) \leqslant (k - 1)^2$.
B\'ona~\cite{Bona2005} then strengthened this, by conjecturing that equality holds
if and only if $\beta$ is layered.
Arratia's conjecture was subsequently refuted by Albert, Elder, Rechnitzer, Westcott \& Zabrocki~\cite{AERWZ2006} who showed that $\gr(\av(\pdiamond))$ exceeded 9.47.

However, evidence suggested that the fastest growing principal classes were those of layered permutations, and B\'ona
conjectured (see~\cite{Bona2007}) that the most easily avoided permutation of length
$k$ %(i.e. the permutation $\beta$ for which $\av_n(\beta)$ was the greatest)
was $1 \oplus 21 \oplus 21 \oplus\ldots\oplus 21$ for odd $k$ and $1 \oplus 21 \oplus 21 \oplus\ldots\oplus 21 \oplus 1$ for even~$k$.
Claesson, Jel\'inek, and Steingr\'imsson~\cite{CJS2012} then proved that for every layered permutation $\beta$ of length
$k$, the growth rate of $\av(\beta)$ is less than $4k^2$, and B\'ona~\cite{Bona2012a+} showed that for his conjectured
easiest-to-avoid permutations, the growth rates were at most $9k^2/4$.
It thus came as somewhat of a shock when Fox~\cite{Fox2013}
recently proved (by considering the problem in the context of 0-1 matrices) that for almost all permutations $\beta$ of length $k$, $\gr(\av(\beta))$ is, in fact, of the order
%$2^{k^{\Theta(1)}}$,
of $2^k$.
Therefore, in general, layered permutations are very far from being the easiest to avoid.

\subsubsection*{Specific classes}
Another major strand in permutation class research has concerned the enumeration of classes with a few small basis elements.
%The Wikipedia page~\cite{WikiEnumPermClassesThin} contains an up-to-date table of results.
An up-to-date table of results in this area is recorded on the Wikipedia page~\cite{WikiEnumPermClassesThin}.
Knuth's investigation of $\av(\mathbf{231})$ was not the first study of a permutation class.
Half a century previously, MacMahon~\cite{MacMahon1915} had shown that $\av(\mathbf{123})$ was counted by the Catalan numbers.
Knuth's matching result for $\av(\mathbf{231})$ thus gave the first example of a Wilf-equivalence not induced by symmetry, and revealed that there was only a single Wilf class for permutations of length 3.

One important class that was considered soon after Knuth's work
was
the \emph{Baxter permutations}, previously studied by Baxter~\cite{Baxter1964}
in connection with an investigation into the behaviour of fixed points of commuting continuous functions.
The Baxter permutations constitute the non-classical class $\av(\mathbf{25\bar{3}14},\mathbf{41\bar{3}52})$ (see Gire~\cite{Gire1993}).
They were enumerated by
Chung, Graham, Hoggatt \& Kleiman~\cite{CGHK1978}, who introduced \emph{generating trees} as an enumerative mechanism,
a technique later to be used more widely.
Another significant
early enumerative result was the proof by Regev~\cite{Regev1981} that $\gr(\av(\mathbf{12\ldots k}))=(k-1)^2$ for every~$k$.
Gessel~\cite{Gessel1990} later gave an explicit enumeration in terms of determinants.

The first in-depth %enumerative
study of classical permutation classes
was by Simion \& Schmidt~\cite{SS1985} who enumerated classes avoiding two patterns of length 3 and
established that there were three Wilf classes.
Subsequent results made heavy use of generating trees:
West~\cite{West1995} showed that $\av(\textbf{3142},\textbf{2413})$ is enumerated by the Schr\"oder numbers.
West also~\cite{West1996} enumerated classes avoiding one pattern of length 3 and one of length 4, and, in collaboration with Chow~\cite{CW1999}, those avoiding one pattern of length 3 and an increasing or decreasing sequence.
%Vatter~\cite{Vatter2006} also investigated generating trees.
B\'ona~\cite{Bona1997a} enumerated $\av(\mathbf{1342})$ by establishing a bijection between the class and $\beta(0,1)$ trees.

Various other techniques have been used. Zeilberger~\cite{Zeilberger1998} introduced \emph{enumeration schemes} for automatic enumeration.
These were later employed by Kremer \& Shiu~\cite{KS2003} to count several classes that avoid pairs of length 4 patterns.
Enumeration schemes were further developed by Vatter~\cite{Vatter2008}, and extended by Pudwell~\cite{Pudwell2010} for use with barred pattern classes,
and by Baxter \& Pudwell~\cite{BP2012} to enumerate classes avoiding
\emph{vincular} patterns, a type of non-classical pattern
introduced by
Babson \& Steingr\'imsson~\cite{BS2000}.
An alternative method that has been successful in some contexts
is the \emph{insertion encoding} of permutations introduced by Albert, Linton \& Ru\v{s}kuc~\cite{ALR2005}.
Albert, Elder, Rechnitzer, Westcott, \& Zabrocki~\cite{AERWZ2006}
made use of the insertion encoding to determine a lower bound on the growth rate of $\av(\pdiamond)$.
This technique was also utilized by Vatter~\cite{Vatter2012} to enumerate two classes avoiding two patterns of length 4.

Another important approach has been the use of \emph{grid classes}, an introduction to which we postpone until Chapter~\myref{chap02}.
Atkinson~\cite{Atkinson1998} determined the generating function
for \emph{skew-merged} permutations, which can be partitioned into an increasing sequence and a decreasing sequence, a class that
Stankova~\cite{Stankova1994} had previously determined to
%be $\av(\mathbf{2143},\mathbf{3412})$.
have the basis $\{\mathbf{2143},\mathbf{3412}\}$.
Atkinson~\cite{Atkinson1999} also made use of grid classes to enumerate a number of classes, including $\av(\mathbf{132},\mathbf{4321})$.
Much more recently,
grid classes have been used
to enumerate various classes avoiding two patterns of length 4 by
Albert, Atkinson \& Brignall~\cite{AAB2011,AAB2012}, Pantone~\cite{Pantone2013}, and Albert, Atkinson \& Vatter~\cite{AAV2014}.
Finally, a paper of Albert \& Brignall~\cite{AB2014} utilizes grid classes to enumerate
$\av(\mathbf{4231},\mathbf{35142},\mathbf{42513},\mathbf{351624})$, a class which arises in connection with algebraic geometry, specifically the categorization of Schubert varieties.

%Uncategorized: Albert, Atkinson \& Vatter~\cite{AAV2009} enumerated $\av(\mathbf{1324},\mathbf{4231})$ using some of the theory of simple permutations developed in Albert \& Atkinson~\cite{AA2005}; Callan~\cite{Callan2013,Callan2013a} enumerated $\av(\mathbf{1243}, \mathbf{2134})$ and $\av(\mathbf{4321},\mathbf{3241})$.

%\subsubsection*{Wilf-equivalence}
\label{sectWilfEquiv}
Parallel to the enumeration of specific classes went work on determining the Wilf classes.
The first result of this sort was by West~\cite{West1990}, who showed that, for any permutation $\sigma$,
$\av(\mathbf{12}\oplus\sigma)$ and $\av(\mathbf{21}\oplus\sigma)$ are Wilf-equivalent.
Hence, in particular, $\av(\mathbf{1234})$, $\av(\mathbf{1243})$ and $\av(\mathbf{2143})$ are in the same Wilf class.
This result was subsequently generalised. %Firstly,
Babson \& West~\cite{BW2000} proved the Wilf-equivalence of
$\av(\mathbf{123}\oplus\sigma)$ and $\av(\mathbf{321}\oplus\sigma)$.
Then, Backelin, West \& Xin~\cite{BWX2007} demonstrated that
$\av(\mathbf{12\ldots k}\oplus\sigma)$ and $\av(\mathbf{k\ldots21}\oplus\sigma)$ were in the same Wilf class for every $k$.
It was also established
by Stankova \& West~\cite{SW2002} that $\av(\mathbf{231}\oplus\sigma)$ was Wilf-equivalent to $\av(\mathbf{312}\oplus\sigma)$.
In addition, Stankova~\cite{Stankova1994} showed the Wilf-equivalence of
$\av(\mathbf{1342})$ and $\av(\mathbf{2413})$.
As well as these results on principal classes, papers by
B\'ona~\cite{Bona1998a},
Kremer~\cite{Kremer2000,Kremer2003}, and
Le~\cite{Le2005}
together accomplished the Wilf classification of classes avoiding pairs of patterns of length four.

%At the time of writing, the table on the Wikipedia page~\cite{WikiEnumPermClassesThin} records that all but nine of the 38 Wilf classes for permutation classes avoiding two patterns of length four have now been enumerated. All of these have algebraic generating functions.

There is one obvious gap in this record of the enumeration of classes with small bases: $\av(\pdiamond)$.
This class has been the \emph{b\^ete noire} of permutation class enumeration. Very little concrete progress has been made on it. We consider the $\pdiamond$-avoiders
in Chapter~\myref{chap1324}, and
postpone a historical overview until there.

Another important strand of research which we have ignored here is the question of
determining the structure of the subset of the real line consisting of
the growth rates of a permutation classes.
We investigate this subject in Chapter~\myref{chap08}, where we present the background to our work in this area.

\section{Outline and list of main results}

The rest of this thesis is divided into three parts.
In
Part~\myref{partI}, we consider the enumeration of monotone grid classes of permutations, a family of permutation classes defined in terms of the permitted shape for plots of permutations in a class.
The most important result in this part is an explicit formula for the growth rate of every permutation grid class.
Part~\myref{partII} is much shorter and concerns the structure of the set of growth rates of permutation classes.
We establish a new upper bound on the value above which every real number is the growth rate of some permutation class.
Finally, in Part~\myref{partIII}, we introduce a new technique that can be used for the enumeration of permutation classes, based on a
graph associated with each permutation, which we call its Hasse graph.
As well as using this method to determine the generating function for some previously unenumerated classes, we conclude the thesis by making use of our approach to provide an improved lower bound on the growth rate of $\av(\pdiamond)$.

The chapters in this thesis are of very unequal length, each one addressing a specific enumerative question.
The work
in Chapter~\myref{chap05} has been published (see~\cite{Bevan2013}), as has that
in Chapter~\myref{chap07} (see~\cite{Bevan2013b}).
The research in Chapters~\myref{chapF} and~\myref{chapE} has been accepted for publication (see~\cite{Bevan2014a}), as has that in Chapter~\myref{chap1324} (see~\cite{Bevan2014Thin}).
The work in Chapter~\myref{chap08} has been submitted for publication (see~\cite{Bevan2014b}).
Here, for reference, is a list of the primary results in this thesis:

\subsubsection*{Part~\myref{partI}: Grid Classes}
\begin{bullets}
\item Exact enumeration of skinny grid classes (Theorem~\myref{thmSkinny}).
\item Exact enumeration of acyclic classes of gridded permutations (Theorem~\myref{thmAcyclicGriddingsGF}).
\item Exact enumeration of unicyclic classes of gridded permutations (Theorem~\myref{thmUnicyclicGriddingsGF}).
\item The generating function of any unicyclic class of gridded permutations is algebraic (Theorem~\myref{thmUnicyclicAlgebraic}).
\item The generating function of any class of gridded permutations is D-finite (Theorem~\myref{thmGridPermsDFinite}).
\item The growth rate of the family of balanced tours on a connected graph is the same as that of the family of all tours of even length on the graph (Theorem~\myref{thmBalancedEqualsEven}).
\item The growth rate of a monotone grid class of permutations is equal to the square of the spectral radius of its row-column graph (Theorem~\myref{thmGrowthRate}).
\item The growth rate of each monotone grid class is an algebraic integer (Corollary~\myref{corAlgebraicInteger}).
\item A monotone grid class whose row-column graph is a cycle has growth rate~4 (Corollary~\myref{corCycle}).
\item If the growth rate of a monotone grid class is less than 4, it is equal to $4\cos^2\!\left(\frac{\pi}{k}\right)$ for some $k\geqslant 3$ (Corollary~\myref{corSmallGrowthRates}).
\item For every $\gamma \geqslant 2+\sqrt{5}$ there is a monotone grid class with growth rate arbitrarily close to $\gamma$ (Corollary~\myref{corAccumulationPoints}).
\item Almost all large permutations in a monotone grid class have the same shape (Proposition~\myref{propGriddedPermsLimitShape}).
\item A technique for determining the limit shape of a permutation in a monotone grid class (Proposition~\myref{propAsymptLagrange}).
\item The growth rate of
a geometric grid class
is equal to
the square
of the largest root of
the matching polynomial
of the row-column graph of the double refinement of its gridding matrix (Theorem~\myref{thmGeomClassGrowthRate}).
\item The set of growth rates of geometric grid classes consists of the squares of the spectral radii of trees (Corollary~\myref{corGeomSqRhoTrees}).
\item If $G(M)$ is connected,
  then $\gr(\Geom(M))<\gr(\Grid(M))$
  if and only if $G(M)$ contains a cycle (Corollary~\myref{corGeomGridGRIneq}).
\item A specification of the effect of the subdivision of an edge on the largest root of
the matching polynomial of a graph (Lemma~\myref{lemmaSubdividing1}).
\end{bullets}
\vspace{-12pt}

\subsubsection*{Part~\myref{partII}: The Set of Growth Rates}
\begin{bullets}
\item Let $\theta_B\approx2.35526$ be the unique real root of $x^7-2\+x^6-x^4-x^3-2\+x^2-2\+x-1$.
%There is an interval of permutation class growth rates arbitrarily close to $\theta_B$ (Theorem~\myref{thmTheta}).
The set of growth rates of permutation classes includes an infinite sequence of intervals whose infimum is $\theta_B$ (Theorem~\myref{thmTheta}).
\item Let $\lambda_B\approx2.35698$ be the unique positive root of $x^8-2\+x^7-x^5-x^4-2\+x^3-2\+x^2-x-1$.
Every value at least $\lambda_B$ is the growth rate of a permutation class (Theorem~\myref{thmLambda}).
\item A specification of when the set of representations of numbers in non-integer bases, where each digit is drawn from a different set, is an interval
(Lemma~\myref{lemmaBaseNotation1}).
\end{bullets}
\vspace{-12pt}

\subsubsection*{Part~\myref{partIII}: Hasse Graphs}
\begin{bullets}
\item A new derivation of the algebraic generating function of $\av(\pdiamond,\mathbf{2314})$, first enumerated by Kremer~\cite{Kremer2000,Kremer2003} (Theorem~\myref{thmR}).
\item A functional equation for the generating function of $\av(\pdiamond,\mathbf{1432})$ (Theorem~\myref{thmGCrec}).
\item A new derivation of the algebraic generating function of {forest-like} permutations, $\av(\pdiamond,\mathbf{21\bar{3}54})$,
first enumerated by Bousquet-M\'elou \& Butler~\cite{BMB2007} (Theorem~\myref{thmL}).
\item A functional equation for the generating function of plane permutations, $\av(\mathbf{21\bar{3}54})$ (Theorem~\myref{thmM}).
\item The algebraic generating function of $\av(\mathbf{1234},\mathbf{2341})$ (Theorem~\myref{thmF}).
\item The algebraic generating function of
$\av(\mathbf{1243},\mathbf{2314})$ (Theorem~\myref{thmE}).
\item The growth rate of $\av(\pdiamond)$ exceeds $9.81$ (Theorem~\myref{thm1324LowerBound}).
\item The number of occurrences of a fixed pattern
in a {\L}uka\-sie\-wicz path of length $n$
exhibits
a concentrated Gaussian limit distribution
%with mean and standard deviation asymptotically linear in $n$
(Theorem~\myref{thmLukaPatternsGaussian}).
\end{bullets}

% ================================================================
\cleardoublepage

%} %\HIDE

\part{\textsc{Grid Classes}}\label{partI} % Part I
%\HIDE{
\setcounter{chapter}{1}
% LaTeX file

% ================================================================
%\chapter{Introduction}\label{chap02}
\chapter{Introducing grid classes}\label{chap02}

%\todob{}

%\section{Grid classes}

% ----------------------------------------------------------------
\section{Grid classes and griddings}
\label{sectGridClassDefs}
One approach to investigating permutation classes
that has proven particularly fruitful
involves the use of certain classes that are defined constructively, rather than in terms of their basis.
The monotone \emph{grid class} $\Grid(M)$\footnote{Huczynska \& Vatter~\cite{HV2006} were the first to use the term ``grid class'' and the notation $\Grid(M)$.}
is defined by a matrix $M$, all of whose entries are in $\{0,1,-1\}$. This \emph{gridding matrix} specifies the permitted shape for plots of permutations in the class. Each entry of $M$ corresponds to a \emph{cell} in a \emph{gridding} of a permutation. If the
entry is $1$, any points in the cell must form an increasing sequence; if the entry is $-1$, any points in the cell must form a decreasing sequence; if the entry is $0$, the cell must be empty.

\begin{figure}[ht]
  $$
  \begin{tikzpicture}[scale=0.20]
    \plotperm{8}{3,1,5,6,7,4,8,2}
      \draw[thick] (5.5,.5) -- (5.5,8.5);
      \draw[thick] (.5,2.5) -- (8.5,2.5);
  \end{tikzpicture}
  \quad\;
  \begin{tikzpicture}[scale=0.20]
    \plotperm{8}{3,1,5,6,7,4,8,2}
      \draw[thick] (5.5,.5) -- (5.5,8.5);
      \draw[thick] (.5,3.5) -- (8.5,3.5);
  \end{tikzpicture}
  \quad\;
  \begin{tikzpicture}[scale=0.20]
    \plotperm{8}{3,1,5,6,7,4,8,2}
      \draw[thick] (5.5,.5) -- (5.5,8.5);
      \draw[thick] (.5,4.5) -- (8.5,4.5);
  \end{tikzpicture}
  \quad\;
  \begin{tikzpicture}[scale=0.20]
    \plotperm{8}{3,1,5,6,7,4,8,2}
      \draw[thick] (4.5,.5) -- (4.5,8.5);
      \draw[thick] (.5,4.5) -- (8.5,4.5);
  \end{tikzpicture}
  \quad\;
  \begin{tikzpicture}[scale=0.20]
    \plotperm{8}{3,1,5,6,7,4,8,2}
      \draw[thick] (3.5,.5) -- (3.5,8.5);
      \draw[thick] (.5,4.5) -- (8.5,4.5);
  \end{tikzpicture}
  \quad\;
  \begin{tikzpicture}[scale=0.20]
    \plotperm{8}{3,1,5,6,7,4,8,2}
      \draw[thick] (2.5,.5) -- (2.5,8.5);
      \draw[thick] (.5,4.5) -- (8.5,4.5);
  \end{tikzpicture}
  \quad\;
  \begin{tikzpicture}[scale=0.20]
    \plotperm{8}{3,1,5,6,7,4,8,2}
      \draw[thick] (2.5,.5) -- (2.5,8.5);
      \draw[thick] (.5,5.5) -- (8.5,5.5);
  \end{tikzpicture}
  \vspace{-6pt}
  $$
  \caption{The seven griddings of permutation $\mathbf{31567482}$ in
  \protect\gctwo{2}{1,1}{-1,-1}
  }\label{figGriddings}
\end{figure}
For greater clarity, we denote grid classes by \emph{cell diagrams} rather than by their matrices; for example, $\gctwo{3}{1,-1,0}{0,-1,1}=\begin{gridmx}1&-1&0\\0&-1&1\end{gridmx}$.
Sometimes, with a slight abuse of notation, we use a cell diagram to denote the gridding matrix itself.
We say that a grid class $\Grid(M)$ has \emph{size} $m$ if its matrix $M$ has $m$ non-zero entries.
Note that a permutation may have multiple possible griddings in a grid class. See Figure~\ref{figGriddings} for an example.

When defining grid classes, to match the way we view permutations graphically, we index matrices from the lower left corner, with the order of the indices reversed from the normal convention. For example,
a matrix with dimensions $t\times u$ has $t$ columns and $u$~rows, and
$M_{2,1}$ is the entry in the second column from the left and in the bottom row of~$M$.

\label{defGridding}
If $M$ is a $0/\!\pm\!1$ matrix with $t$ columns and $u$ rows, then an \emph{$M$-gridding} of a permutation $\sigma$ of length $k$ is a pair of integer
sequences
$0=c_0\leqslant c_1\leqslant \ldots\leqslant c_t=k$ (the \emph{column dividers})
and $0=r_0\leqslant r_1\leqslant \ldots\leqslant r_u=k$ (the \emph{row dividers})
such that
for all $i\in\{0,\ldots,t\}$ and $j\in\{0,\ldots,u\}$,
the subsequence of $\sigma$ with indices in $(c_{i-1},c_i]$ and values in $(r_{j-1},r_j]$ is
increasing if $M_{i,j}=1$,
decreasing if $M_{i,j}=-1$, and
empty if $M_{i,j}=0$.
For example, in the leftmost gridding in Figure~\ref{figGriddings}, $c_1=5$ and $r_1=2$.

The \emph{grid class} $\Grid(M)$ is then defined to be the set of all permutations that have an $M$-gridding.
The \emph{griddings} of a permutation in $\Grid(M)$ are its $M$-griddings.
We use $\Grid_k(M)$ to denote the permutations in $\Grid(M)$ of length $k$.

Sometimes we need to consider a permutation along with a specific gridding. In this case, we refer to a permutation together with an $M$-gridding as an \emph{$M$-gridded permutation}. We use $\Gridhash(M)$ to denote the class of all $M$-gridded permutations, every permutation in $\Grid(M)$ being present once with each of its griddings.
We use $\Gridhash_k(M)$ for the set of $M$-gridded permutations of length $k$.
An important observation is the following:
\begin{lemma}[{Vatter~\cite[Proposition~2.1]{Vatter2011}}]\label{lemmaGRGriddings}
The upper/lower growth rate of a monotone grid class $\Grid(M)$ is equal to the upper/lower growth rate of the corresponding class of gridded permutations $\Gridhash(M)$.
\end{lemma}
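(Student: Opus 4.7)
The plan is to sandwich $|\Grid_k(M)|$ and $|\Gridhash_k(M)|$ between each other, with a merely polynomial gap, and then extract $k$-th roots.

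First I would note the trivial inequality $|\Grid_k(M)| \leqslant |\Gridhash_k(M)|$, since the map $\Gridhash(M) \to \Grid(M)$ that forgets the gridding is surjective (every permutation in $\Grid(M)$ has at least one $M$-gridding by definition). For the reverse direction, I need to bound the number of $M$-griddings a single permutation of length $k$ can possess. If $M$ has dimensions $t \times u$, then any $M$-gridding of $\sigma \in \Grid_k(M)$ is determined by the column dividers $0 = c_0 \leqslant c_1 \leqslant \ldots \leqslant c_t = k$ and the row dividers $0 = r_0 \leqslant r_1 \leqslant \ldots \leqslant r_u = k$. The number of such sequences is at most $(k+1)^{t-1}(k+1)^{u-1} = (k+1)^{t+u-2}$, which is polynomial in $k$ (with $t$ and $u$ fixed constants depending only on $M$). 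Hence
$$
|\Grid_k(M)| \;\leqslant\; |\Gridhash_k(M)| \;\leqslant\; (k+1)^{t+u-2}\,|\Grid_k(M)|.
$$

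Taking $k$-th roots and using that $(k+1)^{(t+u-2)/k} \to 1$ as $k \to \infty$, both $\limsup$ and $\liminf$ are preserved, so
$$
\grup(\Grid(M)) \;=\; \grup(\Gridhash(M)) \qquad\text{and}\qquad \grlow(\Grid(M)) \;=\; \grlow(\Gridhash(M)),
$$
as observed earlier in the excerpt (classes whose enumerations differ only by a polynomial factor share the same upper and lower growth rates).

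There is no real obstacle here; the only subtle point is making sure that the dimensions $t$ and $u$ of $M$ (and hence the exponent $t+u-2$) are treated as fixed constants, so that the polynomial factor is genuinely subexponential. The argument is essentially a one-line counting bound followed by the standard polynomial-factor observation.
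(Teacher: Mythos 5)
Your proof is correct and follows essentially the same route as the paper: the surjection from gridded to ungridded permutations gives one inequality, and bounding the number of possible row and column dividers by a polynomial in $k$ (the paper uses the exact count $\binom{k+t-1}{t-1}\binom{k+u-1}{u-1}$ where you use the cruder $(k+1)^{t+u-2}$, which makes no difference) gives the other, after which the polynomial-factor observation finishes the argument.
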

\begin{proof}
Suppose that $M$ has dimensions $t\times u$.
Every permutation in $\Grid(M)$ has at least one gridding in $\Gridhash(M)$, but no permutation in $\Grid(M)$ of length $k$ can have more than $P(k)=\binom{k+t-1}{t-1}\binom{k+u-1}{u-1}$ griddings in $\Gridhash(M)$ because $P(k)$ is the number of possible choices for the
row and column dividers.
Since $P(k)$ is a polynomial
in $k$, the result follows from the definition of the upper/lower growth rate.
\end{proof}

%\newpage
%\rulebreak

% ----------------------------------------------------------------
\subsection*{Row-column graphs}\label{sectRowCol}

To each grid class, we associate a bipartite graph, which we call its \emph{row-column graph}\footnote{Vatter~\cite{Vatter2011} was the first to use the term ``row-column graph''.}.
If $M$ has $t$ rows and $u$ columns, the {row-column graph}, $G(M)$, of $\Grid(M)$ is the graph
with vertices $r_1,\ldots,r_t,c_1,\ldots,c_u$ and an edge between $r_i$ and $c_j$ if and only if $M_{i,j}\neq0$ (see Figure~\ref{figRowColumnGraphA} for an example).
Note that any bipartite graph is
the row-column graph of some grid class, and that the size (number of edges) of the row-column graph is the same as the size (number of non-zero cells) of the grid class.
\begin{figure}[ht]
  $$
  \gclass[0.3125]{4}{3}{
    \gcrow{2}{-1, 1,0, 0}
    \gcrow{1}{ 0, 1,0,-1}
    \gcrow{0}{ 1,-1,1, 0}
  }
  \qquad\qquad
  \raisebox{-.15in}{
  \begin{tikzpicture}[scale=0.5]
      \draw [thick] (4,0)--(1,0)--(1,1)--(2,1)--(2,0);
      \draw [thick] (0,0)--(1,0);
        \fill[black!35!blue,radius=0.17] (0,0) circle ;
        \fill[black!35!red,radius=0.17] (1,0) circle ;
        \fill[black!35!blue,radius=0.17] (2,0) circle;
        \fill[black!35!red,radius=0.17] (3,0) circle;
        \fill[black!35!blue,radius=0.17] (4,0) circle;
        \fill[black!35!red,radius=0.17] (2,1) circle ;
        \fill[black!35!blue,radius=0.17] (1,1) circle ;
        \node[below]at(0,-.1){$c_3$};
        \node[below]at(1,-.1){$r_3$};
        \node[below]at(2,-.1){$c_2$};
        \node[below]at(3,-.1){$r_2$};
        \node[below]at(4,-.1){$c_4$};
        \node[above]at(1,1.1){$c_1$};
        \node[above]at(2,1.1){$r_1$};
    \end{tikzpicture}}
    \vspace{-6pt}
    $$
    \caption{A grid class and its row-column graph}
    \label{figRowColumnGraphA}
\end{figure}
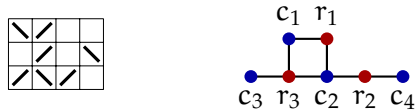

The row-column graph of a grid class captures a great deal of important structural information about the class, so it is common to apportion properties of the row-column graph directly to the grid class itself, for example speaking of a
connected, acyclic or unicyclic
grid class
rather than of a grid class whose row-column graph is
connected, acyclic or unicyclic.
We follow this convention.

Of particular note,
Murphy \& Vatter~\cite{MV2003}
have shown that
a grid class is \emph{partially well-ordered} (contains no infinite antichains) if and only if its row-column graph has no cycles.
A simpler proof of this result was subsequently given by Vatter \& Waton~\cite{VW2011}.
Furthermore, Albert, Atkinson, Bouvel, Ru\v{s}kuc \& Vatter~\cite{AABRV2011} proved a result that implies that
if a grid class has an acyclic row-column graph
then the class has a rational generating function.
%Brignall~\cite{Brignall2012} later presented a generalisation ...

It is generally believed that every grid class has a finite basis (see~\cite[Conjecture 2.3]{HV2006}), but this has so far resisted proof.
Atkinson~\cite{Atkinson1999}
proved that \emph{skinny}
grid classes (whose matrices have dimensions $m\times1$ for some $m$) %(\emph{skinny} grid classes)
have a finite basis.
Waton~\cite{WatonThesis} established the same for
$\!\gctwo{2}{1,1}{1,1}\!$,
a result
which has since been extended by
Albert, Atkinson \& Brignall
to all $2\times 2$
grid classes (in an unpublished note~\cite{AAB2010}).
More recently, Albert, Atkinson, Bouvel, Ru\v{s}kuc \& Vatter~\cite{AABRV2011} have proved that every grid class with an acyclic row-column graph is finitely based.

The concept of a grid class of permutations has been generalised, permitting arbitrary permutation classes in each cell (see Vatter~\cite[Section~2]{Vatter2011}). We only consider \emph{monotone} grid classes here.
Both monotone and generalised grid classes have played a key role in investigations of the set of permutation class growth rates (see~\cite{HV2006,Vatter2011}).
We explore this topic in Part~\ref{partII} below.
An interactive \emph{Mathematica} demonstration of monotone grid classes is available online~\cite{Bevan2012}.
%A Mathematica demonstration of permutation grid classes can be found at \href{http://demonstrations.wolfram.com/PermutationGridClasses}{http:/$\!$/demonstrations.wolfram.com/PermutationGridClasses}~\cite{Bevan2012}.

\section{Outline of Part I}

In the next five chapters, we explore
the enumeration of grid classes from various angles.

Chapter~\myref{chap03} concerns \emph{skinny} grid classes, whose matrices have dimensions $m\times1$ for some $m$.
A permutation in such a class consists of the juxtaposition of ascending and descending sequences.
We present an effective procedure for calculating the generating function for any skinny grid class.

In chapter~\myref{chap04}, we investigate the enumeration of classes of gridded permutations.
We exhibit an effective method for determining the generating function for any acyclic or unicyclic class of gridded permutations.
In the process, we prove that unicyclic classes have algebraic generating functions.
We also prove that the generating function
of every class of gridded permutations is D-finite.

In Chapter~\myref{chap05}, we turn away from exact enumeration and focus on determining the exponential growth rate of grid classes.
We prove that the growth rate of a monotone grid class is given by the square of the spectral radius of its row-column graph.
This is
the first general result
concerning the exact growth rates of a family of permutation classes.
Our proof depends on relating classes of gridded permutations to certain families of tours on graphs, and in the process we establish a new result concerning these tours that is of independent interest.
As a consequence of our theorem, we deduce a number of facts relating to the set of growth rates of grid classes.
This work has been published in~\cite{Bevan2013}.

Chapter~\myref{chap06} concerns the shape of a ``typical'' large permutation in a
monotone grid class. We show that almost all large permutations in a grid class do indeed look the same, and explain how to determine the limit shape.

The final chapter in this part of the thesis, Chapter~\myref{chap07}, concerns \emph{geometric} grid classes, a family of permutation classes closely related to monotone grid classes.
We investigate the growth rates of these classes, and
prove a result
%similar to that in Chapter~\myref{chap05} concerning the growth rates of monotone grid classes,
which relates
the growth rate of a geometric grid class to the largest root of the \emph{matching polynomial} of a graph.
In the process we establish a new result concerning the effect of edge
subdivision on the largest root of the matching polynomial.
We also deduce a number of consequences
including providing a characterisation of the growth rates of
geometric grid classes in terms of the spectral radii of trees.
This work was published in~\cite{Bevan2013b}.

% ================================================================
\cleardoublepage

% LaTeX file

% Linear operators
\newcommand{\Lzero}{\oper_0}
\newcommand{\Lone}{\oper_1}
\newcommand{\Lstar}{\oper_\star}
\newcommand{\Lplus}{\oper_+}
\newcommand{\Lminus}{\oper_-}

\newcommand{\drawnxt}[2][+-]{\draw #2;}

\newcommand{\skinnyclass}[2][0.35]   % [scaling]{vector}
{
  \raisebox{-2pt}
  {
  \begin{tikzpicture}[scale=#1]
    \foreach \d [count=\i] in {#2}
    {
      \draw (\i-1,0) grid (\i,1);
      \draw[thick] (\i-.85,.5-.35*\d)--(\i-.15,.5+.35*\d);
    }
  \end{tikzpicture}
  }
}

% TikZ grid class row -- TO DO: add support for 0
\newcommand{\gridclassrow}[3][1]  % [col]{row}{vector}
{
  \foreach \d [count=\i] in {#3}
  {
    \draw (#1-1+\i-1,#2) grid (#1-1+\i,#2+1);
    \draw[thick] (#1-1+\i-.85,#2+.5-.35*\d)--(#1-1+\i-.15,#2+.5+.35*\d);
  }
}
\newcommand{\gridclasszero}[2]  % {col}{row}
{
  \draw (#1-1,#2) grid (#1,#2+1);
}

% ================================================================
\chapter{Skinny grid classes}\label{chap03}

We say that the permutation grid class $\Grid(V)$ is \emph{skinny}
if $V$
is a $\pm1$ row vector.
%has dimensions $k\times1$ for some $k\geqslant1$.
Thus, a permutation in a skinny grid class consists of the juxtaposition of ascending and descending sequences.
For an illustration, see Figure~\ref{figSkinnyA}.
Skinny grid classes were previously studied by Atkinson, Murphy \& Ru\v{s}kuc~\cite{AMR2002} (under the name ``monotone segment sets''). They proved that every skinny grid class can be described by a regular language and so has a rational generating function.
In this chapter we present a way of
determining the generating function for any skinny grid class $\Grid(V)$.

\begin{figure}[ht]
  \vspace{3pt}
  $$
      \begin{tikzpicture}[scale=0.2]
        \plotperm{15}{4,8,9,11,15,10,5,1,7,6,2,3,12,13,14}
        \drawnxt[2]{[thin] ( 4.5,.5) -- ( 4.5,15.5)}
        \drawnxt[2]{[thin] ( 8.5,.5) -- ( 8.5,15.5)}
        \drawnxt[2]{[thin] (11.5,.5) -- (11.5,15.5)}
      \end{tikzpicture}
  $$
  \caption{A gridded permutation in skinny grid class $\!\!\!$ \skinnyclass{1,-1,-1,1}}
  \label{figSkinnyA}
\end{figure}

To state our result, we need some definitions.
Suppose $V=(v_1,\ldots,v_k)$ is a $\pm1$ vector of length $k$.
Then we define $V^+$ to be the vector $(v_1,\ldots,v_k,v_k)$ of length $k+1$, created from $V$ by repeating its last entry, and define
$V^-$ to be $(v_1,\ldots,v_k,-v_k)$, created from $V^+$ by negating its last entry.
For example, %with a slight abuse of notation, %we have
$\skinnyclass{-1,1}^+ = \skinnyclass{-1,1,1}\!$
and~$\skinnyclass{-1,1}^- = \skinnyclass{-1,1,-1}\!$.
Also, given a vector $V=(v_1,\ldots,v_k)$ and some $j\leqslant k$, we use $V{[\![j]\!]}$ to denote $(v_1,\ldots,v_j)$, the prefix of $V$ of length $j$.
For example, $\skinnyclass{1,-1,-1,1}\!{[\![2]\!]} = \skinnyclass{1,-1}\!$.

\newpage  %%% BAD BREAK IN THMBOX %%%
We prove that the generating function for a skinny grid class can be computed as follows:

\thmbox{
\begin{thm}\label{thmSkinny}
If $V$ is a $\pm1$ vector of length $k$, then
the ordinary generating function for
skinny grid class
$\Grid(V)$ is given by
$$
G_V(z) \;=\; \frac{1}{z} \+ \sum_{j=1}^k H_{V{[\!\!\!\;[j]\!\!\!\;]}}(z,z)
$$
where, for any $\pm1$ vector $V$, $H_V(x,y)$ is defined iteratively as follows:
\begin{align*}
  H_{(1)}(x,y) &\;=\; H_{(-1)}(x,y) \;=\; \frac{x\+y}{1-x} , \\[6pt]
  H_{V^+}(x,y) &\;=\; \frac{x\+y}{x\+y+x-y} \+ \Big( H_V\big(x,y\big) \:-\: H_V\big(y,y\big) \:-\: H_V\big(x,\tfrac{x}{1-x}\big) \:+\: H_V\big(\tfrac{x}{1-x},\tfrac{x}{1-x}\big)\Big) , \\[6pt]
  H_{V^-}(x,y) &\;=\; \frac{x\+y}{x\+y+x-y} \+ \Big( H_V\big(\tfrac{x}{1-x},x\big) \:-\: H_V\big(y,x\big) \Big) .
\end{align*}
\end{thm}
}

One way of gridding a permutation in a skinny grid class is to process the permutation from left to right, placing points in cells
as far to the left as possible while honouring the cell constraints.
A new cell is used only when the next point ``changes direction''.
We call this process \emph{greedy gridding},
%and use the same term to denote the gridding of a permutation that results from the process.
and also call the resulting gridding of a permutation a \emph{greedy gridding}.
See Figure~\ref{figGreedyGridding} for an illustration.
\begin{figure}[ht]
  %\vspace{3pt}
  $$
      \begin{tikzpicture}[scale=0.22]
        \plotperm{15}{2,6,7,8,11,3,9,13,15,14,12,10,5,4,1}
        \drawnxt[3-]{[thick] ( 5.5,.5) -- ( 5.5,15.5) }
        \drawnxt[4-]{[thick] ( 6.5,.5) -- ( 6.5,15.5) }
        \drawnxt[5-]{[thick] ( 9.5,.5) -- ( 9.5,15.5) }
        \drawnxt[6-]{[thick] (10.5,.5) -- (10.5,15.5) }
        \drawnxt[8-]{[thick] (15.25,17) -- (16.25,16) }
        \drawnxt[3-]{[thick] ( 2.5,16) -- ( 3.5,17)}
        \drawnxt[4-]{[thick] ( 5.5,17) -- ( 6.5,16)}
        \drawnxt[5-]{[thick] ( 7.5,16) -- ( 8.5,17)}
        \drawnxt[6-]{[thick] ( 9.5,16) -- (10.5,17)}
        \drawnxt[7-]{[thick] (12.5,17) -- (13.5,16)}
        \drawnxt[8-]{[thick] (15.5,.5) -- (16,.5) -- (16,15.5) -- (15.5,15.5)}
      \end{tikzpicture}
  $$
  \caption{The greedy gridding of a permutation in skinny grid class $\!\!\!$ \skinnyclass{1,-1,1,1,-1,-1}}
  \label{figGreedyGridding}
\end{figure}

Greedy gridding is guaranteed to produce a valid gridding for any permutation in a skinny grid class because
%\begin{lemma}%\label{lemmaGreedyGridding}
%  Every permutation in a skinny grid class has a greedy gridding.
%\end{lemma}
%\begin{proof}
  %Placing
  placing
  a point in a cell as far to the left as possible can never make it harder to place subsequent points.
%\end{proof}
Since there is exactly one greedy gridding of each permutation in a skinny grid class, we can enumerate skinny grid classes
by counting greedy gridded permutations.

Given a permutation $\pi\in\Grid(V)$, it may be the case that
$\pi$ has points in every cell of $V$
when greedy gridded.
We call such permutations \emph{tightly gridded} and use $\Gridstar(V)$ to denote
the set of tightly gridded permutations in $\Grid(V)$.
For example, the permutation in Figure~\ref{figGreedyGridding} is not in $\Gridstar(\!\skinnyclass{1,-1,1,1,-1,-1}\!)$, but is a member of
$\Gridstar(\!\skinnyclass{1,-1,1,1,-1}\!)$.
Clearly, a skinny grid class is the disjoint union of the tightly gridded permutations in each of its prefixes:
$$
\Grid(V) \;= \biguplus_{1\leqslant j\leqslant \mathrm{len}(V)}\!\! \Gridstar(V[\![j]\!]) .
$$
For example,
$\Grid(\!\skinnyclass{-1,1,1}\!) = \Gridstar(\!\skinnyclass{-1}\!) \+\uplus\+ \Gridstar(\!\skinnyclass{-1,1}\!) \+\uplus\+ \Gridstar(\!\skinnyclass{-1,1,1}\!)$.

To enumerate skinny grid classes, we only need to enumerate tightly gridded permutations.
This we do by using a bivariate generating function in which we parameterise twice by the {position of the last point} in the plot of the permutation. We use $x$ to mark the position of the last point counting from the \emph{bottom} and $y$ to mark its position counting from the \emph{top}:
$$
G^\star_V(x,y)
\;= \sum_{\pi\+\in\+\Gridstar(V)}\!\!x^{\pi(|\pi|)}\+y^{|\pi|+1-\pi(|\pi|)}
\;=\; \sum_{r,s\+\geqslant\+ 1}g^\star_{r,s}\+x^r\+y^s
,
$$
where
$g^\star_{r,s}$ is the number of permutations $\pi$ in $\Gridstar(V)$
of length $r+s-1$
such that %$|\pi|=r+s-1$ and
$\pi(|\pi|)=r$.
For example, the permutation $\mathbf{314652}$ would be represented by a contribution of $x^2\+y^5$.

Clearly, $\Gridstar(V)$ and $\Gridstar(-V)$ are Wilf-equivalent.
It helps to restrict our attention to the case in which the last cell of $V$ is increasing. Thus we define
\begin{equation}\label{eqHv}
H_V(x,y) \;=\;
\begin{cases}
G^\star_V(x,y), & \text{if the last cell of $V$ is increasing,} \\[2pt]
G^\star_V(y,x), & \text{if the last cell of $V$ is decreasing.}
\end{cases}
\end{equation}
Note that
%$z^{-1}\+ f(z,z)$
$\frac{1}{z}\+ H_V(z,z)$
is the univariate generating function for $\Gridstar(V)$, and hence the generating function for $\Grid(V)$ is given by
$$
G_V(z) \;=\; \frac{1}{z} \+ \sum_{j=1}^k H_{V{[\!\!\!\;[j]\!\!\!\;]}}(z,z)
$$
as required.

We now need to determine functional equations for $H_V(x,y)$.
To begin with, as the base case, we have
$H_{(1)}(x,y) = H_{(-1)}(x,y) = \frac{xy}{1-x}$ since there is a single increasing permutation of each length in $\Gridstar(1)$.

We now investigate the effect on the generating function of adding points when greedy gridding.

\subsubsection*{Adding a new point}
Let us first consider
how a greedy gridded permutation $\pi\in\Gridstar(V)$ can be extended by adding a single point to its right.
We assume, without loss of generality, that the last cell of $V$ is increasing.
If the new point is added above the last point of $\pi$, then
the extended permutation is also in $\Gridstar(V)$. On the other hand, if the new point is added below the last point of $\pi$,
then it must be placed in
a new cell and the extended permutation is
in $\Gridstar(V^+)$ or $\Gridstar(V^-)$.
See Figure~\ref{figAddPoint} for an illustration.
\begin{figure}[ht]
  $$
      \begin{tikzpicture}[scale=0.3]
      \plotperm{9}{8,7,4,2,3,5,6}
      \drawnxt[1-]{[thick] (4.5,.5) -- (4.5,9.5)}
      \drawnxt[4]{[thick] (8.5,.5) -- (8.5,9.5)}
      {
      \draw[radius=0.3,thick] (9,1.5) circle;
      \draw[radius=0.3,thick] (9,2.5) circle;
      \draw[radius=0.3,thick] (9,3.5) circle;
      \draw[radius=0.3,thick] (9,4.5) circle;
      \draw[radius=0.3,thick] (9,5.5) circle;
      \draw[radius=0.3,thick] (8,6.5) circle;
      \draw[radius=0.3,thick] (8,7.5) circle;
      \draw[radius=0.3,thick] (8,8.5) circle;
      }
      \end{tikzpicture}
  $$
\caption{Possibilities for adding a single point to a %greedy gridded
permutation in $\Gridstar(\!\skinnyclass{-1,1}\!)$}
\label{figAddPoint}
\end{figure}

For each of these three cases
(remaining in $\Gridstar(V)$, expanding to $\Gridstar(V^+)$, and expanding to $\Gridstar(V^-)$)
we define a linear operator ($\Lzero$, $\Lplus$ and $\Lminus$, respectively) acting on $H_V(x,y)$, that reflects in each case the effect of adding a single new point.

For a specific permutation, represented by $x^r\+y^s$, we have
\begin{alignat*}{3}
    \text{(i)} \quad& \Lzero[x^r\+y^s]  &\;=\; \sum_{i=1}^s x^{r+i}\+y^{s+1-i} &\;=\; \frac{x\+y}{x-y}(x^{r+s}-x^r\+y^s) \\[3pt]
   \text{(ii)} \quad& \Lplus[x^r\+y^s]  &\;=\; \sum_{i=1}^r x^{r+1-i}\+y^{s+i} &\;=\; \frac{x\+y}{x-y}(x^r\+y^s-y^{r+s}) \\[3pt]
  \text{(iii)} \quad& \Lminus[x^r\+y^s] &\;=\; \sum_{i=1}^r x^{s+i}\+y^{r+1-i} &\;=\; \frac{x\+y}{x-y}(x^{r+s}-x^s\+y^r).
\end{alignat*}
Thus, the three operators are defined by
\begin{alignat*}{3}
  \Lzero[f(x,y)]  &\;=\; x\+y\+ \frac{f(x,x)-f(x,y)}{x-y}, \\[3pt]
  \Lplus[f(x,y)]  &\;=\; x\+y\+ \frac{f(x,y)-f(y,y)}{x-y}, \\[3pt]
  \Lminus[f(x,y)] &\;=\; x\+y\+ \frac{f(x,x)-f(y,x)}{x-y}.
\end{alignat*}
Note that $\Lminus[f(x,y)]=\Lplus[f(y,x)]$ as expected from
\eqref{eqHv}.
%the counting is inverted (exchanging $x$ and $y$) since we are now assuming that the last cell of $V^-$ is increasing.

\subsubsection*{Adding a new cell}
Let us now consider how a permutation in
$\Gridstar(V^+)$
can be built by extending a permutation $\pi\in\Gridstar(V)$.
First, we need to add a point below the last point of $\pi$,
and then add zero or more increasing points above this first new point.
See Figure~\ref{figAddCell} for an illustration.
\begin{figure}[ht]
  $$
      \begin{tikzpicture}[scale=0.3]
      \plotperm{9}{8,1,2,5,6}
      \drawnxt[3-]{[thick] ( 2.5,.5) -- ( 2.5,9.5)}
      \drawnxt[3-]{[thick] ( 5.5,.5) -- ( 5.5,9.5)}
      \draw[radius=0.3,thick] (6,3) circle;
      \draw[radius=0.3,thick] (7,4) circle;
      \draw[radius=0.3,thick] (8,7) circle;
      \draw[radius=0.3,thick] (9,9) circle;
      \end{tikzpicture}
  $$
\caption{Building a permutation in
$\Gridstar(\!\skinnyclass{-1,1,1}\!)$
from one in
$\Gridstar(\!\skinnyclass{-1,1}\!)$}
\label{figAddCell}
\end{figure}

Thus, in this case, adding a new cell is reflected by a single application of $\Lplus$ followed by zero or more applications of $\Lzero$.
Analogously, building a permutation in
$\Gridstar(V^-)$
from one in
$\Gridstar(V)$ is reflected by a single application of $\Lminus$ followed by zero or more applications of $\Lzero$.

Let $\Lstar$ be the linear operator that reflects the action of adding zero or more increasing points (i.e.~it is equivalent to zero or more applications of $\Lzero$). Then $\Lstar$ satisfies the following functional equation:
\begin{equation*}%\label{eqOmegaStar}
  \Lstar\big[f(x,y)\big] \;=\; f(x,y) \:+\: \Lzero\big[\Lstar\big[f(x,y)\big]\big].
\end{equation*}
This can be solved for $\Lstar$ by using the kernel method
%(see page~\pageref{sectKernelMethod}).
(see Section~\ref{sectKernelMethod}).
Expanding for $\Lzero$ and rearranging %~\eqref{eqOmegaStar}
gives
\begin{equation}\label{eqOmegaStarKernel}
  (x\+y+x-y)\+ \Lstar[f(x,y)] \;=\; (x-y)f(x,y) \:+\: x\+ y\+ \Lstar[f(x,x)].
\end{equation}
Cancelling the kernel, $(x\+y+x-y)$, by setting $y=\frac{x}{1-x}$, then yields
$$
\Lstar[f(x,x)] \;=\; f(x,\tfrac{x}{1-x}),
$$
which we can use to substitute for $\Lstar[f(x,x)]$ in~\eqref{eqOmegaStarKernel}, which we then solve for $\Lstar$:
$$
\Lstar[f(x,y)]\;=\;\frac{x\+y\+ f(x,\frac{x}{1-x})\,+\,(x-y)f(x,y)}{x\+y+x-y}.
$$

We now have all we need to express the relationship between $H_{V^+}(x,y)$, $H_{V^-}(x,y)$ and $H_V(x,y)$:
\begin{equation*}%\label{wrap}
\begin{array}{rcl}
  H_{V^+}(x,y)&\!\!=\!\!&\Lstar\big[\Lplus\big[H_V(x,y)\big]\big] \\[10pt]
  H_{V^-}(x,y)&\!\!=\!\!&\Lstar\big[\Lminus\big[H_V(x,y)\big]\big].
\end{array}
\end{equation*}
Expansion using the definitions of the linear operators then completes the proof of Theorem~\ref{thmSkinny}.

The resulting generating functions for a few small skinny grid classes are listed in Table~\ref{tabSkinny}.
\begin{table}
\renewcommand{\arraystretch}{2.25}
\centering
\begin{tabular}{cc}
%\hline
\skinnyclass{1} &       $\dfrac{z}{1-z}$ \\[6pt]
\hline
\skinnyclass{-1,1} &    $\dfrac{z}{1-2\+z}$ \\[6pt]
\hline
\skinnyclass{1,1} &     $\dfrac{z-2\+z^2+2\+z^3}{(1-z)^2\+(1-2\+z)}$ \\[6pt]
\hline
\skinnyclass{1,-1,1} &  $\dfrac{z-3\+z^2+3\+z^3}{(1-z)^2\+(1-3\+z)}$ \\[6pt]
\hline
\skinnyclass{-1,1,1} &  $\dfrac{z-6\+z^2+13\+z^3-9\+z^4}{(1-z)\+(1-2\+z)^2\+(1-3\+z)}$ \\[6pt]
\hline
\skinnyclass{1,1,1} &   $\dfrac{z-8\+z^2+26\+z^3-39\+z^4+30\+z^5-12\+z^6}{(1-z)^3\+(1-2\+z)^2\+(1-3\+z)}$ \\[6pt]
%\hline
\end{tabular}
\caption{Generating functions for small skinny grid classes}\label{tabSkinny}
\end{table}

Based on an inspection of the results for vectors whose entries are are all ones,
we conclude our considerations of skinny grid classes
by proposing the following conjecture concerning permutations with no more than $k-1$ descents:
\begin{conj}
The ordinary generating function for the $k\times 1$ skinny grid class $\Grid(1,1,\dots,1)$ is given by
$$
G_{(\underbrace{1,1,\,\dots\,,1}_k)}(z) \;=\; -1 \:+\: \sum_{r=1}^k \, \frac{1}{1-r\+ z} \left( \frac{r\+ z}{r\+ z-1} \right)^{k-r}.
$$
\end{conj}

\subsection*{Beyond skinny grid classes}
Individual non-skinny monotone grid classes have been enumerated using various \emph{ad hoc} methods.
Nonetheless,
finding a \emph{general} procedure for
the exact enumeration of monotone grid classes remains an open problem.
The primary challenge is that, whereas
each permutation in a skinny grid class has
a unique greedy gridding,
there is no apparent way to choose a canonical gridding for permutations in an arbitrary grid class.
Futhermore, in general, the generating function for such a class may not be rational or even algebraic.

There is, however, one family of permutation classes, closely related to grid classes, that \emph{has} recently been enumerated.
In~\cite{HV2013+}, Hom\-berger \& Vatter present a way of exactly enumerating any \emph{polynomial} permutation class.\label{defPolynomialClass}
A class $\CCC$ is said to be polynomial if, for all sufficiently large $n$, $|\CCC_n|$ is given by a polynomial. Equivalently, $\CCC$ is polynomial if $\gr(\CCC)\in\{0,1\}$.

\label{pegPermGridClasses}
Each polynomial class can %also
be expressed as the finite union of what Homberger \& Vatter call \emph{peg permutation grid classes}.
Such a class
can be defined by a matrix
whose entries are drawn from $\{0,1,-1,\bullet\}$ and in which each row and each column contains exactly one non-zero entry.
As with monotone grid classes,
the entries in the matrix specify the permitted pattern of
points in the corresponding cell. If the entry is $\bullet$, the cell must contain only a single point or remain empty.
An example of a peg permutation grid class is
$$
\setgcptgridscale{2}
\setgcptsize{0.3}
\av(321,2134)
\;=\;
\gcseven[-1,-1,-1,-1,-1,-1,-1,-1,-1,13,-1,-1,-1,9]{7}
  {0,0,0,0,0,0,0}
  {0,0,1,0,0,0,0}
  {0,0,0,0,0,0,0}
  {0,1,0,0,0,0,0}
  {0,0,0,0,0,1,0}
  {0,0,0,1,0,0,0}
  {1,0,0,0,0,0,0}
  ,
$$
an identity first proved by
Atkinson in~\cite{Atkinson1999}.

Homberger \& Vatter present an effective method for enumerating any peg permutation grid class. It may be possible to extend the techniques they use so as to enable the enumeration of some non-polynomial non-skinny monotone grid classes.

In the next chapter, we turn to an investigation of how we can enumerate classes of \mbox{$M$-gridded} permutations, and explore
how the enumeration of gridded permutations may be of use in enumerating monotone grid classes.

% ================================================================
\cleardoublepage

% LaTeX file

% Grid and gridded permutation classes
\newcommand{\gdn}{\mathbin{\raisebox{.99ex}{$\hspace{.25pt}{}_{{}_{\diagdown\mkern-14mu\diagdown}}\hspace{-1.3pt}$}}}
\newcommand{\gupi}{\mathbin{\raisebox{.91ex}{$\hspace{.25pt}{}_{{}_{\diagup\mkern-14mu\diagup}}\hspace{-1.3pt}$}}}
\newcommand{\gup}{\mathbin{\ooalign{$\phantom{\gdn}$\cr$\gupi$}}}
\newcommand{\gxx}{\mathbin{\ooalign{$\gdn$\cr$\gupi$}}}
\newcommand{\bgdn}{\mathbin{\raisebox{.55ex}{$\hspace{.38pt}{}_{\diagdown\mkern-15mu\diagdown}\hspace{-.15pt}$}}}
\newcommand{\bgupi}{\mathbin{\raisebox{.55ex}{$\hspace{.38pt}{}_{\diagup\mkern-15mu\diagup}\hspace{-.15pt}$}}}
\newcommand{\bgup}{\mathbin{\ooalign{$\phantom{\bgdn}$\cr$\bgupi$}}}
\newcommand{\bgxx}{\mathbin{\ooalign{$\bgdn$\cr$\bgupi$}}}
\newcommand{\hgdn}{\mathbin{\hspace{.57pt}\diagdown\hspace{.43pt}}}
\newcommand{\hgupi}{\mathbin{\hspace{.57pt}\diagup\hspace{.43pt}}}
\newcommand{\hgup}{\mathbin{\ooalign{$\phantom{\hgdn}$\cr$\hgupi$}}}
\newcommand{\hgxx}{\mathbin{\ooalign{$\hgdn$\cr$\hgupi$}}}
\newcommand{\xX}{\hspace{-.6pt}\times\hspace{-.5pt}}
\newcommand{\oox}{    &     & \xX}
\newcommand{\oxo}{    & \xX &    }
\newcommand{\oxx}{    & \xX & \xX}
\newcommand{\xoo}{\xX &     &    }
\newcommand{\xox}{\xX &     & \xX}
\newcommand{\xxo}{\xX & \xX &    }
\newcommand{\xxx}{\xX & \xX & \xX}
\newcommand{\nl}{\\ \hline}
\newenvironment{garray}[1]  {\def\arraystretch{0}    \begin{array}{|*#1{@{}c@{}|}}\hline} {\nl\end{array}}
\newenvironment{bgarray}[1] {\def\arraystretch{0.21} \begin{array}{|*#1{@{}c@{}|}}\hline} {\nl\end{array}}
\newenvironment{hgarray}[1] {\def\arraystretch{0.67} \begin{array}{|*#1{@{}c@{}|}}\hline} {\nl\end{array}}
\newenvironment{xarray}[1]  {\def\arraystretch{0.25} \begin{array}{|*#1{@{}c@{}|}}\hline} {\nl\end{array}}

% ================================================================
\chapter{Gridded permutations}\label{chap04}
\label{chapGriddedPerms}

Success in finding general methods for the exact enumeration of permutation grid classes
is currently limited to skinny and polynomial classes.
However,
it is possible to exactly enumerate a somewhat broader family of
classes of \emph{gridded permutations}.

\begin{figure}[ht]
$$
    \begin{tikzpicture}[scale=0.18,rotate=270]
      \plotperm{24}{16,2,5,18,15,14,6,13,12,24,8,11,9,10, 7,23,22,4,3,21,20,19,1,17}
      \draw[] (14.5,  .5) -- (14.5,24.5);
      \draw[] (  .5, 9.5) -- (24.5, 9.5);
      \draw[] (  .5,15.5) -- (24.5,15.5);
    \end{tikzpicture}
$$
%\caption{A gridded permutation in $\Gridhash\big(\!\gctwo{3}{-1,1,-1}{1,0,1}\!\big)$}
\caption{A $\!\gctwo{3}{-1,1,-1}{1,0,1}\!$-gridded permutation}
\end{figure}

In this chapter we present a procedure for determining the generating function
for any class of gridded permutations whose row-column graph has no more than one cycle in any connected component.
We show that, if $G(M)$ is acyclic, then the generating function of $\Gridhash(M)$
is rational. This actually follows from the proof by Albert, Atkinson, Bouvel, Ru\v{s}kuc \& Vatter~\cite{AABRV2011}
that acyclic grid classes have rational generating functions.
Moreover, we prove that the generating function of a unicyclic class of gridded permutations is always algebraic.
We also prove that, for any $M$, the generating function of $M$-gridded permutations is D-finite.
Finally, we explore how the enumeration of gridded permutations can help when trying to enumerate grid classes.

%\newpage  %%%%%%%%%% <<< <<< <<< FIX FOR BAD PAGE 40 >>> >>> >>> %%%%%%%%%%
It is possible to give an explicit expression for the number of
gridded permutations of length $k$ in any specified grid class.
We do this by summing over the number of configurations with a specified number of points in each cell.

%\Needspace*{5\baselineskip}
\newpage % to try to get the label on the right page!
To express the result, we make use of
\emph{multinomial coefficients},
with their normal combinatorial interpretation,
for which we
use the standard notation
$$
\qquad\qquad\qquad\qquad
\binom{n}{k_1,k_2,\ldots,k_r}
\;=\; \frac{n!}{k_1!\+k_2!\+\ldots \+k_r!},
\qquad\quad
\text{where~}
\sum_{i=1}^r k_i =n,
$$
to denote
the number of ways of distributing $n$ distinguishable objects between $r$ (distinguishable) bins,
such that bin $i$ contains exactly $k_i$ objects ($1\leqslant i\leqslant r$).\label{MultinomialCoeffs}

\begin{lemma}\label{lemmaCountGriddings1}
If $M$ has dimensions $r\times s$, then
the number of gridded permutations of length $k$ in $\Gridhash(M)$ is
given by
$$
\big|\Gridhash_k(M)\big|
\;=\;
\raisebox{-4.5pt}{\fontsize{24.88pt}{0pt}\selectfont $\sum$}
\prod_{i=1}^r \binom{k_{i,1}+k_{i,2}+\ldots+k_{i,s}}{k_{i,1},k_{i,2},\ldots,k_{i,s}}
\prod_{j=1}^s \binom{k_{1,j}+k_{2,j}+\ldots+k_{r,j}}{k_{1,j},k_{2,j},\ldots,k_{r,j}} ,
$$
where the sum is over all combinations of non-negative $k_{i,j}$ ($1\leqslant i\leqslant r$, $1\leqslant j\leqslant s$) such that $\sum\limits_{i,j}k_{i,j}=k$ and $k_{i,j}=0$ if $M_{i,j}=0$.
\end{lemma}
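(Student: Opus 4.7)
The plan is to classify the $M$-gridded permutations of length $k$ according to their \emph{cell profile}: the tuple of integers $(k_{i,j})_{1\leqslant i\leqslant r,\,1\leqslant j\leqslant s}$, where $k_{i,j}$ records the number of points lying in cell $(i,j)$. Every point sits in exactly one cell, so the profile satisfies $\sum_{i,j}k_{i,j}=k$, and the empty-cell condition $M_{i,j}=0$ forces $k_{i,j}=0$. Since these are precisely the profiles in the stated sum, it suffices to show that the number of $M$-gridded permutations with a prescribed profile equals the product of the two multinomial coefficients.

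Fix a valid profile $(k_{i,j})$. The first observation is that the row and column dividers are \emph{forced}: the width of column $i$ must be $\sum_j k_{i,j}$ and the height of row $j$ must be $\sum_i k_{i,j}$. Having fixed the dividers, I would next argue that specifying the gridded permutation is equivalent to making, \emph{independently}, the following two kinds of assignments.
\begin{itemize}
  \item[(a)] For each column $i$, decide which of its $\sum_j k_{i,j}$ positions belong to each of the $s$ cells in that column; the number of choices is $\binom{k_{i,1}+\cdots+k_{i,s}}{k_{i,1},\ldots,k_{i,s}}$.
  \item[(b)] For each row $j$, decide which of its $\sum_i k_{i,j}$ values belong to each of the $r$ cells in that row; the number of choices is $\binom{k_{1,j}+\cdots+k_{r,j}}{k_{1,j},\ldots,k_{r,j}}$.
\end{itemize}

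The key point to verify is that an arbitrary pair of independent choices in (a) and (b) recovers a unique gridded permutation. Since the assignments in (a) and (b) both refine the profile $(k_{i,j})$, each cell $(i,j)$ ends up with exactly $k_{i,j}$ positions and exactly $k_{i,j}$ values; the monotonicity constraint dictated by $M_{i,j}\in\{+1,-1\}$ then determines the bijection between them (positions in left-to-right order matched with values read in increasing or decreasing order according to the sign). Conversely, any $M$-gridded permutation with profile $(k_{i,j})$ trivially determines such a pair of assignments, and this correspondence is a bijection.

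Taking the product of the counts in (a) over $i$ and in (b) over $j$ gives the number of gridded permutations with profile $(k_{i,j})$, and summing over all valid profiles yields the stated formula. I do not foresee a genuine obstacle here: the only point demanding care is the independence of (a) and (b), which holds precisely because the compatibility condition between them is the equality of cell counts, and this is already built into the choice of profile.
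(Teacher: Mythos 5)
Your proposal is correct and follows essentially the same route as the paper: classify gridded permutations by the cell profile $(k_{i,j})$, observe that within each cell the order of points is forced by the sign of $M_{i,j}$, and count the independent interleavings column by column and row by row via multinomial coefficients. The extra care you take in spelling out why the column choices and row choices are independent and jointly determine a unique gridded permutation is a welcome elaboration of the step the paper treats as immediate, but it is not a different argument.
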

\begin{proof}
An $M$-gridded permutation consists of a number of points in each of the cells that correspond to a non-zero entry of $M$. For every permutation, the relative ordering of points (increasing or decreasing) within a particular cell is fixed by the value of the corresponding matrix entry.
However, the relative interleaving between points in distinct cells in the same row or column
can be chosen arbitrarily and independently for each row and column.

Each term in the sum is thus
the number of
gridded permutations in which there are $k_{i,j}$ points in the cell corresponding to $M_{i,j}$,
the terms in the first product representing the number of ways of interleaving points in each column,
and terms in the second product representing the number of ways of interleaving points in each row.
The result follows by summing over configurations with a total of $k$ points and no points in cells that correspond to a zero entry of $M$.
\end{proof}
As a consequence of this result, it is clear that
$\big|\Gridhash_k(M)\big|=\big|\Gridhash_k(M')\big|$
whenever $M'$ results from
permuting the rows and/or columns of $M$.
Indeed, it is not hard to see that the enumeration of a class of gridded
permutations depends only on its row-column graph, a fact that we prove formally later (Corollary~\ref{corGRHashEqForSameRCGraph}).

%\newpage  %%%%%%%%%% <<< <<< <<< FIX FOR BAD PAGE 40 >>> >>> >>> %%%%%%%%%%
We can ``translate'' Lemma~\ref{lemmaCountGriddings1}
to give us an expression for the generating function of any class of gridded permutations.
Note that, when enumerating gridded permutations, it turns out to be simpler if we include the zero-length permutation.
However, when enumerating grid classes, we chose to exclude the zero-length permutation.

Central to this and following results is the \emph{diagonalisation} of generating functions.
Given a bivariate power series $f(x,y)=\sum a_{r,s} x^r y^s$, the \emph{diagonal}, $\Delta(f)$, of $f$ is the univariate series defined by $\Delta(f)(z)=\sum a_{n,n} z^n$.
Equivalently, if $q(x,y)$ consists of the sum of just those terms of $f(x,y)$ for which the exponent of $x$ is the same as that of $y$, then we have $\Delta(f)(z)=q(\sqrt{z},\sqrt{z})$. Alternatively, $\Delta(f)(z)=\big[w^0\big]f(w\sqrt{z},\sqrt{z}/w)$.

\thmbox{
\begin{lemma}\label{lemmaGridHashGF}
The generating function for $\Gridhash(M)$ is given by
$$
G^\#_M(z)\;=\;
\Big[\!\prod_{\substack{i,j\\
                        M_{i,j}\neq0}} \!\! {x_{i,j}}^0\Big]
\prod_i \Big(1-\sqrt{z}\!\sum_{\substack{j\\
                        M_{i,j}\neq0}} \!\! x_{i,j} \Big)^{-1} \+
\prod_j \Big(1-\sqrt{z}\!\sum_{\substack{i\\
                        M_{i,j}\neq0}} \!\! x_{i,j}\!{}^{-1} \Big)^{-1} ,
$$
in which there is one variable $x_{i,j}$ for each non-zero cell of $M$, and we extract only the constant terms as far as the $x_{i,j}$ are concerned.
\end{lemma}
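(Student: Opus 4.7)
The plan is to verify this formula by expanding each of the geometric-series factors as a formal multinomial sum and matching what survives constant-term extraction with the expression already proved in Lemma~\myref{lemmaCountGriddings1}.

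First I would expand the $i$-th row factor as
$$
\Big(1-\sqrt{z}\sum_{j:\+M_{i,j}\neq 0} x_{i,j}\Big)^{\!\!-1}
\;=\;
\sum_{(k_{i,j})_j}
\binom{k_{i,1}+\ldots+k_{i,s}}{k_{i,1},\ldots,k_{i,s}} \,
z^{(k_{i,1}+\ldots+k_{i,s})/2}
\prod_{j:\+M_{i,j}\neq 0} x_{i,j}^{\+k_{i,j}} ,
$$
where the inner sum runs over non-negative integer tuples indexed by the non-zero cells of row $i$. Symmetrically, the $j$-th column factor expands as exactly the same kind of multinomial sum but with $x_{i,j}^{-k'_{i,j}}$ in place of $x_{i,j}^{\+k_{i,j}}$, using a separate family of dummy exponents $(k'_{i,j})_i$.

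Next I would take the product over all rows and all columns and extract the constant term in every variable $x_{i,j}$. For each non-zero cell $(i,j)$, the variable $x_{i,j}$ appears positively only in the $i$-th row factor and negatively only in the $j$-th column factor, so the constant-term condition is simply $k_{i,j}=k'_{i,j}$ for every non-zero cell; there is no interaction between distinct cells. Once these equalities are imposed, the total exponent of $z$ becomes
$$
\tfrac{1}{2}\Big(\!\sum_{i,j} k_{i,j} + \sum_{i,j} k'_{i,j}\!\Big) \;=\; \sum_{i,j} k_{i,j} ,
$$
so only integer powers of $z$ survive, and the coefficient of $z^k$ is exactly
$$
\sum \+ \prod_i \binom{\sum_j k_{i,j}}{k_{i,1},\ldots,k_{i,s}} \+ \prod_j \binom{\sum_i k_{i,j}}{k_{1,j},\ldots,k_{r,j}} ,
$$
summed over configurations with $\sum_{i,j} k_{i,j}=k$ and $k_{i,j}=0$ whenever $M_{i,j}=0$. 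By Lemma~\myref{lemmaCountGriddings1} this is $|\Gridhash_k(M)|$, which completes the identification.

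I do not anticipate a real obstacle: the argument is a bookkeeping exercise translating the multinomial formula of Lemma~\myref{lemmaCountGriddings1} into diagonal form, one diagonal per non-zero cell, with the row factors providing the row-multinomials, the column factors providing the column-multinomials, and the $\sqrt{z}$ ensuring that each point is counted once from the row side and once from the column side. The only subtlety worth noting is justifying the formal manipulations: each factor is a power series in $\sqrt{z}$ whose coefficients are Laurent polynomials in the $x_{i,j}$, so the product is a well-defined formal object and constant-term extraction is legitimate term-by-term.
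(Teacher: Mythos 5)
Your proposal is correct and is essentially the paper's own argument: the paper builds the same two products using variables $u_{i,j}$ and $v_{i,j}$ for the column and row interleavings respectively, then substitutes $u_{i,j}=x_{i,j}\sqrt{z}$ and $v_{i,j}=\sqrt{z}/x_{i,j}$ so that constant-term extraction enforces equal exponents cell by cell, exactly as your expansion does. The only cosmetic difference is direction — you expand the stated formula and match it against Lemma~\myref{lemmaCountGriddings1}, while the paper derives the formula from the interleaving count — but the content is identical.
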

} %\thmbox

\begin{proof}
Suppose that, for each non-zero entry $M_{i,j}$, we let the variable $u_{i,j}$ mark the number of points in the cell corresponding to $M_{i,j}$.
Then the multivariate generating function for the
ways in which the points in column $i$ can be interleaved is
$$
\frac{1}{1-(u_{i,j_1}+u_{i,j_2}+\ldots+u_{i,j_{t}})}
\;=\;
\Big(1-\sum_{\substack{j\\
                        M_{i,j}\neq0}} \!\! u_{i,j} \Big)^{-1} ,
$$
where $j_1, j_2, \ldots, j_{t}$ are the values of $j$ for which $M_{i,j}\neq0$.

Now, let us \emph{also} use $v_{i,j}$ to mark the number of points in the cell corresponding to non-zero entry $M_{i,j}$. If we use the $v_{i,j}$, rather than the $u_{i,j}$, when considering the interleaving of points in the same row, then the multivariate generating function for $M$-gridded permutations is given by those terms in
\begin{equation}\label{eqGriddedGFDiagonal}
%g_M(u_{1,1},\ldots,v_{1,1},\ldots)\;=\;
\prod_i \Big(1-\!\sum_{\substack{j\\
                        M_{i,j}\neq0}} \!\! u_{i,j} \Big)^{-1} \+
\prod_j \Big(1-\!\sum_{\substack{i\\
                        M_{i,j}\neq0}} \!\! v_{i,j} \Big)^{-1}
\end{equation}
for which the exponent of $u_{i,j}$ is the same as that of $v_{i,j}$ for all $i$ and $j$.

The result follows
by replacing each $u_{i,j}$ with $x_{i,j}\sqrt{z}$ and replacing each $v_{i,j}$ with $\sqrt{z}/x_{i,j}$, so that $z$ marks the length of the permutation (which is the total number of points).
Using $\sqrt{z}$ in this way ensures that half of each point is counted when considering the interleaving of points in a column, and another half of each point is counted when considering the interleaving of points in a row.
After this substitution, the terms required are those
for which the exponent of each of $x_{i,j}$ is zero.
\end{proof}

In general, coefficient extraction is hard, so
Lemma~\ref{lemmaGridHashGF} does not give us an effective procedure for determining explicit expressions for the generating functions of arbitrary classes of gridded permutations.
Nevertheless, it can be used to yield closed forms for the generating functions when the row-column graphs of the classes are acyclic or unicyclic.

% \todob{} OGF is product of OGF of connected components

\section{Acyclic classes}\label{sectAcyclicGridhash}
We begin with acyclic classes.
First, we describe a process for adding cells to a grid class that can be used repeatedly to construct any class whose row-column graph is a forest.

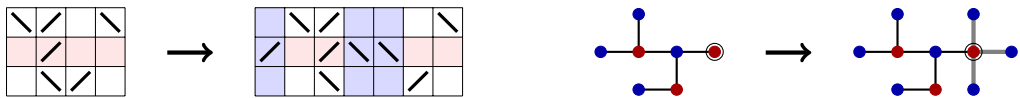
\begin{figure}[ht]
  $$
  \setgcscale{0.39}
  \setgcpreextra{
    \path [fill=red!10] (0,1) rectangle (4,2);
  }
  \gcthree{4}{-1, 1,0,-1}
             { 0, 1,0, 0}
             { 0,-1,1, 0}
  \;\;\;\;
  \raisebox{8pt}{\begin{tikzpicture}\draw [->,ultra thick](0,0)--(.6,0);\end{tikzpicture}}
  \;\;\;\;
  \setgcpreextra{
    \path [fill=red!10]  (0,1) rectangle (7,2);
    \path [fill=blue!15] (0,0) rectangle (1,3);
    \path [fill=blue!15] (3,0) rectangle (5,3);
  }
  \gcthree{7}{0,-1, 1, 0, 0,0,-1}
             {1, 0, 1,-1,-1,0, 0}
             {0, 0,-1, 0, 0,1, 0}
  \qquad
  \qquad
  \raisebox{-7.5pt}
  {
  \begin{tikzpicture}[scale=0.5]
    \draw [thick] (0,1)--(3,1);
    \draw [thick] (1,0)--(2,0);
    \draw [thick] (1,1)--(1,2);
    \draw [thick] (2,0)--(2,1);
    \foreach \x in {0,2}
      \draw [black!35!blue,fill] (\x,1) circle [radius=0.15];
    \foreach \x in {1,3}
      \draw [black!35!red,fill] (\x,1) circle [radius=0.15];
    \draw [black!35!blue,fill] (1,2) circle [radius=0.15];
    \draw [black!35!blue,fill] (1,0) circle [radius=0.15];
    \draw [black!35!red,fill] (2,0) circle [radius=0.15];
    \draw[radius=0.225] (3,1) circle;
  \end{tikzpicture}
  }
  \;\;\;\;
  \raisebox{8pt}{\begin{tikzpicture}\draw [->,ultra thick](0,0)--(.6,0);\end{tikzpicture}}
  \;\;\;\;
  \raisebox{-7.5pt}
  {
  \begin{tikzpicture}[scale=0.5]
    \draw [thick] (0,1)--(3,1);
    \draw [thick] (1,0)--(2,0);
    \draw [thick] (1,1)--(1,2);
    \draw [thick] (2,0)--(2,1);
    \draw [ultra thick,gray] (3,1)--(4,1);
    \draw [ultra thick,gray] (3,0)--(3,2);
    \foreach \x in {0,2,4}
      \draw [black!35!blue,fill] (\x,1) circle [radius=0.15];
    \foreach \x in {1,3}
    {
      \draw [black!35!blue,fill] (\x,2) circle [radius=0.15];
      \draw [black!35!red,fill] (\x,1) circle [radius=0.15];
      \draw [black!35!blue,fill] (\x,0) circle [radius=0.15];
    }
    \draw [black!35!red,fill] (2,0) circle [radius=0.15];
    \draw[radius=0.225] (3,1) circle;
  \end{tikzpicture}
  }
  $$
  \caption{Extending an acyclic grid class by inserting three new columns, each with a single non-zero entry in the second row, and the corresponding extension of the row-column graph}
  \label{figExtendingTree}
\end{figure}
In the context of graphs, it is clear that any forest can be constructed by
starting with
a 1-regular graph (a number of disconnected edges) and then
repeatedly attaching some pendant edges to a leaf vertex of the current graph.
The analogous method for grid classes consists of starting with a class
in which each row and each column contains exactly one non-zero entry, and then repeatedly applying the following extension process:
Choose a row or column that contains a single non-zero entry and form a new class by inserting additional columns or rows respectively, each containing a single non-zero entry in the chosen row or column.
This grid class method corresponds exactly to the graph method applied to row-column graphs.
See Figure~\ref{figExtendingTree} for an illustration of this extension process.

The following theorem enables us to
use this extension process to enumerate any class of
$M$-gridded permutations when $G(M)$ is a forest. For clarity, its statement only covers the case in which new columns are added to the class.

\thmbox{
\begin{thm}\label{thmAcyclicGriddingsGF}
Suppose $G(M)$ is acyclic and that $M$ has a single non-zero entry in some row $r$.
Let $M^+$ be a matrix formed from $M$ by inserting $k$ additional columns, each containing a single non-zero entry in row $r$.

If $G^\#_M(z,x)$ is the bivariate generating function for $\Gridhash(M)$, where $z$ marks length and $x$ marks the number of points in the cell corresponding to the non-zero entry of $M$ in row $r$,
then there exist polynomials $P$ and $Q$ such that
$$
G^\#_M(z,x) \;=\; \frac{1}{1+z\+(P(z)+x\+Q(z))}.
$$
Moreover, if $G^\#_{M^+}(z,x_1,\dots,x_k)$ is the multivariate generating function for $\Gridhash(M^+)$ where $x_1,\dots,x_k$ mark the number of points in each of the $k$ new cells in row $r$, then
$$
G^\#_{M^+}(z,x_1,\dots,x_k) \;=\; \frac{1}{1+z\+\big(P(z)+Q(z)-(x_1+\ldots+x_k)\+(1+z\+P(z)) \big)}.
$$
\end{thm}
} %\thmbox

%\Needspace*{2\baselineskip}
In proving this theorem, we make use of the following standard diagonalisation result concerning the extraction of coefficients from generating functions:
\begin{lemma}[{Stanley~\cite[Section 6.3]{Stanley1999}}; see Furstenberg~\cite{Furstenberg1967}]\label{lemmaDiagonalGF}
  If $g(x)=g(z,x)$ is a formal Laurent series, then the constant term $[x^0]g(x)$ is given by the sum of the residues of $x^{-1}g(x)$ at those poles $\alpha$ of $g(x)$ for which $\lim\limits_{z\rightarrow0}\alpha(z)=0$.
\end{lemma}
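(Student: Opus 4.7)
The plan is to prove the identity by combining Cauchy's residue theorem with a careful choice of contour in the complex $x$-plane. The central observation is that for $z$ in a sufficiently small punctured neighbourhood of $0$, the poles of $g(z,\cdot)$ as a function of $x$ separate into two classes: \emph{small poles} $\alpha_i(z)$ that tend to $0$ as $z\to 0$, and \emph{large poles} that remain bounded away from $0$. I would choose a radius $r(z)$ lying strictly between the two classes, giving a circle $\Gamma = \{|x| = r(z)\}$ that lies in an annulus of convergence for the relevant Laurent expansion of $g$ in $x$.

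By Cauchy's formula for Laurent coefficients, the constant term $[x^0]g$ equals $\frac{1}{2\pi i}\oint_{\Gamma} x^{-1}g(z,x)\,dx$. Applying the residue theorem to this contour integral, the value equals the sum of the residues of $x^{-1}g(z,x)$ at the poles inside $\Gamma$, which are precisely the small poles of $g$ (together with $x=0$ itself, if $g$ has a pole there; in the applications relevant to this chapter $g$ will be analytic at the origin, so that extra term is absent). This is exactly the sum of residues claimed in the statement.

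The main obstacle is reconciling the analytic calculation with the formal power-series assertion. A priori the contour integral is defined only for $z$ in a punctured neighbourhood of $0$, whereas the lemma asserts an identity in $\mathbb{C}[[z]]$; to close the gap I would verify that each residue $\mathrm{Res}_{x=\alpha_i}(g/x)$ is itself algebraic in $z$ and holomorphic at $z=0$, so that the analytic identity on a sector near the origin determines the corresponding formal identity uniquely. An equivalent, purely algebraic route is to decompose $g$ into partial fractions in $x$ over an algebraic extension of $\mathbb{C}((z))$: the small-pole partial fractions can only be expanded with negative $x$-powers (since their pole locations are not invertible in $\mathbb{C}[[z]]$) and so contribute nothing to $[x^0]g$, while the large-pole terms and the polynomial part carry all of the $x^0$ weight; the identity ``the sum of all residues of a rational function on $\mathbb{P}^1$ equals $0$'' then converts this large-pole contribution into the sum of residues at the small poles, giving the stated formula.
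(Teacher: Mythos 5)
The paper does not actually prove this lemma: it is quoted directly from Stanley and Furstenberg, so there is no internal proof to compare against. Your argument is the standard one underlying those references: write $[x^0]g$ as $\frac{1}{2\pi i}\oint_{|x|=r(z)}x^{-1}g(z,x)\,dx$ over a circle that separates the poles collapsing to the origin from those that do not, apply the residue theorem, and then justify the passage from the analytic identity (valid for $z$ near $0$) to the formal one — or, as in your alternative route, argue purely algebraically via partial fractions over an algebraic closure of $\mathbb{C}((z))$, using the fact that $1/(x-\alpha)$ with $\alpha$ of positive $z$-adic valuation expands with only negative powers of $x$. Both routes are sound and are essentially what the cited sources do. The one point to tighten is your treatment of $x=0$: even when $g$ itself is analytic at the origin, $x^{-1}g$ generically has a simple pole there with residue $g(z,0)$, which the contour integral picks up; the ``extra term'' is absent only when $g(z,0)=0$ (as happens in the paper's applications, where $g$ carries a factor of $x$ in its numerator), not merely when $g$ is analytic at the origin. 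Since the lemma as stated shares this imprecision, this is a caveat to record rather than a gap in your proof.
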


\begin{proof}[Proof of Theorem~\ref{thmAcyclicGriddingsGF}]
For the base case, if each row and each column of $M$ contains exactly one non-zero entry,
then
$$
G^\#_M(z,x) \;=\; \frac{1}{(1-z\+x)(1-z)^{m-1}},
$$
where $m$ is the number of non-zero entries in $M$.
This can certainly be expressed in the form specified in the statement of the theorem.

We now consider the extension process. The argument is analogous to that used in the proof of Lemma~\ref{lemmaGridHashGF}.
The multivariate generating function for the ways in which the points in row $r$ of $\Gridhash(M^+)$ can be interleaved is
$$
R(z,x) \;=\;
R(z,x,x_1,\dots,x_k) \;=\;
\frac{1}{1-z\+(x+x_1+\ldots+x_k)}
.$$

Therefore,
$$
G^\#_{M^+}(z,x_1,\dots,x_k)
\;=\;
\big[x^0\big]G^\#_M\big(z,\tfrac{x}{\sqrt{z}}\big)R\big(z,\tfrac{1}{x\sqrt{z}}\big)
\;=\;
\big[x^0\big]\frac{G^\#_M(z,x/\sqrt{z})}{1-x^{-1}\sqrt{z}-z\+s},
$$
where $s=x_1+\ldots+x_k$.

If we assume that
$$
G^\#_M(z,x) \;=\; \frac{1}{1+z\+(P(z)+x\+Q(z))},
$$
then
$$
G^\#_{M^+}(z,x_1,\dots,x_k)
\;=\;
\big[x^0\big]\frac{x}{\big(1+z\+P(z)+x\+\sqrt{z}\+Q(z)\big)\+\big(x\+(1-z\+s)-\sqrt{z}\big)}.
$$

We now apply Lemma~\ref{lemmaDiagonalGF}.
The expression at the right has
two poles.
%One
The root of the first factor of the denominator
diverges at $z=0$.
The other pole,
$\alpha=\sqrt{z}/(1-z\+s)$, satisfies the necessary criterion.

Since $\alpha$ is a simple pole,
$$
G^\#_{M^+}(z,x_1,\dots,x_k)
\;=\;
\frac{1}{\big(1+z\+P(z)+\alpha\+\sqrt{z}\+Q(z)\big)\+\big(1-z\+s\big)},
$$
which simplifies to yield the desired result.

Since every acyclic class can be constructed by multiple applications of the extension process,
and the generating function resulting from the extension process can also be expressed in the required form, the result holds.
\end{proof}

\section{Unicyclic classes}\label{sectUnicyclicGridhash}
We call a graph unicyclic if it is not acyclic and no connected component contains more than one cycle.
A permutation class is unicyclic if its row-column graph is unicyclic.

\begin{figure}[ht]
  $$
  \setgcscale{0.39}
  \setgcpreextra{
    %\path [fill=red!15]  (0,1) rectangle (5,2);
    %\path [fill=blue!15] (2,0) rectangle (3,3);
    %\path [fill=magenta!15] (2,1) rectangle (3,2);
    \path [fill=black!15] (1,1) rectangle (3,3);
  }
  \gcthree{5}{-1, 0,1, 1,0}
             { 0, 1,0, 0,0}
             { 0,-1,0,-1,1}
  \;\;\;\;
  \raisebox{8pt}{\begin{tikzpicture}\draw [->,ultra thick](0,0)--(.6,0);\end{tikzpicture}}
  \;\;\;\;
  \setgcpreextra{
    %\path [fill=magenta!15] (1,1) rectangle (2,2);
    \path [fill=black!15] (1,1) rectangle (2,2);
  }
  \gctwo{4}{-1, 1, 1,0}
           { 0,-1,-1,1}
  \qquad
  \qquad
  \qquad
  \raisebox{-7.5pt}
  {
  \begin{tikzpicture}[scale=0.5]
    \draw [thick] (1,0)--(3,0);
    \draw [thick] (1,2)--(1,1)--(2,1)--(2,0);
    \draw [ultra thick,gray] (0,0)--(1,0);
    \draw [ultra thick,gray] (0,1)--(1,1);
    \foreach \x in {0,2}
    {
      \draw [black!35!red,fill] (\x,0) circle [radius=0.15];
      \draw [black!35!blue,fill] (\x,1) circle [radius=0.15];
    }
    \foreach \x in {1,3}
      \draw [black!35!blue,fill] (\x,0) circle [radius=0.15];
    \draw [black!35!blue,fill] (1,2) circle [radius=0.15];
    \draw [black!35!red,fill] (1,1) circle [radius=0.15];
  \end{tikzpicture}
  }
  \;\;\;\;
  \raisebox{8pt}{\begin{tikzpicture}\draw [->,ultra thick](0,0)--(.6,0);\end{tikzpicture}}
  \;\;\;\;
  \raisebox{-7.5pt}
  {
  \begin{tikzpicture}[scale=0.5]
    \draw [thick] (1,0)--(3,0);
    \draw [thick] (1,2)--(1,1)--(2,1)--(2,0);
    \draw [ultra thick,gray] (1,0)--(1,1);
    \foreach \x in {2}
    {
      \draw [black!35!red,fill] (\x,0) circle [radius=0.15];
      \draw [black!35!blue,fill] (\x,1) circle [radius=0.15];
    }
    \foreach \x in {1,3}
      \draw [black!35!blue,fill] (\x,0) circle [radius=0.15];
    \draw [black!35!blue,fill] (1,2) circle [radius=0.15];
    \draw [black!35!red,fill] (1,1) circle [radius=0.15];
  \end{tikzpicture}
  }
  $$
  \caption{Constructing a unicyclic grid class by identifying two cells of an acyclic class, and the corresponding %edge-identification
  operation on the row-column graph}
  \label{figCreateUnicyclic}
\end{figure}
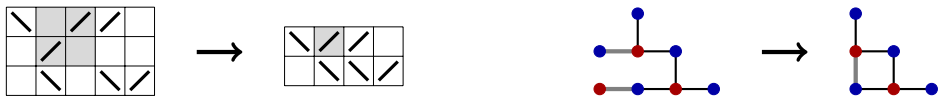
In the context of graphs, a connected unicyclic graph can be constructed
from a tree
by identifying two pendant edges and their endvertices, oriented such that the two leaf vertices are not identified.
We construct a unicyclic grid class in an analogous way.
We begin with
an acyclic class whose matrix $M$
contains either
$\!\gctwo{2}{0,1}{1}\!$ or
$\!\gctwo{2}{-1}{0,-1}\!$ as a submatrix $M'$,
such that one of the two non-zero cells in $M'$ is the only non-zero cell in its column in $M$,
and the other non-zero cell in $M'$ is the only non-zero cell in its row in $M$.
A unicyclic class is then constructed by combining the two columns of $M$ that contain $M'$ keeping all the non-zero entries,
and similarly combining the two rows of $M$ that contain $M'$.
This corresponds to identifying the (pendant) edges of $G(M)$ that correspond to the two non-zero entries of $M'$.
See Figure~\ref{figCreateUnicyclic} for an illustration of this operation.

The following theorem enables us to use this operation to enumerate any unicyclic class of gridded permutations.

\thmbox{
\begin{thm}\label{thmUnicyclicGriddingsGF}
Suppose $G(M)$ is acyclic and that $e_1$ and $e_2$ are two pendant edges of $G(M)$ that correspond to the non-zero entries of a
$\!\gctwo{2}{0,1}{1}\!$ or
$\!\gctwo{2}{-1}{0,-1}\!$ submatrix of $M$.
Let $M^\circ$ be the unicyclic matrix that results from combining the columns and rows of $M$ containing the cells corresponding to $e_1$ and $e_2$.

If $G^\#_M(z,u,v)$ is the trivariate generating function for $\Gridhash(M)$, where $z$ marks length, and $u$ and $v$ mark the number of points in the cells corresponding to $e_1$ and $e_2$,
then there exist polynomials $R$, $S$, $T$ and $U$ such that
$$
G^\#_M(z,u,v) \;=\; \frac{1}{1+z\+(R(z)+u\+S(z)+v\+T(z)+u\+v\+z\+U(z))}.
$$
Moreover, the generating function of $\Gridhash(M^\circ)$ is then given by
$$
G^\#_{M^\circ}(z)
\;=\;
\frac{1}{\sqrt{(1+z\+R(z)+z\+U(z))^2-4\+z\+S(z)\+T(z)}}.
$$
\end{thm}
} %\thmbox

We have as an immediate consequence:

\thmbox{
\begin{thm}\label{thmUnicyclicAlgebraic}
  %Every unicyclic class of gridded permutations has an algebraic generating function.
  The generating function of any unicyclic class of gridded permutations is algebraic.
\end{thm}
} %\thmbox

\begin{proof}[Proof of Theorem~\ref{thmUnicyclicGriddingsGF}]
  The first part follows from the fact that the required form is preserved by the extension process of Theorem~\ref{thmAcyclicGriddingsGF}.

  By construction, $G^\#_{M^\circ}(z)$ is given by those terms of $G^\#_M(z,u,v)$ for which $u$ and $v$ have the same exponent. Thus, if we assume that $G^\#_M(z,u,v)$ has the form specified in the statement of the theorem,
  $$
  G^\#_{M^\circ}(z)
  \;=\;
  \big[w^0\big]G^\#_M\big(z,\tfrac{w}{\sqrt{z}},\tfrac{1}{w\sqrt{z}}\big)
  \;=\;
  \big[w^0\big]\frac{w}{w^2\+\sqrt{z}\+S(z) + w\+(1 + z\+R(z)+z\+U(z)) + \sqrt{z}\+T(z)} .
  $$

We now apply Lemma~\ref{lemmaDiagonalGF}.
The expression at the right has
two poles.
One
diverges at $z=0$.
The other,
$$
\alpha \;=\;
\frac{-1-z\+R(z) -z\+ U(z)+\sqrt{(1+z\+R(z) +z\+ U(z))^2-4\+ z\+ S(z)\+ T(z)}}{2\+ \sqrt{z}\+ S(z)},
$$
satisfies the necessary criterion.
Algebraic manipulation then yields the required result.
\end{proof}

\section{Exploitation}\label{sectExploitGridhash}
Theorems~\ref{thmAcyclicGriddingsGF} and~\ref{thmUnicyclicGriddingsGF} can be used to help in enumerating acyclic and unicyclic monotone grid classes.
One possible approach to the enumeration of a permutation grid class is to partition it into parts
in such a way that each permutation in the class has a unique gridding in exactly one of the parts.
The parts are then essentially sets of gridded permutations.

\subsection*{Acyclic classes}
For example, it is not difficult to confirm that
the acyclic grid class
$\Grid\big(\!\gctwo{2}{1,1}{-1}\!\big)$ can be partitioned as follows:
$$
\setgcscale{0.3}
\gctwo{2}{1,1}{-1}
\;=\;      \gctwo[1]{1}{1}{-1}
\;\uplus\; \gcfive[2,4,3]{4}{0,1,0,1}{0,0,1}{1}{-1}{0,-1}
\;\uplus\; \gcfour[3,1,2]{4}{1,0,0,1}{0,0,1}{}{0,-1}
\;\uplus\; \gcsix[4,2,5,3]{6}{0,0,1,0,0,1}{1,0,0,0,1}{0,0,0,1}{}{0,-1}{0,0,-1} .
$$
We call diagrams like these \emph{entanglement diagrams}.
They are similar to the peg permutation grid class diagrams described
%in Section~\ref{pegPermGridClasses}. %
in the previous chapter
(see page~\pageref{pegPermGridClasses}).
However, in an entanglement diagram, disks ($\bullet$) are placed on the intersections of grid lines, rather than in the cells, and they specify that any permutation in the set denoted by the diagram \emph{must} have a (single) point at that location.
Entanglement diagrams thus do not represent permutation classes closed under containment.
With a slight abuse of terminology,
we also use the term \emph{entanglement diagram} to
refer to the set of permutations represented by such a diagram.

Let us call a partition of a grid class into disjoint entanglement diagrams
such that each permutation has a unique gridding a \emph{proper partition} of the class.
If we have a proper partition of a grid class,
then the enumeration of the grid class can be achieved simply by
counting gridded permutations in each of the diagrams.
So, in our example, we have
$$
G_{\!\gctwo{2}{1,1}{-1}\!\!}(z)
  \;=\;
  %G^\#_{\!\gcone{1}{}\!\!}(z) \:+\:
  z\+G^\#_{\!\gctwo{1}{1}{-1}\!\!}(z) \:+\:
  z^3\+G^\#_{\!\gcfive{4}{0,1,0,1}{0,0,1}{1}{-1}{0,-1}\!\!}(z) \:+\:
  z^3\+G^\#_{\!\gcthree{4}{1,0,0,1}{0,0,1}{0,-1}\!\!}(z) \:+\:
  z^4\+G^\#_{\!\gcfive{6}{0,0,1,0,0,1}{1,0,0,0,1}{0,0,0,1}{0,-1}{0,0,-1}\!\!}(z) ,
$$
which, after numerous applications of Theorem~\ref{thmAcyclicGriddingsGF}, yields
$$
G_{\!\gctwo{2}{1,1}{-1}\!\!}(z)
  \;=\;
\frac{1-5\+ z+8\+ z^2-3\+ z^3}{(1-z)\+ (1-2\+ z) \+(1-3\+ z+z^2)} .
$$

It is elementary to determine a general technique for the proper
partitioning of \emph{skinny} grid classes into entanglement diagrams.
This yields an alternative approach to their enumeration to that in the previous chapter.
We leave the details as an exercise for the reader.
However, attempts at discovering an effective procedure for proper partitioning applicable to all acyclic grid classes
have so far been unsuccessful.
However, it does seem reasonable to assume that every acyclic grid class can be enumerated in this way:

\thmbox{
\begin{conj}%\label{conj*}
  Every acyclic monotone grid class can be properly partitioned into a finite number of acyclic entanglement diagrams.
\end{conj}
} %\thmbox

\newpage
It is to be hoped that some effective procedure can be discovered for the partitioning so that it can be used to enumerate all unicyclic grid classes.

\subsection*{Unicyclic classes}
When we consider unicyclic classes, the situation seems to be somewhat less straightforward.
For example, it would appear that the ``double chevron'' grid class $\Grid\big(\!\gctwo{2}{1,1}{-1,-1}\!\big)$ can be properly partitioned as follows:
$$
\setgcscale{0.3}
\gctwo{2}{1,1}{-1,-1}
\;=\;      \gctwo[1]{1}{1}{-1}
\;\uplus\; %2 \!\times\!\!
           \gcsix[2,5,4]{4}{0,1,0,1}{0,0,1}{1}{0,0,-1}{-1}{0,-1,0,-2}
\;\uplus\; \gcsix[4,1,2]{4}{0,1}{1,0,0,2}{0,0,1}{-1}{0,0,-1}{0,-1,0,-1}.
$$

To achieve a proper partition of a unicyclic class, we seem to need to generalise our concept of an entanglement diagram, allowing some lines to cross cell boundaries.\footnote{Such diagrams have row-column \emph{hypergraphs}.}
It is not too hard to work out how to enumerate the gridded permutations in such generalised diagrams.

Assuming our partition of $\!\gctwo{2}{1,1}{-1,-1}\!$ is correct, it is then possible to deduce that
$$
f_{\!\gctwo{2}{1,1}{-1,-1}\!\!}(z)
\;=\;
\frac{1}{\sqrt{1-4\+z}} \:-\: \frac{1-4\+z+5\+z^2}{(1-2\+z)\+(1-3\+z)}.
$$

In the light of this, we tentatively make the following conjecture, which would hold if every unicyclic monotone grid class could be properly partitioned into a finite number of
acyclic and
unicyclic generalised entanglement diagrams:

\thmbox{
  \begin{conj}%\label{conj*}
     The generating function of any unicyclic monotone grid class is algebraic.
  \end{conj}
} %\thmbox

\subsection*{Multicyclic classes}
What can we say about classes with components having more than one cycle?

From
\eqref{eqGriddedGFDiagonal} in the proof of Lemma~\ref{lemmaGridHashGF}, we know that
the generating function of any class of gridded permutations results from the repeated {diagonalisation} of a rational function.
The diagonal of a bivariate rational power series is always algebraic, a result first proved by Furstenberg~\cite{Furstenberg1967}. Indeed the converse is also true, every algebraic function comes about by the diagonalisation of some rational function.
However, diagonalisation does not preserve algebraicity.
Nevertheless, Lipshitz~\cite{Lipshitz1988} proved that \emph{D-finite} power series \emph{are} closed under taking diagonals. (See %page~\pageref{defDFinite}
Section~\ref{defDFinite}
for the definition of a D-finite power series.)
As a consequence, we have the following general result concerning gridded permutations:

\thmbox{
\begin{thm}\label{thmGridPermsDFinite}
   The generating function of any class of gridded permutations is D-finite.
\end{thm}
} %\thmbox

As a consequence, to conclude, we dare to propose the following conjecture, which would hold if
every monotone grid class could be properly partitioned into a finite number of suitably generalised entanglement diagrams, each of whose enumeration was D-finite:

\thmbox{
  \begin{conj}%\label{conj*}
     The generating function of any monotone grid class is D-finite.
  \end{conj}
} %\thmbox

% ================================================================
\cleardoublepage

% LaTeX file

\newcommand{\limkinfty}{\lim\limits_{k\rightarrow\infty}}
\newcommand{\tr}{\mathrm{tr}}
\newcommand{\dg}{d}

% balanced and even-length tours
\newcommand{\WB}{W^{{}^{\text{\textsf{\textbf{\textup{B}}}}}}}
\newcommand{\WWWB}{\WWW^{{}^{\text{\textsf{\textbf{\textup{B}}}}}}}
\newcommand{\WE}{W^{{}^{\text{\textsf{\textbf{\textup{E}}}}}}}
\newcommand{\WWWE}{\WWW^{{}^{\text{\textsf{\textbf{\textup{E}}}}}}}

% H-zero, visits
\newcommand{\Hzero}{H_0}
\newcommand{\vis}[2]{\Psi(#1,#2)}

% excursions: *-*, 1-*, *-1, 1-1, *-initial
\newcommand{\xx}{\mbox{$\ast$-$\ast$}}
\newcommand{\ox}{\mbox{1-$\ast$}}
\newcommand{\xo}{\mbox{$\ast$-1}}
\newcommand{\oo}{\mbox{1-1}}
\newcommand{\xinit}{\mbox{$\ast$-initial}}

% ================================================================
\chapter{Growth rates of grid classes}\label{chap05}

In this chapter and the next, we turn away from exact enumeration to the (slightly) easier question of determining the asymptotic growth rate of monotone grid classes of permutations.
We
prove that the
exponential
growth rate of $\Grid(M)$ is equal to the square of the spectral radius of its row-column graph $G(M)$.
Consequently,
we
utilize
spectral graph
theoretic results
to
characterise all slowly growing grid classes
and
to
show
that for every
$\gamma\geqslant2+\sqrt{5}$
there is a grid class with growth rate arbitrarily close to $\gamma$.

To prove our main result, we
establish bounds on the size of certain families of tours on graphs.
In the process,
we
prove
that
the family of
tours of even length on
a connected graph
grows at the same rate as
the family of
``balanced''
tours on
the graph
(in which
the number of times an edge is traversed in one direction is the same as the number of times it is traversed in the other direction).

% ================================================================
\section{Introduction}

Our focus in this chapter is on the
%\emph{growth rates}
growth rates
of grid classes.
We prove the following theorem:

\thmbox{
\begin{repthm}{thmGrowthRate}
The growth rate of a monotone grid class of permutations exists and is equal to the square of the spectral radius of its row-column graph.
\end{repthm}
} %\thmbox

The bulk of the work required to prove this theorem %Theorem~\ref{thmGrowthRate}
is concerned with carefully counting certain families of tours on graphs, in order to give bounds on their sizes.
In particular, we consider ``balanced'' tours, in which the number of times an edge is
traversed in one direction is the same as the number of times it is traversed in the other
direction.
As a consequence, we %are able to
prove the following new result concerning tours on graphs:

\thmbox{
\begin{repthm}{thmBalancedEqualsEven}
The growth rate of the family of balanced tours on a connected graph is the same as that of the family of all tours of even length on the graph.
%The family of balanced tours on a connected graph has the same growth rate as the family of all tours of even length on the graph.
\end{repthm}
} %\thmbox

As a consequence of Theorem~\ref{thmGrowthRate}, % this,
by using the machinery of spectral graph theory, we are able to deduce
a variety of supplementary results.
We give a characterisation of grid classes whose growth rates are no greater than
$\frac{9}{2}$ %,
%(see Subsection~\ref{sectSmallGridClasses}),
(in a similar fashion to Vatter's characterisation of ``small'' permutation classes in~\cite{Vatter2011}).
We also fully characterise all \emph{accumulation points} of grid class growth rates, the least of which occurs at 4.
%(see Subsection~\ref{sectAccumulationPoints}).
Other results include:
\begin{repcor}{corAlgebraicInteger}
The growth rate of every monotone grid class is an algebraic integer.
\end{repcor}
\begin{repcor}{corCycle}
A monotone grid class whose row-column graph is a cycle has growth rate~4.
\end{repcor}
\begin{repcor}{corSmallGrowthRates}
If the growth rate of a monotone grid class is less than 4, it is equal to $4\cos^2\!\left(\frac{\pi}{k}\right)$ for some integer $k\geqslant 3$.
\end{repcor}
\begin{repcor}{corAccumulationPoints}
For every $\gamma \geqslant 2+\sqrt{5}$ there is a monotone grid class with growth rate arbitrarily close to $\gamma$.
\end{repcor}
%\vspace{-9pt}
%Further corollaries can be found in Section~\ref{sectConsequences}.

%\vspace{12pt}

The remainder of this chapter is structured as follows:
In Section~\ref{sectTours},
we introduce the particular families of tours on graphs that we study and
present our results concerning these tours, culminating in the proof of Theorem~\ref{thmBalancedEqualsEven}.
This is followed, in Section~\ref{sectGridClasses}, by the application of these results to prove our grid class growth rate result, Theorem~\ref{thmGrowthRate}.
To conclude, in Section~\ref{sectConsequences}, we present a number of consequences of Theorem~\ref{thmGrowthRate} that follow from known spectral graph theoretic results.
%We conclude with a few final remarks.

%\newpage
% ================================================================
\section{Tours on graphs}\label{sectTours}
In this section, we investigate families of tours on graphs, parameterised by the number of times each edge is traversed.
We determine a lower bound on the size of families of ``balanced'' tours
and an upper bound on families of arbitrary tours.
Applying the upper bound to tours of even length gives us an expression compatible with the lower bound. % on balanced tours.
Combining this with the fact that any balanced tour has even length enables us to
prove Theorem~\ref{thmBalancedEqualsEven}
which reveals that even-length tours and balanced tours grow at the same rate.
These bounds are subsequently used in Section~\ref{sectGridClasses} to relate
%families of
tours on graphs to permutation grid classes.

To establish the lower and upper bounds, we first enumerate tours on trees. We then present a way of associating tours on an arbitrary connected graph $G$ with tours on a
related %tree,
``partial covering'' tree,
which we employ to determine bounds for families of tours on arbitrary graphs.
Let us begin by introducing the tours that we consider below.

% ----------------------------------------------------------------
\subsection{Notation and definitions}%\label{sectNotation}
A \emph{walk}, of length $k$, on a graph
is a non-empty alternating sequence of
vertices and edges $v_0, e_1, v_1, e_2, v_2, \ldots, e_k, v_k$ in which the endvertices of $e_i$ are $v_{i-1}$ and $v_i$.
Neither the edges nor the vertices need be distinct.
We say that such a walk \emph{traverses} edges $\{e_1,\ldots,e_k\}$ and \emph{visits} vertices $\{v_1,\ldots,v_{k-1}\}$.
A \emph{tour} (or \emph{closed} walk) is a walk which starts and ends at the same vertex (i.e. $v_k=v_0$).
Our interest is restricted to tours.

%In what follows,
Below,
when considering a graph with $m$ edges, we denote its edges $e_1,e_2,\ldots,e_m$. In any particular context, we can choose
the
ordering of the edges
so as
to simplify our presentation.
%The
We denote the
edges incident to a given vertex $v$
%are denoted
by
${e^v_1},{e^v_2},\ldots,e^v_{\dg(v)}$, where $\dg(v)$ is the \emph{degree} of $v$ (number of edges incident to $v$).
Again, we are free to choose
the order of the edges incident to a vertex
so as
to clarify our arguments.

% ------------------------------------------------
\subsubsection*{Families of tours}
Our interest is in families of tours that are parameterised by the number of times each edge is traversed.
Given non-negative integers $h_1,h_2,\ldots,h_m$
and some vertex $u$ of a graph $G$, we use
$$\WWW_G((h_i);u)\;=\;\WWW_G(h_1,h_2,\ldots,h_m;u)$$ to denote the family of tours on $G$ which start and end at $u$ and traverse each edge $e_i$ exactly $h_i$ times. (We use $\WWW$ rather than $\TTT$ for families of tours to avoid confusion when considering tours on trees.) % in Subsection~\ref{sectTrees}.)

We use ${h^v_1},{h^v_2},\ldots,h^v_{\dg(v)}$ for the number of traversals of edges incident to a vertex $v$ in $\WWW_G((h_i);u)$.
So, if $v$ and $w$ are the endvertices of $e_i$, $h_i$ has two aliases $h^v_j$ and $h^w_{j'}$ for some $j$ and $j'$.

We use $W_G((h_i);u)=|\WWW_G((h_i);u)|$ to denote the number of these tours.

Note that for some values of $h_1,\ldots,h_m$, the family $\WWW_G((h_i);u)$ is empty.
In particular, if $E^+=\{e_i\in E(G):h_i>0\}$ is the set of edges traversed by tours in the family,
and $G^+=G[E^+]$ is the subgraph of $G$ induced by these edges,
then
if $G^+$ is disconnected or does not contain $u$,
we have $\WWW_G((h_i);u)=\varnothing$.
A family of tours may also be empty for ``parity'' reasons; for example, if $T$ is a tree, then $\WWW_T((h_i);u)=\varnothing$ if any of the $h_i$ are odd. Our counting arguments must remain valid for these empty families.

% ------------------------------------------------
%\subsubsection*{Balanced tours}%\label{sectBalanced}
Of particular interest to us are tours in which
the number of times an edge is traversed in one direction is the same as
the number of times it is traversed in the other direction.
We call such tours {\emph{balanced}}.

Given non-negative integers $k_1,k_2,\ldots,k_m$
and some vertex $u$ of a graph $G$, we use
$$\WWWB_G((k_i);u)\;=\;\WWWB_G(k_1,k_2,\ldots,k_m;u)$$ to denote the family of balanced tours on $G$ which start and end at $u$, and traverse each edge $e_i$ exactly $k_i$ times \emph{in each direction}.
Note that we parameterise balanced tours by \emph{half} the number of traversals of each edge.

We use ${k^v_1},{k^v_2},\ldots,k^v_{\dg(v)}$ for the number of traversals in either direction of edges incident to a vertex $v$ in $\WWWB_G((k_i);u)$.
So, if $v$ and $w$ are the endvertices of $e_i$, $k_i$ has two aliases $k^v_j$ and $k^w_{j'}$ for some $j$ and $j'$

We use $\WB_G((k_i);u)=|\WWWB_G((k_i);u)|$ to denote the number of these balanced tours.

As with $\WWW_G((h_i);u)$, $\WWWB_G((k_i);u)$ may be empty.
Observe also that, since any tour on a forest is balanced, $\WWW_F((2k_i);u)=\WWWB_F((k_i);u)$ for any forest $F$ and $u\in V(F)$.
Moreover, for any graph $G$,
we have
$\WB_G((k_i);u)\leqslant W_G((2k_i);u)$, with equality if and only if the component of $G^+$ containing $u$, if present, is acyclic, where $G^+$ is the subgraph of $G$ induced by the edges that are actually traversed by tours
in the family.

% ----------------------------------------------------------------
\subsubsection*{Visits and excursions}
We use $\vis{G}{v}$ to denote the number of {visits} to $v$ of any tour on $G$ in some
family (specified by the context).
In practice, this notation is unambiguous because we only consider one family of tours on a particular graph at a time.
Observe that any tour in $\WWW_G((h_i);u)$ visits vertex $v\neq u$ exactly $\half({h^v_1}+{h^v_2}+\ldots+{h^v_{\dg(v)}})$ times, and
that for balanced tours in $\WWWB_G((k_i);u)$ we have
$\vis{G}{v}={k^v_1}+{k^v_2}+\ldots+{k^v_{\dg(v)}}$.

If $\vis{G}{v}$ is positive, then
separating the visits to $v$ are $\vis{G}{v}-1$ ``subtours''
starting and ending at $v$;
we refer to these subtours as \emph{excursions} from $v$.

% ------------------------------------------------
\subsubsection*{Multinomial coefficients}
%*** moved above to start of Chapter~\ref{chapGriddedPerms} ***
In our calculations, we make considerable use of multinomial coefficients as described on page~\pageref{MultinomialCoeffs}.
We make repeated use of the fact that a multinomial coefficient can be decomposed into a product of binomial coefficients:
$
\binom{n}{k_1,\ldots,k_r}
=
\binom{k_1}{k_1} \binom{k_1+k_2}{k_2} \ldots \binom{k_1+\ldots +k_r}{k_r}
%\;=\;
%\prod_{i=1}^r \binom{\sum_{j=1}^i k_j}{k_i}
.
$
%When manipulating inequalities, we rely on the following monotonicity condition:
%$$
%\binom{n}{k_1,k_2,\ldots,k_r}
%\;\leqslant\;
%\binom{n+1}{k_1+1,k_2,\ldots,k_r}.
%$$
We consider a multinomial coefficient that has one or more negative terms to be \emph{zero}.
This guarantees
%(weak) monotonicity over
%that
that the monotonicity condition
$
\binom{n}{k_1,\ldots,k_r}
\leqslant
\binom{n+1}{k_1+1,\ldots,k_r}
$
holds
for
all possible sets of values.

% ----------------------------------------------------------------
\subsection{Tours on trees}\label{sectTrees}
We begin by establishing bounds on the size of families of tours on \emph{trees}.
As we noted above, all such tours are balanced.
We start with star graphs, giving an exact enumeration of any family:
\begin{lemma}\label{lemmaStar}
If $S_m$ is the star graph $K_{1,m}$ with central vertex $u$, then
$$
\WB_{S_m}((k_i);u)\;=\;
\binom{k_1+k_2+\ldots+k_m}{k_1,\,k_2,\,\ldots,\,k_m}
\;=\;
\binom{\vis{S_m}{u}}{k^u_1,\,k^u_2,\,\ldots,\,k^u_{\dg(u)}}.
$$
\end{lemma}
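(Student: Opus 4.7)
The plan is to observe that the star graph $S_m$ has the special feature that every non-central vertex is a leaf, so there is essentially only one "move" available once an excursion from $u$ has begun. First I would note that any tour on $S_m$ starting and ending at $u$ decomposes uniquely as a concatenation of excursions from $u$: because each leaf has degree 1, any traversal of an edge $e_i$ outward from $u$ must be followed immediately by a traversal of $e_i$ back to $u$. Thus each excursion consists of the length-2 subtour $u, e_i, v_i, e_i, u$ for some $i$, which contributes exactly one traversal of $e_i$ in each direction.

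Next I would use this to set up the bijection. A balanced tour in $\WWWB_{S_m}((k_i);u)$ traverses $e_i$ exactly $k_i$ times in each direction, so it consists of exactly $k_i$ excursions to leaf $v_i$, for each $i\in\{1,\ldots,m\}$. Since excursions to the same leaf are indistinguishable and excursions to distinct leaves are distinguishable, the balanced tour is uniquely specified by the ordering of its $k_1+k_2+\ldots+k_m$ excursions, one of $k_i$ kinds for each $i$. The number of such sequences is exactly the multinomial coefficient $\binom{k_1+k_2+\ldots+k_m}{k_1,k_2,\ldots,k_m}$, giving the first equality.

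For the second equality, I would invoke the formula $\vis{G}{v}=k^v_1+k^v_2+\ldots+k^v_{\dg(v)}$ for balanced tours, already noted in the ``Visits and excursions'' paragraph. Applied at $v=u$ in $S_m$, where $u$ is incident to every edge, this gives $\vis{S_m}{u}=k^u_1+\ldots+k^u_{\dg(u)}=k_1+\ldots+k_m$, so the two expressions for the multinomial coefficient coincide.

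There is no real obstacle here: the argument is a direct bijection plus a one-line substitution. The only mild care required is to check that the formula continues to hold in degenerate cases (for instance, when some $k_i=0$, the corresponding leaf is never visited, but the multinomial coefficient simply drops the zero entry; and when all $k_i=0$, the unique empty tour at $u$ is counted by $\binom{0}{0,\ldots,0}=1$), so the identity is valid for every choice of non-negative integers $k_1,\ldots,k_m$.
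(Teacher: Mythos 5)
Your proposal is correct and matches the paper's proof, which likewise observes that every tour on the star decomposes into out-and-back excursions along single edges, so a balanced tour is just an interleaving of $k_i$ excursions along each $e_i$, counted by the multinomial coefficient. The extra care you take with degenerate cases and with the identity $\vis{S_m}{u}=k_1+\ldots+k_m$ is consistent with the conventions the paper sets up beforehand.
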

\begin{proof}
$\WWWB_{S_m}((k_i);u)$ consists of all possible interleavings of $k_i$ excursions from $u$ out-and-back along each $e_i$.
\end{proof}
It is possible to extend our exact enumeration to those families of balanced tours on trees in which every internal (non-leaf) vertex is visited at least once. These families are never empty.
\begin{lemma}\label{lemmaTree}
If $T$ is a tree,
$u\in V(T)$
and, for each $v\neq u$, $e^v_1$ is the edge incident to $v$ that is on the unique path between $u$ and $v$, and
if $k^v_1$ is positive for all internal vertices $v$ of $T$,
then
$$
\WB_T((k_i);u)\;=\;
\binom{\vis{T}{u}}{k^u_1,\,k^u_2,\,\ldots,\,k^u_{\dg(u)}}
\prod_{v\neq u}\binom{\vis{T}{v}-1}{{k^v_1}\!-\!1,\,{k^v_2},\,\ldots,\,{k^v_{\dg(v)}}}.
$$
\end{lemma}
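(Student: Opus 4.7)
My plan is to prove the identity by induction on the number of edges of $T$, peeling off a pendant edge at each step. The base case $|E(T)| = 0$ is immediate: $T$ consists solely of $u$, the only tour is the empty one, and both sides of the claimed equality evaluate to $1$ (an empty multinomial and an empty product).

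For the inductive step, I would choose a leaf $\ell \neq u$ of $T$, with unique neighbour $p$, incident edge $e$, and $k = k^\ell_1$; since $\WWWB_T((k_i);u)$ is non-empty only when all $k_i$ are positive, I may assume $k \geqslant 1$. Let $T'$ denote the tree $T$ with $\ell$ (and $e$) removed. The factor contributed by $\ell$ in the claimed formula is $\binom{k-1}{k-1} = 1$, and the factors for all vertices other than $p$ and $\ell$ coincide in $T$ and $T'$, so it suffices to track how the $p$-factor changes.

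The structural observation underlying the induction is that, since $\ell$ is a leaf, every excursion in a balanced tour on $T$ that traverses $e$ must be the trivial out-and-back $p \to \ell \to p$. This gives a bijection between $\WWWB_T((k_i);u)$ and pairs $(\pi', \iota)$, where $\pi' \in \WWWB_{T'}((k_i)_{i \neq e};u)$ is obtained by stripping out all $k$ such out-and-back excursions, and $\iota$ specifies at which visits to $p$ in $\pi'$ these excursions were inserted. I will count the insertions by a case split. When $p \neq u$, the tour $\pi'$ visits $p$ exactly $\vis{T'}{p}$ times, giving $\vis{T'}{p}$ distinct slots into which the $k$ indistinguishable excursions can be distributed; stars-and-bars yields $\binom{\vis{T'}{p} + k - 1}{k}$ insertions. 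When $p = u$, the tour $\pi'$ has $\vis{T'}{u}$ excursions from $u$, producing $\vis{T'}{u} + 1$ slots (before the first, between consecutive, and after the last), for a total of $\binom{\vis{T'}{u} + k}{k}$ insertions. Using $\vis{T}{p} = \vis{T'}{p} + k$ (respectively $\vis{T}{u} = \vis{T'}{u} + k$), elementary binomial manipulation verifies that these counts coincide exactly with the ratio of the claimed formula for $T$ to the claimed formula for $T'$, and the inductive hypothesis closes the argument.

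The main obstacle is justifying the bijective decomposition --- that every balanced tour on $T$ arises from exactly one pair $(\pi', \iota)$. This rests on $\ell$ being a leaf, so that visits to $\ell$ are entirely localised as trivial out-and-back trips at $p$ and can be unambiguously identified and stripped. A secondary subtlety is the two-case treatment of $p = u$ versus $p \neq u$: the basepoint's dual role as starting and ending vertex creates one additional insertion slot relative to the generic case, reflecting the extra occurrence of $u$ in the vertex sequence as compared with the cyclic interpretation used to define $\vis{T}{u}$.
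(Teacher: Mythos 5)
Your argument is correct in substance and rests on the same core idea as the paper's proof --- decomposing a balanced tour on a tree into a tour on a smaller tree together with trivial out-and-back excursions along pendant edges --- but you organise the induction differently. The paper inducts on the number of \emph{internal} vertices: it starts from the star centred at $u$ (Lemma~\ref{lemmaStar}) and, in a single step, attaches an entire bundle of $r$ pendant edges to a leaf $v\neq u$ of the current tree; interleaving the $r$ families of new out-and-back excursions with the existing $k^v_1-1$ excursions from $v$ then produces the factor $\binom{\vis{T}{v}-1}{k^v_1-1,\,k^v_2,\,\ldots,\,k^v_{\dg(v)}}$ in one stroke. You instead peel a single pendant edge at a time, inducting on $|E(T)|$ from a lone vertex, which obliges you to (i) verify a ratio of multinomials via stars-and-bars rather than read the factor off directly, and (ii) treat $p=u$ separately, the basepoint offering $\vis{T'}{u}+1$ insertion slots rather than $\vis{T'}{p}$. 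Both points are handled correctly: since the peeled edge is not $e^p_1$, the ratios of the claimed formulas for $T$ and $T'$ are indeed $\binom{\vis{T'}{p}+k-1}{k}$ and $\binom{\vis{T'}{u}+k}{k}$ in the two cases, matching your slot counts. The paper's bundling sidesteps case (ii) entirely, because growth always occurs at a vertex other than $u$.

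One slip needs repair. The claim that $\WWWB_T((k_i);u)$ is non-empty only when all $k_i$ are positive is false, so your reduction to $k=k^\ell_1\geqslant1$ is not justified as written: on the path $u$--$a$--$\ell$ with multiplicities $1$ on $ua$ and $0$ on $a\ell$, the lemma's hypothesis holds (only $a$ is internal) yet the family contains the tour $u,a,u$. The case $k=0$ for the peeled leaf must therefore be addressed in the induction. It is immediate --- deleting an untraversed leaf changes neither the set of tours nor, reading that leaf's factor $\binom{-1}{-1}$ as $1$ (the convention the paper itself implicitly uses when asserting that every leaf contributes a factor of $1$), the right-hand side --- but it should be dispatched explicitly rather than via a false non-emptiness assertion.
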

\begin{proof}
We use induction on the number of internal vertices. Note that the multinomial coefficient for a leaf vertex simply contributes a factor of 1 to the product.
Lemma~\ref{lemmaStar} provides the base case.

Given a tree $T$ with $m$ edges $e_1,\ldots,e_m$,
and a leaf $v$ of $T$,
let $T'$ be the tree ``grown'' from $T$ by attaching $r$ new pendant edges $e_{m+1},\ldots,e_{m+r}$ to $v$.

If ${k^v_1}$ is positive,
since $v$ is a leaf, each tour in $\WWWB_T(k_1,\ldots,k_m;u)$ visits $v$ exactly ${k^v_1}$ times, with ${k^v_1}-1$ excursions from $v$ along ${e^v_1}$ separating these visits.
Any such tour can be extended to a tour in $\WWWB_{T'}(k_1,\ldots,k_{m+r};u)$
by arbitrarily interleaving $k_{m+i}$ new excursions out-and-back along each new pendant edge $e_{m+i}$
($i=1,\ldots,r$)
with the existing ${k^v_1}-1$ excursions from $v$ along ${e^v_1}$.
\end{proof}
This exact enumeration can be used to generate %yield
the following general bounds on the number of tours on trees:
\begin{cor}\label{corTreeBounds}
If $T$ is a tree, then for any vertex
$u\in V(T)$, $\WB_T((k_i);u)$ satisfies the following bounds:
$$
\prod_{v\in V(T)}\binom{\vis{T}{v}-\dg(v)}{{k^v_1}\!-\!1,\,{k^v_2}\!-\!1,\,\ldots,\,{k^v_{\dg(v)}}\!-\!1}
\;\leqslant\;
\WB_T((k_i);u)
\;\leqslant\;
\prod_{v\in V(T)}\binom{\vis{T}{v}}{{k^v_1},\,{k^v_2},\,\ldots,\,{k^v_{\dg(v)}}}.
$$
\end{cor}
\begin{proof}
If all the $k_i$ are positive, then this follows directly from Lemma~\ref{lemmaTree}.

If one or more of the $k_i$ is zero, then the lower bound is trivially true, because one of the multinomial coefficients is zero. The upper bound
also holds trivially
if there are no tours in the family. Otherwise,
let $T^+$ be the subtree of $T$ induced by the vertices actually visited by tours in $\WWWB_T((k_i);u)$. Then $\WB_T((k_i);u)=\WB_{T^+}((k_i);u)$. But we know that
$$
\WB_{T^+}((k_i);u)
\;\leqslant\;
\prod_{v\in V(T^+)}\binom{\vis{T^+}{v}}{{k^v_1},\,{k^v_2},\,\ldots,\,{k^v_{\dg(v)}}}
\;=\;
\prod_{v\in V(T)}\binom{\vis{T}{v}}{{k^v_1},\,{k^v_2},\,\ldots,\,{k^v_{\dg(v)}}}
$$
as a result of Lemma~\ref{lemmaTree} and the fact that $k^v_i=0$ for all
edges $e^v_i$ incident to
unvisited vertices
$v\in V(T)\setminus V(T^+)$.
\end{proof}

% ----------------------------------------------------------------
\subsection{Treeification} %: partial covering trees}
%\todo{Change terminology?}
In order to establish the lower and upper bounds for tours on arbitrary connected graphs, we relate tours on a connected graph $G$ to (balanced) tours on a related
%``edge spanning''
%``partial covering''
tree which we call a \emph{treeification} of $G$.
The process of treeification
consists of repeatedly breaking cycles until
% no further cycles remain.
the resulting graph is acyclic.
This creates a sequence of graphs $G=G_0,G_1,\ldots,G_t=T$ %for some $t$,
where
%$G_t$ is acyclic.
$T$ is a tree.
We call this sequence a \emph{treeification sequence}.

Formally, we define a treeification of a
connected
graph to be the result of % applying
the following
(nondeterministic)
process that transforms a connected graph
% with $m$ edges
into a
%forest
tree
with
%$m$
the same number of
edges. %;
%. If the original graph is connected, each of its treeifications is a tree;

%\newpage %%%
\begin{samepage}
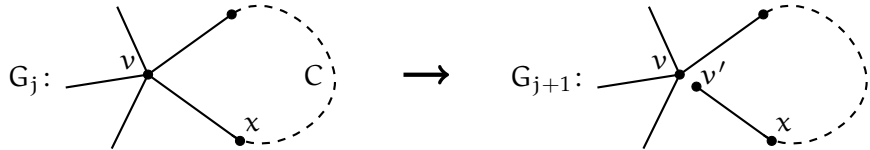
\begin{figure}[ht]
  $$
    \raisebox{.4in}{$G_j\!:\;\:$}
    \begin{tikzpicture}[scale=1.36,rotate=-36]
      \draw [thick] (1.1,0)--(0,0);
      \draw [thick] (0,0) -- (.31,.95);
      \draw [thick] (-.63,.36)--(0,0)--(-.57,-.57);
      \draw [thick] (0,0)--(.14,-.79);
      \draw [dashed,thick,] (1,0) to [out=0,in=-45] (1.46,1.06) node[left]{$C$} to [out=135,in=72] (.31,.95);
      \draw [fill] (0,0) circle [radius=0.044];
      \draw [fill] (1.1,0) circle [radius=0.044];
      \draw [fill] (.31,.95) circle [radius=0.044];
      \node at (-.2,0) {$v\:$};
      \node at (1.1,0.2) {$x$};
    \end{tikzpicture}
    \qquad
    \raisebox{.45in}
    %{$\Rightarrow$}
    {\begin{tikzpicture}\draw [->,ultra thick](0,0)--(.6,0);\end{tikzpicture}}
    \qquad
    \raisebox{.4in}{$G_{j+1}\!:\;\:$}
    \begin{tikzpicture}[scale=1.36,rotate=-36]
      \draw [thick] (1.1,0)--(.2,0);
      \draw [thick] (0,0) -- (.31,.95);
      \draw [thick] (-.63,.36)--(0,0)--(-.57,-.57);
      \draw [thick] (0,0)--(.14,-.79);
      \draw [dashed,thick,] (1,0) to [out=0,in=-45] (1.46,1.06) to [out=135,in=72] (.31,.95);
      \draw [fill] (0,0) circle [radius=0.044];
      \draw [fill] (.2,0) circle [radius=0.044];
      \draw [fill] (1.1,0) circle [radius=0.044];
      \draw [fill] (.31,.95) circle [radius=0.044];
      \node at (-.2,0) {$v\:$};
      \node at (.26,.21) {$v'$};
      \node at (1.1,0.2) {$x$};
    \end{tikzpicture}
    \vspace{-3pt}
  $$
  \caption{Splitting vertex $v$}\label{figVertexSplitting}
\end{figure}
To treeify a connected graph $G=G_0$,
we first give an (arbitrary) order to
its vertices. Then we %, and then
apply the following
vertex-splitting operation in turn
to each $G_j$
%($0\leqslant j<t$)
to create $G_{j+1}$ ($j=0,1,\ldots$), %:
until no cycles remain:
\vspace{-9pt}
\begin{enumerate}
  \itemsep0pt
  \item
  Let $v$ be the first vertex (in the ordering)
  that occurs in some cycle $C$ of $G_j$.
  \item Split vertex $v$ by doing the following (see Figure~\ref{figVertexSplitting}): \vspace{-3pt}
  \begin{enumerate}
  \itemsep0pt
  \item Delete an edge $xv$ from $E(C)$ (there are two choices for vertex $x$).
  \item Add a new vertex $v'$ (to the end of the vertex ordering).
  \item Add the pendant edge $xv'$ (making $v'$ a leaf).
  \end{enumerate}
\end{enumerate}
\end{samepage}

\begin{figure}[ht]
$$
  \begin{tikzpicture}[scale=0.8]
      \draw [thick] (0,2)--(0,0)--(2,0)--(2,1);
      \draw [thick] (1,0)--(1,2)--(2,2);
      \draw [thick] (1,2)--(0,2);
      \draw [thick] (1,1)--(2,1);
      \draw [thick] (2,2)--(2,1);
      \draw [thick] (0,1)--(1,1);
      \draw [fill] (0,0) circle [radius=0.075];
      \draw [fill] (0,1) circle [radius=0.075];
      \draw [fill] (0,2) circle [radius=0.075];
      \draw [fill] (1,0) circle [radius=0.075];
      \draw [fill] (1,1) circle [radius=0.075];
      \draw [fill] (1,2) circle [radius=0.075];
      \draw [fill] (2,0) circle [radius=0.075];
      \draw [fill] (2,1) circle [radius=0.075];
      \draw [fill] (2,2) circle [radius=0.075];
      \node at (1,-.5) {$G=G_0$};
      \node at (-.25,2) {${}_1$};
      \node at (2.25,1) {${}_2$};
      \node at (2.25,0) {${}_3$};
      \node at (1.2,.75) {${}_4$};
    \end{tikzpicture}
    \qquad\!
    \begin{tikzpicture}[scale=0.8]
      \draw [thick] (0,2)--(0,0)--(2,0)--(2,1);
      \draw [thick] (1,0)--(1,2)--(2,2);
      \draw [thick] (1,2)--(0.2,2.2);
      \draw [thick] (1,1)--(2,1);
      \draw [thick] (2,2)--(2,1);
      \draw [thick] (0,1)--(1,1);
      \draw [fill] (0,0) circle [radius=0.075];
      \draw [fill] (0,1) circle [radius=0.075];
      \draw [fill] (0,2) circle [radius=0.075];
      \draw [fill] (0.2,2.2) circle [radius=0.075];
      \draw [fill] (1,0) circle [radius=0.075];
      \draw [fill] (1,1) circle [radius=0.075];
      \draw [fill] (1,2) circle [radius=0.075];
      \draw [fill] (2,0) circle [radius=0.075];
      \draw [fill] (2,1) circle [radius=0.075];
      \draw [fill] (2,2) circle [radius=0.075];
      \node at (1,-.5) {$G_1$};
      \node at (-.25,2) {${}_1$};
      \node at (2.25,1) {${}_2$};
      \node at (2.25,0) {${}_3$};
      \node at (1.2,.75) {${}_4$};
    \end{tikzpicture}
    \qquad
    \begin{tikzpicture}[scale=0.8]
      \draw [thick] (0,2)--(0,0)--(2,0)--(2,1);
      \draw [thick] (1,0)--(1,2)--(2,2);
      \draw [thick] (1,2)--(0.2,2.2);
      \draw [thick] (1,1)--(1.8,1.2);
      \draw [thick] (2,2)--(2,1);
      \draw [thick] (0,1)--(1,1);
      \draw [fill] (0,0) circle [radius=0.075];
      \draw [fill] (0,1) circle [radius=0.075];
      \draw [fill] (0,2) circle [radius=0.075];
      \draw [fill] (0.2,2.2) circle [radius=0.075];
      \draw [fill] (1,0) circle [radius=0.075];
      \draw [fill] (1,1) circle [radius=0.075];
      \draw [fill] (1,2) circle [radius=0.075];
      \draw [fill] (2,0) circle [radius=0.075];
      \draw [fill] (2,1) circle [radius=0.075];
      \draw [fill] (1.8,1.2) circle [radius=0.075];
      \draw [fill] (2,2) circle [radius=0.075];
      \node at (1,-.5) {$G_2$};
      \node at (-.25,2) {${}_1$};
      \node at (2.25,1) {${}_2$};
      \node at (2.25,0) {${}_3$};
      \node at (1.2,.75) {${}_4$};
    \end{tikzpicture}
    \qquad\!
    \begin{tikzpicture}[scale=0.8]
      \draw [thick] (0,2)--(0,0)--(2,0)--(2,1);
      \draw [thick] (1,0)--(1,2)--(2,2);
      \draw [thick] (1,2)--(0.2,2.2);
      \draw [thick] (1,1)--(1.8,1.2);
      \draw [thick] (2,2)--(2.2,1.2);
      \draw [thick] (0,1)--(1,1);
      \draw [fill] (0,0) circle [radius=0.075];
      \draw [fill] (0,1) circle [radius=0.075];
      \draw [fill] (0,2) circle [radius=0.075];
      \draw [fill] (0.2,2.2) circle [radius=0.075];
      \draw [fill] (1,0) circle [radius=0.075];
      \draw [fill] (1,1) circle [radius=0.075];
      \draw [fill] (1,2) circle [radius=0.075];
      \draw [fill] (2,0) circle [radius=0.075];
      \draw [fill] (2,1) circle [radius=0.075];
      \draw [fill] (1.8,1.2) circle [radius=0.075];
      \draw [fill] (2.2,1.2) circle [radius=0.075];
      \draw [fill] (2,2) circle [radius=0.075];
      \node at (1,-.5) {$G_3$};
      \node at (-.25,2) {${}_1$};
      \node at (2.25,.9) {${}_2$};
      \node at (2.25,0) {${}_3$};
      \node at (1.2,.75) {${}_4$};
    \end{tikzpicture}
    \qquad\!
    \begin{tikzpicture}[scale=0.8]
      \draw [thick] (0,2)--(0,0)--(2,0)--(2,1);
      \draw [thick] (1,0)--(1,2)--(2,2);
      \draw [thick] (1,2)--(0.2,2.2);
      \draw [thick] (1,1)--(1.8,1.2);
      \draw [thick] (2,2)--(2.2,1.2);
      \draw [thick] (0,1)--(0.8,1.2);
      \draw [fill] (0,0) circle [radius=0.075];
      \draw [fill] (0,1) circle [radius=0.075];
      \draw [fill] (0,2) circle [radius=0.075];
      \draw [fill] (0.2,2.2) circle [radius=0.075];
      \draw [fill] (1,0) circle [radius=0.075];
      \draw [fill] (1,1) circle [radius=0.075];
      \draw [fill] (0.8,1.2) circle [radius=0.075];
      \draw [fill] (1,2) circle [radius=0.075];
      \draw [fill] (2,0) circle [radius=0.075];
      \draw [fill] (2,1) circle [radius=0.075];
      \draw [fill] (1.8,1.2) circle [radius=0.075];
      \draw [fill] (2.2,1.2) circle [radius=0.075];
      \draw [fill] (2,2) circle [radius=0.075];
      \node at (1,-.5) {$G_4=T$};
      \node at (-.25,2) {${}_1$};
      \node at (2.25,.9) {${}_2$};
      \node at (2.25,0) {${}_3$};
      \node at (1.2,.75) {${}_4$};
    \end{tikzpicture}
$$
\caption{A treeification sequence; numbers show the first few vertices in the ordering}\label{figTreeify}
\end{figure}
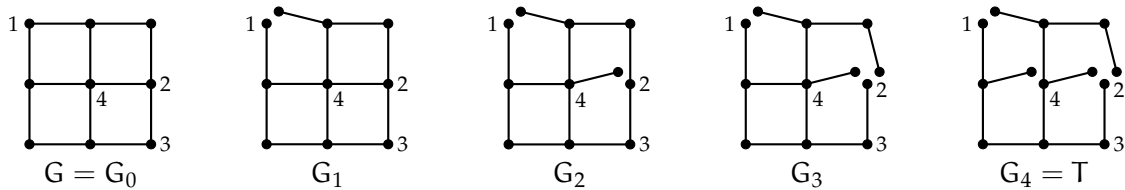

Note that if a vertex $v$ is split
multiple
times when treeifying a graph $G$, these
splits occur contiguously (because of the ordering placed on the vertices of $G$).
Thus, if $v$ is split
$r$ times, there is a contiguous subsequence $G_j,G_{j+1},\ldots,G_{j+r}$
of the treeification sequence
that corresponds to the splitting of $v$.
See Figure~\ref{figTreeify} for an example
of a treeification sequence.

There is a natural way to establish a relationship between tours on different graphs in a treeification sequence $G_0,...,G_t$.
%The graphs in such a sequence are related by \emph{graph homomorphisms} (edge preserving maps).
The treeification process induces \emph{graph homomorphisms} (edge preserving maps) between the graphs in such a sequence.
For all $i<j$, there is a \emph{surjective} %graph
homomorphism from $G_j$ onto $G_i$.
This homomorphism is also \emph{locally injective} since it maps neighbourhoods of $G_j$ injectively into neighbourhoods of $G_i$.
A locally injective map such as this is also known as a \emph{partial cover}.
In particular, for each $j<t$, there is a
partial cover of $G_{j+1}$ onto $G_j$ that maps
the new pendant edge $xv'$ to the edge $xv$ that it replaces.
These homomorphisms
%give us
impart
a natural correspondence between families of tours on different graphs in the treeification sequence, which we
%utilise
employ
%below
later
to determine our bounds.

%If $G_t$ is a treeification of $G$, then the partial cover from $G_t$ onto $G$ satisfies additional local properties (e.g. the pre-image of any vertex contains at most one vertex with degree greater than 1).

Although the concept of treeification is a very natural one,
these particular ``partial covering trees'' do not appear to have been studied before;
%its
their
only previous use seems to be by
Yarkony, Fowlkes and Ihler
to address a problem in
computer vision \cite{YFI2010}.
For a general introduction to graph homomorphisms, see
see the monograph by Hell \& Ne\v{s}e\-t\v{r}il~\cite{HN2004}.
For more on partial maps and other locally constrained graph homomorphisms,
see the survey article by Fiala \& Kratochv\'{\i}l~\cite{FK2008}.

If we have a treeification sequence
$G=G_0,\ldots,G_t=T$ for a connected graph $G$,
we can use a three-stage process to establish a lower or upper bound for a family of tours on $G$.
In the first stage (``splitting once''), we relate the number of tours in the family on $G_j$ ($j<t$) to the number of tours in a related family on $G_{j+1}$.
In the second stage (``fully splitting one vertex''), for a vertex $v$, we consider the subsequence $G_j,\ldots,G_{j+r}$
%of the treeification sequence
that corresponds to the splitting of $v$
and, iterating
the inequality from
the first stage, relate the number of tours on $G_j$ to the number of tours on $G_{j+r}$.
Finally (``fully splitting all vertices''), iterating
the inequality from
the second stage, we relate the number of tours on $G=G_0$ to the number of tours on $G_t=T$, and employ
the bounds on tours on $T$ from Corollary~\ref{corTreeBounds} to determine the bound for the family of tours on $G$.

In Subsection~\ref{sectLowerBound}, we use this three-stage process to produce a lower bound on the value of
$\WB_G((k_i);u)$.
Then, in Subsection~\ref{sectUpperBound}, we use the same three-stage process to establish an upper bound on
$W_G((h_i);u)$.

% ----------------------------------------------------------------
\subsection{The lower bound}\label{sectLowerBound}
Our lower bound is
on the number of \emph{balanced} tours. We only
consider the families in which \emph{every} edge is traversed at least once in each direction. On a connected graph, these families are never empty.
\begin{lemma}
\label{lemmaLowerBound}
If $G$ is a connected graph with $m$ edges and $k_1,\ldots,k_m$ are all positive, then for any vertex $u\in V(G)$,
$$
\WB_G(k_1,k_2,\ldots,k_m;u)
\;\geqslant\;
\prod_{v\in V(G)}\binom
{{k^v_1}+{k^v_2}+\ldots+{k^v_{\dg(v)}}-\dg(v)}
{{k^v_1}\!-\!1,\,{k^v_2}\!-\!1,\,\ldots,\,{k^v_{\dg(v)}}\!-\!1}.
$$
\end{lemma}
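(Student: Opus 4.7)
My plan is to execute the three-stage treeification scheme set up earlier in this section. Fix a treeification sequence $G=G_0,G_1,\ldots,G_t=T$ and bound $\WB_G((k_i);u)$ from below by peeling off split-by-split factors, before finishing with the lower bound of Corollary~\ref{corTreeBounds} applied to $T$.

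The heart of the argument is a \emph{splitting once} inequality. Suppose $G_{l+1}$ is obtained from $G_l$ by splitting a vertex $v$ along edge $e=xv$, creating a new leaf $v'$ joined to $x$ by a pendant edge. If $v$ has degree $\tilde d$ in $G_l$, with incident edge-counts $\tilde{k}^v_1,\ldots,\tilde{k}^v_{\tilde d}$ and $\tilde{k}^v_1$ corresponding to $e$, I aim to show
$$
\WB_{G_l}((k_i);u) \;\geqslant\; \binom{\sum_i(\tilde{k}^v_i-1)}{\tilde{k}^v_1-1}\,\WB_{G_{l+1}}((k_i);u).
$$
The idea is that every tour in $\WWWB_{G_{l+1}}((k_i);u)$ collapses, via the identification $v'\mapsto v$, to a tour in $\WWWB_{G_l}((k_i);u)$ in which each of the $\tilde{k}^v_1$ traversals through $e$ is paired off as part of a ``trivial'' visit $x\!\to\! v\!\to\! x$. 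To construct many distinct $G_l$-tours from one $G_{l+1}$-tour, I enrich it with a \emph{local rearrangement datum} at $v$ that, at certain positions in the visit sequence at $v$, merges adjacent trivial and regular visits into mixed visits of the form $x\!\to\! v\!\to\! e^v_i$ and $e^v_i\!\to\! v\!\to\! x$. A direct count of admissible data should yield the binomial factor above. The hypothesis $k_i>0$ will be used here to ensure that each incident edge contributes at least one regular excursion to anchor these rearrangements.

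To combine the splitting inequalities, order the edges at each $v\in V(G)$ so that the first $s(v)$ are those split away, in the order in which they are split. At the $j$-th split of $v$ the currently incident edges are $e^v_j,\ldots,e^v_{\dg(v)}$ (with their original $k$-values), so the contribution is $\binom{\sum_{i\geqslant j}(k^v_i-1)}{k^v_j-1}$. Iterating across all splits and applying the lower bound of Corollary~\ref{corTreeBounds} to $\WB_T((k_i);u)$ — whose factor at each newly-created leaf is trivially $1$ and whose factor at $v$ itself is $\binom{\sum_{i>s(v)}(k^v_i-1)}{k^v_{s(v)+1}-1,\ldots,k^v_{\dg(v)}-1}$ — the standard multinomial identity
$$
\binom{\sum_i(k^v_i-1)}{k^v_1-1,\ldots,k^v_{\dg(v)}-1} \;=\; \prod_{j=1}^{\dg(v)}\binom{\sum_{i\geqslant j}(k^v_i-1)}{k^v_j-1}
$$
reassembles at each vertex $v$ exactly the factor $\binom{\sum_i k^v_i-\dg(v)}{k^v_1-1,\ldots,k^v_{\dg(v)}-1}$ appearing in the lemma. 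Taking the product over all $v$ finishes the proof.

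The main obstacle is the splitting once inequality. The naive collapse $v'\mapsto v$ is merely an injection $\WWWB_{G_{l+1}}\hookrightarrow\WWWB_{G_l}$, giving only $\WB_{G_l}\geqslant\WB_{G_{l+1}}$ — far too weak. To extract the claimed multiplicative factor, one must describe an explicit local rearrangement operation at $v$, verify that distinct (tour, datum) pairs produce distinct $G_l$-tours, and count the admissible data carefully; reconciling a purely local change at $v$ with the rest of the tour is the delicate point.
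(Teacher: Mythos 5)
Your scaffolding is the right one, and your bookkeeping at the end is actually tidier than the paper's: the per-split factor $\binom{\sum_{i\geqslant j}(k^v_i-1)}{k^v_j-1}$ multiplies against the lower bound of Corollary~\ref{corTreeBounds} on $\WB_T((k_i);u)$ to give \emph{exactly} the multinomial $\binom{\sum_i k^v_i-\dg(v)}{k^v_1-1,\ldots,k^v_{\dg(v)}-1}$ at each vertex, with no further estimates. (The paper proves the stronger single-split factor $\binom{\sum_i\tilde k^v_i-2}{\tilde k^v_1-1}$ --- note that a balanced tour makes $\sum_i\tilde k^v_i$ visits to $v$ --- and then has to weaken it using monotonicity of binomial coefficients before the multinomial reassembles; your factor, with $\tilde d\geqslant 2$ in place of $2$ in the top entry, is implied by the paper's and assembles exactly.)

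The genuine gap is that the splitting-once inequality is the entire content of the lemma, and you have only named it as an obstacle rather than proved it. Worse, the mechanism you sketch --- a ``local rearrangement datum'' that merges an adjacent trivial and regular visit by exchanging their departure edges --- is not obviously a well-defined operation on tours: changing which edge a visit departs along reroutes the rest of the walk, so the change is not local, and it is unclear that the admissible data are counted by the claimed binomial. The paper's device is different. Collapse a $G_{l+1}$-tour to $G_l$ and consider the $\sum_i\tilde k^v_i-1$ excursions from $v$ that separate its visits; partition them into those beginning with a traversal of $e_1$ (there are $\tilde k^v_1$ of these, or $\tilde k^v_1-1$ if the final departure from $v$ is along $e_1$) and the rest. \emph{Re-interleaving} these two subsequences, preserving the internal order of each, always produces a valid balanced tour on $G_l$ with the same traversal counts; exactly one interleaving is the canonical one that lifts back to a tour on $G_{l+1}$; and the number of interleavings is $\binom{\sum_i\tilde k^v_i-1}{\tilde k^v_1}$ or $\binom{\sum_i\tilde k^v_i-1}{\tilde k^v_1-1}$, each at least $\binom{\sum_i\tilde k^v_i-2}{\tilde k^v_1-1}$. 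Injectivity across distinct source tours holds because passing to the canonical interleaving inverts the construction. Until you either reproduce an argument of this kind or make your merging operation precise and count its fibres, the proof is incomplete at its central step.
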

This lower bound does not hold in general for a \emph{disconnected} graph since there are no tours possible if
there is any positive $k_i$
in a component not containing $u$.

\begin{proof}
Let $T$ be some treeification of $G$ with treeification sequence $G=G_0,\ldots,G_t=T$ in which
vertex $u$ is never split.
(This is possible by positioning $u$ last in the ordering on the vertices.)

By exhibiting a surjection from $\WWWB_G((k_i);u)$ onto $\WWWB_T((k_i);u)$ that is consistent with the homomorphism from $T$ onto $G$ induced by the treeification process, we
%relate
determine an inequality relating
%We relate
the number of tours
in the two families.
%in $\WWWB_G((k_i);u)$ to the number of tours in $\WWWB_T((k_i);u)$ by associate.

% ------------------------------------------------
\subsubsection*{I. Splitting once}
Our first stage is to associate a number of tours on $G_j$, in $\WWWB_{G_j}((k_i);u)$, to each tour on $G_{j+1}$, in $\WWWB_{G_{j+1}}((k_i);u)$.

To simplify the notation, let $\Hzero=G_j$ and $H=G_{j+1}$ for some $j<t$.
Let $v$ be the vertex of $\Hzero$ that is split in $H$,
and let $v'$ be the leaf vertex in $H$ added when splitting $v$.
Let $e_1$ be the (only) edge incident to $v'$ in $H$; we also use $e_1$ to refer to the corresponding edge (incident to $v$) in $\Hzero$ (see Figure~\ref{figSplittingLB}).

\begin{figure}[ht]
  $$
    \raisebox{.4in}{$H=G_{j+1}\!:\;\:$}
    \begin{tikzpicture}[scale=1.36,rotate=-36]
      \draw [thick] (1.1,0)--(.2,0) node[midway,below]{$e_1$};
      \draw [thick] (1,.2)--(.55,.2) node[very near end,right]{$\;k_1$};
      \draw [thick] (.55,.2) to [out=180,in=90](.5,.15) to [out=-90,in=180](.55,.1);
      \draw [<-,thick] (1,.1)--(.55,.1);
      \draw [thick] (0,0) -- (.31,.95);
      \draw [thick] (-.63,.36)--(0,0)--(-.57,-.57);
      \draw [thick] (0,0)--(.14,-.79);
      \draw [dashed,thick,] (1,0) to [out=0,in=-45] (1.46,1.06) to [out=135,in=72] (.31,.95);
      \draw [fill] (0,0) circle [radius=0.044];
      \draw [fill] (.2,0) circle [radius=0.044];
      \draw [fill] (1.1,0) circle [radius=0.044];
      \draw [fill] (.31,.95) circle [radius=0.044];
      \node at (-.2,0) {$v\:$};
      \node at (.26,.21) {$v'$};
    \end{tikzpicture}
    \qquad
    \raisebox{.45in}
    %{$\rightarrow$}
    {\begin{tikzpicture}\draw [->,ultra thick](0,0)--(.6,0);\end{tikzpicture}}
    \qquad
    \raisebox{.4in}{$\Hzero=G_j\!:\;\:$}
    \begin{tikzpicture}[scale=1.36,rotate=-36]
      \draw [thick] (1.1,0)--(0,0) node[midway,below]{$e_1$};
      \draw [->,thick] (.95,.1)--(.45,.1);
      \draw [<-,thick] (.95,.2)--(.45,.2) node[very near end,right]{$\;k_1$};
      \draw [thick] (0,0) -- (.31,.95);
      \draw [thick] (-.63,.36)--(0,0)--(-.57,-.57);
      \draw [thick] (0,0)--(.14,-.79);
      \draw [dashed,thick,] (1,0) to [out=0,in=-45] (1.46,1.06)
        to [out=135,in=72] (.31,.95);
      \draw [fill] (0,0) circle [radius=0.044];
      \draw [fill] (1.1,0) circle [radius=0.044];
      \draw [fill] (.31,.95) circle [radius=0.044];
      \node at (-.2,0) {$v\:$};
    \end{tikzpicture}
    \vspace{-3pt}
  $$
  \caption{Tours on $\Hzero$ corresponding to a tour on $H$}\label{figSplittingLB}
\end{figure}
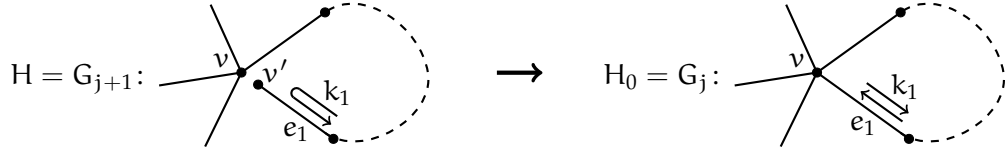
Any tour in $\WWWB_H((k_i);u)$ visits vertex $v$ exactly $\vis{H}{v}$ times and visits vertex $v'$ (along $e_1$)
$k_1$
times. The corresponding tour on $\Hzero$ visits $v$ exactly $\vis{\Hzero}{v}=\vis{H}{v}+k_1$ times. Of these visits there are $k_1$ which arrive along $e_1$ and then depart along $e_1$.

Since $\vis{\Hzero}{v}$ is positive,
separating the visits are $\vis{\Hzero}{v}-1$ excursions from $v$. Depending on whether the final visit to $v$ departs along $e_1$ or not, either $k_1-1$ or $k_1$ of these excursions begin with a traversal of $e_1$; these are interleaved with the other $\vis{H}{v}$ or $\vis{H}{v}-1$ excursions which begin with a traversal of some other edge.

Changing the interleaving of these two sets of excursions (without altering their internal ordering) produces at least
$$
\min\left[\binom{\vis{\Hzero}{v}-1}{k_1-1},\binom{\vis{\Hzero}{v}-1}{k_1}\right]
\;\geqslant\;
\binom{\vis{\Hzero}{v}-2}{k_1-1}
$$
distinct tours in $\WWWB_{\Hzero}((k_i);u)$.

Note that there is only one interleaving of the sets of excursions that corresponds to a valid tour in $\WWWB_H((k_i);u)$: the one in which the excursions beginning with a traversal of $e_1$ away from $v$ are arranged so they occur immediately following a traversal of $e_1$ towards~$v$.

Hence we can deduce that
\begin{equation}\label{eqUnsplitOnce}
\WB_{\Hzero}((k_i);u)
\;\geqslant \;
\binom{\vis{\Hzero}{v}-2}{k_1-1}\WB_H((k_i);u).
\end{equation}

% ------------------------------------------------
\subsubsection*{II. Fully splitting one vertex}
For a given vertex $v$, let
$H_0,H_1,\ldots,H_r$
be the subsequence of graphs that corresponds to the splitting of $v$.
In our second stage, we
relate the number of tours on $H_0$ to the number of tours on $H_r$.

Note that
$\vis{H_0}{v}=\vis{G}{v}$
and
$\vis{H_r}{v}=\vis{T}{v}$
since the splitting of other vertices cannot affect the number of visits to $v$.

Let $e_1,\ldots,e_r$ be the new pendant edges in $H_r$, and hence also in $T$, added when $v$ is split, and let $e_1,\ldots,e_r$ also denote the corresponding edges in $G$.
Then
%we have
$\vis{H_{i-1}}{v}=\vis{H_i}{v}+k_i$
for $1\leqslant i\leqslant r$,
and thus
$\vis{H_{i-1}}{v}
=\vis{T}{v}+k_i+k_{i+1}+\ldots+k_r$,
and in particular
$\vis{G}{v}
=\vis{T}{v}+k_1
+\ldots+k_r$.

Hence, by iterating inequality~\eqref{eqUnsplitOnce}, %we have
\begin{align}\label{eqUnsplitVertex}
\WB_{H_0}((k_i);u)
\;\geqslant\;\; &
\prod_{i=1}^r\binom{\vis{H_{i-1}}{v}-2}{k_i-1}
\+\WB_{H_r}((k_i);u) \nonumber\\[10pt]
=\;\; &
\prod_{i=1}^r\binom{\vis{T}{v}+\big(\sum_{j=i}^r k_j\big)-2}{k_i-1}
\+\WB_{H_r}((k_i);u) \nonumber\\[10pt]
\geqslant\;\; &
\prod_{i=1}^r\binom{\vis{T}{v}+\big(\sum_{j=i}^r (k_j-1)\big)-1}{k_i-1}
\+\WB_{H_r}((k_i);u) \nonumber\\[10pt]
=\;\; &
\binom{\vis{G}{v}-(r+1)}{\vis{T}{v}-1,\,k_1-1,\,k_2-1,\,\ldots,\,k_r-1}
\+\WB_{H_r}((k_i);u).
\end{align}

% ------------------------------------------------
\subsubsection*{III. Fully splitting all vertices}
Finally, our third stage is to relate the number of tours on $G$ to the number
of tours on $T$ and then apply the tree bounds to establish the required lower bound.

For each $v\in V(G)$, let $r(v)$ be the number of times $v$ is split.
Note that $r(v)$ is less than the degree of $v$ in $G$ since $\dg_G(v)=\dg_T(v)+r(v)$.

Thus,
with a suitable indexing of the edges around each vertex,
if we
iterate inequality~\eqref{eqUnsplitVertex}
and combine with the lower bound on $\WB_{T}((k_i);u)$ from Corollary~\ref{corTreeBounds}, we get
\begin{align*}
\WB_G((k_i);u)
\;\;\geqslant\;\; &
\prod_{v\in V(G)}
\binom{\vis{G}{v}-(r(v)+1)}{\vis{T}{v}-1,\,{k^v_1}-1,\ldots,{k^v_{r(v)}}-1}
\+\WB_{T}((k_i);u) \\[10pt]
\geqslant\;\; &
\prod_{v\in V(G)}
\binom{\vis{G}{v}-(r(v)+1)}{\vis{T}{v}-1,\,{k^v_1}-1,\ldots,{k^v_{r(v)}}-1}
\binom{\vis{T}{v}-\dg_T(v)}{{k^v_{r(v)+1}}\!-\!1,\ldots,{k^v_{\dg_G(v)}}\!-\!1} \\[10pt]
\;\;\geqslant\;\; &
\prod_{v\in V(G)}
\binom{\vis{G}{v}-\dg_G(v)}{{k^v_1}-1,\,{k^v_2}-1,\ldots,{k^v_{\dg_G(v)}}-1}
\end{align*}
%as desired.
concluding the proof of Lemma~\ref{lemmaLowerBound}.
\end{proof}

% ----------------------------------------------------------------
\subsection{The upper bound}\label{sectUpperBound}
Our upper bound applies to arbitrary families of tours $\WWW_G((h_i);u)$, without any restriction on the values of the $h_i$.
Subsequently, we apply this result to families of tours of even length.
\begin{lemma}
\label{lemmaUpperBound}
If $G$ is a connected graph with $m$ edges and $u$ is any vertex of $G$,
then
$$
W_G(h_1,h_2,\ldots,h_m;u)
\; \leqslant \;
(h+2m)^m
\prod\limits_{v\in V(G)}\binom
{{k^v_1}+{k^v_2}+\ldots+{k^v_{\dg(v)}}}
{{k^v_1},\,{k^v_2},\,\ldots,\,{k^v_{\dg(v)}}}
$$
for some
$k_i\in[\half h_i, \half h_i + m]$ $(1\leqslant i\leqslant m)$,
where $h=h_1+\ldots+h_m$ is the length of the tours in the family
and
$k^v_1,k^v_2,\ldots,k^v_{\dg(v)}$ are the $k_i$
corresponding to
edges incident to $v$.
\end{lemma}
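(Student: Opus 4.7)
The plan is to mirror the three-stage treeification argument of Lemma~\ref{lemmaLowerBound}, but with inequalities reversed. Fix a treeification sequence $G = G_0, G_1, \ldots, G_t = T$ in which $u$ is placed last in the vertex ordering, so that $u$ is never split. The goal is to exhibit, through the stages of splitting, an inequality that bounds $W_G((h_i);u)$ in terms of $W_T((h'_i);u)$ times a polynomial overhead in $h$ and $m$, and then invoke Corollary~\ref{corTreeBounds} for the tree case, noting that every tour on $T$ is automatically balanced, and that $\WWW_T((h'_i);u) = \varnothing$ unless every $h'_i$ is even, in which case $\WWW_T((h'_i);u) = \WWWB_T((h'_i/2);u)$.

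\textbf{Stage~I} (splitting once) is the heart of the argument. Let $v$ be the vertex of $H_0 = G_j$ split to form $H = G_{j+1}$, with $e_1$ the pendant edge now incident to the new leaf $v'$. A tour $\tau \in \WWW_{H_0}((h_i);u)$ traverses $e_1$ exactly $h_1$ times at $v$, say $a$ times in the direction $v \to x$ and $b = h_1 - a$ times in the direction $x \to v$; crucially, $a$ need not equal $b$. A tour on $H$ that projects to $\tau$ would need every $e_1$-traversal to come in a matched out--back pair at the leaf $v'$, so not every $\tau$ lifts. The plan is to greedily pair consecutive $v \to x \to v$ excursions via $e_1$ in $\tau$ into pairs corresponding to visits to $v'$ in $H$, and to record the positions of the unpaired traversals among the visits to $v$. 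Since each unpaired traversal admits at most $h+2m$ positional choices and the total number of unpaired traversals across all edges is at most $h$, one obtains a single-split inequality of the form
\[
  W_{H_0}((h_i);u) \;\leqslant\; (h+2m)\,\frac{\displaystyle\binom{\vis{H_0}{v}}{k_1, k^v_2, \ldots, k^v_{\dg(v)}}}{\displaystyle\binom{\vis{H}{v}}{k^v_2, \ldots, k^v_{\dg(v)}}}\; W_H((h'_i);u),
\]
with $k_1 \in [h_1/2, h_1/2 + 1]$ and an appropriate $(h'_i)$ on $H$.

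\textbf{Stages~II and III} (fully splitting one vertex, then all vertices) proceed by iteration, telescoping the multinomials via the same binomial identity used in Lemma~\ref{lemmaLowerBound}. Splitting a single vertex $v$ its full $r(v)$ times collapses the iterated bound into a single factor $\binom{\vis{G}{v}}{k^v_1, \ldots, k^v_{\dg(v)}}$, with polynomial overhead at most $(h+2m)^{r(v)}$. Iterating across all vertices yields the product $\prod_v \binom{\vis{G}{v}}{k^v_1, \ldots, k^v_{\dg(v)}}$ with total overhead $(h+2m)^{\sum_v r(v)} \leqslant (h+2m)^m$, since $\sum_v r(v) = t \leqslant m$. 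The tree base case is handled by Corollary~\ref{corTreeBounds}. The final $k_i$ lie in $[h_i/2, h_i/2 + m]$ because each of the at most $m$ splittings that can affect edge $e_i$ shifts its half-traversal count by at most~$1$.

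The principal obstacle is Stage~I. In the lower-bound setting, Lemma~\ref{lemmaLowerBound} benefits from the fact that each balanced tour on $H$ lifts to \emph{many} balanced tours on $H_0$ merely by reinterleaving excursions. Here we must compress the genuinely richer set of tours on $H_0$ (in which $e_1$ may be traversed asymmetrically at $v$) into decorated tours on $H$, and the decoration required to record the unpaired traversals must grow only polynomially in $h$ and $m$. Ensuring this polynomial growth, and simultaneously keeping the shifted half-traversal count $k_1$ within the allowed range at each single-split step, is where the genuine combinatorial work lies.
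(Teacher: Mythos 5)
Your high-level plan — mirror the three-stage treeification argument of Lemma~\ref{lemmaLowerBound} with the inequalities reversed, handle the tree at the end via Corollary~\ref{corTreeBounds}, and control the drift of the $k_i$ by the number of splits — is exactly the paper's strategy, and Stages~II and~III as you outline them telescope the multinomial ratios correctly. But you have not actually carried out Stage~I, and the sketch you give for it does not work as stated. The difficulty is that a tour $\tau$ on $H_0$ may traverse $e_1$ an unbalanced number of times in each direction (say $a$ times $v\to x$ and $b=h_1-a$ times $x\to v$ with $a\neq b$), whereas every traversal of $e_1$ on $H$ must be a matched out-and-back pair at the new leaf $v'$. ``Greedily pairing'' the matched excursions and ``recording the positions of the unpaired traversals'' does not produce a tour on $H$ at all — you cannot simply delete the unpaired traversals without breaking the closed walk, and you cannot keep them because $v'$ is a leaf. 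Moreover your own counting is internally inconsistent: you say the number of unpaired traversals could be as large as $h$ and each admits up to $h+2m$ positional choices, which would give an overhead of $(h+2m)^h$, yet your claimed single-split inequality has only a single factor of $(h+2m)$.

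What the paper actually does in Stage~I is a genuine \emph{modification} of $\tau$, not a decoration of it. It classifies the excursions from $v$ into four types according to whether they start and/or end with a traversal of $e_1$, then (1) reverses the direction of some excursions to bring the two directional counts within one of each other, (2) appends one or two extra excursions that go all the way around a cycle $C$ through $e_1$ (this is where the connectedness/cycle structure of $G$ enters essentially, and it is why the other $h_i$ drift by at most $2$ at each split), and (3) reinterleaves the $1$-initial excursions with the $*$-initial excursions so every inbound $e_1$-traversal is immediately followed by an outbound one. Only after this modification is the result a valid tour on $H$, and the counting then shows each target tour on $H$ has at most $2k_1\binom{\vis{H}{v}+k_1}{k_1}$ preimages, with $k_1=\floor{h_1/2}+1$. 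To repair your proof, you would need to replace ``record unpaired traversals'' with this kind of active rebalancing — some mechanism for absorbing the directional imbalance of $e_1$ into small changes in the other edge counts — and then verify that the number of ways to invert the modification is polynomial in $h_1$, not in $h$.
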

\begin{proof}
Let $T$ be some treeification of $G$ with treeification sequence $G=G_0,\ldots,G_t=T$ in which
vertex $u$ is never split.
(This is possible by positioning $u$ last in the ordering on
the vertices.)

We relate the number of (arbitrary) tours in $\WWW_G((h_i);u)$ to the number of (balanced) tours in $\WWWB_T((k_i);u)$, for some $k_i$ not much greater than $\half h_i$.
This is achieved by exhibiting a surjection from $\WWWB_T((k_i);u)$ onto $\WWW_G((h_i);u)$
that is consistent with the
homomorphism from $T$ onto $G$ induced by the treeification process.

The proof is broken down into the same three stages as for the proof of the lower bound.
Initially, we restrict ourselves to the case in which all the $\vis{G}{v}$ are positive.
The case of unvisited vertices is addressed in an additional stage at the end. % in Subsection~\ref{sectUnvisitedVertices}.

% ------------------------------------------------
\subsubsection*{I. Splitting once}
Our first stage is to associate to each tour on $G_j$ a number of tours on $G_{j+1}$.
However, unlike in the proof of the lower bound, the relationship is not between classes with the same parameterisation.
Rather, we relate tours in $\WWW_{G_j}((h_i);u)$
to slightly longer tours in
$\WWW_{G_{j+1}}((h'_i);u)$, for some $h'_i$ such that,
for each $i$, $h_i \leqslant h'_i \leqslant h_i + 2$.

As we did for the lower bound, let $\Hzero=G_j$ and $H=G_{j+1}$ for some $j<t$.
Let $v$ be the vertex of $\Hzero$ that is split in $H$,
and let $v'$ be the leaf vertex in $H$ added when splitting $v$.

Again, let $e_1$ be the (only) edge incident to $v'$ in $H$; we also use $e_1$ to refer to the corresponding edge (incident to $v$) in $\Hzero$.

Let
$C$ be some cycle in $\Hzero$
containing $e_1$, and let
$e_2$ be the other edge on $C$ that is incident to $v$ (in both $\Hzero$ and $H$).

\begin{figure}[ht]
  $$
    \raisebox{.4in}{$\Hzero=G_j\!:\;\:$}
    \begin{tikzpicture}[scale=1.36,rotate=-36]
      \draw [thick] (1.1,0)--(0,0) node[midway,below]{$e_1$};
      \draw [<->,thick] (.95,.1)--(.45,.1) node[very near end,right]{$\;h_1$};
      \draw [thick] (0,0) -- (.31,.95) node[midway,above]{$e_2\;$};
      \draw [thick] (-.63,.36)--(0,0)--(-.57,-.57);
      \draw [thick] (0,0)--(.14,-.79);
      \draw [dashed,thick,] (1,0) to [out=0,in=-45] (1.46,1.06) node[left]{$C$} to [out=135,in=72] (.31,.95);
      \draw [fill] (0,0) circle [radius=0.044];
      \draw [fill] (1.1,0) circle [radius=0.044];
      \draw [fill] (.31,.95) circle [radius=0.044];
      \node at (-.2,0) {$v\:$};
    \end{tikzpicture}
    \qquad
    \raisebox{.45in}
    %{$\Rightarrow$}
    {\begin{tikzpicture}\draw [->,ultra thick](0,0)--(.6,0);\end{tikzpicture}}
    \qquad
    \raisebox{.4in}{$H=G_{j+1}\!:\;\:$}
    \begin{tikzpicture}[scale=1.36,rotate=-36]
      \draw [thick] (1.1,0)--(.2,0) node[midway,below]{$e_1$};
      \draw [thick] (1,.2)--(.55,.2) node[very near end,right]{$\;k_1$};
      \draw [thick] (.55,.2) to [out=180,in=90](.5,.15) to [out=-90,in=180](.55,.1);
      \draw [<-,thick] (1,.1)--(.55,.1);
      \draw [thick] (0,0) -- (.31,.95) node[midway,above]{$e_2\;$};
      \draw [thick] (-.63,.36)--(0,0)--(-.57,-.57);
      \draw [thick] (0,0)--(.14,-.79);
      \draw [dashed,thick,] (1,0) to [out=0,in=-45] (1.46,1.06) to [out=135,in=72] (.31,.95);
      \draw [fill] (0,0) circle [radius=0.044];
      \draw [fill] (.2,0) circle [radius=0.044];
      \draw [fill] (1.1,0) circle [radius=0.044];
      \draw [fill] (.31,.95) circle [radius=0.044];
      \node at (-.2,0) {$v\:$};
      \node at (.26,.21) {$v'$};
    \end{tikzpicture}
    \vspace{-3pt}
  $$
  \caption{Tours on $H$ corresponding to a tour on $\Hzero$; $k_1=\floor{\half h_1}+1$}\label{figSplittingUB}
  \end{figure}

Given a tour on $\Hzero$, we want to modify it so that the result is a valid tour on $H$. For a tour on $\Hzero$ to be valid on $H$, each traversal of $e_1$ towards $v$ must be immediately followed by a traversal of $e_1$ from $v$. See Figure~\ref{figSplittingUB}.

%\newpage
\begin{samepage}
To achieve this, we make three kinds of changes to excursions from $v$: \vspace{-12pt}
\begin{enumerate}
  \itemsep0pt
  \item Reverse the direction of some of the excursions.
  \item Add one or two additional excursions (around $C$).
  \item Modify the interleaving of excursions.
\end{enumerate}
\vspace{-6pt}
\end{samepage}

\begin{samepage}
To manage the details, given a tour on $\Hzero$, we consider the
$\vis{\Hzero}{v}-1$
excursions from $v$ to be partitioned into subsets as follows:
\begin{center}
\begin{tabular}{rl}
  {\xx{}}: & $a_0$ excursions that don't traverse $e_1$ at all \\[3pt]
  {\ox{}}: & $a_1$ excursions that begin but don't end with a traversal of $e_1$ \\[3pt]
  {\xo{}}: & $a_2$ excursions that end but don't begin with a traversal of $e_1$ \\[3pt]
  {\oo{}}: & $a_3$ excursions that both begin and end with traversals of $e_1$
\end{tabular}
\end{center}
\end{samepage}
We also refer to \ox{} and \oo{} excursions as \emph{1-initial}, and \xx{} and \xo{} excursions as \emph{\xinit{}}.

We refer to the edge traversed in arriving for the first visit to $v$ as the \emph{arrival edge}
and to the edge traversed in departing from the last visit to $v$ as the \emph{departure edge}.
We call their traversals the \emph{arrival} and the \emph{departure} respectively.
To account for these,
we define $a^+_1$ to be $a_1+1$ if the
departure edge is $e_1$
and to be $a_1$ otherwise,
and define $a^+_2$ to be $a_2+1$ if the
arrival edge is $e_1$
and to be $a_2$ otherwise.

So, to transform a tour on $\Hzero$ into one on $H$, we perform the following three steps: \vspace{-6pt}
\begin{enumerate}
  \item If $a^+_2>a_1+1$, reverse the direction of the last $\floor{\half(a^+_2-a_1)}$ of the \xo{} excursions (making them \ox{}). \\
      On the other hand, if $a^+_2<a_1$, reverse the direction of the last $\ceil{\half(a_1-a^+_2)}$ of the \ox{} excursions (making them \xo{}). \\
      Update the values of
      $a_1$ and $a_2$
      to reflect
      these reversals; we now have $a^+_2=a_1$ or $a^+_2=a_1+1$.
  \item If $a^+_1+a^+_2$ is even ($h_1$ is even) or $a^+_1=a_1$
      (the departure edge \emph{isn't} $e_1$),
      add a new \ox{} excursion consisting of a tour around the cycle $C$ (returning to $v$ along $e_2$); this should be added following all the existing excursions. \\
      Also, if $a^+_1+a^+_2$ is even
      ($h_1$ is even)
      or $a^+_1=a_1+1$
      (the departure edge \emph{is} $e_1$),
      add a new \xo{} excursion consisting of a tour around the cycle $C$ (departing from $v$ along $e_2$); this should be added following all the existing excursions. \\
      Update the values of
      $a_1$ and $a_2$
      to reflect the presence of the new excursion(s); we now have $a^+_2=a^+_1$.
  \item Change the interleaving of
      the 1-initial excursions
      with
      the \xinit{} excursions
      so that
      each visit to $v$ along $e_1$ returns immediately along $e_1$.
      This is always possible (see below) and there is only one way of doing it.
      We now have a valid tour on $H$.
\end{enumerate}
\begin{figure}[ht]
\newcommand{\gp}{@{$\:\,$}}
\newcommand{\mb}[1]{$\:\!\!$\textbf{#1}$\:\!\!$}
\newcommand{\mi}[1]{$\:\!\!$\emph{#1}$\:\!\!$}
\newcommand{\mbi}[1]{$\!\!$\textbf{\emph{#1}}$\!\!$}
\begin{center}
\begin{tabular}
{|l|c\gp|\gp c\gp|\gp c\gp|\gp c\gp|\gp c\gp|\gp c\gp|\gp c\gp|\gp c\gp|\gp c\gp|\gp c\gp|\gp c\gp|\gp c\gp|\gp c\gp|\gp c|}
\hline
      &-1&3-2&3-1&\mb{1-1}&3-1&2-1&2-1&3-2&\mb{1-3}&3-1&2-2&\mb{1-1}&&2- \\ \hline
Step~1&-1&3-2&3-1&\mb{1-1}&3-1&2-1&\mbi{1-2}&3-2&\mb{1-3}&\mbi{1-3}&2-2&\mb{1-1}&&2- \\ \hline
Step~2&-1&3-2&3-1&\mb{1-1}&3-1&2-1&\mb{1-2}&3-2&\mb{1-3}&\mb{1-3}&2-2&\mb{1-1}&\mb{1-2}&2- \\ \hline
Step~3&-1&\mb{1-1}&\mb{1-2}&3-2&3-1&\mb{1-3}&3-1&\mb{1-3}&2-1&\mb{1-1}&\mb{1-2}&3-2&2-2&2- \\ \hline
\end{tabular}
\end{center}
\begin{center}
\begin{tabular}
{|l|c\gp|\gp c\gp|\gp c\gp|\gp c\gp|\gp c\gp|\gp c\gp|\gp c\gp|\gp c\gp|\gp c\gp|\gp c\gp|\gp c\gp|\gp c\gp|\gp c\gp|\gp c\gp|\gp c\gp|\gp c|}
\hline
      &-3&\mb{1-1}&\mb{1-3}&\mb{1-3}&3-1&\mb{1-2}&3-3&\mb{1-2}&2-1&\mb{1-1}&3-1&2-3&\mb{1-1}&&&1- \\ \hline
Step~1&-3&\mb{1-1}&\mb{1-3}&\mb{1-3}&3-1&\mb{1-2}&3-3&\mi{2-1}&2-1&\mb{1-1}&3-1&2-3&\mb{1-1}&&&1- \\ \hline
Step~2&-3&\mb{1-1}&\mb{1-3}&\mb{1-3}&3-1&\mb{1-2}&3-3&2-1&2-1&\mb{1-1}&3-1&2-3&\mb{1-1}&\mb{1-2}&2-1&1- \\ \hline
Step~3&-3&3-1&\mb{1-1}&\mb{1-3}&3-3&2-1&\mb{1-3}&2-1&\mb{1-2}&3-1&\mb{1-1}&\mb{1-1}&\mb{1-2}&2-3&2-1&1- \\ \hline
\end{tabular}
\vspace{-3pt}
\end{center}
\caption{Two examples of transforming tours by modifying excursions}\label{figModifyExcursions}
\end{figure}
Figure~\ref{figModifyExcursions} shows two examples of this process. The two-digit entries in the table represent the initial and final edges traversed by excursions
from $v$;
the single-digit entries give the
arrival and departure edges;
$e_3$ is an additional edge incident to $v$.
1-initial excursions (whose interleaving with the
\xinit{}
excursions is modified by Step~3) are shown in bold.
In Step~1, excursions which are reversed are shown in italics.

% ------------------------------------------------
\vspace{-9pt}
\subsubsection*{Validation of Step~3}
\vspace{-9pt}
If we consider the
1-initial excursions and the \xinit{} excursions as two separate lists,
with the
\xinit{} excursions
(together with the arrival and departure)
as ``fixed'', then we can insert
1-initial excursions into the list of \xinit{} excursions as follows: \vspace{-12pt}
\begin{quote}
Following
each
\xo{} excursion
(and the arrival if it is along $e_1$),
place the next unused \ox{} excursion
together with any unused 1-1 excursions that precede it.
\end{quote}
\vspace{-12pt}
This procedure is successful, and ensures that
each visit to $v$ along $e_1$ returns immediately along $e_1$ as along as
the number of traversals of $e_1$ towards $v$ equals the number of traversals of $e_1$ away from $v$, unless either
\vspace{-12pt}
\begin{itemize}
  \itemsep0pt
  \item the departure edge is \emph{not} $e_1$ and the last 1-initial excursion is a 1-1 excursion (the minimal example being~\mbox{-2~1-1~2-}, using the notation of Figure~\ref{figModifyExcursions}), or
  \item the departure edge \emph{is} $e_1$ and the last \xinit{} excursion is a \xx{} excursion
   (the minimal example being
   \mbox{-1~2-2~1-}).
\end{itemize}
\vspace{-12pt}
The rules controlling the addition of new final \xo{} and \ox{} excursions in Step~2 guarantee both that the number of traversals of $e_1$
towards $v$ is the same as the number of traversals of $e_1$ away from $v$, and also that neither of the two exceptional cases occur. Thus Step~3 is always valid.

% ------------------------------------------------
\vspace{-9pt}
\subsubsection*{Counting}
\vspace{-9pt}
Step~2 can add at most two additional excursions from $v$ (around $C$), so
given a tour in $\WWW_{\Hzero}((h_i);u)$, this process produces a tour in $\WWW_H(2k_1,h'_2,\ldots,h'_m;u)$
where $k_1=\floor{\half h_1}+1$,
and for each $i$, $h_i \leqslant h'_i \leqslant h_i + 2$.

After completing Step~1, there are $a_1+a_2+1$ ways in which it could be undone (reverse no more than $a_1$ \ox{} excursions, reverse no more than $a_2$ \xo{} excursions, or do nothing). Since $h_1=a_1+a_2+2a_3$, this does not exceed $h_1+1$.

Also, after Step~3, there are either $k_1$ or $k_1-1$ excursions that begin with a traversal of $e_1$ that could, prior to the step, have been arbitrarily interleaved with those that don't.

Thus we see that there are
no more than
$$
(h_1+1)\max\!\left[\binom{\vis{H}{v}+k_1-1}{k_1},\binom{\vis{H}{v}+k_1-1}{k_1-1}\right]
\;\leqslant\;
2\+k_1\binom{\vis{H}{v}+k_1}{k_1}
$$
distinct tours in
$\WWW_{\Hzero}((h_i);u)$ that generate any specific tour in $\WWW_H(2k_1,h'_2,\ldots,h'_m;u)$.

Hence, % we have
\begin{equation}\label{eqSplitOnce}
W_{\Hzero}((h_i);u)
\;\leqslant\;
2\+k_1\binom{\vis{H}{v}+k_1}{k_1} W_H(2k_1,h'_2,\ldots,h'_m;u).
\end{equation}

Note also that either $\vis{H_0}{v}=\vis{H}{v}+k_1-2$ or $\vis{H_0}{v}=\vis{H}{v}+k_1-1$ (depending on whether $h_1$ is even or odd), and so
\begin{equation}\label{eqSplitOnceVisits}
\vis{H_0}{v}<\vis{H}{v}+k_1.
\end{equation}
Furthermore, $\vis{H}{v}$ is positive, since
the additional
excursion(s) ensure that $h'_2$ is positive.

% ------------------------------------------------
\subsubsection*{II. Fully splitting one vertex}
For a given vertex $v$, let
$H_0,H_1,\ldots,H_r$
be the subsequence of graphs that corresponds to the splitting of $v$.
In the second stage of our proof, we
relate the number of tours on $H_0$ to the number of tours on $H_r$.

Note again that
$\vis{H_0}{v}=\vis{G}{v}$
and
$\vis{H_r}{v}=\vis{T}{v}$
since the splitting of other vertices cannot affect the number of visits to $v$.

We assume that $\vis{G}{v}$ is positive, and hence that $\vis{H_0}{v},\ldots,\vis{H_r}{v}=\vis{T}{v}$ are all positive too.

Let $e_1,\ldots,e_r$ be the new pendant edges in $T$ added when $v$ is split, and let $e_1,\ldots,e_r$ also denote the corresponding edges in $G$.
Then, by~\eqref{eqSplitOnceVisits},
for some $k_1,\ldots,k_r$ such that $\half h_i\leqslant k_i\leqslant \half h_i+i$,
we have
$
\vis{H_{i-1}}{v}\;<\;\vis{H_i}{v}+k_i$,
and thus
$$\vis{H_{i-1}}{v}
\;<\;\vis{T}{v}+k_i+
\ldots+k_r.$$
Hence, by iterating inequality~\eqref{eqSplitOnce},
if
$h'_i=2k_i$ for $1\leqslant i\leqslant r$,
then for some $h'_{r+1},\ldots,h'_m$ such that $h_i\leqslant h'_i\leqslant h_i+2\+r$,
%we have
\begin{align}\label{eqSplitVertex}
W_{H_0}((h_i);u)
\;\leqslant\;\; &
2^r
\+\Bigg(\prod_{i=1}^r k_i\+ \binom{\vis{H_i}{v}+k_i}{k_i}\!\Bigg)
\+W_{H_r}((h'_i);u)
\nonumber \\[10pt]
< \;\; &
2^r
\+\Bigg(\prod_{i=1}^r k_i\+ \binom{\vis{T}{v}+\sum_{j=i}^r k_j}{k_i}\!\Bigg)
\+W_{H_r}((h'_i);u)
\nonumber \\[10pt]
= \;\; &
2^r
\+\Big(\prod_{i=1}^rk_i\Big)
\+\binom{\vis{T}{v}+
\sum_{i=1}^r k_i
}{\vis{T}{v},\,k_1,\,\ldots,\,k_r}
\+W_{H_r}((h'_i);u).
\end{align}

% ------------------------------------------------
\subsubsection*{III. Fully splitting all vertices}
In the third stage of the proof, we relate the number of tours on $G$ to the number
of tours on $T$ and then apply the tree bounds to establish the required upper bound
for the case in which all the $\vis{G}{v}$ are positive.

For each $v\in V(G)$, let $r(v)$ be the number of times $v$ is split.
Also, let $h=h_1+\ldots+h_m$ be the length of the tours in $\WWW_G((h_i);u)$.

Thus,
with a suitable indexing of the edges around each vertex,
if we
iterate inequality~\eqref{eqSplitVertex} and combine with the upper bound on $\WB_{T}((k_i);u)$ from Corollary~\ref{corTreeBounds},
we get,
for some $k_1,\ldots,k_m$ such that $\half h_i\leqslant k_i\leqslant \half h_i+m$,
\begin{align*}
W_G((h_i);u)
\;\leqslant\;\; &
2^m
\+\Big(\prod_{i=1}^m k_i\Big)
\prod_{v\in V(G)}
\binom{\vis{T}{v}+
\sum_{i=1}^{r(v)} k^v_i
}{\vis{T}{v},\,k^v_1,\,\ldots,\,k^v_{r(v)}}
\+\WB_T((k_i);u) \\[10pt]
\leqslant\;\; &
(h+2m)^m
\prod_{v\in V(G)}
\binom{\vis{T}{v}+
\sum_{i=1}^{r(v)} k^v_i
}{\vis{T}{v},\,k^v_1,\,\ldots,\,k^v_{r(v)}}
\+\binom{\vis{T}{v}}{{k^v_{r(v)+1}},\,\ldots,\,{k^v_{\dg_G(v)}}} \\[10pt]
=\;\; &
(h+2m)^m
\prod_{v\in V(G)}
\binom
{k^v_1+\ldots+k^v_{\dg(v)}}
{k^v_1,\,\ldots,\,k^v_{\dg(v)}},
\end{align*}
using the fact
that for each $i$, we have $k_i\leqslant \half h+m$.

% ------------------------------------------------
\subsubsection*{IV. Unvisited vertices}%\label{sectUnvisitedVertices}
Thus we have the desired result for the case in which all the $\vis{G}{v}$ are positive.
To complete the proof, we consider families of tours in which some of the vertices are not visited.

If not all the $\vis{G}{v}$ are positive, then
let $G^+$ be the subgraph of $G$ induced by the vertices actually visited by tours in $\WWW_G((h_i);u)$.
Then
$W_G((h_i);u)=W_{G^+}((h_i);u)$. But we know that
\begin{align*}
W_{G^+}((h_i);u)
\;\leqslant\;\; &
(h+2m)^m
\prod_{v\in V(G^+)}
\binom
{k^v_1+\ldots+k^v_{\dg(v)}}
{k^v_1,\,\ldots,\,k^v_{\dg(v)}} \\[10pt]
\;\leqslant\;\; &
(h+2m)^m
\prod_{v\in V(G)}
\binom
{k^v_1+\ldots+k^v_{\dg(v)}}
{k^v_1,\,\ldots,\,k^v_{\dg(v)}}
\end{align*}
because the inclusion of
the unvisited vertices in
$V(G)\setminus V(G^+)$ cannot decrease the value of the product.
So the bound holds for any family $\WWW_G((h_i);u)$.

This concludes the proof of Lemma~\ref{lemmaUpperBound}.
\end{proof}

% ----------------------------------------------------------------
\subsection{Tours of even length}
In this subsection, we consider the family of all tours of \emph{even length} on a graph and prove that it grows at the same rate as the more restricted family of all balanced tours.

To do this, we make use of the fact
that the growth rate of a collection of objects does not change if we make
``small'' changes to what we are counting:
\begin{obs}\label{obsGRPolySum}
If $\SSS$ is a collection of objects, containing $S_k$ objects of each size $k$, that has a finite growth rate, then for any positive polynomial $P$ and fixed non-negative integers $d_1,d_2$ with $d_1\leqslant d_2$,
$$
%\limkinfty \Big(P(k)\sum_{j\,=\,k+d_1}^{k+d_2}|\SSS_j|\Big)^{1/k}
\limkinfty \Big(P(k)\sum_{j\,=\,k+d_1}^{k+d_2}\!S_j\Big)^{1/k}
\;=\;
%\limkinfty|\SSS_k|^{1/k}
\limkinfty S_k^{1/k}
\;=\;
\gr(\SSS)
.
$$
\end{obs}
This follows directly from the definition of the growth rate and is a generalisation of an observation made in Section~\ref{sectAnalyticCombin}.
We also use this observation when we consider the relationship between permutation grid classes and families of tours on graphs in the next section.

We can employ our upper bound for $W_G((h_i);u)$ to
give us an upper bound for tours of a
specific even length.
We use
$W_G(h;u)$ to denote the number of tours of length $h$
starting and ending at vertex $u$.
\begin{lemma}\label{lemmaUpperBoundNew}
If $G$ is a connected graph with $m$ edges and $u$ is any vertex of $G$, then
the number of tours of length $2k$ on $G$ starting and ending at vertex $u$ is bounded above as follows:
$$
W_G(2k;u)
\;\leqslant\;
(m+1)^m\+
(2k+2m)^m
\displaystyle\sum_{j=k}^{k+m^2}
%\sum\limits_{\sum\limits_{i=1}^m k_i\,=\,j}
\sum\limits_{k_1+\ldots+k_m\,=\,j}
\+\+
\prod\limits_{v\in V(G)}\dbinom{{k^v_1}+
\ldots+{k^v_{\dg(v)}}}
{{k^v_1},\,
\ldots,\,{k^v_{\dg(v)}}}.
$$
\end{lemma}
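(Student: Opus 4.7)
The plan is to deduce this bound from Lemma~\ref{lemmaUpperBound} by summing over all compositions of $2k$ into $m$ non-negative parts and then reorganising the sum over the auxiliary tuples $(k_i)$ rather than the tuples $(h_i)$. First I would observe that, since a tour of length $2k$ uses some non-negative distribution $(h_1,\ldots,h_m)$ of traversals across the edges summing to $2k$, we have
$$
W_G(2k;u) \;=\; \sum_{\substack{h_1,\ldots,h_m \,\geqslant\, 0 \\ h_1+\ldots+h_m \,=\, 2k}} W_G(h_1,\ldots,h_m;u).
$$
Lemma~\ref{lemmaUpperBound} applied to each composition gives an upper bound $(2k+2m)^m \prod_v \binom{k_1^v+\ldots+k_{\dg(v)}^v}{k_1^v,\ldots,k_{\dg(v)}^v}$, with the factor $(h+2m)^m$ replaced by the uniform $(2k+2m)^m$ since $h=2k$, and where for each $(h_i)$ the construction in the proof of Lemma~\ref{lemmaUpperBound} designates a specific tuple $(k_1,\ldots,k_m)$ satisfying $k_i\in[\half h_i,\half h_i+m]$.

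Next I would reorganise the resulting double sum by grouping compositions $(h_i)$ according to the $(k_i)$ they induce. Because every $k_i$ satisfies $\half h_i \leqslant k_i \leqslant \half h_i+m$, summing over $i$ yields $k \leqslant k_1+\ldots+k_m \leqslant k+m^2$, so the reorganised sum is indexed by $j=\sum k_i\in\{k,\ldots,k+m^2\}$ and by tuples $(k_1,\ldots,k_m)$ with $\sum k_i = j$. The remaining ingredient is a multiplicity bound: for a fixed $(k_1,\ldots,k_m)$, how many compositions $(h_1,\ldots,h_m)$ with $\sum h_i = 2k$ are sent to it by the construction of Lemma~\ref{lemmaUpperBound}? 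For each coordinate, $h_i$ is constrained to the interval $[2k_i-2m,\,2k_i]$; the construction's cycle-addition steps fix the relation $k_i = \lfloor h_i/2\rfloor + c_i$ for a non-negative integer offset $c_i \leqslant m$ determined by the treeification, so $h_i$ is limited to at most $m+1$ values given $k_i$. This gives $N(k_1,\ldots,k_m)\leqslant (m+1)^m$, and inserting this bound yields exactly the claimed inequality.

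The main obstacle is the multiplicity estimate: a naive count of integers in $[2k_i-2m,2k_i]$ yields $2m+1$ values per coordinate, which would only give $(2m+1)^m$ rather than $(m+1)^m$. To get the sharper factor one must exploit the detailed structure of the map $(h_i)\mapsto (k_i)$ established in the proof of Lemma~\ref{lemmaUpperBound}: namely that $k_i$ is obtained by adding a non-negative integer offset (the number of auxiliary cycles inserted in the splits affecting $e_i$) to $\lfloor h_i/2\rfloor$, whence the pair $(c_i, \text{parity of }h_i)$ determines $h_i$ from $k_i$ uniquely and only $m+1$ such pairs can occur. Everything else in the proof is bookkeeping.
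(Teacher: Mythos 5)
Your argument is exactly the paper's: it writes $W_G(2k;u)$ as the sum of $W_G((h_i);u)$ over all compositions of $2k$, applies Lemma~\ref{lemmaUpperBound} with $h=2k$, regroups by the induced tuples $(k_i)$ using $k\leqslant k_1+\ldots+k_m\leqslant k+m^2$, and bounds the number of compositions $(h_i)$ mapping to a given $(k_i)$ by $(m+1)^m$. The only difference is that you try to justify the $(m+1)^m$ multiplicity via the fine structure of the map $(h_i)\mapsto(k_i)$ (and your claim that the offsets $c_i$ depend only on the treeification is not quite right, since they also depend on the parities and excursion structure of the particular tour), whereas the paper simply asserts this count; in any case any bound that is constant in $k$, such as the naive $(2m+1)^m$, would serve equally well in the downstream growth-rate arguments.
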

\begin{proof}
From Lemma~\ref{lemmaUpperBound}, for any vertex $u$ of a graph $G$ with $m$ edges, we know that
$$
\begin{array}{rcl}
W_G(2k;u)
& = &
\displaystyle
%\sum\limits_{\sum\limits_{i=1}^m h_i\,=\,2k}
\sum\limits_{h_1+\ldots+h_m\,=\,2k}
W_G((h_i);u) \\[15pt]
& \leqslant &
(2k+2m)^m
\displaystyle
%\sum\limits_{\sum\limits_{i=1}^m h_i\,=\,2k}
\sum\limits_{h_1+\ldots+h_m\,=\,2k}
\+\+
\prod\limits_{v\in V(G)}\dbinom{{k^v_1}+{k^v_2}+\ldots+{k^v_{\dg(v)}}}
{{k^v_1},\,{k^v_2},\,\ldots,\,{k^v_{\dg(v)}}}
\end{array}
$$
where
each $k_i$ is dependent on the sequence $(h_i)$ with
$\half h_i\leqslant k_i\leqslant \half h_i+m$.

There are no more than
$(m+1)^m$
different values of the $h_i$ that give rise to any specific set of $k_i$, and we have
$k\leqslant k_1+\ldots+k_m\leqslant k+m^2$,
so
\[
W_G(2k;u)
\;\leqslant\;
(m+1)^m\+
(2k+2m)^m
\displaystyle\sum_{j=k}^{k+m^2}
%\sum\limits_{\sum\limits_{i=1}^m k_i\,=\,j}
\sum\limits_{k_1+\ldots+k_m\,=\,j}
\+\+
\prod\limits_{v\in V(G)}\dbinom{{k^v_1}+
\ldots+{k^v_{\dg(v)}}}
{{k^v_1},\,
\ldots,\,{k^v_{\dg(v)}}}.
\qedhere
\]
\end{proof}
Now, drawing together our upper and lower bounds enables us to deduce that
the family of balanced tours on a graph $G$ grows at the same rate as the family of all tours of even length on $G$.
We use $\WWWB_G$ for the family of all balanced tours on $G$ and
$\WWWE_G$ for the family of all tours of even length on $G$, where, in both cases,
we consider the size of a tour to be \emph{half} its length.

\thmbox{
\begin{thm}\label{thmBalancedEqualsEven}
The growth rate of the family of balanced tours ($\WWWB_G$) on a connected graph is the same as growth rate of the family of all tours of even length ($\WWWE_G$) on the graph.
\end{thm}
} %\thmbox

\begin{proof}
From Lemma~\ref{lemmaLowerBound}, we know that
$$
\prod_{v\in V(G)}\binom
{{k^v_1}+
\ldots+{k^v_{\dg(v)}}}
{{k^v_1},\,
\ldots,\,{k^v_{\dg(v)}}}
\;\leqslant\;
\WB_G(k_1\!+\!1,
\ldots,k_m\!+\!1;u).
$$
Substitution in the inequality in the statement of Lemma~\ref{lemmaUpperBoundNew} then yields the following relationship between families of even-length and balanced tours:
$$
W_G(2k;u)
\;\leqslant\;
(m+1)^m\+
(2k+2m)^m
\displaystyle\sum_{j=k+m}^{k+m+m^2}
\WB_G(j;u)
$$
where
$\WB_G(j;u)$ is the number of balanced tours of length $2j$ on $G$ starting and ending at $u$.
Combining this with Observation~\ref{obsGRPolySum} and the fact that $\WB_G(k;u)\leqslant W_G(2k;u)$
produces the result $\gr(\WWWB_G)=\gr(\WWWE_G)$.
\end{proof}
Finally, before moving on to the relationship with permutation grid classes,
we
determine the value of the growth rate of
the family of even-length tours
$\WWWE_G$.
This requires only elementary algebraic graph theory.
We
recall here
the relevant % algebraic graph theoretic
concepts. % that we use.
The \emph{adjacency matrix}, $A = A(G)$ of a graph $G$ has rows and columns indexed
by the vertices of $G$, with $A_{i,j}= 1$ or $A_{i,j}= 0$ according to whether vertices $i$ and $j$
are adjacent (joined by an edge) or not.
The \emph{spectral radius} $\rho(G)$ of a graph $G$ is the largest eigenvalue (which is real and positive) of its adjacency matrix.

\begin{lemma}\label{lemmaGREven}
The growth rate of $\WWWE_G$ exists and is equal to the square of the spectral radius of $G$.
\end{lemma}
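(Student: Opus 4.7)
My plan is to invoke classical algebraic graph theory: the number of walks of length $\ell$ between two vertices of $G$ is an entry of a power of the adjacency matrix. Specifically, if $A = A(G)$ denotes the adjacency matrix of $G$, then $(A^\ell)_{u,v}$ equals the number of walks of length $\ell$ from $u$ to $v$. Consequently, the number of tours of length $2k$ in $\WWWE_G$ is either $(A^{2k})_{u,u}$ (if tours are rooted at a specified vertex $u$) or $\tr(A^{2k}) = \sum_v (A^{2k})_{v,v}$ (if we sum over all basepoints). Since these two counts differ by at most a factor of $|V(G)|$, by Observation~\ref{obsGRPolySum} they have the same growth rate, so it suffices to analyse $\tr(A^{2k})$.

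Next, I would exploit that $A$ is a real symmetric matrix, so it admits a spectral decomposition with real eigenvalues $\lambda_1 \geq \lambda_2 \geq \ldots \geq \lambda_n$ (where $n = |V(G)|$) and an orthonormal basis of eigenvectors. By definition, $\rho(G) = \max_i |\lambda_i|$, and since $A$ has non-negative entries, $\rho(G) = \lambda_1$. The trace of $A^{2k}$ is then
\[
\tr(A^{2k}) \;=\; \sum_{i=1}^{n} \lambda_i^{2k}.
\]
Because each $\lambda_i^{2k}$ is non-negative, this sum satisfies the two-sided bound
\[
\rho(G)^{2k} \;\leqslant\; \tr(A^{2k}) \;\leqslant\; n\,\rho(G)^{2k}.
\]

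Taking $k$th roots (recall that in $\WWWE_G$ the \emph{size} of a tour of length $2k$ is $k$) gives
\[
\rho(G)^{2} \;\leqslant\; \tr(A^{2k})^{1/k} \;\leqslant\; n^{1/k}\,\rho(G)^{2},
\]
and letting $k \to \infty$ forces both limit inferior and limit superior to equal $\rho(G)^2$. Hence $\gr(\WWWE_G)$ exists and is equal to $\rho(G)^2$, as required.

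There is no real obstacle here: the result is a routine consequence of the walks-as-matrix-powers correspondence combined with the spectral theorem for symmetric matrices, and does not require the Perron--Frobenius hypothesis that $G$ be connected (connectedness only becomes essential elsewhere, e.g. in Theorem~\ref{thmBalancedEqualsEven}, to identify balanced with even-length tours). The only minor subtlety is clarifying whether one counts rooted or unrooted tours, but this is resolved by the polynomial-factor remark above.
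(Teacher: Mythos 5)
Your proof is correct and follows essentially the same route as the paper: identify $W_G(2k)$ with $\tr(A^{2k})=\sum_i\lambda_i^{2k}$ and sandwich this between $\rho(G)^{2k}$ and a polynomial multiple thereof before taking $k$th roots. The only cosmetic difference is that your upper bound $n\,\rho^{2k}$ is slightly sharper than the paper's $(n\rho)^{2k}$, but both yield the same limit.
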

\begin{proof}
If $G$ has $n$ vertices, then
$$
W_G(2k)
\;=\;
\sum\limits_{u\in V(G)} \!W_G(2k;u)
\;=\;
\tr(A(G)^{2k})
\;=\;
\sum\limits_{i=1}^n \lambda_i^{2k},
$$
where the $\lambda_i$ are the (real) eigenvalues of $A(G)$, the adjacency matrix of $G$, since the diagonal entries of $A(G)^{2k}$ count the number of tours of length $2k$ starting at each vertex. Thus,
$$
\gr(\WWWE_G)
\;=\;
\limkinfty\Big(\sum\limits_{i=1}^n \lambda_i^{2k}\Big)^{1/k}
$$
Now the spectral radius is given by
$
\rho
=
\rho(G)
=
\max\limits_{1\leqslant i\leqslant n} \lambda_i,
$
so we can conclude that
$$
\rho^2
\;=\;
\limkinfty (\rho^{2k})^{1/k}
\;\leqslant\;
\limkinfty\Big(\sum\limits_{i=1}^n \lambda_i^{2k}\Big)^{1/k}
\;\leqslant\;
\limkinfty \big((n\rho)^{2k}\big)^{1/k}
\;=\;
\rho^2.
$$
Thus, $\gr(\WWWE_G)=\rho(G)^2$.
\end{proof}
%\begin{cor}
%$\gr(\WWWB_G)=\rho(G)^2$.
%\end{cor}

%\newpage
% ================================================================
%\section{The growth rate of a grid class}\label{sectGrowthRate}
\section{Grid classes}\label{sectGridClasses}
In this section, we prove our main theorem, that
the growth rate of a monotone grid class of permutations is
equal to the square of the spectral radius of its row-column graph.

The proof is as follows:
First, we present an explicit expression for the number of \emph{gridded permutations} of a given length.
Then, we
use this to show that the class of gridded permutations grows at the same rate as the family of tours of even length on its row-column graph.
Finally, we utilize the fact that the growth rate of a grid class is the same as the growth rate of the corresponding class of gridded permutations.

% ----------------------------------------------------------------
\subsection{Counting gridded permutations}\label{sectCountingGriddedPerms}
As we saw in the previous chapter, it is possible to give an explicit expression for the number of
gridded permutations of length $k$ in any specified grid class.
This is essentially the same as Lemma~\ref{lemmaCountGriddings1}, but using different notation.
We repeat the proof for completeness.
Observe the similarity to the formulae for numbers of tours.
\begin{lemma}\label{lemmaCountGriddings}
If $G=G(M)$ is the row-column graph of
$\Grid(M)$,
and $G$ has $m$ edges
$e_1,\ldots,e_m$,
then the number of gridded permutations of length $k$ in $\Gridhash(M)$ is
given by
$$
\big|\Gridhash_k(M)\big|
\;=\;
%\sum_{\sum\limits_{i=1}^m k_i\,=\,k}
\sum_{k_1+\ldots+k_m\,=\,k}
\+\+
\prod_{v\in V(G)}\binom{{k^v_1}+{k^v_2}+\ldots+{k^v_{\dg(v)}}}
{{k^v_1}\!,\,\,{k^v_2}\!,\,\,\ldots,\,\,{k^v_{\dg(v)}}}
$$
where $k^v_1,k^v_2,\ldots,k^v_{\dg(v)}$ are the $k_i$
corresponding to
edges incident to $v$ in $G$.
\end{lemma}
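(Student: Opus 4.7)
The plan is to argue combinatorially by conditioning on the number of points in each cell, exactly mirroring the proof of Lemma~\ref{lemmaCountGriddings1} but in the row-column graph language. Recall that the non-zero cells of $M$ are in bijection with the edges of $G(M)$, while the rows and columns of $M$ are in bijection with the vertices of $G(M)$. For each edge $e_i$, let $k_i$ denote the number of points that lie in the corresponding cell; clearly $\sum_i k_i = k$ for an $M$-gridded permutation of length $k$.

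The key observation is that, once the multiset $\{k_i\}$ is fixed, an $M$-gridded permutation is determined entirely by independent choices of how to interleave the points in each row and in each column. Indeed, within any single cell, the relative order of its points is completely forced by the corresponding entry of $M$ (strictly increasing if $M_{i,j}=1$, strictly decreasing if $M_{i,j}=-1$). However, for two cells sharing a row, the horizontal interleaving of their points can be chosen arbitrarily, and similarly for two cells sharing a column the vertical interleaving is free. Moreover these choices, one per row and one per column, are mutually independent, because a choice for a row only fixes horizontal positions of points (which columns they fall into is already determined by assigning points to cells), and a choice for a column only fixes vertical positions.

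Therefore, for a fixed composition $(k_1,\ldots,k_m)$ with $\sum k_i = k$, the number of $M$-gridded permutations realising this composition is exactly the product, over all vertices $v$ of $G(M)$, of the number of ways to interleave the points in the row or column corresponding to $v$. Since vertex $v$ is incident to edges carrying $k^v_1,\ldots,k^v_{\dg(v)}$ points, this interleaving count is the multinomial coefficient $\binom{k^v_1+\cdots+k^v_{\dg(v)}}{k^v_1,\ldots,k^v_{\dg(v)}}$. Summing over all compositions of $k$ into $m$ non-negative parts yields the stated formula.

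No step should present any real obstacle; the only subtlety worth highlighting is the independence of the row-interleaving and column-interleaving choices, which is what makes the formula split as a single product over all vertices of $G(M)$ rather than needing any correction terms. The similarity to the expressions appearing in Lemmas~\ref{lemmaStar} and~\ref{lemmaLowerBound} for tours is exactly what will be exploited in the next section to identify the growth rate of $\Gridhash(M)$ with that of $\WWWE_{G(M)}$.
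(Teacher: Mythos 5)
Your proof is correct and follows essentially the same approach as the paper's: condition on the number of points per cell, observe that within-cell order is forced while row- and column-interleavings are free and independent, count each by a multinomial coefficient indexed by a vertex of $G(M)$, and sum over compositions of $k$. The paper's own proof (which explicitly restates the argument of Lemma~\ref{lemmaCountGriddings1} in row-column graph language) makes the same points, including the independence of the interleaving choices that you rightly flag as the only subtlety.
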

\begin{proof}
A gridded permutation in $\Gridhash(M)$ consists of a number of points in each of the cells that correspond to a non-zero entry of $M$. For every permutation, the relative ordering of points (increasing or decreasing) within a particular cell is fixed by the value of the corresponding matrix entry.
However, the relative interleaving between points in distinct cells in the same row or column
can be chosen arbitrarily and independently for each row and column.

Now, each vertex in $G$ corresponds to either a row or a column in $M$, with an incident edge for each non-zero entry in that row or column. Thus,
the number of gridded permutations with $k_i$ points in the cell corresponding to edge $e_i$ for each $i$ is given by the following product of multinomial coefficients:
$$
\prod_{v\in V(G)}\binom{{k^v_1}+{k^v_2}+\ldots+{k^v_{\dg(v)}}}
{{k^v_1}\!,\,\,{k^v_2}\!,\,\,\ldots,\,\,{k^v_{\dg(v)}}}.
$$
The result follows by summing over values of $k_i$ that sum to $k$.
\end{proof}
As an immediate consequence, we have the fact that
the enumeration of a class of gridded permutations depends only on its row-column graph:
\begin{cor}\label{corGRHashEqForSameRCGraph}
If $G(M)=G(M')$, then
$\Gridhash_k(M) = \Gridhash_k(M')$ for all $k$.
\end{cor}

% ----------------------------------------------------------------
\subsection{Gridded permutations and tours}\label{sectPermsAndTours}
We now
use Lemmas~\ref{lemmaLowerBound} and~\ref{lemmaUpperBoundNew} to
relate the number of gridded permutations of length $k$ in $\Gridhash(M)$ to the number of tours of length $2k$ on $G(M)$. We restrict ourselves to permutation classes with connected row-column graphs.

\begin{lemma}\label{lemmaGRsEqual}
If $G(M)$ is connected, the growth rate of $\Gridhash(M)$ exists and is equal to the growth rate of $\WWWE_{G(M)}$.
\end{lemma}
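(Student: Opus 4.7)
The plan is to sandwich $|\Gridhash_k(M)|$ between a count of balanced tours on $G=G(M)$ and a count of arbitrary even-length tours on $G$, and then invoke Theorem~\ref{thmBalancedEqualsEven} together with Lemma~\ref{lemmaGREven}. The starting point is the explicit formula from Lemma~\ref{lemmaCountGriddings}, whose summand is precisely the product of multinomial coefficients that appears in both the lower bound of Lemma~\ref{lemmaLowerBound} (for balanced tours) and the upper bound of Lemma~\ref{lemmaUpperBoundNew} (for even-length tours). Matching these three expressions term-by-term is the essence of the argument.

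For the direction $\grup(\Gridhash(M))\leqslant\gr(\WWWE_G)$, I would apply Lemma~\ref{lemmaLowerBound} after substituting $k_i\mapsto k_i+1$ (which both ensures every argument is positive as the lemma requires, and converts the right-hand side into exactly the multinomial term from Lemma~\ref{lemmaCountGriddings}). Summing over all compositions $k_1+\ldots+k_m=k$ yields
$$|\Gridhash_k(M)|\;\leqslant\;\sum_{k_1+\ldots+k_m=k}\WB_G(k_1+1,\ldots,k_m+1;u)\;\leqslant\;\WWW^B_G(k+m;u),$$
where $\WWW^B_G(j;u)$ denotes the number of balanced tours of length $2j$ on $G$ starting and ending at $u$, and the last inequality holds because every such contribution is a balanced tour of length $2(k+m)$. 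Taking $k$-th roots (and absorbing the harmless shift by $m$ as in Observation~\ref{obsGRPolySum}) gives $\grup(\Gridhash(M))\leqslant\gr(\WWWB_G)$, which equals $\gr(\WWWE_G)$ by Theorem~\ref{thmBalancedEqualsEven}.

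For the reverse direction $\grlow(\Gridhash(M))\geqslant\gr(\WWWE_G)$, I would apply Lemma~\ref{lemmaUpperBoundNew} and use Lemma~\ref{lemmaCountGriddings} to rewrite the inner double sum as a sum of gridded-permutation counts:
$$W_G(2k;u)\;\leqslant\;(m+1)^m(2k+2m)^m\sum_{j=k}^{k+m^2}|\Gridhash_j(M)|.$$
The difficulty here is that the right-hand side spreads the bound over a window of indices rather than pinning it at $|\Gridhash_k(M)|$. I would resolve this by establishing monotonicity $|\Gridhash_{k+1}(M)|\geqslant|\Gridhash_k(M)|$, which follows by fixing a non-zero cell of $M$ once and for all and injecting $\Gridhash_k(M)$ into $\Gridhash_{k+1}(M)$ via the insertion of a new point as the new extreme value in that cell. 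Monotonicity collapses the window, yielding $W_G(2k)\leqslant Q(k)\,|\Gridhash_{k+m^2}(M)|$ for some polynomial $Q$ after summing the finitely many choices of $u$. Taking $k$-th roots and noting that $|\Gridhash_{k+m^2}|^{1/k}$ and $|\Gridhash_k|^{1/k}$ have the same $\liminf$ then gives $\grlow(\Gridhash(M))\geqslant\gr(\WWWE_G)$, which exists as an honest limit by Lemma~\ref{lemmaGREven}.

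The two bounds force $\grlow(\Gridhash(M))=\grup(\Gridhash(M))=\gr(\WWWE_G)$, so $\gr(\Gridhash(M))$ exists and equals $\gr(\WWWE_G)$, completing the proof. The main technical obstacle is the bookkeeping around the index shifts: aligning the multinomial factors in the lower bound for balanced tours requires the $+1$ shift that raises the tour length from $2k$ to $2(k+m)$, while the upper bound for even tours produces a window $[k,k+m^2]$ that must be collapsed using monotonicity. Both shifts contribute only subexponentially and so disappear in the growth rate, but the monotonicity step is the crucial one that allows the $\grlow$ bound to be extracted cleanly.
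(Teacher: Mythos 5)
Your proof is correct and follows essentially the same route as the paper: both directions come from sandwiching the multinomial product of Lemma~\ref{lemmaCountGriddings} between the balanced-tour lower bound of Lemma~\ref{lemmaLowerBound} and the even-tour upper bound of Lemma~\ref{lemmaUpperBoundNew}, exactly as in the paper's chain of inequalities. Your monotonicity argument for collapsing the window $[k,k+m^2]$ is a sound (and slightly more careful) substitute for the paper's quick appeal to Observation~\ref{obsGRPolySum}, and routing the first direction through Theorem~\ref{thmBalancedEqualsEven} rather than through $\WB_G((k_i);u)\leqslant W_G((2k_i);u)$ directly is an equivalent cosmetic choice.
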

\begin{proof}
If matrix $M$ has $m$ non-zero entries (and thus $G(M)$ has $m$ edges), then
for any vertex $u$ of $G(M)$,
combining
Lemmas~\ref{lemmaCountGriddings} and~\ref{lemmaLowerBound},
gives us
\begin{align}\label{eqLB}
|\Gridhash_k(M)| &
\;\leqslant\;
\displaystyle
%\sum_{\sum\limits_{i=1}^m k_i\,=\,k}
\sum_{k_1+\ldots+k_m\,=\,k}
\WB_{G(M)}(k_1+1,k_2+1,\ldots,k_m+1;u) \nonumber\\[4pt]
& \;\leqslant\;
\displaystyle
%\sum_{\sum\limits_{i=1}^m k_i\,=\,k}
\sum_{k_1+\ldots+k_m\,=\,k}
W_{G(M)}(2k_1+2,2k_2+2,\ldots,2k_m+2;u) \nonumber\\[4pt]
& \;\leqslant\;
W_{G(M)}(2k+2m).
\end{align}

%*** *** \todo{Replace text between asterisks with reference to Lemma~\ref{lemmaUpperBoundNew}.}

On the other hand, from Lemma~\ref{lemmaUpperBoundNew}, for any vertex $u$ of a graph $G$ with $m$ edges, %we have
$$
W_G(2k;u)
\;\leqslant\;
(m+1)^m\+
(2k+2m)^m
\displaystyle\sum_{j=k}^{k+m^2}
%\sum\limits_{\sum\limits_{i=1}^m k_i\,=\,j}
\sum\limits_{k_1+\ldots+k_m\,=\,j}
\+\+
\prod\limits_{v\in V(G)}\dbinom{{k^v_1}+
\ldots+{k^v_{\dg(v)}}}
{{k^v_1},\,
\ldots,\,{k^v_{\dg(v)}}}.
$$

Let $W_G(h)$ be the number of tours of length $h$ on
$G$ (starting at any vertex).

Now $W_G(h)=\sum\limits_{u\in V(G))}\!\!W_G(h;u)$, so, using Lemma~\ref{lemmaCountGriddings}, if $G(M)$ has $n$ vertices and $m$ edges, we have
$$
W_{G(M)}(2k)
\;\leqslant\;
n\+
(m+1)^m\+
(2k+2m)^m
\displaystyle\sum_{j=k}^{k+m^2}
|\Gridhash_j(M)|.
$$
The multiplier on the right side of this inequality is a polynomial in $k$.
Hence, using inequality~\eqref{eqLB} and Observation~\ref{obsGRPolySum}, we can conclude that
\[
\gr(\Gridhash(M))\;=\;\gr(\WWWE_{G(M)})
\]
if $G(M)$ is connected.
\end{proof}

% ----------------------------------------------------------------
\subsection{Counting permutations}\label{sectCountingPerms}
We nearly have the result we want.
The final link is
%the following lemma of Vatter
Lemma~\ref{lemmaGRGriddings}
which tells us that, as far as growth rates are concerned,
classes of gridded permutations are indistinguishable from their grid classes.
Thus, by Corollary~\ref{corGRHashEqForSameRCGraph}:
\begin{cor}\label{corGREqForSameRCGraph}
Monotone grid classes with the same row-column graph have the same growth rate.
\end{cor}

% ----------------------------------------------------------------
%\subsection{Proof of Theorem~\ref{thmGrowthRate}}
\subsection{The growth rate of grid classes}
\label{sectProofOfTheorem}
%Assuming the validity of Lemmas~\ref{lemmaLowerBound} and~\ref{lemmaUpperBound}, we
We now have all we need for the proof of our main theorem.

\thmbox{
\begin{thm}\label{thmGrowthRate}
The growth rate of a monotone grid class of permutations exists and is equal to the square of the spectral radius of its row-column graph.
\end{thm}
} %\thmbox

\begin{proof}
For connected grid classes, the result follows immediately from Lemmas~\ref{lemmaGREven},~\ref{lemmaGRsEqual} and~\ref{lemmaGRGriddings}.
A little more work is required to handle the disconnected case.

If $G(M)$ is disconnected,
%then $\Grid(M)$ is the Cartesian product of the grid classes corresponding to the connected components of $G(M)$. The growth rate of the Cartesian product of combinatorial classes is the maximum of the growth rates of the component classes
then the growth rate of $\Grid(M)$ is the maximum of the growth rates of the grid classes corresponding to the connected components of $G(M)$
%(see Theorem IV.8 in Flajolet and Sedgewick~\cite{FS2009}; see also Proposition 2.10 in Vatter~\cite{Vatter2011}).
(see Proposition 2.10 in Vatter~\cite{Vatter2011}).

Similarly, the spectrum of a disconnected graph is the union (with multiplicities) of the spectra of the graph's connected components (see Theorem 2.1.1 in Cvetkovi\'c, Rowlinson \& Simi\'c~\cite{CRS2010}). Thus the spectral radius of a disconnected graph is the maximum of the spectral radii of its components.

Combining these facts with
Lemmas~\ref{lemmaGREven},~\ref{lemmaGRsEqual} and~\ref{lemmaGRGriddings}
yields
$$ %\[
\gr(\Grid(M))\;=\;\rho(G(M))^2
%\qedhere
$$ %\]
as required.
\end{proof}

%\newpage
% ================================================================
\section{Implications}\label{sectConsequences}
As a consequence of Theorem~\ref{thmGrowthRate}, results concerning the spectral radius of graphs can be translated
into facts about the growth rates of permutation grid classes.
So we now present a number of corollaries
%of the theorem
that follow from spectral graph theoretic considerations.
The two recent monographs by Cvetkovi\'c, Rowlinson \& Simi\'c~\cite{CRS2010}
and Brouwer \& Hae\-mers~\cite{BH2012}
provide a valuable overview of spectral graph theory, so, where appropriate, we cite the relevant sections of these (along with the original reference for a result).

As a result of
Corollary~\ref{corGREqForSameRCGraph},
changing the sign of non-zero entries in matrix $M$ has no effect on the growth rate of $\Grid(M)$.
For this reason, when considering particular collections of grid classes below, we choose to represent them by \emph{grid diagrams} in which non-zero matrix entries are represented by a $\!\gxone{1}{1}\!$. As with grid classes, we freely apportion properties of a row-column graph to corresponding grid diagrams.

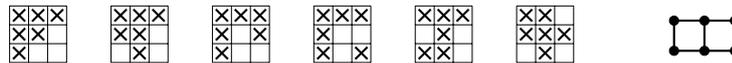
\begin{figure}[ht]
    $$
    \gxthree{3}{1,1,1}{1,1,0}{1,0,0}
    \quad
    \gxthree{3}{1,1,1}{1,1,0}{0,1,0}
    \quad
    \gxthree{3}{1,1,1}{1,0,1}{1,0,0}
    \quad
    \gxthree{3}{1,1,1}{1,0,0}{1,0,1}
    \quad
    \gxthree{3}{1,1,1}{0,1,0}{1,1,0}
    \quad
    \gxthree{3}{1,1,0}{1,1,1}{0,1,0}
    \quad\quad\quad
    \raisebox{-.07in}{\begin{tikzpicture}[scale=0.4]
      \draw [thick] (2,1)--(0,1)--(0,0)--(2,0);
      \draw [thick] (1,0)--(1,1);
      \draw [fill] (0,0) circle [radius=0.15];
      \draw [fill] (1,0) circle [radius=0.15];
      \draw [fill] (2,0) circle [radius=0.15];
      \draw [fill] (0,1) circle [radius=0.15];
      \draw [fill] (1,1) circle [radius=0.15];
      \draw [fill] (2,1) circle [radius=0.15];
    \end{tikzpicture}}
    $$
\caption{Some unicyclic grid diagrams that have the same row-column graph}\label{figGridDiags}
\end{figure}
Since transposing a matrix or
permuting its rows and columns does not change the row-column graph of its grid class, there may be a number of distinct grid diagrams
corresponding to a specific row-column graph (see Figure~\ref{figGridDiags} for an example).

In many cases, we illustrate a result by showing a row-column graph and a corresponding grid diagram.
We display just one of the possible grid diagrams corresponding to the row-column graph.

Our first result is the following elementary observation, which
specifies a limitation on which numbers can be grid class growth rates.
This is a consequence of the fact that the spectral radius of a graph is
a root of the characteristic polynomial of an integer matrix.
\begin{cor}\label{corAlgebraicInteger}
The growth rate of a monotone grid class is an algebraic integer (the root of a monic polynomial).
\end{cor}

% ----------------------------------------------------------------
\subsection{Slowly growing grid classes}\label{sectSmallGridClasses}
Using results concerning graphs with small spectral radius, we can characterise grid classes with
growth rates no greater
than~$\frac{9}{2}$.
This is
similar to Vatter's characterisation of ``small'' permutation classes (with growth rate less than $\kappa\approx 2.20557$) in~\cite{Vatter2011}.

First, we recall that the growth rate of a disconnected grid class is the maximum of the growth rates of its components %(see Subsection~\ref{sectProofOfTheorem}),
(see the proof of Theorem~\ref{thmGrowthRate}),
so we only need to consider connected grid classes.

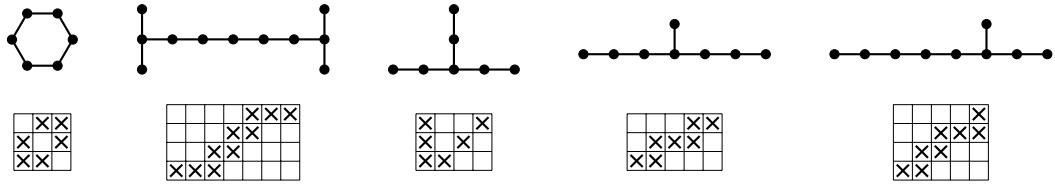
\begin{figure}[ht]
\begin{center}
\renewcommand{\arraystretch}{2.1}
\begin{tabular}{c@{$\qquad$}c@{$\qquad$}c@{$\qquad$}c@{$\qquad$}c}
  \raisebox{0.02in}{\begin{tikzpicture}[scale=0.4]
    \draw [thick] (-.5,.866)--(0,0)--(-.5,-.866)--(-1.5,-.866)--(-2,0)--(-1.5,.866)--(-.5,.866);
    \draw [fill] (0,0) circle [radius=0.15];
    \draw [fill] (-.5,.866) circle [radius=0.15];
    \draw [fill] (-.5,-.866) circle [radius=0.15];
    \draw [fill] (-1.5,.866) circle [radius=0.15];
    \draw [fill] (-1.5,-.866) circle [radius=0.15];
    \draw [fill] (-2,0) circle [radius=0.15];
  \end{tikzpicture}}
&
  \begin{tikzpicture}[scale=0.4]
    \draw [thick] (0,0)--(6,0);
    \draw [thick] (0,1)--(0,0)--(0,-1);
    \draw [thick] (6.0,1)--(6,0)--(6.0,-1);
    \foreach \x in {0,...,6}
      \draw [fill] (\x,0) circle [radius=0.15];
    \draw [fill] (0,1) circle [radius=0.15];
    \draw [fill] (0,-1) circle [radius=0.15];
    \draw [fill] (6,1) circle [radius=0.15];
    \draw [fill] (6,-1) circle [radius=0.15];
  \end{tikzpicture}
&
  \begin{tikzpicture}[scale=0.4]
    \draw [thick] (0,0)--(4,0);
    \draw [thick] (2,0)--(2,2);
    \foreach \x in {0,...,4}
      \draw [fill] (\x,0) circle [radius=0.15];
    \draw [fill] (2,1) circle [radius=0.15];
    \draw [fill] (2,2) circle [radius=0.15];
  \end{tikzpicture}
&
  \raisebox{0.08in}{\begin{tikzpicture}[scale=0.4]
    \draw [thick] (0,0)--(6,0);
    \draw [thick] (3,0)--(3,1);
    \foreach \x in {0,...,6}
      \draw [fill] (\x,0) circle [radius=0.15];
    \draw [fill] (3,1) circle [radius=0.15];
  \end{tikzpicture}}
&
  \raisebox{0.08in}{\begin{tikzpicture}[scale=0.4]
    \draw [thick] (0,0)--(7,0);
    \draw [thick] (5,0)--(5,1);
    \foreach \x in {0,...,7}
      \draw [fill] (\x,0) circle [radius=0.15];
    \draw [fill] (5,1) circle [radius=0.15];
  \end{tikzpicture}}
\\
  \gxthree{3}{0,1,1}{1,0,1}{1,1,0}
&
  \gxfour{7}{0,0,0,0,1,1,1}{0,0,0,1,1,0,0}{0,0,1,1,0,0,0}{1,1,1,0,0,0,0}
&
  \gxthree{4}{1,0,0,1}{1,0,1,0}{1,1,0,0}
&
  \gxthree{5}{0,0,0,1,1}{0,1,1,1,0}{1,1,0,0,0}
&
  \gxfour{5}{0,0,0,0,1}{0,0,1,1,1}{0,1,1,0,0}{1,1,0,0,0}
\end{tabular}
\vspace{-6pt}
\end{center}
\caption{
A cycle, an $H$ graph and the three
other Smith graphs,
with corresponding grid diagrams
}\label{figSmith}
\end{figure}
The connected graphs with spectral radius 2 are known as the \emph{Smith} graphs. These are precisely the \emph{cycle} graphs, the $H$ graphs (paths with two pendant edges
attached to both endvertices, including the star graph $K_{1,4}$), and the three other graphs shown in Figure~\ref{figSmith}.

\begin{figure}[ht]
\begin{center}
\renewcommand{\arraystretch}{2.1}
\begin{tabular}{c@{$\qquad$}c@{$\qquad$}c@{$\qquad$}c@{$\qquad$}c}
  \raisebox{0.14in}{\begin{tikzpicture}[scale=0.4]
    \draw [thick] (0,0)--(6,0);
    \foreach \x in {0,...,6}
      \draw [fill] (\x,0) circle [radius=0.15];
  \end{tikzpicture}}
&
  \begin{tikzpicture}[scale=0.4]
    \draw [thick] (0,0)--(5,0);
    \draw [thick] (-.5,.866)--(0,0)--(-.5,-.866);
    \foreach \x in {0,...,5}
      \draw [fill] (\x,0) circle [radius=0.15];
    \draw [fill] (-.5,.866) circle [radius=0.15];
    \draw [fill] (-.5,-.866) circle [radius=0.15];
  \end{tikzpicture}
&
  \raisebox{0.06in}{\begin{tikzpicture}[scale=0.4]
    \draw [thick] (0,0)--(4,0);
    \draw [thick] (2,0)--(2,1);
    \foreach \x in {0,...,4}
      \draw [fill] (\x,0) circle [radius=0.15];
    \draw [fill] (2,1) circle [radius=0.15];
  \end{tikzpicture}}
&
  \raisebox{0.06in}{\begin{tikzpicture}[scale=0.4]
    \draw [thick] (0,0)--(5,0);
    \draw [thick] (3,0)--(3,1);
    \foreach \x in {0,...,5}
      \draw [fill] (\x,0) circle [radius=0.15];
    \draw [fill] (3,1) circle [radius=0.15];
  \end{tikzpicture}}
&
  \raisebox{0.06in}{\begin{tikzpicture}[scale=0.4]
    \draw [thick] (0,0)--(6,0);
    \draw [thick] (4,0)--(4,1);
    \foreach \x in {0,...,6}
      \draw [fill] (\x,0) circle [radius=0.15];
    \draw [fill] (4,1) circle [radius=0.15];
  \end{tikzpicture}}
\\
  \gxthree{4}{0,0,1,1}{0,1,1,0}{1,1,0,0}
&
  \gxfour{5}{0,0,0,0,1}{0,0,0,1,1}{0,0,1,1,0}{1,1,1,0,0}
&
  \gxthree{3}{0,1,1}{0,1,0}{1,1,0}
&
  \gxthree{4}{0,0,0,1}{0,1,1,1}{1,1,0,0}
&
  \gxfour{4}{0,0,1,1}{0,0,1,0}{0,1,1,0}{1,1,0,0}
\end{tabular}
\vspace{-6pt}
\end{center}
\caption{
A path, a $Y$ graph and the three
other connected proper subgraphs of Smith graphs,
with corresponding grid diagrams
}\label{figSubSmith}
\end{figure}
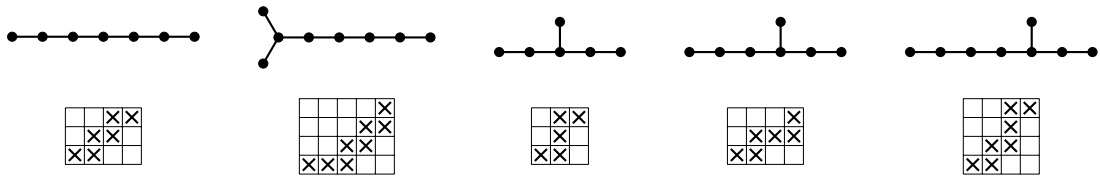
Similarly, the connected proper
subgraphs of the Smith graphs are precisely the \emph{path} graphs, the $Y$ graphs (paths with two pendant edges attached to one endvertex)
and the three other graphs in Figure~\ref{figSubSmith}.
For details, see
Smith~\cite{Smith1970} and Lemmens \& Seidel~\cite{LS1973}; % Theorem 5.1;
also see~\cite[Theorem 3.11.1]{CRS2010} and~\cite[Theorem 3.1.3]{BH2012}.

\Needspace*{2\baselineskip}
With these, we can characterise all grid classes with growth rate no greater than 4:
\begin{cor}\label{corSmith}
If the growth rate of a connected monotone grid class equals 4, then its row-column graph is a Smith graph.
If the growth rate of a connected monotone grid class is less than 4, then its row-column graph is a connected proper
subgraph of a Smith graph.
\end{cor}
In particular, we have the following:
\begin{cor}\label{corCycle}
A monotone grid class
of any size
whose row-column graph is a cycle
or
an $H$ graph
has growth rate
4.
\end{cor}

In Appendix~A of~\cite{Vatter2011}, Vatter considers \emph{staircase} grid classes, whose row-column graphs are paths (see the leftmost grid diagram in Figure~\ref{figSubSmith}).
%Vatter gives the growth rates of these as the root of equations based on a continued fraction.
The %value of the
spectral radius of a path graph has long been known (Lov\'asz \& Pelik\'an~\cite{LP1973}; also see~\cite[Theorem~8.1.17]{CRS2010} and \cite[1.4.4]{BH2012}), from which we can conclude:
\begin{cor}\label{corPath}
A monotone grid class
of size $m$
(having $m$ non-zero cells)
whose row-column graph is a path
has growth rate $4\cos^2\!\left(\frac{\pi}{m+2}\right)$.
This is minimal for any connected grid class of size $m$.
\end{cor}
A $Y$ graph
of size $m$
has spectral radius
$2\cos\!\left(\frac{\pi}{2m}\right)$,
and the spectral radii of
the three other graphs at the right of Figure~\ref{figSubSmith} are $2\cos\!\left(\frac{\pi}{12}\right)$,
$2\cos\!\left(\frac{\pi}{18}\right)$,
and
$2\cos\!\left(\frac{\pi}{30}\right)$,
from left to right (see~\cite[3.1.1]{BH2012}).
Thus
we have the following characterisation of
growth rates less than~4:
\begin{cor}\label{corSmallGrowthRates}
If the growth rate of a monotone grid class is less than 4, it is equal to $4\cos^2\!\left(\frac{\pi}{k}\right)$ for some integer $k\geqslant 3$.
\end{cor}
The only grid class growth rates
no greater than 3 are $1$, $2$, $\half(3+\sqrt{5})\approx2.618$, and $3$.

\begin{figure}[ht]
  $$
  \raisebox{-.08in}
    {\begin{tikzpicture}[scale=0.4]
    \draw [thick] (0,0)--(2,0);
    \draw [thick,dashed] (4.1,0)--(2,0);
    \draw [thick] (4.1,0)--(5.1,0);
    \draw [thick] (2,1)--(1,1)--(0,0)--(1,-1)--(2,-1);
    \draw [thick] (3.75,1)--(4.75,1);
    \draw [thick] (4.45,-1)--(5.45,-1);
    \draw [thick,dashed] (3.75,1)--(2,1);
    \draw [thick,dashed] (4.45,-1)--(2,-1);
    \draw [fill] (0,0) circle [radius=0.15];
    \foreach \x in {1,2,3.75,4.75} \draw [fill] (\x,1) circle [radius=0.15];
    \foreach \x in {1,2,4.1,5.1} \draw [fill] (\x,0) circle [radius=0.15];
    \foreach \x in {1,2,4.45,5.45} \draw [fill] (\x,-1) circle [radius=0.15];
  \end{tikzpicture}}
  \quad\quad\quad\quad
  \begin{tikzpicture}[scale=0.4]
    \draw [thick] (-0.35,0)--(0.65,0);
    \draw [thick,dashed] (1,0)--(2.75,0);
    \draw [thick] (2.75,0)--(4.75,0);
    \draw [thick] (3.75,0)--(3.75,1);
    \draw [thick,dashed] (4.75,0)--(6.5,0);
    \draw [thick] (6.5,0)--(8.5,0);
    \draw [thick] (7.5,0)--(7.5,1);
    \draw [thick,dashed] (8.5,0)--(10.95,0);
    \draw [thick] (10.95,0)--(11.95,0);
    \foreach \x in {-0.35,0.65,2.75,3.75,4.75,6.5,7.5,8.5,10.95,11.95} \draw [fill] (\x,0) circle [radius=0.15];
    \draw [fill] (3.75,1) circle [radius=0.15];
    \draw [fill] (7.5,1) circle [radius=0.15];
  \end{tikzpicture}
  $$
\caption{$E$ and $F$ graphs}\label{figEFGraphs}
\end{figure}
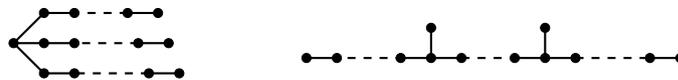
In order to characterise grid classes with growth rates slightly greater than 4,
let
an $E$ graph be a tree consisting of three paths having one endvertex in common, and an $F$ graph be a tree consisting of a path with a pendant edge attached to each of two distinct internal vertices (see Figure~\ref{figEFGraphs}).
Then,
results of
Brouwer \& Neumaier~\cite{BN1989} and Cvetkovi\'{c}, Doob \& Gutman~\cite{CDG1982}
imply the following (also see~\cite[Theorem 3.11.2]{CRS2010}):
\begin{cor}
If a connected monotone grid class has growth rate
between 4 and $2+\sqrt{5}$,
then its row-column graph is an $E$ or $F$ graph.
\end{cor}
%See the references for details of exactly which $E$ and $F$ graphs have their spectral radius in the range $(2,\sqrt{2+\sqrt{5}})$.

Thus, since $\sqrt{2+\sqrt{5}}$ cannot be an eigenvalue of any graph (see~\cite[p.~93]{CRS2010}), we can deduce the following:
\begin{cor}\label{corProperCycle}
If a monotone grid class properly contains a cycle then its growth rate exceeds $2+\sqrt{5}$.
\end{cor}
More recently, Woo \& Neumaier~\cite{WN2007}
have investigated the structure of graphs with spectral radius no greater than $\frac{3}{2}\sqrt{2}$
(also see~\cite[Theorem 3.11.3]{CRS2010}).
As a consequence,
we have the following:
\begin{cor}
If the growth rate of
a connected monotone grid class is no greater than $\frac{9}{2}$, then its row-column graph is one of the following: \vspace{-12pt}
\begin{enumerate}[~~~~~(a)]
  \itemsep0pt
  \item a tree of maximum degree 3 such that all vertices of degree 3 lie on a path,
  \item a unicyclic graph of maximum degree 3 such that all vertices of degree 3 lie on the cycle, or
  \item a tree consisting of a path with three pendant edges attached to one endvertex.
\end{enumerate}
\end{cor}

% ----------------------------------------------------------------
\subsection{Accumulation points of grid class growth rates}\label{sectAccumulationPoints}
Using graph theoretic results of Hoffman and Shearer, it is possible to characterise \emph{all} accumulation points of grid class growth rates.

As we have seen, the growth rates of grid classes whose row-column graphs are paths and $Y$ graphs grow to 4 from below; 4 is the least accumulation point of growth rates. The following characterises all
accumulation points
below $2+\sqrt{5}$ (see Hoffman~\cite{Hoffman1972}):
\begin{cor}
For $k=1,2,\ldots$, let $\beta_k$ be the positive root of
$$P_k(x)=x^{k+1}-(1+x+x^2+\ldots+x^{k-1})$$
and let
$\gamma_k=2+\beta_k+\beta_k^{-1}$.
Then $4=\gamma_1<\gamma_2<\ldots$ are all the accumulation points of growth rates of
monotone grid classes smaller than $2+\sqrt{5}$.
\end{cor}
The approximate values of
the first eight accumulation points
are:
4, 4.07960, 4.14790, 4.18598, 4.20703, 4.21893, 4.22582, 4.22988.

At $2+\sqrt{5}$, things change dramatically; from this value upwards grid class growth rates are dense (see Shearer~\cite{Shearer1989}):
\begin{cor}\label{corAccumulationPoints}
Every $\gamma\geqslant2+\sqrt{5}$ is an accumulation point of growth rates of
monotone grid classes.
\end{cor}

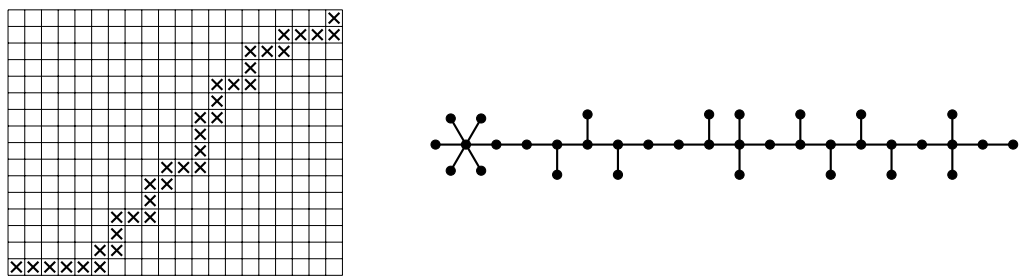
\begin{figure}[ht]
$$
\gclass[0.22]{20}{16}{
\gxrow{15}{0,0,0,0,0,0,0,0,0,0,0,0,0,0,0,0,0,0,0,1}
\gxrow{14}{0,0,0,0,0,0,0,0,0,0,0,0,0,0,0,0,1,1,1,1}
\gxrow{13}{0,0,0,0,0,0,0,0,0,0,0,0,0,0,1,1,1,0,0,0}
\gxrow{12}{0,0,0,0,0,0,0,0,0,0,0,0,0,0,1,0,0,0,0,0}
\gxrow{11}{0,0,0,0,0,0,0,0,0,0,0,0,1,1,1,0,0,0,0,0}
\gxrow{10}{0,0,0,0,0,0,0,0,0,0,0,0,1,0,0,0,0,0,0,0}
\gxrow{ 9}{0,0,0,0,0,0,0,0,0,0,0,1,1,0,0,0,0,0,0,0}
\gxrow{ 8}{0,0,0,0,0,0,0,0,0,0,0,1,0,0,0,0,0,0,0,0}
\gxrow{ 7}{0,0,0,0,0,0,0,0,0,0,0,1,0,0,0,0,0,0,0,0}
\gxrow{ 6}{0,0,0,0,0,0,0,0,0,1,1,1,0,0,0,0,0,0,0,0}
\gxrow{ 5}{0,0,0,0,0,0,0,0,1,1,0,0,0,0,0,0,0,0,0,0}
\gxrow{ 4}{0,0,0,0,0,0,0,0,1,0,0,0,0,0,0,0,0,0,0,0}
\gxrow{ 3}{0,0,0,0,0,0,1,1,1,0,0,0,0,0,0,0,0,0,0,0}
\gxrow{ 2}{0,0,0,0,0,0,1,0,0,0,0,0,0,0,0,0,0,0,0,0}
\gxrow{ 1}{0,0,0,0,0,1,1,0,0,0,0,0,0,0,0,0,0,0,0,0}
\gxrow{ 0}{1,1,1,1,1,1,0,0,0,0,0,0,0,0,0,0,0,0,0,0}
}
\quad\quad\quad
\raisebox{.5in}{\begin{tikzpicture}[scale=0.4]
  \draw [thick] (0,0)--(19,0);
  \foreach \x in {0,...,19} \draw [fill] (\x,0) circle [radius=0.15];
  \foreach \x in {4,6,10,13,15,17}
  { \draw [fill] (\x,-1) circle [radius=0.15]; \draw [thick] (\x,0)--(\x,-1); }
  \foreach \x in {10,17,5,9,12,14}
  { \draw [fill] (\x,1) circle [radius=0.15]; \draw [thick] (\x,0)--(\x,1); }
  \foreach \x in {0.5,1.5}
  {
    \draw [fill] (\x,.866) circle [radius=0.15];
    \draw [fill] (\x,-.866) circle [radius=0.15];
    \draw [thick] (1,0)--(\x,.866);
    \draw [thick] (1,0)--(\x,-.866);
  }
\end{tikzpicture}}
$$
\caption{A grid diagram
whose growth rate differs from $2\pi$ by less than $10^{-7}$,
and its caterpillar row-column graph}\label{figCaterpillar}
\end{figure}
Thus, for every $\gamma\geqslant2+\sqrt{5} \approx4.236068$,
there is a
grid class with growth rate arbitrarily close to $\gamma$.
Indeed, for $\gamma>2+\sqrt{5}$, Shearer's proof provides an iterative process for generating a sequence of grid classes,
each with a row-column graph that
is a \emph{caterpillar}
(a tree such that all vertices of degree 2 or more lie on a path), with growth rates converging to $\gamma$ from below.
An example is shown in Figure~\ref{figCaterpillar}.

% ----------------------------------------------------------------
\subsection{Increasing the size of a grid class}
We now consider
the effect on the growth rate of making
small changes to a grid class.

Firstly, growth rates of connected grid classes satisfy a \emph{strict} monotonicity condition (see~\cite[Proposition 1.3.10]{CRS2010}):
\begin{cor}\label{corAddCell}
Adding a non-zero cell to a connected monotone grid class
while preserving connectivity
increases its growth rate.
\end{cor}
On the other hand,
particularly surprising
is the fact that grid classes with \emph{longer internal} paths or cycles have \emph{lower} growth rates.

An edge $e$ of $G$ is said to lie on an \emph{endpath} of $G$ if $G-e$ is disconnected and one of its components is a (possibly trivial) path. An edge that does not lie on an endpath is said to be \emph{internal}.
Note that a graph has an internal edge if and only if it contains either a cycle or non-star $H$ graph.

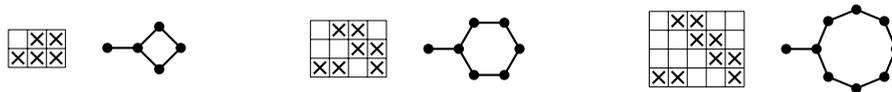
\begin{figure}[ht]
    $$
    \gxtwo{3}{0,1,1}{1,1,1}
    \quad
    \raisebox{-0.09in}{\begin{tikzpicture}[scale=0.4]
    \draw [thick] (0,0)--(1,0)--(1.707,0.707)--(2.414,0)--(1.707,-0.707)--(1,0);
    \draw [fill] (0,0) circle [radius=0.15];
    \draw [fill] (1,0) circle [radius=0.15];
    \draw [fill] (2.414,0) circle [radius=0.15];
    \draw [fill] (1.707,0.707) circle [radius=0.15];
    \draw [fill] (1.707,-0.707) circle [radius=0.15];
    \end{tikzpicture}}
    \quad\quad\quad\quad
    \gxthree{4}{0,1,1}{0,0,1,1}{1,1,0,1}
    \quad
    \raisebox{-0.12in}{\begin{tikzpicture}[scale=0.4]
    \draw [thick] (0,0)--(1,0)--(1.5,0.866)--(2.5,0.866)--(3,0)--(2.5,-0.866)--(1.5,-0.866)--(1,0);
    \draw [fill] (0,0) circle [radius=0.15];
    \draw [fill] (1,0) circle [radius=0.15];
    \draw [fill] (3,0) circle [radius=0.15];
    \draw [fill] (1.5,0.866) circle [radius=0.15];
    \draw [fill] (2.5,0.866) circle [radius=0.15];
    \draw [fill] (1.5,-0.866) circle [radius=0.15];
    \draw [fill] (2.5,-0.866) circle [radius=0.15];
    \end{tikzpicture}}
    \quad\quad\quad\quad
    \gxfour{5}{0,1,1}{0,0,1,1}{0,0,0,1,1}{1,1,0,0,1}
    \quad
    \raisebox{-0.19in}{\begin{tikzpicture}[scale=0.4]
    \draw [thick] (0,0)--(1,0)--(1.383,0.924)--(2.307,1.307)--(3.23,0.924)--(3.613,0)--(3.23,-0.924)--(2.307,-1.307)--(1.383,-0.924)--(1,0);
    \draw [fill] (0,0) circle [radius=0.15];
    \draw [fill] (1,0) circle [radius=0.15];
    \draw [fill] (3.613,0) circle [radius=0.15];
    \draw [fill] (1.383,0.924) circle [radius=0.15];
    \draw [fill] (1.383,-0.924) circle [radius=0.15];
    \draw [fill] (3.23,0.924) circle [radius=0.15];
    \draw [fill] (3.23,-0.924) circle [radius=0.15];
    \draw [fill] (2.307,1.307) circle [radius=0.15];
    \draw [fill] (2.307,-1.307) circle [radius=0.15];
    \end{tikzpicture}}
    \vspace{-3pt}
    $$
\caption{Three unicyclic grid diagrams, of increasing size but decreasing growth rate from left to right, and their row-column graphs}\label{figSubdivision}
\end{figure}
An early result of Hoffman \& Smith~\cite{HS1975}
shows that the
subdivision of an internal edge \emph{reduces} the spectral radius (also see~\cite[Proposition 3.1.4]{BH2012} and~\cite[Theorem 8.1.12]{CRS2010}). Hence, we can deduce the following unexpected consequence for grid classes:
\begin{cor}
\label{corSubdivision}
If $\Grid(M)$ is connected,
and $G(M')$ is obtained from $G(M)$ by subdividing an internal edge, then $\gr(\Grid(M'))<\gr(\Grid(M))$
unless $G(M)$ is a cycle or an $H$ graph.
\end{cor}
\vspace{-7pt}
For an example, see Figure~\ref{figSubdivision}.

% ----------------------------------------------------------------
\subsection{Grid classes with extremal growth rates}
Finally, we briefly consider grid classes with maximal or minimal growth rates for their size.

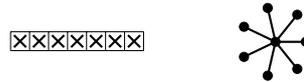
\begin{figure}[ht]
  $$
  \gxone{7}{1,1,1,1,1,1,1}
  \quad\quad\quad
  \raisebox{-.16in}{\begin{tikzpicture}[scale=0.40]
    \draw [thick] (.716,-.898)--(0,0)--(.716,.898);
    \draw [thick] (-.256,-1.121)--(0,0)--(-.256,1.121);
    \draw [thick] (-1.036,-.499)--(0,0)--(-1.036,.499);
    \draw [thick] (0,0)--(1,0);
    \draw [fill] (0,0) circle [radius=0.15];
    \draw [fill] (1,0) circle [radius=0.15];
    \draw [fill] (.716,-.898) circle [radius=0.15];
    \draw [fill] (-.256,-1.121) circle [radius=0.15];
    \draw [fill] (-1.036,-.499) circle [radius=0.15];
    \draw [fill] (.716,.898) circle [radius=0.15];
    \draw [fill] (-.256,1.121) circle [radius=0.15];
    \draw [fill] (-1.036,.499) circle [radius=0.15];
  \end{tikzpicture}}
  \vspace{-3pt}
  $$
\caption{A skinny grid diagram
and its row-column star graph}\label{figSkinny}
\end{figure}
%\Needspace*{3\baselineskip}
%We call a grid class of a $1\times m$ matrix a \textbf\emph{skinny} grid class.
The row-column graph of a skinny grid class is a \emph{star} (see Figure~\ref{figSkinny}).
Stars have maximal spectral radius among trees (see~\cite[Theorem 8.1.17]{CRS2010}). This yields:
\begin{cor}\label{corSkinny}
Among all connected acyclic monotone grid classes of size $m$,
the skinny grid classes
have the largest growth rate
(equal to $m$).
\end{cor}

We have already seen (Corollary~\ref{corPath}) that the
%(acyclic)
connected grid classes with \emph{least} growth rates are those whose row-column graph is a path. For unicyclic grid classes, we have the following (see~\cite[Theorem 8.1.18]{CRS2010}):
\begin{cor}\label{corCycleMin}
Among all connected unicyclic monotone grid classes of size $m$, those
%with row-column graph $C_m$
whose row-column graph is a single cycle of length $m$
have the smallest growth rate
(equal to 4).
\end{cor}
There are many additional results known concerning graphs with extremal values for their spectral radii, especially for graphs with a small number of cycles.
For an example, see the two papers by Simi\'c~\cite{Simic1987a,Simic1989}
on the largest eigenvalues of unicyclic and bicyclic graphs.
Results like these
can be translated into further facts concerning the growth rates of grid classes.

% ================================================================
\cleardoublepage

% LaTeX file

\newcommand{\gammaM}{\gamma_{\!M}^{\phantom{n}}}
\newcommand{\gammaMn}{{\gammaM}^{\!\!n}}

% ================================================================
\chapter{Grid class limit shapes}\label{chap06}

In this chapter, we briefly investigate the shape of a ``typical'' large permutation in
a monotone grid class. We show that almost all large permutations in a grid class do, in fact, have the same shape,
and present a way of discovering this shape.

As we saw above (Lemma~\ref{lemmaCountGriddings1}),
if $M$ has dimensions $r\times s$, then
the number of gridded permutations of length $n$ in $\Gridhash(M)$ is
given by a sum of products of multinomial coefficients, one for each column and one for each row:
\begin{equation*}%\label{eqSumProdMultinom}
\big|\Gridhash_n(M)\big|
\;=\;
\raisebox{-4.5pt}{\fontsize{24.88pt}{0pt}\selectfont $\sum$}
\prod_{i=1}^r \binom{a_{i,1}+a_{i,2}+\ldots+a_{i,s}}{a_{i,1},a_{i,2},\ldots,a_{i,s}}
\prod_{j=1}^s \binom{a_{1,j}+a_{2,j}+\ldots+a_{r,j}}{a_{1,j},a_{2,j},\ldots,a_{r,j}} ,
\end{equation*}
where the sum is over all combinations of non-negative $a_{i,j}$ such that $\sum_{i,j}a_{i,j}=n$ and $a_{i,j}=0$ if $M_{i,j}=0$.
Each $a_{i,j}$ represents the number of points in the $(i,j)$ cell.

To determine grid class limit shapes, we focus on the \emph{proportion} $\alpha_{i,j}=a_{i,j}/n$ of points in each cell.
(Vatter~\cite[Appendix A.4]{Vatter2011} takes a similar approach to yield a continued fraction expression for the growth rate of monotone grid classes whose row-column graph is a path.)

Given a $0/\pm1$ matrix $M=(m_{i,j})$ with dimensions $r\times s$, let us use $\mathbf{A}(M)$ to denote the %(uncountable)
set of real $r\times s$ %\footnote{It would be sufficient to consider matrices with rational entries.}
matrices $A=(\alpha_{i,j})$ with nonnegative entries, such that $\alpha_{i,j}=0$ if $m_{i,j}=0$ and $\sum\limits_{i,j}^{\phantom{.}}\alpha_{i,j}=1$.
For example,
$
\begin{smallmx}0.2&0.7\\ 0.1&0\end{smallmx}
\:\in\:
\mathbf{A}\begin{smallmx}-1&1\\ -1&0\end{smallmx}.
$

Given a matrix $A=(\alpha_{i,j})\in\mathbf{A}(M)$, let $\Gridhash(A)$ be the set of gridded permutations in $\Gridhash(M)$ for which the proportion of points in each cell is given by the values of the $\alpha_{i,j}$. Thus,
$$
\Gridhash(M) \;=\; \biguplus_{A\in\mathbf{A}(M)} \!\Gridhash(A).
$$
Note that, if $A=(\alpha_{i,j})$, then $\Gridhash_n(A)$ is nonempty only if, for each $i,j$, the number $\alpha_{i,j}n$ is an integer.

We claim that, asymptotically, the proportion of points in each cell is the same for
almost all large gridded permutations in $\Gridhash(M)$.
To state this formally, we require the following definition:
For a real matrix $A$, let $\mathbf{A}\raisebox{-2pt}{$\varepsilon$}(A)$ be the set of matrices that are \emph{$\varepsilon$-close} to $A$:
$$
\mathbf{A}\raisebox{-2pt}{$\varepsilon$}(A) \;=\; \big\{B : \big|\!\big|B-A\big|\!\big|_{\max}\leqslant\varepsilon \big\},
$$
where $\big|\!\big|(a_{i,j})\big|\!\big|_{\max}=\max\limits_{i,j}|a_{i,j}|$ is the ``entrywise'' max norm.

\thmbox{
\begin{prop}\label{propGriddedPermsLimitShape}
  For any connected class of gridded permutations, $\Gridhash(M)$, there is a unique $A_M\in\mathbf{A}(M)$, such that for all $\varepsilon>0$,
  $$
  %\liminfty \frac{ \sum_{B\in\mathbf{A}\raisebox{0pt}{${}_\varepsilon$}(A)} \big|\Gridhash_n(B)\big| }{ \big|\Gridhash_n(M)\big| } \;=\; 1 .
  %
  \big|\Gridhash_n(M)\big|
  \;\sim\;
  \sum_{B\in\mathbf{A}\raisebox{0pt}{${}_\varepsilon$}(A_M)} \big|\Gridhash_n(B)\big|
  $$
\end{prop}
} %\thmbox

Rather than giving a formal proof, we just demonstrate the result by considering the simplest nontrivial grid class, $\CCC=\Gridhash(\!\raisebox{-1pt}{\setgcscale{0.32}\gcone{2}{1,1}}\!)$.

If $\alpha n$ is an integer, then the number of gridded permutations in $\CCC$ of length $n$ that have $\alpha n$ points in the left cell and
$(1-\alpha)n$ points in the right cell is $\binom{n}{\alpha n}$.
Applying Stirling's approximation, asymptotically we have
$$
\big|\Gridhash_n(\alpha,1-\alpha)\big|
\;\sim\;
\frac{1}{\sqrt{2\pi\alpha(1-\alpha)n}} \left(\alpha^{-\alpha}(1-\alpha)^{-(1-\alpha)}\right)^{\!n}
\;=\;
c_\alpha \+ n^{-\nfrac{1}{2}} \+ \gamma_{\!\alpha}^{\+n}.
$$
It can easily be confirmed using elementary calculus that the exponential term, $\gamma_{\!\alpha}$, is unimodal and takes a maximum value (of 2) when $\alpha=\half$. Let $A_M=(\half,\half)$.

Partitioning $\CCC$ between, firstly, those
gridded permutations whose
distribution of points is $\varepsilon$-close to $A_M$ and, secondly, those
whose
distribution of points is \emph{not} $\varepsilon$-close to $A_M$ yields
\begin{equation}\label{eqGriddedPermPartition}
%\big|\Gridhash_n(\!\raisebox{-1pt}{\setgcscale{0.32}\gcone{2}{1,1}}\!)\big|
\big|\CCC_n\big|
\;=\;
\sum_{B\in\mathbf{A}\raisebox{0pt}{${}_\varepsilon$}(\half,\half)} \!\big|\Gridhash_n(B)\big|
\:+\:
\sum_{B\in\mathbf{A}(1,1) \setminus \mathbf{A}\raisebox{0pt}{${}_\varepsilon$}(\half,\half)} \!\big|\Gridhash_n(B)\big|.
\end{equation}
Now, there is always some $\alpha\in\big[\half-\tfrac{1}{2n},\half\big]$
for which %the set
$\Gridhash_n(\alpha,1-\alpha)$ is nonempty.
Hence, for large enough $n$, the first summand in~\eqref{eqGriddedPermPartition} is at least
$c_{\alpha_1} \+ n^{-\nfrac{1}{2}} \+ \gamma_{\!\alpha_1}^{\+n}$,
where
$\alpha_1=\half-\tfrac{\varepsilon}{2}$.
On the other hand,
the second summand is no more than
$n \+ c_{\alpha_2} \+ n^{-\nfrac{1}{2}} \+ \gamma_{\!\alpha_2}^{\+n}$,
where
$\alpha_2=\half-\varepsilon$.
So, since $\gamma_{\!\alpha_1}>\gamma_{\!\alpha_2}$,
the set of gridded permutations whose asymptotic distribution of points is $\varepsilon$-close to $A_M$ grows exponentially faster than the set of those whose distribution of points is not $\varepsilon$-close to $A_M$.
Thus, asymptotically, almost all of the gridded permutations in $\CCC$
have a distribution of points $\varepsilon$-close to $A_M$.

We claim that this is true of any class of gridded permutations as long as its row-column graph is connected.

Now, we know (see Lemma~\ref{lemmaGRGriddings})
that the maximum number of griddings of permutations in a monotone grid class grows only polynomially with length.
Hence, asymptotically almost all permutations in $\Grid(M)$ have griddings whose distribution is $\varepsilon$-close to $A_M$. Moreover, it can be shown that, of all the ways of interleaving points in cells in the same row or column, asymptotically almost all of them distribute the points evenly. Determining $A_M$ is thus sufficient to give us
the \emph{limit shape} of permutations in $\Grid(M)$, showing what %almost all
a typical
large gridded permutation in the class looks like.
\begin{figure}[ht]
  \begin{center}
    \includegraphics[scale=0.33]{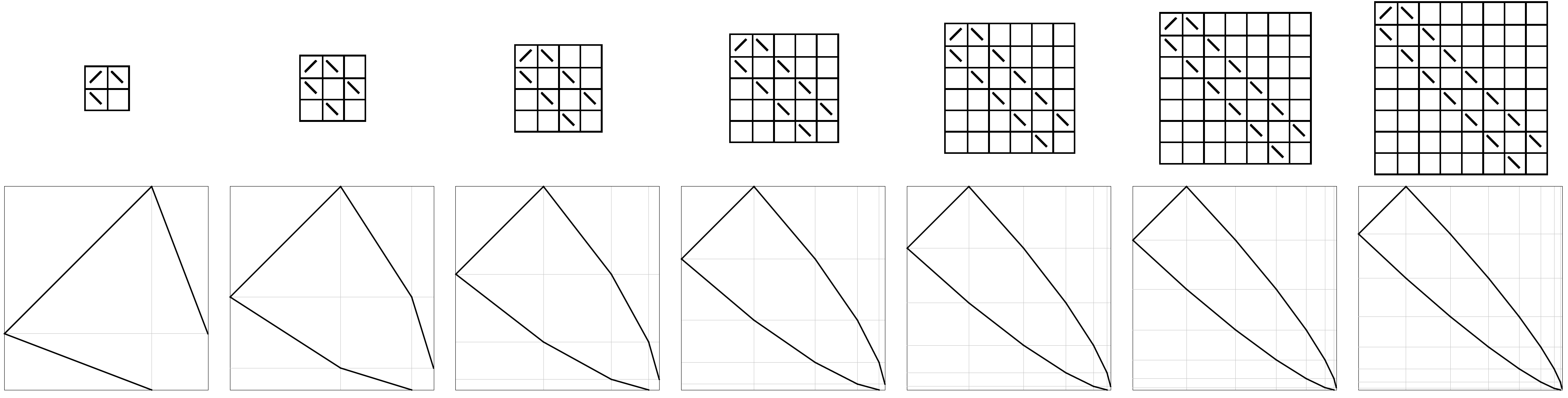}
  \end{center}
  \caption{Limit shapes of monotone grid classes whose row-column graphs are paths}\label{figLimitShapes}
\end{figure}

Let us now consider how to ascertain the limiting distribution of points.
When applied to a multinomial coefficient, Stirling's approximation gives the following asymptotic form:
$$
\binom{\tau\+n}{\beta_1 n,\beta_2 n,\ldots,\beta_k n}
\;\sim\;
\sqrt{\frac{\tau}{(2\+\pi)^{k-1}\+\beta_1\+\beta_2\ldots\beta_k}}
\+
n^{-(k-1)/2}
\+
\left(\tau^\tau\+\beta_1^{-\beta_1}\+\beta_2^{-\beta_2}\ldots\+\beta_k^{-\beta_k}\right)^{\!n},
$$
where $\tau=\beta_1+\beta_2+\ldots+\beta_k$ and each $\beta_i$ is positive.

Thus if
$A=(\alpha_{i,j})$ has dimensions $r\times s$ and all the $\alpha_{i,j}$ are rational,
$$
\grup(\Gridhash(A))
\;=\;
%\gamma(A)
%\;=\;
\prod_{i=1}^r %\Bigg(
%\frac {\big(\!\sum_j\alpha_{i,j}\big)^{\sum_j\alpha_{i,j}}} {\prod_j{\alpha_{i,j}}^{\alpha_{i,j}}}
\frac {{\kappa_i}^{\kappa_i}} {\prod_j{\alpha_{i,j}}^{\alpha_{i,j}}}
%\Bigg)^{\!\!n}
\,\times\:
\prod_{j=1}^s %\Bigg(
%\frac {\big(\!\sum_i\alpha_{i,j}\big)^{\sum_i\alpha_{i,j}}} {\prod_i{\alpha_{i,j}}^{\alpha_{i,j}}}
\frac {{\rho_j}^{\rho_j}} {\prod_i{\alpha_{i,j}}^{\alpha_{i,j}}}
%\Bigg)^{\!\!n}
,
$$
where $\kappa_i=\sum_j\alpha_{i,j}$ is the proportion of points in column $i$
and $\rho_j=\sum_i\alpha_{i,j}$ is the proportion of points in row $j$,
and we take sums and products over the nonzero $\alpha_{i,j}$ only.

Hence, by an argument analogous to that used above in analysing the asymptotic structure of $\Gridhash(\!\raisebox{-1pt}{\setgcscale{0.32}\gcone{2}{1,1}}\!)$,
and the fact that $\gr(\Grid(M))=\gr(\Gridhash(M))$ (see Lemma~\ref{lemmaGRGriddings}),
we have the following consequence of Proposition~\ref{propGriddedPermsLimitShape}:

\newpage  %%% BAD BREAK IN THMBOX %%%
\thmbox{
\begin{lemma}\label{lemmaAsymptGridGR}
The growth rate of a connected monotone grid class, $\Grid(M)$, is given by
$$
  \gr(\Grid(M))
  %\;=\;
  %\gammaM
  \;=\;
  \max_{(\alpha_{i,j})\in\AAA(M)} \prod_{i=1}^r
  \;
  \frac {{\kappa_i}^{\kappa_i}} {\prod_j{\alpha_{i,j}}^{\alpha_{i,j}}}
  \,\times\:
  \prod_{j=1}^s
  \frac {{\rho_j}^{\rho_j}} {\prod_i{\alpha_{i,j}}^{\alpha_{i,j}}}
  ,
$$
where $\kappa_i=\sum_j\alpha_{i,j}$ and $\rho_j=\sum_i\alpha_{i,j}$.

%~

Moreover, the maximum is achieved uniquely by the matrix $A_M$, specified in Proposition~\ref{propGriddedPermsLimitShape}, which indicates the limiting distribution of points in each cell.
\end{lemma}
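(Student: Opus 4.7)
The plan is to derive the formula as a direct consequence of the exact count in Lemma~\ref{lemmaCountGriddings1} (equivalently, Lemma~\ref{lemmaCountGriddings}) together with Stirling's approximation, and then to identify the unique maximiser using Proposition~\ref{propGriddedPermsLimitShape} and the fact from Lemma~\ref{lemmaGRGriddings} that $\gr(\Grid(M))=\gr(\Gridhash(M))$.

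First I would rewrite $|\Gridhash_n(M)|$ using Lemma~\ref{lemmaCountGriddings1} as a sum
$$
\big|\Gridhash_n(M)\big| \;=\; \sum_{A\in\mathbf{A}(M),\, nA\in\mathbb{Z}^{r\times s}} \big|\Gridhash_n(A)\big|,
$$
where each summand is a product of $r+s$ multinomial coefficients, one per row and one per column, all of whose arguments are integer multiples of $n$. Substituting $a_{i,j}=\alpha_{i,j}n$ and applying the asymptotic form of the multinomial coefficient quoted immediately before the lemma yields, for any fixed $A\in\mathbf{A}(M)$ with positive rational entries on the support of $M$,
$$
\big|\Gridhash_n(A)\big| \;\sim\; C(A)\,n^{-e(M)/2}\,\Phi_M(A)^{n},
$$
where $e(M)$ is a constant depending only on the shape of $M$ and
$$
\Phi_M(A) \;=\; \prod_{i=1}^{r} \frac{\kappa_i^{\kappa_i}}{\prod_j \alpha_{i,j}^{\alpha_{i,j}}} \;\cdot\; \prod_{j=1}^{s} \frac{\rho_j^{\rho_j}}{\prod_i \alpha_{i,j}^{\alpha_{i,j}}},
$$
with $\kappa_i$ and $\rho_j$ the column and row marginals of $A$, under the convention $0^0=1$. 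The function $\Phi_M$ is continuous on the compact set $\mathbf{A}(M)$, so attains its maximum there.

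Next I would bound $|\Gridhash_n(M)|$ above and below by multiples of $\Phi_M^{\max\,n}$, where $\Phi_M^{\max}=\max_{A\in\mathbf{A}(M)}\Phi_M(A)$. For the upper bound, the number of summands is at most $\binom{n+rs-1}{rs-1}$, which is polynomial in $n$, and each term is at most $C\,n^{O(1)}\,\Phi_M^{\max\,n}$, so $|\Gridhash_n(M)|\leqslant P(n)\,\Phi_M^{\max\,n}$ for some polynomial $P$. For the lower bound, I would choose for each $n$ a rational matrix $A^{(n)}\in\mathbf{A}(M)$ with $nA^{(n)}$ integral whose entries lie within $1/n$ of a maximiser, and argue by continuity of $\log\Phi_M$ on the relative interior of $\mathbf{A}(M)$ that $\Phi_M(A^{(n)})\to\Phi_M^{\max}$. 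This shows $\gr(\Gridhash(M))=\Phi_M^{\max}$, and combining with Lemma~\ref{lemmaGRGriddings} gives $\gr(\Grid(M))=\Phi_M^{\max}$, yielding the formula.

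Finally, for uniqueness I would invoke Proposition~\ref{propGriddedPermsLimitShape}: it asserts the existence of a unique $A_M\in\mathbf{A}(M)$ at which the distribution of points is asymptotically concentrated. If $\Phi_M$ attained its maximum at some $B\neq A_M$ (or on a set of positive diameter), then the same bound-and-approximate argument applied to a small neighbourhood of $B$ disjoint from a neighbourhood of $A_M$ would contribute an exponentially non-negligible mass away from $A_M$, contradicting the $\varepsilon$-concentration statement of Proposition~\ref{propGriddedPermsLimitShape} for small enough $\varepsilon$. Hence the maximiser is unique and equals $A_M$. The main obstacle, in my view, is the careful handling of boundary cases where some $\alpha_{i,j}$ lie on the boundary of the simplex: the Stirling expansion used above requires the $\alpha_{i,j}$ in the support of $M$ to be strictly positive, so I would argue separately that $A_M$ lies in the relative interior of $\mathbf{A}(M)$, using connectedness of $G(M)$ to rule out maximisers that concentrate on proper subclasses (since the value of $\Phi_M$ can always be strictly increased by redistributing a small positive amount of mass into any connected cell currently at zero).
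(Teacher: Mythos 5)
Your proposal is correct and follows essentially the same route as the paper: apply Stirling's approximation to the product of multinomial coefficients from Lemma~\ref{lemmaCountGriddings1}, bound the polynomially many summands above and below by the maximal exponential rate, transfer to $\Grid(M)$ via Lemma~\ref{lemmaGRGriddings}, and obtain uniqueness of the maximiser from Proposition~\ref{propGriddedPermsLimitShape}. You actually supply more detail than the paper does (the explicit upper/lower sandwich and the discussion of boundary entries and the relative interior), since the paper only sketches this argument by analogy with the $1\times2$ example.
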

}

To determine the limiting distribution, we can make use of the following result:

\thmbox{
\begin{prop}\label{propAsymptLagrange}
  Let $A=({\alpha}_{i,j})$ be the distribution of points in a gridding of a typical large permutation in a connected monotone grid class.
  Then there is a constant $C$ such that,
  for every nonzero ${\alpha}_{i,j}$, we have ${{\alpha}_{i,j}}^2/{\kappa}_i{\rho}_j=C$, where ${\kappa}_i=\sum_j{\alpha}_{i,j}$ and ${\rho}_j=\sum_i{\alpha}_{i,j}$.
\end{prop}
} %\thmbox

\begin{proof}
We use the method of Lagrange multipliers.
Let $f(A)$ represent the expression from Lemma~\ref{lemmaAsymptGridGR} that we need to maximise. We
introduce the auxiliary function
$$
\Lambda(A,\lambda) \;=\; \log f(A)-\lambda\big(1-\sum_{i,j}\alpha_{i,j}\big).
$$
The limiting distribution is then given by the solution to the set of equations
$$
\frac{\partial\Lambda(A,\lambda)}{\partial\alpha_{i,j}} \;=\; 0,
$$
one for each nonzero $\alpha_{i,j}$, together with the original constraint $\sum_{i,j}\alpha_{i,j}=1$.

Now, for each $\alpha_{i,j}$,
$$
\frac{\partial\Lambda(A,\lambda)}{\partial\alpha_{i,j}}
\;=\;
\lambda \:-\: 2\log\alpha_{i,j}
\:+\:
\log \kappa_i %\sum_{j'}\alpha_{i,j'},
\:+\:
\log \rho_j %\sum_{i'}\alpha_{i',j}
.
$$
Rearrangement and exponentiation then yields the fact that, for every $\alpha_{i,j}$, the expression
$
%\frac
{{\alpha_{i,j}}^{2}}
/
{
\kappa_i %\big(\sum_{i'}\alpha_{i',j}\big)
%\+
\rho_j %\big(\sum_{j'}\alpha_{i,j'}\big)
} %\;=\; e^\lambda .
$
has the same value ($e^\lambda$).
\end{proof}

Proposition~\ref{propAsymptLagrange} can be used to generate a set of polynomial equations for $A_M$ which can, in theory, be solved.
By applying Lemma~\ref{lemmaAsymptGridGR}, this gives a method for determining the growth rate of a grid class.
In practice, this is rather harder than
finding the growth rate by
evaluating the spectral radius of the row-column graph.
However it is useful if we desire to know the limit shape.

Let's work through an example: $\Gridhash\big(\!\gctwo{3}{-1,-1,-1}{-1,0,0}\!\big)$.

For this class, the equations for the entries in %the matrix of limiting proportions,
$A_M=\begin{smallmx}\alpha_{1,2}&\alpha_{2,2}&\alpha_{3,2}\\ \alpha_{1,1}&0&0\end{smallmx}$ are
$$
\tfrac{{\alpha_{1,1}}}{{\alpha_{1,1}}+{\alpha_{1,2}}}
\;=\;
\tfrac{{\alpha_{1,2}}^2}{({\alpha_{1,1}}+{\alpha_{1,2}}) ({\alpha_{1,2}}+{\alpha_{2,2}}+{\alpha_{3,2}})}
\;=\;
\tfrac{{\alpha_{2,2}}}{{\alpha_{1,2}}+{\alpha_{2,2}}+{\alpha_{3,2}}}
\;=\;
\tfrac{{\alpha_{3,2}}}{{\alpha_{1,2}}+{\alpha_{2,2}}+{\alpha_{3,2}}},
$$
together with
$$
{\alpha_{1,1}}+{\alpha_{1,2}}+{\alpha_{2,2}}+{\alpha_{3,2}} \;=\; 1 .
$$
These equations have the unique positive solution
$$
%\begin{array}{rcl}
  \alpha_{1,1}  \;=\;  \tfrac{1}{4}(2-\sqrt{2}) , \qquad\quad
  \alpha_{1,2}  \;=\;  \tfrac{1}{2\sqrt{2}} , \qquad\quad
  \alpha_{2,2}  \;=\;  %\tfrac{1}{4} , \qquad
  \alpha_{3,2}  \;=\;  \tfrac{1}{4} .
%\end{array}
$$
Thus, we discover that, in $\Gridhash\big(\!\gctwo{3}{-1,-1,-1}{-1,0,0}\!\big)$, %our example class,
almost all large permutations look like this:
\vspace{-6pt}
\begin{center}
\includegraphics[scale=0.85]{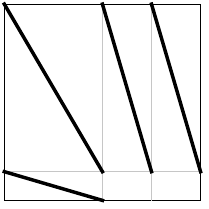}
\end{center}

The limit shapes of some other grid classes are illustrated in Figure~\ref{figLimitShapes}. We encounter similar pictures when considering the structure of permutations in the class $\av(\pdiamond)$ in the final chapter of this thesis.

One interpretation of the limit shape of a monotone grid class is as the
measure-theoretic limit of convergent sequences of permutations. %, known as a \emph{permuton}.
We conclude this chapter with a very brief sketch of this concept.

One of the most exciting recent developments
in graph theory is the idea of the
limit of a convergent sequence of finite graphs, introduced by
Lov\'asz \&
Szegedy~\cite{LS2006,Lovasz2012},
and
known as a \emph{graphon}. A graphon is a Lebesgue measurable
function $W$ from $[0,1]^2$ to $[0,1]$ that is symmetric in its two arguments, i.e., $W(x,y)=W(y,x)$ for every $x,y\in[0,1]$.

A similar analytic
limit object,
called a \emph{permuton}~\cite{GGKK2013},
has also been defined recently for sequences of permutations
by
Hoppen, Kohayakawa, Sampaio and their collaborators~\cite{BHKS2011,HKMRS2013,HKMS2011b,HKMS2011,HKS2012}.
Formally, a permuton is a probability measure $\mu$ on the $\sigma$-algebra %$\AAA$
of Borel sets of the
unit square $[0, 1]^2$ such that $\mu$ has uniform marginals, i.e.,
$$
\mu([\alpha,\beta] \times [0,1]) \;=\; \mu([0,1] \times [\alpha,\beta]) \;=\; \beta-\alpha
$$
for every $0 \leqslant \alpha \leqslant \beta \leqslant 1$.

\begin{figure}[t] %[ht]
  \begin{center}
    \includegraphics[scale=0.58]{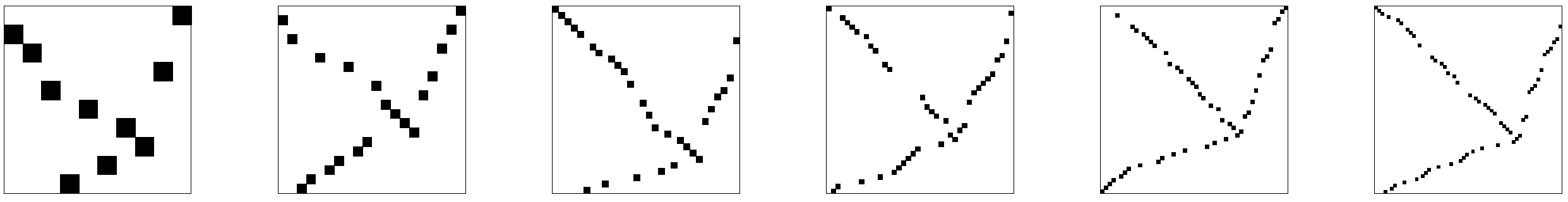}
  \end{center}
  \caption{Permutons associated with permutations in \protect\gctwo{2}{-1,1}{1,0}}
  \label{figPermutonExample}
\end{figure}
One way of understanding the idea of the limit of a sequence of permutations is to associate a permuton $\mu_\sigma$ with each permutation $\sigma$. If $|\sigma|=n$, then for each $i=1,\ldots,n$,
$$
\mu_\sigma([(i-1)/n,i/n] \times [(\sigma(i)-1)/n,\sigma(i)/n]) \;=\; 1 .
$$
Over regions that don't intersect these $n$ squares, $\mu_\sigma$ is zero.
See Figure~\ref{figPermutonExample} for some examples.
A sequence $(\sigma_k)$ of permutations of increasing length is then said to converge to a permuton $\mu$
if,
for every measurable region $R \subseteq [0,1]^2$, we have $\mu_{\sigma_k}(R) \rightarrow \mu(R)$ as $k$ tends to infinity.

In the graph-theoretical
context,
Janson~\cite{Janson2011,HJS2013}
has
initiated a study of the
graphons that occur as limits of
sequences of graphs in a graph class.
He discovered an intriguing relationship between the growth rate of such a class and the ``entropy'' of its graphons.
Sadly, this theorem does not transfer naturally to the world of permutations.
However, an investigation of
%the relationship between permutation classes and
the permutons that occur as limits
of
sequences of permutations in a permutation class
may yield interesting results.
One initial elementary observation is that the \emph{support} of any such permuton must have zero measure.

% ================================================================
\cleardoublepage

% LaTeX file

\setgcgap{0.08}  % global default for this chapter

% 0/±1, M^{x2}, trace monoid of M
\newcommand{\opmo}{$0$/$\pm1$}
\newcommand{\drm}{M^{\!\times2}}
\newcommand{\grm}{G^{\times\!}}
\newcommand{\tmm}{\mathbb{M}(M)}

% ================================================================
\chapter{Geometric grid classes}\label{chap07}

% ================================================================
\section{Introduction}

Closely related to monotone grid classes are \emph{geometric} grid classes, as investigated
by Albert, Atkinson, Bouvel, Ru\v{s}kuc \& Vatter~\cite{AABRV2011} and Albert, Ru\v{s}kuc \& Vatter~\cite{ARV2012}.
The geometric grid class $\Geom(M)$ is a subclass of $\Grid(M)$, permutations in $\Geom(M)$ satisfying an additional ``geometric'' constraint.

%Before we can state our result, we need a number of definitions.

\begin{figure}[ht]
$$
%{\setgcscale{0.4} \gctwo{3}{1,0,-1}{1,-1,1}}
%\qquad\qquad
{
\setgcscale{0.8}
\setgcgap{0.037}
\setgcptgridscale{5}
\setgcptsize{0.095}%{0.105}
\setgcptcolor{blue!50!black}
\gctwo[-1, 1,7,-1,-1, -1, -1,-1,2,-1, -1, 9,8,3,6]{3}{1,0,-1}{1,-1,1}
\qquad
\setgcptgridscale{6}
\gctwo[-1, 1,-1,-1,-1,5, -1, -1,-1,-1,2,-1, -1, 11,10,3,4]{3}{1,0,-1}{1,-1,1}
}
\qquad\qquad\quad
 \raisebox{-.15in}{
  \begin{tikzpicture}[scale=0.8]
      \draw [very thick] (0,0)--(1,0)--(1,1)--(0,1);
      \draw [very thick,dashed] (0,0)--(0,1);
      \draw [very thick,dashed] (1,0)--(2,0);
      \fill[radius=0.12,black!50!blue] (0,0) circle ;
      \fill[radius=0.12,black!50!red] (1,0) circle ;
      \fill[radius=0.12,black!50!blue] (2,0) circle;
      \fill[radius=0.12,black!50!red] (0,1) circle ;
      \fill[radius=0.12,black!50!blue] (1,1) circle ;
      \node[below]at(0,-.1){$c_3$};
      \node[below]at(1,-.1){$r_1$};
      \node[below]at(2,-.1){$c_2$};
      \node[above]at(0,1.1){$r_2$};
      \node[above]at(1,1.1){$c_1$};
    \end{tikzpicture}}
$$
\caption[figGeomClass]{
At left: The standard figure for
$\begin{geommx}1&\pos0&-1\\ 1&-1&\pos1 \end{geommx}$,
showing two %distinct
%griddings
plots
of the permutation $1527634$
with distinct griddings.
%in this class,
At right: Its row-column graph;
% with a single negative cycle;
positive edges are shown as solid lines, negative edges are dashed.
}\label{figGeomClass}
\end{figure}
Like a monotone grid class, a geometric grid class is specified by a \opmo{} matrix which represents the shape of plots of permutations in the class.
As before, to match the Cartesian coordinate system, we index these matrices from the lower left, by column and then by row.
If $M$ is such a matrix, then we say that the \emph{standard figure} of $M$, denoted  $\Lambda_M$, is the subset of $\mathbb{R}^2$ consisting of the
%disjoint
union of
oblique
open line segments $L_{i,j}$ with slope $M_{i,j}$ for each $i,j$ for which
%$M_{i,j}\neq0$,
$M_{i,j}$ is nonzero,
where $L_{i,j}$ extends
%where the endpoints of $L_{i,j}$ are
from $(i-1,j-1)$ to $(i,j)$
%$(i-1,j-1)$ and $(i,j)$
if $M_{i,j}=1$, and
from $(i-1,j)$ to $(i,j-1)$
%$(i-1,j)$ and $(i,j-1)$
if $M_{i,j}=-1$.
%the open line segment from $(i-1,j-1)$ to $(i,j)$ for each $i,j$ for which $M_{i,j}=1$ and
%the open line segment from $(i-1,j)$ to $(i,j-1)$ for each $i,j$ for which $M_{i,j}=-1$.

The geometric grid class $\Geom(M)$ is
then defined to be
the set of
those
permutations $\sigma_1\sigma_2\ldots\sigma_n$ that can be plotted as a subset of the standard figure, i.e.~for which there exists a sequence of points $(x_1,y_1),\ldots,(x_n,y_n)\in\Lambda_M$ %,
%known as a \textbf{plot},
such that $x_1<x_2<\ldots<x_n$ and the sequence $y_1,\ldots,y_n$ is order-isomorphic to $\sigma_1,\ldots,\sigma_n$.
See Figure~\ref{figGeomClass} for an example.

We define the row-column graph $G(M)$ of geometric grid class $\Geom(M)$ in the same way as for a monotone grid class (see Section~\ref{sectRowCol}), except that we label each edge $r_ic_j$ with the value of $M_{i,j}$.
Edges labelled $+1$ are called \emph{positive}; edges labelled $-1$ are called \emph{negative}.
See Figure~\ref{figGeomClass} for an illustration.

Clearly $\Geom(M)$ is a subset of $\Grid(M)$. Indeed,
$\Grid(M)$ consists of those permutations that can be plotted as a subset of some figure consisting of the union of \emph{any monotonic curves} $\Gamma_{i,j}$ with the same endpoints as the $L_{i,j}$ in $\Lambda_M$.
This  permits greater flexibility in
the positioning of points in the cells, so one would expect that for some matrices, $\Grid(M)$ would contain permutations that cannot be plotted on $\Lambda_M$.
This is the case. For example, it can easily be shown that $\mathbf{2413}$ and $\mathbf{3142}$ lie in $\Grid\big(\!\gctwo{2}{-1,1}{1,-1}\!\big)$ but not in $\Geom\big(\!\gctwo{2}{-1,1}{1,-1}\!\big)$. % (see~\cite[Figure~4]{AABRV2011}).
In fact, the geometric grid class
$\Geom(M)$ and the monotone grid class $\Grid(M)$
are identical
if
and only if
$G(M)$
is acyclic~\cite[Theorem~3.2]{AABRV2011}.

\begin{figure}[ht]
$$
{
\setgcscale{0.4}
\setgcgap{0.04}
\setgcarrowmode
\setgcarrowtip{stealth}
\gcfour{6}{0,3,0,0,-2}{2,0,0,0,0,-3}{0,3,-2,0,0,3}{2,0,0,-3,2}
}
\qquad\quad\qquad
 \raisebox{-.3625in}{
  \begin{tikzpicture}[scale=0.8]
      \draw [very thick] (1.707,-0.707)--(1,0)--(1.707,0.707)--(2.707,0.707);
      \draw [very thick] (2.707,-0.707)--(3.707,-0.707)--(4.414,0)--(3.707,0.707);
      \draw [very thick,dashed] (0,0)--(1,0);
      \draw [very thick,dashed] (1.707,-0.707)--(2.707,-0.707);
      \draw [very thick,dashed] (2.707,0.707)--(3.707,0.707);
      \draw [very thick,dashed] (4.414,0)--(5.414,0);
      \fill[radius=0.12,black!50!blue] (0,0) circle ;
      \fill[radius=0.12,black!50!red] (1,0) circle ;
      \fill[radius=0.12,black!50!blue] (1.707,-0.707) circle;
      \fill[radius=0.12,black!50!blue] (1.707,0.707) circle;
      \fill[radius=0.12,black!50!red] (2.707,-0.707) circle ;
      \fill[radius=0.12,black!50!red] (2.707,0.707) circle ;
      \fill[radius=0.12,black!50!blue] (3.707,-0.707) circle;
      \fill[radius=0.12,black!50!blue] (3.707,0.707) circle;
      \fill[radius=0.12,black!50!red] (4.414,0) circle ;
      \fill[radius=0.12,black!50!blue] (5.414,0) circle ;
      \node[above]at(0,.1){$c'_2$};
      \node[above]at(1,.1){$r_1\,$};
      \node[above]at(1.707,0.807){$c_1$};
      \node[above]at(2.707,0.807){$r_2$};
      \node[above]at(3.707,0.807){$c'_3$};
      \node[above]at(4.414,.1){$\,r'_1$};
      \node[above]at(5.414,.1){$c_2$};
      \node[below]at(1.707,-.807){$c_3$};
      \node[below]at(2.707,-.807){$r'_2$};
      \node[below]at(3.707,-.807){$c'_1$};
    \end{tikzpicture}}
$$
\caption[figDoubleRefinement]{At left: The standard figure of
%the double refinement of $\begin{smallmx}1&\pos0&-1\\ 1&-1&\pos1 \end{smallmx}$,
$\begin{smallmx}1&\pos0&-1\\ 1&-1&\pos1 \end{smallmx}^{\!\times2}$,
with a consistent orientation marked. At right:
Its row-column graph.
}\label{figDoubleRefinement}
\end{figure}
To state our result, we need one final definition related to geometric grid classes.
If $M$ is a \opmo{} matrix of dimensions $t\ttimes u$, we define the \emph{double refinement} $\drm$ of $M$ to be the \opmo{} matrix of dimensions $2\+t\ttimes 2\+u$ obtained from $M$ by replacing each $0$ with $\begin{smallmx}0&0\\0&0\end{smallmx}$, each $1$ with $\begin{smallmx}0&1\\1&0\end{smallmx}$, and each $-1$ with $\begin{smallmx}-1&\pos0\\ \pos0&-1\end{smallmx}$.
See Figure~\ref{figDoubleRefinement} for an example.
Note that the standard figure of $\drm$ is essentially a scaled copy of the standard figure of $M$, so we have:
\begin{obs}\label{obsGeomDouble}
$\Geom(\drm)=\Geom(M)$ for any \opmo{} matrix $M$.
\end{obs}
%\vspace{-6pt}
We demonstrate a connection between the growth rate of
%geometric grid class
$\Geom(M)$ and the \emph{matching polynomial} of the graph $G(\drm)$, the row-column graph of the double refinement of $M$.
A~$k$-\emph{matching} of a graph is a set of $k$ edges, no
pair of which have a vertex in common.
For example, the negative (dashed) edges in the graph in Figure~\ref{figDoubleRefinement} constitute a $4$-matching.
%A matching is also known as an \emph{independent edge set}.
If, for each~$k$, $m_k(G)$ denotes the number of distinct $k$-matchings of a graph $G$
%, and $G$ has $n$ vertices,
with $n$ vertices,
then the \emph{matching polynomial}
$\mu_G(z)$
%$\mu(G;z)$
of $G$ is defined to be
\begin{equation}\label{eqMatchingPolynomialDefn}
%$$
\mu_G(z)
%\mu(G;z)
\;=\; \sum_{k=0}^{\floor{n/2}} (-1)^k\+ m_k(G)\+z^{n-2k}.
%$$
\end{equation}
Observe that the exponents of the variable $z$ enumerate \emph{defects} in $k$-matchings: the number of vertices which are \emph{not} endvertices of an edge in such a matching.
If $n$ is even, $\mu_G(z)$ is an even function;
if $n$ is odd, $\mu_G(z)$ is an odd function.

With the relevant definitions complete, we can now state our theorem:

\thmbox{
\begin{thm}\label{thmGeomClassGrowthRate}
The growth rate of
geometric grid class
$\Geom(M)$
exists and
is equal to
%equals
the square
of the largest root of
the matching polynomial
$\mu_{G(\drm)}(z)$,
where
$G(\drm)$ is
the row-column graph of the double refinement of $M$.
\end{thm}
} %\thmbox

In the next section, we prove this theorem by utilizing the link between geometric grid classes and trace monoids, and their connection to rook numbers and the matching polynomial.
Then, in Section~\ref{sectGeomImplications}
we investigate a number of implications of this result by utilizing properties of the matching polynomial, especially the fact that the moments of $\mu_G(z)$
enumerate certain closed walks on $G$.
Firstly, we characterise the growth rates of geometric grid classes in terms of the spectral radii of trees.
Then, we explore the influence of cycle parity on growth rates and relate
the growth rates of geometric grid classes to those of monotone grid classes.
Finally, we consider the effect
of subdividing edges in the row-column graph, proving some new results regarding how edge subdivision affects the largest root of the matching polynomial.

%\newpage
% ================================================================
%\section{Griddings, consistent orientations and trace monoids}
\section{Proof of growth rate theorem} %~\ref{thmGeomClassGrowthRate}}

In order to prove our result, we make use of the
%begin by relating
connection between geometric grid classes and \emph{trace monoids}. This relationship was first used by Vatter \& Waton~\cite{VW2011} to establish certain structural properties of
%some
grid classes, and was developed further in~\cite{AABRV2011} from where we use a number of results.
To begin with, we need to consider griddings of permutations.

We define $M$-griddings and $M$-gridded permutations as for monotone grid classes (see Section~\ref{defGridding}).
A permutation may have multiple distinct griddings in a given geometric grid class; see Figure~\ref{figGeomClass} for an example.
We use $\Geomhash(M)$ to denote the set of all $M$-gridded permutations in $\Geom(M)$.
As is the case for monotone grid classes (Lemma~\ref{lemmaGRGriddings}), the growth rate of
$\Geom(M)$ is equal to the growth rate of the corresponding class of $M$-gridded permutations $\Geomhash(M)$.
So we can restrict our considerations to $M$-gridded permutations.

To determine the growth rate of
$\Geomhash(M)$,
we relate $M$-gridded permutations to words in a trace monoid.
To achieve this, one additional concept is required, that of a \emph{consistent orientation} of a standard figure.
If $\Lambda_M=\bigcup\,\{L_{i,j}:M_{i,j}\neq0\}$ is the standard figure of a \opmo{} matrix $M$, then a \emph{consistent orientation} of $\Lambda_M$ consists of an orientation of each oblique line $L_{i,j}$ such that in each column either all the lines are oriented leftwards or all are oriented rightwards, and in each row either all the lines are oriented downwards or all are oriented upwards.\footnote{For
ease
%simplicity
of exposition, we
use
%prefer
the concept of a consistent orientation
rather than the
%to the
%equivalent traditional
approach used previously involving
partial multiplication matrices; results from \cite{AABRV2011} follow \emph{mutatis mutandis}.}
See Figures~\ref{figDoubleRefinement} and \ref{figPermWord} for examples.

It is not always possible to consistently orient a standard figure.
The ability to do so depends on the \emph{cycles} in the row-column graph.
We say that the \emph{parity} of a cycle in $G(M)$ is
the product of the labels of its edges,
a \emph{positive cycle} is one which has parity $+1$, and
a \emph{negative cycle} is one with parity $-1$.
The following result relates
cycle parity to
consistent orientations:
\begin{lemma}[{see Vatter \& Waton~\cite[Proposition~2.1]{VW2011}}]\label{lemmaNegativeCycles}
The standard figure
%of a \opmo{} matrix $M$
$\Lambda_M$
has a consistent orientation if and
only if its row-column graph $G(M)$ contains no negative cycles.
\end{lemma}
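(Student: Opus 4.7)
My plan is to reinterpret a consistent orientation as a $\pm1$-labelling of the vertices of $G(M)$ and then observe that the existence of such a labelling is controlled precisely by cycle parity.

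First I would reformulate the data. A consistent orientation of $\Lambda_M$ assigns to each column $c_i$ a sign $s(c_i)\in\{+1,-1\}$ (say $+1$ for ``rightward'' and $-1$ for ``leftward''), and to each row $r_j$ a sign $s(r_j)\in\{+1,-1\}$ (say $+1$ for ``upward'' and $-1$ for ``downward''). Consider a non-zero cell $M_{i,j}$. If $M_{i,j}=+1$, the line $L_{i,j}$ runs from $(i-1,j-1)$ to $(i,j)$, so orienting it is equivalent to choosing either (rightward in column, upward in row) or (leftward in column, downward in row); both possibilities correspond to $s(c_i)s(r_j)=+1$. If $M_{i,j}=-1$, the line $L_{i,j}$ runs from $(i-1,j)$ to $(i,j-1)$, and its two orientations correspond to (rightward, downward) or (leftward, upward); both give $s(c_i)s(r_j)=-1$. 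Thus a consistent orientation of $\Lambda_M$ exists if and only if there exists a labelling $s\colon V(G(M))\to\{+1,-1\}$ with $s(u)s(v)=\ell$ for every edge $uv$ of $G(M)$ labelled $\ell$.

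For the forward implication I would take such a labelling $s$ and any cycle $v_0,v_1,\ldots,v_k=v_0$ in $G(M)$, and compute
\[
\prod_{i=1}^{k}\ell(v_{i-1}v_i) \;=\; \prod_{i=1}^{k} s(v_{i-1})\+s(v_i) \;=\; \prod_{i=0}^{k-1} s(v_i)^{2} \;=\; 1,
\]
since each vertex on the cycle is incident to exactly two of its edges. So every cycle has parity $+1$, i.e.\ no negative cycles exist.

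For the converse I would build $s$ component by component. Fix a connected component $C$ of $G(M)$, pick any base vertex $v_0\in C$ and set $s(v_0)=+1$. For any other $v\in C$ choose a path $v_0,v_1,\ldots,v_k=v$ in $C$ and define $s(v)=\prod_{i=1}^{k}\ell(v_{i-1}v_i)$. This is well-defined: any two such paths differ by a closed walk, which decomposes into cycles, and by hypothesis each cycle contributes a factor $+1$. By construction $s(u)s(v)=\ell(uv)$ on every edge (immediate for a tree edge of the chosen spanning tree, and a one-cycle check for non-tree edges). Translating $s$ back into orientations of the $L_{i,j}$ via the dictionary of the first paragraph yields a consistent orientation of $\Lambda_M$.

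The main obstacle, and the only step requiring care, is the bookkeeping in the opening paragraph: correctly verifying that for both values of $M_{i,j}\in\{+1,-1\}$ the two admissible orientations of $L_{i,j}$ are exactly those for which $s(c_i)s(r_j)=M_{i,j}$ under a fixed convention for what ``rightward/leftward'' and ``upward/downward'' mean. Once that translation is in place, the equivalence with the absence of negative cycles is a standard bipartite-like ``balance'' argument and the proof is essentially routine.
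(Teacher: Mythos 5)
The paper does not actually prove this lemma; it quotes it from Vatter \& Waton, so there is no in-paper proof to compare against, and your proposal must stand on its own. It does: the translation of a consistent orientation into a vertex signing $s$ with $s(c_i)s(r_j)=M_{i,j}$ on every edge is the right reformulation (your geometric bookkeeping for the two slopes is correct — a positive-slope segment couples rightward with upward, a negative-slope segment couples rightward with downward), and the existence of such a signing is exactly the classical balance criterion for signed graphs, which you establish by the telescoping product around cycles in one direction and the path-product construction in the other. This is, to the best of my knowledge, essentially the argument in the cited source.

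One step deserves tightening. A closed walk does not literally decompose into cycles; the correct statement is that, because the labels are $\pm1$, edges traversed an even number of times contribute a factor of $1$, and the edges traversed an odd number of times form an even subgraph, hence an edge-disjoint union of cycles, each of parity $+1$ by hypothesis. Alternatively, drop the well-definedness argument entirely: define $s$ once and for all via a spanning tree of each component and verify the edge condition on each non-tree edge through its fundamental cycle, as you already indicate at the end. Either repair is routine, and with it the proof is complete.
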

For example,
%$\begin{rowcolmx}1&\pos0&-1\\ 1&-1&\pos1 \end{rowcolmx}$
$G\big(\!\gctwo{3}{1,0,-1}{1,-1,1}\!\big)$
contains a negative cycle so its standard figure
has no
consistent orientation (see Figure~\ref{figGeomClass}), whereas
%$\begin{rowcolmx}-1&\pos0&-1\\ \pos1&-1&\pos1 \end{rowcolmx}$
$G\big(\!\gctwo{3}{-1,0,-1}{1,-1,1}\!\big)$
has no negative cycles so its standard figure has a consistent orientation
(see Figure~\ref{figPermWord}).

On the other hand, we can always consistently orient the standard figure of the double refinement of a matrix
by orienting each oblique line towards the centre of its $2\times2$ block (as in Figure~\ref{figDoubleRefinement}).
So
we have the following:
\begin{lemma}[see {\cite[Proposition~4.1]{AABRV2011}}]\label{lemmaDoubleRefinementConsistentOrientation}
If $M$ is any \opmo{} matrix, then $\Lambda_{\drm}$ has a consistent orientation.
\end{lemma}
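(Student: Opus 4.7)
The plan is to exhibit an explicit consistent orientation of $\Lambda_{\drm}$, namely the one hinted at in the text: orient each oblique line segment of the standard figure toward the centre of the $2 \times 2$ block of $\drm$ in which it lies.

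The first step is to verify the geometric observation that makes this rule well-defined. Each nonzero entry $M_{i,j}$ gives rise to a $2 \times 2$ block of $\drm$ occupying columns $2i-1, 2i$ and rows $2j-1, 2j$, with centre at $(2i-1, 2j-1)$. If $M_{i,j} = 1$, the block is $\begin{smallmx}0&1\\1&0\end{smallmx}$, so its two $+1$-slope segments sit in the bottom-left and top-right cells of the block, and they meet precisely at the block's centre, together forming a single diagonal across the block. If $M_{i,j} = -1$, the block is $\begin{smallmx}-1&0\\0&-1\end{smallmx}$, and its two $-1$-slope segments sit in the top-left and bottom-right cells, again meeting at the block's centre. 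In either case, each of the two segments has the block centre as one endpoint, so ``toward the centre of the block'' assigns each segment a unique direction.

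The second step is to check that the resulting orientation is consistent, i.e., uniform within each column and each row of $\drm$. Consider an odd-indexed column $2i-1$: every nonzero cell in this column is the left half of some block corresponding to a nonzero $M_{i,j}$, and so its segment is oriented rightward (toward the centre of that block), \emph{regardless} of whether the block comes from $M_{i,j} = 1$ or $M_{i,j} = -1$. The same argument applied to even-indexed columns shows every segment in column $2i$ is oriented leftward. Exchanging the roles of rows and columns, every segment in row $2j-1$ is oriented upward, and every segment in row $2j$ is oriented downward. This gives the required consistency, and so $\Lambda_{\drm}$ admits a consistent orientation, as required.

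The proof is essentially a picture (Figure~\ref{figDoubleRefinement}), and there is no genuine obstacle: the only thing to watch is the bookkeeping needed to match matrix entries within each block to the correct geometric quadrant across the four sign/parity cases. Everything else is elementary verification.
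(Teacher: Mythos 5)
Your proposal is correct and is exactly the orientation the paper uses (orienting each segment toward the centre of its $2\times2$ block, as in Figure~\ref{figDoubleRefinement}); the paper merely asserts this works, while you verify the bookkeeping, and your case-checks of which quadrant of each block is nonzero and of the uniform left/right and up/down directions in each column and row of $\drm$ are all accurate. Nothing further is needed.
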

Thus, by Lemma~\ref{lemmaNegativeCycles}, the row-column graph of the double refinement of a matrix never contains a negative cycle.
Figure~\ref{figDoubleRefinement} shows a consistent orientation of the standard figure of the double refinement of a matrix whose standard figure (shown in Figure~\ref{figGeomClass}) doesn't itself have a consistent orientation.

\begin{figure}[ht]
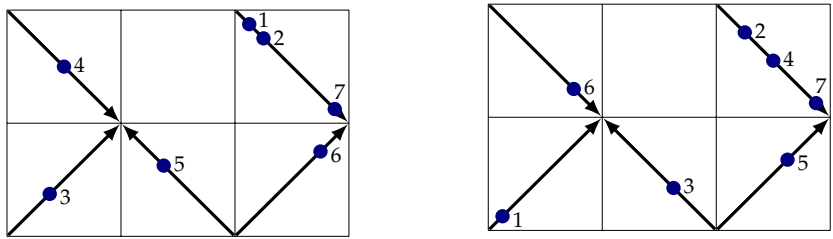

$$
{
\setgcscale{1.5}
\setgcgap{0.01}
\setgcptgridscale{8}
\setgcptsize{0.063}
\setgcptcolor{blue!50!black}
\setgcarrowmode
%\setgcarrowtip{angle 90}
\setgcextra{
\node[right]at(2.115,1.865){${}^1$};
\node[right]at(2.25,1.71){${}^2$};
\node[right]at(0.375,0.345){${}_3$};
\node[right]at(0.5,1.47){${}^4$};
\node[right]at(1.375,0.595){${}^5$};
\node[right]at(2.75,0.72){${}_6$};
\node[above]at(2.925,1.125){${}_7$};
}
\gctwo[-1, -1,-1,3,12,-1,-1,-1, -1, -1,-1,5,-1,-1,-1,-1, -1, 15,14,-1,-1,-1,6,9]{3}{-2,0,-2}{2,-3,2}
\qquad\qquad
\setgcextra{
\node[right]at(0.125,0.095){${}_1$};
\node[right]at(2.25,1.72){${}^2$};
\node[right]at(1.625,0.345){${}^3$};
\node[right]at(2.5,1.47){${}^4$};
\node[right]at(2.625,0.595){${}_5$};
\node[right]at(0.75,1.22){${}^6$};
\node[above]at(2.925,1.125){${}_7$};
}
\gctwo[-1, 1,-1,-1,-1,-1,10,-1, -1, -1,-1,-1,-1,3,-1,-1, -1, -1,14,-1,12,5,-1,9]{3}{-2,0,-2}{2,-3,2}
}
$$
\caption[figPermWord]{The plots of permutation $1527634$
in $\begin{geommx}-1&\pos0&-1\\ \pos1&-1&\pos1 \end{geommx}$
%that is
%encoded by
associated with the
words $a_{32}a_{32}a_{11}a_{12}a_{21}a_{31}a_{32}$ and $a_{11}a_{32}a_{21}a_{32}a_{31}a_{12}a_{32}$.
Both plots correspond to the same gridding.
}\label{figPermWord}
\end{figure}
We are now in a position to describe the association between words and $M$-gridded permutations.
If $M$ is a \opmo{} matrix, then
we
let $\Sigma_M=\{a_{ij}:M_{i,j}\neq0\}$ be an alphabet of symbols, one for each nonzero cell in $M$.
If
we have a consistent orientation for $\Lambda_M$,
then
we can associate to each finite word $w_1\ldots w_n$ over $\Sigma_M$
a specific plot of
a permutation
in $\Geom(M)$
as follows: If $w_k=a_{ij}$, include the point at distance
%$\frac{k}{n+1}\sqrt{2}$
$k\+\sqrt{2}/(n+1)$
along line segment %$L_{i,j}\subseteq\Lambda_M$,
$L_{i,j}$
according to its orientation.
See Figure~\ref{figPermWord} for two examples.
Clearly, this induces a mapping %$\phi^\#$
from
the set of all finite words over $\Sigma_M$ to $\Geomhash(M)$.
In fact, it can readily be shown that this map is surjective, every $M$-gridded permutation corresponding to some word over $\Sigma_M$ (\cite[Proposition~5.3]{AABRV2011}).

As can be seen in Figure~\ref{figPermWord}, distinct words may be mapped to the same gridded permutation.
This occurs because the order in which two consecutive points are included is immaterial if they occur in cells that
are neither in the same column nor in the same row.
%share neither column nor row.
From the perspective of the words, adjacent symbols corresponding to such cells may be interchanged without changing the
gridded permutation.
This corresponds to a structure known as a %partially commutative
%monoid
%or
%a
{trace monoid}. %: %, which leads us to the following definition:

If we have a consistent orientation for standard figure $\Lambda_M$,
then
we define the \emph{trace monoid of~$M$},
which we denote by $\tmm$,
to be the set of equivalence classes of words over $\Sigma_M$ in which $a_{ij}$ and $a_{k\ell}$ commute (i.e.~$a_{ij}a_{k\ell}=a_{k\ell}a_{ij}$) whenever $i\neq k$ and $j\neq \ell$.
%We denote the trace monoid of $M$ by $\tmm$.
It is then relatively straightforward to show equivalence between gridded permutations and elements of the trace monoid:
\begin{lemma}[see {\cite[Proposition~7.1]{AABRV2011}}]\label{lemmaTraceMonoid}
If the standard figure $\Lambda_M$ has a consistent orientation, then gridded $n$-permutations in $\Geomhash(M)$ are in bijection with equivalence classes of words of length $n$ in
$\tmm$.
\end{lemma}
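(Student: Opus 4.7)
The map from words over $\Sigma_M$ to gridded permutations, which sends $w$ to the gridded permutation it plots, is surjective onto $\Geomhash(M)$ by the result already noted, so the plan is to show that it descends to a bijection from $\tmm$. This splits into (i) showing that a single commutation preserves the associated gridded permutation, and (ii) showing conversely that two words mapping to the same gridded permutation must differ only by a sequence of commutations.

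For (i), I would consider two words $w = \alpha\, a_{ij}\, a_{k\ell}\, \beta$ and $w' = \alpha\, a_{k\ell}\, a_{ij}\, \beta$ with $i \ne k$ and $j \ne \ell$. The swap only alters the coordinates of the two affected points, which remain on segments $L_{i,j}$ and $L_{k,\ell}$; since these segments lie in different columns and different rows, their horizontal and vertical positions relative to each other are determined already by cell assignment. Moreover, at most one of the two swapped symbols contributes to any single column or to any single row, so its rank among that column's symbols and among that row's symbols is unaffected by the swap. Hence the entire plot, and in particular the induced gridded permutation, is unchanged.

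For (ii), the key observation is that if two words $u$ and $v$ map to the same gridded permutation $\sigma$, then for every pair of symbols $a_{ij}$ and $a_{k\ell}$ with $i=k$ or $j=\ell$ (that is, sharing a row or column), the subsequence of $u$ consisting of just these two symbols must equal the corresponding subsequence of $v$: when $i=k$ this subsequence records the left-to-right order of the points of $\sigma$ lying in cells $(i,j)$ and $(i,\ell)$, and when $j=\ell$ it records their bottom-to-top order. Equality of $u$ and $v$ in $\tmm$ then follows from the classical projection characterisation of trace monoid equivalence: two words are equal in a free partially commutative monoid if and only if they induce the same subword on every pair of non-commuting generators. (Alternatively, one can give a direct induction, successively commuting the first symbol of $u$ to the front of $v$ through its prefix; such commutations are legal precisely because any blocking symbol sharing a row or column with the incoming symbol would force the corresponding projections of $u$ and $v$ to disagree.) The main obstacle, and the place where the hypothesis is essential, is the use of consistent orientation in the preceding sentence: it is exactly what guarantees that word order along each segment agrees with the geometric left-to-right or bottom-to-top order of the points in that cell, so that the projection equalities actually encode properties of $\sigma$ rather than of the word alone.
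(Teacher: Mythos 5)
The thesis does not actually prove this lemma: it is stated with a pointer to \cite[Proposition~7.1]{AABRV2011} and the remark that the equivalence is ``relatively straightforward'', so there is no in-text argument to compare yours against. Your proposal is a correct, self-contained proof, and it follows what is essentially the standard route: surjectivity is quoted, well-definedness on trace classes follows because a single commutation of independent letters moves each of the two points within its own segment without disturbing its rank among the symbols of its column or of its row (this step, like the converse, silently uses the consistent orientation to identify word order with geometric order within each column and row), and injectivity follows because the gridded permutation determines, for every pair of letters sharing a column or a row, the projection of the word onto that pair, whence trace equivalence by the projection characterisation of free partially commutative monoids. The only point worth making explicit is that the projection lemma also requires the two words to have the same number of occurrences of each letter; this is immediate here, since that count is the number of points of the gridded permutation in the corresponding cell. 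With that observed, the argument is complete.
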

\vspace{-4pt}
Hence, by combining Lemmas~\ref{lemmaDoubleRefinementConsistentOrientation} and \ref{lemmaTraceMonoid} with Observation~\ref{obsGeomDouble}, we know that the growth rate of $\Geom(M)$ is equal to the growth rate of $\mathbb{M}(\drm)$ if it exists.
% Thus, $\gr(\Geom(M))=\gr(\mathbb{M}(\drm))$.
All that remains %for us to do
is to determine the growth rate of the trace monoid of a matrix.

% ================================================================
%\section{Proof of Theorem~\ref{thmGeomClassGrowthRate}} %: Enumerating trace monoids}

Trace monoids
%(also known as \emph{partially commutative monoids})
were first studied by Cartier \& Foata~\cite{CF1969}.
%They have proven to be particularly valuable in modelling concurrent computations; see, for example, the book edited by Diekert \& Rozenberg~\cite{DR1995}.
%They
Using extended M\"obius inversion, they
determined the general form of the generating function, as follows:
\begin{lemma}[\cite{CF1969};
%also Fisher~\cite{Fisher1989};
see also Flajolet \& {Sedgewick~\cite[Note~V.10]{FS2009}}] %, p.307]
The ordinary generating function for $\tmm$ is given by
%\begin{equation}\label{eqTraceMonoidGF}
$$
f_M(z) \;=\;
%\left(
\frac{1}{
\sum_{k\geqslant0} (-1)^k \+ r_k(M) \+ z^k
}
%\right)^{-1}
$$
%\end{equation}
where $r_k(M)$ is the number of $k$-subsets of $\Sigma_M$ whose elements commute pairwise.
\end{lemma}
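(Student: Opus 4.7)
The plan is to prove the equivalent identity
\[
\Big(\sum_{k\geqslant 0} (-1)^k\+ r_k(M)\+ z^k\Big)\+ f_M(z) \;=\; 1,
\]
which, on extracting the coefficient of $z^n$, amounts to showing that for every $n\geqslant 1$,
\[
\sum_{k=0}^{n} (-1)^k\+ r_k(M)\+ \big|\mathbb{M}(M)_{n-k}\big| \;=\; 0.
\]
Here $\mathbb{M}(M)_{n-k}$ denotes the set of traces of length $n-k$, and a $k$-subset of $\Sigma_M$ whose elements pairwise commute is just a $k$-clique in the \emph{commutation graph} $C_M$ on vertex set $\Sigma_M$ (with edges between commuting letters).

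My strategy is a sign-reversing involution. Let
\[
S_n \;=\; \big\{(K,w):K\text{ a clique in }C_M,\; w\in\mathbb{M}(M),\; |K|+|w|=n\big\},
\]
weighted by the sign $(-1)^{|K|}$, so that the signed cardinality of $S_n$ is exactly the sum above. First I would fix an arbitrary linear order $<$ on $\Sigma_M$ and recall the \emph{Cartier--Foata normal form}: each trace $w$ is represented uniquely as an ordered product of cliques $w=C_1C_2\cdots C_\ell$, where each $C_i$ is listed in increasing $<$-order and each letter of $C_{i+1}$ fails to commute with some letter of $C_i$ (the greedy leftmost decomposition). The leading slice $C_1$ therefore consists exactly of those letters that may appear as the initial letter of some word representing $w$.

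Next I would define, for each pair $(K,w)\neq(\varnothing,\varepsilon)$ with $w=C_1\cdots C_\ell$, the \emph{pivot}
\[
a^{*}(K,w) \;=\; \min{}_{<}\,\big(\,K\,\cup\,\{a\in C_1:a\text{ commutes with every letter of }K\}\,\big),
\]
where the minimum is well-defined because both sets cannot be simultaneously empty unless $K=\varnothing$ and $C_1=\varnothing$, i.e.\ unless the pair is $(\varnothing,\varepsilon)$. The involution $\phi$ then toggles the membership of $a^{*}$: if $a^{*}\in K$, set $\phi(K,w)=(K\setminus\{a^{*}\},\,a^{*}\cdot w)$; otherwise $a^{*}\in C_1$, and set $\phi(K,w)=(K\cup\{a^{*}\},\,w')$ where $w'$ is the trace obtained by deleting $a^{*}$ from $C_1$. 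In both cases $|K|+|w|$ is preserved while $|K|$ changes by $1$, so the sign flips.

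The main obstacle, and really the only delicate point, is to verify that $\phi$ is genuinely an involution: one must check (a) that after the swap the resulting $K'$ is still a commuting clique and the resulting word is in Cartier--Foata normal form, so that the pivot of the new pair is computed on the same data, and (b) that the pivot of $\phi(K,w)$ is again $a^{*}$. For (a), when $a^{*}\in K$ moves into the trace, the minimality of $a^{*}$ forces $a^{*}$ to commute with every letter of $C_1$ that is $<a^{*}$, and the fact that $a^{*}\in K$ ensures $a^{*}$ commutes with the remaining elements of $K$; a short case analysis confirms that the new leading slice is obtained by inserting $a^{*}$ in the correct $<$-position in $C_1$, preserving normal form. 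Assertion (b) follows by observing that in both directions the same commutation relations are checked against the same pool of letters $K\cup C_1$, modulo the swap. Finally, the unique fixed-free element is the empty pair $(\varnothing,\varepsilon)$, contributing sign $+1$ and length $0$. Hence the signed sum vanishes for $n\geqslant 1$, the stated generating function identity holds, and the lemma follows. (Equivalently, one could phrase this as M\"obius inversion in the Cayley graph of $\mathbb{M}(M)$, with M\"obius function $(-1)^{|K|}$ on cliques and $0$ elsewhere, as in Flajolet \& Sedgewick~\cite[Note V.10]{FS2009}.)
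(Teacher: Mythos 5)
Your proof is correct in substance, but note that the paper does not prove this lemma at all: it is quoted as a known result of Cartier \& Foata, with the M\"obius-inversion/cluster argument delegated to the cited references. So there is nothing in the paper to match your argument against; what you have supplied is a self-contained combinatorial proof of the cited identity. Your sign-reversing involution on pairs $(K,w)$ — toggle the $<$-least letter of $K\cup\{a\in C_1: a$ commutes with all of $K\}$ between the clique and the leading slice of the Cartier--Foata normal form — is the standard bijective proof of this inversion formula (essentially Viennot's heaps-of-pieces argument), and it is sound: I checked that in both directions the pivot of the image pair is again $a^{*}$, so $\phi$ is an involution whose unique fixed point is $(\varnothing,\varepsilon)$, which gives the vanishing of the signed sum for $n\geqslant 1$.

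Two of your intermediate justifications are, however, not quite right, even though the conclusion survives. First, in Case~1 it is \emph{not} true that "the new leading slice is obtained by inserting $a^{*}$ in the correct $<$-position in $C_1$": the leading slice of $a^{*}\cdot w$ is $\{a^{*}\}\cup\{b\in C_1: b \text{ commutes with } a^{*}\}$, and letters of $C_1$ that fail to commute with $a^{*}$ are pushed into the second slice. Second, minimality of $a^{*}$ does \emph{not} force $a^{*}$ to commute with every letter of $C_1$ below it; it only tells you that any $b\in C_1$ with $b<a^{*}$ fails to commute with some letter of $K$. Neither claim is actually needed. What the involution property requires is only that no candidate smaller than $a^{*}$ appears in the image pair, and this follows directly: in Case~1 any $b<a^{*}$ in the new leading slice commutes with $a^{*}$ and hence, if it also commutes with $K\setminus\{a^{*}\}$, lies in the original candidate set, contradicting minimality; in Case~2 a newly exposed minimal letter of $w'$ necessarily fails to commute with $a^{*}$, so it cannot be a candidate for the enlarged clique. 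I would rewrite the verification of steps (a) and (b) along these lines; with that repair the proof is complete.
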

Since symbols in $\tmm$ commute if and only if they correspond to cells that are neither in the same column nor in the same row,
it is easy to see that
%the numbers $r_k(M)$ are the \emph{rook numbers} for $M$:
$r_k(M)$ is the number of distinct ways of placing $k$ chess rooks on the nonzero entries of $M$ in such a way that no two rooks attack each other by being in the same column or row.
The numbers $r_k(M)$ are known as the \emph{rook numbers} for $M$ (see Riordan~\cite{Riordan2002}).
%(For more on rook numbers, see Kaplansky \& Riordan~\cite{KR1946} or Chapters~7 and~8 of Riordan~\cite{Riordan2002}.)
%Given that
Moreover, a matching
%(independent edge set)
in the row-column graph $G(M)$ also corresponds to a set of cells %,
no pair of which share a column or row.
So
%it is equally obvious that
the rook numbers for $M$ are
%simply
%just
%also
the same as
the numbers of matchings in $G(M)$: % the row-column graph of $M$:
\begin{obs}\label{obsRookNumIsMatchingNum}
For all $k\geqslant0$, $r_k(M)=m_k(G(M))$.
\end{obs}
Now, by elementary analytic combinatorics (see Section~\ref{sectAnalyticCombin}), we know that the growth rate of $\tmm$ is given by the reciprocal of the root
of the denominator %of~\eqref{eqTraceMonoidGF}.
of $f_M(z)$
that has least magnitude (Lemma~\ref{lemmaExponentialGrowthFormula}). %(see~\cite[Theorem~IV.7]{FS2009}).
The fact that this polynomial has a \emph{unique} root of smallest modulus
was proved by Goldwurm \& Santini in~\cite{GS2000}. It is real and positive by Pringsheim's Theorem (Lemma~\ref{lemmaPringsheim}).
% (\cite{FS2009} Theorem IV.6).

But the \emph{reciprocal} of the \emph{smallest} root of a polynomial is the same as the \emph{largest} root of the \emph{reciprocal} polynomial (obtained by reversing the order of the coefficients). Hence,
%we have:
%\begin{lemma}\label{lemmaGRTraceMonoid}
%If
if $M$ has dimensions $t\ttimes u$
and $n=t+u$,
%is the number of vertices in $G(M)$,
then the growth rate of $\tmm$ is the largest (positive real) root of the polynomial
\begin{equation}\label{eqGMDefn}
%$$
g_M(z)
\;=\;
\frac{1}{
z^{\floor{n/2}} \+ f_M \!\left(\frac{1}{z}\right)\!
}
\;=\;
\sum_{k=0}^{\floor{n/2}} (-1)^k \+ r_k(M) \+ z^{\floor{n/2}-k}.
%$$
\end{equation}
%where $n=t+u$ is the number of vertices in $G(M)$.
%\end{lemma}
Here,
%Since $r_k(M)=0$ for $k>\floor{n/2}$,
$g_M(z)$ is the reciprocal polynomial of
%the denominator of $f_M(z)$
%$1/f_M(z)$
$\left(f_M(z)\right)^{-1}$
%$\frac{1}{f_M(z)}$
multiplied by some nonnegative power of $z$,
since $r_k(M)=0$ for all $k>\floor{n/2}$.
%indeed, $r_k(M)=0$ for $k>\min(t,u)$
Note also that $n$ is the number of vertices in $G(M)$.

If we now compare the definition of $g_M(z)$ in~\eqref{eqGMDefn}
%in Lemma~\ref{lemmaGRTraceMonoid}
with that of the matching polynomial $\mu_G(z)$ in~\eqref{eqMatchingPolynomialDefn}
and use Observation~\ref{obsRookNumIsMatchingNum}, then we see that:
%\begin{lemma}
%$g_M(z^2)=\mu_{G(M)}(z)$ if $n$ is even, and $g_M(z^2)=z^{-1}\+\mu_{G(M)}(z)$ if $n$ is odd.
$$
g_M(z^2) \;=\;
\begin{cases}
\phantom{z^{-1}\+}\mu_{G(M)}(z), & \text{if $n$ is even;} \\[3pt]
         z^{-1}\+ \mu_{G(M)}(z), & \text{if $n$ is odd.}
\end{cases}
$$
%\end{lemma}
Hence,
the largest root of $g_M(z)$ is the square of the largest root of $\mu_{G(M)}(z)$.

%This is
We now have
all we need to prove Theorem~\ref{thmGeomClassGrowthRate}: The growth rate of $\Geom(M)$ is equal to the growth rate of $\mathbb{M}(\drm)$ which equals the square of the largest root of $\mu_{G(\drm)}(z)$.

%\vspace{9pt}
In the above argument, we only employ the double
refinement $\drm$ to ensure that a consistent orientation is possible.
By
Lemma~\ref{lemmaNegativeCycles}, we know that if
$G(M)$
is free of negative cycles then $\Lambda_M$
can be consistently oriented.
Thus, we have the following special case of Theorem~\ref{thmGeomClassGrowthRate}:

\thmbox{
\begin{cor}\label{corNoNegCycles}
If
$G(M)$
contains no negative cycles,
then
the growth rate of
%geometric grid class
$\Geom(M)$ is
equal to
the square
of the largest root of
%the matching polynomial
$\mu_{G(M)}(z)$.
\end{cor}
} %\thmbox

%\newpage % ***
% ================================================================
\section{Consequences}
\label{sectGeomImplications}
%\section{Discussion}
In this final section, we investigate some of the implications of Theorem~\ref{thmGeomClassGrowthRate}.
By considering properties of the matching polynomial, we characterise the growth rates of geometric grid classes in terms of the spectral radii of trees, prove a monotonicity result, and explore the influence of cycle parity on growth rates.
We then compare the growth rates of geometric grid classes with those of monotone grid classes.
Finally, we consider the effect
%impact
of subdividing edges in the row-column graph.

\newcommand{\deledge}{\!\setminus\!}
%\subsubsection*{Properties of the matching polynomial}
Let's begin by introducing some notation.
We use $G+H$ to denote the graph composed of two disjoint subgraphs $G$ and $H$.
The graph resulting from deleting
the vertex $v$
(and all edges incident to $v$)
from a graph $G$
%(i.e.~the subgraph of $G$ induced by $V(G)\setminus\{v\}$)
is denoted $G-v$.
Generalising this,
if $H$ is a subgraph of~$G$, then $G-H$ is the graph obtained by deleting the vertices of $H$ from $G$.
In contrast, we use
$G\deledge e$ to denote
the graph that results from deleting the edge $e$ from %the edge set of
$G$.
The number of connected components of $G$ is represented by $\mathrm{comp}(G)$.
The characteristic polynomial of a graph $G$ is denoted $\Phi_G(z)$. We use $\rho(G)$ to denote the spectral radius of $G$, the largest root of $\Phi_G(z)$.
Finally, we use $\lambda(G)$ for the largest root of the matching polynomial $\mu_G(z)$.

The matching polynomial was independently discovered a number of times, beginning with
Heilmann \& Lieb~\cite{HL1970} when investigating monomer-dimer systems in statistical physics.
It was first studied from a combinatorial perspective by Farrell~\cite{Farrell1979} and Gutman~\cite{Gutman1977}.
The theory was then further developed by Godsil \& Gutman~\cite{GG1981} and Godsil~\cite{Godsil1981}.
An introduction can be found in the books by Godsil~\cite{Godsil1993} and Lov\'asz \& Plummer~\cite{LP2009}.

The facts concerning the matching polynomial that we use are covered by three lemmas.
As a consequence of the first, we only need to consider connected graphs:
\begin{lemma}[Farrell~\cite{Farrell1979}; Gutman~\cite{Gutman1977}]
\label{lemmaMuProductComponents}
The matching polynomial of a graph is the product of the matching polynomials of its connected components.
%$\mu_{G+ H}(z)=\mu_{G}(z)\+\mu_{H}(z)$
\end{lemma}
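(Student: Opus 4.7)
The plan is to prove the result for the disjoint union of two graphs and then extend to arbitrarily many components by induction. So let $G = G_1 + G_2$ where $G_1$ and $G_2$ are vertex-disjoint, with $n_1$ and $n_2$ vertices respectively, and note that $G$ has $n = n_1 + n_2$ vertices.

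The key combinatorial observation is that, because no edge of $G$ crosses between $G_1$ and $G_2$, every $k$-matching of $G$ decomposes uniquely as the union of a $k_1$-matching of $G_1$ and a $k_2$-matching of $G_2$ with $k_1 + k_2 = k$. Conversely, any such pair yields a $k$-matching of $G$, since edges from $G_1$ and $G_2$ automatically share no vertex. This gives the convolution identity
$$
m_k(G) \;=\; \sum_{k_1 + k_2 = k} m_{k_1}(G_1)\+ m_{k_2}(G_2).
$$

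Substituting this into the definition~\eqref{eqMatchingPolynomialDefn} of the matching polynomial, and noting that $z^{n - 2k} = z^{n_1 - 2k_1} \cdot z^{n_2 - 2k_2}$ and $(-1)^k = (-1)^{k_1}(-1)^{k_2}$, a direct manipulation yields
$$
\mu_G(z) \;=\; \sum_{k_1, k_2 \geqslant 0} (-1)^{k_1} m_{k_1}(G_1) z^{n_1 - 2k_1} \cdot (-1)^{k_2} m_{k_2}(G_2) z^{n_2 - 2k_2} \;=\; \mu_{G_1}(z)\+ \mu_{G_2}(z),
$$
where extending the summation to all non-negative $k_1, k_2$ is harmless since $m_{k_i}(G_i) = 0$ once $2k_i > n_i$. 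An easy induction on the number of components then gives the general statement. There is no serious obstacle here; the only care required is to verify that the combinatorial decomposition of matchings across components is genuinely a bijection, which is immediate from the absence of edges between $G_1$ and $G_2$.
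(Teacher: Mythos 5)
Your proof is correct: the convolution identity $m_k(G)=\sum_{k_1+k_2=k}m_{k_1}(G_1)\,m_{k_2}(G_2)$ for a disjoint union follows immediately from the bijective decomposition of matchings across components, and substituting it into the definition of $\mu_G$ gives the product formula, with the general case by induction. The paper does not prove this lemma at all — it simply cites Farrell and Gutman — and your argument is precisely the standard one those references give, so there is nothing to compare beyond noting that your write-up supplies the routine details the paper omits.
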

%\vspace{-9pt}
Thus, in particular:
\begin{cor}\label{corLambdaMaxComponents}
For any graphs $G$ and $H$, we have
  $\lambda(G+H)=\max(\lambda(G),\lambda(H))$.
\end{cor}

%\vspace{9pt}
The second lemma relates the matching polynomial to the characteristic polynomial.
\begin{lemma}[Godsil \& Gutman~\cite{GG1981}] % Corollary~4.1]
\label{lemmaMuCycleDef}
If $\CCC_G$ consists of all nontrivial subgraphs of $G$ which are unions of vertex-disjoint cycles (i.e., all subgraphs of $G$ which are regular of degree 2), then
  $$
  \mu_G(z) \;=\; \Phi_G(z) \:+\: \sum_{C\in\CCC_G} \!\!2^{\+\mathrm{comp}(C)} \+ \Phi_{G-C}(z),
  $$
where $\Phi_{G-C}(z)=1$ if $C=G$.
\end{lemma}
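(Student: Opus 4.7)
The plan is to first establish Sachs' coefficient theorem, which expresses $\Phi_G(z)$ as a signed sum over vertex-disjoint unions of edges and cycles, and then to invert that expansion into the desired formula for $\mu_G$. The key point is that both $\Phi_G$ and $\mu_G$ admit natural combinatorial interpretations: the former via the permutation expansion of a determinant, the latter by definition.

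First, I would expand $\Phi_G(z) = \det(zI - A(G))$ as $\sum_\pi \mathrm{sgn}(\pi)\prod_i (zI-A)_{i,\pi(i)}$. Decompose each permutation $\pi$ into cycles: fixed points contribute a factor of $z$; a transposition corresponding to an edge $ij$ contributes $(-1)\cdot a_{ij}^2 = -1$; and a cycle of length $k\geqslant 3$ contributes $(-1)^{k-1}\cdot(-1)^k=-1$. So every nontrivial cycle of $\pi$ contributes $-1$, a $k$-cycle requires $\{i_1,\ldots,i_k\}$ to span a cycle in $G$, and for each cycle of length $\geqslant 3$ in $G$ there are two orientations (while 2-cycles have a unique orientation). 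Grouping the nontrivial cycles of $\pi$ into a Sachs subgraph $H=M\sqcup C$ (with $M$ a matching and $C$ a vertex-disjoint union of cycles of length $\geqslant 3$) yields
\[
\Phi_G(z) \;=\; \sum_{H=M\sqcup C}(-1)^{|M|+c(C)}\+2^{c(C)}\+z^{n-2|M|-|V(C)|},
\]
where $c(C)$ is the number of cycle components. Holding $C$ fixed and summing the matching part $M$ over $G-V(C)$ recognises $\mu_{G-V(C)}(z)$, so
\[
\Phi_G(z) \;=\; \sum_{C\in\CCC_G\cup\{\varnothing\}}(-1)^{c(C)}\+2^{c(C)}\+\mu_{G-C}(z). \tag{$\star$}
\]

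Next, to invert $(\star)$ and obtain the stated formula, I substitute $(\star)$ applied to $G-V(C)$ into the right-hand side $\Phi_G(z)+\sum_{C\in\CCC_G}2^{c(C)}\Phi_{G-C}(z)$ and collect coefficients of $\mu_{G-V(D)}(z)$ for each disjoint cycle subgraph $D$ of $G$. A subgraph $D$ with $d$ components arises from a choice of nonempty $C$ and an (arbitrary) $C'\subseteq D-C$: the resulting coefficient is
\[
\sum_{c_1=1}^{d}\binom{d}{c_1}2^d(-1)^{d-c_1} \;=\; 2^d\Bigl(\sum_{c_1=0}^{d}\binom{d}{c_1}(-1)^{d-c_1} - (-1)^d\Bigr) \;=\; (-1)^{d+1}\+2^d
\]
for $d\geqslant 1$, and $1$ for $D=\varnothing$ (coming solely from the $\Phi_G$ term with $C'=\varnothing$ in $(\star)$). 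Re-summing via $(\star)$ one more time,
\[
\Phi_G(z)+\sum_{C\in\CCC_G}2^{c(C)}\Phi_{G-C}(z)\;=\;\mu_G(z)-\sum_{D\in\CCC_G}(-1)^{c(D)}2^{c(D)}\mu_{G-D}(z)+\mu_G(z)-\bigl(\mu_G(z)-\Phi_G(z)\bigr),
\]
which, after cancellation using $(\star)$, equals $\mu_G(z)$.

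The main obstacle is purely bookkeeping: ensuring the signs in the inversion step line up. A cleaner alternative, which I would use if the algebra becomes opaque, is a direct involution argument on the coefficient of $z^{n-2k}$ of both sides, pairing cancelling Sachs subgraphs by toggling a minimal cycle, so that only pure matchings survive on the right-hand side—this reproduces $(-1)^k m_k(G)$ by definition of $\mu_G$. Either route reduces the lemma to the Sachs expansion together with a single algebraic identity, and no deeper input is needed. Once Lemma~\ref{lemmaMuCycleDef} is established, it combines cleanly with Lemma~\ref{lemmaMuProductComponents} to relate $\lambda(G)$ to $\rho$-type spectral quantities, as used in the subsequent consequences.
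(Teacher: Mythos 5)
The paper does not actually prove this lemma --- it is quoted from Godsil \& Gutman with a citation only --- so your argument is necessarily an independent one. The route you take (Sachs' coefficient theorem, regrouped by the cycle part as $\Phi_G(z)=\sum_{C}(-1)^{\mathrm{comp}(C)}\,2^{\mathrm{comp}(C)}\,\mu_{G-C}(z)$, the sum running over all $2$-regular subgraphs $C$ of $G$ \emph{including} $C=\varnothing$, followed by an inversion) is the standard one, and most of it is correct: the derivation of $(\star)$ from the permutation expansion of $\det(zI-A)$ is fine, and so is the coefficient $(-1)^{d+1}2^{d}$ that you extract for the contribution of $\sum_{C\in\CCC_G}2^{\mathrm{comp}(C)}\Phi_{G-C}$ to $\mu_{G-D}$ when $D$ has $d\geqslant1$ components.

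The problem is your final display: its right-hand side simplifies, using $(\star)$ itself, to $2\,\mu_G(z)$ rather than $\mu_G(z)$. Indeed $-\sum_{D\in\CCC_G}(-1)^{\mathrm{comp}(D)}2^{\mathrm{comp}(D)}\mu_{G-D}(z)=\mu_G(z)-\Phi_G(z)$ by $(\star)$, so the displayed right-hand side equals $\mu_G+(\mu_G-\Phi_G)+\mu_G-(\mu_G-\Phi_G)=2\mu_G$; the trailing terms $+\mu_G(z)-\bigl(\mu_G(z)-\Phi_G(z)\bigr)$ smuggle in an extra copy of $\Phi_G$ that has already been accounted for. The repair is easy and you have already set it up: write the whole expression as $\sum_{C}2^{\mathrm{comp}(C)}\Phi_{G-C}(z)$ with $C$ ranging over all $2$-regular subgraphs including $\varnothing$, substitute $(\star)$ for each $\Phi_{G-C}$, and observe that for a fixed $2$-regular $D$ with $d$ components the total coefficient of $\mu_{G-D}(z)$ is $\sum_{c_1=0}^{d}\binom{d}{c_1}\,2^{c_1}\,(-1)^{d-c_1}2^{d-c_1}=2^{d}(1-1)^{d}$, which vanishes for $d\geqslant1$ and equals $1$ for $D=\varnothing$; the double sum therefore collapses to $\mu_G(z)$. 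With that one-line finish (or with the cancelling involution on Sachs subgraphs that you sketch as an alternative) the proof is complete.
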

%\vspace{-9pt}
As an immediate consequence, we have the following:
\begin{cor}[Sachs~\cite{Sachs1964}; Mowshowitz~\cite{Mowshowitz1972}; Lov\'asz \& Pelik\'an~\cite{LP1973}]
\label{corMuPhiTree}
The matching polynomial of a graph %, $\mu_G(z)$,
is identical to its characteristic polynomial %, $\Phi_G(z)$,
if and only if the graph is acyclic.
%If $F$ is a forest, $\mu_F(z)=\Phi_F(z)$.
\end{cor}
%\vspace{-9pt}
In particular, their largest roots are identical:
\begin{cor}\label{corLambdaRhoTree}
  If $G$ is a forest, %the roots of $\mu_G(z)$ coincide with the spectrum of $G$ and hence
  then
$\lambda(G)=\rho(G)$.
\end{cor}
%\vspace{-9pt}
Thus, using Corollaries~\ref{corNoNegCycles} and~\ref{corLambdaMaxComponents}, we have the following alternative characterisation for the growth rates of acyclic geometric grid classes:

\thmbox{
\begin{cor}\label{corForestRho}
  If $G(M)$ is a forest, then $\gr(\Geom(M))=\rho(G(M))^2$.
\end{cor}
} %\thmbox

This result is to be expected, since we know that $\Geom(M)=\Grid(M)$ if $G(M)$ is acyclic, and have shown (Theorem~\ref{thmGrowthRate}) that $\gr(\Grid(M))=\rho(G(M))^2$.

%\vspace{12pt}
The last, and most important, of the three lemmas allows us to determine the largest root of the matching polynomial of a graph from
the spectral radius of a related tree.
%Lemma~\ref{lemmaExpandVertex}
It
is a consequence of the fact,
determined by Godsil in~\cite{Godsil1981}, % and~\cite{Godsil1993},
that the moments (sums of the powers of the roots) of
%the matching polynomial of a graph
$\mu_G(z)$ enumerate certain closed walks on~$G$, which he calls \emph{tree-like}.\footnote{The author is grateful to Brendan McKay, whose helpful response to a question on \emph{MathOverflow}~\cite{McKay2014Thin} alerted him to the significance of this result.}
This is analogous to the fact that the moments of
%the characteristic polynomial of $G$
$\Phi_G(z)$
count \emph{all} closed walks on~$G$.
On a tree, all closed walks are tree-like.

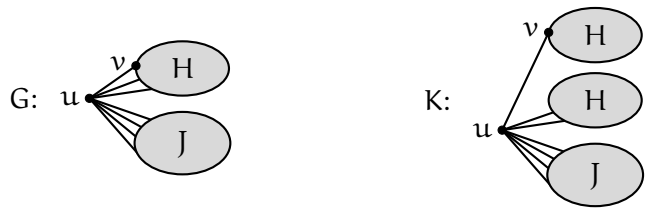
\begin{figure}[ht]
$$
\raisebox{12pt}{
\begin{tikzpicture}[scale=0.36]
    \draw [thick] (0,.3)--(1.7,1.5);
    \draw [thick] (0,.3)--(2.1,1.1);
    \draw [thick] (0,.3)--(3,0.75);
    \draw [fill=gray!30!white,thick] (3.4,1.4) circle [x radius=1.7, y radius=1];
    \draw [fill] (0,.3) circle [radius=0.15];
    \node[left]at(0,.3){$u$};
    \node[left]at(-1.5,.3){$G$:};
    \draw [fill] (1.7,1.5) circle [radius=0.15];
    \node[left]at(1.7,1.6){$v$};
    \node[]at(3.4,1.4){$H$};
    \draw [thick] (0,.3)--(3,-0.75);
    \draw [thick] (0,.3)--(2.33,-1);
    \draw [thick] (0,.3)--(2.17,-1.5);
    \draw [thick] (0,.3)--(2,-2);
    \draw [fill=gray!30!white,thick] (3.4,-1.35) circle [x radius=1.75, y radius=1.15];
    \node[]at(3.4,-1.35){$J$};
\end{tikzpicture}
}
\qquad\qquad\qquad
\begin{tikzpicture}[scale=0.36]
    \draw [fill=gray!30!white,thick] (3.4,3.8) circle [x radius=1.7, y radius=1];
    \draw [thick] (0,.3)--(1.7,3.9);
    \draw [thick] (0,.3)--(2,1);
    \draw [thick] (0,.3)--(3,0.75);
    \draw [fill=gray!30!white,thick] (3.4,1.4) circle [x radius=1.7, y radius=1];
    \draw [fill] (0,.3) circle [radius=0.15];
    \node[left]at(0,.3){$u$};
    \node[left]at(-1.5,1.4){$K$:};
    \draw [fill] (1.7,3.9) circle [radius=0.15];
    \node[left]at(1.7,4){$v$};
    \node[]at(3.4,1.4){$H$};
    \node[]at(3.4,3.8){$H$};
    \draw [thick] (0,.3)--(3,-0.75);
    \draw [thick] (0,.3)--(2.33,-1);
    \draw [thick] (0,.3)--(2.17,-1.5);
    \draw [thick] (0,.3)--(2,-2);
    \draw [fill=gray!30!white,thick] (3.4,-1.35) circle [x radius=1.75, y radius=1.15];
    \node[]at(3.4,-1.35){$J$};
\end{tikzpicture}
\qquad
$$
\caption{Expanding $G$ at $u$ along $uv$; $H$ is the component of $G-u$ that contains $v$}
\label{figExpandingVertex}
\end{figure}
\begin{lemma}[Godsil~\cite{Godsil1981}; see also~\cite{Godsil1993} and~\cite{LP2009}]
\label{lemmaExpandVertex}
  Let $G$ be a graph and let $u$ and $v$ be adjacent vertices in a cycle of $G$.
  Let $H$ be the component of $G-u$ that contains $v$.
  Now let $K$ be the graph constructed by taking a copy of $G\deledge uv$ and a copy of $H$ and joining the occurrence of $u$ in the copy of $G\deledge uv$ to the occurrence of $v$ in the copy of $H$ %.
  (see Figure~\ref{figExpandingVertex}).
  Then $\lambda(G)=\lambda(K)$.
\end{lemma}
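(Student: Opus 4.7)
\medskip

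The plan is to prove the sharper identity $\mu_K(z) = \mu_H(z)\cdot\mu_G(z)$ and then deduce the statement about largest roots from Corollary~\ref{corLambdaMaxComponents} together with the monotonicity fact that deleting a vertex does not increase $\lambda$. The main tool will be the edge-deletion recurrence for the matching polynomial,
\[
\mu_{G}(z) \;=\; \mu_{G\setminus e}(z)\;-\;\mu_{G-x-y}(z),
\]
valid for any edge $e=xy$ of a graph; this follows by partitioning the matchings of $G$ into those that contain $e$ and those that do not, and is standard (see Godsil~\cite{Godsil1993}). I would state and briefly justify this recurrence at the outset, together with the multiplicativity under disjoint union provided by Lemma~\ref{lemmaMuProductComponents}.

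The core calculation proceeds as follows. Write $G-u = H + J$, where $J$ is the (possibly empty) union of the components of $G-u$ other than $H$. Apply the edge-deletion recurrence to $G$ along $uv$ to obtain
\[
\mu_G(z) \;=\; \mu_{G\setminus uv}(z) \;-\; \mu_{H-v}(z)\,\mu_J(z),
\]
using that $G-u-v = (H-v)+J$. Now let $e'=u'v'$ be the edge added in the construction of $K$, where $u'$ is the copy of $u$ in $G\setminus uv$ and $v'$ is the copy of $v$ in the attached copy of $H$. Since $K\setminus e'$ is the disjoint union $(G\setminus uv) + H$, and $K-u'-v'$ is the disjoint union $(G-u) + (H-v) = H+J+(H-v)$, the same recurrence applied to $K$ along $e'$ yields
\[
\mu_K(z) \;=\; \mu_{G\setminus uv}(z)\,\mu_H(z) \;-\; \mu_H(z)\,\mu_J(z)\,\mu_{H-v}(z).
\]
Factoring out $\mu_H(z)$ on the right and comparing with the displayed expression for $\mu_G(z)$ gives $\mu_K(z) = \mu_H(z)\,\mu_G(z)$.

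Given the factorisation, the roots of $\mu_K$ are the union (with multiplicity) of the roots of $\mu_G$ and $\mu_H$, so $\lambda(K) = \max(\lambda(G),\lambda(H))$ by the same reasoning that gives Corollary~\ref{corLambdaMaxComponents}. To finish, I need $\lambda(H) \leqslant \lambda(G)$. This follows from the classical interlacing theorem for the matching polynomial, which says that the roots of $\mu_{G-v}(z)$ interlace those of $\mu_G(z)$ for every vertex $v$, and in particular $\lambda(G-v)\leqslant \lambda(G)$; iterating vertex deletions to pass from $G$ to the induced subgraph $H$ of $G-u$ gives the required inequality. I would quote this interlacing result from Godsil~\cite{Godsil1993} rather than reprove it.

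The main obstacle is essentially bookkeeping: making sure the two applications of the edge-deletion recurrence produce expressions that can be matched up, and being careful that the copy of $u$ in $G\setminus uv$ plays the role of the endpoint of $e'$ while the fresh copy of $v$ comes from the attached $H$. Once the identity $\mu_K=\mu_H\mu_G$ is in hand, the passage to largest roots is immediate from the standard monotonicity property. No subtle analytic step is required beyond the interlacing inequality, which is the one external fact I would have to invoke.
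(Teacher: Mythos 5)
Your argument is correct. Note that the paper does not actually prove this lemma --- it is quoted from Godsil with a citation --- so there is no internal proof to compare against; your derivation is, in substance, the standard one from the literature. The two applications of the edge--deletion recurrence $\mu_{G}(z)=\mu_{G\setminus xy}(z)-\mu_{G-x-y}(z)$ are set up correctly: $K\setminus u'v'=(G\setminus uv)+H$ and $K-u'-v'=(G-u)+(H-v)=H+J+(H-v)$, which yields the factorisation $\mu_K(z)=\mu_H(z)\,\mu_G(z)$ exactly as you claim (and, as a sanity check, the degrees $|V(K)|=|V(G)|+|V(H)|$ match). The passage to largest roots via $\lambda(K)=\max(\lambda(G),\lambda(H))$ and the interlacing inequality $\lambda(G-v)\leqslant\lambda(G)$ (iterated to reach the induced subgraph $H$) is sound and, importantly, avoids the circularity that would arise from invoking Corollary~\ref{corStrictMono}, which the paper derives \emph{from} this lemma. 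One small remark: your proof never uses the hypothesis that $uv$ lies on a cycle --- the identity $\mu_K=\mu_H\mu_G$ and the conclusion $\lambda(K)=\lambda(G)$ hold for any edge $uv$; the cycle condition matters only for the way the lemma is applied (each expansion strictly reduces the number of cycles, so repeated expansion terminates in a forest).
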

%\vspace{-9pt}
The process that is described in Lemma~\ref{lemmaExpandVertex}
we call
``\emph{expanding} $G$ \emph{at} $u$ \emph{along} $uv$''.
Each such expansion of a graph $G$ produces a graph with fewer cycles than $G$.
Repeated application of this process thus eventually results in a forest $F$ such that $\lambda(F)=\lambda(G)$.
We shall say that $F$ results from \emph{fully expanding}~$G$.
Hence, by
Corollaries~\ref{corLambdaMaxComponents} and~\ref{corLambdaRhoTree},
the largest root of the matching polynomial of a graph equals the spectral radius of some tree:
for any graph~$G$, there is a tree $T$ such that $\lambda(G)=\rho(T)$.

It is readily observed that
every tree is the row-column graph of some geometric grid class. Thus
%As an immediate consequence,
we have the following characterisation of geometric grid class growth rates.

\thmbox{
\begin{cor}\label{corGeomSqRhoTrees}
  The set of growth rates of geometric grid classes consists of the squares of the spectral radii of trees.
\end{cor}
} %\thmbox

\HIDE{
In~\cite{Shearer1989}, Shearer proved that, for any $\rho\geqslant\sqrt{2+\sqrt{5}}$, there exists a sequence of trees $T_1,T_2,\ldots$ such that $\liminfty[k]\rho(T_k)=\rho$. Thus we have the following density result for the set of geometric grid class growth rates:

\thmbox{
\begin{cor}
  Every value at least $2+\sqrt{5}$ is a limit point of growth rates of
geometric grid classes.
\end{cor}
} %\thmbox
} %\HIDE

%\newpage
%\rulebreak
%\vspace{12pt}
The spectral radii of connected graphs satisfy the following strict monotonicity condition:
\begin{lemma}[{\cite[Proposition~1.3.10]{CRS2010}}]\label{lemmaStrictMono}
  If $G$ is connected and $H$ is a proper subgraph of $G$, then we have $\rho(H)<\rho(G)$.
\end{lemma}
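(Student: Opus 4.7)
The plan is to appeal to the Perron--Frobenius theorem for the adjacency matrix of the connected graph $G$. Because $G$ is connected, $A(G)$ is an irreducible nonnegative symmetric matrix, so $\rho(G)$ is a simple eigenvalue admitting a \emph{strictly positive} eigenvector $x$, i.e.\ $A(G)\+x=\rho(G)\+x$ with $x_v>0$ for every $v\in V(G)$. The strategy is then to compare $A(H)$ with $A(G)$ by evaluating $A(H)$ on the restriction of $x$ (or, in the vertex-deletion case, a restriction of $x$), and to turn the resulting pointwise inequality into a strict inequality on the spectral radius by pairing against a Perron vector of $H$.

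First I would reduce to two elementary cases: either $H=G\setminus e$ for a single edge $e=uv$, or $H=G-v$ for a single vertex $v$. Any proper subgraph of $G$ is obtained from $G$ by a sequence of such one-step deletions, and combined with the (non-strict) monotonicity
$\rho(H')\leqslant\rho(G')$ whenever $H'\subseteq G'$, it is enough to establish strictness at the step that breaks out of $G$ itself. In the edge-deletion case, using the Perron vector $x$ of $G$ we have
$(A(H)\+x)_i\;=\;(A(G)\+x)_i-[i\in\{u,v\}]\,x_{\bar\imath}\;=\;\rho(G)\+x_i-[i\in\{u,v\}]\,x_{\bar\imath},$
where $\bar\imath$ denotes the other endpoint of $e$. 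This gives $A(H)\+x\leqslant\rho(G)\+x$ componentwise, with strict inequality at the coordinates indexed by $u$ and $v$. In the vertex-deletion case, let $x'$ be $x$ with the $v$-coordinate removed; then $(A(H)\+x')_i=(A(G)\+x)_i-[i\sim v]\,x_v\leqslant\rho(G)\+x'_i$, and because $G$ is connected, $v$ has at least one neighbour $i\in V(H)$, giving strict inequality there.

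To convert this pointwise inequality into strict inequality for the spectral radii, I would pair with a nonnegative (left) Perron eigenvector $y$ of $A(H)$. Writing $y^{\!\top}\!A(H)=\rho(H)\+y^{\!\top}$ and exploiting $y\geqslant 0$ together with the strict positivity of $x$ (respectively $x'$), we get $\rho(H)\+y^{\!\top}x\leqslant\rho(G)\+y^{\!\top}x$, and since $x>0$ and $y\neq 0$ the inner product is positive, so $\rho(H)\leqslant\rho(G)$. The remaining task, which I expect to be the main obstacle, is to upgrade this to strict inequality. The subtlety is that the strict pointwise inequality occurs at a specific coordinate (the endpoint of $e$ or a neighbour of $v$), and one must ensure $y$ is positive there; if $H$ is disconnected, $y$ may be supported only on a single component. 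This is handled by choosing the component $C$ of $H$ whose spectral radius realises $\rho(H)$ and observing that $C$ still contains the distinguished coordinate (or, if not, that component has spectral radius strictly less than $\rho(G)$ by the previous step applied within $G$, together with connectedness of $G$ forcing an edge between $C$ and the rest). A careful case split on whether the deleted edge or vertex meets $C$ then yields $\rho(H)<\rho(G)$ in every case, completing the proof.
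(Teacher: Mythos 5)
The paper offers no proof of this lemma at all---it is quoted directly as Proposition~1.3.10 of Cvetkovi\'c, Rowlinson \& Simi\'c---so there is nothing in-paper to compare against; your Perron--Frobenius argument is the standard proof of this standard fact, and it is essentially sound. Two remarks on the one step you flag as the main obstacle. First, the support issue for the Perron vector $y$ of $H$ resolves more cleanly than your parenthetical suggests: after deleting a single edge $uv$, each resulting component contains $u$ or $v$; and after deleting a vertex $v$, \emph{every} component of $G-v$ contains a neighbour of $v$ (take any vertex of the component, a $G$-path from it to $v$, and the penultimate vertex of that path, which lies in the same component). Hence the component realising $\rho(H)$ always contains a coordinate at which your pointwise inequality is strict, $y$ is positive there, and the term $-y_{i_0}x_{\bar{\imath}_0}$ survives the pairing to give strictness. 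In particular the fallback branch of your case split is never needed---which is fortunate, because as written (``by the previous step applied within $G$'') it invokes the lemma itself for the smaller proper subgraph $C\subsetneq G$ and would be circular. Second, a slicker standard route to strictness avoids tracking the distinguished coordinate altogether: pad $H$ to the vertex set of $G$, suppose $\rho(H)=\rho(G)$, and note that the Perron vector $y\geqslant0$ of $A(H)$ then satisfies $A(G)y\geqslant A(H)y=\rho(G)y$; pairing with the strictly positive Perron vector of the irreducible $A(G)$ forces $A(G)y=\rho(G)y$, whence $y>0$ and $(A(G)-A(H))y=0$ gives $A(G)=A(H)$, contradicting properness.
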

%\vspace{-9pt}
Lemma~\ref{lemmaExpandVertex} enables us to prove
the analogous fact for the largest roots of matching polynomials, from which we can deduce a monotonicity result for geometric grid classes:
\begin{cor}\label{corStrictMono}%[see~\cite{Godsil1993} Corollary~6.1.3]
If $G$ is connected and $H$ is a proper subgraph of $G$, then $\lambda(H)<\lambda(G)$.
\end{cor}
%\vspace{-18pt}
\begin{proof}
  Suppose we fully expand $H$
  (at vertices $u_1,\ldots,u_k$, say),
  %until no cycles remain.
  then %, by Corollary~\ref{corLambdaRhoTree},
  the result is a forest $F$ such that $\lambda(H)=\rho(F)$.
  Now suppose that we repeatedly expand $G$ analogously at $u_1,\ldots,u_k$, and then continue to fully expand the resulting graph. The outcome is a tree $T$ (since $G$ is connected)
  such that $F$ is a proper subgraph of $T$ and
  $\lambda(G)=\rho(T)$.
  The result follows from Lemma~\ref{lemmaStrictMono}.
\end{proof}
%\vspace{-9pt}
Adding a non-zero cell to a \opmo{} matrix $M$ adds an edge to $G(M)$.
Thus, geometric grid classes satisfy the following monotonicity condition:

\thmbox{
\begin{cor}%\label{corAddCellA}
%Adding a non-zero cell to a connected geometric grid class while preserving connectivity increases its growth rate.
If $G(M)$ is connected and $M'$ results from adding a non-zero cell to $M$
in such a way that $G(M')$ is also connected,
%while preserving connectivity,
then $\gr(\Geom(M'))>\gr(\Geom(M))$.
\end{cor}
} %\thmbox

% --------------------------------------------------------
\subsection{Cycle parity}

The growth rate of a geometric grid class
depends on the parity of its cycles.
Consider the case of $G(M)$ being a cycle graph $C_n$.
If $G(M)$ is a negative cycle, then $G(\drm)=C_{2n}$.
Now, by Lemma~\ref{lemmaExpandVertex}, we have $\lambda(C_n)=\rho(P_{2n-1})$, where $P_n$ is the path graph on $n$ vertices.
The spectral radius of a path graph on $n$ vertices is $2\cos\frac{\pi}{n+1}$.
So,
\begin{equation}\label{eqGRGeomCycle}
%$$
\gr(\Geom(M)) \;=\;
\begin{cases}
4\cos^2\frac{\pi}{2n}, & \text{if $G(M)$ is a positive cycle;} \\[5pt]
4\cos^2\frac{\pi}{4n}, & \text{if $G(M)$ is a negative cycle.}
\end{cases}
%$$
\end{equation}
Thus the geometric grid class whose row-column graph is a
negative cycle has a greater growth rate than the class
whose row-column graph is a positive cycle.
As another example,
\begin{equation}\label{eqGRGeomExNegCyc}
  \gr\!\left(\begin{geommx}1&\pos0&-1\\ 1&-1&\pos1 \end{geommx}\right) \;=\; 3+\sqrt{2} \;\approx\; 4.41421,
\end{equation}
%Figure~\ref{figGeomClass}
whereas
%the growth rate of
\begin{equation}\label{eqGRGeomExPosCyc}
  \gr\!\left(\begin{geommx}-1&\pos0&-1\\ \pos1&-1&\pos1 \end{geommx}\right) \;=\; 4.
\end{equation}
The former, containing a negative cycle,
has a greater growth rate than the latter, whose cycle is positive.
This is typical;
we prove the following result:
%determine sufficient conditions under which the growth rate is increased by making positive cycles negative.

%\newcommand{\Msub}{M_{\+\negsub}}
\newcommand{\Msub}{M_1}
\newcommand{\Gsub}{G_1}
\thmbox{
\begin{cor}\label{corNegateCell}
If $G(M)$ is connected and contains no negative cycles, and $\Msub$ %$G(\Msub)$
results from %negating
changing the sign of
a single entry of $M$ that is in a cycle (thus making one or more cycles in $G(\Msub)$ negative),
then
$\gr(\Geom(\Msub)) > \gr(\Geom(M))$.
\end{cor}
} %\thmbox

In order to do this, we need to consider the structure of $G(\drm)$.
The graph $G(\drm)$ can be constructed from $G(M)$ as follows: % see GeomNew p.1
If $G(M)$ has vertex set $\{v_1,\ldots,v_n\}$, then we let $G(\drm)$ have vertices $v_1,\ldots,v_n$ and $v'_1,\ldots,v'_n$.
If $v_iv_j$ is a positive edge in $G(M)$, then in $G(\drm)$ we add an edge between $v_i$ and $v_j$ and also between $v'_i$ and $v'_j$.
On the other hand, if $v_iv_j$ is a negative edge in $G(M)$, then in $G(\drm)$ we join $v_i$ to $v'_j$ and $v'_i$ to $v_j$.
The correctness of this construction follows directly from the definitions of double refinement and of the row-column graph of a matrix.
%Note that the map which takes $v_i$ to $v'_i$ and $v'_i$ to $v_i$ is an automorphism of $G(\drm)$.
For an illustration, compare the graph in Figure~\ref{figDoubleRefinement} against that in Figure~\ref{figGeomClass}.

Note that if $v_1,\ldots,v_k$ is a positive
$k$-cycle in $G(M)$, then $G(\drm)$ contains
two vertex-disjoint positive $k$-cycles, the union of whose vertices is $\{v_1,\ldots,v_k,v'_1,\ldots,v'_k\}$.
In contrast, if $v_1,\ldots,v_\ell$ is a negative
$\ell$-cycle in $G(M)$, then $G(\drm)$ contains a (positive) $2\+\ell$-cycle on the vertices $\{v_1,\ldots,v_\ell,v'_1,\ldots,v'_\ell\}$ in which $v_i$ is opposite $v'_i$ (i.e. $v'_i$ is at distance $\ell$ from $v_i$ around the cycle) for each $i$, $1\leqslant i\leqslant\ell$.
We make the following additional observations:
\begin{obs}\label{obsPosx2} % GeomNew p.7
  If $G(M)$ has no negative cycles, then $G(\drm)=G(M)+ G(M)$.
\end{obs}
\begin{obs} %\label{obsNegx2}
  If $G(M)$ is connected and has a negative cycle, then $G(\drm)$ is connected.
\end{obs}
%\vspace{-9pt}
We now have all we require to prove our cycle parity result.

\begin{figure}[ht] %[ht]
$$
\begin{tikzpicture}[scale=0.36]
    \draw [thick] (0,3)--(-2.5,0);
    \draw [thick] (0,3)--(-1.6,0);
    \draw [thick] (0,3)--(-0.7,0);
    \draw [thick] (0,3)--(-3,4);
    \draw [thick] (0,3)--(-3,3);
    \draw [thick] (0,3)--(-3,2);
    \draw [fill=gray!30!white,thick] (-1.7,0) circle [x radius=1.7, y radius=1];
    \node[]at(-1.7,0){$H$};
    \draw [fill=gray!30!white,thick] (-3,3) circle [x radius=1.7, y radius=1.15];
    \node[]at(-3,3){$J$};
    \draw [thick] (0,0)--(0,3);
    \draw [fill] (0,3) circle [radius=0.15];
    \draw [fill] (0,0) circle [radius=0.15];
    \node[above right]at(0,3){$\!\!u$};
    \node[below right]at(0,0){$\!v$};
    \draw [thick] (3,0)--(3.7,3);
    \draw [thick] (3,0)--(4.6,3);
    \draw [thick] (3,0)--(5.5,3);
    \draw [thick] (3,0)--(6,1);
    \draw [thick] (3,0)--(6,0);
    \draw [thick] (3,0)--(6,-1);
    \draw [fill=gray!30!white,thick] (4.7,3) circle [x radius=1.7, y radius=1];
    \node[]at(4.7,3){$H$};
    \draw [fill=gray!30!white,thick] (6,0) circle [x radius=1.7, y radius=1.15];
    \node[]at(6,0){$J$};
    \draw [thick] (3,0)--(3,3);
    \draw [fill] (3,3) circle [radius=0.15];
    \draw [fill] (3,0) circle [radius=0.15];
    \node[above left]at(3,3){$v'\!\!\!$};
    \node[below left]at(3,.27){$u\!'$};
    \node[]at(1.5,-3.1){$G(\drm)\+=\+G+ G$};
\end{tikzpicture}
\qquad\quad
\begin{tikzpicture}[scale=0.36]
    \draw [thick] (0,3)--(-2.5,0);
    \draw [thick] (0,3)--(-1.6,0);
    \draw [thick] (0,3)--(-0.7,0);
    \draw [thick] (0,3)--(-3,4);
    \draw [thick] (0,3)--(-3,3);
    \draw [thick] (0,3)--(-3,2);
    \draw [fill=gray!30!white,thick] (-1.7,0) circle [x radius=1.7, y radius=1];
    \node[]at(-1.7,0){$H$};
    \draw [fill=gray!30!white,thick] (3,0) circle [x radius=1.7, y radius=1];
    \node[]at(3,0){$H$};
    \draw [fill=gray!30!white,thick] (-3,3) circle [x radius=1.7, y radius=1.15];
    \node[]at(-3,3){$J$};
    \draw [thick] (0,3)--(1.3,0);
    \draw [fill] (0,3) circle [radius=0.15];
    \draw [fill] (1.3,0) circle [radius=0.15];
    \node[above right]at(0,3){$\!\!u$};
    \node[below left]at(1.3,0){$v\!$};
    \node[]at(0,-3.1){$\phantom{G_1}K\phantom{G_1}$};
\end{tikzpicture}
\qquad\quad
\begin{tikzpicture}[scale=0.36]
    \draw [fill=gray!10!white] (-5.2,4.65)--(6.9,4.65)--(6.9,1.5)--(1.25,1.5)--(1.25,-1.5)--(-5.2,-1.5)--(-5.2,4.65);
    \draw [thick] (0,3)--(-2.5,0);
    \draw [thick] (0,3)--(-1.6,0);
    \draw [thick] (0,3)--(-0.7,0);
    \draw [thick] (0,3)--(-3,4);
    \draw [thick] (0,3)--(-3,3);
    \draw [thick] (0,3)--(-3,2);
    \draw [fill=gray!30!white,thick] (-1.7,0) circle [x radius=1.7, y radius=1];
    \node[]at(-1.7,0){$H$};
    \draw [fill=gray!30!white,thick] (-3,3) circle [x radius=1.7, y radius=1.15];
    \node[]at(-3,3){$J$};
    \draw [thick] (0,0)--(3,0);
    \draw [fill] (0,3) circle [radius=0.15];
    \draw [fill] (0,0) circle [radius=0.15];
    \node[above right]at(0,3){$\!\!u$};
    \node[below right]at(0,-0.1){$\!v$};
    \draw [thick] (3,0)--(3.7,3);
    \draw [thick] (3,0)--(4.6,3);
    \draw [thick] (3,0)--(5.5,3);
    \draw [thick] (3,0)--(6,1);
    \draw [thick] (3,0)--(6,0);
    \draw [thick] (3,0)--(6,-1);
    \draw [fill=gray!30!white,thick] (4.7,3) circle [x radius=1.7, y radius=1];
    \node[]at(4.7,3){$H$};
    \draw [fill=gray!30!white,thick] (6,0) circle [x radius=1.7, y radius=1.15];
    \node[]at(6,0){$J$};
    \draw [thick] (0,3)--(3,3);
    \draw [fill] (3,3) circle [radius=0.15];
    \draw [fill] (3,0) circle [radius=0.15];
    \node[above left]at(3,3){$v'\!\!\!$};
    \node[below left]at(3,.17){$u\!'\!$};
    \node[]at(1.5,-3.1){$\Gsub\+=\+G({\Msub}^{\!\times2})$};
\end{tikzpicture}
\vspace{-10.5pt}
$$
\caption{Graphs used in the proof of Corollary~\ref{corNegateCell}}
\label{figNegCycle}
\end{figure}
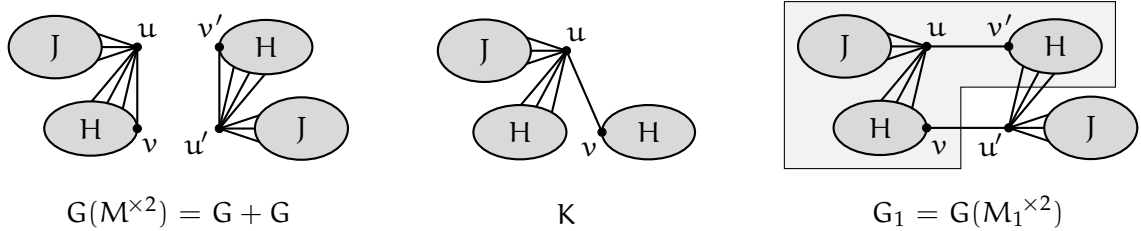
%\vspace{-9pt}
\begin{proof}[Proof of Corollary~\ref{corNegateCell}]
Let $G=G(M)$ and $\Gsub=G({\Msub}^{\!\times2})$, and let
$uv$ be the edge in $G$ corresponding to the entry in $M$ that is negated to create $\Msub$.
Since $G$ %is connected and
contains no negative cycles, by Observation~\ref{obsPosx2},
$G(\drm)=G+G$.
%is the disjoint union of two copies of $G$.
Thus,
since $G$ is connected, it has the form at the left of Figure~\ref{figNegCycle}, in which $H$ is the component of $G-u$ containing $v$.
Moreover,
we have
$\gr(\Geom(M))=\lambda(G)$.
(This also follows from Corollary~\ref{corNoNegCycles}.)
Now, if we expand $G$ at $u$ along $uv$, by Lemma~\ref{lemmaExpandVertex}, %we have
$\lambda(G)=\lambda(K)$, where $K$ is the graph in the centre of Figure~\ref{figNegCycle}.

On the other hand, $\Gsub$ is obtained from $G(\drm)$ by removing the edges $uv$ and $u'v'$, and adding $uv'$ and $u'v$, as shown at the
right of Figure~\ref{figNegCycle}.
It is readily observed that $K$ is a proper subgraph of $\Gsub$ (see the shaded box in Figure~\ref{figNegCycle}), and hence, by Corollary~\ref{corStrictMono}, $\lambda(K)<\lambda(\Gsub)$. Since $\gr(\Geom(\Msub))=\lambda(\Gsub)$, the result follows.
\end{proof}
%It seems possible
Thus, making the first negative cycle increases the growth rate.
We suspect, in fact,
that the following stronger statement is also true:

\thmbox{
\begin{conj}\label{conjNegateCell}
If $G(M)$ is connected and $\Msub$ results from negating a single entry of $M$
that is in one or more positive cycles but in no negative cycle,
%(thus making one or more cycles in $G(\Msub)$ negative),
then
\vspace{-6pt}
$$\gr(\Geom(\Msub)) \;>\; \gr(\Geom(M)).$$
\end{conj}
} %\thmbox

To prove this more general result seems to require some new ideas.
If $G(M)$ already contains a negative cycle, then $G(\drm)$ is connected, and, when this is the case,
there appears to be no obvious way to generate a subgraph of $G({\Msub}^{\!\times2})$
by expanding $G(\drm)$.

% --------------------------------------------------------
\subsection{Monotone grid classes}

Typically,
the growth rate of a monotone grid class is greater than
%the growth rate
that
of the corresponding geometric grid class.
For example,
if $G(M)$ is a cycle then $\gr(\Grid(M))=4$, whereas from~\eqref{eqGRGeomCycle} we know that $\gr(\Geom(M))<4$.
As another example,
$$
\gr\!\left(\begin{gridmx}1&\pos0&-1\\ 1&-1&\pos1 \end{gridmx}\right)
\;=\;
\gr\!\left(\begin{gridmx}-1&\pos0&-1\\ \pos1&-1&\pos1 \end{gridmx}\right)
\;=\;
\thalf(5+\sqrt{17})
\;\approx\;
4.56155,
$$
which should be compared with~\eqref{eqGRGeomExNegCyc} and~\eqref{eqGRGeomExPosCyc}.

The fact that the growth rate of the monotone grid class is strictly greater
is a consequence of the fact that, if $G$ is connected and not acyclic, then %there is a strict inequality between
$\lambda(G)$ and $\rho(G)$ are distinct:
\begin{lemma}[Godsil \& Gutman~\cite{GG1981}] % Theorem~6]
\label{lemmaLambdaLessRhoCyclic}
If %a graph
$G$ is connected and contains a cycle, then
$\lambda(G)<\rho(G)$.
%the spectral radius of $G$ strictly exceeds the largest root of the matching polynomial $\mu_G(z)$.
\end{lemma}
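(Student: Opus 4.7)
The plan is to apply the Godsil--Gutman identity (Lemma~\ref{lemmaMuCycleDef}),
$$\mu_G(z) \;=\; \Phi_G(z) \:+\: \sum_{C\in\CCC_G} \!\!2^{\+\mathrm{comp}(C)} \+ \Phi_{G-C}(z),$$
and to show that its right-hand side is strictly positive at every $z\geqslant\rho(G)$. From this, the desired inequality $\lambda(G)<\rho(G)$ follows at once, since no root of $\mu_G$ can then lie in $[\rho(G),\infty)$.

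The first step is to verify that for every graph $H$ whose connected components each have spectral radius strictly less than $\rho(G)$, we have $\Phi_H(z)>0$ for all $z\geqslant\rho(G)$. This is immediate: $\Phi_H$ factors as the product of the (monic) characteristic polynomials of its components, and since each adjacency matrix is symmetric, every such factor has only real roots, all lying strictly below $\rho(G)$; hence each factor, and thus the product, is strictly positive at any $z\geqslant\rho(G)$. The second step is to check the hypothesis of the first step for each $G-C$ with $C\in\CCC_G$: because $G$ is connected and $C$ is a nonempty union of vertex-disjoint cycles, $G-C$ is a proper subgraph of $G$, so by Lemma~\ref{lemmaStrictMono} every connected component of $G-C$ has spectral radius strictly less than $\rho(G)$. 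The edge case $C=G$ is covered by the convention $\Phi_{G-C}\equiv 1>0$ stipulated in Lemma~\ref{lemmaMuCycleDef}.

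It then remains to assemble the pieces. The hypothesis that $G$ contains a cycle ensures that $\CCC_G$ is nonempty (it contains each cycle of $G$ as an element), so the sum on the right-hand side of the Godsil--Gutman identity has at least one term. At $z=\rho(G)$ the term $\Phi_G(z)$ vanishes, while every remaining summand is strictly positive by the two preceding steps; hence $\mu_G(\rho(G))>0$. For $z>\rho(G)$ the term $\Phi_G(z)$ is also strictly positive, so $\mu_G(z)>0$ there as well, completing the argument. I anticipate no substantive obstacle in executing this plan: the only detail requiring a little care is the positivity check for $\Phi_H(\rho(G))$ when $H$ is disconnected, which is handled cleanly by factoring the characteristic polynomial over components.
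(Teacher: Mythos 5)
Your proposal is correct and takes essentially the same approach as the paper: apply the Godsil--Gutman identity from Lemma~\ref{lemmaMuCycleDef}, use Lemma~\ref{lemmaStrictMono} to show each $\Phi_{G-C}(z)>0$ for $z\geqslant\rho(G)$, and conclude that $\mu_G(z)>0$ there. You merely spell out a few more details (factoring $\Phi_H$ over components, nonemptiness of $\CCC_G$, and the $C=G$ convention) that the paper leaves implicit.
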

%\vspace{-18pt}
\begin{proof} % GeomNew p.2
By Lemma~\ref{lemmaStrictMono},
if $C$ is a nonempty subgraph of $G$, then $\rho(G-C)<\rho(G)$.
So we have $\Phi_{G-C}(z)>0$ for all $z\geqslant\rho(G)$.
Moreover, $\Phi_G(z)\geqslant0$ for $z\geqslant\rho(G)$. So,
since $G$ contains a cycle, from Lemma~\ref{lemmaMuCycleDef} we can deduce that $\mu_G(z)>0$ if $z\geqslant\rho(G)$, and thus
$\lambda(G)<\rho(G)$.
\end{proof}
%\vspace{-9pt}
Note that,
analogously to Observation~\ref{obsGeomDouble},
$\Grid(\drm)=\Grid(M)$.
Hence it must be the case that $\rho(G(\drm))=\rho(G(M))$, the growth rate of a monotone grid class thus being independent of the parity of its cycles.
As a consequence, from Lemma~\ref{lemmaLambdaLessRhoCyclic} we can deduce that in the non-acyclic case
there is a strict inequality between the growth rate of a geometric grid class and the growth rate of the corresponding monotone grid class:

\thmbox{
\begin{cor}\label{corGeomGridGRIneq}
  If $G(M)$ is connected,
  % and not acyclic,
  then $\gr(\Geom(M))<\gr(\Grid(M))$ %.
  if and only if $G(M)$ contains a cycle.
\end{cor}
} %\thmbox

% --------------------------------------------------------
\subsection{Subdivision of edges}

One particularly surprising result in Chapter~\ref{chap05} concerning the growth rates of monotone grid classes is the fact that classes
whose row-column graphs
have longer internal paths or cycles exhibit \emph{lower} growth rates (Corollary~\ref{corSubdivision}).
Recall that an edge $e$ of a graph $G$ is said to lie on an \emph{endpath} of $G$ if $G\deledge e$ is disconnected and one of its components is a (possibly trivial) path. An edge that does not lie on an endpath is said to be \emph{internal}.
The following result of Hoffman \& Smith states that the subdivision of an edge increases or decreases the spectral radius of the graph depending on whether the edge lies on an endpath or is internal:
\begin{lemma}[Hoffman \& Smith~\cite{HS1975}]\label{lemmaRhoSubdivision}
  Let $G$ be a connected graph and $G'$ be obtained from $G$ by subdividing an edge $e$. If $e$ lies on an endpath, then $\rho(G')>\rho(G)$. Otherwise (if $e$ is an internal edge), $\rho(G')\leqslant\rho(G)$, with equality if and only if $G$ is a cycle or has the following form (which we call an ``$H$~graph''):
$$
  \begin{tikzpicture}[scale=0.4]
    %\draw [thick] (0,0)--(1,0);
    \draw [thick,dashed] (0,0)--(4,0);
    %\draw [thick] (5,0)--(6,0);
    \draw [thick] (-.5,.866)--(0,0)--(-.5,-.866);
    \draw [thick] (4.5,.866)--(4,0)--(4.5,-.866);
    %\foreach \x in {0,...,6}
    %  \draw [fill] (\x,0) circle [radius=0.15];
    \draw [fill] (0,0) circle [radius=0.15];
    %\draw [fill] (1,0) circle [radius=0.15];
    %\draw [fill] (5,0) circle [radius=0.15];
    \draw [fill] (4,0) circle [radius=0.15];
    \draw [fill] (-.5,.866) circle [radius=0.15];
    \draw [fill] (-.5,-.866) circle [radius=0.15];
    \draw [fill] (4.5,.866) circle [radius=0.15];
    \draw [fill] (4.5,-.866) circle [radius=0.15];
  \end{tikzpicture}
$$
\end{lemma}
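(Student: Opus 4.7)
The plan is to use the Perron--Frobenius theorem together with the Rayleigh quotient characterisation $\rho(G) = \max_{\mathbf{y}\neq 0} \mathbf{y}^\top A(G)\mathbf{y}/\mathbf{y}^\top \mathbf{y}$. Since $G$ and $G'$ are both connected, each admits a strictly positive Perron eigenvector; write $\mathbf{x}$ for that of $G$ with $A(G)\mathbf{x} = \rho\mathbf{x}$, and $\mathbf{x}'$ for that of $G'$ with $A(G')\mathbf{x}' = \rho'\mathbf{x}'$. Let $e=uv$ be the subdivided edge and $w$ the new vertex of $G'$, so that in $G'$ vertex $w$ is adjacent to $u$ and $v$ and the edge $uv$ is absent.

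For the endpath case, suppose $e$ lies on an endpath $P = v_0,v_1,\ldots,v_k$ with $v_0$ the attachment vertex and $v_k$ a leaf. The Perron entries along $P$ satisfy the three-term recurrence $x_{v_{i+1}} = \rho\+ x_{v_i} - x_{v_{i-1}}$ with the terminal condition $\rho\+ x_{v_k} = x_{v_{k-1}}$, forcing them to be strictly positive and to decay in a controlled way. I would define a test vector $\mathbf{y}$ on $V(G')$ by setting $y_i = x_i$ for $i\in V(G)$ and $y_w = \alpha$, then compute the Rayleigh quotient $R(\alpha)$ explicitly: the numerator picks up $2\alpha(x_u+x_v) - 2\+ x_u\+ x_v$ relative to $\mathbf{x}^\top A(G)\mathbf{x}$, and the denominator gains only $\alpha^2$. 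Optimising over $\alpha$ and using $x_u,x_v>0$ yields $R(\alpha)>\rho$ for a suitable choice, whence $\rho(G')>\rho(G)$.

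For the internal edge case, I would argue in the reverse direction. Starting from $\mathbf{x}'$ on $V(G')$, the eigenvalue equation at $w$ reads $\rho'\+ x'_w = x'_u + x'_v$. Define a candidate $\mathbf{y}$ on $V(G)$ by $y_i = x'_i$ for $i\in V(G)\setminus\{u,v\}$ and $y_u, y_v$ chosen (depending on $x'_u,x'_v,x'_w$) so that the contribution of $w$ is reassigned across the restored edge $uv$. A direct expansion of $\mathbf{y}^\top A(G)\mathbf{y}$ then produces the bound $\mathbf{y}^\top A(G)\mathbf{y} \geqslant \rho'\+\mathbf{y}^\top\mathbf{y}$, with the slack between the two sides being a non-negative quadratic in $x'_u - x'_v$. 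This gives $\rho(G)\geqslant \rho(G')$ and reduces equality to the requirement $x'_u=x'_v$.

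The main obstacle is the rigidity argument for the equality case. If $\rho(G')=\rho(G)$, then the identity $x'_u=x'_v$ must hold; since $e$ is internal, the two ``sides'' of $e$ in $G$ are non-trivial, and the eigenvalue equation then propagates this symmetry outward. The plan is to show that any connected graph $G$ admitting such a Perron-symmetric internal edge must possess a graph automorphism swapping $u$ and $v$ and fixing the rest of the structure compatibly with the Perron eigenvector; iterating across internal edges forces $G$ to be vertex-transitive on a path-like backbone with only trivial decorations at the two ends, leaving only cycles and $H$ graphs as possibilities. I expect this classification step --- propagating a local spectral symmetry into a global structural constraint --- to be substantially harder than the Rayleigh quotient inequalities, and it is where most of the technical effort will be concentrated.
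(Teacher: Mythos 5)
The paper does not actually prove this lemma: it is quoted from Hoffman \& Smith~\cite{HS1975} and used as a black box, so there is no internal proof to compare against. Judged on its own merits, your sketch has concrete gaps in both halves. For the endpath case, your Rayleigh-quotient computation needs $R(\alpha)>\rho$ for some $\alpha$, which after clearing denominators is the condition $\rho\alpha^2-2(x_u+x_v)\alpha+2x_ux_v<0$; this quadratic in $\alpha$ takes a negative value only when $(x_u+x_v)^2>2\rho\, x_ux_v$, and that inequality generally \emph{fails} once $\rho\geqslant 2$ (for instance, $x_u=x_v$ forces $\rho<2$). The three-term recurrence along the pendant path does not obviously rescue this, and in any event the endpath case is immediate from strict subgraph monotonicity (Lemma~\ref{lemmaStrictMono}): lengthening a pendant path yields a connected graph containing a copy of $G$ as a proper subgraph.

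For the internal case the same issue reappears with the inequality reversed, and here it is fatal because this is exactly where the internal-edge hypothesis must do work. Taking $y=x'$ restricted to $V(G)$ and using $\rho'x'_w=x'_u+x'_v$, the slack you invoke works out to $2x'_ux'_v-(x'_u+x'_v)x'_w=\tfrac{1}{\rho'}\bigl((2\rho'-4)x'_ux'_v-(x'_u-x'_v)^2\bigr)$, which is \emph{not} a non-negative quadratic in $x'_u-x'_v$: its sign depends on whether the Perron entries at the two ends of $e$ are comparable. Proving that internality forces $\max(x'_u,x'_v)\leqslant(\rho'-1)\min(x'_u,x'_v)$ --- equivalently, that neither side of $e$ can carry a decaying, path-like Perron profile --- is precisely the content of Hoffman and Smith's argument, and it is absent from your sketch. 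Finally, you rightly flag the equality classification as the hardest step, but the plan of promoting $x'_u=x'_v$ to a graph automorphism and then to the cycle/$H$-graph dichotomy is only a plan; nothing written rules out an asymmetric graph whose Perron vector happens to satisfy the equality condition. As it stands, none of the three assertions of the lemma is actually established.
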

Thus for monotone grid classes, if $G(M)$ is connected,
and $G(M')$ is obtained from $G(M)$ by the subdivision of one or more internal edges, then $\gr(\Grid(M'))\leqslant\gr(\Grid(M))$.
%unless $G(M)$ is a cycle or an $H$ graph. % (see Bevan~\cite{Bevan2013}).

As we see shortly, the situation is not as simple for geometric grid classes.
The effect of edge subdivision on the largest root of the matching polynomial does not seem to have been addressed previously.
In fact, the subdivision of an edge that is in a cycle may
cause $\lambda(G)$ %the largest root of the matching polynomial
to increase or decrease, or may leave it unchanged.
See Figures~\ref{figGeomSubdivisionIncr}--\ref{figGeomSubdivisionDecr} for illustrations of the three cases.
We investigate this further below.
However, if the edge being subdivided is not on a cycle in $G$, then the behaviour of $\lambda(G)$ mirrors that of $\rho(G)$, as we now demonstrate:
\begin{lemma}\label{lemmaSubdividing1}
  Let $G$ be a connected graph and $G'$ be obtained from $G$ by subdividing an edge $e$.
  If $e$ lies on an endpath, then $\lambda(G')>\lambda(G)$.
  However, if $e$ is an internal edge and not on a cycle, then $\lambda(G')\leqslant\lambda(G)$, with equality if and only if $G$ is an $H$ graph.
\end{lemma}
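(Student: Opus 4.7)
The plan is to treat the two cases with quite different methods.

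For the endpath case, the argument will be a direct application of Corollary~\ref{corStrictMono}. Let $\ell$ denote the leaf at the far end of the endpath of $G'$ containing the subdivision vertex $w$. Deleting $\ell$ from $G'$ shortens this endpath by exactly one step and recovers a graph isomorphic to $G$ via the obvious relabelling. Thus $G \cong G'-\ell$ is a proper induced subgraph of the connected graph $G'$, and Corollary~\ref{corStrictMono} immediately yields $\lambda(G) < \lambda(G')$.

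For the internal non-cycle case, the plan is to reduce to the tree case, where $\lambda = \rho$ and the analogous statement is Lemma~\ref{lemmaRhoSubdivision}, by iteratively applying Lemma~\ref{lemmaExpandVertex}. Write $e = xy$ and $G \deledge e = G_x \cup G_y$; the internal hypothesis says neither component is a path. Since $e$ lies outside every cycle, any cycle edge $uv$ has at least one endpoint outside $\{x,y\}$ (otherwise $e$ would lie on a cycle), so I will always expand at some $u \notin \{x,y\}$. Because $xy$ is an edge of $G-u$, both $x$ and $y$ will lie in the same component of $G-u$, so either both or neither lie in the duplicated component $H$. Performing the same expansion on $G'$ will produce $K'$ obtained from $K$ by subdividing either one or two copies of $e$, according as $e \notin H$ or $e \in H$, with $\lambda(G) = \lambda(K)$ and $\lambda(G') = \lambda(K')$.

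Iterating to completion will produce trees $T_G$ and $T_{G'}$ with $\lambda(G) = \rho(T_G)$ and $\lambda(G') = \rho(T_{G'})$, where $T_{G'}$ equals $T_G$ with each of $N \geqslant 1$ copies of $e$ subdivided. The key technical claim, proved by induction on the expansions, will be that every such copy is an internal edge of $T_G$: removing one splits $T_G$ into two subtrees, each of which contains a copy of $G_x$ or $G_y$ as a subgraph and is therefore not a path. Applying Lemma~\ref{lemmaRhoSubdivision} one subdivision at a time will then give $\rho(T_{G'}) \leqslant \rho(T_G)$, with strict inequality unless every intermediate tree is an $H$ graph (noting that subdividing an internal edge of an $H$ graph produces another $H$ graph). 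The main obstacle will be the equality case: one must show that $T_G$ is an $H$ graph if and only if $G$ itself is, which reduces to arguing that the expansion procedure, applied to a connected graph that contains at least one cycle, cannot produce an $H$ graph, given the rigid degree structure such graphs enjoy.
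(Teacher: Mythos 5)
Your endpath argument is the paper's (up to spelling out the isomorphism $G\cong G'-\ell$), and your plan for the internal case follows the paper's route exactly: reduce to trees via Lemma~\ref{lemmaExpandVertex}, then finish with Corollary~\ref{corLambdaRhoTree} and Lemma~\ref{lemmaRhoSubdivision}. The only organisational difference is that the paper proceeds by induction on the number of cycles, expanding one vertex at a time and invoking the lemma for the intermediate graph $K$, whereas you fully expand to $T_G$ first and then perform all the subdivisions at the tree level. Your observations that every cycle edge has an endpoint outside $\{x,y\}$, and that each expansion subdivides one or two further copies of $e$, are correct.

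The gap is in the justification of your key technical claim. You argue that deleting a copy of $e$ from $T_G$ leaves two subtrees, ``each of which contains a copy of $G_x$ or $G_y$ as a subgraph and is therefore not a path.'' But $T_G$ is a tree, and once $G$ contains a cycle --- the only case in which any expansion occurs --- at least one of $G_x$, $G_y$ contains that cycle and so cannot occur as a subgraph of $T_G$ at all. Take $G$ to be two triangles joined by the single edge $e$: then $e$ is internal and lies on no cycle, both $G_x$ and $G_y$ are triangles, and neither embeds in any subtree of $T_G$, so your ``therefore not a path'' inference has nothing to stand on. The internality of the copies of $e$ in $T_G$ is still true (and still exactly what you need), but for a subtler reason: a side fails to be a path because the unfolding of the cyclic component eventually wraps around to a further copy of $x$ or $y$ carrying additional branching --- the unfolding of a cycle on its own is a \emph{path} --- and this requires its own argument. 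Either supply that argument (e.g.\ by tracking degrees through the expansions), or restructure as the paper does: after a single expansion, each copy of $e$ in $K$ still has the relevant non-path component of $G\setminus e$ intact, or freshly duplicated, on each side, so internality in $K$ can be read off directly, and the induction hypothesis for graphs with fewer cycles absorbs the rest. Your identification of the equality case as the remaining obstacle is accurate, but note that it too leans on the same internality claim.
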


\begin{figure}[ht]
$$
\raisebox{12pt}{
\begin{tikzpicture}[scale=0.36]
    \draw [thick] (0,.3)--(1.7,1.5);
    \draw [thick] (0,.3)--(2.1,1.1);
    \draw [thick] (0,.3)--(3,0.75);
    \draw [thick,fill=gray!30!white] (3,1.4) circle [x radius=1.3, y radius=1];
    \draw [thick,fill=gray!30!white] (7.2,1.4) circle [x radius=1.2, y radius=.9];
    \draw [fill] (0,.3) circle [radius=0.15];
    \node[left]at(0,.3){$u$};
    \node[left]at(-1.5,.3){$G$:};
    \draw [fill] (1.7,1.5) circle [radius=0.15];
    %\draw [thick,fill=gray!30!white] (5.5,1.4)--(6.77,0.98)--(6.77,1.82)--(5.5,1.4);
    \draw [fill] (4.3,1.4) circle [radius=0.15];
    \draw [fill] (6,1.4) circle [radius=0.15];
    \draw [ultra thick] (4.3,1.4)--(6,1.4);
    \node[above]at(5.15,1.4){$e$};
    \node[left]at(1.7,1.6){$v$};
    \node[]at(3,1.4){$\,H_1$};
    \node[]at(7.2,1.4){$H_2$};
    \draw [thick] (0,.3)--(3,-0.75);
    \draw [thick] (0,.3)--(2.33,-1);
    \draw [thick] (0,.3)--(2.17,-1.5);
    \draw [thick] (0,.3)--(2,-2);
    \draw [thick,fill=gray!30!white] (3.4,-1.35) circle [x radius=1.75, y radius=1.15];
    \node[]at(3.4,-1.35){$J$};
\end{tikzpicture}
}
\qquad\qquad\qquad
\begin{tikzpicture}[scale=0.36]
    \draw [thick,fill=gray!30!white] (3,3.8) circle [x radius=1.3, y radius=1];
    \draw [thick] (0,.3)--(1.7,3.9);
    \draw [thick] (0,.3)--(2.1,1.1);
    \draw [thick] (0,.3)--(3,0.75);
    \draw [thick,fill=gray!30!white] (3,1.4) circle [x radius=1.3, y radius=1];
    \draw [fill] (0,.3) circle [radius=0.15];
    \node[left]at(0,.3){$u$};
    \node[left]at(-1.5,1.4){$K$:};
    \draw [fill] (1.7,3.9) circle [radius=0.15];
    %
    %\draw [thick,fill=gray!30!white] (5.5,3.8)--(6.77,3.38)--(6.77,4.22)--(5.5,3.8);
    \draw [thick,fill=gray!30!white] (7.2,3.8) circle [x radius=1.2, y radius=.9];
    \draw [fill] (4.3,3.8) circle [radius=0.15];
    \draw [fill] (6,3.8) circle [radius=0.15];
    \draw [ultra thick] (4.3,3.8)--(6,3.8);
    \node[above]at(5.15,3.8){$e_1$};
    %
    %\draw [thick,fill=gray!30!white] (5.5,1.4)--(6.77,0.98)--(6.77,1.82)--(5.5,1.4);
    \draw [thick,fill=gray!30!white] (7.2,1.4) circle [x radius=1.2, y radius=.9];
    \draw [fill] (4.3,1.4) circle [radius=0.15];
    \draw [fill] (6,1.4) circle [radius=0.15];
    \draw [ultra thick] (4.3,1.4)--(6,1.4);
    \node[above]at(5.15,1.4){$e_2$};
    \node[left]at(1.7,4){$v$};
    \node[]at(3,1.4){$\,H_1$};
    \node[]at(3,3.8){$\,H_1$};
    \node[]at(7.2,1.4){$H_2$};
    \node[]at(7.2,3.8){$H_2$};
    \draw [thick] (0,.3)--(3,-0.75);
    \draw [thick] (0,.3)--(2.33,-1);
    \draw [thick] (0,.3)--(2.17,-1.5);
    \draw [thick] (0,.3)--(2,-2);
    \draw [thick,fill=gray!30!white] (3.4,-1.35) circle [x radius=1.75, y radius=1.15];
    \node[]at(3.4,-1.35){$J$};
\end{tikzpicture}
\qquad
$$
\caption{Graphs used in the proof of Lemma~\ref{lemmaSubdividing1}} %; the triangles represent non-path graphs}
\label{figExpandingVertexEInternal}
\end{figure}
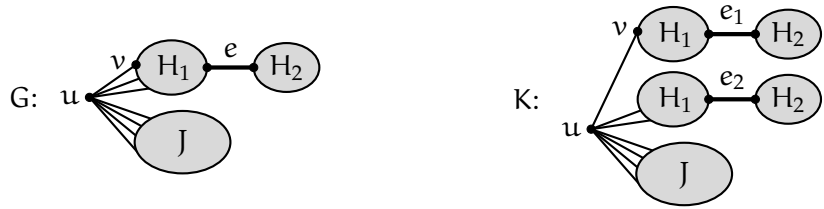
\begin{proof}
If $e$ lies on an endpath, then $G$ is a proper subgraph of $G'$ and so
the result follows from Corollary~\ref{corStrictMono}.
On the other hand, if $e$ is internal and $G$ is acyclic, the conclusion is a consequence of Corollary~\ref{corLambdaRhoTree} and Lemma~\ref{lemmaRhoSubdivision}.
Thus, we need only consider the situation in which
$e$ is internal and $G$ contains a cycle. We proceed by induction on the number of cycles in $G$, acyclic graphs constituting the base case.
Let $uv$ be an edge in a cycle of $G$ such that $u$ is not an endvertex of~$e$.
Now, let $K$ be the result of expanding $G$ at $u$ along $uv$, and let $K'$, analogously, be the result of expanding $G'$ at $u$ along~$uv$.
%$K$ has fewer cycles than $G$.

We consider the effect of the expansion of $G$ upon $e$ and the effect of the expansion of $G'$ upon the two edges resulting from the subdivision of $e$.
If $e$ is in the component of $G-u$ containing~$v$, then $e$ is duplicated in $K$, both copies of $e$ remaining internal (see Figure~\ref{figExpandingVertexEInternal}).
Moreover, $K'$ results from subdividing both copies of $e$ in $K$.
Conversely, if $e$ is in a component of $G-u$ not containing~$v$, then $e$ is not duplicated in $K$ (and remains internal).
In this case, $K'$ results from subdividing $e$ in $K$.
In either case, $K'$ is the result of subdividing internal edges of $K$ (a graph with fewer cycles than $G$), and so
the result follows from the induction hypothesis.
\end{proof}
Now,
the subdivision of an edge
of a row-column graph
that is not on a cycle has no effect on the parity of the cycles.
Hence,
we
have the following conclusion
for the growth rates of geometric grid classes:

\thmbox{
\begin{cor}
If $G(M)$ is connected,
and $G(M')$ is obtained from $G(M)$ by the subdivision of one or more internal edges not on a cycle, then $\gr(\Geom(M'))\leqslant\gr(\Geom(M))$, with equality if and only if $G(M)$ is an $H$ graph.
\end{cor}
} %\thmbox

\begin{figure}[ht] %[ht]
\vspace{6pt}
$$
\raisebox{34.2pt}{
\begin{tikzpicture}[scale=0.36]
    \draw [thick] (0,3)--(-3,4);
    \draw [thick] (0,3)--(-3,3);
    \draw [thick] (0,3)--(-3,2);
    \draw [thick] (0,3)--(1.3,4.7);
    \draw [thick] (0,3)--(1.662,4.126);
    \draw [thick] (0,3)--(2.3,3.834);
    \draw [thick] (0,3)--(2.4,2.1);
    \draw [thick] (0,3)--(2.1,1.7);
    \draw [thick] (0,3)--(1.8,1.3);
    \draw [thick] (0,3)--(1.5,0.9);
    \draw [ultra thick] (5.3,1.3)--(5.3,4.7);
    \draw [fill=gray!30!white,thick] (3.3,4.7) circle [x radius=2, y radius=1];
    \node[]at(3.3,4.7){$\phantom{{}_.}H_1$};
    \draw [fill=gray!30!white,thick] (3.3,1.3) circle [x radius=2, y radius=1];
    \node[]at(3.3,1.3){$\phantom{{}_.}H_2$};
    \draw [fill=gray!30!white,thick] (-3,3) circle [x radius=1.7, y radius=1.1];
    \node[]at(-3,3){$J$};
    \draw [fill] (0,3) circle [radius=0.15];
    \draw [fill] (1.3,4.7) circle [radius=0.15];
    \draw [fill] (1.662,4.126) circle [radius=0.15];
    \draw [fill] (2.3,3.834) circle [radius=0.15];
    \draw [fill] (5.3,4.7) circle [radius=0.15];
    \draw [fill] (5.3,1.3) circle [radius=0.15];
    \node[above left]at(0,3.1){$u\!\!\!$};
    %\node[above left]at(1.3,4.6){$v\!$};
    \node[right]at(5.3,3){$e$};
    \node[right]at(5.3,4.8){$x_1$};
    \node[right]at(5.3,1.2){$x_2$};
    \node[left]at(-5.4,3){$G$:};
    \node[below left]at(0,1.3){$\phantom{x_1}$};
\end{tikzpicture}
}
\qquad\qquad\quad
\begin{tikzpicture}[scale=0.36]
    \draw [thick] (0,3)--(1.3,9.4);
    \draw [ultra thick] (5.3,9.4)--(7.7,9.4);
    \draw [fill=gray!30!white,thick] (3.3,9.4) circle [x radius=2, y radius=1];
    \node[]at(3.3,9.4){$\phantom{{}_.}H_1$};
    \draw [fill=gray!30!white,thick] (9.7,9.4) circle [x radius=2, y radius=1];
    \node[]at(9.7,9.4){$\phantom{{}_.}H_2$};
    \draw [fill] (1.3,9.4) circle [radius=0.15];
    \draw [fill] (5.3,9.4) circle [radius=0.15];
    \draw [fill] (7.7,9.4) circle [radius=0.15];
    \node[below left]at(8.2,9.4){$x_2$};
    \draw [thick] (0,3)--(1.662,6.126);
    \draw [ultra thick] (5.3,6.7)--(7.7,6.7);
    \draw [fill=gray!30!white,thick] (3.3,6.7) circle [x radius=2, y radius=1];
    \node[]at(3.3,6.7){$\phantom{{}_.}H_1$};
    \draw [fill=gray!30!white,thick] (9.7,6.7) circle [x radius=2, y radius=1];
    \node[]at(9.7,6.7){$\phantom{{}_.}H_2$};
    \draw [fill] (1.662,6.126) circle [radius=0.15];
    \draw [fill] (5.3,6.7) circle [radius=0.15];
    \draw [fill] (7.7,6.7) circle [radius=0.15];
    \node[below left]at(8.2,6.7){$x_2$};
    \draw [thick] (0,3)--(2.3,3.134);
    \draw [ultra thick] (5.3,4)--(7.7,4);
    \draw [fill=gray!30!white,thick] (3.3,4) circle [x radius=2, y radius=1];
    \node[]at(3.3,4){$\phantom{{}_.}H_1$};
    \draw [fill=gray!30!white,thick] (9.7,4) circle [x radius=2, y radius=1];
    \node[]at(9.7,4){$\phantom{{}_.}H_2$};
    \draw [fill] (2.3,3.134) circle [radius=0.15];
    \draw [fill] (5.3,4) circle [radius=0.15];
    \draw [fill] (7.7,4) circle [radius=0.15];
    \node[below left]at(8.2,4){$x_2$};
    \draw [thick] (0,3)--(-3,4);
    \draw [thick] (0,3)--(-3,3);
    \draw [thick] (0,3)--(-3,2);
    \draw [thick] (0,3)--(2.4,2.1);
    \draw [thick] (0,3)--(2.1,1.7);
    \draw [thick] (0,3)--(1.8,1.3);
    \draw [thick] (0,3)--(1.5,0.9);
    \draw [ultra thick] (5.3,1.3)--(7.7,1.3);
    \draw [fill=gray!30!white,thick] (3.3,1.3) circle [x radius=2, y radius=1];
    \node[]at(3.3,1.3){$\phantom{{}_.}H_2$};
    \draw [fill=gray!30!white,thick] (9.7,1.3) circle [x radius=2, y radius=1];
    \node[]at(9.7,1.3){$\phantom{{}_.}H_1$};
    \draw [fill=gray!30!white,thick] (-3,3) circle [x radius=1.7, y radius=1.1];
    \node[]at(-3,3){$J$};
    \draw [fill] (0,3) circle [radius=0.15];
    \draw [fill] (5.3,1.3) circle [radius=0.15];
    \draw [fill] (7.7,1.3) circle [radius=0.15];
    \node[above left]at(0,3.1){$u\!\:\!\!$};
    %\node[above left]at(1.3,4.6){$v\!$};
    %\node[above]at(6.5,4.7){$e_1$};
    %\node[above]at(6.5,1.3){$e_2$};
    \node[below left]at(8.2,1.3){$x_1$};
    \node[left]at(-5.4,6.4){$K$:};
\end{tikzpicture}
$$
\caption{Graphs used in Lemma~\ref{lemmaSubdividing2}}
\label{figExpandingCycle}
\end{figure}
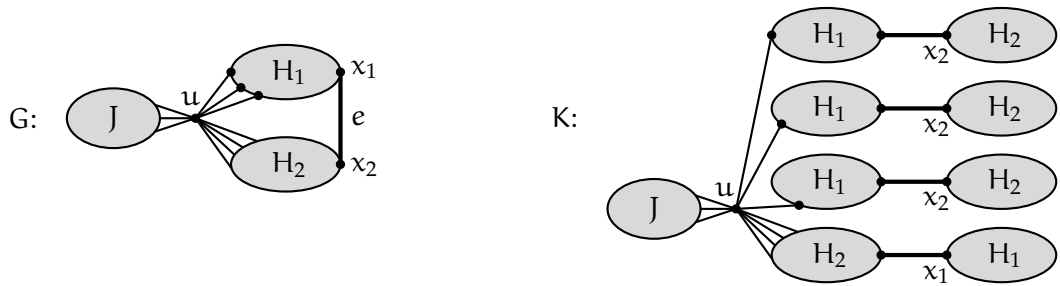
Let us now investigate the effect of subdividing an edge $e$ that lies on a cycle.
We restrict our attention to graphs
in which there is a vertex $u$ such that
the two endvertices of $e$ are in distinct
components of $(G\deledge e) - u$.
See the graph at the left of Figure~\ref{figExpandingCycle} for an illustration.
We leave the consideration of multiply-connected graphs that fail to satisfy this condition for future study.
\begin{lemma}\label{lemmaSubdividing2}
Let $G$ be a connected graph and $e=x_1x_2$ an edge on a cycle $C$ of $G$.
Let $u$ be a vertex on $C$, and let $H_1$ and $H_2$ be the distinct
components of $(G\deledge e) - u$ that contain $x_1$ and $x_2$ respectively.
Finally, let $G'$ be the graph obtained from $G$ by subdividing $e$.
  \vspace{-9pt}
  \begin{enumerate}[(a)]
    \item If, for $i\in\{1,2\}$, $H_i$ is a (possibly trivial) path of which $x_i$ is an endvertex, then $\lambda(G')>\lambda(G)$.
    \item If, for $i\in\{1,2\}$, $H_i$ is not a path or is a path of which $x_i$ is not an endvertex, then $\lambda(G')<\lambda(G)$.
  \end{enumerate}
  \vspace{-9pt}
\end{lemma}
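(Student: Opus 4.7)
My plan is to use Godsil's vertex expansion (Lemma~\ref{lemmaExpandVertex}) repeatedly at $u$ to ``untangle'' every cycle passing through $e$, then reduce the comparison $\lambda(G)$ versus $\lambda(G')$ to the edge-subdivision result already established in Lemma~\ref{lemmaSubdividing1}. The key combinatorial observation is that since $H_1$ and $H_2$ are distinct components of $(G\setminus e) - u$, every cycle in $G$ through $e$ must pass through $u$. Write $H^+ = H_1 \cup H_2 \cup \{e\}$ and let $d$ be the number of edges from $u$ into $V(H^+)$. Iteratively expanding $G$ at $u$ along $d-1$ of these edges produces a graph $K$ with $\lambda(K) = \lambda(G)$ in which $u$ retains its connections to the rest $J$ of the graph and is attached to $d$ vertex-disjoint copies of $H^+$, each by a single edge; within each copy, $e$ has become a bridge that is not on any cycle. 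Performing the same $d-1$ expansions on $G'$ yields a graph $K'$ with $\lambda(K') = \lambda(G')$ which coincides with $K$ except that each of the $d$ copies of $e$ has been subdivided.

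For case~(a), the hypothesis that each $H_i$ is a path with $x_i$ as an endvertex means that $H^+$ itself is a path (obtained by concatenating $H_1$, $e$, $H_2$). In $K$ each copy of this path is attached to $u$ at a single vertex $w$. Starting from a leaf of the copy opposite to $w$ on the appropriate side, a chain of degree-$2$ vertices runs through $e$ up to a vertex of degree at least $3$ (namely $w$ or $u$), so $e$ lies on a pendant endpath of $K$. Iterating Lemma~\ref{lemmaSubdividing1} over the $d$ copies of $e$ then gives $\lambda(K') > \lambda(K)$, and hence $\lambda(G') > \lambda(G)$.

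For case~(b), the hypothesis rules out any pendant path through $e$: if $H_i$ is not a path it contains a vertex of degree at least~$3$ that blocks every degree-$2$ chain from a leaf toward $e$, while if $H_i$ is a path with $x_i$ as an interior vertex then $x_i$ itself has degree at least~$3$ in the copy. Therefore each copy of $e$ in $K$ is an internal edge not on a cycle, and Lemma~\ref{lemmaSubdividing1} yields a non-strict inequality with equality possible only when the ambient graph is an $H$ graph. Iterating the subdivisions one copy at a time across intermediate graphs should give $\lambda(K') \leqslant \lambda(K)$, and the proof concludes by showing that strict inequality holds at some stage, i.e.\ that at least one intermediate graph is not an $H$ graph.

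The main obstacle is precisely this last step: ruling out the $H$-graph case at every stage of the iterated subdivision. I expect to handle it by structural observation---any copy of $H^+$ that forms a pendant arm of an $H$ graph must be a path of length at most two ending in two pendant edges, which forces the corresponding $H_i$ to be a (possibly trivial) path with $x_i$ as an endvertex and so violates the case~(b) hypothesis. An alternative to the iterative argument, which may avoid intermediate $H$-graph issues entirely, is to subdivide all $d$ copies of $e$ simultaneously and compare $\mu_K$ and $\mu_{K'}$ directly using the edge- and vertex-recurrences for the matching polynomial; this could prove cleaner but requires more bookkeeping.
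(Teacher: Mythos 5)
Your proof follows essentially the same route as the paper: both use Godsil's expansion lemma (Lemma~\ref{lemmaExpandVertex}) at $u$ to turn every copy of $e$ into a bridge lying on no cycle and then invoke Lemma~\ref{lemmaSubdividing1}, the only cosmetic difference being that you expand along all but one of the edges from $u$ into $H_1\cup H_2$ whereas the paper expands along every edge from $u$ into $H_1$. Your explicit treatment of the $H$-graph equality case in part~(b) is sound — indeed more careful than the paper, which simply asserts that $K$ is not an $H$ graph — and the structural observation you sketch (each copy of $H^+$ contributes a cycle or at least two vertices of degree at least three under the case~(b) hypothesis, so $K$ cannot be an $H$ graph) does close that loose end.
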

\begin{proof}
Let $K$ be the result of repeatedly expanding $G$ at $u$ along every edge joining $u$ to $H_1$.
$K$~has the form shown at the right of Figure~\ref{figExpandingCycle}.
Also let $K'$ be the result of repeatedly expanding $G'$ ($G$~with edge $e$ subdivided) in an analogous way at $u$.
%It is readily observed that
Clearly $K'$ is the same as the
graph that results from subdividing the
copies of $e$
in $K$.

Now, for part (a), since $H_1$ is a path with an end at $x_1$, and also $H_2$ is a path with an end at $x_2$, we see that $K'$ is the result of subdividing edges of $K$ that are on endpaths.
Hence, by the first part of Lemma~\ref{lemmaSubdividing1}, we have $\lambda(G')>\lambda(G)$ as required.

For part (b), since $H_1$ is not a path with an end at $x_1$, and nor is $H_2$ a path with an end at $x_2$,
we see that $K'$ is the result of subdividing internal edges of $K$.
Since $K$ is not an $H$ graph, by Lemma~\ref{lemmaSubdividing1}, we have $\lambda(G')<\lambda(G)$ as required.
\end{proof}
If the conditions for parts (a) and (b) of this lemma both fail to be satisfied (i.e. $H_1$ is a suitable path and $H_2$ isn't, or \emph{vice versa}), then
%the arguments used in the proof are unsuccessful
the proof fails.
This is due to the fact that
expansion leads to at least one copy of $e$ in $K$ being internal
and to another copy of $e$ in $K$ being on an endpath.
Subdivision of the former decreases $\lambda(G)$ whereas subdivision of the latter causes it to increase.
Sometimes, as in Figure~\ref{figGeomSubdivisionEq}, these effects balance exactly; on other occasions one or the other dominates.
We leave a detailed analysis of such cases for later study.

\begin{figure}[t] %[ht]
    $$
    \gctwo{4}{1,1,1}{0,1,1,1}
    \quad
    \raisebox{-0.09in}{\begin{tikzpicture}[scale=0.4]
    \draw [thick] (0,0)--(1,0)--(1.707,0.707)--(2.414,0)--(1.707,-0.707)--(1,0);
    \draw [thick] (2.414,0)--(3.414,0);
    \draw [fill] (0,0) circle [radius=0.15];
    \draw [fill] (1,0) circle [radius=0.15];
    \draw [fill] (2.414,0) circle [radius=0.15];
    \draw [fill] (3.414,0) circle [radius=0.15];
    \draw [fill] (1.707,0.707) circle [radius=0.15];
    \draw [fill] (1.707,-0.707) circle [radius=0.15];
    \end{tikzpicture}}
    \quad\quad\quad\quad
    \gcthree{5}{1,1,0,1}{0,1,1}{0,0,1,1,1}
    \quad
    \raisebox{-0.13in}{\begin{tikzpicture}[scale=0.4,rotate=30]
    \draw [thick] (0,0)--(1,0)--(1.5,0.866)--(2.5,0.866)--(3,0)--(2.5,-0.866)--(1.5,-0.866)--(1,0);
    \draw [thick] (2.5,-0.866)--(3,-1.732);
    \draw [fill] (0,0) circle [radius=0.15];
    \draw [fill] (1,0) circle [radius=0.15];
    \draw [fill] (3,0) circle [radius=0.15];
    \draw [fill] (3,-1.732) circle [radius=0.15];
    \draw [fill] (1.5,0.866) circle [radius=0.15];
    \draw [fill] (2.5,0.866) circle [radius=0.15];
    \draw [fill] (1.5,-0.866) circle [radius=0.15];
    \draw [fill] (2.5,-0.866) circle [radius=0.15];
    \end{tikzpicture}}
    \quad\quad\quad\quad
    \gcfour{6}{1,1,0,0,1}{0,1,1}{0,0,1,1}{0,0,0,1,1,1}
    \quad
    \raisebox{-0.22in}{\begin{tikzpicture}[scale=0.4,rotate=45]
    \draw [thick] (0,0)--(1,0)--(1.383,0.924)--(2.307,1.307)--(3.23,0.924)--(3.613,0)--(3.23,-0.924)--(2.307,-1.307)--(1.383,-0.924)--(1,0);
    \draw [thick] (2.307,-1.307)--(2.307,-2.307);
    \draw [fill] (0,0) circle [radius=0.15];
    \draw [fill] (1,0) circle [radius=0.15];
    \draw [fill] (3.613,0) circle [radius=0.15];
    \draw [fill] (1.383,0.924) circle [radius=0.15];
    \draw [fill] (1.383,-0.924) circle [radius=0.15];
    \draw [fill] (3.23,0.924) circle [radius=0.15];
    \draw [fill] (3.23,-0.924) circle [radius=0.15];
    \draw [fill] (2.307,1.307) circle [radius=0.15];
    \draw [fill] (2.307,-1.307) circle [radius=0.15];
    \draw [fill] (2.307,-2.307) circle [radius=0.15];
    \end{tikzpicture}}
    \vspace{-9pt}
    $$
%\caption{Geometric grid classes of increasing growth rate from left to right, and their row-column graphs}
\caption{Standard figures and row-column graphs of geometric grid classes whose growth rates increase
%with increasing growth rate
from left to right}
\label{figGeomSubdivisionIncr}
%\vspace{-9pt}
    $$
    \gctwo{5}{1,1,1,1}{0,0,1,1,1}
    \quad
    \raisebox{-0.09in}{\begin{tikzpicture}[scale=0.4]
    \draw [thick] (0.293,0.707)--(1,0)--(1.707,0.707)--(2.414,0)--(1.707,-0.707)--(1,0);
    \draw [thick] (0.293,-0.707)--(1,0);
    \draw [thick] (2.414,0)--(3.414,0);
    \draw [fill] (0.293,0.707) circle [radius=0.15];
    \draw [fill] (0.293,-0.707) circle [radius=0.15];
    \draw [fill] (1,0) circle [radius=0.15];
    \draw [fill] (2.414,0) circle [radius=0.15];
    \draw [fill] (3.414,0) circle [radius=0.15];
    \draw [fill] (1.707,0.707) circle [radius=0.15];
    \draw [fill] (1.707,-0.707) circle [radius=0.15];
    \end{tikzpicture}}
    \quad\quad\quad\:\;
    \gcthree{6}{1,1,1,0,1}{0,0,1,1}{0,0,0,1,1,1}
    \quad
    \raisebox{-0.16in}{\begin{tikzpicture}[scale=0.4,rotate=30]
    \draw [thick] (0.234,0.643)--(1,0)--(1.5,0.866)--(2.5,0.866)--(3,0)--(2.5,-0.866)--(1.5,-0.866)--(1,0);
    \draw [thick] (2.5,-0.866)--(3,-1.732);
    \draw [thick] (0.234,-0.643)--(1,0);
    \draw [fill] (0.234,0.643) circle [radius=0.15];
    \draw [fill] (0.234,-0.643) circle [radius=0.15];
    \draw [fill] (1,0) circle [radius=0.15];
    \draw [fill] (3,0) circle [radius=0.15];
    \draw [fill] (1.5,0.866) circle [radius=0.15];
    \draw [fill] (2.5,0.866) circle [radius=0.15];
    \draw [fill] (3,-1.732) circle [radius=0.15];
    \draw [fill] (1.5,-0.866) circle [radius=0.15];
    \draw [fill] (2.5,-0.866) circle [radius=0.15];
    \end{tikzpicture}}
    \quad\quad\quad\:\;
    \gcfour{7}{1,1,1,0,0,1}{0,0,1,1}{0,0,0,1,1}{0,0,0,0,1,1,1}
    \quad
    \raisebox{-0.23in}{\begin{tikzpicture}[scale=0.4,rotate=45]
    \draw [thick] (0.207,0.609)--(1,0)--(1.383,0.924)--(2.307,1.307)--(3.23,0.924)--(3.613,0) --(3.23,-0.924)--(2.307,-1.307)--(1.383,-0.924)--(1,0);
    \draw [thick] (2.307,-1.307)--(2.307,-2.307);
    \draw [thick] (0.207,-0.609)--(1,0);
    \draw [fill] (0.207,0.609) circle [radius=0.15];
    \draw [fill] (0.207,-0.609) circle [radius=0.15];
    \draw [fill] (1,0) circle [radius=0.15];
    \draw [fill] (3.613,0) circle [radius=0.15];
    \draw [fill] (1.383,0.924) circle [radius=0.15];
    \draw [fill] (1.383,-0.924) circle [radius=0.15];
    \draw [fill] (3.23,0.924) circle [radius=0.15];
    \draw [fill] (3.23,-0.924) circle [radius=0.15];
    \draw [fill] (2.307,1.307) circle [radius=0.15];
    \draw [fill] (2.307,-1.307) circle [radius=0.15];
    \draw [fill] (2.307,-2.307) circle [radius=0.15];
    \end{tikzpicture}}
    \vspace{-9pt}
    $$
%\caption{Geometric grid classes of equal growth rate (of~5), and their row-column graphs}
\caption{Standard figures and row-column graphs of geometric grid classes whose growth rates are all the same
(equal to 5)
}
\label{figGeomSubdivisionEq}
%\vspace{-9pt}
\vspace{3pt}
    $$
    \gctwo{6}{1,1,1,1}{0,0,1,1,1,1}
    \quad
    \raisebox{-0.09in}{\begin{tikzpicture}[scale=0.4]
    \draw [thick] (0.293,0.707)--(1,0)--(1.707,0.707)--(2.414,0)--(1.707,-0.707)--(1,0);
    \draw [thick] (0.293,-0.707)--(1,0);
    \draw [thick] (3.121,0.707)--(2.414,0)--(3.121,-0.707);
    \draw [fill] (0.293,0.707) circle [radius=0.15];
    \draw [fill] (0.293,-0.707) circle [radius=0.15];
    \draw [fill] (1,0) circle [radius=0.15];
    \draw [fill] (2.414,0) circle [radius=0.15];
    \draw [fill] (3.121,0.707) circle [radius=0.15];
    \draw [fill] (3.121,-0.707) circle [radius=0.15];
    \draw [fill] (1.707,0.707) circle [radius=0.15];
    \draw [fill] (1.707,-0.707) circle [radius=0.15];
    \end{tikzpicture}}
    \quad\quad\quad
    \gcthree{7}{1,1,1,0,1}{0,0,1,1}{0,0,0,1,1,1,1}
    \quad
    \raisebox{-0.16in}{\begin{tikzpicture}[scale=0.4,rotate=30]
    \draw [thick] (0.234,0.643)--(1,0)--(1.5,0.866)--(2.5,0.866)--(3,0)--(2.5,-0.866)--(1.5,-0.866)--(1,0);
    \draw [thick] (2.326,-1.851)--(2.5,-0.866)--(3.44,-1.208);
    \draw [thick] (0.234,-0.643)--(1,0);
    \draw [fill] (0.234,0.643) circle [radius=0.15];
    \draw [fill] (0.234,-0.643) circle [radius=0.15];
    \draw [fill] (1,0) circle [radius=0.15];
    \draw [fill] (3,0) circle [radius=0.15];
    \draw [fill] (1.5,0.866) circle [radius=0.15];
    \draw [fill] (2.5,0.866) circle [radius=0.15];
    \draw [fill] (2.326,-1.851) circle [radius=0.15];
    \draw [fill] (3.44,-1.208) circle [radius=0.15];
    \draw [fill] (1.5,-0.866) circle [radius=0.15];
    \draw [fill] (2.5,-0.866) circle [radius=0.15];
    \end{tikzpicture}}
    \quad\quad\quad
    \gcfour{8}{1,1,1,0,0,1}{0,0,1,1}{0,0,0,1,1}{0,0,0,0,1,1,1,1}
    \quad
    \raisebox{-0.23in}{\begin{tikzpicture}[scale=0.4,rotate=45]
    \draw [thick] (0.207,0.609)--(1,0)--(1.383,0.924)--(2.307,1.307)--(3.23,0.924)--(3.613,0) --(3.23,-0.924)--(2.307,-1.307)--(1.383,-0.924)--(1,0);
    \draw [thick] (1.698,-2.1)--(2.307,-1.307)--(2.916,-2.1);
    \draw [thick] (0.207,-0.609)--(1,0);
    \draw [fill] (0.207,0.609) circle [radius=0.15];
    \draw [fill] (0.207,-0.609) circle [radius=0.15];
    \draw [fill] (1,0) circle [radius=0.15];
    \draw [fill] (3.613,0) circle [radius=0.15];
    \draw [fill] (1.383,0.924) circle [radius=0.15];
    \draw [fill] (1.383,-0.924) circle [radius=0.15];
    \draw [fill] (3.23,0.924) circle [radius=0.15];
    \draw [fill] (3.23,-0.924) circle [radius=0.15];
    \draw [fill] (2.307,1.307) circle [radius=0.15];
    \draw [fill] (2.307,-1.307) circle [radius=0.15];
    \draw [fill] (1.698,-2.1) circle [radius=0.15];
    \draw [fill] (2.916,-2.1) circle [radius=0.15];
    \end{tikzpicture}}
    \vspace{-9pt}
    $$
%\caption{Geometric grid classes of decreasing growth rate from left to right, and their row-column graphs}
\caption{Standard figures and row-column graphs of geometric grid classes whose growth rates decrease
%with decreasing growth rate
from left to right}
\label{figGeomSubdivisionDecr}
\end{figure}
To conclude, we state the consequent result for the growth rates of geometric grid classes.
To simplify its statement and avoid having to concern ourselves directly with cycle parities, we
define $\grm(M)$ to be $G(M)$ when $G(M)$ has no negative cycles and $\grm(M)$ to be $G(\drm)$ otherwise.

\newpage  %%% BAD BREAK IN THMBOX %%%
\thmbox{
\begin{cor}
  Suppose $\grm(M)$ is connected.
  \vspace{-3pt}
  \begin{enumerate}[(a)]
    \item If $\grm(M')$ is obtained from $\grm(M)$ by subdividing one or more edges that satisfy the conditions of part (a) of Lemma~\ref{lemmaSubdividing2}, then $\gr(\Geom(M'))>\gr(\Geom(M))$.
    \item If $\grm(M')$ is obtained from $\grm(M)$ by subdividing one or more edges that satisfy the conditions of part (b) of Lemma~\ref{lemmaSubdividing2}, then $\gr(\Geom(M'))<\gr(\Geom(M))$.
  \end{enumerate}
  \vspace{-9pt}
\end{cor}
} %\thmbox

Figure~\ref{figGeomSubdivisionIncr} provides an illustration of part (a) and Figure~\ref{figGeomSubdivisionDecr} an illustration of part (b).

\HIDE{
\textbf{Small growth rates}

only others less than $2+\sqrt5$: cycles, pans: cycle + pendant edge, or $4$-cycle + path with 2 edges

--- Ma~\cite{Ma2001,Ma2005} (in Chinese), see Qiao \& Zhan~\cite{QZ2011}
} %\HIDE

\HIDE{
\textbf{Bounds}
\begin{bullets}
  \item If $\Delta>1$, $\gamma<4\+\Delta-4$ (Heilmann \& Lieb~\cite{HL1972})
  \item $\gamma\geqslant\Delta$ (Lov\'asz \& Pelik\'an~\cite{LP1973})?
  \item $\gamma\geqslant2\overline{d}-1$ (Fisher \& Ryan~\cite{FR1992})
\end{bullets}
Note: $\drm$ has the same maximum and average degree as $M$.
} %\HIDE

\setgcgap{0.18} % reset for other chapters

% ================================================================
\cleardoublepage

%} %\HIDE

\part{\textsc{The Set of Growth Rates}}\label{partII} % Part II
%\HIDE{
\setcounter{chapter}{7}
% LaTeX file

% ================================================================
\chapter{Intervals of growth rates}\label{chap08}

% ================================================================
\section{Introduction}\label{sectLambdaPaperIntro}

What does the set of possible permutation class growth rates look like?
The last few years have seen substantial progress on answering this question, with particular focus on %certain
significant phase transition values.
Kaiser \& Klazar~\cite{KK2003} characterised all growth rates up to 2, showing that
the golden ratio $\varphi\approx1.61803$ (the unique positive root of $x^2-x-1$) is the least growth rate greater than 1.
Indeed, they prove a stronger statement, known as the \emph{Fibonacci dichotomy}, that if $|\CCC_n|$ is ever less
than the $n$th Fibonacci number, then $|\CCC_n|$ is eventually polynomial.
Kaiser \& Klazar also determine that
2 is the least limit point in the set of growth rates.
Huczynska \& Vatter~\cite{HV2006} later gave a shorter proof of the Fibonacci dichotomy, by characterising  polynomial permutation classes in terms of monotone grid classes (see the brief discussion above on page~\mypageref{defPolynomialClass}).

Klazar~\cite{Klazar2004} considered the least growth rate admitting uncountably many permutation classes (which he denoted $\kappa$) and proved that $\kappa$ is at least 2 and is no greater than
approximately $2.33529$
%(the unique real
(a root of a quintic). %$x^5-x^4-2\+x^3-2\+x^2-x-1$
Vatter~\cite{Vatter2011} determined the exact value of $\kappa$ to be the unique real root of $x^3-2\+x^2-1$ (approximately $2.20557$) and completed the
characterisation of all growth rates up to $\kappa$, proving that there are uncountably many permutation classes with growth rate $\kappa$ and only countably many with growth rate less than $\kappa$.
Central to Vatter's work is the use of (generalised) grid classes.
The phase
transition at $\kappa$ also has enumerative ramifications: Albert, Ru\v{s}kuc \& Vatter~\cite{ARV2012} have shown that every permutation class with growth rate less than
$\kappa$ has a rational generating function.

Balogh, Bollob\'as \& Morris~\cite{BBM2006} extended Kaiser \& Klazar's work to the more
general setting of ordered graphs.
They conjectured that the set of growth
rates of hereditary classes of ordered graphs contains no limit points from above, and also that all
such growth rates are integers or algebraic irrationals. These conjectures were
disproved by Albert \& Linton~\cite{AL2009} who exhibited an uncountable \emph{perfect set} (a closed set every member of which is a limit point) of growth rates of permutation classes and in turn conjectured that the set of growth rates includes some interval $(\lambda,\infty)$.

Their conjecture was established by Vatter~\cite{Vatter2010b} who proved that there are permutation classes of every growth rate at least $\lambda_A\approx2.48187$ (the unique real root of $x^5-2\+x^4-2\+x^2-2\+x-1$). In addition, he conjectured %(\cite{Vatter2010b} Conjecture~1.4)
that this value was optimal, the set of growth rates below $\lambda_A$ being nowhere dense.
By generalising Vatter's constructions, we now refute his conjecture by proving the following two results:

\thmbox{
\begin{thm}\label{thmTheta}
Let $\theta_B\approx2.35526$ be the unique real root of $x^7-2\+x^6-x^4-x^3-2\+x^2-2\+x-1$.
For any $\varepsilon>0$, there exist $\delta_1$ and $\delta_2$ with $0<\delta_1<\delta_2<\varepsilon$ such that every value in the interval $[\theta_B+\delta_1,\theta_B+\delta_2]$ is the growth rate of a permutation class.
\end{thm}
}

\thmbox{
\begin{thm}\label{thmLambda}
Let $\lambda_B\approx2.35698$ be the unique positive root of $x^8-2\+x^7-x^5-x^4-2\+x^3-2\+x^2-x-1$.
Every value at least $\lambda_B$ is the growth rate of a permutation class.
\end{thm}
}

The proofs of these theorems
are based upon an analysis of expansions of real numbers in non-integer bases, %$\beta$,
the study of which was initiated by R\'enyi in the 1950s.
In particular,
we prove two generalisations of a result of Pedicini concerning %
expansions in which the digits are drawn from sets of allowed values.
These results have been submitted for
publication (see~\cite{Bevan2014b}).

\newcommand{\grset}{{\boldsymbol\Gamma}}
%\Needspace*{3\baselineskip}
In order to complete the characterisation of all permutation class growth rates, further investigation is required of the values between $\kappa$ and $\lambda_B$.
This interval contains the first limit point from above (which we denote $\xi$) and the first perfect set (whose infimum we denote $\eta$), as well as the first interval (whose infimum we denote $\theta$).
In~\cite{Vatter2010b},
Vatter showed that $\xi_A\approx2.30522$ (the unique real root of $x^5-2\+x^4-x^2-x-1$) is the infimum of a perfect set in the set of growth rates and hence a limit point from above, and conjectured that this is the least such limit point (in which case $\eta=\xi$).
If $\grset$ denotes the set of all permutation class growth rates, then the current state of our knowledge can be summarised by the following table:
\begin{center}
\renewcommand{\arraystretch}{1.25}
\begin{tabular}{|c|l|r|}
  \hline
  $\kappa$  & $\inf\,\{\+\gamma:|\grset\cap(1,\gamma)|>\aleph_0\+\}$ & $\kappa\approx2.20557$ \\ \hline
  $\xi$     & $\inf\,\{\+\gamma:\gamma\text{~is a limit point from above in~}\grset\+\}$ & $\xi\leqslant\xi_A\approx2.30522$ \\ \hline
  $\eta$    & $\inf\,\{\+\gamma:\gamma\in P, P\subset\grset, P\text{~is~a~perfect~set}\+\}$ & $\eta\leqslant\xi_A\approx2.30522$ \\ \hline
  $\theta$  & $\inf\,\{\+\gamma:(\gamma,\delta)\subset\grset\+\}$ & $\theta\leqslant\theta_B\approx2.35526$ \\ \hline
  $\lambda$ & $\inf\,\{\+\gamma:(\gamma,\infty)\subset\grset\+\}$ & $\lambda\leqslant\lambda_B\approx2.35698$ \\
  \hline
\end{tabular}
\end{center}
Further research is required to determine whether the upper bounds on $\xi$, $\eta$, $\theta$ and $\lambda$ are, in fact, equalities.
Currently, the only proven lower bound we have for any of these is $\kappa$.

Clearly, there are analogous phase transition values in the set of growth rates of hereditary classes of ordered graphs, and the permutation class upper bounds also bound the corresponding ordered graph values from above.
The techniques used here
could readily be applied directly to ordered graphs to yield smaller upper bounds
to the ordered graph phase transitions.
%for the phase transitions in the growth rates of hereditary classes of ordered graphs.
We leave such a study for the future.

%\vspace{9pt}
In the next section, we investigate expansions of real numbers in non-integer bases.
In~\cite{Pedicini2005}, Pedi\-cini considered the case in which the digits are drawn from some allowable digit set
and determined the conditions on the base under which every real number in an interval can be expressed by such an expansion.
We prove two generalisations of Pedicini's result, firstly permitting each
digit to be drawn from a different allowable digit set, and secondly allowing the use of what we call \emph{generalised digits}.

In Section~\ref{sectSumClosed}, we consider the growth rates of sum-closed permutation classes (whose members are constructed from a sequence of indecomposable permutations).
Building on our results in the previous section concerning expansions of real numbers in non-integer bases, we
determine sufficient conditions for the set of growth rates of a family
of sum-closed permutation classes to include an interval of the real line.
These conditions relate to the enumeration of the indecomposable permutations in the classes. %, and are a generalisation of a result of Vatter.

In the final section, we prove our two theorems by constructing families of permutation classes whose indecomposable permutations
are enumerated by sequences satisfying the required conditions.
The key permutations in our constructions are formed
from permutations known as \emph{increasing oscillations} by \emph{inflating} their ends.
Our constructions are similar to, but considerably more general than, those used by Vatter to prove that the set of permutation class growth rates includes the interval $[\lambda_A,\infty)$.

% ================================================================
\section{Non-integer bases and generalised digits}

Suppose we want a way of representing any value in an interval of the real line.
As is well known, infinite sequences of small non-negative integers suffice.
Given some integer $\beta>1$ and
any number
$x\in[0,1]$
there is some sequence $(a_n)$ where each
$a_n\in\{0,1,\ldots,\beta-1\}$
such that
%\begin{equation}\label{eqBaseBeta}
$$
x \;=\;
\sum\limits_{n=1}^{\infty}a_n \+ \beta^{-n}
.
$$
%\end{equation}
This is simply the familiar ``base $\beta$'' expansion %, or $\beta$-expansion,
of $x$,
normally written $x=0.a_1a_2a_3\ldots$,
the $a_n$ being ``$\beta$-ary digits''.
For simplicity, we use $(a_n)_\beta$ to denote the sum above. %in~\eqref{eqBaseBeta};
For example,
if $a_n=1$ for each $n\geqslant1$
then $(a_n)_3=\thalf$.

In a seminal paper, R\'enyi~\cite{Renyi1957} generalized these expansions to arbitrary real
bases $\beta > 1$, and since then many papers have been devoted to the various connections with other branches of mathematics,
such as measure theory and dynamical systems. For further information, see the recent survey of Komornik~\cite{Komornik2011}.
Of particular relevance to us, Pedi\-cini~\cite{Pedicini2005} explored the further generalisation in which the digits are drawn from some allowable digit set
$A = \{a_0,\ldots,a_m\}$,
with $a_0 < \ldots < a_m$.
He proved that every point in the interval
%$[a_0/(\beta-1),a_m/(\beta-1)]$
$\big[\frac{a_0}{\beta-1},\frac{a_m}{\beta-1}\big]$
has an expansion of the above form
with $a_n\in A$ for all $n\geqslant1$ if and only if
$
\max\limits_{1\leqslant j\leqslant m}(a_j-a_{j-1})
%\;\leqslant\;
\leqslant
\frac{a_m-a_0}{\beta-1}
.
$

We establish two additional generalisations of Pedicini's result.
Firstly, we permit each digit to be drawn from a different
allowable digit set.
Specifically, %suppose that
for each~$n$
we allow the $n^{\text{th}}$
digit $a_n$ to be drawn from some finite set $A_n$ of permitted values.
For instance, we could have $A_n=\{1,4\}$ for odd $n$ and $A_n=\{1,3,5,7,9\}$ for even $n$.
We return to this example below.

Let $(A_n)_\beta=\{(a_n)_\beta:a_n\in A_n\}$ be the set of values for which there is an expansion in base~$\beta$.
The following lemma, generalising the result of Pedicini, gives sufficient conditions on $\beta$ for $(A_n)_\beta$ to be an interval of the real line.

\begin{lemma}\label{lemmaBaseNotation1}
Given a sequence $(A_n)$ of non-empty finite sets of non-negative real numbers, let $\ell_n$ and $u_n$
denote the least and greatest (`lower' and `upper') elements of $A_n$ respectively,
and let $\Delta_n=u_n-\ell_n$.
%be their difference.
Also,
let
$\delta_n$
be the largest gap between consecutive elements of $A_n$ (with $\delta_n=0$ if $|A_n|=1$).

If $(u_n)$ is bounded, % by some value $U$,
$\beta>1$,
and
for each $n$, $\beta$ satisfies the inequality
$$
\delta_n
 \;\leqslant\;
\sum\limits_{i=1}^{\infty}\Delta_{n+i}\+\beta^{-i},
$$
then
%for any number $x$ such that $(\ell_n)_\beta \leqslant x \leqslant (u_n)_\beta$ there is some sequence $(a_n)$ with each $a_n\in A_n$ such that $(a_n)_\beta=x$.
$(A_n)_\beta=\big[(\ell_n)_\beta,\,(u_n)_\beta\big]$.
\end{lemma}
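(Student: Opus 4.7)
The plan is to prove the lemma by establishing the two inclusions separately. The forward inclusion $(A_n)_\beta \subseteq [(\ell_n)_\beta, (u_n)_\beta]$ is immediate, since any sequence $(a_n)$ with $a_n\in A_n$ satisfies $\ell_n \leqslant a_n \leqslant u_n$ term by term, and boundedness of $(u_n)$ together with $\beta>1$ ensures every such sum converges absolutely. The entire substance of the proof lies in the reverse inclusion: given any $x\in[(\ell_n)_\beta,(u_n)_\beta]$, I must produce digits $a_n\in A_n$ with $\sum_{n\geqslant 1}a_n\beta^{-n}=x$.

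I would do this by a greedy construction. Setting $r_0 = x$ and $r_n = x - \sum_{i=1}^{n}a_i\beta^{-i}$, define the ``admissible'' envelopes $S_n = \sum_{i>n}\ell_i\beta^{-i}$ and $T_n = \sum_{i>n}u_i\beta^{-i}$. The guiding invariant is that $r_n \in [S_n,T_n]$ for every $n$, which holds at $n=0$ by hypothesis and guarantees convergence $r_n\to 0$ since $T_n\to 0$ under the bound on $(u_n)$. Given the invariant at step $n-1$, choosing $a_n\in A_n$ preserves it precisely when $a_n$ lies in the interval $I_n := [\beta^n(r_{n-1}-T_n),\, \beta^n(r_{n-1}-S_n)]$, and elementary manipulation using $S_{n-1}=\ell_n\beta^{-n}+S_n$ and $T_{n-1}=u_n\beta^{-n}+T_n$ shows that $I_n\cap[\ell_n,u_n]$ is always non-empty. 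The main obstacle is proving that this intersection actually contains a member of $A_n$.

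This is where the hypothesis on $\delta_n$ enters. The length of $I_n$ is $\beta^n(T_n-S_n)=\sum_{i\geqslant 1}\Delta_{n+i}\beta^{-i}$, which by assumption is at least $\delta_n$. I would split into three cases. If $\beta^n(r_{n-1}-S_n)\geqslant u_n$, take $a_n=u_n$; if $\beta^n(r_{n-1}-T_n)\leqslant \ell_n$, take $a_n=\ell_n$; and in either case the invariant $r_n\in[S_n,T_n]$ is restored by a direct check. Otherwise $I_n$ is a closed subinterval of $[\ell_n,u_n]$ of length at least $\delta_n$, and since the complement of $A_n$ in $[\ell_n,u_n]$ is a union of open gaps of length at most $\delta_n$, no such closed interval can avoid $A_n$. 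This gives the desired $a_n\in A_n$.

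Iterating produces a sequence with $r_n\in[S_n,T_n]\to\{0\}$, so $(a_n)_\beta = x$, completing the reverse inclusion. The whole argument is essentially a careful greedy/pigeonhole scheme, and the non-trivial content is exactly the observation that a closed interval of length $\delta_n$ in $[\ell_n,u_n]$ must meet $A_n$ — a tight use of the hypothesis, which explains why the bound on $\delta_n$ involves precisely the tail sum $\sum_{i\geqslant 1}\Delta_{n+i}\beta^{-i}$.
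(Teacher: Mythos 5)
Your argument is correct, but it is organised differently from the paper's. The paper also builds the digits greedily, but one-sidedly: it first normalises to $\ell_n=0$, then takes each $a_n$ to be the \emph{largest} element of $A_n$ keeping the partial sum $S(n)\leqslant x$, and verifies convergence afterwards by a dichotomy --- if $a_n<u_n$ infinitely often then $|x-S(n)|<\delta_n\beta^{-n}\to 0$ directly, while if $a_n=u_n$ for all $n>N$ with $a_N<u_N$ the gap inequality at index $N$ forces $(a_n)_\beta>x$, a contradiction. The hypothesis is thus invoked only once, in the terminal case. Your proof instead maintains the two-sided invariant $r_n\in[S_n,T_n]$ with $S_n=\sum_{i>n}\ell_i\beta^{-i}$, $T_n=\sum_{i>n}u_i\beta^{-i}$, and uses the gap inequality at \emph{every} step: the feasible interval $I_n$ has length exactly $\sum_{i\geqslant1}\Delta_{n+i}\beta^{-i}\geqslant\delta_n$, and a closed subinterval of $[\ell_n,u_n]$ of that length cannot sit inside a single gap of $A_n$. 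This makes the role of the hypothesis completely transparent (it is precisely the statement that the feasible window is at least as wide as the worst gap), makes convergence automatic from $T_n\to0$ rather than requiring a case split, and avoids the normalisation step. The trade-off is that you must check the three boundary cases for $I_n$ against $[\ell_n,u_n]$, whereas the paper's maximal choice needs no such bookkeeping. Both arguments generalise equally well to the generalised-digit setting of the subsequent lemma. One small presentational point: in your third case it is worth stating explicitly that an interval $[c,d]$ disjoint from $A_n$ and contained in $(\ell_n,u_n)$ lies strictly inside a single gap $(a,b)$ between consecutive elements, so $d-c<b-a\leqslant\delta_n$, which is the strict inequality that closes the pigeonhole.
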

\begin{proof}
If we let $A'_n=\{a-\ell_n:a\in A_n\}$ for each $n$, then the set of values that can be represented by digits from the $A_n$ is
the same as the set of values representable using digits from the $A'_n$ shifted % translated
by $(\ell_n)_\beta$:
$$
(A_n)_\beta \;=\; \{(a_n)_\beta:a_n\in A_n\} \;=\; \{(a'_n)_\beta+(\ell_n)_\beta:a'_n\in A'_n\}.
$$
So we need only consider cases in which each $\ell_n$ is zero, whence we have $\Delta_n=u_n$ for all $n$.

Given some $x$ in the specified interval,
we choose each digit maximally such that no partial sum exceeds $x$,
i.e. for each $n$, we select $a_n$ to be the greatest element of $A_n$ such that
$$
S(n) \;=\;
\sum\limits_{i=1}^{n}a_i\+ \beta^{-i} \;\leqslant\; x.
$$
It follows that
$(a_n)_\beta
=\liminfty S(n)
\leqslant x
$.
We claim that $(a_n)_\beta=x$.

If, for some $n$, $a_n<u_n$ then there is some element of $A_n$ greater than $a_n$ but no larger than $a_n+\delta_n$, so by our choice of the $a_n$, $S(n)+\delta_n\+\beta^{-n}>x$.
Hence, $|x-S(n)|<\delta_n\+\beta^{-n}$.
Thus, if $a_n<u_n$ for infinitely many values of $n$, $(a_n)_\beta=\liminfty S(n)=x$, since the $\delta_n$ are bounded and $\beta>1$.

%Suppose now that $a_n<u_n$ for only finitely many $n$.
Suppose now that $a_n=u_n$ for all but finitely many $n$.
If, in fact, $a_n=u_n$ for all $n$, then we have $x=(u_n)_\beta=(a_n)_\beta$ as required.
Indeed, this is the only possibility given the way we have chosen the $a_n$.\footnote{For example, our approach would select $0.50000\ldots$ rather than $0.49999\ldots$ as the decimal representation of~$\half$, but $0.99999\ldots$ as the only possible representation for $1$.}
Let us assume that $N$ is the greatest index for which $a_N<u_N$, so that $S(N)+\delta_N\+\beta^{-N}>x$.

Now, by the constraints on $\beta$ and the fact that $\Delta_i=u_i$ for all $i$, we have
$$
\delta_N\+ \beta^{-N}
\;\leqslant\;
%\leqslant
%\sum\limits_{i=n+1}^{\infty}\Delta_i\+\beta^{-i}
%\;=\;
\sum\limits_{i=N+1}^{\infty}u_i\+\beta^{-i},
$$
and thus
$$
(a_n)_\beta \;=\; S(N) \:+\: \sum\limits_{i=N+1}^{\infty}u_i\+\beta^{-i} \;>\;x,
$$
which produces a contradiction.
\end{proof}
We refer to the conditions relating $\beta$ to the values of the $\delta_n$ and $\Delta_n$ as the \emph{gap inequalities}. These reoccur in subsequent lemmas.

Let us return to our example. If $A_n=\{1,4\}$ for odd $n$ and $A_n=\{1,3,5,7,9\}$ for even $n$, then to produce an interval of values it is sufficient for $\beta$ to satisfy the %following
two gap inequalities:
$$
\begin{array}{rcl}
3 & \!\leqslant\! & 8\+\beta^{\negsup1} + 3\+\beta^{\negsup2} + 8\+\beta^{\negsup3} + 3\+\beta^{\negsup4} + \ldots \;=\; \tfrac{8\+\beta+3}{\beta^2-1},
\qquad\!\text{so~~} \beta\:\leqslant\: \frac{1}{3}(4+\sqrt{34}) \:\approx\: 3.27698 , \\[12pt]
2 & \!\leqslant\! & 3\+\beta^{\negsup1} + 8\+\beta^{\negsup2} + 3\+\beta^{\negsup3} + 8\+\beta^{\negsup4} + \ldots \;=\; \tfrac{3\+\beta+8}{\beta^2-1},
\qquad\!\text{so~~} \beta\:\leqslant\: \frac{1}{4}(3+\sqrt{89}) \:\approx\: 3.10850 .
\end{array}
$$
Thus, if $1<\beta\leqslant\frac{1}{4}\+(3+\sqrt{89})$ then $(A_n)_\beta$ %$\{(a_n)_\beta:a_n\in A_n\}$
is an interval.
For instance, with $\beta=3$, we have $(A_n)_3=\big[\frac{1}{8},\frac{9}{4}\big]$.

%\Needspace*{7\baselineskip}
We now generalise our concept of a digit %even further
by permitting members of the $A_n$ to have the form $c_0.c_1\ldots c_k$ for non-negative integers $c_i$, where this is to be interpreted to mean
$$
c_0.c_1\ldots c_k \;=\; \sum\limits_{i=0}^{k}c_i\+ \beta^{-i} %.
$$
as would be expected.
We call $c_0.c_1\ldots c_k$ a \emph{generalised digit} or a \emph{generalised $\beta$-digit} of \emph{length} $k+1$ with \emph{subdigits} $c_0,c_1,\ldots,c_k$.
Note that the value of a generalised $\beta$-digit depends on the value of~$\beta$.

As an example of sets of generalised digits,
consider
$A_n=\{1.1,1.11,1.12,1.2,1.21,1.22\}$ for odd~$n$ and $A_n=\{0\}$ for even $n$.
This is similar to what we use below to create intervals of permutation class growth rates. We return to this example below.

As long as the sets of generalised digits are suitably bounded, the same gap constraints on $\beta$ suffice as before to ensure that $(A_n)_\beta$ % $\{(a_n)_\beta:a_n\in A_n\}$
is an interval. Thus we have the following generalisation of Lemma~\ref{lemmaBaseNotation1}.

\begin{lemma}\label{lemmaBaseNotation2}
Given a sequence $(A_n)$ of non-empty finite sets of generalised $\beta$-digits, for a fixed $\beta$ let $\ell_n$ and $u_n$
denote the least and greatest elements of $A_n$ respectively,
and let $\Delta_n=u_n-\ell_n$.
%be their difference.
Also,
let
$\delta_n$
be the largest gap between consecutive elements of $A_n$ (with $\delta_n=0$ if $|A_n|=1$).

If both the length and the subdigits of all the generalised digits in the $A_n$ are
bounded, % by some value $U$,
$\beta>1$,
and
for each~$n$, $\beta$~satisfies the inequality
$$
\delta_n
 \;\leqslant\;
\sum\limits_{i=1}^{\infty}\Delta_{n+i}\+\beta^{-i},
$$
then
%for any number $x$ such that $(\ell_n)_\beta \leqslant x \leqslant (u_n)_\beta$ there is some sequence $(a_n)$ with each $a_n\in A_n$ such that $(a_n)_\beta=x$.
$(A_n)_\beta=\big[(\ell_n)_\beta,\,(u_n)_\beta\big]$.
\end{lemma}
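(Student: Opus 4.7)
The plan is to deduce Lemma~\ref{lemmaBaseNotation2} directly from Lemma~\ref{lemmaBaseNotation1}. Once the base $\beta$ is fixed, each generalised $\beta$-digit $c_0.c_1\ldots c_k$ is simply the real number $\sum_{i=0}^{k} c_i\+\beta^{-i}$, so each $A_n$ may be regarded as a finite set of non-negative reals, the quantities $\ell_n$, $u_n$, $\Delta_n$ and $\delta_n$ retain their meaning, and the sum $(a_n)_\beta=\sum_{n\geqslant1}a_n\+\beta^{-n}$ is interpreted exactly as in Lemma~\ref{lemmaBaseNotation1}. Since the gap inequalities in the hypothesis are identical to those of Lemma~\ref{lemmaBaseNotation1}, the conclusion $(A_n)_\beta=\big[(\ell_n)_\beta,\,(u_n)_\beta\big]$ will follow as soon as we verify that the sequence of upper elements $(u_n)$ is bounded.

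To see that $(u_n)$ is bounded, I would use the two boundedness hypotheses. Let $K$ be an upper bound on the lengths of the generalised digits appearing in $\bigcup_n A_n$ and let $C$ be an upper bound on their subdigits. Then, for each $n$,
$$
u_n \;\leqslant\; \sum_{i=0}^{K-1} C\+\beta^{-i} \;\leqslant\; \frac{C}{1-\beta^{-1}},
$$
so $(u_n)$ is uniformly bounded. Lemma~\ref{lemmaBaseNotation1} then applies verbatim to the sequence $(A_n)$ viewed as finite sets of reals, and delivers the desired conclusion.

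Alternatively, one could mimic the proof of Lemma~\ref{lemmaBaseNotation1} directly: reduce to the case $\ell_n=0$ by the shift trick, then choose each $a_n\in A_n$ greedily as the largest element for which $S(n)=\sum_{i=1}^{n}a_i\+\beta^{-i}\leqslant x$, and argue that either $a_n<u_n$ infinitely often (in which case $|x-S(n)|<\delta_n\+\beta^{-n}$ forces convergence to $x$, using boundedness of the $\delta_n$) or $a_n=u_n$ for all sufficiently large $n$ (in which case the gap inequality at the last exceptional index forces $x=(a_n)_\beta$). Nothing in that argument uses that the digits are integers, only that they are real numbers with bounded values.

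The main thing to check carefully is that the boundedness hypotheses on length and subdigits really do give a single uniform bound on the $u_n$ independent of $n$, since the proof of Lemma~\ref{lemmaBaseNotation1} relies on this when passing to the limit in the bound $|x-S(n)|<\delta_n\+\beta^{-n}$. I do not anticipate any further obstacle: the generalisation from ordinary to generalised digits is essentially a change of notation, and the analytic content is already contained in Lemma~\ref{lemmaBaseNotation1}.
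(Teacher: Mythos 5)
Your proposal is correct. The paper in fact omits the proof of Lemma~\ref{lemmaBaseNotation2} entirely, remarking only that it is ``essentially the same'' as that of Lemma~\ref{lemmaBaseNotation1} --- i.e.\ the paper's intended argument is your second route, a verbatim re-run of the greedy-digit proof. Your primary route is slightly different and arguably cleaner: you observe that, once $\beta$ is fixed, each generalised $\beta$-digit is just a non-negative real number, and Lemma~\ref{lemmaBaseNotation1} was already stated for arbitrary finite sets of non-negative reals, so Lemma~\ref{lemmaBaseNotation2} is a literal corollary rather than an analogue. The only hypothesis of Lemma~\ref{lemmaBaseNotation1} not explicitly assumed in Lemma~\ref{lemmaBaseNotation2} is the boundedness of $(u_n)$, and your estimate $u_n\leqslant C\sum_{i=0}^{K-1}\beta^{-i}\leqslant C/(1-\beta^{-1})$ from the bounded length and bounded subdigits is exactly what is needed (valid since $\beta>1$). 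One point worth noting, which your fixing of $\beta$ handles correctly: the ordering of the elements of $A_n$, and hence $\ell_n$, $u_n$ and $\delta_n$, genuinely depends on $\beta$ (this is why the paper's footnote observes that the set of admissible bases need not be an interval), so the reduction must be performed for each fixed $\beta$ separately --- which is precisely how both the lemma and your argument are phrased.
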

We omit the proof since it is essentially the same as that for
Lemma~\ref{lemmaBaseNotation1}.

Let us analyse our generalised digit example.
If $A_n=\{1.1,1.11,1.12,1.2,1.21,1.22\}$ for odd~$n$ and $A_n=\{0\}$ for even $n$,
then for odd $n$ we have $\Delta_n=0.12
=\beta^{\negsup1}+2\+\beta^{\negsup2}$.
%=\nfrac{1}{\beta}+\nfrac{2}{\beta^2}$.
However, the determination of $\delta_n$ depends on whether $1.2$ exceeds $1.12$ or not.
If it does (which is the case when $\beta$ exceeds~2), then $\delta_n$ is the greater of
$\beta^{\negsup2}$
%$\nfrac{1}{\beta^2}$
and
$\beta^{\negsup1}-2\+\beta^{\negsup2}$.
%$\nfrac{1}{\beta}-\nfrac{2}{\beta^2}$.
%If $1.2$ is less than $1.12$,
If $\beta<2$,
then $\delta_n$
%may take its value from one of three different expressions.
is the greatest of $\beta^{\negsup2}$, $\beta^{\negsup1}-\beta^{\negsup2}$ and $-\beta^{\negsup1}+2\+\beta^{\negsup2}$.
Careful solving of all the resulting inequalities reveals that
if $1<\beta\leqslant\half\+(1+\sqrt{13})\approx2.30278$,
then $(A_n)_\beta$ % $\{(a_n)_\beta:a_n\in A_n\}$
is an interval.\footnote
{When the $A_n$ contain generalised digits,
the solution set need not consist of a single interval.
For example, if
$A_n=\{0.5,0.501,0.502,0.51,0.511,0.512,0.52,0.521,1.3\}$ for odd~$n$
and
$A_n=\{0\}$ for even~$n$, then
$(A_n)_\beta$ % $\{(a_n)_\beta:a_n\in A_n\}$
is an interval if $1<\beta\leqslant2.732...$ and also if $2.879...\leqslant\beta\leqslant2.923...$. %However, this situation does not arise with the constructions we use for intervals of permutation class growth rates. YES IT DOES! See Collection E.
}
%In particular, $(A_n)_2=\big[1,\frac{5}{3}\big]$.

Let us now begin to relate this to the construction of permutation classes.

% ================================================================
%\subsection{Sum-closed permutation classes}
\section{Sum-closed permutation classes}\label{sectSumClosed}

Recall, from Section~\ref{defDirectSum}, the definition
of the direct sum, $\sigma\oplus\tau$, of two permutations and the concept of an indecomposable, % from page~\pageref{defDirectSum},
and also that
of a sum-closed class, and of the sum closure $\sumclosed\SSS$ of a downward closed set of indecomposables. % , from Section~\ref{defSumClosure}.
All the permutation classes that we construct to prove our results are sum-closed, and we define them by specifying the (downward closed)
sets of indecomposables % permutations
from which they are composed.
The remainder of this section is devoted to study the growth rates of sum-closed classes, and our main results, Lemmas~\ref{lemmaGRInterval1} and~\ref{lemmaGRInterval2}, provide conditions on when the growth rates of a family of sum-closed permutation classes form an interval.
In Section~\ref{sectConstructions}, we construct families of sum-closed classes satisfying these conditions, thereby proving Theorems~\ref{thmTheta} and~ \ref{thmLambda}.

We say that a set of permutations $\SSS$ is \emph{positive} and \emph{bounded} (by a constant~$c$) if $1\leqslant|\SSS_n|\leqslant c$ for all $n$.
As long as its set of indecomposables is positive and bounded, the growth rate of a sum-closed permutation class is easy to determine.
Variants of the following lemma can be found in the work of Albert \& Linton~\cite{AL2009} and of Vatter~\cite{Vatter2010b}.
\begin{lemma}%[see \cite{AL2009} Observation 2.2 and \cite{Vatter2010b} Proposition 2.2]
\label{lemmaGRSumClosed}
If $\SSS$ is a downward closed set of indecomposables that is positive and bounded, then $\gr(\sumclosed\SSS)$ is the unique $\gamma$ such that $\gamma>1$ and $S(\gamma^{-1})=\sum\limits_{n=1}^\infty|\SSS_n|\+\gamma^{-n}=1$.
\end{lemma}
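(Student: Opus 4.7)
The plan is to exploit the symbolic-method decomposition of a sum-closed class. Every permutation $\pi\in\sumclosed\SSS$ has a unique expression as a direct sum $\sigma_1\oplus\sigma_2\oplus\cdots\oplus\sigma_r$ with each $\sigma_i\in\SSS$, so if we momentarily adjoin the empty permutation we can write $\sumclosed\SSS=\seq{\SSS}$, whose ordinary generating function is
$$C(z)\;=\;\frac{1}{1-S(z)}.$$
Adjoining a single element does not affect the growth rate, so it will suffice to analyse the singularities of $C(z)$.

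Next I would locate the candidate singularity on the positive real axis. The boundedness hypothesis $|\SSS_n|\leqslant c$ gives $S(z)\leqslant c\+z/(1-z)$ for $0\leqslant z<1$, so $S$ is analytic on the open unit disc. The positivity hypothesis $|\SSS_n|\geqslant 1$ combined with continuity forces $S(z)\geqslant z/(1-z)\to\infty$ as $z\to 1^-$, and since $S(0)=0$ and $S'(z)>0$ on $(0,1)$ (every coefficient being positive), $S$ is a continuous, strictly increasing bijection from $[0,1)$ onto $[0,\infty)$. Hence $S(z)=1$ has a unique solution $z_0\in(0,1)$, and setting $\gamma=z_0^{-1}$ produces the unique value exceeding $1$ with $S(\gamma^{-1})=1$, establishing the uniqueness claim of the lemma.

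It remains to identify $\gamma$ with the growth rate. For the upper bound, $C(z)$ is meromorphic on $|z|<1$, analytic on $[0,z_0)$ (since $S(z)<1$ there), and has a pole at $z_0$ (because $S'(z_0)>0$, the factor $1-S(z)$ has a simple zero there). Pringsheim's theorem (Lemma~\ref{lemmaPringsheim}) identifies $z_0$ as the dominant singularity of $C$, so the exponential growth formula (Lemma~\ref{lemmaExponentialGrowthFormula}) yields $\grup(\sumclosed\SSS)=z_0^{-1}=\gamma$. For the matching lower bound I would observe that the direct sum of any element of $\sumclosed\SSS_m$ with any element of $\sumclosed\SSS_n$ yields a distinct element of $\sumclosed\SSS_{m+n}$, so $|\sumclosed\SSS_{m+n}|\geqslant|\sumclosed\SSS_m|\cdot|\sumclosed\SSS_n|$. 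Fekete's lemma applied to the super-additive sequence $\log|\sumclosed\SSS_n|$ then shows that $\gr(\sumclosed\SSS)=\lim|\sumclosed\SSS_n|^{1/n}$ actually exists, and combined with the upper bound this forces $\gr(\sumclosed\SSS)=\gamma$.

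No step here presents a serious obstacle; the only point that requires slight care is the divergence $S(z)\to\infty$ as $z\to 1^-$, which relies essentially on the positivity hypothesis $|\SSS_n|\geqslant 1$ rather than on mere non-triviality, and ensures that $z_0<1$ (equivalently $\gamma>1$).
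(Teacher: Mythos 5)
Your proof is correct, but it takes a genuinely different route from the paper's. The paper disposes of the lemma in three lines by checking the hypotheses of the supercritical sequence schema (Lemma~\ref{lemmaSupercritical}, i.e.\ Theorem~V.1 of Flajolet \& Sedgewick): $\sumclosed\SSS\cong\seq{\SSS}$, $S$ is aperiodic (because positivity makes every coefficient nonzero), has radius of convergence $1$ (because the coefficients are bounded and positive), and diverges as $x\to1^-$; the cited theorem then hands over both existence and value of the growth rate. You instead reassemble the conclusion from lower-level ingredients: the same monotonicity argument locates the unique $z_0\in(0,1)$ with $S(z_0)=1$, Pringsheim plus the exponential growth formula give $\grup=z_0^{-1}$ (your identification of $z_0$ as the dominant singularity is correct, since $C$ is analytic on $[0,z_0)$ and Pringsheim forces its radius of convergence onto the positive axis), and supermultiplicativity of $|\sumclosed\SSS_n|$ with Fekete's lemma supplies existence of the limit. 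The trade-off: your argument is longer but self-contained relative to the tools already stated in Chapter~1, and it never needs aperiodicity, because the existence of the limit comes from the combinatorial super-additivity (essentially Arratia's observation, which the paper cites elsewhere) rather than from singularity analysis on the whole circle $|z|=z_0$. The paper's version is shorter but leans on a black-box theorem whose aperiodicity hypothesis must still be verified.
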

This follows from the following standard analytic combinatorial result.
%Here, $\seq{\GGG}$ is the class of objects each element of which consists of a sequence of elements of $\GGG$.
\begin{lemma}
[{\cite[Theorem~V.1]{FS2009}}] % ``Asymptotics of supercritical sequences'']
\label{lemmaSupercritical}
Suppose %we have
$\FFF=\seq{\GGG}$ %\footnote{That is, each element of $\FFF$ consists of a sequence of elements of $\GGG$.}
and the corresponding generating functions, $F$ and $G$, are analytic at $0$, with $G(0) = 0$ and
$$
\lim\limits_{x\rightarrow\rho^-}G(x) \;>\; 1,
$$
where $\rho$ is the radius of convergence
of $G$. If, in addition, $G$ is aperiodic, i.e. there
does not exist an integer $d \geqslant 2$ such that $G(z) = H(z^d)$ for some $H$ analytic at $0$, then
$\gr(\FFF)=\sigma^{-1}$,
where $\sigma$ is the only root in $(0, \rho)$ of $G(x) = 1$.
\end{lemma}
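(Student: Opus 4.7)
The plan is to translate the statement into an analytic question about $F(z) = 1/(1-G(z))$ (which is immediate from $\FFF = \seq{\GGG}$), locate the dominant singularity of $F$, show it is isolated on its circle of convergence, and then extract coefficient asymptotics via the Exponential Growth Formula (Lemma~\ref{lemmaExponentialGrowthFormula}) together with standard singularity analysis of a simple pole.

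First I would establish the existence and uniqueness of $\sigma$. Since $G$ is an enumerative generating function, its Taylor coefficients are non-negative, so $x \mapsto G(x)$ is continuous and non-decreasing on $[0,\rho)$; combined with $G(0)=0$ and $\lim_{x \to \rho^-} G(x) > 1$, the intermediate value theorem yields a root of $G(x)=1$ in $(0,\rho)$, and strict monotonicity (from $G$ being a non-constant power series with non-negative coefficients) gives uniqueness. For $|z| < \sigma$, the triangle inequality $|G(z)| \leqslant G(|z|) < 1$ shows that $F$ is analytic on the open disk of radius $\sigma$, while at $z=\sigma$ the denominator $1-G(z)$ has a simple zero (because $G'(\sigma) > 0$, since $G'$ has non-negative coefficients and $G$ is not constant), so $\sigma$ is a simple pole of $F$ and, by Pringsheim's Theorem (Lemma~\ref{lemmaPringsheim}), lies on the circle of convergence of $F$.

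The crux, and the only place aperiodicity enters, is showing that $\sigma$ is the \emph{only} singularity of $F$ on the circle $|z|=\sigma$. For this I would invoke the \emph{Daffodil Lemma}~\cite[Lemma~IV.1]{FS2009}: when a power series with non-negative coefficients has support whose greatest common divisor is $1$ (equivalent to the aperiodicity hypothesis stated here), one has the strict inequality $|G(z)| < G(|z|)$ at every point $z$ of the circle of convergence of $G$ other than the positive real one. Applied at radius $\sigma$, this yields $|G(z)| < 1$ for every $z$ on $\{|z|=\sigma\}\setminus\{\sigma\}$, so $1-G(z)$ has no further zeros on that circle and $\sigma$ is an isolated simple pole of $F$ inside a slightly larger disk in which $F$ is otherwise meromorphic.

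Finally, the local expansion $F(z) \sim c/(\sigma - z)$ near $z=\sigma$, combined with the absence of competing singularities on $|z|=\sigma$, gives $[z^n]F(z) \sim c\+\sigma^{-n-1}$, whence $|\FFF_n|^{1/n} \to \sigma^{-1}$ and $\gr(\FFF) = \sigma^{-1}$ as claimed. The main obstacle is the Daffodil Lemma step: if aperiodicity is dropped, $G$ has period $d \geqslant 2$, producing $d$ equally spaced singularities of $F$ on the circle $|z|=\sigma$; the Exponential Growth Formula would still yield $\grup(\FFF) = \sigma^{-1}$, but the genuine growth rate, defined as a limit, could fail to exist, so aperiodicity is indispensable for the stated conclusion.
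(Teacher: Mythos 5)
Your proof is correct. Note that the paper does not actually prove this lemma: it is quoted verbatim as Theorem~V.1 of Flajolet \& Sedgewick, and your argument is essentially a faithful reconstruction of their proof of the supercritical sequence schema --- translate $\FFF=\seq{\GGG}$ into $F=1/(1-G)$, locate the simple pole at the unique $\sigma\in(0,\rho)$ with $G(\sigma)=1$, use the Daffodil Lemma to rule out other singularities on $|z|=\sigma$, and transfer the pole asymptotics. One small wording point: the Daffodil Lemma is applied on the circle $|z|=\sigma$, which lies strictly \emph{inside} the disk of convergence of $G$ (since $\sigma<\rho$), not on $G$'s circle of convergence; that is precisely why the strict inequality $|G(z)|<G(|z|)$ is available there, so your application is legitimate even though the phrasing is slightly off.
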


\begin{proof}[Proof of Lemma~\ref{lemmaGRSumClosed}]
The %sum-closed
permutation class
$\sumclosed\SSS$ is in bijection with $\seq{\SSS}$.
Since $|\SSS_n|$ is positive and bounded, $S(z)$ is aperiodic, its radius of convergence is $1$,
%and $\lim\limits_{x\rightarrow1^-}S(x)=+\infty$.
and $\!\!\!\lim\limits_{\:\:\:x\rightarrow1^-}\!\!S(x)=+\infty$.
Thus
the requirements of Proposition~\ref{lemmaSupercritical} are satisfied, from which the desired result follows immediately.
\end{proof}

%\rulebreak

Building on our earlier results concerning expansions of real numbers
in non-integer bases,
we are now in a position to describe conditions under which
the set of growth rates of
a family of sum-closed permutation classes
includes an interval of the real line.
To state the lemma,
we introduce a notational convenience.
If $(a_n)$ is a bounded sequence of positive integers that enumerates a downward closed set of indecomposables $\SSS$ (i.e. $|\SSS_n|=a_n$ for all $n$), let us use
$\gr(\sumclosed (a_n))$ to denote the growth rate of
%$\sumclosed (a_n)$ denote
the sum-closed permutation class $\sumclosed\SSS$. % in which $\SSS$ is a downward closed set of indecomposables such that $|\SSS_n|=a_n$ for all $n$.

%\todob{} [illustrate what this is about; building a sum-closed permutation class]

%\todob{} [generalises Vatter Prop 2.4]

\begin{lemma}\label{lemmaGRInterval1}
Given a sequence $(A_n)$ of non-empty finite sets of positive integers, let $\ell_n$ and $u_n$
denote the least and greatest elements of $A_n$ respectively,
and let $\Delta_n=u_n-\ell_n$.
Also,
let~$\delta_n$
be the largest gap between consecutive elements of $A_n$ (with $\delta_n=0$ if $|A_n|=1$).

If $(u_n)$ is bounded, then for every real number $\gamma$ such that $\gr(\sumclosed(\ell_n))\leqslant\gamma\leqslant\gr(\sumclosed(u_n))$ and
%\begin{equation}\label{eqGammaIneqs}
$$
\delta_n
 \;\leqslant\;
\sum\limits_{i=1}^{\infty}\Delta_{n+i}\+\gamma^{-i}
$$
%\end{equation}
for each~$n$,
there is some sequence $(a_n)$ with each
$a_n\in A_n$
such that
$\gamma=\gr(\sumclosed(a_n))$.
\end{lemma}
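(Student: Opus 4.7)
The plan is to translate the growth-rate problem directly into a base-$\gamma$ expansion problem, so that Lemma~\ref{lemmaBaseNotation1} becomes immediately applicable. By Lemma~\ref{lemmaGRSumClosed}, for any bounded sequence $(a_n)$ of positive integers enumerating a downward closed set of indecomposables, $\gr(\sumclosed(a_n))$ is the unique $\gamma>1$ with $\sum_{n\geqslant 1}a_n\+\gamma^{-n}=1$. Hence to establish the lemma it suffices to produce a sequence $(a_n)$ with $a_n\in A_n$ for every~$n$ such that $(a_n)_\gamma=1$, that is, such that $1\in(A_n)_\gamma$.

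First I would verify that the hypotheses of Lemma~\ref{lemmaBaseNotation1} hold with base $\beta=\gamma$: the sets $A_n$ are non-empty and finite, the upper sequence $(u_n)$ is bounded by assumption, $\gamma>1$ since $\gamma\geqslant\gr(\sumclosed(\ell_n))\geqslant 2$ (because each $\ell_n\geqslant 1$), and the required gap inequalities $\delta_n\leqslant\sum_{i\geqslant 1}\Delta_{n+i}\+\gamma^{-i}$ are precisely those stipulated in the present lemma. Lemma~\ref{lemmaBaseNotation1} therefore gives
$$
(A_n)_\gamma\;=\;\bigl[(\ell_n)_\gamma,\+(u_n)_\gamma\bigr].
$$

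Next I would check that $1$ lies in this interval. Applying Lemma~\ref{lemmaGRSumClosed} to the extremal sequences, we have $\sum \ell_n\+\gr(\sumclosed(\ell_n))^{-n}=1$ and $\sum u_n\+\gr(\sumclosed(u_n))^{-n}=1$. Since each function $\gamma\mapsto\sum s_n\+\gamma^{-n}$ is strictly decreasing in $\gamma$ for any positive sequence $(s_n)$, the hypothesis $\gr(\sumclosed(\ell_n))\leqslant\gamma\leqslant\gr(\sumclosed(u_n))$ yields
$$
(\ell_n)_\gamma\;=\;\sum_{n\geqslant 1}\ell_n\+\gamma^{-n}\;\leqslant\;1\;\leqslant\;\sum_{n\geqslant 1}u_n\+\gamma^{-n}\;=\;(u_n)_\gamma.
$$
Consequently $1\in(A_n)_\gamma$, and by Lemma~\ref{lemmaBaseNotation1} there exists $(a_n)$ with $a_n\in A_n$ for each $n$ and $(a_n)_\gamma=1$. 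Since $(a_n)$ is bounded (by the bound on $(u_n)$) and positive, one final appeal to Lemma~\ref{lemmaGRSumClosed} delivers $\gr(\sumclosed(a_n))=\gamma$, as required.

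There is no real obstacle here: the entire argument hinges on the single observation that the defining equation $\sum a_n\+\gamma^{-n}=1$ for the growth rate of a sum-closed class is formally identical to the base-$\gamma$ expansion equation studied in the previous section. Once this correspondence is noticed, the proof is just a matter of matching hypotheses and invoking Lemmas~\ref{lemmaGRSumClosed} and~\ref{lemmaBaseNotation1}. The genuinely substantive work sits in Lemma~\ref{lemmaBaseNotation1} (establishing that the gap inequalities force $(A_n)_\gamma$ to be a full interval) and will be reused verbatim when Lemma~\ref{lemmaBaseNotation2}, which allows generalised digits, is substituted for Lemma~\ref{lemmaBaseNotation1} in the more general companion Lemma~\ref{lemmaGRInterval2}.
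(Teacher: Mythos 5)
Your proof is correct and follows essentially the same route as the paper's: translate the growth-rate condition into the equation $\sum a_n\gamma^{-n}=1$ via Lemma~\ref{lemmaGRSumClosed}, use monotonicity of $\gamma\mapsto\sum s_n\gamma^{-n}$ to place $1$ in the interval $[(\ell_n)_\gamma,(u_n)_\gamma]$, and invoke Lemma~\ref{lemmaBaseNotation1} to produce the required sequence. The only (harmless) addition is your explicit check that $\gamma>1$, which the paper leaves implicit.
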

\begin{proof}
Let $\gamma_1=\gr(\sumclosed(\ell_n))$ and $\gamma_2=\gr(\sumclosed(u_n))$, so $\gamma_1\leqslant\gamma\leqslant\gamma_2$.
Now, by Lemma~\ref{lemmaGRSumClosed},
$$
(\ell_n)_\gamma
\;=\;
\sum\limits_{n=1}^{\infty}\ell_n\+\gamma^{-n}
\;\leqslant\;
\sum\limits_{n=1}^{\infty}\ell_n\+\gamma_1^{-n}
\;=\;
1
\;=\;
\sum\limits_{n=1}^{\infty}u_n\+\gamma_2^{-n}
\;\leqslant\;
\sum\limits_{n=1}^{\infty}u_n\+\gamma^{-n}
\;=\;
(u_n)_\gamma .
$$
Hence,
$
(\ell_n)_\gamma
\leqslant
1
\leqslant
(u_n)_\gamma
$
and so, since $\gamma$ satisfies the
%necessary gap inequalities~\eqref{eqGammaIneqs},
gap inequalities,
by Lemma~\ref{lemmaBaseNotation1}
there is some sequence $(a_n)$ with each
$a_n\in A_n$
such that
$(a_n)_\gamma = 1$ and thus, by Lemma~\ref{lemmaGRSumClosed}, $\gamma=\gr(\sumclosed(a_n))$.
\end{proof}
This result generalises Proposition~2.4 in Vatter~\cite{Vatter2010b}, which treats only the case in which all the $A_n$ are identical and consist of an interval of integers $\{\ell,\ell+1,\ldots,u\}$.

As a consequence of this lemma, given a suitable sequence $(A_n)$ of sets of integers, if we could construct a family of sum-closed permutation classes such that
every sequence $(a_n)$ with each $a_n\in A_n$ enumerates
the indecomposables of
some member of the family, then the set of growth rates of the classes in the family would include an interval of the real line.
Returning to our original example in which $A_n=\{1,4\}$ for odd $n$ and $A_n=\{1,3,5,7,9\}$ for even $n$, suppose it were possible to construct
sum-closed classes whose indecomposables were enumerated by each sequence $\{(a_n):a_n\in A_n\}$.
Then the set of growth rates would include the interval from
$\gr(\sumclosed(1,1,\ldots))=2$ to the lesser of
$\gr(\sumclosed(4,9,4,9,\ldots))=2+\sqrt{14}$ %\approx5.74166$
and $\frac{1}{4}\+(3+\sqrt{89})$, the greatest value permitted by the gap inequalities. %\approx3.10850$.
(Clearly, the fact that there are only countably many permutation classes with growth rate less than $\kappa\approx2.20557$ means that such a construction is, in fact, impossible.)

%For the constructions that we will be building,
We now broaden this result to handle the case in which the $A_n$ contain generalised digits.
In the integer case, a term $a_n$ corresponds to a set consisting of $a_n$ indecomposables of length $n$.
In the generalised digit case, $a_n=c_0.c_1\ldots c_k$ corresponds to a set consisting of $c_0$ indecomposables of length $n$, $c_1$ indecomposables of length $n+1$, and so on, up to $c_k$ indecomposables of length $n+k$.

%In order to state the final result of this section,
%To do this, we need to
%First, let us
Before stating the generalised digit version of our lemma, we
generalise our terminology and notation.
Let us say that a sequence of generalised digits $(a_n)$ \emph{enumerates} a set~$\SSS$ of permutations if
$\SSS$ is the union $\biguplus \SSS^{\!(\!n\!)}$
of {disjoint} sets
$\SSS^{\!(\!n\!)}$,
such that, for each $n$, if we have $a_n=c_0.c_1\ldots c_k$ then $\SSS^{\!(\!n\!)}$~consists of exactly $a_n+c_i$ permutations of length $n+i$ for $i=0,\ldots,k$.
If $(a_n)$ is a sequence of generalised digits that enumerates a downward closed set of indecomposables $\SSS$,
let us use
$\gr(\sumclosed (a_n))$ to denote the growth rate
of the sum-closed permutation class $\sumclosed\SSS$.

In the proofs of our theorems, we make use of the following elementary fact.
Given a sequence of generalised digits that enumerates a set $\SSS$, there is a unique sequence of integers that also enumerates $\SSS$.
For example, if $a_n=1.12$ for odd $n$ and $a_n=3.1$ for even $n$, then
$(a_n)$ and the sequence
$(t_n)=(1,4,4,4,\ldots)$
are equivalent in this way.
We write such equivalences $(a_n)\equiv(t_n)$.

\begin{lemma}\label{lemmaGRInterval2}
Given a sequence $(A_n)$ of non-empty finite sets of generalised $\gamma$-digits, for a fixed $\gamma$ let $\ell_n$ and $u_n$
denote the least and greatest elements of $A_n$ respectively,
and let $\Delta_n=u_n-\ell_n$.
%be their difference.
Also,
let
$\delta_n$
be the largest gap between consecutive elements of $A_n$ (with $\delta_n=0$ if $|A_n|=1$).

If the length and subdigits of all the generalised $\gamma$-digits in the $A_n$ are
bounded,
then for every real number $\gamma$ such that $\gr(\sumclosed(\ell_n))\leqslant\gamma\leqslant\gr(\sumclosed(u_n))$ and
%\begin{equation}\label{eqGammaIneqsGen}
$$
\delta_n
 \;\leqslant\;
\sum\limits_{i=1}^{\infty}\Delta_{n+i}\+\gamma^{-i}
$$
%\end{equation}
for each~$n$,
there is some sequence $(a_n)$ with each
$a_n\in A_n$
such that
$\gamma=\gr(\sumclosed(a_n))$.
\end{lemma}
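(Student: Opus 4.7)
The plan is to mirror the proof of Lemma~\ref{lemmaGRInterval1}, substituting Lemma~\ref{lemmaBaseNotation2} for Lemma~\ref{lemmaBaseNotation1} and taking care of the translation between generalised-digit sums and growth rates of sum-closed classes. The pivotal observation is that if $(a_n)$ is a sequence of generalised $\gamma$-digits enumerating a set $\SSS$ of indecomposables, with equivalent integer sequence $(t_n)\equiv(a_n)$, then by the definition of a generalised digit we have
\[
\sum_{n\geqslant1}|\SSS_n|\+\gamma^{-n} \;=\; \sum_{n\geqslant1}t_n\+\gamma^{-n} \;=\; \sum_{n\geqslant1}a_n\+\gamma^{-n} \;=\; (a_n)_\gamma,
\]
since regrouping the contributions of each subdigit $c_i$ of $a_n=c_0.c_1\ldots c_k$ at position $n+i$ produces exactly the integer enumeration $(t_n)$. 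Consequently Lemma~\ref{lemmaGRSumClosed} translates directly into the statement that $\gamma=\gr(\sumclosed(a_n))$ if and only if $\gamma>1$ and $(a_n)_\gamma=1$, provided $(t_n)$ is positive and bounded. The boundedness of the lengths and subdigits of the generalised digits in the $A_n$ is precisely what guarantees this last condition for any admissible choice of $a_n\in A_n$.

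With this translation in hand, I would first apply Lemma~\ref{lemmaGRSumClosed} to the two extreme sequences $(\ell_n)$ and $(u_n)$ (via their equivalent integer sequences) to obtain
\[
(\ell_n)_\gamma \;\leqslant\; 1 \;\leqslant\; (u_n)_\gamma,
\]
exactly as in the proof of Lemma~\ref{lemmaGRInterval1}: the lower bound follows from $\gamma\geqslant\gr(\sumclosed(\ell_n))$ together with monotonicity of the series in $\gamma^{-1}$, and the upper bound from $\gamma\leqslant\gr(\sumclosed(u_n))$.

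Next, I would invoke Lemma~\ref{lemmaBaseNotation2}. Its hypotheses on the sets $A_n$ (bounded lengths and subdigits, plus the gap inequalities satisfied by the chosen $\gamma$) coincide with those given in the present statement, so the conclusion $(A_n)_\gamma=\bigl[(\ell_n)_\gamma,(u_n)_\gamma\bigr]$ applies. Since $1$ lies in this interval, there exists a sequence $(a_n)$ with each $a_n\in A_n$ such that $(a_n)_\gamma=1$. Passing to the equivalent integer sequence and applying Lemma~\ref{lemmaGRSumClosed} one final time yields $\gamma=\gr(\sumclosed(a_n))$, as required.

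The main obstacle, and the only point where real care is needed, is the translation step: verifying that the generalised-digit sum $(a_n)_\gamma$ equals the series $\sum|\SSS_n|\gamma^{-n}$ that governs the growth rate of $\sumclosed\SSS$, and that the boundedness assumptions on length and subdigits of the generalised digits transfer into the positive-and-bounded hypothesis of Lemma~\ref{lemmaGRSumClosed} uniformly over the choice of $(a_n)$. Once this bookkeeping is done, the remainder of the argument is a routine repetition of the proof of Lemma~\ref{lemmaGRInterval1}, with Lemma~\ref{lemmaBaseNotation2} replacing Lemma~\ref{lemmaBaseNotation1}.
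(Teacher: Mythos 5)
Your proof is correct and follows exactly the route the paper intends: the paper omits the proof of this lemma, stating only that it is essentially the same as that of Lemma~\ref{lemmaGRInterval1} with Lemma~\ref{lemmaBaseNotation2} in place of Lemma~\ref{lemmaBaseNotation1}, which is precisely the argument you give. Your explicit bookkeeping of the equivalence between the generalised-digit sum $(a_n)_\gamma$ and the enumeration series $\sum_n |\SSS_n|\gamma^{-n}$ is a welcome addition that the paper leaves implicit.
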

We omit the proof since it is essentially the same as that for
Lemma~\ref{lemmaGRInterval1}, but making use of Lemma~\ref{lemmaBaseNotation2} rather than Lemma~\ref{lemmaBaseNotation1}.

Let us revisit our generalised digit example in which
$A_n=\{1.1,1.11,1.12,1.2,1.21,1.22\}$ for odd~$n$ and $A_n=\{0\}$ for even $n$.
Suppose it were
possible to construct
sum-closed classes whose indecomposables were enumerated by each sequence $\{(a_n):a_n\in A_n\}$.
Then the set of growth rates of the classes
would include the interval extending from
$\gr(\sumclosed(1,1,\ldots))=2$ to the lesser of
$\gr(\sumclosed(1,2,3,2,3,\ldots))\approx2.51155$ (the real root of a cubic)
and $\half\+(1+\sqrt{13})$, the maximum value permitted by the gap inequalities.
(As with our previous example, such a construction is, in fact, impossible.)

We also need to use the following elementary analytic result, also found in Vatter~\cite{Vatter2010b}.
\begin{lemma}[{\cite[Proposition 2.3]{Vatter2010b}}] %; see \cite{AL2009} Lemma 3.3]
\label{lemmaGRSumClosedClose}
Given $\varepsilon>0$ and $c>0$, there is a positive integer $m$, dependent only on $\varepsilon$ and $c$, such that if
$(r_n)$ and $(s_n)$ are two sequences of positive integers bounded by $c$, and
$r_n=s_n$ for all $n\leqslant m$, then
%$\big|\gr(\sumclosed(r_n))-\gr(\sumclosed(s_n))\big|\leqslant\varepsilon$.
$\gr(\sumclosed(r_n))$ and $\gr(\sumclosed(s_n))$ differ by no more than $\varepsilon$.
\end{lemma}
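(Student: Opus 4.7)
The plan is to exploit the characterisation of Lemma~\ref{lemmaGRSumClosed}: writing $R(z)=\sum_n r_n z^n$ and $S(z)=\sum_n s_n z^n$, the reciprocals $1/\gamma_r$ and $1/\gamma_s$ of the two growth rates are the unique solutions in $(0,1)$ of $R(z)=1$ and $S(z)=1$. First I would confine both reciprocals to a fixed compact subinterval of $(0,1)$ that depends only on $c$. Since each $r_n\geqslant 1$, we have $R(z)\geqslant z/(1-z)$, so $R(1/\gamma_r)=1$ forces $\gamma_r\geqslant 2$; since each $r_n\leqslant c$, we have $R(z)\leqslant cz/(1-z)$, so $\gamma_r\leqslant c+1$. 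The same bounds hold for $\gamma_s$, hence $1/\gamma_r,1/\gamma_s\in [1/(c+1),\,1/2]$.

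Next I would use the agreement $r_n=s_n$ for $n\leqslant m$ to show that $R$ and $S$ are uniformly close on this interval. Because $|r_n-s_n|\leqslant 2c$ for all $n$, for every $z\in [1/(c+1),\,1/2]$,
$$
|R(z)-S(z)| \;\leqslant\; 2c\sum_{n>m} z^n \;\leqslant\; \frac{2c\,z^{m+1}}{1-z} \;\leqslant\; 2c\cdot 2^{-m}.
$$
Evaluating at $z=1/\gamma_s$ and recalling $S(1/\gamma_s)=1$ therefore yields $|R(1/\gamma_s)-R(1/\gamma_r)|=|R(1/\gamma_s)-1|\leqslant 2c\cdot 2^{-m}$.

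The final step converts this into closeness of the zeros themselves via a uniform lower bound on $R'$. Since $R'(z)=r_1+2r_2 z+3r_3 z^2+\cdots\geqslant r_1\geqslant 1$ on $[0,1)$, the mean value theorem gives
$$
\bigl|\,1/\gamma_r-1/\gamma_s\,\bigr| \;\leqslant\; 2c\cdot 2^{-m}.
$$
Multiplying by $\gamma_r\gamma_s\leqslant (c+1)^2$ then yields $|\gamma_r-\gamma_s|\leqslant 2c(c+1)^2\cdot 2^{-m}$, and so any integer $m\geqslant\log_2\!\bigl(2c(c+1)^2/\varepsilon\bigr)$ does the job. Since this bound depends only on $\varepsilon$ and $c$, the lemma follows.

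I do not anticipate a genuine obstacle: the whole argument is routine real analysis once the growth rates are located inside a fixed compact subinterval of $(0,1)$. The one subtlety worth flagging is that the lower bound $r_n\geqslant 1$ is used twice and is essential — once to bound $\gamma_r$ away from $1$ (ensuring the reciprocals stay uniformly inside $(0,1/2]$, so that the tail $z^{m+1}/(1-z)$ really does decay exponentially in $m$), and once to secure the derivative bound $R'\geqslant 1$ needed for stability of the root.
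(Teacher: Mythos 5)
Your proof is correct and follows essentially the same route as the paper's: both arguments locate the reciprocals of the growth rates in $[1/(c+1),1/2]$ using $1\leqslant r_n\leqslant c$, exploit the agreement of coefficients up to $m$ together with $\gamma\geqslant 2$ to get an exponentially small discrepancy, and convert this into closeness of the growth rates via the factor $(c+1)^2$. Your mean-value-theorem step with $R'\geqslant r_1\geqslant 1$ is just a repackaging of the paper's term-by-term observation that the $n=1$ summand already contributes $\gamma_1^{-1}-\gamma_2^{-1}$, so the two proofs coincide up to a harmless factor of $2$ in the final choice of $m$.
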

\begin{proof}
Let $\gamma_1=\gr(\sumclosed(r_n))$ and $\gamma_2=\gr(\sumclosed(s_n))$ and suppose that $\gamma_1<\gamma_2$.
Note that $\gamma_1\geqslant2$ because $(r_n)$ is positive, and $\gamma_2\leqslant c+1$ because $(s_n)$ is bounded by $c$.

From Lemma~\ref{lemmaGRSumClosed}, we have
$
\sum\limits_{n=1}^\infty r_n\+\gamma_1^{-n}
%\;=\;
=
\sum\limits_{n=1}^\infty s_n\+\gamma_2^{-n}
%\;=\;
%=
%1 %.
$.
Hence, since
$r_n=s_n$ for $n\leqslant m$,
$$
%\tfrac{1}{4}\+(\gamma_2-\gamma_1)
\frac{\gamma_2-\gamma_1}{(c+1)^2}
\;\leqslant\;
\gamma_1^{-1}-\gamma_2^{-1}
\;\leqslant\;
%\sum\limits_{n=1}^m
\sum\limits_{n\leqslant m}
r_n\+(\gamma_1^{-n}-\gamma_2^{-n})
\;=\;
\sum\limits_{n>m} s_n\+\gamma_2^{-n} - \sum\limits_{n>m} r_n\+\gamma_1^{-n}
\;\leqslant\;
\frac{c}{2^m} .
$$
Thus, $m=\ceil{\log_2 %\!\big(
\frac{c\+(c+1)^2}{\varepsilon}
%\big)
}$ suffices.
\end{proof}

%\Needspace*{2\baselineskip}
% ================================================================
\section{Constructions that yield intervals of growth rates}\label{sectConstructions}

To prove our theorems, we construct families of permutation classes whose indecomposables are enumerated by sequences
satisfying the requirements of Lemma~\ref{lemmaGRInterval2}.
%\todob{} [what exactly is new about my constructions wrt Vince's?]
In doing this, it helps to take a graphical perspective.
%However, a graphical perspective is more insightful.
%The \emph{plot} of a permutation $\sigma$ is the set of points $(i,\sigma(i))$ in the Euclidean plane.
Recall from Section~\ref{defGraphOfPerm} %page~\pageref{defGraphOfPerm}
that the (ordered) graph, $G_\sigma$, of a permutation $\sigma$ of length~$n$ has
%as vertex set the set of points $(i,\sigma(i))$ in the Euclidean plane
vertex set $\{1,\ldots,n\}$
with an edge between vertices $i$ and $j$ %$(i,\sigma(i))$ and $(j,\sigma(j))$
if $i<j$ and $\sigma(i)>\sigma(j)$.
Recall also that
$\tau\leqslant\sigma$
if and only if
$G_\tau$ is an induced ordered subgraph of $G_\sigma$.
Observe too that $\sigma$ is indecomposable if and only if $G_\sigma$ is \emph{connected}.
It tends to be %more
advantageous to think of indecomposables as those permutations whose graphs are connected.

\begin{figure}[ht]
  \mybox{
  \vspace{3pt}
  $$
  \begin{tikzpicture}[scale=0.225]
    \plotpermnobox{6}{3, 1, 5, 2, 6, 4}
    \draw [thin] (2,1)--(1,3)--(4,2)--(3,5)--(6,4)--(5,6);
    \node at(3.5,-1){$\omega_6 = \mathbf{315264}$};
  \end{tikzpicture}
  \quad\qquad
  \begin{tikzpicture}[scale=0.225]
    \plotpermnobox{6}{2, 4, 1, 6, 3, 5}
    \draw [thin] (1,2)--(3,1)--(2,4)--(5,3)--(4,6)--(6,5);
    \node at(3.5,-1){$\overline\omega_6 = \mathbf{241635}$};
  \end{tikzpicture}
  \quad\qquad
  \begin{tikzpicture}[scale=0.225]
    \plotpermnobox{6}{3, 1, 5, 2, 7, 4, 6}
    \draw [thin] (2,1)--(1,3)--(4,2)--(3,5)--(6,4)--(5,7)--(7,6);
    \node at(3.5,-1){$\omega_7 = \mathbf{3152746}$};
  \end{tikzpicture}
  \quad\qquad
  \begin{tikzpicture}[scale=0.225]
    \plotpermnobox{6}{2, 4, 1, 6, 3, 7, 5}
    \draw [thin] (1,2)--(3,1)--(2,4)--(5,3)--(4,6)--(7,5)--(6,7);
    \node at(3.5,-1){$\overline\omega_7 = \mathbf{2416375}$};
  \end{tikzpicture}
  \vspace{-6pt}
  $$
  }
  \caption{Primary and secondary oscillations}\label{figOscillating}
\end{figure}
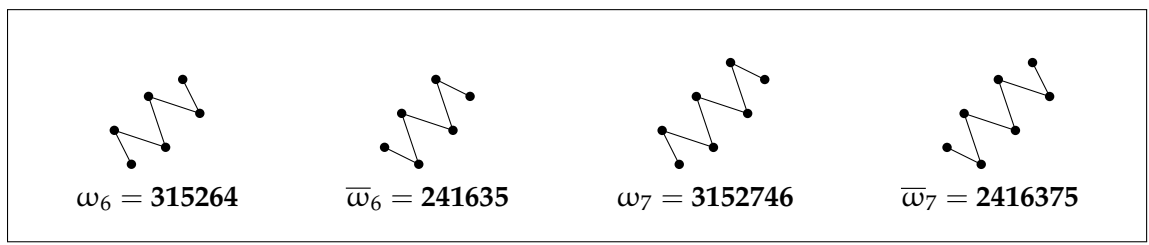
All our constructions are based on permutations whose graphs are \emph{paths}.
Such permutations are called \emph{increasing oscillations}. We distinguish two cases,
%The oscillating and secondary oscillations are exactly those permutations whose graphs are paths.
the \emph{primary oscillation} of length $n$, which we denote $\omega_n$,
whose \emph{least} entry corresponds to an end of the path,
and the \emph{secondary oscillation} of length $n$, denoted~$\overline\omega_n$
whose \emph{first} entry corresponds to an end of the path.
See Figure~\ref{figOscillating} for an illustration.
Trivially, we have $\omega_1=\overline\omega_1=\mathbf{1}$ and $\omega_2=\overline\omega_2=\mathbf{21}$.
%These are the permutations whose graphs are paths on $n$ vertices.
%In the case of $\omega_n$, its ;
%in contrast, in the case of $\overline\omega_n$, its \emph{first} entry corresponds to an end of the
%path.\footnote{Trivially, we have $\omega_1=\overline\omega_1=\mathbf{1}$ and $\omega_2=\overline\omega_2=\mathbf{21}$.}
%path (trivially, we have $\omega_1=\overline\omega_1=\mathbf{1}$ and $\omega_2=\overline\omega_2=\mathbf{21}$).
The (lower and upper) \emph{ends} of an increasing oscillation are the
entries corresponding to the (lower and upper)
%end vertices
ends
of its path graph.

\HIDE{
Formally, for $n\geqslant 4$, the primary oscillation of length $n$, $\omega_n$, and the secondary oscillation of length $n$, $\overline\omega_n$, can be defined as follows:
$$
\begin{array}{rclr}
\omega_n          &=&          3,    1,\,\,\, 5,2,\,\,\, 7,4,\,\,\, 9,6,\,\,\, \ldots,\,\,\, n-3,n-6,\,\,\, n-1,            n-4,\,\,\,
         n,   n-2 & \text{~for even~}n\geqslant4, \\[3pt]
\omega_n          &=&          3,    1,\,\,\, 5,2,\,\,\, 7,4,\,\,\, 9,6,\,\,\, \ldots,\,\,\, n-2,n-5,\,\,\, n,\phantom{{}-0}n-3,\,\,\, \phantom{n,{}}n-1 & \text{~for odd~}n\geqslant5,  \\[3pt]
\overline\omega_n &=& \phantom{0,{}} 2,\,\,\, 4,1,\,\,\, 6,3,\,\,\, 8,5,\,\,\, \ldots,\,\,\, n-2,n-5,\,\,\, n,\phantom{{}-0}n-3,\,\,\, \phantom{n,{}}n-1 & \text{~for even~}n\geqslant4, \\[3pt]
\overline\omega_n &=& \phantom{0,{}} 2,\,\,\, 4,1,\,\,\, 6,3,\,\,\, 8,5,\,\,\, \ldots,\,\,\, n-3,n-6,\,\,\, n-1,            n-4,\,\,\,
         n,   n-2 & \text{~for odd~}n\geqslant5.
\end{array}
$$
} %\HIDE

\begin{figure}[ht]
  \mybox{
  \vspace{-6pt}
  $$
  \begin{tikzpicture}[scale=0.2]
    \plotpermnobox{9}{4, 1, 2, 6, 3, 9, 5, 7, 8}
    \draw [thin] (2,1)--(1,4);
    \draw [thin] (3,2)--(1,4)--(5,3)--(4,6)--(7,5)--(6,9)--(8,7);
    \draw [thin] (6,9)--(9,8);
    \node at(5,-1){$\;\;\omega_7^{2,2}$};
  \end{tikzpicture}
  \quad
  \begin{tikzpicture}[scale=0.2]
    \plotpermnobox{10}{4, 1, 2, 6, 3, 10, 5, 7, 8, 9}
    \draw [thin] (2,1)--(1,4);
    \draw [thin] (3,2)--(1,4)--(5,3)--(4,6)--(7,5)--(6,10)--(8,7);
    \draw [thin] (10,9)--(6,10)--(9,8);
    \node at(5.5,-1){$\;\;\omega_7^{2,3}$};
  \end{tikzpicture}
  \quad
  \begin{tikzpicture}[scale=0.2]
    \plotpermnobox{10}{5, 1, 2, 3, 7, 4, 10, 6, 8, 9}
    \draw [thin] (2,1)--(1,5)--(4,3);
    \draw [thin] (3,2)--(1,5)--(6,4)--(5,7)--(8,6)--(7,10)--(9,8);
    \draw [thin] (7,10)--(10,9);
    \node at(5.5,-1){$\;\;\omega_7^{3,2}$};
  \end{tikzpicture}
  \quad
  \begin{tikzpicture}[scale=0.2]
    \plotpermnobox{11}{5, 1, 2, 3, 7, 4, 11, 6, 8, 9, 10}
    \draw [thin] (2,1)--(1,5)--(4,3);
    \draw [thin] (3,2)--(1,5)--(6,4)--(5,7)--(8,6)--(7,11)--(9,8);
    \draw [thin] (11,10)--(7,11)--(10,9);
    \node at(6,-1){$\;\;\omega_7^{3,3}$};
  \end{tikzpicture}
  \quad
  \begin{tikzpicture}[scale=0.2]
    \plotpermnobox{11}{6, 1, 2, 3, 4, 8, 5, 11, 7, 9, 10}
    \draw [thin] (2,1)--(1,6)--(4,3);
    \draw [thin] (5,4)--(1,6);
    \draw [thin] (3,2)--(1,6)--(7,5)--(6,8)--(9,7)--(8,11)--(11,10);
    \draw [thin] (8,11)--(10,9);
    \node at(6,-1){$\;\;\omega_7^{4,2}$};
  \end{tikzpicture}
  \quad
  \begin{tikzpicture}[scale=0.2]
    \plotpermnobox{12}{6, 1, 2, 3, 4, 8, 5, 12, 7, 9, 10, 11}
    \draw [thin] (2,1)--(1,6)--(4,3);
    \draw [thin] (5,4)--(1,6);
    \draw [thin] (3,2)--(1,6)--(7,5)--(6,8)--(9,7)--(8,12)--(11,10);
    \draw [thin] (12,11)--(8,12)--(10,9);
    \node at(6.5,-1){$\;\;\omega_7^{4,3}$};
  \end{tikzpicture}
  \vspace{-9pt}
  $$
  }
  \caption{The set of permutations $R_7^{4,3}$}\label{figR743}
\end{figure}
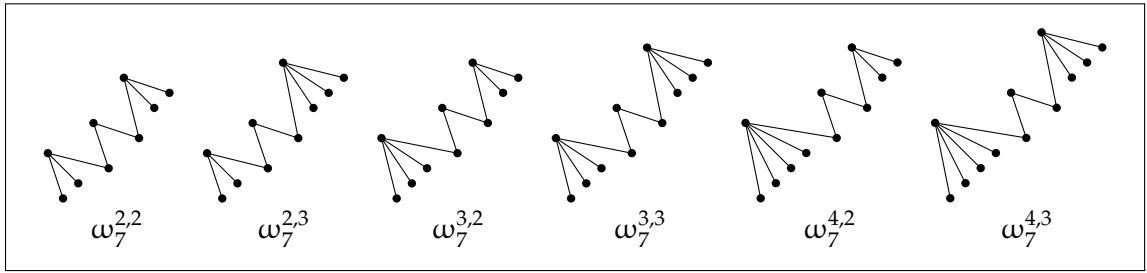
The key permutations in our constructions are %permutations
formed by \emph{inflating} (i.e. replacing)
each end of a primary oscillation of odd length with an increasing permutation.
For $n\geqslant4$ and $r,s\geqslant1$,
let~$\omega_n^{r,s}$ denote the
permutation of length $n-2+r+s$
formed by replacing %inflating
the lower and upper ends of $\omega_n$ with the increasing permutations of length $r$ and~$s$ respectively.
The graph of $\omega_n^{r,s}$ thus consists of a path on $n-2$ vertices with $r$ pendant edges attached to its lower end and
$s$ pendant edges attached to its upper end.
See Figure~\ref{figR743} for some examples.
We also occasionally make use of $\overline\omega_n^{r,s}$, which we define analogously.
%(In his proof in~\cite{Vatter2010b} that every value at least $\lambda_A$ is the growth rate of a permutation class, Vatter also uses constructions based on inflating the ends of increasing oscillations.)

The building blocks for our constructions are the following sets of indecomposables.
Given a primary oscillation with both ends inflated, i.e. some $\omega_n^{r,s}$ with $r,s\geqslant2$, we define~$R_n^{r,s}$ to be the set of primary oscillations with both ends inflated that are
%contained in~$\omega_n^{r,s}$:
subpermutations of~$\omega_n^{r,s}$:
$$
R_n^{r,s} \;=\; \{\omega_n^{u,v} \,:\, 2\leqslant u\leqslant r,\,2\leqslant v\leqslant s\}.
$$
We only make use of $R_n^{r,s}$ for odd $n$. %the primary oscillation that is inflated will always have a length that is odd.
See Figure~\ref{figR743} for an example.

Let us investigate the properties of these sets.
Firstly, if $n\neq m$, then no element of $R_n^{r,s}$ is a subpermutation of %contained in
an element of~$R_m^{r,s}$.
This is a consequence of %the first part of
the following elementary observation,
which follows directly from consideration of the (ordered) graphs of the permutations. %,
%a generalisation of which occurs in Vatter~\cite{Vatter2010b}.
\begin{obs} %\label{obsIncomparable}
If $n,m\geqslant4$, $n\neq m$ and $r,s,u,v\geqslant2$, then $\omega_n^{r,s}$ and $\omega_m^{u,v}$ are incomparable under the subpermutation %containment
order.
%Furthermore, if either $r<u$ and $s>v$ or $r>u$ and $s<v$, then for any $n\geqslant4$, $\omega_n^{r,s}$ and $\omega_n^{u,v}$ are incomparable.
\end{obs}

Thus for fixed $r$ and $s$, and varying $n$, the $R_n^{r,s}$ form a collection of sets of permutations, no member of one being a subpermutation of a member of another. This %is similar to
should be compared and contrasted with
the %(simpler)
concept of an \emph{antichain}, which is a set of permutations, none of which is a subpermutation of another.

We build permutation classes by specifying that, for some fixed $r$ and $s$, their indecomposables must
include some subset of $R_n^{r,s}$ for each (odd) $n$.
Because, if $n\neq m$, any element of $R_n^{r,s}$ is incomparable with any element of $R_m^{r,s}$,
the choice of subset to include can be made independently for each~$n$.
This provides the flexibility we need to construct families of classes whose growth rates include an interval.
(Vatter's simpler constructions in~\cite{Vatter2010b},
which are also based on inflating the ends of increasing oscillations,
rely on being able to choose any subset of an infinite antichain to include in the indecomposables of a class.)

Clearly, for each $n$, the subset of $R_n^{r,s}$ included in the indecomposables must be downward closed.
It has been observed heuristically that
subsets containing $\omega_n^{3,2}$ %have been observed to be
are better for generating intervals of growth rates.
In light of this, we define $\FFF_n^{r,s}$ to be the family of downward closed subsets of $R_n^{r,s}$ that contain $\omega_n^{3,2}$ (and hence also contain $\omega_n^{2,2}$).

For example, $\FFF_n^{4,3}$ consists of the seven downsets whose %maxima %
sets of maximal elements
are
%$\{w_n^{3,2}\}$, $\{w_n^{3,2},w_n^{2,3}\}$, $\{w_n^{3,3}\}$, $\{w_n^{4,2}\}$, $\{w_n^{4,2},w_n^{2,3}\}$, $\{w_n^{4,2},w_n^{3,3}\}$ and $\{w_n^{4,3}\}$.
$$
\{\omega_n^{3,2}\},\;\; \{\omega_n^{3,2},\omega_n^{2,3}\},\;\; \{\omega_n^{3,3}\},\;\; \{\omega_n^{4,2}\},\;\; \{\omega_n^{4,2},\omega_n^{2,3}\},\;\; \{\omega_n^{4,2},\omega_n^{3,3}\}, \;\;\text{and}\;\; \{\omega_n^{4,3}\}.
$$
To each downset $S$ in $\FFF_n^{r,s}$, we can associate a generalised digit $c_0.c_1\ldots c_k$ %$f_{n+2}$
such that $c_i$ records %reflects
the number of elements in $S$ of length $n+2+i$ for each $i\geqslant0$.
For example, the generalised digits associated with %corresponding to
the elements of $\FFF_n^{4,3}$ are (in the same order as above)
$$
1.1,\;\; 1.2,\;\; 1.21,\;\; 1.11,\;\; 1.21,\;\; 1.22 \;\;\text{and}\;\; 1.221.
$$
%\Needspace*{5\baselineskip}
We also need to take into account the indecomposables that are subpermutations of %contained in
$\omega_n^{r,s}$ but are \emph{not} elements of~$R_n^{r,s}$.
%If~$n$ is odd, then these are of the following types:
For odd $n$, these are of the following types:
%consist of
\begin{bullets}
  \item Primary oscillations $\omega_m$ % for $m\leqslant n$.
  and secondary oscillations $\overline\omega_m$.%, % for $m<n$.
  \item Permutations whose graphs are stars %graphs
  $K_{1,u}$ %, % for $u\leqslant\max(r,s)+1$,
  and whose first (and greatest) entry corresponds to the internal vertex;
  we use~$\psi_u$ to denote these star permutations.%, %.
  \item Increasing oscillations with just one end inflated: $\omega_m^{u,1}$, $\omega_m^{1,v}$ and $\overline\omega_m^{1,v}$. %for $4\leqslant m\leqslant n$, $2\leqslant u\leqslant r$ and $2\leqslant v\leqslant s$.
  %\item Primary oscillations with both ends inflated: $\omega_n^{u,v}$ with $2\leqslant u\leqslant r$ and $2\leqslant v\leqslant s$.
\end{bullets}
\vspace{6pt}
\begin{figure}[ht]
  \mybox{
  \vspace{-6pt}
  $$
  \begin{tikzpicture}[scale=0.225]
    \plotpermnobox{6}{3, 1, 5, 2, 6, 4}
    \draw [thin] (2,1)--(1,3)--(4,2)--(3,5)--(6,4)--(5,6);
    \node at(3.5,-1){$\;\;\omega_6$};
  \end{tikzpicture}
  \;\qquad\;
  \begin{tikzpicture}[scale=0.225]
    \plotpermnobox{6}{2, 4, 1, 6, 3, 5}
    \draw [thin] (1,2)--(3,1)--(2,4)--(5,3)--(4,6)--(6,5);
    \node at(3.5,-1){$\;\;\overline\omega_6$};
  \end{tikzpicture}
  \;\qquad\;
  \begin{tikzpicture}[scale=0.225]
    \plotpermnobox{6}{6, 1, 2, 3, 4, 5}
    \draw [thin] (2,1)--(1,6)--(3,2);
    \draw [thin] (4,3)--(1,6)--(5,4);
    \draw [thin] (6,5)--(1,6);
    \node at(3.5,-1){$\;\;\psi_5$}; %{$\;\;\omega_3^{4,1}$}; %
  \end{tikzpicture}
  \;\qquad\;
  \begin{tikzpicture}[scale=0.225]
    \plotpermnobox{6}{5, 1, 2, 3, 6, 4}
    \draw [thin] (2,1)--(1,5)--(3,2);
    \draw [thin] (4,3)--(1,5)--(6,4)--(5,6);
    \node at(3.5,-1){$\;\;\omega_4^{3,1}$};
  \end{tikzpicture}
  \;\qquad\;
  \begin{tikzpicture}[scale=0.225]
    \plotpermnobox{6}{4, 1, 2, 6, 3, 5}
    \draw [thin] (2,1)--(1,4)--(3,2);
    \draw [thin] (1,4)--(5,3)--(4,6)--(6,5);
    \node at(3.5,-1){$\;\;\omega_5^{2,1}$};
  \end{tikzpicture}
  \;\qquad\;
  \begin{tikzpicture}[scale=0.225]
    \plotpermnobox{6}{3, 1, 6, 2, 4, 5}
    \draw [thin] (2,1)--(1,3)--(4,2)--(3,6)--(5,4);
    \draw [thin] (3,6)--(6,5);
    \node at(3.5,-1){$\;\;\omega_5^{1,2}$};
  \end{tikzpicture}
  \;\qquad\;
  \begin{tikzpicture}[scale=0.225]
    \plotpermnobox{6}{2,6,1,3,4,5}
    \draw [thin] (1,2)--(3,1)--(2,6)--(4,3);
    \draw [thin] (6,5)--(2,6)--(5,4);
    \node at(3.5,-1){$\;\;\overline\omega_4^{1,3}$};
  \end{tikzpicture}
  \vspace{-9pt}
  $$
  }
  \caption{The elements of $Q^{4,3}$ of size 6}\label{figS426}
\end{figure}
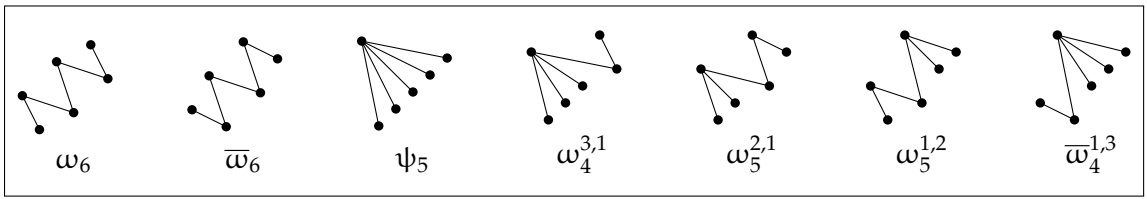
Given $r,s\geqslant2$, let $Q^{r,s}$ be the (infinite) set of %all such
permutations that are subpermutations of %contained in
$\omega_n^{r,s}$ for some odd $n\geqslant5$, but are not elements of~$R_n^{r,s}$.
See Figure~\ref{figS426} for an illustration.

%$$\mathfrak{A\,B\,C\,D\,E\,F\,G\,H\,I\,J\,K\,L\,M\,N\,O\,P\,Q\,R\,S\,T\,U\,V\,W\,X\,Y\,Z}$$

We now have the building blocks we need.
Given $r\geqslant3$, $s\geqslant2$, and odd $k\geqslant5$, we define $\Phi_{r,s,k}$ to be the family of those permutation classes whose indecomposables are the union of $Q^{r,s}$ together with %and of
an element of $\FFF_n^{r,s}$ for each odd $n\geqslant k$:
$$
\Phi_{r,s,k} \;=\;
\Big\{
%\,
\sumclosed \big( Q^{r,s} \:\cup\: S_k \:\cup\: S_{k+2} \:\cup\: S_{k+4} \:\cup\: \ldots
\big) \::\:
%\forall\+ n\+ .\:
S_n\in \FFF_n^{r,s}
, \, n=k,k+2,\ldots
%\,
\Big\} .
$$
The sequence of families $(\Phi_{5,3,k})_{k=5,7,\ldots}$ is what we need to prove our first theorem, which we restate below.
The set of growth rates of permutation classes in
each family $\Phi_{5,3,k}$ %
consists of an interval, $I_k$, such that the sequence $(I_k)$ of these intervals approaches $\theta_B$ from above.
%We are now in a position to use $\FFF_n^{5,3}$ and $Q^{5,3}$ to construct a collection of permutation classes whose set of growth rates consists of a sequence of intervals.

We make use of one additional notational convenience. A sequence of integers $(a_n)$ whose entries have the same value for all $n\geqslant k$ is denoted $(a_1,a_2,\ldots,a_{k-1},\overline{a_k})$. For example,
$(1,\overline4)=(1,4,4,4,\ldots)$.
We also use $(0^k,a_{k+1},\ldots)$ for a sequence whose first $k$ terms are zero.
For example, $(0^3,1)=(0,0,0,1)$.

%\thmbox{
\begin{repthm}{thmTheta}
Let $\theta_B\approx2.35526$ be the unique real root of $x^7-2\+x^6-x^4-x^3-2\+x^2-2\+x-1$.
For any $\varepsilon>0$, there exist $\delta_1$ and $\delta_2$ with $0<\delta_1<\delta_2<\varepsilon$ such that every value in the interval $[\theta_B+\delta_1,\theta_B+\delta_2]$ is the growth rate of a permutation class.
\end{repthm}
%}
%\vspace{-15pt}
\begin{proof}
Let $(q_n)$ be the sequence that enumerates %counts the number of elements of
$Q^{5,3}$.
%of each length. %: $q_n=\big|\{\sigma\in Q^{5,3} : |\sigma|=n\}\big|$.
It can readily be checked that we have
$(q_n)=(1,1,2,3,5,7,\overline8)$, and that
$\gr(\sumclosed(q_n))=\theta_B$.

%\Needspace*{2\baselineskip}
Let $k\geqslant5$ be odd.
For each odd $n\geqslant k+2$, let
$$
F_n \;=\; \{1.1,1.11,1.111,1.2,1.21,1.211,1.22,1.221,1.222,1.2221\}
$$
be the set of generalised digits
associated with %elements of
the sets of indecomposables in
$\FFF_{n\negsub2}^{5,3}$.
Otherwise (if $n$ is even or $n\leqslant k$), let $F_n=\{\+0\+\}$.
Now, for each $n$, let $A_n=\{q_n+f_n:f_n\in F_n\}$.

So, by construction, for every permutation class $\sumclosed\SSS\in\Phi_{5,3,k}$ there is a corresponding sequence $(a_n)$, with each
$a_n\in A_n$, that enumerates $\SSS$.

Let $\ell_n=q_n+1.1$ for odd $n\geqslant k+2$ and $\ell_n=q_n$ otherwise.
Similarly, let $u_n=q_n+1.2221$ for odd $n\geqslant k+2$ and $u_n=q_n$ otherwise.
We have the following equivalences:
$$
\begin{array}{rcl}
%\gr(\sumclosed(\ell_n)) &\!=\!& \gr(\sumclosed(1,1,2,3,5,7,\overbrace{8,8,\ldots,8}^{k-5},\overline9)) \\[9pt]
%\gr(\sumclosed(u_n))    &\!=\!& \gr(\sumclosed(1,1,2,3,5,7,\overbrace{8,8,\ldots,8}^{k-5},9,10,11,\overline{12}))
(\ell_n) &\!\equiv\!& (1,1,2,3,5,7,\overbrace{8,8,\ldots,8}^{k-5},\overline9) \\[8pt]
(u_n)    &\!\equiv\!& (1,1,2,3,5,7,\overbrace{8,8,\ldots,8}^{k-5},9,10,11,\overline{12})
\end{array}
$$
We now apply Lemma~\ref{lemmaGRInterval2}.
It can be checked that the gap inequalities %~\eqref{eqGammaIneqsGen}
necessitate only that %require % oblige %
the growth rate does not exceed $\gamma_{\max}\approx2.470979$, the unique positive root of the quartic $x^4-2\+x^3-x^2-1$. This is independent of the value of $k$, and
%can been shown to be
is
greater than $\gr(\sumclosed(u_n))$ for all odd $k\geqslant5$
since $\gr(\sumclosed(u_n))\approx2.362008$ if $k=5$.

So, for each $k$ we have an interval of growth rates:
If $\gamma$ is such that it is the case that $\gr(\sumclosed(\ell_n))\leqslant\gamma\leqslant\gr(\sumclosed(u_n))$, then there is some permutation class in $\Phi_{5,3,k}$ whose growth rate is $\gamma$.

Moreover, by Lemma~\ref{lemmaGRSumClosedClose},
\[
\liminfty[k] \gr(\sumclosed(\ell_n)) \;=\;
\liminfty[k] \gr(\sumclosed(u_n)) \;=\;
\gr(\sumclosed(q_n)) \;=\; \theta_B, %.
%\qedhere
\]
so these intervals can be found arbitrarily close to $\theta_B$.
%so the infimum of these intervals is $\theta_B$ as required.
\end{proof}

For our second theorem, we need to add extra sets of indecomposables to our constructions.
As before, we start with $r\geqslant3$, $s\geqslant2$, and odd $k\geqslant5$.
A suitable collection, $\HHH$, of extra sets of indecomposables satisfies the following two conditions:
\begin{bullets}
  \item Each set in $\HHH$ is disjoint from $Q^{r,s}$ and also disjoint from each $R_n^{r,s}$ for odd $n\geqslant k$.
  \item For each set $H\in\HHH$, the union %it is the case that
  $H\cup Q^{r,s}$ is a downward closed set of indecomposables.
\end{bullets}
Given these conditions,
%Now suppose that $\HHH$ is a collection of sets of indecomposables, each of which is disjoint from $Q^{r,s}$ and also disjoint from each $R_n^{r,s}$ for odd $n\geqslant k$. Furthermore, suppose that for each set $H\in\HHH$ it is the case that $H\cup Q^{r,s}$ is a downward closed set of indecomposables.
%Then
we define $\Phi_{r,s,k,\HHH}$ to be the family of those permutation classes whose indecomposables are the union of $Q^{r,s}$
together with an element of $\HHH$
and an element of $\FFF_n^{r,s}$ for each odd $n\geqslant k$:
$$
\Phi_{r,s,k,\HHH} \;=\;
\Big\{
%\,
\sumclosed \big( Q^{r,s} \:\cup\: H \:\cup\: S_k \:\cup\: S_{k+2} \:\cup\: S_{k+4} \:\cup\: \ldots
\big) \::\:
H \in \HHH \:\text{~and~}\:
%\forall\+ n\+ .\:
S_n\in \FFF_n^{r,s}
, \, n=k,k+2,\ldots\!
%\,
\Big\} .
$$

%\Needspace*{2\baselineskip}
We define %specify
our extra sets of indecomposables
by specifying %stating
an upper set of \emph{maximal} indecomposables, $U$, and a lower set of \emph{required} indecomposables, $L$.
If $S$ is a set of indecomposables, let
${\downarrow}S$ denote the downset consisting of those indecomposables that are subpermutations of elements of $S$.
Then, given $r,s\geqslant2$ and suitable sets of indecomposables $U$ and $L$, we use
%$(U\,{\Downarrow}\,D)_{r,s}$
$U\,{\Downarrow}_{r,s}\+L$
to denote the collection of those downward closed subsets of ${\downarrow}U\setminus Q^{r,s}$
that include the set of required indecomposables~$L$.
For instance, we have
%$\FFF^{r,s}_n=\big(\{\omega^{r,s}\}\,{\Downarrow}\,\{\omega^{3,2}\}\big)_{r,s}$.
$\FFF^{r,s}_n=\{\omega_n^{r,s}\}\,{\Downarrow}_{r,s}\+\{\omega_n^{3,2}\}$.

%\rulebreak

Let's consider as an example the family $\Phi_{5,3,5,\HHH}$ where $\HHH %=\{\pi_0\}\,{\Downarrow}_{5,3}\+\{\mu_1\}
= \{\omega_5^{7,1}\}\,{\Downarrow}_{5,3}\+\{\psi_7\}$;
see $\pi_0$ and $\mu_1$ in Figure~\ref{figColABPerms} below.
The sets in $\HHH$ consist of indecomposables that are subpermutations of $\omega_5^{7,1}$ but are not in $Q^{5,3}$.
There are six such indecomposables: $\psi_7$, $\psi_8$, $\omega_4^{6,1}$, $\omega_4^{7,1}$, $\omega_5^{6,1}$, and $\omega_5^{7,1}$.
The collection $\HHH$ consists of the nine downward closed subsets of these six that contain $\psi_7$.

The set of indecomposables in a permutation class in our family consists of $Q^{5,3}$ together with an element of $\FFF_n^{5,3}$ for each odd $n\geqslant5$ and an extra set from $\HHH$.
As in the proof of Theorem~\ref{thmTheta},
$Q^{5,3}$ contributes $(q_n)=(1,1,2,3,5,7,\overline8)$ to the enumeration of the indecomposables, and
for each odd $n\geqslant5$, there are ten distinct generalised digits associated with sets of indecomposables in $\FFF_n^{5,3}$, ranging between 1.1 and 1.2221.
Let $F_n$ consist of this set of generalised digits for odd $n\geqslant7$ and otherwise contain only 0.
The extra sets in $\HHH$
have seven distinct enumerations. These can be represented by the set of generalised digits $H_8=\{1,1.1,1.11,1.2,1.21,1.22,1.221\}$; let $H_n=\{0\}$ for $n\neq8$.
Now, for each $n$, let $A_n=\{q_n+f_n+h_n:f_n\in F_n,h_n\in H_n\}$.

So, by construction, for every permutation class $\sumclosed\SSS\in\Phi_{5,3,5,\HHH}$ there is a corresponding sequence $(a_n)$, with each
$a_n\in A_n$, that enumerates $\SSS$.
The minimal enumeration sequence is $(\ell_n)\equiv(1, 1, 2, 3, 5, 7, 9, 10, \overline9)$ for which the growth rate is $\gr(\sumclosed(\ell_n)) \approx 2.36028$. Similarly,
the maximal enumeration sequence is $(u_n)\equiv(1, 1, 2, 3, 5, 7, 9, 11, 13, 14, 13, \overline{12})$ for which the growth rate is $\gr(\sumclosed(u_n)) \approx 2.36420$.
The gap inequalities only necessitate that the growth rate not exceed $\gamma_{\max}\approx2.47098$. Thus the growth rates of permutation classes in our example family $\Phi_{5,3,5,\HHH}$ form an interval.

%\rulebreak

The proof of our second theorem follows similar lines to this example.
%We now have all we need to prove our second theorem.

%\todob{} [computer required]

%\thmbox{
\begin{repthm}{thmLambda}
Let $\lambda_B\approx2.35698$ be the unique positive root of $x^8-2\+x^7-x^5-x^4-2\+x^3-2\+x^2-x-1$.
Every value at least $\lambda_B$ is the growth rate of a permutation class.
\end{repthm}
%}
%\vspace{-18pt}
\begin{proof}
In~\cite{Vatter2010b}, Vatter has shown that there are permutation classes of every growth rate at least $\lambda_A\approx2.48187$ (the unique real root of $x^5-2\+x^4-2\+x^2-2\+x-1$).
It thus suffices to exhibit permutation classes whose growth rates cover the interval $[\lambda_B,\lambda_A]$.
%\Needspace*{3\baselineskip}
With the $\pi_i$ and $\mu_j$ as in Figures~\ref{figColABPerms}--\ref{figColEPerms} below,
we claim that the permutation classes in the following five families meet our needs:

\begin{tabular}{lccllccl}
\textbf{Family} &$\!\!\!\!\!$\textbf{A} &$\!\!\!\!\!\!$\textbf{:}&
$\!\Phi_{5,3,7,\AAA}$ & $\!\!$where$\!\!$ &
$\AAA$ & $\!\!\!=\!\!\!$ & $\{\pi_1\}\,{\Downarrow}_{5,3}\+\{\mu_1\}$ \\[3pt]
\textbf{Family} &$\!\!\!\!\!$\textbf{B} &$\!\!\!\!\!\!$\textbf{:}&
$\!\Phi_{5,3,5,\BBB}$ & $\!\!$where$\!\!$ &
$\BBB$ & $\!\!\!=\!\!\!$ & $\{\pi_2\}\,{\Downarrow}_{5,3}\+\varempty$ \\[3pt]
\textbf{Family} &$\!\!\!\!\!$\textbf{C} &$\!\!\!\!\!\!$\textbf{:}&
$\!\Phi_{9,8,5,\CCC}$ & $\!\!$where$\!\!$ &
$\CCC$ & $\!\!\!=\!\!\!$ & $\{\pi_3\}\,{\Downarrow}_{9,8}\+\{\mu_2\}$ \\[3pt]
\textbf{Family} &$\!\!\!\!\!$\textbf{D} &$\!\!\!\!\!\!$\textbf{:}&
$\!\Phi_{5,3,5,\DDD}$ & $\!\!$where$\!\!$ &
$\DDD$ & $\!\!\!=\!\!\!$ & $\{\pi_4,\pi_5\}\,{\Downarrow}_{5,3}\+\{\mu_3\}$ \\[3pt]
\textbf{Family} &$\!\!\!\!\!$\textbf{E} &$\!\!\!\!\!\!$\textbf{:}&
$\!\Phi_{5,5,5,\EEE}$ & $\!\!$where$\!\!$ &
$\EEE$ & $\!\!\!=\!\!\!$ & $\{\pi_6,\pi_7,\pi_8\}\,{\Downarrow}_{5,5}\+\{\mu_3,\mu_6\} \,\cup\, \{\pi_6,\pi_7,\pi_8\}\,{\Downarrow}_{5,5}\+\{\mu_2,\mu_4,\mu_5\}$
\end{tabular}

\vspace{6pt}

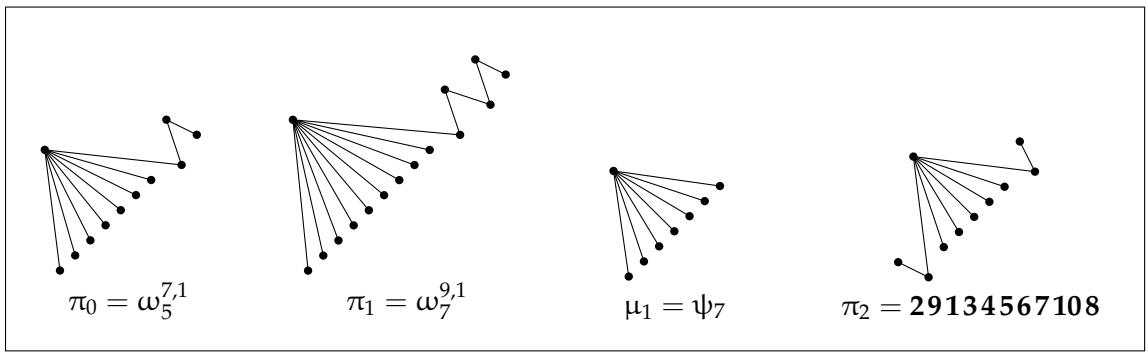
\begin{figure}[ht]
  \mybox{
  \vspace{3pt}
  $$
  \begin{tikzpicture}[scale=0.2]
    \plotpermnobox{11}{9, 1, 2, 3, 4, 5, 6, 7, 11, 8, 10}
    \draw [thin] (2,1)--(1,9)--(3,2);
    \draw [thin] (4,3)--(1,9)--(5,4);
    \draw [thin] (6,5)--(1,9)--(7,6);
    \draw [thin] (8,7)--(1,9)--(10,8);
    \draw [thin] (10,8)--(9,11)--(11,10);
    \node at(6,-1){$\;\;\pi_0=\omega_5^{7,1}$};
  \end{tikzpicture}
  \qquad\quad
  \begin{tikzpicture}[scale=0.2]
    \plotpermnobox{15}{11, 1, 2, 3, 4, 5, 6, 7, 8, 9, 13, 10, 15, 12, 14}
    \draw [thin] (2,1)--(1,11)--(3,2);
    \draw [thin] (4,3)--(1,11)--(5,4);
    \draw [thin] (6,5)--(1,11)--(7,6);
    \draw [thin] (8,7)--(1,11)--(9,8);
    \draw [thin] (10,9)--(1,11)--(12,10)--(11,13)--(14,12)--(13,15)--(15,14);
    \node at(8,-1){$\;\;\pi_1=\omega_7^{9,1}$};
  \end{tikzpicture}
  \qquad\quad
  \begin{tikzpicture}[scale=0.2]
    \plotpermnobox{8}{8, 1, 2, 3, 4, 5, 6, 7}
    \draw [thin] (2,1)--(1,8)--(3,2);
    \draw [thin] (4,3)--(1,8)--(5,4);
    \draw [thin] (6,5)--(1,8)--(7,6);
    \draw [thin] (1,8)--(8,7);
    \node at(4.5,-1){$\;\;\mu_1=\psi_7$};
  \end{tikzpicture}
  \qquad\quad
  \begin{tikzpicture}[scale=0.2]
    \plotpermnobox{10}{2,9,1,3,4,5,6,7,10,8}
    \draw [thin] (4,3)--(2,9)--(5,4);
    \draw [thin] (6,5)--(2,9)--(7,6);
    \draw [thin] (8,7)--(2,9);
    \draw [thin] (1,2)--(3,1)--(2,9)--(10,8)--(9,10);
    \node at(5.5,-1){$\;\pi_2=\mathbf{2\;\!9\;\!1\;\!3\;\!4\;\!5\;\!6\;\!7\;\!10\;\!8}$};
  \end{tikzpicture}
  \vspace{-6pt}
  $$
  }
  \caption{Permutations used to define Families~A and~B}\label{figColABPerms}
\end{figure}
%\Needspace*{6\baselineskip}
We briefly outline the calculations concerning each of these families. The details can be fleshed out in the same way as the example above.

\vspace{6pt}
\newpage %%%%%%%%
\textbf{Family A:} $\,\Phi_{5,3,7,\AAA}$
\begin{bullets}
\raggedright
%\item $\AAA=\{\pi_1\}\,{\Downarrow}_{5,3}\+\{\mu_1\}$
\item $Q^{5,3}$ is enumerated by $(1,1,2,3,5,7,\overline8)$.
\item For each odd $n\geqslant7$, there are 10 distinct generalised digits associated with sets of indecomposables in $\FFF_n^{5,3}$, ranging between 1.1 and 1.2221.
\item There are 47 distinct enumerations of sets of indecomposables in $\AAA=\{\pi_1\}\,{\Downarrow}_{5,3}\+\{\mu_1\}$, ranging between $(0^7,1)$ and $(0^7,1,2,3,4,4,3,2,1)$.
\item The indecomposables in the smallest permutation class in {Family A} are enumerated by $(\ell_n) \equiv (1,1,2,3,5,7,8,\overline9)$.
\item The indecomposables in the largest permutation class in {Family A} are enumerated by $(u_n) \equiv (1,1,2,3,5,7,8,9,11,13,15,16,15,14,13,\overline{12})$.
\item The gap inequalities require the growth rate not to exceed $\gamma_{\max}\approx2.470979$.
\item $\gr(\sumclosed(\ell_n)) = \lambda_B \approx 2.356983$; $\gr(\sumclosed(u_n)) \approx 2.359320$.
\end{bullets}
\vspace{6pt}
%\Needspace*{2\baselineskip}
\textbf{Family B:} $\,\Phi_{5,3,5,\BBB}$
\begin{bullets}
\raggedright
\item $Q^{5,3}$ is enumerated by $(1,1,2,3,5,7,\overline8)$.
\item For each odd $n\geqslant5$, there are 10 distinct generalised digits associated with sets of indecomposables in $\FFF_n^{5,3}$, ranging between 1.1 and 1.2221.
\item There are 29 distinct enumerations of sets of indecomposables in $\BBB=\{\pi_2\}\,{\Downarrow}_{5,3}\+\varempty$, ranging between $(0)$ and
$(0^5, 1, 2, 3, 3, 1)$.
\item The indecomposables in the smallest permutation class in {Family B} are enumerated by $(\ell_n) \equiv (1, 1, 2, 3, 5, 7,\overline9)$.
\item The indecomposables in the largest permutation class in {Family B} are enumerated by $(u_n) \equiv (1, 1, 2, 3, 5, 8, 11, 13, 14, 13,\overline{12})$.
\item The gap inequalities require the growth rate not to exceed $\gamma_{\max}\approx2.470979$.
\item $\gr(\sumclosed(\ell_n)) \approx 2.359304$; $\gr(\sumclosed(u_n)) \approx 2.375872$.
\end{bullets}
\vspace{6pt}
\begin{figure}[ht]
  \mybox{
  \vspace{-6pt}
  $$
  \begin{tikzpicture}[scale=0.2]
    \plotpermnobox{10}{3,1,8,2,4,5,6,10,7,9}
    \draw [thin] (5,4)--(3,8)--(7,6);
    \draw [thin] (6,5)--(3,8);
    \draw [thin] (2,1)--(1,3)--(4,2)--(3,8)--(9,7)--(8,10)--(10,9);
    \node at(5.5,-1){$\;\pi_3=\mathbf{3\;\!1\;\!8\;\!2\;\!4\;\!5\;\!6\;\!10\;\!7\;\!9}$};
  \end{tikzpicture}
  \;\;
  \begin{tikzpicture}[scale=0.2]
    \plotpermnobox{6}{2,5,1,3,6,4}
    \draw [thin] (1,2)--(3,1)--(2,5)--(6,4)--(5,6);
    \draw [thin] (2,5)--(4,3);
    \node at(3.5,-1){$\;\mu_2=\mathbf{251364}$};
  \end{tikzpicture}
  \;\;
  \begin{tikzpicture}[scale=0.2]
    \plotpermnobox{6}{3, 1, 4, 5, 6, 2}
    \draw [thin] (2,1)--(1,3)--(6,2)--(5,6);
    \draw [thin] (4,5)--(6,2)--(3,4);
    \node at(3.5,-1){$\;\pi_4=\mathbf{314562}$};
  \end{tikzpicture}
  \;\;
  \begin{tikzpicture}[scale=0.2]
    \plotpermnobox{9}{2, 8, 1, 3, 4, 5, 6, 9, 7}
    \draw [thin] (4,3)--(2,8)--(5,4);
    \draw [thin] (6,5)--(2,8)--(7,6);
    \draw [thin] (1,2)--(3,1)--(2,8)--(9,7)--(8,9);
    \node at(5,-1){$\;\pi_5=\mathbf{281345697}$};
  \end{tikzpicture}
  \;\;
  \begin{tikzpicture}[scale=0.2]
    \plotpermnobox{4}{2,3,4,1}
    \draw [thin] (1,2)--(4,1)--(3,4);
    \draw [thin] (2,3)--(4,1);
    \node at(2.5,-1){$\;\mu_3=\mathbf{2341}$};
  \end{tikzpicture}
  \vspace{-9pt}
  $$
  }
  \caption{Permutations used to define Families~C,~D and~E}\label{figColCDEPerms}
\end{figure}
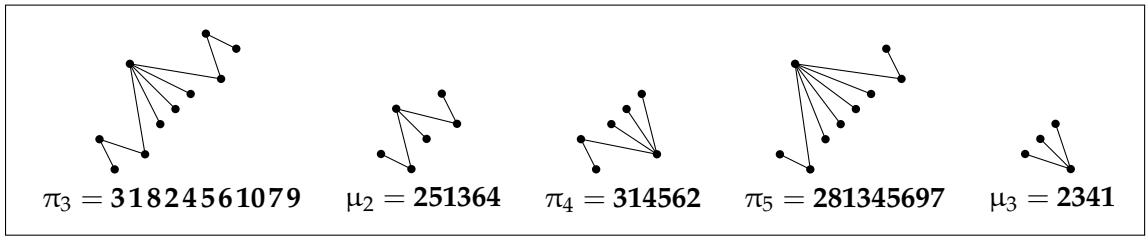
%\Needspace*{4\baselineskip}
\textbf{Family C:} $\,\Phi_{9,8,5,\CCC}$
\begin{bullets}
\raggedright
\item $Q^{9,8}$ is enumerated by $(1, 1, 2, 3, 5, 7, 9, 11, 13, 15,\overline{17})$.
\item For each odd $n\geqslant5$, there are 574 distinct generalised digits associated with sets of indecomposables in $\FFF_n^{9,8}$, ranging between 1.1 and 1.2345677654321.
\item There are 19 distinct enumerations of sets of indecomposables in $\CCC=\{\pi_3\}\,{\Downarrow}_{9,8}\+\{\mu_2\}$, ranging between $(0^5, 1)$ and
$(0^5, 1, 3, 4, 3, 1)$.
\item The indecomposables in the smallest permutation class in {Family C} are enumerated by $(\ell_n) \equiv (1, 1, 2, 3, 5, 8, 10, 12, 14, 16,\overline{18})$.
\item The indecomposables in the largest permutation class in {Family C} are enumerated by $(u_n) \equiv (1, 1, 2, 3, 5, 8, 13, 17, 20, 22, 26, 29, 33, 36, 39, 41, 43, 44,\overline{45})$.
\Needspace*{2\baselineskip}
\item The gap inequalities require the growth rate not to exceed $\gamma_{\max}\approx2.786389$.
\item $\gr(\sumclosed(\ell_n)) \approx 2.373983$; $\gr(\sumclosed(u_n)) \approx 2.389043$.
\end{bullets}
\vspace{6pt}
%\Needspace*{2\baselineskip}
\textbf{Family D:} $\,\Phi_{5,3,5,\DDD}$
\begin{bullets}
\raggedright
\item $Q^{5,3}$ is enumerated by $(1,1,2,3,5,7,\overline8)$.
\item For each odd $n\geqslant5$, there are 10 distinct generalised digits associated with sets of indecomposables in $\FFF_n^{5,3}$, ranging between 1.1 and 1.2221.
\item There are 37 distinct enumerations of sets of indecomposables in $\DDD=\{\pi_4,\pi_5\}\,{\Downarrow}_{5,3}\+\{\mu_3\}$, ranging between $(0^3, 1)$ and
$(0^3, 1, 2, 2, 2, 2, 1)$.
\item The indecomposables in the smallest permutation class in {Family D} are enumerated by $(\ell_n) \equiv (1, 1, 2, 4, 5, 7,\overline9)$.
\item The indecomposables in the largest permutation class in {Family D} are enumerated by $(u_n) \equiv (1, 1, 2, 4, 7, 9, 11,\overline{12})$.
\item The gap inequalities require the growth rate not to exceed $\gamma_{\max}\approx 2.470979$.
\item $\gr(\sumclosed(\ell_n)) \approx 2.389038$; $\gr(\sumclosed(u_n)) \approx  2.430059$.
\end{bullets}
\vspace{6pt}
\begin{figure}[ht]
  \mybox{
  \vspace{-6pt}
  $$
  \begin{tikzpicture}[scale=0.2]
    \plotpermnobox{4}{3,4,1,2}
    \draw [thin] (1,3)--(3,1)--(2,4)--(4,2)--(1,3);
    \node at(2.5,-1){$\;\pi_6=\mathbf{3412}$};
  \end{tikzpicture}
  \quad\;\,
  \begin{tikzpicture}[scale=0.2]
    \plotpermnobox{7}{2, 6, 1, 3, 4, 7, 5}
    \draw [thin] (4,3)--(2,6)--(5,4);
    \draw [thin] (1,2)--(3,1)--(2,6)--(7,5)--(6,7);
    \node at(4,-1){$\;\pi_7=\mathbf{2613475}$};
  \end{tikzpicture}
  \quad\;\,
  \begin{tikzpicture}[scale=0.2]
    \plotpermnobox{8}{3, 1, 4, 5, 6, 8, 2, 7}
    \draw [thin] (3,4)--(7,2)--(4,5);
    \draw [thin] (7,2)--(5,6);
    \draw [thin] (2,1)--(1,3)--(7,2)--(6,8)--(8,7);
    \node at(4.5,-1){$\;\pi_8=\mathbf{31456827}$};
  \end{tikzpicture}
  \quad\;\,
  \begin{tikzpicture}[scale=0.2]
    \plotpermnobox{5}{2, 3, 4, 5, 1}
    \draw [thin] (1,2)--(5,1)--(3,4);
    \draw [thin] (2,3)--(5,1)--(4,5);
    \node at(3,-1){$\;\mu_4=\mathbf{23451}$};
  \end{tikzpicture}
  \quad\;\,
  \begin{tikzpicture}[scale=0.2]
    \plotpermnobox{5}{2, 3, 5, 1, 4}
    \draw [thin] (1,2)--(4,1)--(3,5)--(5,4);
    \draw [thin] (2,3)--(4,1);
    \node at(3,-1){$\;\mu_5=\mathbf{23514}$};
  \end{tikzpicture}
  \vspace{-9pt}
  $$
  }
  \caption{Permutations used to define Family~E}\label{figColEPerms}
\end{figure}
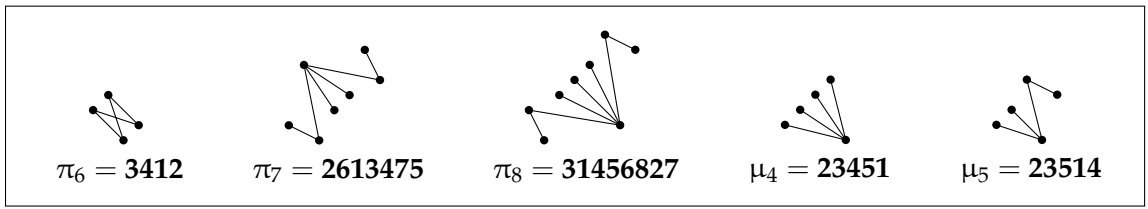
%\Needspace*{6\baselineskip}
\textbf{Family E:} $\,\Phi_{5,5,5,\EEE}$
\begin{bullets}
\raggedright
\item $Q^{5,5}$ is enumerated by $(1, 1, 2, 3, 5, 7, 9,\overline{10})$.
\item For each odd $n\geqslant5$, there are 26 distinct generalised digits associated with sets of indecomposables in $\FFF_n^{5,5}$, ranging between 1.1 and 1.234321.
\item There are 61 distinct enumerations of sets of indecomposables in $\EEE=\{\pi_6,\pi_7,\pi_8\}\,{\Downarrow}_{5,5}\+\{\mu_3,\mu_6\}$ $\,\cup\,$ $\{\pi_6,\pi_7,\pi_8\}\,{\Downarrow}_{5,5}\+\{\mu_2,\mu_4,\mu_5\}$, ranging between $(0^3, 1, 2, 1)$ and
$(0^3, 2, 3, 5, 4, 1)$.
\item The indecomposables in the smallest permutation class in {Family E} are enumerated by $(\ell_n) \equiv (1, 1, 2, 4, 7, 8, 10,\overline{11})$.
\item The indecomposables in the largest permutation class in {Family E} are enumerated by $(u_n) \equiv (1, 1, 2, 5, 8, 12, 14, 13, 14, 16, 17,\overline{18})$.
\item The gap inequalities require the growth rate to be at least $\gamma_{\min}\approx 2.363728$, but not to exceed $\gamma_{\max}\approx 2.489043$.
\item $\gr(\sumclosed(\ell_n)) \approx 2.422247$; $\gr(\sumclosed(u_n)) \approx  2.485938 > \lambda_A$.
\end{bullets}
\vspace{6pt}
\Needspace*{2\baselineskip}
Here is a summary:
%\begin{table}[h]
\begin{center}
\renewcommand{\arraystretch}{1.25}
\begin{tabular}{|c|l|l|}
  \hline
  %\emph{Collection}
  & %\emph{Min/max number of indecomposables of each size}
    \emph{Enumeration of smallest and largest sets of indecomposables}
  & \emph{Interval covered} \\
  \hline
  {~A~} &
    $\hspace{-\arraycolsep}\begin{array}{l}
      (1,1,2,3,5,7,8,\overline9) \\
      (1,1,2,3,5,7,8,9,11,13,15,16,15,14,13,\overline{12})
    \end{array}\hspace{-\arraycolsep}$ &
    $2.356983$ -- $2.359320$ \\
  \hline
  {B} &
    $\hspace{-\arraycolsep}\begin{array}{l}
      (1,1,2,3,5,7,\overline9) \\
      (1,1,2,3,5,8,11,13,14,13,\overline{12})
    \end{array}\hspace{-\arraycolsep}$ &
    $2.359304$ -- $2.375872$ \\
  \hline
  {C} &
    $\hspace{-\arraycolsep}\begin{array}{l}
      (1,1,2,3,5,8,10,12,14,16,\overline{18}) \\
      (1,1,2,3,5,8,13,17,20,22,26,29,33,36,39,41,43,44,\overline{45})
    \end{array}\hspace{-\arraycolsep}$ &
    $2.373983$ -- $2.389043$ \\
  \hline
  {D} &
    $\hspace{-\arraycolsep}\begin{array}{l}
      (1,1,2,4,5,7,\overline9) \\
      (1,1,2,4,7,9,11,\overline{12})
    \end{array}\hspace{-\arraycolsep}$ &
    $2.389038$ -- $2.430059$ \\
  \hline
  {E} &
    $\hspace{-\arraycolsep}\begin{array}{l}
      (1,1,2,4,7,8,10,\overline{11}) \\
      (1,1,2,5,8,12,14,13,14,16,17,\overline{18})
    \end{array}\hspace{-\arraycolsep}$ &
    $2.422247$ -- $2.485938$ \\
  \hline
\end{tabular}
%\caption{Collections of permutation classes whose growth rates cover $[\lambda_B,\lambda_A]$}\label{tableCollections}
\end{center}
%\end{table}
Thus we have five intervals of growth rates that cover $[\lambda_B,\lambda_A]$.
\end{proof}

% ================================================================
\cleardoublepage

%} %\HIDE

\part{\textsc{Hasse Graphs}}\label{partIII} % Part III
%\HIDE{
\setcounter{chapter}{8}
% LaTeX file

\newcommand{\prefix}[1]{#1\mbox{-}\nobreak\hspace{0pt}}

% ================================================================
%\chapter{Introduction}\label{chap09}
\chapter{Introducing Hasse graphs}\label{chap09}

We now turn to the investigation of the structure of various classes of permutations, by considering how the \emph{Hasse graphs} of permutations in these classes can be built from a sequence of rooted source graphs.
For most of the classes, the analysis leads to functional equations for their generating functions.
In some cases, we are able to use the kernel method to solve these equations and determine the corresponding algebraic generating function.
Our approach is similar to that of ``adding a slice'', used previously to enumerate various classes of polyominoes and other combinatorial structures.

\section{Building Hasse graphs}

Corresponding to each permutation $\sigma$, we define an ordered plane graph $H_\sigma$, which we call its \emph{Hasse graph}.
To create the Hasse graph for a permutation $\sigma=\sigma_1\ldots\sigma_n$, let
vertex $i$ be the point $(i,\sigma_i)$ in the Euclidean plane.
Now, for each pair $i,j$ such that $i<j$, add an edge between vertices $i$ and $j$, if and only if $\sigma(i)<\sigma(j)$ and there is no vertex $k$ such that $i<k<j$ and $\sigma(i)<\sigma(k)<\sigma(j)$.
Note that the edges of $H_\sigma$ correspond to the edges of the Hasse diagram of the sub-poset, $P_\sigma$, of $\mathbb{N}^2$ consisting of the points $(i,\sigma_i)$.
See Figure~\ref{figHasseGraph} for an illustration.
%the figures throughout the following chapters for illustrations showing the Hasse graphs of permutations.
Observe, also, that the Hasse graph of a permutation $\sigma$ is the transitive reduction of the reversal of the ordered (inversion) graph of the reversal of $\sigma$.
In practice, we tend not to distinguish between a permutation and its Hasse graph.
The minimal elements of the poset $P_\sigma$ are the left-to-right minima of the permutation $\sigma$.
Similarly, maximal elements of $P_\sigma$ are the right-to-left maxima of $\sigma$.

\begin{figure}[ht]
  $$
  \begin{tikzpicture}[scale=0.275,line join=round]
    \draw [red,thin] (3,11)--(5,16);
    \draw [red,thin] (5,16)--(4,4)--(7,14);
    \draw [red,thin] (12,13)--(8,8)--(13,12)--(9,6)--(12,13);
    \draw [red,thin] (6,1)--(7,14);
    \draw [red,thin] (8,8)--(6,1)--(9,6);
    \draw [red,thin] (10,3)--(12,13)--(11,2)--(13,12)--(10,3)--(14,10)--(11,2)--(15,9)--(10,3)--(16,7)--(11,2)--(17,5)--(10,3);
    \draw [blue,very thick] (2,17)--(1,15)--(5,16);
    \draw [blue,very thick] (7,14)--(3,11);
    \draw [blue,very thick] (12,13)--(3,11)--(13,12);
    \draw [blue,very thick] (16,7)--(9,6)--(4,4)--(8,8)--(14,10)--(9,6)--(15,9)--(8,8);
    \draw [blue,very thick] (4,4)--(17,5);
    \draw [blue,very thick] (10,3)--(6,1)--(11,2);
    \plotpermnobox{}{15,17,11, 4,16, 1,14, 8, 6, 3, 2,13,12,10, 9, 7, 5}
    %                 1  2  3  4  5  6  7  8  9 10 11 12 13 14 15 16 17
    \draw [thin] (1,15)  circle [radius=0.4];
    \draw [thin] (3,11) circle [radius=0.4];
    \draw [thin] (4,4) circle [radius=0.4];
    \draw [thin] (6,1) circle [radius=0.4];
   %\node[above] at (12,12.6) {${}^\bigstar$};
  \end{tikzpicture}
  $$
  \caption{The Hasse graph of a permutation, showing its four source graphs, rooted at the circled vertices}
  \label{figHasseGraph}
\end{figure}
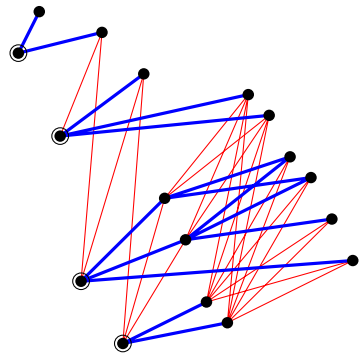
Hasse graphs of permutations were previously considered by Bousquet-M\'elou \& Butler~\cite{BMB2007}, who
determined the algebraic generating function of the class of \emph{forest-like} permutations
whose Hasse graphs are acyclic.
We revisit forest-like permutations in Chapter~\myref{chapForest} below.

Given a permutation $\sigma$, we partition the vertices of $H_\sigma$ by spanning it with a sequence of graphs, which we call the \emph{source graphs} of $\sigma$.
There is one source graph for each left-to-right minimum of $\sigma$.
Suppose $u_1,\ldots,u_m$ are the vertices of $H_\sigma$ corresponding to the left-to-right minima of $\sigma$, listed from left to right.
Then the $k$th source graph $G_k$ is the graph induced by $u_k$ and those vertices of $H_\sigma$ lying above and to the right of $u_k$ that are not in $G_1,\ldots,G_{k-1}$. We refer to $u_k$ as the \emph{root} of source graph $G_k$.
See Figure~\ref{figHasseGraph} for an illustration.
In the figures, edges of source graphs are shown as thick lines.

The structure of the source graphs of permutations in a specific permutation class is constrained by the need to avoid the patterns in the basis of the class.
If the source graphs for a class are acyclic, we refer to them as \emph{source trees}.

The \emph{bottom subgraph}
of a Hasse graph
is
the graph induced by its lowest vertex (the
least entry in the permutation) and all the vertices lying above and to its right.
Observe that the bottom subgraph may contain vertices from more than one source graph.
For example, the bottom subgraph of the Hasse graph in Figure~\ref{figHasseGraph} contains vertices from three source graphs.
Bottom subgraphs of permutations in a specific permutation class satisfy the same structural restrictions as do the source graphs.
We refer to an acyclic bottom subgraph as a \emph{bottom subtree}.

We build the Hasse graph of a permutation by starting with a single source graph and then repeatedly adding another source graph to the lower right, rooted at a new left-to-right minimum.
The technique is similar to that of ``adding a slice'', which has been used successfully to enumerate constrained compositions and other classes of polyominoes, a topic of interest in statistical mechanics
(see, for example, Bousquet-M\'elou's review paper~\cite{Bousquet-Melou1996}, the books of van Rensburg~\cite{vanRensburg2000} and Guttmann~\cite{Guttmann2009}, and~\cite[Examples~III.22 and~V.20]{FS2009}).
When a source graph is added, its vertices are interleaved horizontally with the non-root vertices of the bottom subgraph of the graph built from the previous source graphs.
Typically, the positioning of the vertices of the new source graph
relative to those of the bottom subgraph
is constrained by the need to avoid forbidden patterns.

When a new source graph is added, additional edges may be added, each joining a vertex in the source graph to a vertex in the bottom subgraph.
We call these edges \emph{tendrils} and say that the vertices at the upper ends of the tendrils are \emph{tethered} by the tendrils.
Tendrils are represented by thin edges in Figure~\ref{figHasseGraph} and in other figures below.
Clearly, the positioning of the vertices of the source graph
relative to those of the bottom subgraph determines which tendrils are added, and \emph{vice versa}.
Rather than analysing the possible positions for the vertices of the source graph,
it is often easier to consider the permissable choices for the tendrils instead.

\section{Avoiding 1324}\label{sect1324Tethering}
A number of the classes that we analyse in the following chapters
are subclasses of $\av(\pdiamond)$.
What can we say about the Hasse graphs of permutations in such classes?

\begin{figure}[ht]
  $$
  \begin{tikzpicture}[scale=0.25]
    \draw [blue,ultra thick] (1,15)--(4,20);
    \draw [blue,ultra thick] (2,9)--(3,10)--(5,13)--(15,14);
    \draw [blue,ultra thick] (7,5)--(12,6)--(13,8);
    \draw [blue,ultra thick] (14,1)--(19,3);
    \draw [blue,very thick] (1,15)--(6,16)--(8,19);
    \draw [blue,very thick] (10,18)--(6,16)--(11,17);
    \draw [blue,very thick] (3,10)--(16,11)--(17,12);
    \draw [blue,very thick] (12,6)--(18,7);
    \draw [blue,very thick] (14,1)--(20,2);
    \draw [red,thin] (3,10)--(4,20);
    \draw [red,thin] (5,13)--(6,16);
    \draw [red,thin] (8,19)--(7,5)--(10,18)--(9,4)--(11,17)--(7,5);
    \draw [red,thin] (9,4)--(12,6);
    \draw [red,thin] (18,7)--(14,1)--(16,11)--(13,8)--(15,14)--(14,1);
    \plotpermnobox{20}{15, 9, 10, 20, 13, 16, 5, 19, 4, 18, 17, 6, 8, 1, 14, 11, 12, 7, 3, 2}
    \draw [thin] (1,15) circle [radius=0.4];
    \draw [thin] (2,9)  circle [radius=0.4];
    \draw [thin] (7,5)  circle [radius=0.4];
    \draw [thin] (9,4)  circle [radius=0.4];
    \draw [thin] (14,1) circle [radius=0.4];
  \end{tikzpicture}
  $$
  \caption{The graph of a permutation
  avoiding $\pdiamond$,
  showing its construction from five source trees}
  \label{fig1324HasseGraphExample}
\end{figure}
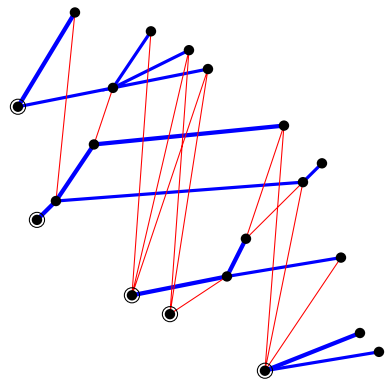
If a permutation avoids %the pattern
$\pdiamond$, then its
Hasse graph does not have the diamond graph
$H_\pdiamond =
\raisebox{-2.5pt}{\begin{tikzpicture}[scale=0.12,line join=round]
  \draw[] (1,1)--(2,3)--(4,4)--(3,2)--(1,1);
  \plotpermnobox{}{1,3,2,4}
\end{tikzpicture}}
$
as a minor.
In particular,
each source graph of $H_\sigma$ is a \emph{tree}, and so is
its bottom subgraph.
We call the uppermost (leftmost) branch (path towards the upper right from the root) of a source tree its \emph{trunk}.
The topmost vertex of the trunk is its \emph{tip}.
When adding a new source tree to the lower right of a Hasse graph $H$,
to avoid creating a $\pdiamond$, any vertices of the source tree not in the trunk must be positioned to the right of all
vertices of $H$.
Thus the lower ends of any new tendrils created are on the trunk of the new source tree.
Moreover, when a new source tree is added, no vertex in (the bottom subtree of) $H$ is tethered by more than one tendril.
See Figure~\ref{fig1324HasseGraphExample} for an illustration.

%\subsubsection*{Outline of Part IV}
\section{Outline of Part III}

In the remaining chapters of this thesis, we study seven different permutation classes.
In four cases, analysis of the Hasse graph structure enables us to ascertain the algebraic generating function for the class. Two of these results are new.
In two others, we are only able to establish functional equations.
In the final case, the $\pdiamond$-avoiders, we achieve neither of these, but are able to determine a new lower bound for the growth rate of the class.

In Chapters~\myref{chapR}--\myref{chapForest}, we consider three classes that avoid $\pdiamond$.
In Chapter~\myref{chapR}, we look at $\av(\pdiamond,\mathbf{2314})$,
which is counted by the (large) Schr\"oder numbers,
and present a new derivation of its algebraic generating function.
This class was first enumerated by Kremer~\cite{Kremer2000,Kremer2003}.

In Chapter~\myref{chapG}, we consider $\av(\pdiamond,\mathbf{1432})$.
The source graphs in this class have a very simple form.
We derive a (very complicated) functional equation for
the generating function of
this class, which, unfortunately, we are unable to solve.

In Chapter~\myref{chapForest}, we use our methods to give a simplified derivation of the algebraic generating function for
\emph{forest-like} permutations, originally enumerated by Bousquet-M\'elou \& Butler~\cite{BMB2007}.
This is the class whose Hasse graphs are acyclic.
It is not a classical permutation class: % because
the Hasse graph of
$\mathbf{2143}$ is a cycle, so $\mathbf{2143}$ is not forest-like, but $\mathbf{2143}$ is
contained in $\mathbf{21354}$, the Hasse graph of which is a tree.
The class of forest-like permutations can be defined as
$\av(\pdiamond,\mathbf{21\bar{3}54})$, where $\mathbf{21\bar{3}54}$ is a {barred pattern}
%(see page~\pageref{defBarredPattern}).
(see Section~\myref{defBarredPattern}).

In Chapter~\myref{chapPlane}, we investigate the class of \emph{plane permutations}, those permutations whose Hasse graph is plane (i.e. non-crossing).
This is the barred pattern class $\av(\mathbf{21\bar{3}54})$.
We establish a nice functional equation for the generating function of this class, which we are not able to solve.

In Chapters~\myref{chapF} and~\myref{chapE}, we enumerate two more
permutation classes that each avoid two patterns of length 4.
We establish the generating function for
$\av(\mathbf{1234},\mathbf{2341})$ in Chapter~\myref{chapF}.
In doing so, the kernel method is used six times to solve
the relevant functional equations.
In Chapter~\myref{chapE}, we derive functional equations for $\av(\mathbf{1243},\mathbf{2314})$, and solve them to determine its algebraic generating function. This requires an
unusual simultaneous double application of the kernel method.
These results have been accepted for publication (see~\cite{Bevan2014a}).

Finally, in Chapter~\myref{chap1324}, we use our Hasse graph techniques in an investigation of the structure of the notorious class $\av(\pdiamond)$. As a result, we establish a new lower bound of 9.81 for its growth rate.
As a consequence of our examination of the asymptotic distribution of certain substructures in the Hasse graphs of $\pdiamond$-avoiders, we are able to prove that, in the limit, patterns in
{\L}uka\-sie\-wicz paths exhibit a concentrated Gaussian distribution.
This work has been accepted for publication (see~\cite{Bevan2014Thin}).

\HIDE{
\section{Terminology and notation}

In the following chapters, the word ``class'' does not necessarily refer to a (downward-closed) permutation class, but is used more generally for any combinatorial class.
For a given class of permutations, the class of its source graphs is denoted $\SSS$. This class is (clearly) different in different chapters.
The connected (skew-indecomposable) elements of a class $\AAA$ are denoted $\AAA_\CC$.
Clearly, $\AAA=\seqplus{\AAA_\CC}$.
Finally, we use $\TTT$ to denote the class of rooted plane trees, and $t(z)$ to denote its generating function $\half(1 - \sqrt{1 - 4\+ z})$.
} %\HIDE

% ================================================================
\cleardoublepage

% LaTeX file

% ================================================================
\chapter{Avoiding 1324 and 2314}
\label{chapR}

We begin our investigation of Hasse graphs of permutations by studying
$\av(\pdiamond,\mathbf{2314})$.
This class was first enumerated by Kremer~\cite{Kremer2000,Kremer2003} using generating trees.
It is counted by \href{http://oeis.org/A006318}{A006318} in OEIS~\cite{OEIS}.
In this chapter, we determine its generating function by analys\-ing its Hasse graphs, which have a relatively simple structure.

Let us use $\RRR$ to denote $\av(\pdiamond,\mathbf{2314})$.
Since permutations in $\RRR$ avoid $\pdiamond$, the source graphs and bottom subgraphs are trees.
The avoidance of $\mathbf{2314}$ places no further restrictions on the structure of source trees because
$\mathbf{2314}$ is not contained in any permutation whose Hasse graph is a tree rooted at the lower left.

In order to avoid $\mathbf{2314}$, a permutation can only contain occurrences of $\mathbf{213}$ for which
there is no point to the lower left of the $\mathbf{2}$.
Thus the $\mathbf{2}$ in any $\mathbf{213}$ must be a left-to-right minimum, the root of a source tree.
In any $\mathbf{213}$, the edge joining the $\mathbf{1}$ to the $\mathbf{3}$ must be a tendril.
Hence, when adding a new source tree,
the upper end of each tendril must be a child of the root of the bottom subtree.
Specifically, if the root of the bottom subtree has $k$ children, $v_1, \ldots, v_k$, from left to right, then the upper ends of the tendrils are $v_i, \ldots, v_k$ for some $i\in\{1,\ldots,k\}$.
See Figure~\ref{figRExample} for an illustration.
Clearly, the key parameter to keep account of is the root degree of the bottom subtree.

\begin{figure}[ht]
  $$
  \begin{tikzpicture}[scale=0.275]
    \draw [blue,ultra thick] (1,12)--(3,16);
    \draw [blue,ultra thick] (2,7)--(7,10)--(9,11);
    \draw [blue,ultra thick] (4,4)--(13,6);
    \draw [blue,ultra thick] (10,1)--(14,2)--(15,3);
    \draw [blue,very thick] (1,12)--(5,14)--(6,15);
    \draw [blue,very thick] (1,12)--(8,13);
    \draw [blue,very thick] (2,7)--(11,8)--(12,9);
    \draw [blue,very thick] (4,4)--(16,5);
    \draw [red,thin] (13,6)--(10,1)--(11,8)--(4,4)--(5,14)--(2,7)--(3,16);
    \draw [red,thin] (4,4)--(7,10)--(8,13);
    \draw [red,thin] (15,3)--(16,5);
    \plotpermnobox{16}{12,7,16,4,14,15,10,13,11,1,8,9,6,2,3,5}
    \draw [thin] (1,12) circle [radius=0.4];
    \draw [thin] (2,7)  circle [radius=0.4];
    \draw [thin] (4,4)  circle [radius=0.4];
    \draw [thin] (10,1) circle [radius=0.4];
  \end{tikzpicture}
  $$
  \caption{The Hasse graph of a permutation
  in class $\RRR$,
  showing its construction from four source trees}
  \label{figRExample}
\end{figure}
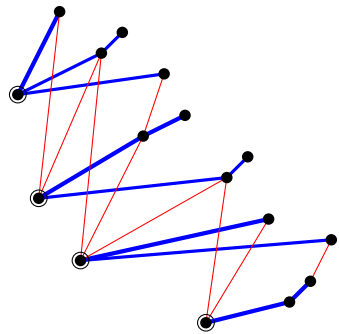

Let us imagine that source trees come with tendrils already attached to some of their trunk vertices.
We use $\SSS$ to denote the class of extended source trees.
As we have noted, the set of tethered vertices in the bottom subtree can be determined from the number of tendrils.
(Indeed,
if there are $m$ tendrils, they are the rightmost $m$ children of the root of the bottom subtree.)
Thus, an element of $\SSS$ specifies both a specific source tree and also the positioning of its vertices relative to those of the bottom subtree.

Let us determine the structure of elements of $\SSS$.
We use $z$ to mark the number of vertices in the source tree. We do not count the (tethered) vertices at the upper ends of the tendrils.
In addition, we use $y$ to mark the root degree (including counting any tendrils attached to the root).
After a new source tree has been attached, this value is the root degree of the bottom subtree in the resulting permutation.
Finally, we use $x$ to mark the total number of tendrils.
For example, the last source tree in Figure~\ref{figRExample} would be represented by $z^3\+y^3\+x^3$.

\begin{figure}[ht]
  $$
  \begin{tikzpicture}[xscale=0.25,yscale=0.225,line join=round]
    \draw [blue,ultra thick] (1,1)--(4,6)--(7,11);
    \draw [blue,ultra thick,dashed] (7,11)--(11.5,18.5);
    \draw [blue,very thick] (24,2)--(1,1)--(22,4);
    \draw [blue,very thick] (20,7)--(4,6)--(18,9);
    \draw [blue,very thick] (16,12)--(7,11)--(14,14);
    \draw [red,thin] (2,24.8)--(1,1)--(3,24.6);
    \draw [red,thin] (5,23.4)--(4,6)--(6,23.2);
    \draw [red,thin] (8,22)--(7,11)--(9,21.8);
    \draw [red,thin] (12,21.4)--(11.5,18.5)--(13,21.2);
    \draw [very thick,black!40!green,fill=black!40!green!65!white,rotate around={25:(24,2)}] (24,2)--(25.75,1.25)--(25.9,2.75)--(24,2);
    \draw [very thick,black!40!green,fill=black!40!green!65!white,rotate around={25:(22,4)}] (22,4)--(23.75,3.25)--(23.9,4.75)--(22,4);
    \draw [very thick,black!40!green,fill=black!40!green!65!white,rotate around={25:(20,7)}] (20,7)--(21.75,6.25)--(21.9,7.75)--(20,7);
    \draw [very thick,black!40!green,fill=black!40!green!65!white,rotate around={25:(18,9)}] (18,9)--(19.75,8.25)--(19.9,9.75)--(18,9);
    \draw [very thick,black!40!green,fill=black!40!green!65!white,rotate around={25:(16,12)}] (16,12)--(17.75,11.25)--(17.9,12.75)--(16,12);
    \draw [very thick,black!40!green,fill=black!40!green!65!white,rotate around={25:(14,14)}] (14,14)--(15.75,13.25)--(15.9,14.75)--(14,14);
    \plotpermnobox{24}{1,0,0,6,0,0,11,0,0,0,0,0,0,14,0,12,0,9,0,7,0,4,0,2}
    \fill[radius=0.275] (11.5,18.5) circle;
    \node[right] at (26,3.7) {$\!\Big\}\,\seq{y\+\TTT}$};
    \node[right] at (22,8.7) {$\!\Big\}\,\seq{\TTT}$};
    \node[right] at (18,13.7) {$\!\Big\}\,\seq{\TTT}$};
    \node[below] at (.5,1) {$z$};
    \node[below right] at (2.3,4) {$y$};
    \node[below right] at (3.5,6) {$z$};
    \node[below right] at (6.5,11) {$z$};
    \node[below right] at (11,18.5) {$z$};
    \node[above] at (2.5,24.5) {$\,\,\,{}_{\seq{yx}}$};
    \node[above] at (5.5,23.0) {$\,\,\,{}_{\seq{x}}$};
    \node[above] at (8.5,21.5) {$\,\,\,{}_{\seq{x}}$};
    \node[above] at (12.5,20.9) {$\,\,\,{}_{\seq{x}}$};
  \end{tikzpicture}
  $$
  \caption{The structure of extended source trees for class $\RRR$}
  \label{figRSStruct}
\end{figure}
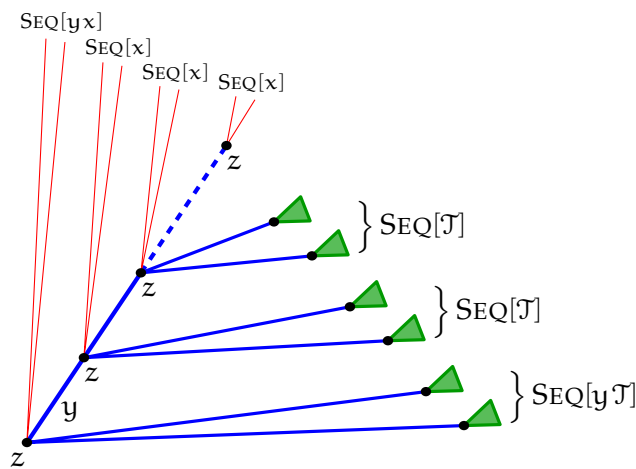
The class, $\SSS$, of source trees with tendrils attached
satisfies the following structural equation:
\begin{equation}\label{eqnS}
\SSS \;=\;
\ZZZ\+\seq{y\+ x} \:+\:
\big(
\ZZZ\+\seq{y\+ x}\+ y\+\seq{y\+\TTT}
\times\seq{\ZZZ\+\seq{x}\+\seq{\TTT}}
\times \ZZZ\+\seq{x}
\big),
\end{equation}
where $\TTT$ represents plane trees.

The first summand corresponds to single vertex source trees, and the second to source trees with more than one vertex (and hence more than one trunk vertex).
As illustrated in Figure~\ref{figRSStruct},
the first term in the parenthesized product %forming the second summand
corresponds to
the root vertex and structures attached to it,
the second term corresponds to intermediate vertices in the trunk and structures attached to them,
and the third term corresponds to the tip of the trunk and tendrils attached to it. % respectively.
%$\seq{y\+ x}$ and $\seq{x}$ count tendrils attached to the relevant trunk vertex;
%$\seq{y\+\TTT}$ and $\seq{\TTT}$ count non-trunk vertices.

To account for child vertices of the root of the bottom subtree which are not tethered (vertices $v_1,\ldots v_{i-1}$ above), we can consider these vertices as being the upper ends of ``virtual'' %(half-)
tendrils which are not attached at the lower end to any vertex of the source tree.
Let $\SSS^\star$ denotes the extension of class $\SSS$ to include these virtual tendrils.
Thus, an element of $\SSS^\star$ consists of a source tree with tendrils attached to its trunk along with zero or more virtual tendrils.
We have $\SSS^\star = \seq{x}\+\SSS$. Expanding~\eqref{eqnS} then gives us the following trivariate generating function for $\SSS^\star$:
\begin{equation*}%\label{eqnSStar}
S^\star(z,y,x) \;=\; \frac
{z\+ (x\+ (1 - x)\+ (1 - y) - z\+ (1 - y + y\+ (1 - y\+ x) \+(t(z) -  x ) ))}
{(1 - y + z\+ y^2)\+(1 - x)\+ (1 - y\+ x)\+ (x - x^2 - z)},
\end{equation*}
where $t(z)=\half(1 - \sqrt{1 - 4\+ z})$ is the generating function for $\TTT$, the class of rooted plane trees.

Let $R(y)=R(z,y)$ be the bivariate generating function for $\RRR$, in which $y$ marks the root degree of the bottom subtree.
Since a vertex in the bottom subtree can only be tethered by one tendril,
adding a new source tree corresponds to the linear operator on $R(y)$ that maps $y^d$ to $[x^d]S^\star(z,y,x)$.

The result now follows by algebraic manipulation and the kernel method. It can be checked (using a computer algebra system, or otherwise) that
\begin{equation*}
[x^d]S^\star(z,y,x) \;=\;
\frac{z}{1-y+z\+y^2}\+\big(1+y\+(t(z)-1-y^d+z^{-d}\+t(z)^d)\big) .
\end{equation*}
The class of initial source trees is defined by $\ZZZ\times\seq{y\+\TTT}$, so $\RRR$ satisfies the functional equation
\begin{equation*}
R(y) \;=\;
\frac{z}{1-y\+t(z)} \:+\:
\frac{z}{1-y+z\+y^2}\+\big(R(1)+y\+\big(t(z)-1-R(y)+R(t(z)/z)\big)\big).
\end{equation*}
This can be solved for $R(1)$ by using the kernel method. Rearranging yields
\begin{equation}\label{eqnRFunc}
(1 -y +z\+y +z\+y^2)\+R(y) \;=\;
z\+ \big(1 + R(1) + y\+(R(t(z)/z)-1-\sqrt{1-4\+z})\big) .
\end{equation}
The kernel can be cancelled by setting $y=(1-z-\sqrt{z^2-6\+ z+1})/2\+ z$, which results in an equation that can be solved to give $R(t(z)/z)$ in terms of $R(1)$. Finally, substituting for $R(t(z)/z)$ in \eqref{eqnRFunc}, setting $y$ to 1 and solving for $R(1)$ yields the following:

\newpage  %%% BAD BREAK IN THMBOX %%%
\thmbox{
\begin{thm}\label{thmR}
The class of permutations avoiding $\mathbf{\pdiamond}$ and $\mathbf{2314}$ has the algebraic generating function
$$
%R(1) \;=\;
\half\+\Big(1-z-\sqrt{1-6\+z+z^2}\Big).
$$
\end{thm}
}

The corresponding numbers are known as the (large) Schr\"oder numbers. %; .
These numbers also count the
number of paths from the southwest corner $(0,0)$ of an $n\times n$ grid to the northeast corner $(n,n)$, using only single steps east, northeast, or north, that do not rise above the SW–-NE diagonal.
%Class $\RRR$ is Wilf-equivalent to the class of \emph{separable} permutations, $\av(\mathbf{2413},\mathbf{3142})$.
The following question may be worth consideration:
\begin{question}
Is there a nice bijection between these lattice paths and the Hasse graphs of permutations in $\av(\mathbf{\pdiamond},\mathbf{2314})$?
\end{question}

% ================================================================
\cleardoublepage

% LaTeX file

% ================================================================
\chapter{Avoiding 1324 and 1432}
\label{chapG}

In this chapter, we investigate $\av(\pdiamond,\mathbf{1432})$. This is one of the classes, listed in~\cite{WikiEnumPermClassesThin}, with
a basis consisting of two permutations of length 4, that remains to be enumerated.
The source graphs in this class have a very simple form. Despite this, the functional equation we derive for its generating function is rather complicated, and we are unable to solve it.

Let us use $\GGG$ to denote $\av(\pdiamond,\mathbf{1432})$.
As in the previous chapter, since $\GGG$ avoids $\pdiamond$, source graphs and bottom subgraphs are trees.
The further requirement to avoid $\mathbf{1432}$ severely restricts the structure of both source trees and bottom subtrees: they must either be paths or else have a single fork. Let's call these \emph{spindly trees}.

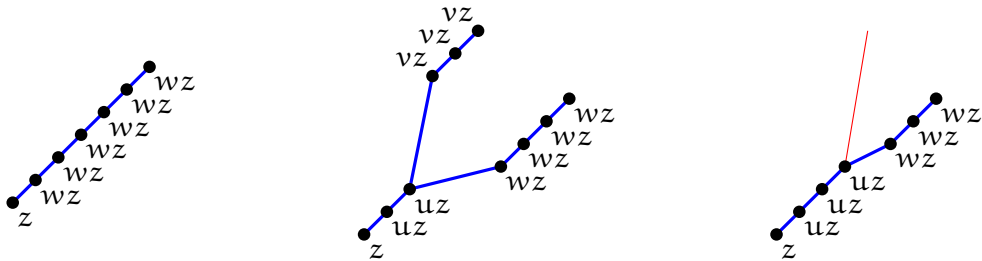
\begin{figure}[ht]
  $$
  \raisebox{12pt}{
  \begin{tikzpicture}[scale=0.3,line join=round]
    \draw [blue,very thick] (1,1)--(7,7);
    \plotpermnobox{6}{1,2,3,4,5,6,7}
    \node [below right] at (1,1) {$\!z$};
    \node [below right] at (2,2) {$\!w\+z$};
    \node [below right] at (3,3) {$\!w\+z$};
    \node [below right] at (4,4) {$\!w\+z$};
    \node [below right] at (5,5) {$\!w\+z$};
    \node [below right] at (6,6) {$\!w\+z$};
    \node [below right] at (7,7) {$\!w\+z$};
  \end{tikzpicture}
  }
  \qquad\qquad\quad
  \begin{tikzpicture}[scale=0.3,line join=round]
    \draw [blue,very thick] (1,1)--(3,3);
    \draw [blue,very thick] (6,10)--(4,8)--(3,3)--(7,4)--(10,7);
    \plotpermnobox{10}{1,2,3,8,9,10,4,5,6,7}
    \node [below right] at (1,1) {$\!z$};
    \node [below right] at (2,2) {$\!u\+z$};
    \node [below right] at (3,3) {$\!u\+z$};
    \node [above left] at (4,8) {$v\+z\!$};
    \node [above left] at (5,9) {$v\+z\!$};
    \node [above left] at (6,10) {$v\+z\!$};
    \node [below right] at (7,4) {$\!w\+z$};
    \node [below right] at (8,5) {$\!w\+z$};
    \node [below right] at (9,6) {$\!w\+z$};
    \node [below right] at (10,7) {$\!w\+z$};
  \end{tikzpicture}
  \qquad\qquad\quad
  %\raisebox{12pt}{
  \begin{tikzpicture}[scale=0.3,line join=round]
    \draw [blue,very thick] (1,1)--(4,4)--(6,5)--(8,7);
    \draw [red,thin] (4,4)--(5,10);
    \plotpermnobox{6}{1,2,3,4,0,5,6,7}
    \node [below right] at (1,1) {$\!z$};
    \node [below right] at (2,2) {$\!u\+z$};
    \node [below right] at (3,3) {$\!u\+z$};
    \node [below right] at (4,4) {$\!u\+z$};
    \node [below right] at (6,5) {$\!w\+z$};
    \node [below right] at (7,6) {$\!w\+z$};
    \node [below right] at (8,7) {$\!w\+z$};
  \end{tikzpicture}
  %}
  $$
  \caption{Spindly trees: a path, a forked tree, and a path with a tendril attached}
  \label{figSpindly}
\end{figure}

Let $\SSS$ denote the class of spindly trees. Let us use
\vspace{-9pt}
\begin{itemize}
  \itemsep0pt
  \item $w$ to mark vertices in path trees (except the root) and in the \emph{right} (lower) branch of forked trees,
  \item $v$ to mark vertices in the \emph{left} (upper) branch of forked trees, and
  \item $u$ to mark vertices in the \emph{stem} of forked trees: the forked vertex and vertices below the fork (except the root).
\end{itemize}
\vspace{-9pt}
%Note that a path tree is considered to be composed of just a right branch.

There are thus two cases, source trees that are paths, the class of which we denote $\SSS_\PP$, and forked source trees, for which we use $\SSS_\FF$. These can be defined by the following
two structural equations.

Firstly,
\begin{equation*}%\label{}
  \SSS_\PP \;=\; \ZZZ\times\seq{w\+\ZZZ}.
\end{equation*}
Secondly,
\begin{equation*}%\label{}
  \SSS_\FF \;=\; \ZZZ\times\seq{u\+\ZZZ}\times\seqplus{v\+\ZZZ}\times\seqplus{w\+\ZZZ}.
\end{equation*}

The corresponding generating functions are thus
\begin{equation*}%\label{}
  S_\PP(w) \;=\; \frac{z}{1-z\+w}
\end{equation*}
and
\begin{equation*}%\label{}
  S_\FF(u,v,w) \;=\; \frac{z^3\+v\+w }{(1-z\+u) \+ (1-z\+v) \+(1-z\+w)}.
\end{equation*}
The sum of these give the following generating function for $\SSS$:
\begin{equation*}%\label{}
  S(u,v,w) \;=\; \frac{z\+(1-z\+u\+(1-z\+v)-z\+v\+(1-z\+u))}{(1-z\+u)\+(1-z\+v)\+(1-z\+w)}.
\end{equation*}

It is also helpful to consider the class consisting of path source trees with a tendril attached to any one of the vertices. We denote this class $\widehat{\SSS}_\PP$.
We use $w$ to mark the vertices on the path above the tendril and $u$ to mark the rest except the root.
Thus,
\begin{equation*}%\label{}
  \widehat{\SSS}_\PP \;=\; \ZZZ\times\seq{u\+\ZZZ}\times\seq{w\+\ZZZ},
\end{equation*}
for which the generating function is
\begin{equation*}%\label{}
  \widehat{S}_\PP(u,w) \;=\; \frac{z}{(1-z\+u)\+(1-z\+w)}.
\end{equation*}

Let $G_\CC(u,v,w) = G_\CC(z,u,v,w)$
be the multivariate generating function for connected permutations in $\GGG$, in which $u$, $v$ and $w$ mark the non-root vertices in the spindly bottom subtree as described above.

When adding a new spindly source tree, no more than two tendrils can be added.
We count \emph{connected} (skew-indecomposable) permutations.
The addition of a new spindly source tree can be broken down into four cases. We denote the corresponding linear operators on the generating function by $\oper_1$, $\oper_2$, $\oper_3$ and $\oper_4$.

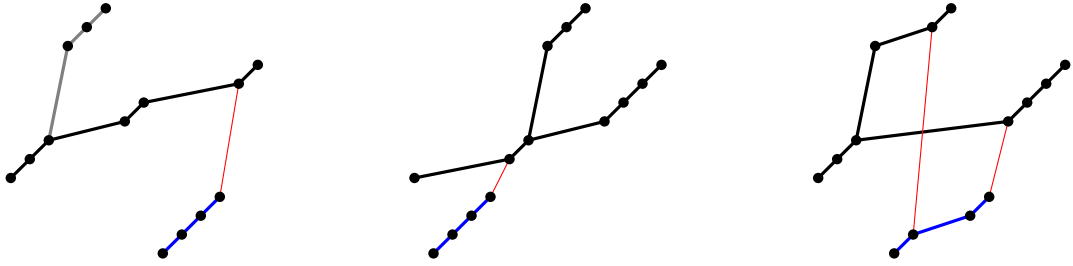
\begin{figure}[ht]
  $$
  \begin{tikzpicture}[scale=0.25]
    \draw [very thick] (1,5)--(3,7)--(7,8)--(8,9)--(13,10)--(14,11);
    \draw [gray,very thick] (3,7)--(4,12)--(6,14);
    \draw [blue,very thick] (9,1)--(12,4);
    \draw [red,thin] (12,4)--(13,10);
    \plotpermnobox{14}{5,6,7,12,13,14,8,9,1,2,3,4,10,11}
  \end{tikzpicture}
  \qquad\qquad\quad
  \begin{tikzpicture}[scale=0.25]
    \draw [very thick] (1,5)--(6,6)--(7,7)--(8,12)--(10,14);
    \draw [very thick] (7,7)--(11,8)--(14,11);
    \draw [blue,very thick] (2,1)--(5,4);
    \draw [red,thin] (5,4)--(6,6);
    \plotpermnobox{14}{5,1,2,3,4,6,7,12,13,14,8,9,10,11}
  \end{tikzpicture}
  \qquad\qquad\quad
  \begin{tikzpicture}[scale=0.25]
    \draw [very thick] (1,5)--(3,7)--(4,12)--(7,13)--(8,14);
    \draw [very thick] (3,7)--(11,8)--(14,11);
    \draw [blue,very thick] (5,1)--(6,2)--(9,3)--(10,4);
    \draw [red,thin] (6,2)--(7,13);
    \draw [red,thin] (10,4)--(11,8);
    \plotpermnobox{14}{5,6,7,12,1,2,13,14,3,4,8,9,10,11}
  \end{tikzpicture}
  $$
  \caption{Cases I, II and III: Three methods for adding a %4-vertex
  path}
  \label{figGCases123}
\end{figure}
Cases I, II and III are only applicable when the source tree is a path. These are
illustrated in Figure~\ref{figGCases123}.
\subsubsection*{Case I}
In case I, a path tree is inserted below a path bottom subtree or the right branch of a forked bottom subtree so that the resultant bottom subtree is a path, a tendril being attached to the tip of the path.
Thus,
\begin{equation*}%\label{}
  \oper_1[u^r\+v^s\+w^t] \;=\; \sum_{i=1}^t S_\PP(w)\+w^i \;=\; \frac{z\+w\+(1-w^t)}{(1-w)\+(1-z\+w)},
\end{equation*}
where $i$ counts the tethered vertex in the right branch of the bottom subtree, starting from the tip.
Hence,
\begin{equation*}%\label{}
  \oper_1[f(u,v,w)] \;=\; \frac{z\+w}{(1-w)\+(1-z\+w)}\+\big(f(1,1,1)-f(1,1,w)\big).
\end{equation*}

\subsubsection*{Case II}
In case II, a path tree is inserted to the left of the fork in a forked bottom subtree, a tendril being attached to the tip of the path.
The resultant bottom subtree is forked.
Thus,
\begin{equation*}%\label{}
  \oper_2[u^r\+v^s\+w^t] \;=\; \sum_{i=1}^r S_\PP(u)\+u^i\+v^s\+w^t \;=\; \frac{z\+u\+(1-u^r)\+v^s\+w^t}{(1-u)\+(1-z\+u)},
\end{equation*}
where $i$ counts the tethered vertex in the stem of the bottom subtree, starting from the forked vertex.
Hence,
\begin{equation*}%\label{}
  \oper_2[f(u,v,w)] \;=\; \frac{z\+u}{(1-u)\+(1-z\+u)}\+\big(f(1,v,w)-f(u,v,w)\big).
\end{equation*}

\subsubsection*{Case III}
In case III, a path tree is inserted below the left branch of a forked bottom subtree.
The leftmost vertex in the right branch is always tethered to the tip of the path tree.
The resultant bottom subtree is forked.
Thus,
\begin{equation*}%\label{}
  \oper_3[u^r\+v^s\+w^t] \;=\; \sum_{i=1}^s \widehat{S}_\PP(u,w)\+v^i\+w^t \;=\; \frac{z\+v\+(1-v^s)\+w^t}{(1-z\+u)\+(1-v)\+(1-z\+w)},
\end{equation*}
where $i$ counts the tethered vertex in the left branch of the bottom subtree, starting from the tip.
Hence,
\begin{equation*}%\label{}
  \oper_3[f(u,v,w)] \;=\; \frac{z\+v}{(1-z\+u)\+(1-v)\+(1-z\+w)}\+\big(f(1,1,w)-f(1,v,w)\big).
\end{equation*}

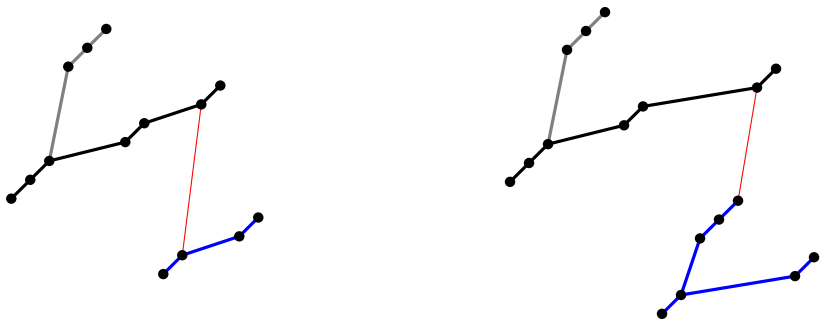
\begin{figure}[ht]
  $$
  \raisebox{15pt}{
  \begin{tikzpicture}[scale=0.25]
    \draw [very thick] (1,5)--(3,7)--(7,8)--(8,9)--(11,10)--(12,11);
    \draw [gray,very thick] (3,7)--(4,12)--(6,14);
    \draw [blue,very thick] (9,1)--(10,2)--(13,3)--(14,4);
    \draw [red,thin] (10,2)--(11,10);
    \plotpermnobox{14}{5,6,7,12,13,14,8,9,1,2,10,11,3,4}
  \end{tikzpicture}
  }
  \qquad\qquad\qquad\qquad
  \begin{tikzpicture}[scale=0.25]
    \draw [very thick] (1,8)--(3,10)--(7,11)--(8,12)--(14,13)--(15,14);
    \draw [gray,very thick] (3,10)--(4,15)--(6,17);
    \draw [blue,very thick] (9,1)--(10,2)--(16,3)--(17,4);
    \draw [blue,very thick] (10,2)--(11,5)--(13,7);
    \draw [red,thin] (13,7)--(14,13);
    \plotpermnobox{17}{8,9,10,15,16,17,11,12,1,2,5,6,7,13,14,3,4}
  \end{tikzpicture}
  $$
  \caption{Case IV: The addition of a path and of a forked tree}
  \label{figGCase4}
\end{figure}
\subsubsection*{Case IV}
Case IV is illustrated in Figure~\ref{figGCase4}.
In this case, a path or forked tree is inserted below a path bottom subtree or the right branch of a forked bottom subtree so that
the resultant bottom subtree is forked.
In this case, the tendril may not be attached to the tip of a path source tree.
Thus,
\begin{equation*}%\label{}
  \oper_4[u^rv^sw^t] =
  \sum_{i=1}^t \big(\widehat{S}_\PP(u,w)-S_\PP(u)+S_\FF(u,v,w)\big) v^i
  = \frac{z^2vw(1-v^t)}{(1-zu)(1-v)(1-zv)(1-zw)},
\end{equation*}
and hence
\begin{equation*}%\label{}
  \oper_4[f(u,v,w)] \;=\; \frac{z^2\+v\+w}{(1-z\+u)\+(1-v)\+(1-z\+v)\+(1-z\+w)}\+\big(f(1,1,1)-f(1,1,v)\big).
\end{equation*}

\newpage  %%% BAD BREAK IN THMBOX %%%
Thus we have the following:

\thmbox{
\begin{thm}\label{thmGCrec}
The generating function $G_\CC(u,v,w)$ for connected permutations in the class $\av(\pdiamond,\mathbf{1432})$ satisfies the functional equation
$$
  %A(u,v,w) \;=\; S(u,v,w) \:+\: \oper_1[A(u,v,w)] \:+\: \oper_2[A(u,v,w)] \:+\: \oper_3[A(u,v,w)] \:+\: \oper_4[A(u,v,w)]
  G_\CC(u,v,w) \;=\; S(u,v,w) \:+\: (\oper_1+ \oper_2 + \oper_3 + \oper_4)[G_\CC(u,v,w)].
$$
\end{thm}
} %\thmbox

%So the generating function $G_\CC(u,v,w)$ of connected permutations in $\av(\pdiamond,\mathbf{1432})$ satisfies the functional equation
%\begin{equation}\label{eqGCrec}
  %A(u,v,w) \;=\; S(u,v,w) \:+\: \oper_1[A(u,v,w)] \:+\: \oper_2[A(u,v,w)] \:+\: \oper_3[A(u,v,w)] \:+\: \oper_4[A(u,v,w)]
%  G_\CC(u,v,w) \;=\; S(u,v,w) \:+\: (\oper_1+ \oper_2 + \oper_3 + \oper_4)[G_\CC(u,v,w)],
%\end{equation}
%which
This is a very complex algebraic identity relating $G_\CC(u,v,w)$ to $G_\CC(1,v,w)$, $G_\CC(1,1,w)$, $G_\CC(1,1,v)$ and $G_\CC(1,1,1)$.

The kernel method can be applied to this to yield $G_\CC(1,v,w)$ as a function of $G_\CC(1,1,w)$, $G_\CC(1,1,v)$ and $G_\CC(1,1,1)$, and then used again to give $G_\CC(1,1,w)$ in terms of $G_\CC(1,1,1)$ and $G_\CC(1,1,\beta(w))$ for some algebraic function $\beta(w)$.
This does not appear to help us to get an explicit form for $G_\CC(1,1,1)$.
Despite the simplicity of the structure of the source trees of $\GGG$, its exact enumeration remains an open question.

However, the
%recurrence~\eqref{eqGCrec}
recurrence in Theorem~\ref{thmGCrec}
can be iterated to extract the coefficients of $z^n$ from $G_\CC(1,1,1)$ and hence
enumerate $\GGG=\av(\pdiamond,\mathbf{1432})$ for small $n$.
%The first 20 terms are 1, 2, 6, 22, 89, 380, 1677, 7566, 34676, 160808, 752608, 3548325, 16830544, 80234659, 384132724, 1845829988, 8897740300, 43010084460, 208409687323, 1012046126532.
Values up to $n=40$ (determined in seven hours by \emph{Mathematica}~\cite{Mathematica}) can be found at \href{http://oeis.org/A165542}{A165542} in OEIS~\cite{OEIS}.

\emph{Postscript}: Following submission of this thesis, at the AMS meeting in Washington, DC, in March 2015, Nathaniel Shar presented a much more efficient recurrence for class $\GGG$; his work is currently unpublished.
%(also Albert, Homberger, Pantone and Vatter)

%\subsection*{$\av(\pdiamond,\mathbf{1432},\mathbf{1243})$}
%\todob{}

% ================================================================
\cleardoublepage

% LaTeX file

% ================================================================
\chapter{Forest-like permutations}
\label{chapForest}

In this chapter, we consider \emph{forest-like} permutations, $\av(\pdiamond,\mathbf{21\bar{3}54})$, permutations whose Hasse graph is a plane forest. %\todo{Expand}
These are $\pdiamond$-avoiders whose Hasse graph is plane.
This class is counted by \href{http://oeis.org/A111053}{A111053} in OEIS~\cite{OEIS}.
It was first enumerated by Bousquet-M\'elou \& Butler~\cite{BMB2007}.
Their approach makes use of a rather complicated analysis of the Hasse graphs of permutations.
We present here a somewhat simpler derivation of the algebraic generating function for this class.

Since permutations in the class avoid $\pdiamond$, the ``tethering rules'' for $\pdiamond$-avoiders presented in Section~\ref{sect1324Tethering} apply.
Source and bottom subgraphs are trees.
The planarity of the Hasse graphs implies that
the only vertices in the bottom subtree that can be tethered are those
in its
lowermost (rightmost) branch.
As a consequence, to avoid creating a $\pdiamond$, at most one tendril can be used when adding a new source tree.
We count \emph{connected} (skew-indecomposable) permutations in the class, those constructed by using exactly one tendril when attaching each new source tree.
Clearly, we must keep track of the length of the lowermost branch of the bottom subtree.

We begin by presenting structural equations for source trees.
We use $u$ to mark vertices in the uppermost branch (the trunk) and $v$ to mark vertices in the lowermost branch, excluding the root vertex.
% (which can never become the upper end of a tendril).
There are two cases, source trees that are paths, the class of which we denote $\SSS_\PP$, and source trees with at least one fork, for which we use $\SSS_\FF$. These can be defined by the following
two structural equations.

Firstly,
\begin{equation*}%\label{}
  \SSS_\PP \;=\; u\+\ZZZ \times \seq{u\+v\+\ZZZ},
\end{equation*}
the first term corresponding to the root and the second to the remaining vertices.

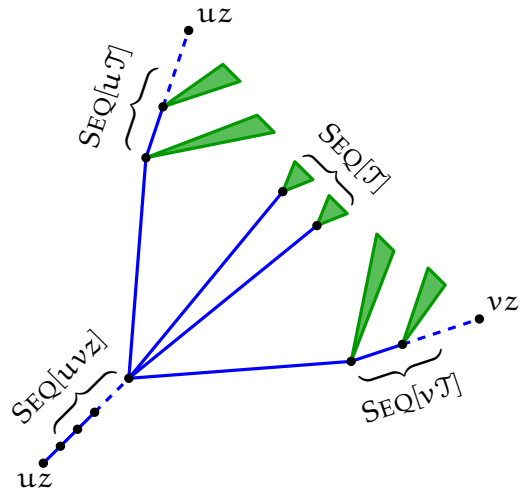
\begin{figure}[ht]
  $$
  \begin{tikzpicture}[scale=0.225,line join=round]
    \draw [blue,very thick] (0,0)--(3,3);
    \draw [blue,very thick,dashed] (3,3)--(5,5);
    \draw [blue,very thick] (5,5)--(6,18)--(7,21);
    \draw [blue,very thick,dashed] (7,21)--(8.5,25.5);
    \draw [blue,very thick] (5,5)--(18,6)--(21,7);
    \draw [blue,very thick,dashed] (21,7)--(25.5,8.5);
    \draw [blue,very thick] (14,16)--(5,5)--(16,14);
    \draw [very thick,black!40!green,fill=black!40!green!65!white,rotate around={45:(14,16)}] (14,16)--(15.75,15.25)--(15.75,16.75)--(14,16);
    \draw [very thick,black!40!green,fill=black!40!green!65!white,rotate around={45:(16,14)}] (16,14)--(17.75,13.25)--(17.75,14.75)--(16,14);
    \draw [very thick,black!40!green,fill=black!40!green!65!white] (7,21)--(10.5,23.5)--(11.5,22.5)--(7,21);
    \draw [very thick,black!40!green,fill=black!40!green!65!white] (6,18)--(12.5,20.5)--(13.5,19.5)--(6,18);
    \draw [very thick,black!40!green,fill=black!40!green!65!white] (21,7)--(23.5,10.5)--(22.5,11.5)--(21,7);
    \draw [very thick,black!40!green,fill=black!40!green!65!white] (18,6)--(20.5,12.5)--(19.5,13.5)--(18,6);
    \plotpermnobox{24}{1,2,3,0,5,18,21,0,0,0,0,0,0,16,0,14,0,6,0,0,7}
    \fill[radius=0.275] (0,0) circle;
    \fill[radius=0.275] (8.5,25.5) circle;
    \fill[radius=0.275] (25.5,8.5) circle;
    \node [rotate=-45] at (2.2,3.8) {$\Bigg\{$};
    \node [rotate=45] at (1,5) {$\seq{u\+v\+z}$};
    \node [rotate=45] at (16.9,16.9) {$\bigg\}$};
    \node [rotate=-45] at (18,18) {$\,\seq{\TTT}$};
    \node [rotate=-71.565] at (21,5.5) {$\Bigg\}$};
    \node [rotate=18.435] at (21.6,3.7) {$\seq{v\+\TTT}$};
    \node [rotate=-18.435] at (5.5,21) {$\Bigg\{$};
    \node [rotate=71.565] at (3.7,21.6) {$\seq{u\+\TTT}$};
    \node[below] at (-.5,0) {$u\+z$};
    \node [above right] at (8.4,25.4) {$u\+z$};
    \node [above right] at (25.4,8.4) {$v\+z$};
  \end{tikzpicture}
  $$
  \caption{The structure of source trees for forest-like permutations}
  \label{figLSFStruct}
\end{figure}
Secondly,
\begin{equation*}%\label{}
  \SSS_\FF \;=\; u\+\ZZZ \times \seq{u\+v\+\ZZZ} \times (\seq{u\+\TTT}\times u\+\ZZZ) \times (\seq{v\+\TTT}\times v\+\ZZZ) \times \seq{\TTT},
\end{equation*}
where $\TTT$ represents plane trees.

This is illustrated in Figure~\ref{figLSFStruct}.
The first term in the product corresponds to the root.
The second corresponds to the \emph{stem} of the tree (vertices up to and including the first forked vertex).
The third and fourth terms correspond to vertices above the stem lying on the uppermost and lowermost branches respectively, along with subtrees rooted at these vertices, the subterms $u\+\ZZZ$ and $v\+\ZZZ$ corresponding to the tips of the relevant branches.
The final term corresponds to the remaining vertices, which lie on subtrees whose roots are children of the first forked vertex that are not on the uppermost or lowermost branch.

The corresponding trivariate generating functions are thus:
\begin{equation*}%\label{}
  S_\PP(u,v) \;=\; \frac{z\+u}{1-z\+u\+v},
\end{equation*}
\begin{equation*}%\label{}
  S_\FF(u,v) \;=\; \frac{z^2 \+ u^2 \+ v \+ t(z)}{(1-z\+u\+v) \+ (1-u\+t(z)) \+ (1-v\+t(z)) },
\end{equation*}
where $t(z)$ is the generating function for plane trees.
Let $S(u,v)=S_\PP(u,v)+S_\FF(u,v)$ be the generating function for all source trees.

We now designate one vertex in the trunk to which a tendril is to be attached.
Once we have done this, we no longer need to keep track of the number of vertices in the trunk.
We use the pointing construction
%(see page~\pageref{defPointing})
(see Section~\ref{defPointing})
to derive the appropriate generating functions.

For source trees that are paths, we exclude the case in which the tendril is attached to the tip (because we need to handle this case separately). If $\widehat{S}_\PP(v)$ is the corresponding bivariate generating function for paths with a tendril attached to a non-tip vertex, then
\begin{equation*}%\label{}
  \widehat{S}_\PP(v) \;=\; \partial_u S_\PP(u,v)\big|_{u=1} \:-\: S_\PP(1,v) \;=\; \frac{z^2\+v}{(1-z\+v)^2}.
\end{equation*}
Analogously, if we use $\widehat{S}_\FF(v)$ for forked source trees with a tendril attached to a trunk vertex, then
\begin{equation*}%\label{}
  \widehat{S}_\FF(v) \;=\; \partial_u S_\FF(u,v)\big|_{u=1} \;=\; \frac{ z^2\+v\+t(z)\+ (2-z\+v-t(z))}{(1-z\+v)^2\+(1-t(z))^2\+  (1-v\+t(z))}.
\end{equation*}
Let $\widehat{S}(v)=\widehat{S}_\PP(v)+\widehat{S}_\FF(v)$.

Let $L_\CC(v)=L_\CC(z,v)$ be the bivariate generating function for connected forest-like permutations, in which $v$ marks the length (number of edges in) the lowermost branch of the bottom subtree.

\begin{figure}[ht]
  $$
  \begin{tikzpicture}[scale=0.25]
    \draw [very thick] (1,6)--(2,7)--(6,8)--(12,9)--(15,10)--(16,11);
    \draw [very thick] (2,7)--(3,14)--(5,15);
    \draw [very thick] (3,14)--(4,16);
    \draw [very thick] (13,13)--(12,9)--(14,12);
    \draw [blue,very thick] (7,1)--(11,5);
    \draw [red,thin] (11,5)--(12,9);
    \plotpermnobox{16}{6,7,14,16,15,8,1,2,3,4,5,9,13,12,10,11}
  \end{tikzpicture}
  \qquad\qquad\qquad
  \begin{tikzpicture}[scale=0.25]
    \draw [very thick] (1,9)--(2,10)--(6,11)--(10,12)--(13,13)--(14,14);
    \draw [very thick] (2,10)--(3,17)--(5,18);
    \draw [very thick] (3,17)--(4,19);
    \draw [very thick] (11,16)--(10,12)--(12,15);
    \draw [blue,very thick] (7,1)--(8,2)--(9,4)--(15,5)--(16,8);
    \draw [blue,very thick] (8,2)--(19,3);
    \draw [blue,very thick] (17,7)--(15,5)--(18,6);
    \draw [red,thin] (9,4)--(10,12);
    \plotpermnobox{19}{9,10,17,19,18,11,1,2,4,12,16,15,13,14,5,8,7,6,3}
  \end{tikzpicture}
  $$
  \caption{The addition of a path and of a tree}
  \label{figL}
\end{figure}
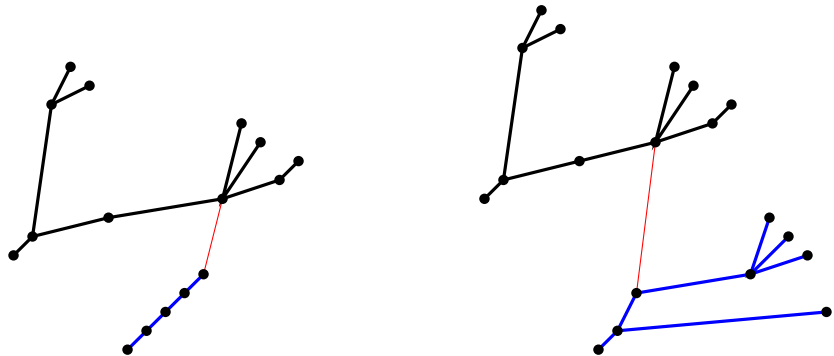
There are two methods by which a new source tree can be added to a bottom subtree, as shown in Figure~\ref{figL}.
In the first, a path source tree is added with a tendril attached to its tip.
In this case,
the lowermost branch of the
result may contain vertices from the bottom subtree.
The action of adding a path source tree in this way is thus seen to be reflected by the linear operator
$\oper_1$, acting on $L_\CC(v)$, that satisfies
\begin{equation*}%\label{}
  \oper_1[v^k] \;=\; \sum_{i=1}^k S_\PP(1,v) \+v^i \;=\; \frac{z\+v\+(1-v^k)}{(1-z\+v)\+(1-v)},
\end{equation*}
where $i$ counts the tethered vertex in the lowermost branch of the bottom subtree, starting from the tip. Hence,
\begin{equation*}%\label{}
  \oper_1[f(v)] \;=\; \frac{z\+v\+\big(f(1)-f(v)\big)}{(1-z\+v)\+(1-v)}.
\end{equation*}

In the second method, a source tree (which may be a path) is added in such a way that some of its vertices are positioned to the right of the bottom subtree. This action is reflected by the linear operator
$\oper_2$ that satisfies
\begin{equation*}%\label{}
  \oper_2[v^k] \;=\; k \+ \widehat{S}(v),
\end{equation*}
since there are $k$ choices for the upper end of the tendril.
Hence,
\begin{equation*}%\label{}
  \oper_2[f(v)] \;=\; \widehat{S}(v) \+ f'(1).
\end{equation*}

The class of initial source trees is enumerated by $S(1,v)$.
Hence, the generating function for connected forest-like permutations, $L_\CC(v)$, satisfies the functional equation
\begin{equation*}%\label{}
  L_\CC(v) \;=\; S(1,v) \:+\: (\oper_1+\oper_2)[L_\CC(v)],
\end{equation*}
which expands to give
\begin{equation}\label{eqLCv}
  L_\CC(v) \;=\; S(1,v)
    \:+\: \frac{z\+v\+\big(L_\CC(1)-L_\CC(v)\big)}{(1-z\+v)\+(1-v)}
    \:+\: \widehat{S}(v) \+ L_\CC'(1).
\end{equation}
Taking the limit as $v$ tends to 1, we get
\begin{equation}\label{eqLC1}
  L_\CC(1) \;=\; S(1,1)
    \:+\: \frac{z}{(1-z)}\+L_\CC'(1)
    \:+\: \widehat{S}(1) \+ L_\CC'(1).
\end{equation}
If we eliminate $L_\CC'(1)$ from~\eqref{eqLCv} and~\eqref{eqLC1}, solve for $L_\CC(v)$, expand and simplify, then
\begin{equation*}%\label{*}
  L_\CC(v) \;=\; z \+ \frac
  {(1 - v)\+\big(1 - 2\+ z\+ v - (1 - z\+ v)\+ t(z)\big) \:-\: v\+ \big(z\+ v - (2 - v)\+ (1 - t(z))\big)\+ L_\CC(1)}
  {\big(1-z\+v-t(z)\big)\+\big(1-v+z\+v^2\big)}.
\end{equation*}
The kernel method now applies. Cancelling the kernel, by letting $v=t(z)/z$, and solving for $L_\CC(1)$ yields
\begin{equation*}%\label{*}
  L_\CC(1) \;=\; \frac   {z-3\+z^2 +(1-5\+z)\+t(z)}   {2-9\+z},
\end{equation*}
and so the generating function for all forest-like permutations is given by
\begin{equation*}%\label{*}
  %L(1) \;=\;
  \frac{L_\CC(1)}{1-L_\CC(1)}\;=\; \frac   {z^2+z^3 +(1-5\+z)\+t(z)}   {1-5\+z+2\+z^2-z^3}.
\end{equation*}
Hence, expanding $t(z)$, we have:

\thmbox{
\begin{thm}\label{thmL}
The class of forest-like permutations has the algebraic generating function
$$
\frac
{
2\+ z^2\+ (1 + z) + (1 - 5 z)\+ (1 - \sqrt{1 - 4\+ z})
}{
2\+(1 - 5\+z + 2\+z^2 - z^3)
} .
$$
\end{thm}
}

% ================================================================
\cleardoublepage

% LaTeX file

% ================================================================
\chapter{Plane permutations}
\label{chapPlane}

In~\cite{BMB2007}, Bousquet-M\'elou \& Butler ask the question, ``What is the number of plane permutations?''. Plane permutations are those whose Hasse graph is plane (i.e. non-crossing).
See Figure~\ref{figM} for an example. (The permutations in the illustration on page~\pageref{figFrontispiece} are also plane permutations.)
Plane permutations are characterised by
avoiding the barred pattern $\mathbf{21\bar{3}54}$.
In this chapter, we investigate this class of permutations.
%Let us use $\MMM$ to denote $\av(\mathbf{21\bar{3}54})$.

We determine a functional equation for this class.
Our first observation is that
plane permutations
of size $n$
are in bijection with
plane
source graphs with $n+1$ vertices, by the addition/removal of a (single) root vertex to the lower left.
So we consider plane
source graphs, and then discount the root vertex.

\begin{figure}[ht]
  $$
  \begin{tikzpicture}[scale=0.25,line join=round]
    \draw [very thick] (1,1)--(2,5)--(3,6)--(4,20)--(8,21)--(9,24) node [right] {${}_1$};
    \draw [very thick] (8,21)--(19,22)--(21,23) node [right] {${}_2$};
    \draw [very thick] (3,6)--(5,16)--(6,18)--(7,19) node [right] {${}_3$};
    \draw [red,thin] (7,19)--(8,21);
    \draw [very thick] (5,16)--(14,17) node [right] {${}_4$};
    \draw [red,thin] (14,17)--(19,22);
    \draw [very thick] (3,6)--(11,7)--(12,9)--(13,10)--(18,11)--(20,13)--(23,14)--(24,15) node [right] {${}_5$};
    \draw [red,thin] (13,10)--(14,17);
    \draw [red,thin] (18,11)--(19,22);
    \draw [red,thin] (20,13)--(21,23);
    \draw [very thick] (18,11)--(22,12) node [right] {${}_6$};
    \draw [red,thin] (22,12)--(23,14);
    \draw [very thick] (11,7)--(15,8) node [right] {${}_7$};
    \draw [red,thin] (15,8)--(18,11);
    \draw [very thick] (1,1)--(10,2)--(16,3)--(17,4) node [right] {${}_8$};
    \draw [red,thin] (10,2)--(11,7);
    \draw [red,thin] (17,4)--(18,11);
     \plotpermnobox{24}{1,5,6,20,16,18,19,21,24,2,7,9,10,17,8,3,4,11,22,13,23,12,14,15}
  \end{tikzpicture}
  $$
  \caption{A plane source graph with its spanning tree built from eight branches; the tips of the branches are numbered in the order they are added}
  \label{figM}
\end{figure}
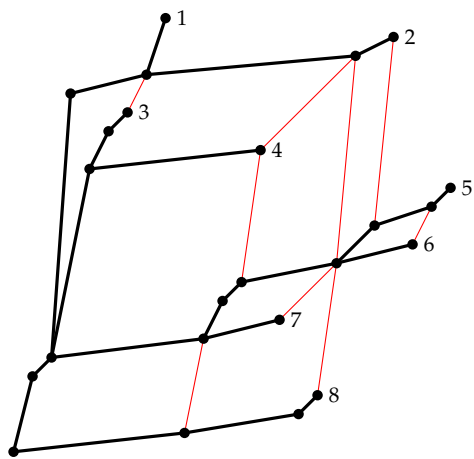
To build a plane source graph, we repeatedly add \emph{branches} (maximal increasing paths) towards the lower right.
The initial branch consists of the left-to-right maxima of the permutation. %(the upper left boundary vertices).
These branches build a rooted spanning tree.
See Figure~\ref{figM} for an illustration.

Specifically, suppose we are given a plane source graph $G$
and have already constructed a
partial spanning tree, $T$, of $G$ from a sequence of branches.
Let $x$ be the maximal (most north-easterly) vertex in $T$
for which there is some vertex in $G\setminus T$ above it
(to its north-east); $x$ is among the
right-left minima of $T$.
Then
the next branch is joined to $T$ at $x$ and consists of those
left-to-right maxima of $G\setminus T$ that are above $x$.
See Figure~\ref{figM} for an illustration of this process.
We call $x$ the \emph{joining} vertex.

\begin{figure}[ht]
  $$
  \begin{tikzpicture}[scale=0.275,line join=round]
    \draw [very thick]
      (1,1)node [gray!50!black,above left] {${}_6\!$}--
      (2,2)node [gray!50!black,above left] {${}_5\!$}--
      (3,3)node [gray!50!black,above left] {${}_4\!$} %node [below right] {$x$}
      --
      (4,10)node [gray!50!black,above left] {${}_3\!$}--
      (8,11)node [gray!50!black,above left] {${}_2\!$}--
      (9,12)node [gray!50!black,above left] {${}_1\!$}--
      (10,13) -- (12,14) -- (15,15) -- (16,16) -- (17,17);
    \node [below right] at (4,10.25) {${}^1$};
    \node [below right] at (8,11.5) {${}^2$};
    \node [below right] at (9,12.5) {${}^3$};
    \node [below right] at (10,13.4) {${}^4$};
    \node [below right] at (12,14.25) {${}^5$};
    \node [below right] at (15,15.5) {${}^6$};
    \node [below right] at (16,16.5) {${}^7$};
    \node [below right] at (17,17.5) {${}^8$};
    \draw [blue,ultra thick] (3,3)--(5,4)--(7,6)--(11,7)--(13,8)--(14,9);
    \draw [red,thin] (7,6)--(8,11);
    \draw [red,thin] (11,7)--(12,14);
    \draw [red,thin] (14,9)--(15,15);
     \plotpermnobox{17}{1,2,3,10,4,5,6,11,12,13,7,14,8,9,15,16,17}
    \draw [thin] (1,1) circle [radius=0.4];
    \draw [thin] (2,2) circle [radius=0.4];
    \draw [thin] (3,3) circle [radius=0.4];
    \draw [thin] (4,10)  circle [radius=0.4];
    \draw [thin] (8,11)  circle [radius=0.4];
    \draw [thin] (9,12)  circle [radius=0.4];
    \draw [thin] (5,4)  circle [radius=0.4];
    \draw [thin] (6,5)  circle [radius=0.4];
    \draw [thin] (7,6)  circle [radius=0.4];
    \draw [thin] (11,7) circle [radius=0.4];
    \draw [thin] (13,8) circle [radius=0.4];
  \end{tikzpicture}
  $$
  \caption{Adding a new branch; open vertices, both before and after adding the branch, are circled} %$u^8\+v^4$}
  \label{figMAdd}
\end{figure}
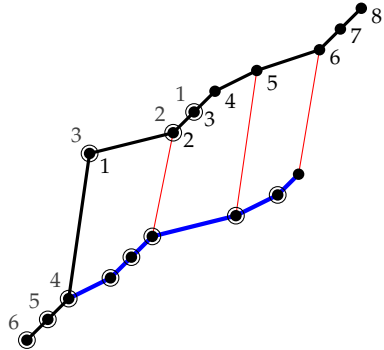
To keep track of the possibilities, at each step, we \emph{mark} each of the current right-to-left minima %(the lower right boundary vertices)
as either \emph{open} or \emph{closed}.
Open vertices are those to which the next branch may be joined.
In the initial branch, all vertices except its tip are open.
After a branch is added, the open vertices are those to the lower left of its tip; the tip and any vertices to its upper right are closed.
See Figure~\ref{figMAdd} for an example of adding a branch.
In this example, prior to the new branch being added, there were 6 open vertices and 5 closed vertices.
Afterwards, there are 8 open vertices and 4 closed vertices.

Note that
vertices in a new branch are positioned to the right of the marked (open or closed)
vertex immediately above the joining vertex.
They may be arbitrarily interleaved horizontally with any further marked vertices above the joining vertex, occurring immediately above the joining vertex to avoid creating a $\mathbf{2143}$.

Given this construction process, it is now straightforward to specify a functional equation for the generating function.
%We mark open vertices with $u$ and closed vertices with $v$.
Let us use $u$ for open vertices and $v$ for closed vertices.
Then the process of adding a new branch to a partial spanning tree $T$ corresponds to the linear operator $\oper$ satisfying the equation
\begin{equation}\label{eqnMOper}
  \oper[u^r\+v^s] \;=\;
   \sum_{i=1}^r
   \sum_{j=1}^{s+i-1}
   u^{r+1-i} \+
   \seq{z\+u}^j \+
   z \+
   v^{s+i-j}.
\end{equation}
Here, $i$ indexes (from the upper right) the choice of joining vertex from among the open vertices of $T$,
and $j$ indexes (from the lower left) the choice for the relative position of the tip of the branch
from among the marked vertices of $T$ above the joining vertex.
For example, in Figure~\ref{figMAdd}, the choices are $i=4$ and $j=5$.
Each of the $j$ occurrences of $\seq{z\+u}$ corresponds to a sequence of new \emph{closed} vertices inserted to the right of a marked vertex of $T$. In Figure~\ref{figMAdd}, these sequences have lengths $3, 0, 0, 1, 1$ from left to right.

We can now take the expansion and linear extension of \eqref{eqnMOper} and combine it with an expression that enumerates the initial branch to yield a functional equation for the generating function of plane permutations. %class $\MMM$.
After discounting the root vertex,
this gives us the following result:

\thmbox{
\begin{thm}\label{thmM}
The generating function for plane permutations is given by $P(1,1)$, where
$$
  P(u,v) \;=\;
    %z\+u\+q\+v
    \frac{z\+u\+v}{1-z\+u}
    \:+\:
    \frac{z\+u\+v}{1-v-z\+u\+v}
    \left(
    \frac{P(u,q)-P(q,q)}{u-q}
    \:-\:
    \frac{P(u,v)-P(v,v)}{u-v}
    \right) ,
$$
in which $q = \frac{1}{1-z\+u}$. % q counts sequences of open vertices.
\end{thm}
}

% Alternative functional equation -- see notes pp.12-13 and planeNew1.nb
\HIDE{
\thmbox{
\begin{thm}\label{thmM}
The generating function for plane permutations is given by $P(1,1)$, where
$$
  P(u,v) \;=\;
    \frac{z\+u\+v}{1-z\+u}
    \:+\:
    \frac{z\+u}{(1-z\+u)\+(1-v)}
    \left(
    \frac{v\+(1+u-v)\+(P(1,1)-P(u,1))}{1-u}
    \:+\:
    \frac{u\+(P(v,v)-P(u,v))}{v-u}
    \right) .
$$
\end{thm}
}
} %\HIDE

It is not known how to solve this equation.
However, this recurrence can be iterated to extract the coefficients of $z^n$ from $M(1,1)$ and hence
enumerate $\av(\mathbf{21\bar{3}54})$ for small $n$.
%The first fifteen terms are 1, 2, 6, 23, 104, 530, 2958, 17734, 112657, 750726, 5207910, 37387881, 276467208, 2097763554, 16282567502.
Values up to $n=37$ (determined in twelve hours by \emph{Mathematica}~\cite{Mathematica}) can be found at \href{http://oeis.org/A117106}{A117106} in OEIS~\cite{OEIS}.

\section*{Conjectures}

Two intriguing %interesting
conjectures have been made concerning the enumeration of the class of plane permutations.
Firstly,
in the OEIS entry,
Mark van Hoeij posits a connection between plane permutations and the sequence of \emph{Ap\'ery numbers}
$$
a_n \;=\; \sum_{k=0}^n \binom{n}{k}^{\!2} \binom{n+k}{k}.
$$
These are known as Ap\'ery numbers because, along with other sequences, they were used in
Ap\'ery's celebrated proof of the irrationality of $\zeta(2)$ and $\zeta(3)$~\cite{Apery1979,Apery1981} (see~\cite{vdPoorten1979}).
Van Hoeij's conjecture is as follows:
\begin{conj}[van Hoeij; \href{http://oeis.org/A117106}{A117106} in OEIS~\cite{OEIS}]\label{conjVanHoeij}
The number of plane permutations, $p_n$, of length $n\geqslant2$ is given by
$$
p_n \;=\; \frac{24\+\big((5\+n^3-5\+n+6)\+a_{n+1} \:-\: (5\+n^2+15\+n+18)\+a_n\big)}{5\+(n-1)\+n^2\+ (n+2)^2\+ (n+3)^2\+ (n+4)}
,
$$
where $a_n = \sum_{k=0}^n \binom{n}{k}^{\!2} \binom{n+k}{k}$.
\end{conj}
We have confirmed that this conjecture is consistent with the values calculated using the recurrence in Theorem~\ref{thmM}.

\HIDE{
\begin{conj}[van Hoeij; see~\href{http://oeis.org/A117106}{A117106} in OEIS~\cite{OEIS}]
The generating function for plane permutations is
$$
%M(1,1) \;=\;
\frac{(1+z^2) \left(\alpha(z)  \, _2F_1(\frac{5}{12},\frac{13}{12};1;q(z))- \beta(z)\+  p(z)\, _2F_1(\frac{1}{12},\frac{5}{12};1;q(z))\right)}{720\+ z^4\+ p(z)^{5/4}} \:-\: \frac{1+8\+ z-6\+ z^2+7\+ z^3}{5\+ z^3} ,
$$
where
\begin{eqnarray*}
% \nonumber to remove numbering (before each equation)
  p(z) &=& 1 - 12\+z + 14\+z^2 + 12\+z^3 + z^4 ,\\[3pt]
  q(z) &=& \frac{1728\+z^5 \+(1-11 z-z^2)}{p(z)^3} ,\\[3pt]
  \alpha(z) &=& (1-18\+ z+74\+ z^2+18\+ z^3+ z^4)\+(1 +228\+ z +494\+ z^2-228\+ z^3+ z^4) , \\[3pt] %\text{~and}\\[3pt]
  \beta(z) &=& 1+78\+ z-1606\+ z^2-78\+ z^3+  z^4 .
\end{eqnarray*}
\end{conj}
} %\HIDE

Ap\'ery found that $a_n$ satisfies the following recurrence:
$$
a_{n} \;=\; \frac{1}{n^2}\+\big((11\+n^2-11\+n+3)\+a_{n-1} \:+\: (n-1)^2\+a_{n-2}\big)
.
$$
This can be combined with Conjecture~\ref{conjVanHoeij}, enabling us to conjecture the following: % for $p_n$:
\begin{conj}
The number of plane permutations, $p_n$, of length $n\geqslant3$ satisfies the recurrence
$$
p_{n} \;=\; \frac{(11\+n^2+11\+n-6)\+p_{n-1} \:+\: (n-2)\+(n-3)\+p_{n-2}}{(n+3)\+(n+4)} .
$$
\end{conj}

If Conjecture~\ref{conjVanHoeij} holds, then
the growth rate of %the class of
plane permutations, $\liminfty \sqrt[n]{p_n}$, is the same as that for the Ap\'ery numbers.
From the definition of the $a_n$, it is clear that this is given by
$$
\max_{0\leqslant\lambda\leqslant1} \,\liminfty \sqrt[n]{ \tbinom{n}{\lambda n}^{2} \+ \tbinom{(1+\lambda) n}{\lambda n} },
$$
which can easily be computed using Stirling's approximation and elementary calculus:
\begin{conj}
The growth rate of the class of plane permutations is $\half\+(11+5\+\sqrt{5}) \approx 11.09017$.
\end{conj}

A rather different conjecture was made by Carla Savage during her plenary talk at
the Permutation Patterns 2014 conference in Tennessee.
It relates plane permutations to certain \emph{inversion sequences}\footnote{Also known as Lehmer codes.}, which are sequences of nonnegative integers $e_1 e_2 \ldots e_n$ such that $e_i<i$ for each $i$.
These sequences are known as inversion sequences because there is a map, $\varphi$, which maps each permutation $\sigma$ to an inversion sequence
that counts the inversions of $\sigma$:
$$
\varphi(\sigma) \;=\; e_1\ldots e_n, \qquad \text{where~~} e_j \:=\: \big|\{ i \::\: i<j \text{~and~} \sigma(i)>\sigma(j) \}\big| .
$$
This map is a length-preserving bijection (see Knuth~\cite{Knuth1973}).

In an analogous manner to permutations, %it can be said
we say
that an inversion sequence $\eta=e_1\ldots e_n$ \emph{contains} a pattern $\sigma=s_1 \ldots s_k$ if there is a subsequence of $\eta$ that is order isomorphic to $\sigma$, and say that $\eta$ \emph{avoids} $\sigma$ if there is no such subsequence of $\eta$. In the context of inversion sequences,
a pattern may be an arbitrary sequence of nonnegative integers.
For example, $\mathbf{01031}$ contains two occurrences of $\mathbf{010}$, but avoids $\mathbf{201}$.

Savage's conjecture proposes a bijection between plane permutations and a particular class of inversion sequences:

\begin{conj}[Savage, 2014]\label{conjSavage}
The number of plane permutations of length $n$ is the same as the number of inversion sequences of length $n$ avoiding $\mathbf{101}$ and $\mathbf{201}$.
\end{conj}

We have confirmed that the classes are indeed equinumerous for $n\leqslant36$.

%\vspace{9pt}
The conjectures of van Hoeij and Savage appear to be unrelated, but there is in fact a possible connection.
It turns out that the Ap\'ery numbers enumerate sets of lattice points in a certain sequence of lattices, and
inversion sequences can also be interpreted from the perspective of lattice point enumeration.

For each $d\geqslant0$, the $A_d$ lattice consists of the set of those points in $\mathbb{Z}^{d+1}$ whose coordinate sum is zero.
Given a lattice $L$,
the %sequence of
\emph{crystal ball} numbers, $G_L(r)$, for $L$, denote %gives
the number of points in $L$ that are within $r$ steps of the origin.
Crystal ball numbers for the $A_d$ lattices were first determined, independently, by Conway \& Sloane~\cite{CS1997} and Baake \& Grimm~\cite{BG1997}.
The Ap\'ery numbers $a_n$ are the ``diagonal'' crystal ball numbers for the $A_d$ lattices: for each $n$, they
enumerate the number of points in $A_n$ that are within $n$ steps of the origin. In symbols,
$$
a_n \;=\;  G_{A_n}(n) .
$$

%This suggests a potential link with the conjecture of Savage.
On the other hand,
inversion sequences of length $n$ can be viewed as lattice points in a (half-open) $1 \times 2 \times \ldots \times n$ box in the cubic lattice $\mathbb{Z}^n$.
Indeed, Savage and her co-authors make much use of this lattice point enumeration perspective in their work (for example, see~\cite{BS2010}). %and~\cite{SS2012}). lecture hall partitions / polytopes
Perhaps there is some way of leveraging these connections to lattices to prove both conjectures.

% ================================================================
\cleardoublepage

% LaTeX file

% ================================================================
\chapter{Avoiding 1234 and 2341}
\label{chapF}

In this chapter and the next, we enumerate two classes whose bases consist of two permutations of length 4.
The first of these is $\av(\mathbf{1234},\mathbf{2341})$.
We use our Hasse graph approach to prove the following theorem:

\thmbox{
\begin{thm}\label{thmF}
The class of permutations avoiding $\mathbf{1234}$ and $\mathbf{2341}$ has the algebraic generating function
$$
%\frac
%{(2-10\+z+9\+z^2+7\+z^3-4\+z^4)\+\sqrt{1-4\+z} \:-\: (2-16\+z+41\+z^2-39\+z^3+12\+z^4)}
%{(1-4\+z)\+(1-3\+z+z^2)\+\big((1-z)\+\sqrt{1-4\+z} \:+\: (1-3\+z)\big)} .
\frac
{2-10\+z+9\+z^2+7\+z^3-4\+z^4 \:-\: (2-8\+z+9\+z^2-3\+z^3)\+\sqrt{1-4\+z}}
{(1-3\+z+z^2)\+\big((1-5\+z+4\+z^2) \:+\: (1-3\+z)\+\sqrt{1-4\+z}\big)} .
$$
Its growth rate is equal to 4.
\end{thm}
}

In doing so, we use the kernel method six times to solve the relevant
functional equations. %The work in this chapter was published in~\cite{Bevan2014a}.

Let us use $\FFF$ to denote $\av(\mathbf{1234},\mathbf{2341})$.
The structure of class $\FFF$ depends critically on the presence or absence of occurrences of the pattern $\mathbf{123}$.
In light of this,
to enumerate this class, we partition it into three sets $\AAA$, $\BBB$ and $\CCC$ as follows:
\begin{bullets}
  \item $\AAA=\av(\mathbf{123})$.
  \item $\BBB=\av(\mathbf{1234},\mathbf{2341},\mathbf{13524},\mathbf{14523})\setminus\AAA$. Every permutation in $\BBB$ contains at least one occurrence of a $\mathbf{123}$, but avoids $\mathbf{13524}$ and $\mathbf{14523}$.
  \item $\CCC=\av(\mathbf{1234},\mathbf{2341})\setminus(\AAA\cup\BBB)$. Every permutation in $\CCC$ contains a $\mathbf{13524}$ or a $\mathbf{14523}$.
\end{bullets}
%Note that the patterns $\mathbf{13524}$ and $\mathbf{14523}$ are the two ways in which two $\mathbf{123}$ patterns may occur in a source graph ...
We refer to a permutation in $\AAA$ as an \prefix{$\AAA$}permutation, and similarly for $\BBB$ and $\CCC$.

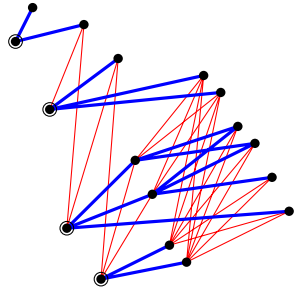
\begin{figure}[t] %[ht]
  $$
  \begin{tikzpicture}[scale=0.225,line join=round]
    \draw [red,thin] (3,11)--(5,16);
    \draw [red,thin] (5,16)--(4,4)--(7,14);
    \draw [red,thin] (12,13)--(8,8)--(13,12)--(9,6)--(12,13);
    \draw [red,thin] (6,1)--(7,14);
    \draw [red,thin] (8,8)--(6,1)--(9,6);
    \draw [red,thin] (10,3)--(12,13)--(11,2)--(13,12)--(10,3)--(14,10)--(11,2)--(15,9)--(10,3)--(16,7)--(11,2)--(17,5)--(10,3);
    \draw [blue,very thick] (2,17)--(1,15)--(5,16);
    \draw [blue,very thick] (7,14)--(3,11);
    \draw [blue,very thick] (12,13)--(3,11)--(13,12);
    \draw [blue,very thick] (16,7)--(9,6)--(4,4)--(8,8)--(14,10)--(9,6)--(15,9)--(8,8);
    \draw [blue,very thick] (4,4)--(17,5);
    \draw [blue,very thick] (10,3)--(6,1)--(11,2);
    \plotpermnobox{}{15,17,11, 4,16, 1,14, 8, 6, 3, 2,13,12,10, 9, 7, 5}
    %                 1  2  3  4  5  6  7  8  9 10 11 12 13 14 15 16 17
    \draw [thin] (1,15)  circle [radius=0.4];
    \draw [thin] (3,11) circle [radius=0.4];
    \draw [thin] (4,4) circle [radius=0.4];
    \draw [thin] (6,1) circle [radius=0.4];
   %\node[above] at (12,12.6) {${}^\bigstar$};
  \end{tikzpicture}
  $$
  \caption{%The Hasse graph of a
           A permutation in class $\FFF$, spanned by four source graphs} %; the spike vertex is marked with a star} %$\bigstar$}
  \label{figFPerm}
\end{figure}
The addition of a source graph to a \prefix{$\CCC$}permutation can only yield another \prefix{$\CCC$}permutation (since it can't cause the removal a $\mathbf{13524}$ or $\mathbf{14523}$ pattern).
Similarly, the addition of a source graph to a \prefix{$\BBB$}permutation can't result in an \prefix{$\AAA$}permutation.
Hence, we can enumerate $\AAA$ without first considering $\BBB$ or $\CCC$, and can enumerate $\BBB$ before considering~$\CCC$.

Before investigating the structure of permutations in $\AAA$, $\BBB$ and $\CCC$, let us briefly examine what a typical source graph in $\FFF$ looks like. Firstly, the avoidance of $\mathbf{1234}$ means that the non-root vertices of any source graph form a \prefix{$\mathbf{123}$}avoider. % (whose Hasse graph is bipartite).
Secondly,
the avoidance of $\mathbf{2341}$ presents no additional restriction on the structure of a source graph,
because the presence of a $\mathbf{2341}$ would
imply the presence of a $\mathbf{123}$ in the non-root vertices.
Thus a source graph in $\FFF$ consists of a root together with a \prefix{$\mathbf{123}$}avoider formed from the non-root vertices.

%\vspace{9pt}
\subsubsection*{The structure of set $\AAA$}
\vspace{-6pt}

We begin by looking at $\AAA=\av(\mathbf{123})$.
As is very well known, this class is enumerated by the Catalan numbers.
However, we need to keep track of the structure of the bottom subgraph. So we must determine the appropriate
bivariate generating function.

Let $\AAA_\ssS$ denote the set of source graphs in set $\AAA$.
Now, each member of $\AAA_\ssS$ is simply a \emph{fan}, a root vertex connected to a (possibly empty) sequence of pendant edges.
Bottom subgraphs are also fans.
Thus source graphs and bottom subgraphs of $\AAA$ are acyclic.

When enumerating $\AAA$, we use the variable $u$ to mark the \emph{number of leaves} (non-root vertices) in the bottom subtree.
The generating function for $\AAA_\ssS$ is thus given by
$$
A_\ssS(u) \;=\; z+z^2\+u+z^3\+u^2+\ldots \;=\; \frac{z}{1-z\+u} .
$$

We now consider the process of building an \prefix{$\AAA$}permutation from a sequence of source trees.
When a source tree is added to an \prefix{$\AAA$}permutation,
the root vertex of the source tree may be inserted to the left of zero or more of the leaves of the bottom subtree.
See Figure~\ref{figABuild} for an illustration.
Note that, in this and other similar figures, the original bottom subgraph is displayed to the upper left, with the new source graph to the lower right.

\newpage %%%%%%%
The action of adding a source tree is thus seen to be reflected by the linear operator $\oper_{\ssA}$ whose effect on $u^k$ is given by
$$
\oper_{\ssA}\big[u^k\big] \;=\; A_\ssS(u)\+(1+u+\ldots+u^k) \;=\; A_\ssS(u)\+\frac{1-u^{k+1}}{1-u}.
$$
Hence,
the bivariate generating function $A(u)$ for $\AAA$ is defined by the following recursive functional equation:
\begin{equation*}%\label{}
   A(u)  \;=\;  A_\ssS(u) \:+\: A_\ssS(u)\+\frac{A(1)-u\+A(u)}{1-u}.
\end{equation*}
\begin{figure}[t] %[ht]
  $$
  \begin{tikzpicture}[scale=0.225,line join=round]
    \draw [very thick] (2,11)--(1,6)--(3,10);
    \draw [very thick] (5,9)--(1,6)--(6,8);
    \draw [very thick] (7,7)--(1,6);
    \draw [blue,very thick] (8,5)--(4,1)--(9,4);
    \draw [blue,very thick] (10,3)--(4,1)--(11,2);
    \draw [red,thin] (5,9)--(4,1)--(6,8);
    \draw [red,thin] (7,7)--(4,1);
    \plotpermnobox{}{6,11,10, 1, 9, 8, 7, 5, 4, 3, 2}
    %                1  2  3  4  5  6  7  8  9 10 11
    \draw [thin] (1,6)  circle [radius=0.4];
    \draw [thin] (4,1) circle [radius=0.4];
  \end{tikzpicture}
  $$
  \caption{Adding a source tree to the bottom subtree of an \prefix{$\AAA$}permutation}
  \label{figABuild}
\end{figure}
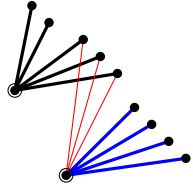This equation can be solved using the {kernel method}.
To start, we express $A(u)$ in terms of $A(1)$,
by expanding and rearranging to give
\begin{equation}\label{eqAKernel}
A(u)  \;=\;   \frac{z \+\big(1-u+A(1)\big)}{1-u+z\+u^2}.
\end{equation}
Now, cancelling the kernel by setting $u=(1-\sqrt{1-4\+z})/2\+z$ yields
the univariate generating function for $\AAA$,
$$
A(1) \;=\; \frac{1-\sqrt{1-4\+ z}}{2\+ z}-1.
$$
This is the generating function for the Catalan numbers as expected.

Finally, by substituting for $A(1)$ back into~\eqref{eqAKernel} we
get the following explicit algebraic expression for $A(u)$:
$$
A(u)  \;=\; \frac{1-2\+z\+u-\sqrt{1-4\+z}}{2\+(1-u+z\+u^2)} .
$$

\subsubsection*{The structure of set $\BBB$}
\vspace{-6pt}

We now consider set $\BBB$.
Recall that sets $\BBB$ and $\CCC$ consist of those permutations in class $\FFF$ that contain at least one occurrence of a $\mathbf{123}$.
We need to keep track of the position of the
leftmost occurrence of a $\mathbf{3}$ in such a pattern.
Given a permutation in $\BBB$ or $\CCC$, let us call the vertex corresponding to the leftmost $\mathbf{3}$ in a $\mathbf{123}$ the \emph{spike}.
In the figures, the spike is marked with a star.

We now make a key observation.\label{obsKey}
When adding a source graph to a permutation containing a $\mathbf{123}$, no vertex of the source graph may be positioned to the right of the spike, or else a $\mathbf{2341}$ would be created.
Hence, the spike in any permutation in classes $\BBB$ or $\CCC$ occurs in its bottom subgraph.
When enumerating sets $\BBB$ and $\CCC$, we use the variable $u$ to mark \emph{the
number of vertices to the left of the spike} in its bottom subgraph.

\begin{figure}[ht]
  $$
  \begin{tikzpicture}[scale=0.225,line join=round]
    \draw [blue,very thick] (2,15)--(1,1)--(3,14);
    \draw [blue,very thick] (4,13)--(1,1)--(5,10);
    \draw [blue,very thick] (6,9)--(1,1)--(7,7);
    \draw [blue,very thick] (8,4)--(1,1);
    \draw [blue,very thick] (14,3)--(1,1)--(15,2);
    \draw [blue,very thick] (9,12)--(5,10)--(10,11)--(6,9)--(9,12)--(7,7)--(10,11)--(8,4)--(9,12);
    \draw [blue,very thick] (7,7)--(11,8)--(8,4);
    \draw [blue,very thick] (12,6)--(8,4)--(13,5);
    \plotpermnobox{}{1,15,14,13,10,9,7,4,12,11, 8, 6, 5, 3, 2}
    %                1  2  3  4  5 6 7 8  9 10 11 12 13 14 15
    \draw [thin] (1,1)  circle [radius=0.4];
    \node[above] at (9,11.6) {${}^\bigstar$};
  \end{tikzpicture}
  $$
  \caption{A source graph in set $\BBB$} %; the spike is marked with a star}
  \label{figBSource}
\end{figure}
Let $\BBB_\ssS$ be the set of source graphs in set $\BBB$.
Since \prefix{$\BBB$}permutations contain a $\mathbf{123}$ but
avoid $\mathbf{13524}$ and $\mathbf{14523}$,
the non-root vertices of a permutation in $\BBB_\ssS$ consist of two descending sequences, the second sequence beginning (with the spike) above the last vertex in the first sequence.
See Figure~\ref{figBSource} for an illustration.
If we consider the non-root vertices in order from top to bottom, then it can be seen that
$\BBB_\ssS$ is defined by the structural equation
$$
\BBB_\ssS \;=\; u\+\ZZZ \:\times\: \seq{u\+\ZZZ} \:\times\: \ZZZ \:\times\: \seq{u\+\ZZZ+\ZZZ} \:\times\: u\+\ZZZ \:\times\: \seq{\ZZZ}.
$$
The first term on the right corresponds to the root and the remaining terms deal with the non-root vertices in order from top to bottom, vertices to the left of the spike being marked with $u$.
The third term corresponds to the spike and the fifth represents the lowest point to the left of the spike (the rightmost $\mathbf{2}$ of a $\mathbf{123}$).
Hence, the generating function for $\BBB_\ssS$ is
$$
B_\ssS(u) \;=\; \frac{z^3\+u^2}{(1-z)\+ (1-z\+u)\+ (1-z-z\+u)} .
$$

We now study the process of building a \prefix{$\BBB$}permutation from a sequence of source graphs.
There are two cases. A permutation in $\BBB$ may result either from the addition of a source graph to an \prefix{$\AAA$}permutation, or else
from adding a source graph to another \prefix{$\BBB$}permutation. We address these two cases in turn.

\begin{figure}[ht]
  $$
  \begin{tikzpicture}[scale=0.225,line join=round]
    \draw [very thick] (2,17)--(1,12)--(4,16);
    \draw [very thick] (5,15)--(1,12)--(6,14);
    \draw [very thick] (7,13)--(1,12);
    \draw [blue,very thick] (8,11)--(3,2)--(9,9);
    \draw [blue,very thick] (10,8)--(3,2)--(11,5);
    \draw [blue,very thick] (12,4)--(3,2);
    \draw [blue,very thick] (16,3)--(3,2);
    \draw [blue,very thick] (9,9)--(13,10)--(10,8);
    \draw [blue,very thick] (11,5)--(13,10)--(12,4)--(14,7)--(11,5)--(15,6)--(12,4);
    \draw [red,thin] (5,15)--(3,2)--(4,16);
    \draw [red,thin] (7,13)--(3,2)--(6,14);
    \plotpermnobox{}{12,17, 2,16,15,14,13,11, 9, 8, 5, 4,10, 7, 6, 3}
    %                 1  2  3  4  5  6  7  8  9 10 11 12 13 14 15 16 17
    \draw [thin] (1,12)  circle [radius=0.4];
    \draw [thin] (3,2) circle [radius=0.4];
    \node[above] at (13,9.6) {${}^\bigstar$};
  \end{tikzpicture}
  \qquad\qquad
  \begin{tikzpicture}[scale=0.225,line join=round]
    \draw [very thick] (2,17)--(1,12)--(4,16);
    \draw [very thick] (5,15)--(1,12)--(11,14);
    \draw [very thick] (12,13)--(1,12);
    \draw [blue,very thick] (6,11)--(3,2)--(7,10);
    \draw [blue,very thick] (8,9)--(3,2)--(9,8);
    \draw [blue,very thick] (10,7)--(3,2)--(13,6);
    \draw [blue,very thick] (14,5)--(3,2)--(15,4);
    \draw [blue,very thick] (16,3)--(3,2);
    \draw [red,thin] (5,15)--(3,2)--(4,16);
    \draw [red,thin] (11,14)--(6,11)--(12,13)--(7,10)--(11,14)--(8,9)--(12,13)--(9,8)--(11,14)--(10,7)--(12,13);
    \plotpermnobox{}{12,17, 2,16,15,11,10, 9, 8, 7,14,13, 6, 5, 4, 3}
    %                 1  2  3  4  5  6  7  8  9 10 11 12 13 14 15 16 17
    \draw [thin] (1,12)  circle [radius=0.4];
    \draw [thin] (3,2) circle [radius=0.4];
    \node[above] at (11,13.6) {${}^\bigstar$};
  \end{tikzpicture}
  \qquad\qquad
  \begin{tikzpicture}[scale=0.225,line join=round]
    \draw [very thick] (2,17)--(1,12)--(4,16);
    \draw [very thick] (5,15)--(1,12)--(11,14);
    \draw [very thick] (12,13)--(1,12);
    \draw [blue,very thick] (6,11)--(3,2)--(7,9);
    \draw [blue,very thick] (8,8)--(3,2)--(9,5);
    \draw [blue,very thick] (10,4)--(3,2);
    \draw [blue,very thick] (16,3)--(3,2);
    \draw [blue,very thick] (7,9)--(13,10)--(8,8);
    \draw [blue,very thick] (9,5)--(13,10)--(10,4)--(14,7)--(9,5)--(15,6)--(10,4);
    \draw [red,thin] (5,15)--(3,2)--(4,16);
    \draw [red,thin] (11,14)--(6,11)--(12,13)--(7,9)--(11,14)--(8,8)--(12,13)--(9,5)--(11,14)--(10,4)--(12,13);
    \plotpermnobox{}{12,17, 2,16,15,11, 9, 8, 5, 4,14,13,10, 7, 6, 3}
    %                 1  2  3  4  5  6  7  8  9 10 11 12 13 14 15 16 17
    \draw [thin] (1,12)  circle [radius=0.4];
    \draw [thin] (3,2) circle [radius=0.4];
    \node[above] at (11,13.6) {${}^\bigstar$};
  \end{tikzpicture}
  $$
  %\caption{Ways of adding a source graph to a bottom subtree in $\AAA$ to create a permutation in~$\BBB$}
  \caption{Ways to create a \prefix{$\BBB$}permutation by adding a source graph to the bottom subtree of an \prefix{$\AAA$}permutation}
  \label{figABBuild}
\end{figure}
One way to create a \prefix{$\BBB$}permutation from an \prefix{$\AAA$}permutation is to
add a source graph from $\BBB_\ssS$, positioning its root to the left of zero or more of the leaves of the bottom subtree
of the \prefix{$\AAA$}permutation
and its non-root vertices to the right of the bottom subtree.
In this case, the new permutation inherits its spike from the added source graph.
This is illustrated in the left diagram in Figure~\ref{figABBuild}.
The generating function for this set of permutations is thus given by
$$
B_{\textsf{AB1}}(u) \;=\; B_\ssS(u)\+\frac{A(1)-u\+A(u)}{1-u}
%\;=\;
%\frac
%{z^2\+u^2\+\big((1-2\+z-z\+u)-(1-z\+u)\+\sqrt{1-4\+z}\big)}
%{2\+ (1-z)\+ (1-z\+u)\+ (1-z-z\+u)\+ (1-u+z\+u^2)}
.
$$
For simplicity, we choose not to present the expanded form of $B_{\textsf{AB1}}(u)$, or that of most subsequent expressions. They can all be represented in the form $(p+q\+\sqrt{1-4\+z})/r$ for appropriate polynomials $p$, $q$ and $r$.

The other possibility for creating a \prefix{$\BBB$}permutation from an \prefix{$\AAA$}permutation involves the positioning of some non-root vertices of the source graph to the left of some of the leaves in the bottom subtree, making one of the vertices in the original bottom subtree the spike.
The source graph may be drawn from either $\AAA_\ssS$ or $\BBB_\ssS$, as illustrated in the centre and right diagrams in Figure~\ref{figABBuild}.

In this situation, if the source graph has a spike,
it must be positioned to the right of all leaves in the bottom subtree, or else a $\mathbf{1234}$ would be created.
Furthermore, any source graph vertices placed to the left of leaves in the bottom subtree
must occur at the same position in the bottom subtree, or else a $\mathbf{13524}$ would be created.
This position may be chosen independently of where the root vertex is placed.

From these considerations, it can be determined that
the resulting set of permutations has the generating function defined by
$$
%B_{\textsf{AB2}}(u) \;=\; \left(\!B_\ssS(u)+\frac{z^2\+u^2}{(1-z)\+(1-z\+u)}\right) \frac{1}{1-u}\left(\!A'(1)-\frac{u}{1-u}\+\big(A(1)-A(u)\big)\!\right) .
B_{\textsf{AB2}}(u) \;=\; \Big(B_\ssS(u)+\frac{z^2\+u^2}{(1-z)\+(1-z\+u)}\Big) \+ \frac{1}{1-u} \+ \Big(A\!'(1)-\frac{u}{1-u}\+\big(A(1)-A(u)\big)\Big) ,
$$
where the presence of the derivative $A\!'$ is a consequence of the independent choice of two positions in the bottom subtree.

%For simplicity, we choose not to present the expanded form of $B_{\textsf{AB2}}(u)$, or that of most subsequent expressions. They can all be expressed in the form $(p+q\+\sqrt{1-4\+z})/r$ for appropriate polynomials $p$, $q$ and $r$.

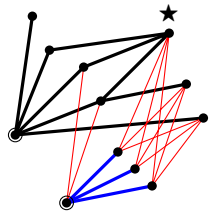
\begin{figure}[ht]
  $$
  \begin{tikzpicture}[scale=0.225,line join=round]
    \draw [very thick] (2,12)--(1,5)--(3,10)--(10,11);
    \draw [very thick] (5,9)--(1,5)--(6,7);
    \draw [very thick] (1,5)--(12,6);
    \draw [very thick] (11,8)--(6,7)--(10,11)--(5,9);
    \draw [blue,very thick] (7,4)--(4,1)--(8,3);
    \draw [blue,very thick] (9,2)--(4,1);
    \draw [red,thin] (5,9)--(4,1)--(6,7);
    \draw [red,thin] (10,11)--(7,4)--(11,8)--(8,3)--(10,11)--(9,2)--(11,8);
    \draw [red,thin] (7,4)--(12,6)--(8,3);
    \draw [red,thin] (9,2)--(12,6);
    \plotpermnobox{}{5,12,10, 1, 9, 7, 4, 3, 2,11, 8, 6}
    %                1  2  3  4  5  6  7  8  9 10 11 12
    \draw [thin] (1,5)  circle [radius=0.4];
    \draw [thin] (4,1) circle [radius=0.4];
    \node[above] at (10,10.6) {${}^\bigstar$};
  \end{tikzpicture}
  $$
  \caption{Adding a source tree to the bottom subgraph of a \prefix{$\BBB$}permutation}
  \label{figBBBuild}
\end{figure}
Finally, we consider the addition of a source graph to a \prefix{$\BBB$}permutation.
As we noted in our key observation on page~\pageref{obsKey},
no vertex of the source graph may be positioned to the right of the spike in the bottom subgraph.
As a result, the new source graph may not contain a $\mathbf{123}$ or else a $\mathbf{1234}$ would be created, so the source graph must be a member of $\AAA_\ssS$ (a fan).
Moreover, the leaves of the source tree must be positioned \emph{immediately} to the left of the spike, or else a $\mathbf{1234}$ would be created.
See Figure~\ref{figBBBuild} for an illustration.

Note that, as a consequence of these restrictions, it is impossible for the addition of a source graph to a \prefix{$\BBB$}permutation to create a $\mathbf{13524}$ or $\mathbf{14523}$. So it is not possible to extend a \prefix{$\BBB$}permutation so as to create a \prefix{$\CCC$}permutation.

Thus
the bivariate generating function $B(u)$ of set $\BBB$ is defined by the following recursive functional equation:
\begin{equation*}%\label{}
   B(u)  \;=\;  B_\ssS(u) + B_{\textsf{AB1}}(u) + B_{\textsf{AB2}}(u) \:+\: \frac{z\+u}{1-z\+u}\+\frac{B(1)-B(u)}{1-u} ,
\end{equation*}
where the final term reflects the addition of a source tree to a \prefix{$\BBB$}permutation.

This equation is amenable to the kernel method.
It can be rearranged to express $B(u)$ in terms of $B(1)$. The kernel can then be cancelled by setting $u=(1-\sqrt{1-4\+z})/2\+z$, which yields an expression for $B(1)$:
$$
B(1)
\;=\;
\frac{-1+8\+z-19\+z^2+12\+z^3 \:+\: (1-6\+z+9\+z^2-2\+z^3)\+\sqrt{1-4 z}}{2\+z^3\+(1-4\+z)}.
$$
Substitution then
results in an explicit algebraic expression for $B(u)$, which we refrain from presenting explicitly due to its size.

\vspace{9pt}
\begin{figure}[ht]
  $$
  \begin{tikzpicture}[scale=0.225,line join=round]
    \draw [blue,very thick] (2,15)--(1,1)--(3,14);
    \draw [blue,very thick] (1,1)--(4,10)--(6,13)--(5,9);
    \draw [blue,very thick] (5,9)--(1,1)--(7,7);
    \draw [blue,very thick] (8,4)--(1,1);
    \draw [blue,very thick] (12,8)--(11,3)--(1,1)--(15,2);
    \draw [blue,very thick] (9,12)--(4,10)--(10,11)--(5,9)--(9,12)--(7,7)--(10,11)--(8,4)--(9,12);
    \draw [blue,very thick] (7,7)--(12,8)--(8,4);
    \draw [blue,very thick] (13,6)--(8,4)--(14,5)--(11,3)--(13,6);
    \plotpermnobox{}{1,15,14,10, 9,13, 7, 4,12,11, 3, 8, 6, 5, 2}
    %                1  2  3  4  5  6  7  8  9 10 11 12 13 14 15
    \draw [thin] (1,1)  circle [radius=0.4];
    \node[above] at (6,12.6) {${}^\bigstar$};
  \end{tikzpicture}
  $$
  \caption{A source graph in set $\CCC$}
  \label{figCSource}
\end{figure}
\subsubsection*{The structure of set $\CCC$}
\vspace{-6pt}

We begin our enumeration of $\CCC$ by counting its set of source graphs, which we denote~$\CCC_\ssS$.
Rather than doing this directly,
we enumerate all the source graphs that contain a $\mathbf{123}$ (i.e.~those in either $\BBB_\ssS$ or $\CCC_\ssS$) and then subtract those in $\BBB_\ssS$.
To begin, we consider how we might build an \emph{arbitrary} source graph in class $\FFF$ by adding vertices from left to right.

Suppose we have a partly formed source graph with at least one non-root vertex, whose rightmost vertex is $v$,
and we want to add further vertices to its right.
What are the options?
If $v$ is not the lowest of the non-root vertices, then any subsequent vertices must be placed lower than~$v$.
The only other restriction is that vertices must be positioned higher than the root.
If we use $y$ to mark the number of positions in which a vertex may be inserted, then the action of adding a new vertex can be seen to be reflected by the following linear operator: %$\oper_{\textsf{\L}}$ whose effect on $y^k$ is given by
%\begin{equation*}%\label{eqLukLinOp}
%  \oper_{\textsf{\L}}\big[y^k] \;=\; z\+(y^2+y^3+\ldots+y^{k+1}) \;=\; z\+y^2\+\frac{1-y^k}{1-y}.
%\end{equation*}
\begin{equation*}%\label{eqLukLinOp}
  \oper_{\textsf{\L}}\big[f(y)\big] \;=\; z\+y^2\+\frac{f(1)-f(y)}{1-y}.
\end{equation*}
We choose to denote this operator $\oper_{\textsf{\L}}$ because it corresponds to the action used in building a {\L}uka\-sie\-wicz path.

Now let us consider source graphs %that have no vertices to the right of the spike.
whose rightmost vertex is a spike.
These are in~$\BBB_\ssS$, so let's call this set $\BBB_\textsf{S0}$.
As usual, we mark with $u$ those vertices to the left of the spike.
If, in addition, we mark with $y$ those vertices not above the spike, then $\BBB_\textsf{S0}$ is defined by the structural equation
$$
\BBB_\textsf{S0} \;=\; u\+\ZZZ \:\times\: \seq{u\+\ZZZ} \:\times\: \seqplus{u\+y\+\ZZZ} \:\times\: y\+\ZZZ .
$$
%$$
%B_\textsf{S0}(y) \;=\; \frac{z^3\+y^2\+u^2}{(1-z\+u)\+(1-z\+y\+u)}
%$$
It is readily seen that $y$ correctly marks the number of positions in which an additional vertex may be inserted to the right.

Let $\DDD_\ssS=\BBB_\ssS\cup\CCC_\ssS$. Since every member of $\DDD_\ssS$ is built from an element of $\BBB_\textsf{S0}$ by applying $\oper_{\textsf{\L}}$ zero or more times, it follows that the generating function for $\DDD_\ssS$ is defined by the recursive functional equation
%$$
%D_\ssS(y) \;=\; B_\textsf{S0}(y) \:+\: \oper_{\textsf{\L}}\big[D_\ssS(y))\big] .
%$$
%$$
%D_\ssS(y) \;=\; B_\textsf{S0}(y) \:+\: z\+y^2\+\frac{D_\ssS(1)-D_\ssS(y)}{1-y} .
%$$
$$
D_\ssS(y) \;=\; \frac{z^3\+y^2\+u^2}{(1-z\+u)\+(1-z\+y\+u)} \:+\: z\+y^2\+\frac{D_\ssS(1)-D_\ssS(y)}{1-y} .
$$
This equation can be solved for $D_\ssS(1)$ by the kernel method,
using $y=(1-\sqrt{1-4\+z})/2\+z$ to
cancel the kernel.
The generating function for $\CCC_\ssS$ %then follows from
is then defined by
$$
C_\ssS(u) \;=\; D_\ssS(1) \:-\: B_\ssS(u) .
$$

We now study the process of building a \prefix{$\CCC$}permutation from a sequence of source graphs.
As with set $\BBB$, there are two cases. A permutation in $\CCC$ may result either from the addition of a source graph to an \prefix{$\AAA$}permutation, or else
from adding a source graph to another \prefix{$\CCC$}permutation.
(As we observed above, it is not possible to create a \prefix{$\CCC$}permutation by adding a source graph to a \prefix{$\BBB$}permutation.)
We address the two cases in turn.

\begin{figure}[ht]
  $$
  \begin{tikzpicture}[scale=0.225,line join=round]
    \draw [very thick] (2,19)--(1,14)--(4,18);
    \draw [very thick] (5,17)--(1,14)--(6,16);
    \draw [very thick] (7,15)--(1,14);
    \draw [blue,very thick] (8,13)--(3,1)--(9,10);
    \draw [blue,very thick] (10,9)--(3,1)--(11,8);
    \draw [blue,very thick] (12,7)--(3,1)--(13,5);
    \draw [blue,very thick] (15,4)--(3,1)--(16,3);
    \draw [blue,very thick] (3,1)--(19,2);
    \draw [blue,very thick] (14,12)--(10,9)--(17,11)--(9,10)--(14,12)--(13,5);
    \draw [blue,very thick] (11,8)--(14,12)--(12,7)--(17,11)--(11,8);
    \draw [blue,very thick] (17,11)--(13,5)--(18,6)--(15,4)--(17,11)--(16,3)--(18,6);
    \draw [red,thin] (4,18)--(3,1)--(5,17);
    \draw [red,thin] (6,16)--(3,1)--(7,15);
    \plotpermnobox{}{14,19, 1,18,17,16,15,13,10, 9, 8, 7, 5,12, 4, 3,11, 6, 2}
    %                 1  2  3  4  5  6  7  8  9 10 11 12 13 14 15 16 17 18 19
    \draw [thin] (1,14)  circle [radius=0.4];
    \draw [thin] (3,1) circle [radius=0.4];
    \node[above] at (14,11.6) {${}^\bigstar$};
  \end{tikzpicture}
  \qquad\qquad\qquad\quad
  \begin{tikzpicture}[scale=0.225,line join=round]
    \draw [very thick] (2,19)--(1,14)--(4,18);
    \draw [very thick] (8,17)--(1,14)--(10,16);
    \draw [very thick] (13,15)--(1,14);
    \draw [blue,very thick] (5,13)--(3,1)--(6,10);
    \draw [blue,very thick] (7,9)--(3,1)--(9,8);
    \draw [blue,very thick] (11,7)--(3,1)--(12,5);
    \draw [blue,very thick] (15,4)--(3,1)--(16,3);
    \draw [blue,very thick] (3,1)--(19,2);
    \draw [blue,very thick] (14,12)--(7,9)--(17,11)--(6,10)--(14,12)--(12,5);
    \draw [blue,very thick] (9,8)--(14,12)--(11,7)--(17,11)--(9,8);
    \draw [blue,very thick] (17,11)--(12,5)--(18,6)--(15,4)--(17,11)--(16,3)--(18,6);
    \draw [red,thin] (4,18)--(3,1);
    \draw [red,thin] (8,17)--(5,13)--(10,16)--(6,10)--(8,17)--(7,9)--(10,16)--(9,8)--(13,15);
    \draw [red,thin] (5,13)--(13,15)--(6,10);
    \draw [red,thin] (7,9)--(13,15)--(11,7);
    \draw [red,thin] (12,5)--(13,15);
    \plotpermnobox{}{14,19, 1,18,13,10, 9,17, 8,16, 7, 5,15,12, 4, 3,11, 6, 2}
    %                 1  2  3  4  5  6  7  8  9 10 11 12 13 14 15 16 17 18 19
    \draw [thin] (1,14)  circle [radius=0.4];
    \draw [thin] (3,1) circle [radius=0.4];
    \node[above] at (8,16.6) {${}^\bigstar$};
  \end{tikzpicture}
  $$
  %\caption{Ways of adding a source graph to a bottom subtree in $\AAA$ to create a permutation in~$\CCC$}
  \caption{Ways to create a \prefix{$\CCC$}permutation by adding a source graph to the bottom subtree of an \prefix{$\AAA$}permutation}
  \label{figACBuild}
\end{figure}
One way to create a \prefix{$\CCC$}permutation from an \prefix{$\AAA$}permutation is to
add a source graph from $\CCC_\ssS$, positioning its root to the left of zero or more of the leaves of the bottom subtree
of the \prefix{$\AAA$}permutation
and its non-root vertices to the right of the bottom subtree.
%In this case, the new permutation inherits its spike from the added source graph.
This is illustrated in the left diagram in Figure~\ref{figACBuild}.
The generating function for this set of permutations is thus
$$
C_{\textsf{AC1}}(u) \;=\; C_\ssS(u)\+\frac{A(1)-u\+A(u)}{1-u} .
$$

The other method for creating a \prefix{$\CCC$}permutation from an \prefix{$\AAA$}permutation involves the positioning of some non-root vertices of the source graph to the left of some of the leaves in the bottom subtree.
This is illustrated in the right diagram in Figure~\ref{figACBuild}.
In analysing this method, it is more convenient to look, more generally, at how an \prefix{$\AAA$}permutation can be extended to yield a permutation containing a $\mathbf{123}$, in either $\BBB$ or $\CCC$. We can then subtract those members of $\BBB$ that are enumerated by $B_{\textsf{AB2}}$.

We achieve the enumeration by adding vertices from left to right in four steps:
\begin{bulletnums}
\item The first step adds the root.
\item The second step adds the first non-root vertex, which determines the position of the new spike, and also any other vertices positioned to the left of the spike.
\item The third step adds any additional vertices to the right of the spike but to the left of some other leaves in the bottom subtree. The addition of such vertices creates occurrences of $\mathbf{13524}$.
\item Finally, the fourth step adds any vertices to the right of the bottom subtree.
\end{bulletnums}

Step 1: Permutations that result from the addition of the root vertex are enumerated by
$$
D_\textsf{1}(u) \;=\; z\+u\+\frac{A(1)-A(u)}{1-u} .
$$
Step 2: In this step, we insert the descending sequence of vertices that creates the new spike.
In the generating function for permutations resulting from this action, we introduce two additional catalytic variables that we require for steps 3 and 4.
For use in step 3, $v$ marks the number of source tree leaves to the right of the new spike.
For step 4, we use
$y$ to mark valid positions for the insertion of subsequent vertices, as we did previously.
The generating function is %thus
$$
D_\textsf{2}(v) \;=\; \frac{z\+y^2\+u^2}{1-z\+y\+u}\+\frac{D_\textsf{1}(v)-D_\textsf{1}(u)}{v-u} .
$$
Step 3: The effect of adding additional vertices to the right of the spike but to the left of some
other leaves in the bottom subtree is represented by the recursive functional equation
%$$
%  \oper_{\textsf{3}}\big[f(v)\big] \;=\; z\+y\+v\+\frac{f(1)-f(v)}{1-v}.
%$$
%$$
%D_\textsf{3}(v) \;=\; D_\textsf{2}(v) \:+\: \oper_{\textsf{3}}\big[D_\textsf{3}(v)\big]
%$$
$$
D_\textsf{3}(y,v) \;=\; D_\textsf{2}(v) \:+\: z\+y\+v\+\frac{D_\textsf{3}(y,1)-D_\textsf{3}(y,v)}{1-v} .
$$
Again, the kernel method can be used to solve this for $D_\textsf{3}(y,1)$, the kernel being cancelled by setting $v=1/(1-z\+y)$.

Step 4: Finally, the addition of vertices to the right of the bottom subtree is reflected by the {\L}uka\-sie\-wicz operator $\oper_{\textsf{\L}}$, giving rise to the recursive functional equation
$$
D_\textsf{4}(y) \;=\; D_\textsf{3}(y,1) \:+\: z\+y^2\+\frac{D_\textsf{4}(1)-D_\textsf{4}(y)}{1-y} ,
$$
which can be solved for $D_\textsf{4}(1)$ by cancelling the kernel with $y=(1-\sqrt{1-4\+z})/2\+z$.

The generating function for the set of permutations resulting from the second way of creating a \prefix{$\CCC$}permutation from an \prefix{$\AAA$}permutation
is then defined by
$$
C_{\textsf{AC2}}(u) \;=\; D_\textsf{4}(1) - B_{\textsf{AB2}}(u) .
$$

Our work is almost complete. We only have to consider how a source graph may be added to a \prefix{$\CCC$}permutation.
In fact, the situation is extremely constrained.
First, as noted earlier, the source graph must be positioned to the left of the spike.
Furthermore, the presence of a $\mathbf{13524}$ or $\mathbf{14523}$ means that the addition of a source graph with even a single non-root vertex would create a $\mathbf{1234}$.
So the only possibility is the addition of a trivial (single vertex) source tree.
Thus the bivariate generating function $C(u)$ of set $\CCC$ is defined by the following recursive functional equation:
%\begin{equation*}%\label{}
%  \oper_{\CC\CC}\big[f(u)\big] \;=\; z\+u\+\frac{f(1)-f(u)}{1-u}.
%\end{equation*}
$$
C(u)  \;=\;  C_\ssS(u) + C_{\textsf{AC1}}(u) + C_{\textsf{AC2}}(u) \:+\: z\+u\+\frac{C(1)-C(u)}{1-u}.
$$
where the final term reflects the addition of a trivial source tree to a \prefix{$\CCC$}permutation.
This equation can be solved to yield the following expression for $C(1)$ by %the use of
a sixth and final application of the kernel method,
cancelling the kernel by setting $u=1/(1-z)$:
$$
%C(1)
%\;=\;
\frac{-1+10\+z-35\+z^2+52\+z^3-35\+z^4+12\+z^5 \:+\: (1-8\+z+21\+z^2-22\+z^3+11\+z^4-2\+z^5)\sqrt{1-4\+z}}{2\+z^3\+(1-4\+z)\+(1-3\+z+z^2)} .
$$

We now have all we need to prove Theorem~\ref{thmF} by
obtaining an explicit expression for the generating function
that enumerates class
%for
$\FFF$.
Since $\FFF$ is the disjoint union of $\AAA$, $\BBB$ and $\CCC$,
its generating function
is
given by
$A(1)+B(1)+C(1)$.
Thus, by appropriate expansion and simplification,
the generating function for
$\av(\mathbf{1234},\mathbf{2341})$
can be shown to be equal to
$$
\frac
{2-10\+z+9\+z^2+7\+z^3-4\+z^4 \:-\: (2-8\+z+9\+z^2-3\+z^3)\+\sqrt{1-4\+z}}
{(1-3\+z+z^2)\+\big((1-5\+z+4\+z^2) \:+\: (1-3\+z)\+\sqrt{1-4\+z}\big)}.
$$
This has singularities at $z=\frac{1}{4}$, $z=\frac{1}{2}\+(3-\sqrt{5})$ and $z=\frac{1}{2}\+(3+\sqrt{5})$. Hence, the growth rate of $\av(\mathbf{1234},\mathbf{2341})$ is equal to 4, the reciprocal of the least of these.

The first twelve terms of the sequence $|\FFF_n|$ are 1, 2, 6, 22, 89, 376, 1611, 6901, 29375, 123996, 518971, 2155145.
More values
can be found at
\href{http://oeis.org/A165540}{A165540} in OEIS~\cite{OEIS}.

% ================================================================
\cleardoublepage

% LaTeX file

% ================================================================
\chapter{Avoiding 1243 and 2314}
\label{chapE}

In this chapter, we analyse the structure of the Hasse graphs of permutations in the class $\av(\mathbf{1243},\mathbf{2314})$ and prove the following theorem:

\thmbox{
\begin{thm}\label{thmE}
The class of permutations avoiding $\mathbf{1243}$ and $\mathbf{2314}$ has an algebraic generating function $F(z)$ which satisfies the cubic polynomial equation
$$
      (z-3\+z^2+2\+z^3)
\:-\: (1-5\+z+8\+z^2-5\+z^3)\+F(z)
\:+\: (2\+z-5\+z^2+4\+z^3)\+F(z)^2
\:+\: z^3\+F(z)^3
\;=\; 0
.
$$
Its growth rate is approximately 5.1955, the greatest real root of the quintic polynomial
$$
2 - 41\+z + 101\+z^2 - 97\+z^3 + 36\+z^4 -4\+z^5 .
$$
\end{thm}
}

The proof requires an unusual
simultaneous double application of the kernel method.
%he work in this chapter was published in~\cite{Bevan2014a}.

\vspace{9pt}
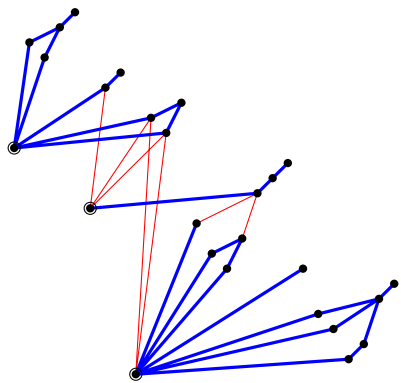
\begin{figure}[ht] %[ht]
  $$
  \begin{tikzpicture}[scale=0.20,line join=round]
    \draw [red,thin] (6,15)--(7,23);
    \draw [red,thin] (6,15)--(11,20)--(9,4)--(10,21)--(6,15);
    \draw [red,thin] (13,14)--(17,16)--(16,13);
    \draw [blue,very thick] (8,24)--(7,23)--(1,19)--(2,26)--(4,27)--(3,25)--(1,19);
    \draw [blue,very thick] (4,27)--(5,28);
    \draw [blue,very thick] (1,19)--(10,21)--(12,22)--(11,20)--(1,19);
    \draw [blue,very thick] (6,15)--(17,16)--(19,18);
    \draw [blue,very thick] (13,14)--(9,4)--(14,12)--(16,13)--(15,11)--(9,4);
    \draw [blue,very thick] (26,10)--(25,9)--(21,8)--(9,4)--(22,7)--(25,9)--(24,6)--(23,5)--(9,4)--(20,11);
    \plotpermnobox{}{19,26,25,27,28,15,23,24,4,21,20,22,14,12,11,13,16,17,18,11,8,7,5,6,9,10}
    \draw [thin] (1,19)  circle [radius=0.4];
    \draw [thin] (6,15) circle [radius=0.4];
    \draw [thin] (9,4) circle [radius=0.4];
  \end{tikzpicture}
  $$
  \caption{%The Hasse graph of a
           A permutation in class $\EEE$, spanned by three source graphs}
  \label{figEPerm}
\end{figure}
%\vspace{-9pt}
% ================================================================================================
%\section{Permutations avoiding \texorpdfstring{$\mathbf{1243}$ and $\mathbf{2314}$}{1243 and 2314}}\label{sectE}

Let us %(arbitrarily)
use $\EEE$ to denote %the permutation class
$\av(\mathbf{1243},\mathbf{2314})$.
What can we say about the structure of source graphs in $\EEE$?
Firstly, since $H_\mathbf{1243} =
\raisebox{-2.5pt}{\begin{tikzpicture}[scale=0.12,line join=round]
  \draw[] (1,1)--(2,2);
  \draw[] (3,4)--(2,2)--(4,3);
  \plotpermnobox{}{1,2,4,3}
\end{tikzpicture}}
$
may not occur as a subgraph, only the root of a
source graph may fork towards the upper right.
Secondly, each source graph in $\EEE$ is \emph{plane}.
This is the case because every non-plane graph
contains a
$H_\mathbf{2143} =
\raisebox{-2.5pt}{\begin{tikzpicture}[scale=0.12,line join=round]
  \draw[] (1,2)--(3,4)--(2,1)--(4,3)--(1,2);
  \plotpermnobox{}{2,1,4,3}
\end{tikzpicture}}
$, and, furthermore, any $\mathbf{2143}$ in a source graph occurs as part of a $\mathbf{13254}$ (where the $\mathbf{1}$ is the root of the source graph). But this is impossible in $\EEE$, since $\mathbf{13254}$
does not avoid $\mathbf{1243}$.

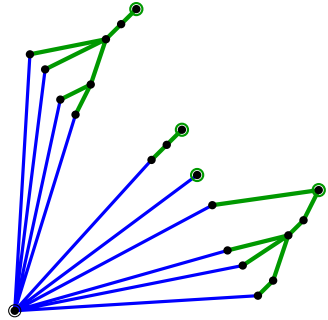
\begin{figure}[ht]
  $$
  \begin{tikzpicture}[scale=0.2,line join=round]
    \draw [blue,very thick] (2,18)--(1,1)--(3,17);
    \draw [blue,very thick] (4,15)--(1,1)--(5,14);
    \draw [blue,very thick] (10,11)--(1,1)--(13,10);
    \draw [blue,very thick] (14,8)--(1,1)--(15,5);
    \draw [blue,very thick] (16,4)--(1,1)--(17,2);
    \draw [ultra thick,black!40!green] (5,14)--(6,16)--(4,15);
    \draw [ultra thick,black!40!green] (6,16)--(7,19)--(3,17);
    \draw [ultra thick,black!40!green] (2,18)--(7,19)--(9,21);
    \draw [ultra thick,black!40!green] (10,11)--(12,13);
    \draw [ultra thick,black!40!green] (17,2)--(18,3)--(19,6)--(16,4);
    \draw [ultra thick,black!40!green] (15,5)--(19,6)--(20,7)--(21,9)--(14,8);
    \plotpermnobox{21}{1,18,17,15,14,16,19,20,21,11,12,13,10,8,5,4,2,3,6,7,9}
    \draw [thin] (1,1)  circle [radius=0.4];
    \draw [thick,black!40!green] (9,21)  circle [radius=0.4];
    \draw [thick,black!40!green] (12,13) circle [radius=0.4];
    \draw [thick,black!40!green] (13,10) circle [radius=0.4];
    \draw [thick,black!40!green] (21,9)  circle [radius=0.4];
  \end{tikzpicture}
  $$
  \caption{A source graph for class $\EEE$, constructed from four u-trees}
  \label{figESource}
\end{figure}
If we combine these two observations, we see that
the non-root vertices of a source graph consist of a sequence of
inverted subtrees whose roots are right-to-left maxima.
The avoidance of $H_\mathbf{2314} =
\raisebox{-2.5pt}{\begin{tikzpicture}[scale=0.12,line join=round]
  \draw[] (1,2)--(2,3)--(4,4)--(3,1);
  \plotpermnobox{}{2,3,1,4}
\end{tikzpicture}}
$
places restrictions on the structure of the subtrees, so that they must consist of a path at the lower right, which we call the \emph{trunk}, with pendant edges attached to its
left.
It is readily seen that these correspond to permutations in
$\av(\mathbf{132},\mathbf{231})$.
We call trees of this form \emph{u-trees}, short for
\emph{unbalanced} trees.
See Figure~\ref{figESource} for an illustration of a source graph constructed from u-trees.

The class $\UUU$ of u-trees satisfies the structural equation
$$
  \UUU \;=\; \ZZZ\times\seq{\ZZZ\times\seq{\ZZZ}}
  %\UUU = \ZZZ\times\seq{\ZZZ\times\seq{\ZZZ}},
$$
where the first term on the right represents the lowest leaf at the tip of the trunk and the second represents the remainder of the vertices in the trunk, each with a (possibly empty) sequence of pendant edges attached to the upper left.
Hence the generating function for $\UUU$ is
\begin{equation*}%\label{}
  U(z) \;=\; \frac{z\+(1-z)}{1-2\+z}.
\end{equation*}
If we use $u$ to mark the number of u-trees, the class $\SSS$ of source graphs
satisfies the structural equation
$$
  \SSS \;=\; \ZZZ\times\seq{u\+\UUU}
  %\SSS = \ZZZ\times\seq{u\+\UUU}
$$
and thus has bivariate generating function
\begin{equation*}%\label{}
  S(u) \;=\; S(z,u) \;=\; \frac{z\+(1-2\+z)}{1-(2+u)\+ z+u\+ z^2}.
\end{equation*}

Let us now examine how a permutation in $\EEE$ can be built from a sequence of source graphs.
Observe that, when a source graph is added,
no vertex of the source graph can be positioned between two vertices of a u-tree in the bottom subgraph, because otherwise
a $\mathbf{2314}$ would be created.
In addition,
there are strong constraints on when u-trees in the new source graph can be positioned to the left of a u-tree in the bottom subgraph.

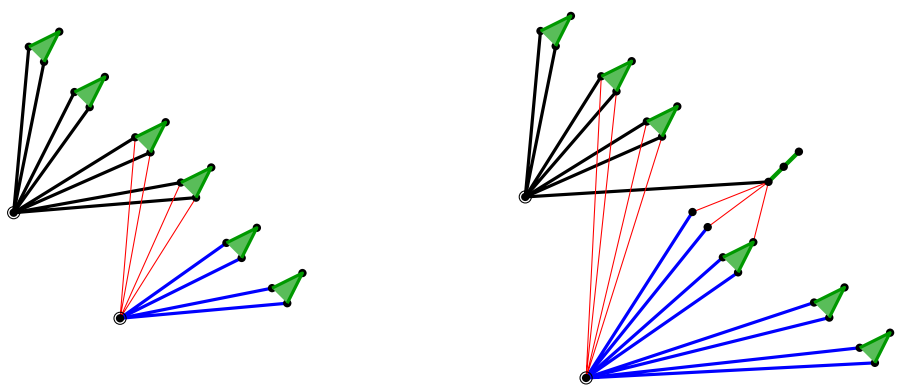
\begin{figure}[ht]
  $$
  \raisebox{22.5pt}{
  \begin{tikzpicture}[scale=0.2,line join=round]
    \draw [very thick] (2,19)--(1,8)--(3,18);
    \draw [very thick] (5,16)--(1,8)--(6,15);
    \draw [very thick] (9,13)--(1,8)--(10,12);
    \draw [very thick] (12,10)--(1,8)--(13,9);
    \draw [blue,very thick] (15,6)--(8,1)--(16,5);
    \draw [blue,very thick] (18,3)--(8,1)--(19,2);
    \draw [red,thin] (9,13)--(8,1)--(10,12);
    \draw [red,thin] (12,10)--(8,1)--(13,9);
      \plotpermnobox{20}{8,19,18,20,16,15,17,1,13,12,14,10,9,11,6,5,7,3,2,4}
    \draw [very thick,black!40!green,fill=black!40!green!65!white] (3,18)--(4,20)--(2,19);
    \draw [very thick,black!40!green,fill=black!40!green!65!white] (6,15)--(7,17)--(5,16);
    \draw [very thick,black!40!green,fill=black!40!green!65!white] (10,12)--(11,14)--(9,13);
    \draw [very thick,black!40!green,fill=black!40!green!65!white] (13,9)--(14,11)--(12,10);
    \draw [very thick,black!40!green,fill=black!40!green!65!white] (16,5)--(17,7)--(15,6);
    \draw [very thick,black!40!green,fill=black!40!green!65!white] (19,2)--(20,4)--(18,3);
    \draw [thin] (1,8)  circle [radius=0.4];
    \draw [thin] (8,1) circle [radius=0.4];
  \end{tikzpicture}
  }
  \qquad\qquad\qquad\quad
  \begin{tikzpicture}[scale=0.2,line join=round]
    \draw [very thick] (2,24)--(1,13)--(3,23);
    \draw [very thick] (6,21)--(1,13)--(7,20);
    \draw [very thick] (9,18)--(1,13)--(10,17);
    \draw [very thick] (1,13)--(17,14);
    \draw [ultra thick,black!40!green] (19,16)--(17,14);
    \draw [blue,very thick] (12,12)--(5,1)--(13,11);
    \draw [blue,very thick] (14,9)--(5,1)--(15,8);
    \draw [blue,very thick] (20,6)--(5,1)--(21,5);
    \draw [blue,very thick] (23,3)--(5,1)--(24,2);
    \draw [red,thin] (6,21)--(5,1)--(7,20);
    \draw [red,thin] (9,18)--(5,1)--(10,17);
    \draw [red,thin] (12,12)--(17,14)--(13,11);
    \draw [red,thin] (16,10)--(17,14);
      \plotpermnobox{25}{13,24,23,25,1,21,20,22,18,17,19,12,11,9,8,10,14,15,16,6,5,7,3,2,4}
    \draw [very thick,black!40!green,fill=black!40!green!65!white] (3,23)--(4,25)--(2,24);
    \draw [very thick,black!40!green,fill=black!40!green!65!white] (7,20)--(8,22)--(6,21);
    \draw [very thick,black!40!green,fill=black!40!green!65!white] (10,17)--(11,19)--(9,18);
    \draw [very thick,black!40!green,fill=black!40!green!65!white] (15,8)--(16,10)--(14,9);
    \draw [very thick,black!40!green,fill=black!40!green!65!white] (21,5)--(22,7)--(20,6);
    \draw [very thick,black!40!green,fill=black!40!green!65!white] (24,2)--(25,4)--(23,3);
    \draw [thin] (1,13)  circle [radius=0.4];
    \draw [thin] (5,1) circle [radius=0.4];
  \end{tikzpicture}
  $$
  \caption{The two methods for adding a source graph in class $\EEE$; u-trees are shown schematically as filled triangles}
  \label{figEBuild}
\end{figure}
These conditions result in there being two distinct ways in which a source graph may be added.
These are illustrated in Figure~\ref{figEBuild}.
In the first method, the root of the source graph is positioned to the left of zero or more u-trees in the bottom subgraph and the u-trees in the source graph are positioned to the right of the bottom subgraph.

The second method is more subtle. It is only applicable if
the rightmost u-tree of the bottom subgraph
is a path.
If that is the case, then an initial sequence of
u-trees in the source graph can be positioned to the left of this path subtree, as long as
each of them, except possibly the last,
consists of a single vertex.
If
the rightmost u-tree of the bottom subgraph
were not a path, then a $\mathbf{1243}$ would be created.
Similarly, if a non-final u-tree consisted of more than one vertex, then a $\mathbf{2314}$ would be created.

In order to handle this second method, we need to keep track of those source graphs in which the rightmost u-tree is a path.
Let $\SSS_\PP$ be the class of such graphs. It satisfies the structural equation
$$
  \SSS_\PP \;=\; \ZZZ\times\seq{u\+\UUU}\times u\+\seqplus{\ZZZ} ,
  %\SSS_\PP \;=\; \SSS\times u\+\seqplus{\ZZZ} ,
$$
where $u$ marks the number of u-trees as before.
This class thus has bivariate generating function
\begin{equation*}%\label{}
  S_\PP(u) \;=\; S_\PP(z,u) \;=\; \frac{u\+ z^2\+ (1-2\+ z)}{(1-z)\+ \big(1-(2+u)\+ z+u\+ z^2\big)}.
\end{equation*}

In order to distinguish between those situations when the second method of adding a source graph is applicable and those when it isn't,
let us use $\PPP$ to denote the set of those permutations in $\EEE$ whose Hasse graphs have bottom subgraphs in which the rightmost u-tree is a path.

We are interested in determining the two bivariate generating functions
$E(u)=E(z,u)$ and
$P(u)=P(z,u)$ for $\EEE$ and $\PPP$ respectively, where $u$ marks the number of u-trees \emph{in the bottom subgraph}.
To do this, we establish four linear operators on these generating functions that reflect the different ways in which a source graph can be added.

%\newpage
The action of adding a source graph using the first method is readily seen to be reflected by the following linear operator:
\begin{equation*}%\label{}
  \oper_{\EE\EE}\big[f(u)\big] \;=\; S(u)\+\frac{f(1)-u\+f(u)}{1-u}.
\end{equation*}
The first method creates a member of $\PPP$ from an arbitrary element of $\EEE$ whenever
the source graph is in $\SSS_\PP$ (i.e.
its rightmost u-tree is a path).
Thus the appropriate linear operator %, which we denote $\oper_{\EE\PP}$,
is
\begin{equation*}%\label{}
  \oper_{\EE\PP}\big[f(u)\big] \;=\; S_\PP(u)\+\frac{f(1)-u\+f(u)}{1-u}.
\end{equation*}

Now let us determine the linear operators corresponding to the second method of adding a source graph.

The set, $\SSS^\star$, of source graphs that can be added using the second method
satisfies the structural equation
$$
  %\SSS^\star \;=\; \ZZZ\times\seq{\ZZZ}\times\seqplus{u\+\UUU}
  \SSS^\star \;=\; \ZZZ\times\seq{\ZZZ}\times u\+\UUU\times\seq{u\+\UUU},
  %\SSS^\star \;=\; \seq{\ZZZ}\times u\+\UUU \times \SSS
$$
in which the third term on the right identifies the u-tree which is positioned immediately to the left of the rightmost (path) u-tree in the bottom subgraph.
This specification thus counts multiple times those source graphs that can be added in more than one way due to the presence of a non-empty initial sequence of single-vertex u-trees.
Note also that we don't mark the initial sequence of single-vertex u-trees with $u$.
The generating function for $\SSS^\star$ is
\begin{equation*}%\label{}
  S^\star(u) \;=\; \frac{u\+ z^2}{1-(2+u)\+ z+u\+ z^2}.
\end{equation*}
The action of adding a source graph using the second method is then seen to be reflected
by the following linear operator:
\begin{equation*}%\label{}
  \oper_{\PP\EE}\big[f_\PP(u)\big] \;=\; S^\star(u)\+\frac{f_\PP(1)-f_\PP(u)}{1-u} .
\end{equation*}
Finally, let us consider when adding a source graph to an arbitrary member of $\PPP$ creates another permutation in $\PPP$.
The second method creates an element of $\PPP$ if the source graph is in $\SSS_\PP$ and its rightmost (path) u-tree is added to the right of the bottom subgraph.
An element of $\PPP$ is also created if the source graph has a single path u-tree or consists of a single vertex (the root).
Thus the set, $\SSS_\PP^\star$, of source graphs, counted with multiplicity,
that can be added to create an element of $\PPP$
satisfies the structural equation
$$
  \SSS_\PP^\star \;=\; \ZZZ\times\seq{\ZZZ}\times\seqplus{u\+\UUU}\times u\+\seqplus{\ZZZ} \:+\: \ZZZ\times u\+\seq{\ZZZ}.
$$
Its generating function is
\begin{equation*}%\label{}
  S_\PP^\star(u) \;=\; \frac{u\+ z\+ (1-2\+ z)\+(1-u\+z)}{(1-z)\+ \big(1-(2+u)\+ z+u\+ z^2\big)},
\end{equation*}
and the corresponding linear operator
is
\begin{equation*}%\label{}
  \oper_{\PP\PP}\big[f_\PP(u)\big] \;=\; S_\PP^\star(u)\+\frac{f_\PP(1)-f_\PP(u)}{1-u}.
\end{equation*}

We are now in a position to derive the generating function %$E(z,1)=E(1)$
for $\EEE$ and hence prove Theorem~\ref{thmE}.
From the analysis above, we know that
the bivariate generating function $E(u)=E(z,u)$ of class $\EEE$ is defined by the following pair of mutually recursive functional equations:
\begin{equation*}%\label{}
  \begin{array}{rclcrcr}
   E(u) & = & S(u)     & \!+\! & \oper_{\EE\EE}\big[E(u)\big] & \!+\! & \oper_{\PP\EE}\big[P(u)\big] \\[3pt]
   P(u) & = & S_\PP(u) & \!+\! & \oper_{\EE\PP}\big[E(u)\big] & \!+\! & \oper_{\PP\PP}\big[P(u)\big]
  \end{array}  .
\end{equation*}
These can be expanded to give the following:
\begin{equation}\label{eqnE1}
  E(u) \;=\; z\+\frac{(1-2\+z)\+\big(1-u+E(1)-u\+E(u)\big) \:+\: u\+z\+\big(P(1)-P(u)\big)}{(1-u)\+ \big(1-(2+u)\+ z+u\+ z^2\big)} ,
\end{equation}
\begin{equation}\label{eqnE2}
  P(u) \;=\; u\+z\+(1-2\+z)\frac{z\+\big(1-u+E(1)-u\+E(u)\big)\:+\: (1-u\+z)\+\big(P(1)-P(u)\big)}{(1-u)\+ (1-z)\+ \big(1-(2+u)\+ z+u\+ z^2\big)} .
\end{equation}
%\begin{eqnarray}
% \nonumber to remove numbering (before each equation)
%  E(u) &=& S(u)      \:+\: \oper_{\EE\EE}[E(u)] \:+\: \oper_{\PP\EE}[P(u)] \label{eqnE1}  \\
%  P(u) &=& S_\PP(u)  \:+\: \oper_{\EE\PP}[E(u)] \:+\: \oper_{\PP\PP}[P(u)] \label{eqnE2}
%\end{eqnarray}
An unusual simultaneous double application of the kernel method can then be used
to yield the
algebraic generating function for class $\EEE$ as follows.

First, we eliminate $P(u)$ from \eqref{eqnE1} and \eqref{eqnE2}, and express $E(u)$
in terms of $E(1)$ and $P(1)$ as a rational function.
Cancelling the resulting kernel,
\begin{equation*}%\label{eqKernel}
(1-3\+z+2\+z^2)
\:-\:
(2-7\+z+7\+z^2-z^3)\+u
\:+\:
(1-3\+z+3\+z^2)\+u^2
\:-\:
(z-3\+z^2+3\+z^3)\+u^3 ,
\end{equation*}
with the appropriate root
then gives us an equation relating $E(1)$ and $P(1)$.

Secondly, we eliminate $E(u)$ from \eqref{eqnE1} and \eqref{eqnE2}, and express $P(u)$ in terms of $E(1)$ and $P(1)$.
Cancelling the (same) kernel (using a different root)
gives a second equation relating $E(1)$ and $P(1)$.

Finally, we eliminate $P(1)$ from these two equations to yield the following explicit expression for $E(1)$:
\begin{equation*}%\label{}
  E(1) \;=\;
  \frac
  {(1-\rho)   \!\left(
  (1-z)(1-2z)(1-\sigma+z\sigma^2(1-\rho))
  -\alpha\rho(1-\sigma)
  +z^2(1-z-z \sigma)\rho\sigma
  \right)}
  {(\alpha- z^2(1-z)\rho)\rho (1-\sigma)
   \:-\:
   (1-2z) \!\left( (1-z) (1-\sigma+z\sigma^2)- z (1-2 z)\rho \sigma^2  \right)},
\end{equation*}
%\begin{equation*}%\label{}
%  E(1) \;=\;
%  \frac
%  {(1-\rho)   \left(
%  (1-z)\+(1-2\++z)\+(1-\sigma+z\+\sigma^2\+(1-\rho))
%  -\alpha\+\rho\+(1-\sigma)
%  +z^2\+(1-z-z\+ \sigma)\+\rho\+\sigma
%  \right)}
%  {(\alpha- z^2\+(1-z)\+\rho)\+\rho\+ (1-\sigma)
%   \:-\:
%   (1-2z) \left( (1-z)\+ (1-\sigma+z\+\sigma^2)- z\+ (1-2\+ z)\+\rho\+ \sigma^2  \right)},
%\end{equation*}
where
\begin{equation*}%\label{}
  \alpha \;=\;
  1-4\+ z+5\+ z^2-z^3 ,
\end{equation*}
and $\rho$ and $\sigma$ are the appropriate roots of the kernel:
\begin{eqnarray*}
% \nonumber to remove numbering (before each equation*)
  \rho   &=& \frac{1}{3\+z}\left(1 \:-\: \frac{2\+\omega\+\eta}{\xi^{1/3}} \:-\:
             \frac{\omega^2\+\xi^{1/3}}{2\+\beta}\right) \\[6pt]
  \sigma &=& \frac{1}{3\+z}\left(1 \:-\: \frac{2\+\eta}{\xi^{1/3}} \:-\: \frac{\xi^{1/3}}{2\+\beta} \right),
\end{eqnarray*}
%where
%and
with
\begin{eqnarray*}
% \nonumber to remove numbering (before each equation*)
  \xi    &=& 12\+ \sqrt{-3\+z^6\+ (1-2\+z)\+\beta^3 \+\theta}\:-\:4\+\beta^2\+\zeta  ,\\[3pt]
  \omega &=& \thalf\+(-1+i\+\sqrt{3}) ,\\[3pt]
  \beta  &=& 1 - 3\+z + 3\+z^2 ,\\[3pt]
  \eta   &=& 1 - 9\+ z + 24\+ z^2 - 21\+ z^3 + 3\+ z^4 ,\\[3pt]
  \theta &=& 4 - 36\+ z + 97\+ z^2 - 101\+ z^3 + 41\+ z^4 - 2\+ z^5 , \\[3pt] %\text{~and}\\[3pt]
  \zeta  &=& 2 - 24\+ z + 96\+ z^2 - 144\+ z^3 + 63\+ z^4 .
\end{eqnarray*}

Thus, using a computer algebra system to handle the details of the algebraic manipulation, it can be determined that the generating function $F(z)=E(1)$ for %class $\EEE$
$\av(\mathbf{1243},\mathbf{2314})$ has the minimal polynomial
$$
      (z-3\+z^2+2\+z^3)
\:-\: (1-5\+z+8\+z^2-5\+z^3)\+F(z)
\:+\: (2\+z-5\+z^2+4\+z^3)\+F(z)^2
\:+\: z^3\+F(z)^3
,
$$
and, by determining the location of the singularities, that the growth rate of the class is approximately 5.1955, the greatest real root of the quintic polynomial %$\theta$.
$$
-z^5\+\theta(z^{-1})
\;=\;
2 - 41\+z + 101\+z^2 - 97\+z^3 + 36\+z^4 -4\+z^5 ,
$$
as required.

%The recurrences \eqref{eqnE1} and \eqref{eqnE2} can be iterated to extract the coefficients of $z^n$ from $E(1)$ and hence
%enumerate $\EEE$ %=\av(\mathbf{1243},\mathbf{2314})$
%for small $n$.
The first twelve terms of the sequence $|\EEE_n|$ are 1, 2, 6, 22, 88, 367, 1571, 6861, 30468, 137229, 625573, 2881230.
%Values up to $n=48$ %(determined in 5 hours by Mathematica)
More values
can be found at
\href{http://oeis.org/A165539}{A165539} in OEIS~\cite{OEIS}.

% ================================================================
\cleardoublepage

% LaTeX file

\newcommand{\smallT}{\text{\small$\mathbf{T}$}}
\newcommand{\subT}{\mathbf{T}}
\newcommand{\smallF}{\text{\small$\mathbf{F}$}}
\newcommand{\subF}{\mathbf{F}}

% ================================================================
\chapter{Avoiding 1324}
\label{chap1324}

% ================================================================
\section{Introduction}\label{sectIntro}

The class $\av(\pdiamond)$, of permutations avoiding the pattern $\pdiamond$,
is the only class avoiding a single pattern of length four that is yet to be enumerated exactly.
There are three Wilf classes for single permutations of length four.

The classes $\av(\mathbf{1234})$, $\av(\mathbf{1243})$, $\av(\mathbf{1432})$ and $\av(\mathbf{2143})$ all have the same enumeration.
This fact is a consequence of the work of Babson \& West~\cite{BW2000}, which was later generalised by Backelin, West \& Xin~\cite{BWX2007} (see Section~\ref{sectWilfEquiv}).
Gessel~\cite{Gessel1990} derived an explicit form for the ({$D$-finite}) generating functions of the classes $\av(\mathbf{12\ldots k})$ in terms of determinants, for which Bousquet-M\'elou~\cite{Bousquet-Melou2011} later gave an alternative derivation using the kernel method. For $k=4$, the enumeration can be expressed explicitly:
$$
\av_n(\mathbf{1234})
\;=\;
\frac{1}{(n+1)^2\+(n+2)} \sum_{k=0}^n \binom{2k}{k} \binom{n+1}{k+1} \binom{n+2}{k+1}.
$$
The growth rate of this class is 9, a specific case of the general result of Regev~\cite{Regev1981} that $\gr(\av(\mathbf{12\ldots k}))=(k-1)^2$.

The permutation classes $\av(\mathbf{1342})$ and $\av(\mathbf{2413})$ constitute another Wilf equivalence class (see Stankova~\cite{Stankova1994}).
B\'ona~\cite{Bona1997a} determined the algebraic generating function of $\av(\mathbf{1342})$ to be
$$
\frac{32\+z}{1+20\+z-8\+z^2-(1-8\+z)^{3/2}}
,
$$
from which its growth rate of 8 can readily be obtained.

The remaining class, $\pdiamond$-avoiding permutations, remains unenumerated.
The notoriety of this problem is renowned.
Indeed, as reported in~\cite{EV2005}, at the Permutation Patterns conference in Florida in 2005,
Zeilberger made the unorthodox metaphysical conjecture that ``not even God knows $|\av_{1000}(\pdiamond)|$''.
Conway \& Guttmann~\cite{CG2015} recently presented evidence strongly suggesting that the generating function for this class does not have an algebraic singularity, and is thus not D-finite.
Even the growth rate of the $\pdiamond$-avoiders is currently unknown.

In this chapter, by considering certain large subsets of $\av(\pdiamond)$,
which consist of permutations with a particularly regular structure,
we prove that the growth rate of the class exceeds $9.81$.
This improves on a previous lower bound of $9.47$.
Central to our proof is an examination of the asymptotic
distributions of certain substructures in
the Hasse graphs of the permutations.
In this context,
we consider
occurrences of patterns in {\L}uka\-sie\-wicz paths and prove that
in the limit they exhibit a concentrated Gaussian distribution.

In a recent paper,
Conway \& Guttmann~\cite{CG2015}
calculate the number of permutations avoiding $\pdiamond$ up to length
$36$, building on earlier work by Johansson \& Nakamura~\cite{JN2014}.
They then analyse the sequence of values and
give an estimate for
the growth rate of $\av(\pdiamond)$
of~$11.60\pm0.01$.
However, rigorous bounds still differ from this value quite markedly.

The last few years have seen a steady reduction
in upper bounds on the growth rate, %.
%These are
based on
a
colouring scheme of Claesson, Jel\'inek \& Stein\-gr\'imsson~\cite{CJS2012} which yields a %bound
value of~$16$.
B\'ona~\cite{Bona2014+}
has now reduced this to
$13.73718$ by employing
a refined counting argument.

As far as lower bounds go,
Albert, Elder, Rechnitzer, Westcott \& Zabrocki~\cite{AERWZ2006}
established that the growth rate is at least $9.47$,
by
using the \emph{insertion encoding} %(see~\cite{ALR2005})
of
$\pdiamond$-avoiders
to construct a
sequence of finite automata that accept subclasses of $\av(\pdiamond)$.
The growth rate of a subclass is then determined from the
transition matrix of the corresponding automaton.
Our main result is an improvement on this lower bound:

\thmbox{
\begin{thm}\label{thm1324LowerBound}
$\gr(\av(\pdiamond)) > 9.81$.
\end{thm}
}

\begin{figure}[t]%[ht]
\begin{center}
  \includegraphics[scale=0.55]{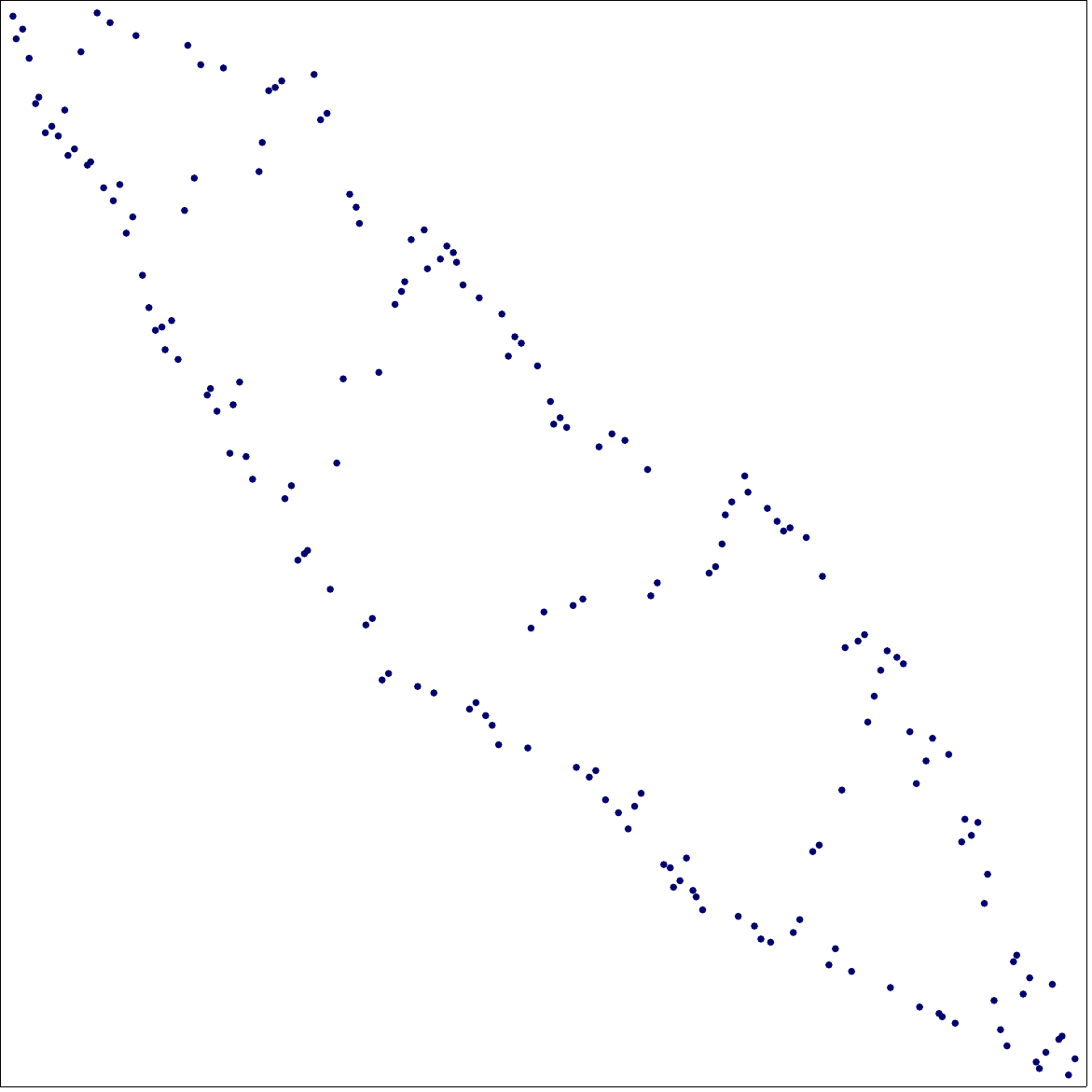}
  $
  \qquad
  $
  \includegraphics[scale=0.55]{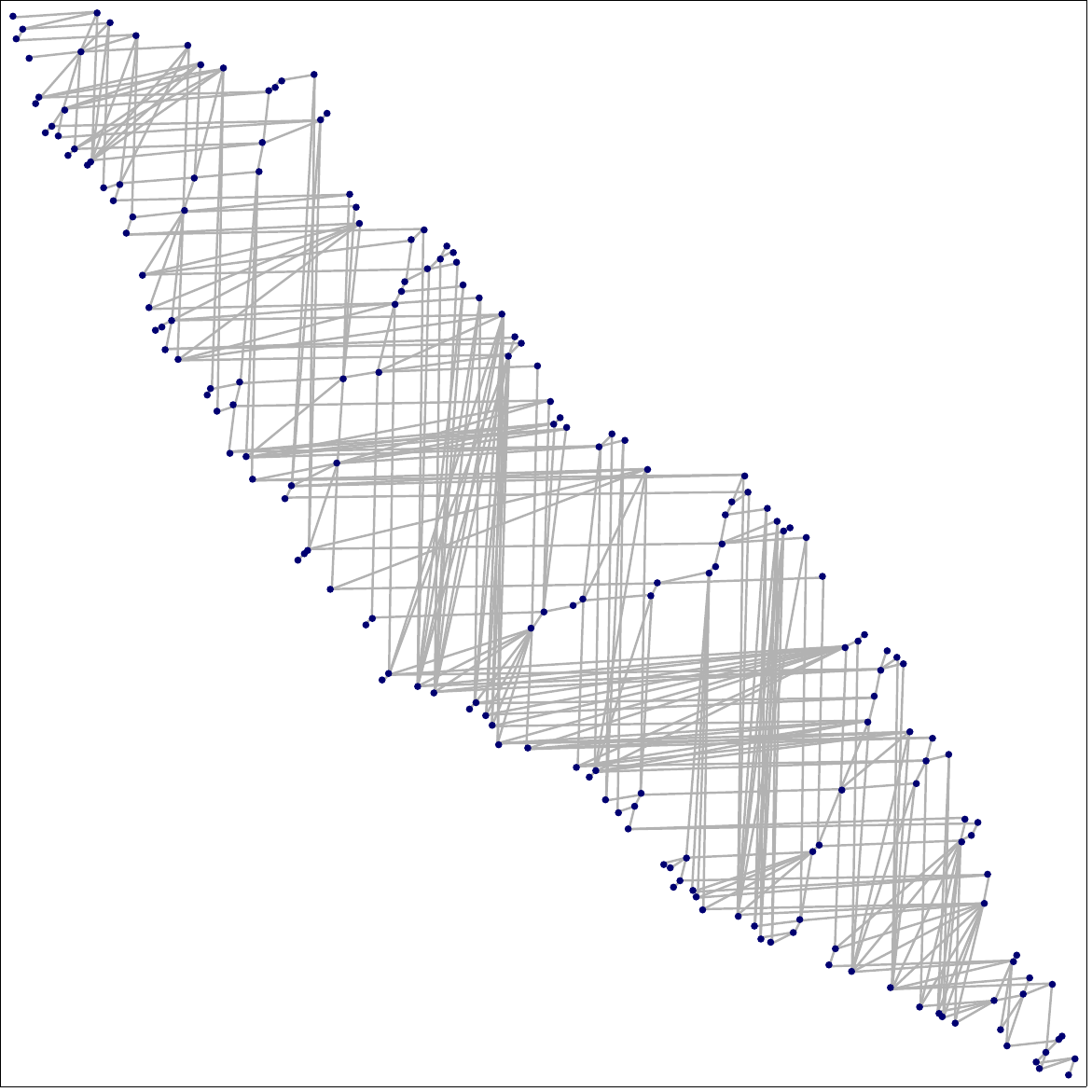}
\end{center}
  \caption{The plot of a $\pdiamond$-avoider of length 187 and its Hasse graph}
  \label{fig187}
\end{figure}

As observed in Section~\ref{sect1324Tethering},
the subgraph of the Hasse graph $H_\sigma$
induced by a left-to-right minimum of $\sigma$ and the points to its upper right is a tree.
By symmetry, this is also the case for the subgraph
induced by a right-to-left maximum of $\sigma$ and the points to its lower left.

% *** *** *** *** ***
%\newpage
What does a \emph{typical} $\pdiamond$-avoider look like?
Figures~\ref{fig187} and~\ref{figEinar1000} contain illustrations %that are typical
of large $\pdiamond$-avoiders.\footnote{The data for Figure~\ref{figEinar1000} was provided by Einar Steingr\'imsson from the investigations he describes in~\cite[Footnote~4]{Steingrimsson2013}.} As is noted by Flajolet \& Sedgewick (\cite[p.169]{FS2009}), the fact that a single example can be used to illustrate the asymptotic structure of a large random combinatorial object can be attributed to concentration of distributions, of which we make much use below in determining our lower bound.
Observe the cigar-shaped boundary regions consisting of numerous small subtrees, and also the relative scarcity of points in the interior, which tend to be partitioned into a few paths connecting the two boundaries.
Many questions concerning the
shape of a typical large $\pdiamond$-avoider
remain to be answered or even to be posed precisely.
The recent investigations of
Madras \& Liu~\cite{ML2010} and Atapour \& Madras~\cite{AM2014}
provide a starting point.
%What is the shape of the envelope?
%Considerably more progress has been made in understanding the shape of random $\mathbf{123}$- and $\mathbf{132}$-avoiders (see \cite{AM2014} and Miner \& Pak~\cite{MP2014}).

\begin{figure}[htp]
\begin{center}
  \includegraphics[scale=0.38]{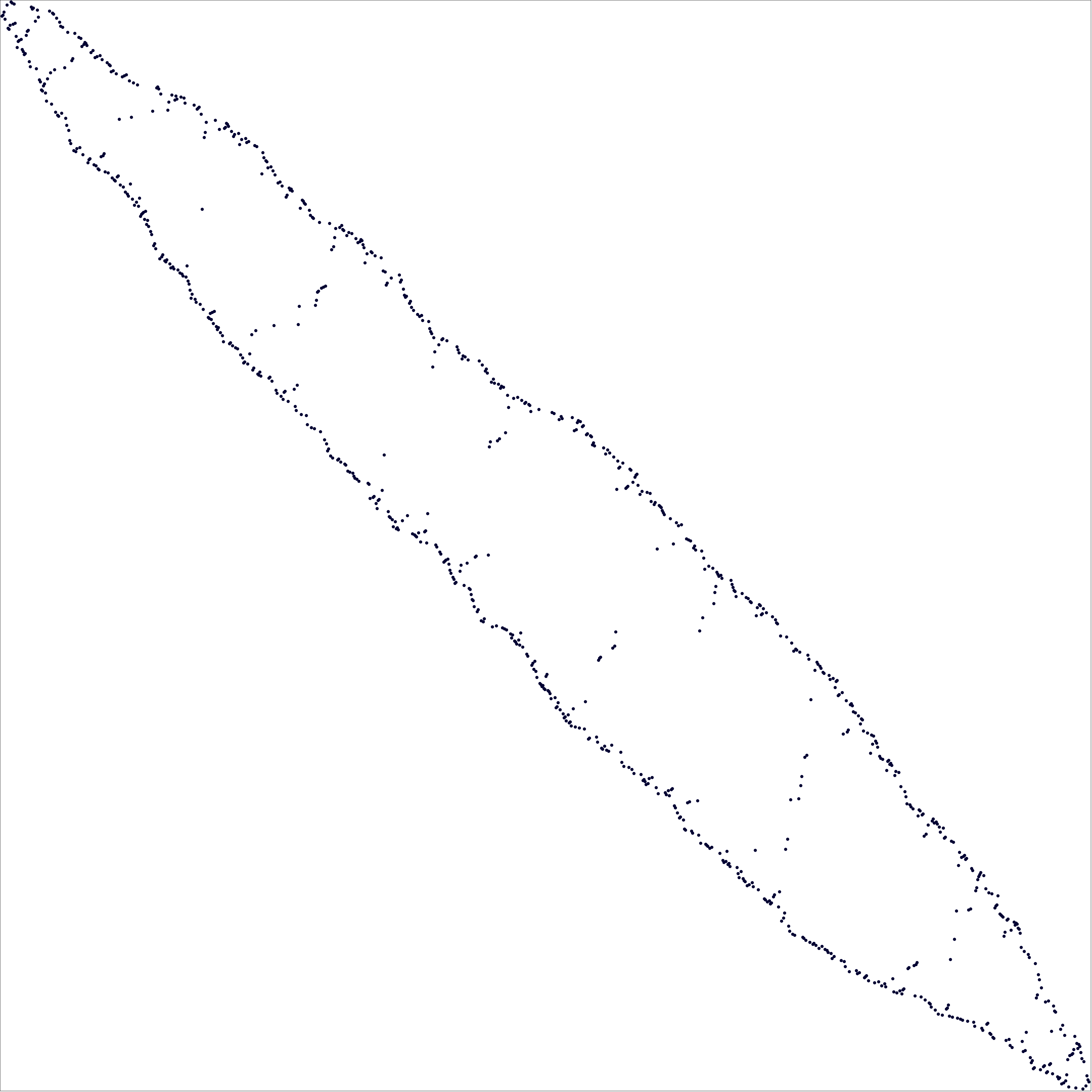}
\end{center}
\begin{center}
  \includegraphics[scale=0.36]{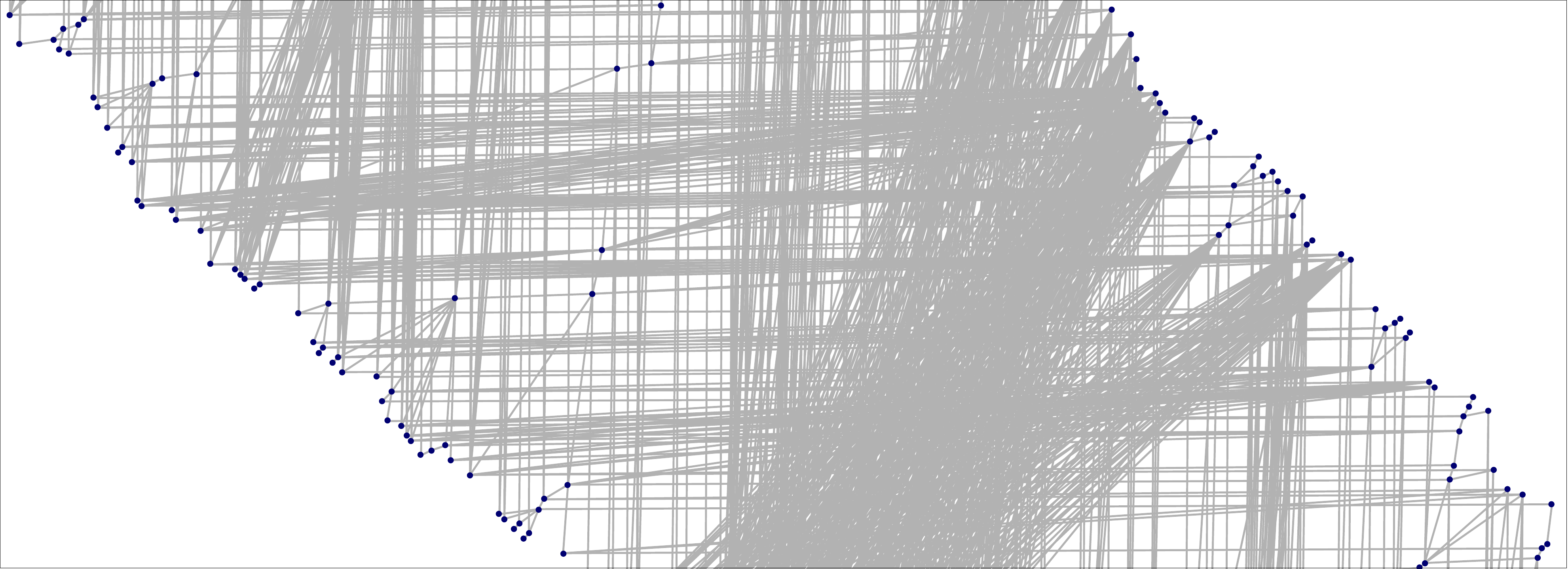}
\end{center}
\caption{The plot of a $\pdiamond$-avoider of length 1000 %, courtesy of Einar Steingr\'imsson,
and part of its Hasse graph}
\label{figEinar1000}
\end{figure}

We restrict our attention to
$\pdiamond$-avoiders whose Hasse graphs are spanned by a disjoint sequence of trees, rooted at alternate boundaries.
In our investigation of how these trees can interact, we consider the asymptotic distribution of certain substructures of the Hasse graphs.
In doing so, we
exploit the fact that plane trees
are in bijection with \emph{{\L}uka\-sie\-wicz paths}.
A {\L}uka\-sie\-wicz path of length $n$ is a
sequence of integers $y_0,\ldots,y_n$ such that $y_0=0$,
$y_i\geqslant1$ for $i\geqslant1$,
and each \emph{step} $s_i=y_i-y_{i-1}\leqslant1$.
Thus, at each step, a {\L}uka\-sie\-wicz path may rise by at most one, but may fall by any amount as long as it doesn't drop to zero or below.

\begin{figure}[ht]
  $$
  \begin{tikzpicture}[scale=0.33,line join=round]
    \draw[help lines] (0,0) grid (18,6);
    \draw[red,thick] (0,0)--(4,4)--(5,4)--(6,2);
    \draw[red,thick] (11,5)--(12,6)--(13,3)--(14,1);
    \draw[red,thick] (17,3)--(18,2);
    \draw[ultra thick] (6,2)--(7,3)--(8,3)--(9,4)--(10,4)--(11,5);
    \draw[ultra thick] (14,1)--(15,2)--(16,2)--(17,3);
  \end{tikzpicture}
  $$
  \caption{The plot of a {\L}uka\-sie\-wicz path that contains three
  occurrences of the pattern $\mathbf{1,0,1}$, two of which overlap}
  \label{figLukaPath}
\end{figure}
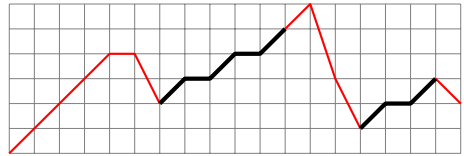
In particular, we investigate the distribution of \emph{patterns} in {\L}uka\-sie\-wicz paths.
A pattern $\omega$ of length $m$ in such a path
is a sequence of steps $\omega_1,\ldots,\omega_m$
that occur contiguously in the path (i.e. there is some $k\geqslant0$ such that $\omega_j=s_{k+j}$ for $1\leqslant j\leqslant m$),
with the restriction that the \emph{height}
$\sum_{j=1}^i \omega_j$ after the $i$th step is positive
for $1\leqslant i\leqslant m$.
Note that multiple occurrences of a given pattern may overlap in a {\L}uka\-sie\-wicz path. See Figure~\ref{figLukaPath} for an illustration.

%\newpage
Under very general conditions, %(see Drmota~\cite{Drmota1997}),
substructures of recursively defined combinatorial classes can be shown to be distributed normally in the limit.
By generalising the correlation polynomial of Guibas \& Odlyzko, %~\cite{GO1981},
and combining it with an application of the kernel method,
we prove that patterns in {\L}uka\-sie\-wicz paths also satisfy the conditions necessary for asymptotic normality:

\thmbox{
\begin{thm}\label{thmLukaPatternsGaussian}
The number of occurrences of a fixed pattern
in a {\L}uka\-sie\-wicz path of length $n$
exhibits
a Gaussian limit distribution
with mean
and standard deviation
asymptotically linear in $n$.
\end{thm}
}

% ================================================================
%\subsubsection*{Organization}
In the next section, we introduce certain subsets of $\av(\pdiamond)$ for consideration, which consist of permutations having a particularly regular structure, and explore restrictions on their structure.
We follow this in Section~\ref{sectConcentration} by looking at a number of parameters that record the distribution of substructures in our permutations.
Key to our result is the fact that these are asymptotically concentrated, and
in this section
we
prove three of the four concentration results we need.
Section~\ref{sectLukasPatterns} is reserved for the proof of Theorem~\ref{thmLukaPatternsGaussian}, concerning the distribution of patterns in {\L}uka\-sie\-wicz paths. %This section may be read independently of the rest of the paper.
To conclude,
in Section~\ref{sectLowerBound1324},
we use Theorem~\ref{thmLukaPatternsGaussian} to prove our final concentration result, and then
pull everything together to calculate a lower bound for $\gr(\av(\pdiamond))$, thus proving
Theorem~\ref{thm1324LowerBound}.

% ================================================================
\section{Permutations with a regular structure}\label{sectW}

In this section, we present the structure and substructures of the permutations that we investigate. %will be investigating.
Let $\WWW$ be the set of all
permutations avoiding $\pdiamond$
whose Hasse graphs are spanned by a sequence of trees
rooted alternately at the lower left and the upper right.
See Figure~\ref{fig157} for an example.

\begin{figure}[ht]
\begin{center}
  \includegraphics[scale=0.42]{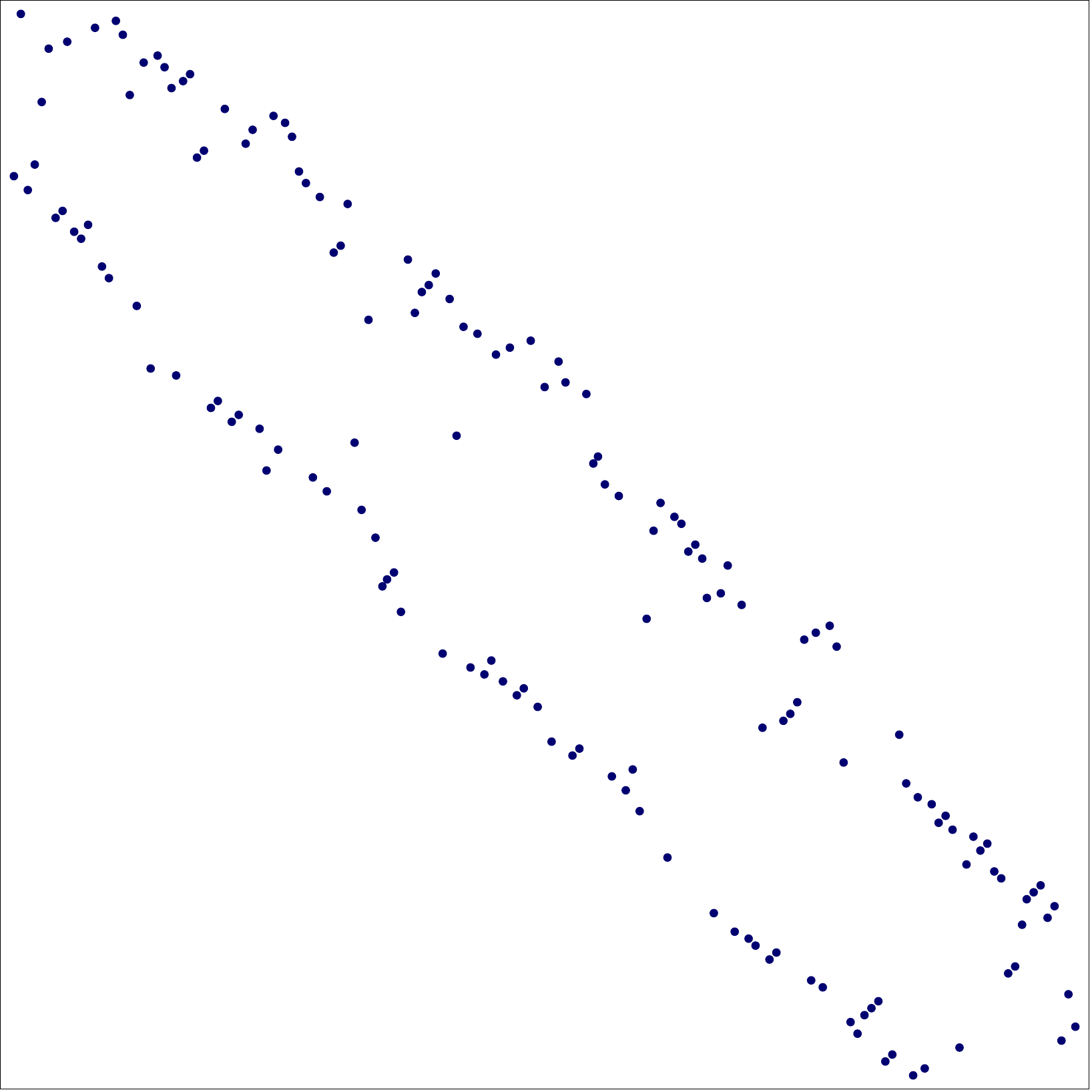}
  $
  \qquad
  $
  \includegraphics[scale=0.42]{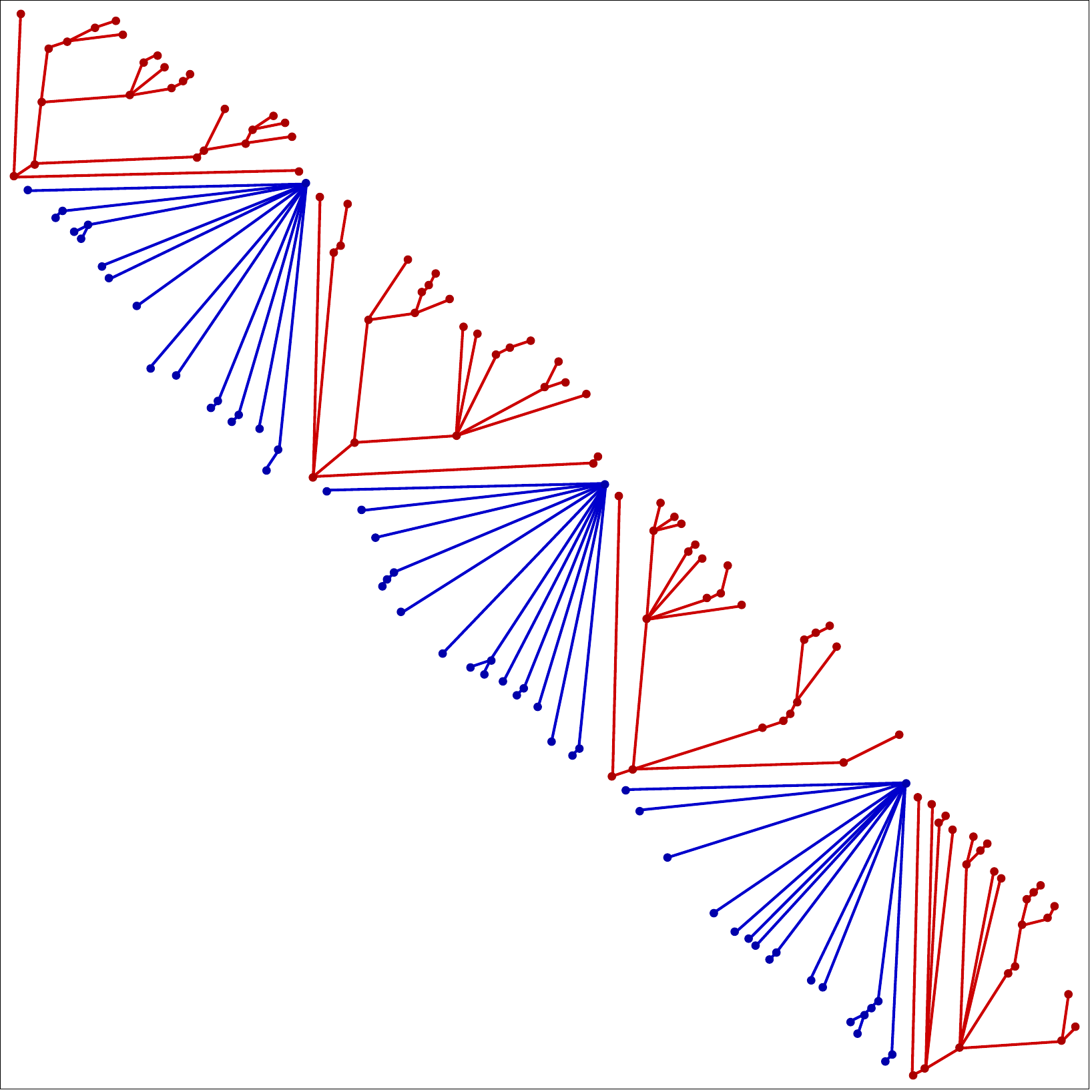}
\end{center}
  \caption{A permutation %of length 157
  in $\WWW(3,25,19,12)$ and the spanning of its Hasse graph by red and blue trees}
  \label{fig157}
\end{figure}
Trees rooted at a left-to-right minimum we colour red, and trees rooted at a right-to-left maximum we colour blue. We refer to these as \emph{red trees} and \emph{blue trees} respectively.
As a mnemonic, note that Red trees grow towards the Right and bLue trees grow towards the Left.

Observe that the root of each non-initial blue tree is the uppermost point below the root of the previous red tree, and the root of each non-initial red tree is the leftmost point to the right of the root of the previous blue tree.
Note that $\WWW$ does not contain every $\pdiamond$-avoider. For example, $\mathbf{2143}\notin\WWW$.

We consider elements of $\WWW$ with a particularly regular structure.
Each red tree has the same number of vertices.
Similarly, each blue tree has the same number of vertices. Moreover, every blue tree also has the same root degree.
Specifically,
for any positive $t$, $k$, $\ell$ and~$d$, let $\WWW(t,k,\ell,d)$ be the
set of those permutations in $\WWW$ which
satisfy the following four conditions:
\begin{bulletnums}
  \item Its Hasse graph is spanned by $t+1$ red trees and $t$ blue trees.
  \item Each red tree has $k$ vertices.
  \item Each blue tree has $\ell$ vertices.
  \item Each blue tree has root degree $d$ .
\end{bulletnums}
See Figure~\ref{fig157} for an illustration of a permutation in $\WWW(3,25,19,12)$.

To simplify our presentation,
we use the term \emph{blue subtree} to denote a principal subtree of a blue tree.
(The principal subtrees of a rooted tree are the connected components resulting from deleting the root.)
Thus each blue tree consists of a root vertex and a sequence of $d$ blue subtrees.
We also refer to the roots of blue subtrees simply as \emph{blue roots}.

Our goal is to determine a lower bound for the growth rate of
the union of all the $\WWW(t,k,\ell,d)$.
To achieve this, our focus is on sets in which the number and sizes of the trees
grow together along with
the root degree of blue trees.
Specifically, we consider
the parameterised sets
$$
\WWW_{\lambda,\delta}(k)
\;=\;
\WWW(k,k,\ceil{\lambda\+k},\ceil{\delta\+\lambda\+k}),
$$
for some $\lambda>0$ and $\delta\in(0,1)$,
consisting of $k+1$ $k$-vertex red trees and $k$ $\ceil{\lambda\+k}$-vertex blue trees each having root degree $\ceil{\delta\+\lambda\+k}$.
Thus,
$\lambda$ is the asymptotic ratio of the size of blue trees to red trees,
and $\delta$ is the limiting ratio of the root degree of each blue tree to its size.
Note that, asymptotically,
$1/\delta$
is the mean number of vertices in a blue subtree. Typically these subtrees are small.

%\newpage
Let $g(\lambda,\delta)$ denote the upper growth rate
of $\bigcup_k\! \WWW_{\lambda,\delta}(k)$:
$$
g(\lambda,\delta)
\;=\;
\liminfty[k]\big|\WWW_{\lambda,\delta}(k)\big|^{\nfrac{1}{n(k,\lambda)}},
$$
where
$n(k,\lambda) = k\+\big(k+\ceil{\lambda\+k}+1\big)$ is the length of each permutation in $\WWW_{\lambda,\delta}(k)$.
In order to prove Theorem~\ref{thm1324LowerBound},
we show that there is some
$\lambda$ and $\delta$ for which
$g(\lambda,\delta) > 9.81$.

%\newpage
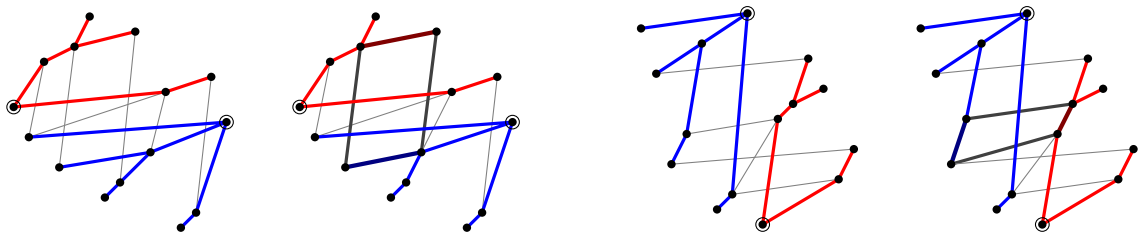
\begin{figure}[ht]
  $$
  \begin{tikzpicture}[scale=0.2,line join=round]
    \draw [black!50] (3,12)--(2,7)--(11,10);
    \draw [black!50] (5,13)--(4,5);
    \draw [black!50] (8,4)--(9,14);
    \draw [black!50] (10,6)--(11,10);
    \draw [black!50] (13,2)--(14,11);
    \draw [red,very thick] (6,15)--(5,13)--(3,12)--(1,9)--(11,10)--(14,11);
    \draw [red,very thick] (9,14)--(5,13);
    \draw [blue,very thick] (2,7)--(15,8)--(10,6)--(4,5);
    \draw [blue,very thick] (10,6)--(8,4)--(7,3);
    \draw [blue,very thick] (15,8)--(13,2)--(12,1);
    \plotpermnobox{15}{9,7,12,5,13,15,3,4,14,6,10,1,2,11,8}
    \draw [thin] (1,9) circle [radius=0.45];
    \draw [thin] (15,8) circle [radius=0.45];
  \end{tikzpicture}
  \qquad
  \begin{tikzpicture}[scale=0.2,line join=round]
    \draw [black!50] (3,12)--(2,7)--(11,10);
    \draw [black!75,very thick] (5,13)--(4,5);
    \draw [black!75,very thick] (9,6)--(10,14);
    \draw [black!50] (9,6)--(11,10);
    \draw [black!50] (13,2)--(14,11);
    \draw [red,very thick] (6,15)--(5,13)--(3,12)--(1,9)--(11,10)--(14,11);
    \draw [red!50!black,ultra thick] (10,14)--(5,13);
    \draw [blue,very thick] (2,7)--(15,8)--(9,6);
    \draw [blue!50!black,ultra thick] (9,6)--(4,5);
    \draw [blue,very thick] (9,6)--(8,4)--(7,3);
    \draw [blue,very thick] (15,8)--(13,2)--(12,1);
    \plotpermnobox{15}{9,7,12,5,13,15,3,4,6,14,10,1,2,11,8}
    \draw [thin] (1,9) circle [radius=0.45];
    \draw [thin] (15,8) circle [radius=0.45];
  \end{tikzpicture}
  \qquad\qquad
  \begin{tikzpicture}[scale=0.2,line join=round]
    \draw [black!50] (2,11)--(12,12);
    \draw [black!50] (4,7)--(10,8);
    \draw [black!50] (3,5)--(15,6);
    \draw [black!50] (10,8)--(7,3)--(14,4);
    \draw [blue,very thick] (1,14)--(8,15)--(5,13)--(2,11);
    \draw [blue,very thick] (5,13)--(4,7)--(3,5);
    \draw [blue,very thick] (8,15)--(7,3)--(6,2);
    \draw [red,very thick] (12,12)--(11,9)--(10,8)--(9,1)--(14,4)--(15,6);
    \draw [red,very thick] (13,10)--(11,9);
    \plotpermnobox{15}{14,11,5,7,13,2,3,15,1,8,9,12,10,4,6}
    \draw [thin] (9,1) circle [radius=0.45];
    \draw [thin] (8,15) circle [radius=0.45];
  \end{tikzpicture}
  \qquad
  \begin{tikzpicture}[scale=0.2,line join=round]
    \draw [black!50] (2,11)--(12,12);
    \draw [black!75,very thick] (3,5)--(10,7);
    \draw [black!75,very thick] (4,8)--(11,9);
    \draw [black!50] (3,5)--(15,6);
    \draw [black!50] (10,7)--(7,3)--(14,4);
    \draw [blue,very thick] (1,14)--(8,15)--(5,13)--(2,11);
    \draw [blue,very thick] (5,13)--(4,8);
    \draw [blue!50!black,ultra thick] (4,8)--(3,5);
    \draw [blue,very thick] (8,15)--(7,3)--(6,2);
    \draw [red,very thick] (12,12)--(11,9);
    \draw [red,very thick] (10,7)--(9,1)--(14,4)--(15,6);
    \draw [red!50!black,ultra thick] (11,9)--(10,7);
    \draw [red,very thick] (13,10)--(11,9);
    \plotpermnobox{15}{14,11,5,8,13,2,3,15,1,7,9,12,10,4,6}
    \draw [thin] (9,1) circle [radius=0.45];
    \draw [thin] (8,15) circle [radius=0.45];
  \end{tikzpicture}
  $$
  \caption{Valid and invalid horizontal interleavings,
            and valid and invalid vertical interleavings;
            occurrences of $\pdiamond$ are shown with thicker edges}
  \label{figInterleaveExample}
\end{figure}
$\WWW(t,k,\ell,d)$ consists precisely of those permutations that
can be built by starting with a $k$-vertex red tree
and repeating the following two steps exactly $t$ times (see Figure~\ref{fig157}):
\begin{bulletnums}
\item Place an $\ell$-vertex blue tree with root degree $d$
below the previous red tree (with its root to the right of the red tree),
horizontally interleaving its non-root vertices with the non-root vertices of the previous red tree in any way that avoids creating a $\pdiamond$.
\item Place a $k$-vertex red tree
to the right of the previous blue tree (with its root below the blue tree),
vertically interleaving its non-root vertices with the non-root vertices of the previous blue tree without creating a $\pdiamond$.
\end{bulletnums}
See Figure~\ref{figInterleaveExample} for illustrations of valid interleavings of the non-root vertices of red and blue trees, and also of invalid interleavings containing occurrences of $\pdiamond$.
The configurations that have to be avoided when interleaving are
%only possible ways in which a $\pdiamond$ might be created by this construction are
shown schematically in Figure~\ref{fig1324Causes}.

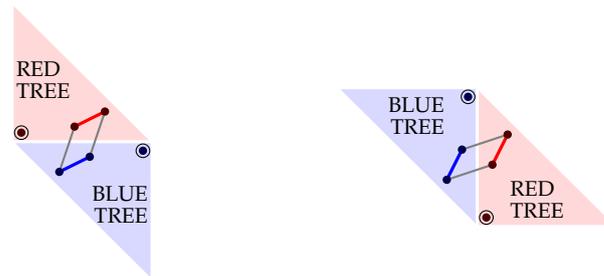
\begin{figure}[ht]
$$
\begin{tikzpicture}[line join=round,scale=0.2]
  \path [fill=blue!15] (7,-6) -- (7,2.9) -- (-1.9,2.9) -- (7,-6);
  \node[left] at (7.5,-0.4) {\scriptsize BLUE};
  \node[left] at (7.5,-1.85) {\scriptsize TREE};
  \path [fill=red!15] (6.9,3.1) -- (-2,3.1) -- (-2,12) -- (6.9,3.1);
  \node[right] at (-2.5,7.85) {\scriptsize RED};
  \node[right] at (-2.5,6.4) {\scriptsize TREE};
  \draw [red,very thick] (2,4)--(4,5);
  \draw [blue,very thick] (1,1)--(3,2);
  \draw [black!50, thick] (1,1)--(2,4);
  \draw [black!50, thick] (3,2)--(4,5);
  \plotpermnobox[blue!30!black]{}{1,0,2,0}
  \plotpermnobox[red!30!black]{}{0,4,0,5}
  \fill[blue!30!black,radius=0.275] (6.5,2.4) circle;
  \fill[red!30!black,radius=0.275] (-1.5,3.6) circle;
  \draw [thin] (6.5,2.4)  circle [radius=0.45];
  \draw [thin] (-1.5,3.6) circle [radius=0.45];
\end{tikzpicture}
\qquad\qquad\qquad
\raisebox{19pt}{
\begin{tikzpicture}[line join=round,scale=0.2]
  \path [fill=blue!15] (-6,7) -- (2.9,7) -- (2.9,-1.9) -- (-6,7);
  \node[left] at (1.5,6.0) {\scriptsize BLUE};
  \node[left] at (1.5,4.55) {\scriptsize TREE};
  \path [fill=red!15] (3.1,6.9) -- (3.1,-2) -- (12,-2) -- (3.1,6.9);
  \node[right] at (4.5,0.45) {\scriptsize RED};
  \node[right] at (4.5,-1.0) {\scriptsize TREE};
  \draw [red,very thick] (4,2)--(5,4);
  \draw [blue,very thick] (1,1)--(2,3);
  \draw [black!50, thick] (1,1)--(4,2);
  \draw [black!50, thick] (2,3)--(5,4);
  \plotpermnobox[blue!30!black]{}{1,3}
  \plotpermnobox[red!30!black]{}{0,0,0,2,4}
  \fill[blue!30!black,radius=0.275] (2.4,6.5) circle;
  \fill[red!30!black,radius=0.275] (3.6,-1.5) circle;
  \draw [thin] (2.4,6.5)  circle [radius=0.45];
  \draw [thin] (3.6,-1.5) circle [radius=0.45];
\end{tikzpicture}
}
$$
\caption{Possible causes of a $\pdiamond$ when interleaving horizontally and vertically}
\label{fig1324Causes}
\end{figure}

We simply call a valid interleaving of the non-root vertices of a red tree with those of a blue tree an \emph{interleaving} of the trees.
Note that the choice of interleaving at each step is completely independent of the interleaving at any previous or subsequent
step. The only requirement is that no $\pdiamond$ is created by any of the interleavings.

The key to our result is thus an analysis of how vertices of red and blue trees may be interleaved without forming a $\pdiamond$. The remainder of this chapter consists of this analysis.

In what follows, we work exclusively with interleavings of red and blue trees in elements of $\WWW_{\lambda,\delta}(k)$,
for some given
$\lambda>0$ and $\delta\in(0,1)$.
Thus, we assume, without restatement, that a red tree has $k$ vertices, and that a blue tree has $\ell=\ceil{\lambda\+k}$ vertices and is composed of $d=\ceil{\delta\+\lambda\+k}$ blue subtrees.

We now consider how to avoid creating a $\pdiamond$.
Without loss of generality, we limit our discussion to the horizontal case.

\begin{figure}[ht]
  $$
  \begin{tikzpicture}[scale=0.2,line join=round]
    \path [fill=blue!15] (5.65,0.5) rectangle (6.35,32.5);
    \path [fill=blue!15] (6.65,0.5) rectangle (8.35,32.5);
    \path [fill=blue!15] (10.65,0.5) rectangle (11.35,32.5);
    \path [fill=blue!15] (12.65,0.5) rectangle (15.35,32.5);
    \path [fill=blue!15] (18.65,0.5) rectangle (19.35,32.5);
    \path [fill=blue!15] (22.65,0.5) rectangle (26.35,32.5);
    \path [fill=blue!15] (26.65,0.5) rectangle (27.35,32.5);
    \path [fill=blue!15] (29.65,0.5) rectangle (31.35,32.5);
    \draw [gray,very thin] (6,15)--(9,30);
    \draw [gray,very thin] (6,15)--(10,29);
    \draw [gray,very thin] (6,15)--(12,27);
    \draw [gray,very thin] (6,15)--(16,24);
    \draw [gray,very thin] (6,15)--(20,21);
    \draw [gray,very thin] (6,15)--(22,18);
    \draw [gray,very thin] (6,15)--(29,19);
    \draw [gray,very thin] (8,14)--(9,30);
    \draw [gray,very thin] (8,14)--(10,29);
    \draw [gray,very thin] (8,14)--(12,27);
    \draw [gray,very thin] (8,14)--(16,24);
    \draw [gray,very thin] (8,14)--(20,21);
    \draw [gray,very thin] (8,14)--(22,18);
    \draw [gray,very thin] (8,14)--(29,19);
    \draw [gray,very thin] (11,12)--(12,27);
    \draw [gray,very thin] (11,12)--(16,24);
    \draw [gray,very thin] (11,12)--(20,21);
    \draw [gray,very thin] (11,12)--(22,18);
    \draw [gray,very thin] (11,12)--(29,19);
    \draw [gray,very thin] (15,11)--(16,24);
    \draw [gray,very thin] (15,11)--(20,21);
    \draw [gray,very thin] (15,11)--(22,18);
    \draw [gray,very thin] (15,11)--(29,19);
    \draw [gray,very thin] (20,21)--(19,8);
    \draw [gray,very thin] (22,18)--(19,8);
    \draw [gray,very thin] (29,19)--(19,8);
    \draw [gray,very thin] (26,7)--(28,20);
    \draw [gray,very thin] (26,7)--(29,19);
    \draw [gray,very thin] (27,3)--(28,20);
    \draw [gray,very thin] (27,3)--(29,19);
    \draw [gray,very thin] (6,15)--(18,23);
    \draw [gray,very thin] (8,14)--(18,23);
    \draw [gray,very thin] (11,12)--(18,23);
    \draw [gray,very thin] (15,11)--(18,23);
    \draw [red,very thick] (1,17)--(18,23);
    \draw [blue,very thick] (19,8)--(32,16);
    \fill [blue!80!white,thin] (6,15)  circle [radius=0.45];
    \fill [blue!80!white,thin] (8,14)  circle [radius=0.45];
    \fill [blue!80!white,thin] (11,12) circle [radius=0.45];
    \fill [blue!80!white,thin] (15,11) circle [radius=0.45];
    \fill [blue!80!white,thin] (19,8)  circle [radius=0.45];
    \fill [blue!80!white,thin] (26,7)  circle [radius=0.45];
    \fill [blue!80!white,thin] (27,3)  circle [radius=0.45];
    \fill [blue!80!white,thin] (31,2)  circle [radius=0.45];
    \draw [red,very thick] (1,17)--(2,31)--(3,32);
    \draw [red,very thick] (1,17)--(16,24)--(17,25);
    \draw [red,very thick] (1,17)--(4,26)--(5,28)--(9,30);
    \draw [red,very thick] (5,28)--(10,29);
    \draw [red,very thick] (4,26)--(12,27);
    \draw [red,very thick] (22,18)--(29,19);
    \draw [red,very thick] (1,17)--(20,21)--(21,22);
    \draw [red,very thick] (1,17)--(22,18)--(28,20);
    \draw [blue,very thick] (6,15)--(32,16);
    \draw [blue,very thick] (7,13)--(8,14)--(32,16);
    \draw [blue,very thick] (11,12)--(32,16);
    \draw [blue,very thick] (13,10)--(15,11)--(32,16);
    \draw [blue,very thick] (14,9)--(15,11);
    \draw [blue,very thick] (24,4)--(25,5)--(26,7)--(32,16);
    \draw [blue,very thick] (23,6)--(26,7);
    \draw [blue,very thick] (27,3)--(32,16);
    \draw [blue,very thick] (30,1)--(31,2)--(32,16);
    \draw [thin] (1,17)  circle [radius=0.4];
    \draw [thin] (32,16)  circle [radius=0.4];
    \plotpermnobox[red!30!black]{32}{17, 31, 32, 26, 28,  0,  0,  0, 30, 29,  0, 27,  0, 0,  0, 24, 25, 23, 0, 21, 22, 18, 0, 0, 0, 0, 0, 20, 19, 0, 0,  0}
    \plotpermnobox[blue!30!black]{32}{ 0,  0,  0,  0,  0, 15, 13, 14,  0,  0, 12,  0, 10, 9, 11,  0,  0,  0, 8,  0,  0,  0, 6, 4, 5, 7, 3, 0,  0, 1, 2, 16}
    %\plotpermnobox[red]{32}{0, -1, -1, -1, -1,  0,  0,  0, -1, -1,  0, -1,  0, 0,  0, -1, -1, -1, 0, -1, -1, -1, 0, 0, 0, 0, 0, -1, -1, 0, 0,  0}
    %\plotpermnobox[blue]{32}{ 0,  0,  0,  0,  0, -2, 0, -2,  0,  0, -2,  0, 0, 0, -2,  0,  0,  0, -2,  0,  0,  0, 0, 0, 0, -2, -2, 0,  0, 0, -2, 0}
  \end{tikzpicture}
  $$
  \caption{An interleaving of red and blue trees in which no blue subtree (shown in a shaded rectangle) is split by a red vertex} %, together with a representation of its pre-interleaving; from the right, the gap sizes are 2, 0, 3, 3, 1, 2, 0, 4}
  \label{figPartialInterleaving}
\end{figure}
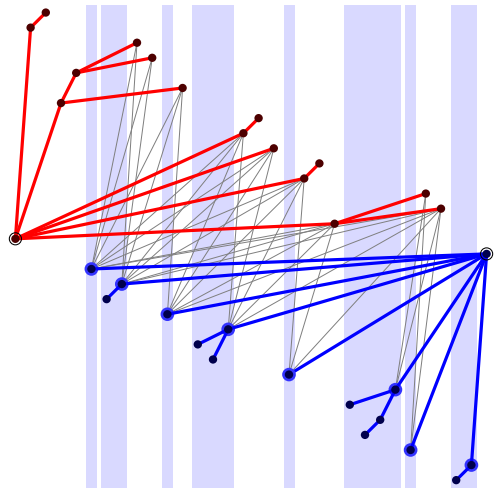
One way to guarantee that no $\pdiamond$ is created when interleaving trees is to ensure that no blue subtree is split by a red vertex, since the pattern $\pdiamond$ is avoided in any interleaving in which no red vertex occurs between two blue vertices of the same blue subtree. See Figure~\ref{figPartialInterleaving} for an illustration.

Let $\WWW^0_{\lambda,\delta}(k)$ be the subset of $\WWW_{\lambda,\delta}(k)$ in which red vertices are interleaved with blue subtrees in this manner in each interleaving.

$\WWW^0_{\lambda,\delta}(k)$ is easy to enumerate since trees and interleavings can be chosen independently. Indeed,
$$
\big| \WWW^0_{\lambda,\delta}(k) \big| \;=\; R_k^{\,k+1} \times B_k^{\,k} \times P_k^{\,2k},
$$
where $R_k$ is the number of distinct red trees, $B_k$ is the number of distinct blue trees and $P_k$ is the number of distinct
ways of interleaving red vertices with blue subtrees.
%pre-interleavings.

%If we let $\ell=\ceil{\lambda\+k}$ and $d=\ceil{\delta\+\lambda\+k}$, then
$R_k=\frac{1}{k}\+\binom{2\+k-2}{k-1}$,
$B_k=\frac{d}{\ell-1}\+\binom{2\ell-3-d}{\ell-2}$ (see~\cite[Example~III.8]{FS2009}),
and
$P_k = \binom{k-1 + d}{d}$.
Hence, by applying Stirling's approximation we obtain the following expression for the growth rate of $\WWW^0_{\lambda,\delta}(k)$:
\begin{equation}\label{eqW0GR}
g_0(\lambda,\delta)
\;=\;
\liminfty[k]\big|\WWW^0_{\lambda,\delta}(k)\big|^{\nfrac{1}{n(k,\lambda)}}
\;=\;
E(\lambda,\delta)^{\nfrac{1}{(1+\lambda)}},
\end{equation}
where
\begin{equation}\label{eqEDef}
  E(\lambda,\delta) \;=\;
  4 \+ \frac{(2-\delta)^{(2-\delta)\lambda}}{(1-\delta)^{(1-\delta)\lambda}} \+ \frac{(1+\delta\+\lambda)^{2(1+\delta\lambda)}}{(\delta\+\lambda)^{2\delta\lambda}} .
\end{equation}
%As an aside,
It is now elementary to determine the maximum value of this growth rate.
For fixed $\lambda$, $E(\lambda,\delta)$ is maximal when $\delta$ has the value
$$
  \delta_\lambda \;=\; \frac{2\+ \lambda -1+\sqrt{1+4\+ \lambda +8\+ \lambda ^2}}{2\+ \lambda\+  (2+\lambda )}.
$$
Thence,
numerically maximising $g_0(\lambda,\delta)$ by setting $\lambda\approx0.61840$ (with $\delta_\lambda\approx0.86238$) yields a preliminary lower bound for  $\gr(\av(\pdiamond))$ of 9.40399.
It is rather a surprise that such a simple construction exhibits a growth rate as large as this.

From this analysis, we see that we have complete freedom in choosing the positions of the blue roots (roots of blue subtrees) relative to the vertices of the red tree. In the light of this, we divide the process of interleaving into two stages:
\begin{bulletnums}
  \item Freely
        %select the positions of the blue roots relative to the vertices of the red tree.
        interleave the blue roots with the red vertices.
  \item Select %valid
        positions for the non-root vertices of each blue subtree, while avoiding the creation of a $\pdiamond$.
\end{bulletnums}
We call the outcome of the first stage %, an interleaving of blue roots and the non-root vertices of a red tree,
a \emph{pre-interleaving}.
A pre-interleaving is thus a sequence consisting of $k-1$ red vertices and $d=\ceil{\delta\+\lambda\+k}$ blue vertices (the blue roots);
the non-root vertices of the blue subtrees play no role in a pre-interleaving.

Note that in the second stage,
each blue subtree can be considered independently since no $\pdiamond$ can contain vertices from more than one blue subtree.
We now consider where the non-root vertices may be positioned.

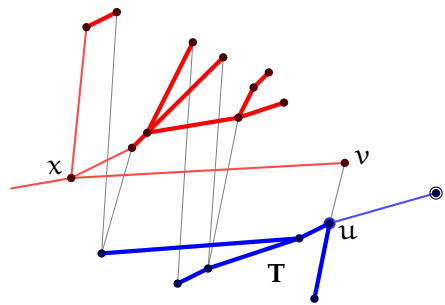
\begin{figure}[ht]
  $$
  \begin{tikzpicture}[scale=0.20,line join=round]
    \draw[gray,very thin] (3,5)--(4,21);
    \draw[gray,very thin] (3,5)--(5,12);
    \draw[gray,very thin] (8,3)--(9,19);
    \draw[gray,very thin] (10,4)--(11,18);
    \draw[gray,very thin] (10,4)--(12,14);
    \draw[gray,thin] (18,7)--(19,11);
    \draw[red!70!white,thick] (1,10)--(-3,9.3);
    \draw[red!70!white,thick] (1,10)--(2,20);
    \draw[red!70!white,thick] (1,10)--(5,12);
    \draw[red!70!white,thick] (1,10)--(19,11);
    \draw[red,ultra thick] (12,14)--(13,16);
    \draw[red,ultra thick] (12,14)--(15,15);
    \draw[red,ultra thick] (13,16)--(14,17);
    \draw[red,ultra thick] (2,20)--(4,21);
    \draw[red,ultra thick] (5,12)--(6,13);
    \draw[red,ultra thick] (6,13)--(9,19);
    \draw[red,ultra thick] (6,13)--(11,18);
    \draw[red,ultra thick] (6,13)--(12,14);
    \fill [blue!80!white,thin] (18,7)  circle [radius=0.4];
    \draw[blue!70!white,, thick] (18,7)--(25,9);
    \draw[blue,ultra thick] (3,5)--(16,6);
    \draw[blue,ultra thick] (8,3)--(10,4);
    \draw[blue,ultra thick] (10,4)--(16,6);
    \draw[blue,ultra thick] (16,6)--(18,7);
    \draw[blue,ultra thick] (17,2)--(18,7);
    %\node[right] at (25,8.8) {$w$};
    \node[right] at (19,11.4) {$v$};
    \node[right] at (17.9,6.4) {$u$};
    \node[left] at (1.1,10.7) {$x$};
    \node at (14.5,3.7) {$\smallT$};
    \plotpermnobox[red!30!black]{}{10,20,0,21,12,13,0,0,19,0,18,14,16,17,15,0,0,0,11}
    \plotpermnobox[blue!30!black]{}{0,0,5,0,0,0,0,3,0,4,0,0,0,0,0,6,2,7,0,0,0,0,0,0,9}
    \draw [thin] (25,9) circle [radius=0.4];
  \end{tikzpicture}
  $$
  \caption{An interleaving of a blue subtree $\smallT$ with its two-component red forest}
  \label{figRedForest}
\end{figure}
Our first observation is as follows:
Suppose $v$ is the nearest red vertex to the right of the root $u$ of some blue subtree $\smallT$.
Now let $x$ be the parent of $v$ in the red tree.
Then no vertex of~$\smallT$ can be positioned to the left of $x$, since otherwise a $\pdiamond$ would be created in which $xuv$ would be the $\mathbf{324}$.
Thus, vertices of~$\smallT$ can only be interleaved with those red vertices positioned between $u$ and $x$.
We call the graph induced by this set of red vertices (which may be empty) a \emph{red forest}.
See Figure~\ref{figRedForest} for an illustration.

\begin{figure}[ht]
  $$
  \begin{tikzpicture}[scale=0.20,line join=round]
    \draw [red,very thin,fill=red!15] (1.5,11.5) rectangle (15.5,21.5);
    \draw[gray,very thin] (5,5)--(6,13);
    \draw[gray,very thin] (8,3)--(9,19);
    \draw[gray,very thin] (10,4)--(11,18);
    \draw[gray,very thin] (10,4)--(12,14);
    \draw[gray,thin] (18,7)--(19,11);
    \draw[red!70!white,thick] (1,10)--(-3,9.3);
    \draw[red!70!white,thick] (1,10)--(2,20);
    \draw[red!70!white,thick] (1,10)--(4,12);
    \draw[red!70!white,thick] (1,10)--(19,11);
    \draw[red,ultra thick] (12,14)--(13,16);
    \draw[red,ultra thick] (12,14)--(15,15);
    \draw[red,ultra thick] (13,16)--(14,17);
    \draw[red,ultra thick] (2,20)--(3,21);
    \draw[red,ultra thick] (4,12)--(6,13);
    \draw[red,ultra thick] (6,13)--(9,19);
    \draw[red,ultra thick] (6,13)--(11,18);
    \draw[red,ultra thick] (6,13)--(12,14);
    \draw[blue!70!white,, thick] (-1,7.3)--(25,9);
    \fill [blue!80!white,thin] (18,7)  circle [radius=0.4];
    \fill [blue!80!white,thin] (-1,7.3)  circle [radius=0.4];
    \draw[blue!70!white,, thick] (18,7)--(25,9);
    \draw[blue,ultra thick] (5,5)--(16,6);
    \draw[blue,ultra thick] (8,3)--(10,4);
    \draw[blue,ultra thick] (10,4)--(16,6);
    \draw[blue,ultra thick] (16,6)--(18,7);
    \draw[blue,ultra thick] (17,2)--(18,7);
    %\node[right] at (25,8.8) {$w$};
    \node[right] at (19,11.4) {$v$};
    \node[right] at (17.9,6.4) {$u$};
    \node[left] at (1.1,10.7) {$x$};
    \node at (14.5,3.7) {$\smallT$};
    \plotpermnobox[red!30!black]{}{10,20,21,12,0,13,0,0,19,0,18,14,16,17,15,0,0,0,11}
    \plotpermnobox[blue!30!black]{}{0,0,0,0,5,0,0,3,0,4,0,0,0,0,0,6,2,7,0,0,0,0,0,0,9}
    \fill[blue!30!black,radius=0.275] (-1,7.3) circle;
    \node[left] at (-1,7.0) {$y$};
    \draw [thin] (25,9) circle [radius=0.4];
  \end{tikzpicture}
  \qquad\qquad
  \begin{tikzpicture}[scale=0.20,line join=round]
    \draw [red,very thin,fill=red!15] (8.5,13.5) rectangle (15.5,19.5);
    %\fill [red!50!white,thin] (9,19)   circle [radius=0.4];
    %\fill [red!50!white,thin] (11,18)  circle [radius=0.4];
    %\fill [red!50!white,thin] (12,14)  circle [radius=0.4];
    %\fill [red!50!white,thin] (13,16)  circle [radius=0.4];
    %\fill [red!50!white,thin] (14,17)  circle [radius=0.4];
    %\fill [red!50!white,thin] (15,15)  circle [radius=0.4];
    \draw[gray,very thin] (7,5)--(9,19);
    \draw[gray,very thin] (7,5)--(11,18);
    \draw[gray,very thin] (7,5)--(12,14);
    \draw[gray,very thin] (8,3)--(9,19);
    \draw[gray,very thin] (10,4)--(11,18);
    \draw[gray,very thin] (10,4)--(12,14);
    \draw[gray,thin] (18,7)--(19,11);
    \draw[red!70!white,thick] (1,10)--(-3,9.3);
    \draw[red!70!white,thick] (1,10)--(2,20);
    \draw[red!70!white,thick] (1,10)--(4,12);
    \draw[red!70!white,thick] (1,10)--(19,11);
    \draw[red,ultra thick] (12,14)--(13,16);
    \draw[red,ultra thick] (12,14)--(15,15);
    \draw[red,ultra thick] (13,16)--(14,17);
    \draw[red,very thick] (2,20)--(3,21);
    \draw[red,very thick] (4,12)--(5,13);
    \draw[red,very thick] (5,13)--(9,19);
    \draw[red,very thick] (5,13)--(11,18);
    \draw[red,very thick] (5,13)--(12,14);
    \draw[blue!70!white,, thick] (6,8)--(25,9);
    \fill [blue!80!white,thin] (18,7)  circle [radius=0.4];
    \fill [blue!80!white,thin] (6,8)  circle [radius=0.4];
    \draw[blue!70!white,, thick] (18,7)--(25,9);
    \draw[blue,ultra thick] (7,5)--(16,6);
    \draw[blue,ultra thick] (8,3)--(10,4);
    \draw[blue,ultra thick] (10,4)--(16,6);
    \draw[blue,ultra thick] (16,6)--(18,7);
    \draw[blue,ultra thick] (17,2)--(18,7);
    %\node[right] at (25,8.8) {$w$};
    \node[right] at (19,11.4) {$v$};
    \node[right] at (17.9,6.4) {$u$};
    \node[left] at (1.1,10.7) {$x$};
    \node at (14.5,3.7) {$\smallT$};
    \plotpermnobox[red!30!black]{}{10,20,21,12,13,0,0,0,19,0,18,14,16,17,15,0,0,0,11}
    \plotpermnobox[blue!30!black]{}{0,0,0,0,0,0,5,3,0,4,0,0,0,0,0,6,2,7,0,0,0,0,0,0,9}
    \plotpermnobox[blue!30!black]{}{0,0,0,0,0,8}
    \node[left] at (6,7.8) {$y$};
    \draw [thin] (25,9) circle [radius=0.4];
  \end{tikzpicture}
  $$
\caption{Two interleavings of a blue subtree $\smallT$ with a red forest; the red fringes consist of the vertices in the shaded regions}
\label{figRedFringes}
\end{figure}
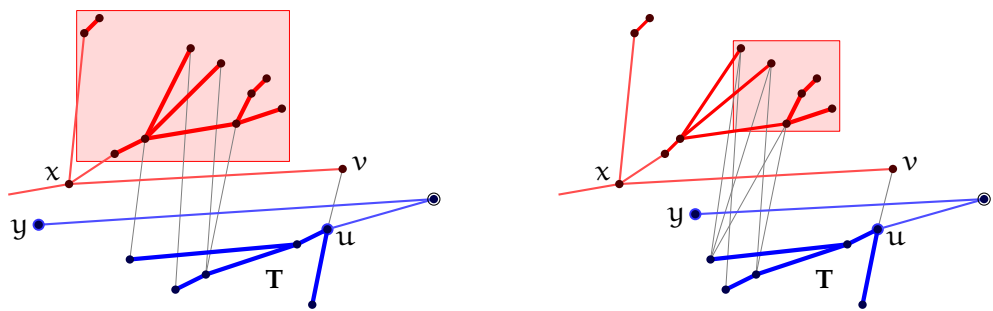
Our second (elementary) observation is
is as follows:
Suppose $u$ is the root of some blue subtree $\smallT$, and $y$ is the next blue root to the left of $u$.
Then all the non-root vertices of $\smallT$ must occur to the right of $y$ (else $\smallT$ would not be a tree).
Note that $y$ may occur either to the left of $x$ or to its right.
See Figure~\ref{figRedFringes} for illustrations of both of these situations.

These two observations
provide two independent constraints on the set of red vertices with which
the
non-root vertices of a blue subtree may be interleaved, the first determined by
the structure of the red tree
and the second by
the pre-interleaving.
This set consists of those vertices of the red forest situated to the right of both $x$ and $y$.
These red vertices induce a subgraph of the red forest which we call its \emph{red fringe}.
In the examples in Figure~\ref{figRedFringes}, the red fringes consist of those vertices in the shaded regions.
The key fact that motivates the rest of our analysis is that vertices of a blue subtree may only be interleaved with vertices of its red fringe.

The size of a red fringe depends on both the size of the corresponding red forest and also on the location of the next blue root to the left.
Let us call the number of
red vertices positioned between a blue root~$u$ and the next blue root to its left ($y$) the \emph{gap size} of $u$; the gap size may be zero.
%contiguous red vertices immediately to the left of a blue root in a pre-interleaving, the \emph{gap size} (which may be zero).
%For example, in Figure~\ref{figPartialInterleaving}, from right to left, the gap sizes are 2, 0, 3, 3, 1, 2, 0, and 4.
The number of vertices in the red fringe is thus the smaller of the gap size and the number of vertices in the red forest.

If we combine this fact
with results concerning the limiting distributions of
blue subtrees and
red fringes,
then we can establish a lower bound for $g(\lambda,\delta)$. This is the focus of the next section.

% ================================================================
\section{Concentration of distributions}\label{sectConcentration}

To determine our lower bound, we depend critically on the fact that the asymptotic distributions of substructures of permutations in $\WWW_{\lambda,\delta}(k)$ are \emph{concentrated}. In this section we
introduce certain parameters counting these substructures,
show how their concentration enables us to bound $g(\lambda,\delta)$ from below,
and prove three of the four concentration results we require.

It is frequently the case that
distributions of parameters counting the proportion of particular substructures in combinatorial classes
have a convergent mean
and
a variance that
vanishes asymptotically.
As a direct consequence of Chebyshev's inequality, such distributions have the following concentration property (see~\cite[Proposition~III.3]{FS2009}): % p.162
\begin{prop}\label{propConcentration}
If $\xi_n$ is a sequence of random variables with means $\mu_n=\mathbb{E}[\xi_n]$
and variances $\upsilon_n=\mathbb{V}[\xi_n]$ satisfying the conditions
$$
\liminfty\mu_n=\mu,
\qquad\qquad
\liminfty \upsilon_n \;=\; 0,
$$
for some constant $\mu$,
then
$\xi_n$ \emph{is concentrated at} $\mu$ in the sense that,
for any $\varepsilon>0$,
given sufficiently large $n$,
$$
\mathbb{P}\big[\, |\xi_n-\mu| \:\leqslant\: \varepsilon \,\big] \:\;>\;\: 1-\varepsilon.
$$
\end{prop}
In practice this often means that we can work on the assumption that the value of any such parameter is entirely concentrated at its limiting mean.
This is the case for the
parameters in which we are interested.

We also make use of the following
result concerning \emph{multiple} concentrated parameters.
\begin{prop}\label{propConcentration2}
If $\xi_n$ and $\xi'_n$ are two sequences of random variables on the same sample space concentrated at $\mu$ and $\mu'$ respectively, then they are \emph{jointly concentrated}
in the sense that,
for any $\varepsilon>0$,
given sufficiently large~$n$,
$$
\mathbb{P}\big[\,
|\xi_n - \mu| \leqslant \varepsilon
\text{~~and~~}
|\xi'_n - \mu'| \leqslant \varepsilon
\,\big] \:\;>\:\; 1-\varepsilon.
$$
\end{prop}
\begin{proof}
For any $\eta>0$ and sufficiently large $n$,
the probability that $\xi_n$ differs from $\mu$ by less than $\eta$ exceeds $1-\eta$, and similarly for $\xi'$ with $\mu'$.
Hence the probability that both are simultaneously $\eta$-close to their asymptotic means is at least $1-2\+\eta$. Let $\eta=\nfrac{\varepsilon}{2}$.
\end{proof}

We now introduce the parameters we need:

\textbf{Blue subtrees} $\beta_k$:
For each plane tree $\smallT$, let $\beta_k(\smallT)$ be the random variable that records the
proportion of blue subtrees in a blue tree that are isomorphic to $\smallT$.

\textbf{Gap sizes} $\gamma_k$:
For each $j\geqslant0$,
let $\gamma_k(j)$ be the random variable that records the
proportion of blue roots in a pre-interleaving that have
gap size~$j$.
Also, let $\gamma_k(>\! j)$ record the
proportion of blue roots in a pre-interleaving whose gap size exceeds $j$.

\textbf{Red forests} $\rho_k$:
For each plane forest $\smallF$, let $\rho_k(\smallF)$ be the random variable that records the
proportion of positions in a red tree whose red forest is isomorphic to $\smallF$.
Also, let $\rho_k(\smallF^+)$ record the
proportion of positions in a red tree whose red forest has at least $|\smallF|$ vertices,
and for which the graph induced by the rightmost $|\smallF|$ vertices of the forest is isomorphic to $\smallF$.

Below, we prove that each of these parameters is concentrated, and calculate their asymptotic means.
First we describe how the parameters are combined.

Our first combined parameter counts red fringes.
Given
the combination of a red tree and a pre-interleaving of its vertices with a sequence of blue roots,
let $\varphi_k(\smallF)$
be the random variable that records the
proportion of blue roots whose red fringe is isomorphic to $\smallF$.
Now, occurrences of blue roots with a given gap size $j$ are spread almost uniformly across the positions in a red tree, non-uniformity only occurring for the $j$ leftmost positions.
This is also the case for the distribution of occurrences of blue roots whose gap size is at least $j$.
Hence, by the definition of a red fringe at the end of Section~\ref{sectW},
given any $\varepsilon>0$, if $k$ is large enough,
$\varphi_k(\smallF)$ differs from
\begin{equation}\label{eqChi}
\gamma_k(|\smallF|)\+\rho_k(\smallF^+)  \:+\:
\gamma_k(>\!|\smallF|)\+\rho_k(\smallF)
\end{equation}
by less than $\varepsilon$.

Our second combined parameter concerns pairs consisting of a blue subtree and a red fringe.
Given
a red tree, a blue tree and a pre-interleaving of their red vertices and blue roots,
let $\psi_k(\smallT,\smallF)$ be the random variable that records the proportion of blue subtrees that are isomorphic to $\smallT$ and have a red fringe that is isomorphic to~$\smallF$. We call such a blue subtree a \emph{$(\smallT,\smallF)$-subtree}. Given that occurrences of a given blue subtree are distributed uniformly across the blue roots, we have
\begin{equation}\label{eqZeta}
\psi_k(\smallT,\smallF) \;=\;
\beta_k(\smallT)\+\varphi_k(\smallF).
\end{equation}

Since, as we show below,
$\beta_k$, $\gamma_k$ and $\rho_k$ are concentrated, it follows that $\psi_k$ is also concentrated.
Let $\mu(\smallT,\smallF)$ denote the limiting mean of $\psi_k(\smallT,\smallF)$
as $k$ tends to infinity.

\begin{figure}[ht]
  $$
  \begin{tikzpicture}[scale=0.18,line join=round]
  \draw[gray,very thin] (3,3)--(4,7);
  \draw[gray,very thin] (3,3)--(5,5);
  \draw[red,very thick] (5,5)--(6,6);
  \draw[blue,very thick] (1,2)--(3,3);
  \draw[blue,very thick] (2,1)--(3,3);
  \draw[blue,very thick] (3,3)--(7,4);
  \plotpermnobox[red!30!black]{}{0,0,0,7,5,6}
  \plotpermnobox[blue!30!black]{}{2,1,3,0,0,0,4}
  \plotperm{7}{}
  \draw [thin] (7,4) circle [radius=0.33];
  \end{tikzpicture}
  \quad\;
  \begin{tikzpicture}[scale=0.18,line join=round]
  \draw[gray,very thin] (1,2)--(3,7);
  \draw[gray,very thin] (2,1)--(3,7);
  \draw[gray,very thin] (4,3)--(5,5);
  \draw[red,very thick] (5,5)--(6,6);
  \draw[blue,very thick] (1,2)--(4,3);
  \draw[blue,very thick] (2,1)--(4,3);
  \draw[blue,very thick] (4,3)--(7,4);
  \plotpermnobox[red!30!black]{}{0,0,7,0,5,6}
  \plotpermnobox[blue!30!black]{}{2,1,0,3,0,0,4}
  \plotperm{7}{}
  \draw [thin] (7,4) circle [radius=0.33];
  \end{tikzpicture}
  \quad\;
  \begin{tikzpicture}[scale=0.18,line join=round]
  \path [fill=gray!25] (0.5,0.5) rectangle (7.5,7.5);
  \draw[gray,very thin] (4,5)--(1,2)--(3,7);
  \draw[gray,very thin] (2,1)--(3,7);
  \draw[gray!50!black, thick] (4,5)--(2,1);
  \draw[gray!50!black, thick] (5,3)--(6,6);
  \draw[red!50!black,ultra thick] (4,5)--(6,6);
  \draw[blue,very thick] (1,2)--(5,3);
  \draw[blue!50!black,ultra thick] (2,1)--(5,3);
  \draw[blue,very thick] (5,3)--(7,4);
  \plotpermnobox[red!30!black]{}{0,0,7,5,0,6}
  \plotpermnobox[blue!30!black]{}{2,1,0,0,3,0,4}
  \plotperm{7}{}
  \draw [thin] (7,4) circle [radius=0.33];
  \end{tikzpicture}
  \quad\;
  \begin{tikzpicture}[scale=0.18,line join=round]
  \draw[gray,very thin] (1,2)--(3,7);
  \draw[gray,very thin] (1,2)--(4,5);
  \draw[gray,very thin] (2,1)--(3,7);
  \draw[gray,very thin] (2,1)--(4,5);
  \draw[red,very thick] (4,5)--(5,6);
  \draw[blue,very thick] (1,2)--(6,3);
  \draw[blue,very thick] (2,1)--(6,3);
  \draw[blue,very thick] (6,3)--(7,4);
  \plotpermnobox[red!30!black]{}{0,0,7,5,6}
  \plotpermnobox[blue!30!black]{}{2,1,0,0,0,3,4}
  \plotperm{7}{}
  \draw [thin] (7,4) circle [radius=0.33];
  \end{tikzpicture}
  \quad\;
  \begin{tikzpicture}[scale=0.18,line join=round]
  \draw[gray,very thin] (1,2)--(2,7);
  \draw[gray,very thin] (4,3)--(5,5);
  \draw[red,very thick] (5,5)--(6,6);
  \draw[blue,very thick] (1,2)--(4,3);
  \draw[blue,very thick] (3,1)--(4,3);
  \draw[blue,very thick] (4,3)--(7,4);
  \plotpermnobox[red!30!black]{}{0,7,0,0,5,6}
  \plotpermnobox[blue!30!black]{}{2,0,1,3,0,0,4}
  \plotperm{7}{}
  \draw [thin] (7,4) circle [radius=0.33];
  \end{tikzpicture}
  \quad\;
  \begin{tikzpicture}[scale=0.18,line join=round]
  \path [fill=gray!25] (0.5,0.5) rectangle (7.5,7.5);
  \draw[gray,very thin] (4,5)--(1,2)--(2,7);
  \draw[gray!50!black, thick] (4,5)--(3,1);
  \draw[gray!50!black, thick] (5,3)--(6,6);
  \draw[red!50!black,ultra thick] (4,5)--(6,6);
  \draw[blue,very thick] (1,2)--(5,3);
  \draw[blue!50!black,ultra thick] (3,1)--(5,3);
  \draw[blue,very thick] (5,3)--(7,4);
  \plotpermnobox[red!30!black]{}{0,7,0,5,0,6}
  \plotpermnobox[blue!30!black]{}{2,0,1,0,3,0,4}
  \plotperm{7}{}
  \draw [thin] (7,4) circle [radius=0.33];
  \end{tikzpicture}
  \quad\;
  \begin{tikzpicture}[scale=0.18,line join=round]
  \draw[gray,very thin] (1,2)--(2,7);
  \draw[gray,very thin] (1,2)--(4,5);
  \draw[gray,very thin] (3,1)--(4,5);
  \draw[red,very thick] (4,5)--(5,6);
  \draw[blue,very thick] (1,2)--(6,3);
  \draw[blue,very thick] (3,1)--(6,3);
  \draw[blue,very thick] (6,3)--(7,4);
  \plotpermnobox[red!30!black]{}{0,7,0,5,6}
  \plotpermnobox[blue!30!black]{}{2,0,1,0,0,3,4}
  \plotperm{7}{}
  \draw [thin] (7,4) circle [radius=0.33];
  \end{tikzpicture}
  $$
  $$
  \begin{tikzpicture}[scale=0.18,line join=round]
  \path [fill=gray!25] (0.5,0.5) rectangle (7.5,7.5);
  \draw[gray,very thin] (1,2)--(2,7);
  \draw[gray!50!black, thick] (3,5)--(1,2);
  \draw[gray!50!black, thick] (5,3)--(6,6);
  \draw[red!50!black,ultra thick] (3,5)--(6,6);
  \draw[blue,very thick] (1,2)--(5,3);
  \draw[blue!50!black,ultra thick] (4,1)--(5,3);
  \draw[blue,very thick] (5,3)--(7,4);
  \plotpermnobox[red!30!black]{}{0,7,5,0,0,6}
  \plotpermnobox[blue!30!black]{}{2,0,0,1,3,0,4}
  \plotperm{7}{}
  \draw [thin] (7,4) circle [radius=0.33];
  \end{tikzpicture}
  \quad\;
  \begin{tikzpicture}[scale=0.18,line join=round]
  \draw[gray,very thin] (1,2)--(2,7);
  \draw[gray,very thin] (1,2)--(3,5);
  \draw[gray,very thin] (4,1)--(5,6);
  \draw[red,very thick] (3,5)--(5,6);
  \draw[blue,very thick] (1,2)--(6,3);
  \draw[blue,very thick] (4,1)--(6,3);
  \draw[blue,very thick] (6,3)--(7,4);
  \plotpermnobox[red!30!black]{}{0,7,5,0,6}
  \plotpermnobox[blue!30!black]{}{2,0,0,1,0,3,4}
  \plotperm{7}{}
  \draw [thin] (7,4) circle [radius=0.33];
  \end{tikzpicture}
  \quad\;
  \begin{tikzpicture}[scale=0.18,line join=round]
  \draw[gray,very thin] (1,2)--(2,7);
  \draw[gray,very thin] (1,2)--(3,5);
  \draw[red,very thick] (3,5)--(4,6);
  \draw[blue,very thick] (1,2)--(6,3);
  \draw[blue,very thick] (5,1)--(6,3);
  \draw[blue,very thick] (6,3)--(7,4);
  \plotpermnobox[red!30!black]{}{0,7,5,6}
  \plotpermnobox[blue!30!black]{}{2,0,0,0,1,3,4}
  \plotperm{7}{}
  \draw [thin] (7,4) circle [radius=0.33];
  \end{tikzpicture}
  \quad\;
  \begin{tikzpicture}[scale=0.18,line join=round]
  \draw[gray,very thin] (4,3)--(5,5);
  \draw[red,very thick] (5,5)--(6,6);
  \draw[blue,very thick] (2,2)--(4,3);
  \draw[blue,very thick] (3,1)--(4,3);
  \draw[blue,very thick] (4,3)--(7,4);
  \plotpermnobox[red!30!black]{}{7,0,0,0,5,6}
  \plotpermnobox[blue!30!black]{}{0,2,1,3,0,0,4}
  \plotperm{7}{}
  \draw [thin] (7,4) circle [radius=0.33];
  \end{tikzpicture}
  \quad\;
  \begin{tikzpicture}[scale=0.18,line join=round]
  \path [fill=gray!25] (0.5,0.5) rectangle (7.5,7.5);
  \draw[gray,very thin] (4,5)--(2,2);
  \draw[gray!50!black, thick] (4,5)--(3,1);
  \draw[gray!50!black, thick] (5,3)--(6,6);
  \draw[red!50!black,ultra thick] (4,5)--(6,6);
  \draw[blue,very thick] (2,2)--(5,3);
  \draw[blue!50!black,ultra thick] (3,1)--(5,3);
  \draw[blue,very thick] (5,3)--(7,4);
  \plotpermnobox[red!30!black]{}{7,0,0,5,0,6}
  \plotpermnobox[blue!30!black]{}{0,2,1,0,3,0,4}
  \plotperm{7}{}
  \draw [thin] (7,4) circle [radius=0.33];
  \end{tikzpicture}
  \quad\;
  \begin{tikzpicture}[scale=0.18,line join=round]
  \draw[gray,very thin] (2,2)--(4,5);
  \draw[gray,very thin] (3,1)--(4,5);
  \draw[red,very thick] (4,5)--(5,6);
  \draw[blue,very thick] (2,2)--(6,3);
  \draw[blue,very thick] (3,1)--(6,3);
  \draw[blue,very thick] (6,3)--(7,4);
  \plotpermnobox[red!30!black]{}{7,0,0,5,6}
  \plotpermnobox[blue!30!black]{}{0,2,1,0,0,3,4}
  \plotperm{7}{}
  \draw [thin] (7,4) circle [radius=0.33];
  \end{tikzpicture}
  $$
  $$
  \begin{tikzpicture}[scale=0.18,line join=round]
  \path [fill=gray!25] (0.5,0.5) rectangle (7.5,7.5);
  \draw[gray!50!black, thick] (2,2)--(3,5);
  \draw[gray!50!black, thick] (5,3)--(6,6);
  \draw[red!50!black,ultra thick] (3,5)--(6,6);
  \draw[blue!50!black,ultra thick] (2,2)--(5,3);
  \draw[blue,very thick] (4,1)--(5,3);
  \draw[blue,very thick] (5,3)--(7,4);
  \plotpermnobox[red!30!black]{}{7,0,5,0,0,6}
  \plotpermnobox[blue!30!black]{}{0,2,0,1,3,0,4}
  \plotperm{7}{}
  \draw [thin] (7,4) circle [radius=0.33];
  \end{tikzpicture}
  \quad\;
  \begin{tikzpicture}[scale=0.18,line join=round]
  \draw[gray,very thin] (2,2)--(3,5);
  \draw[gray,very thin] (4,1)--(5,6);
  \draw[red,very thick] (3,5)--(5,6);
  \draw[blue,very thick] (2,2)--(6,3);
  \draw[blue,very thick] (4,1)--(6,3);
  \draw[blue,very thick] (6,3)--(7,4);
  \plotpermnobox[red!30!black]{}{7,0,5,0,6}
  \plotpermnobox[blue!30!black]{}{0,2,0,1,0,3,4}
  \plotperm{7}{}
  \draw [thin] (7,4) circle [radius=0.33];
  \end{tikzpicture}
  \quad\;
  \begin{tikzpicture}[scale=0.18,line join=round]
  \draw[gray,very thin] (2,2)--(3,5);
  \draw[red,very thick] (3,5)--(4,6);
  \draw[blue,very thick] (2,2)--(6,3);
  \draw[blue,very thick] (5,1)--(6,3);
  \draw[blue,very thick] (6,3)--(7,4);
  \plotpermnobox[red!30!black]{}{7,0,5,6}
  \plotpermnobox[blue!30!black]{}{0,2,0,0,1,3,4}
  \plotperm{7}{}
  \draw [thin] (7,4) circle [radius=0.33];
  \end{tikzpicture}
  \quad\;
  \begin{tikzpicture}[scale=0.18,line join=round]
  \draw[gray,very thin] (5,3)--(6,6);
  \draw[red,very thick] (2,5)--(6,6);
  \draw[blue,very thick] (3,2)--(5,3);
  \draw[blue,very thick] (4,1)--(5,3);
  \draw[blue,very thick] (5,3)--(7,4);
  \plotpermnobox[red!30!black]{}{7,5,0,0,0,6}
  \plotpermnobox[blue!30!black]{}{0,0,2,1,3,0,4}
  \plotperm{7}{}
  \draw [thin] (7,4) circle [radius=0.33];
  \end{tikzpicture}
  \quad\;
  \begin{tikzpicture}[scale=0.18,line join=round]
  \draw[gray,very thin] (3,2)--(5,6);
  \draw[gray,very thin] (4,1)--(5,6);
  \draw[red,very thick] (2,5)--(5,6);
  \draw[blue,very thick] (3,2)--(6,3);
  \draw[blue,very thick] (4,1)--(6,3);
  \draw[blue,very thick] (6,3)--(7,4);
  \plotpermnobox[red!30!black]{}{7,5,0,0,6}
  \plotpermnobox[blue!30!black]{}{0,0,2,1,0,3,4}
  \plotperm{7}{}
  \draw [thin] (7,4) circle [radius=0.33];
  \end{tikzpicture}
  \quad\;
  \begin{tikzpicture}[scale=0.18,line join=round]
  \draw[gray,very thin] (3,2)--(4,6);
  \draw[red,very thick] (2,5)--(4,6);
  \draw[blue,very thick] (3,2)--(6,3);
  \draw[blue,very thick] (5,1)--(6,3);
  \draw[blue,very thick] (6,3)--(7,4);
  \plotpermnobox[red!30!black]{}{7,5,0,6}
  \plotpermnobox[blue!30!black]{}{0,0,2,0,1,3,4}
  \plotperm{7}{}
  \draw [thin] (7,4) circle [radius=0.33];
  \end{tikzpicture}
  \quad\;
  \begin{tikzpicture}[scale=0.18,line join=round]
  \draw[red,very thick] (2,5)--(3,6);
  \draw[blue,very thick] (4,2)--(6,3);
  \draw[blue,very thick] (5,1)--(6,3);
  \draw[blue,very thick] (6,3)--(7,4);
  \plotpermnobox[red!30!black]{}{7,5,6}
  \plotpermnobox[blue!30!black]{}{0,0,0,2,1,3,4}
  \plotperm{7}{}
  \draw [thin] (7,4) circle [radius=0.33];
  \end{tikzpicture}
  $$
  \caption{
  $Q(\mathbf{2134},\mathbf{312})=15$; the five shaded interleavings contain a $\pdiamond$
  }
  \label{figTauPiInterleaving}
\end{figure}
Finally, given a blue subtree~$\smallT$ and a red fringe $\smallF$, let
$Q(\smallT,\smallF)$ denote the number of distinct ways of interleaving the non-root vertices of $\smallT$ and the vertices of $\smallF$ without creating a $\pdiamond$. %, i.e.~avoiding the configurations illustrated in Figure~\ref{fig1324Causes}.
See Figure~\ref{figTauPiInterleaving} for an example.

% *** *** *** *** ***

With all the relevant parameters defined, we are now in a position to present a lower bound on the value of $g(\lambda,\delta)$.
% *** See concentration.nb ***
\begin{prop}\label{propLB}
Let $\SSS$ be any finite set of pairs $(\smallT,\smallF)$ composed of a plane tree $\smallT$ and a plane forest $\smallF$. Then
$$
g(\lambda,\delta)
\:\;\geqslant\;\:
E(\lambda,\delta)
^{\nfrac{1}{(1+\lambda)}}
\,\times\!
\prod_{(\subT,\subF)\,\in\,\SSS\,}\!\!\!
Q(\smallT,\smallF)^{2\delta\lambda\mu(\subT,\subF)
/(1+\lambda)
},
$$
where $E(\lambda,\delta)$ is as defined in equation~\eqref{eqEDef} on page~\pageref{eqEDef}.
\end{prop}
\begin{proof}
Consider
a red tree and a blue tree together with a pre-interleaving of their red vertices and blue roots.
By Propositions~\ref{propConcentration} and~\ref{propConcentration2},
for any $\varepsilon>0$, if $k$ is large enough, then
with probability exceeding $1-\varepsilon$,
it is the case that
$\big|\psi_k(\smallT,\smallF) - \mu(\smallT,\smallF)\big| \leqslant \varepsilon$
for every $(\smallT,\smallF)\in \SSS$.

So the proportion of pre-interleaved pairs of trees
with at least
$\ceil{\delta\+\lambda\+k}\!(\mu(\smallT,\smallF)-\varepsilon)$
occurrences of
$(\smallT,\smallF)$-subtrees
for every $(\smallT,\smallF)\in \SSS$
exceeds $1-\varepsilon$.

Elements of $\WWW_{\lambda,\delta}(k)$
are constructed by independently choosing
trees and interleavings.
Thus, the size of $\WWW_{\lambda,\delta}(k)$ is bounded below by
$$
\big|\WWW_{\lambda,\delta}(k)\big|
\:\;\geqslant\;\:
\big| \WWW^0_{\lambda,\delta}(k) \big| \,\times\,
\Bigg(\!
\prod_{(\subT,\subF)\,\in\,\SSS\,}\!\!\!
(1-\varepsilon)\+Q(\smallT,\smallF)^{\ceil{\delta\lambda k}(\mu(\subT,\subF)-\varepsilon)}
\Bigg) ^{\!2k}
.$$

Recall that
$$
g(\lambda,\delta)
\;=\;
\liminfty[k]\big|\WWW_{\lambda,\delta}(k)\big|^{\nfrac{1}{n(k,\lambda)}},
$$
where
$n(k,\lambda) = k\+\big(k+\ceil{\lambda\+k}+1\big)$ is the length of each permutation in $\WWW_{\lambda,\delta}(k)$.
The desired result follows after expanding and taking the limit, making use of~\eqref{eqW0GR}.
\end{proof}

To determine the asymptotic mean and variance of our parameters, we utilise bivariate generating functions.
The following standard result enables us to obtain the required moments directly
as long as we can extract coefficients.
We use $[z^n]\+f(z)$ to denote the coefficient of $z^n$ in the series expansion of $f(z)$; we also use $f_x$ for $\frac{\partial f}{\partial x}$ and $f_{xx}$ for $\frac{\partial^2 f}{\partial x^2}$.
\begin{prop}[{\cite[Proposition III.2]{FS2009}}]\label{propMoments}
Suppose $A(z,x)$ is the bivariate generating function for some combinatorial class, in which $z$ marks size and $x$ marks the value of a parameter $\xi$.
Then the mean
and
variance
of $\xi$ for elements of size $n$ are given by
%$$
%\mathbb{E}_n[\xi] \;=\; \frac{\alpha'_n}{\alpha_n}
%\qquad
%\text{and}
%\qquad
%\mathbb{V}_n[\xi]
%\;=\;
%\frac{\alpha''_n+\alpha'_n}{\alpha_n} \:-\: {\mathbb{E}_n(\xi)}^2
%$$
$$
\mathbb{E}_n[\xi] \;=\; \frac{[z^n]\+ A_x(z,1)}{[z^n]\+ A(z,1)}
\qquad
\text{and}
\qquad
\mathbb{V}_n[\xi]
\;=\;
\frac{[z^n]\+ A_{xx}(z,1) %\:+\: [z^n]\+ A_x(z,1)
}{[z^n]\+ A(z,1)}
\:+\: \mathbb{E}_n(\xi)
\:-\: {\mathbb{E}_n(\xi)}^2
$$
respectively. %,
%where $\alpha_n$, $\alpha'_n$ and $\alpha''_n$ are the following coefficients:
%$$
%\alpha_n \;=\; [z^n]\+ A(z,1),
%\qquad
%\alpha'_n \;=\; [z^n]\+ A_x(z,1),
%\qquad
%\alpha''_n \;=\; [z^n]\+ A_{xx}(z,1).
%$$
\end{prop}

The proofs of our first three concentration results each follow a similar pattern:
establish the generating function; extract the coefficients; apply Proposition~\ref{propMoments}; take limits
using Stirling's approximation; finally apply Proposition~\ref{propConcentration}.

First, we consider blue subtrees.
Recall that the random variable $\beta_k(\smallT)$ records
the proportion of principal subtrees in a
$\ceil{\lambda\+k}$-vertex plane tree with root degree $\ceil{\delta\+\lambda\+k}$
that are isomorphic to~$\smallT$.
% ---------------
% *** psi3.nb ***
% ---------------
\begin{prop}\label{propPsi}
Let $i=|\smallT|$.
$\beta_k(\smallT)$ is concentrated at
$$
\mu_\beta(\smallT) \;=\; \frac{(1-\delta)^{i-1}}{(2-\delta)^{2i-1}}.
$$
\end{prop}
\begin{proof}
Let
$
T(z) = \thalf \+ \big(1-\sqrt{1-4\+z}\big)
$
be the generating function for plane trees.
Then
the bivariate generating function for plane trees with root degree $d$, in which $z$ marks vertices and $u$ marks principal subtrees isomorphic to $\smallT$, is given by
$$
B(z,u) \;=\; z \left(T(z)+(u-1)\+z^i\right)^{\!d}.
$$
Extracting coefficients yields
$$
\begin{array}{lcl}
[z^\ell]\+ B(z,1) & = & \frac{d}{\ell-1} \+ \binom{2\+\ell - d - 3}{\ell-2}, \\[6pt]
[z^\ell]\+ B_u(z,1)
& = & \frac{d\+(d-1)}{\ell-i-1}\+\binom{2\+\ell-2\+i - d - 2}{\ell-i-2}, \\[6pt]
[z^\ell]\+ B_{uu}(z,1)
& = & \frac{d\+(d-1)\+(d-2)}{\ell-2\+i-1}\+\binom{2\+\ell-4\+i - d - 1}{\ell-2\+i-2}.
\end{array}
$$
Hence, with $\ell=\ceil{\lambda\+k}$ and $d=\ceil{\delta\+\lambda\+k}$, applying Proposition~\ref{propMoments}
and taking limits gives
$$\liminfty[k]\mathbb{E}[\beta_k(\smallT)] \;=\; \frac{(1-\delta)^{i-1}}{(2-\delta)^{2i-1}}
\qquad\text{and}\qquad
\liminfty[k] k \+ \mathbb{V}[\beta_k(\smallT)] \;=\; \upsilon_\beta(\smallT),$$
where $\upsilon_\beta(\smallT)$ is some rational function in $\delta$.
So,
$\beta_k(\smallT)$ satisfies the conditions for Proposition~\ref{propConcentration} and is thus concentrated at $\mu_\beta(\smallT)$ as required.
\end{proof}
Secondly, we consider gap size.
Recall that,
given a pre-interleaving of the non-root vertices of a $k$-vertex red tree and
$\ceil{\delta\+\lambda\+k}$ blue roots,
the random variable
$\gamma_k(j)$ records the
proportion of blue roots that have
gap size~$j$. Similarly, $\gamma_k(>\! j)$ records the proportion that have gap size exceeding $j$.
% ------------------
% *** phi2[a].nb ***
% ------------------
\begin{prop}\label{propPhi}
$\gamma_k(j)$ is concentrated at
$$\mu_\gamma(j) \;=\;
\frac{\delta\+\lambda}{(1+\delta\+\lambda)^{j+1}}
.
$$
Also,
$\gamma_k(>\! j)$ is concentrated at
$$\mu_\gamma(>\!j) \;=\;
\frac{1}{(1+\delta\+\lambda)^{j+1}}
.
$$
\end{prop}
\begin{proof}
The bivariate generating function for pre-interleavings containing $d$ blue roots, in which $z$ marks red vertices and $v$ marks gaps of size $j$, is given by
$$
G(z,v) \;=\; \tfrac{z}{1-z} \left(\tfrac{1}{1-z}+(v-1)\+z^j\right)^{\!d}.
$$
Extracting coefficients yields
$$
\begin{array}{lcl}
[z^k]\+ G(z,1) & = & \binom{k+d-1}{d}, \\[6pt]
[z^k]\+ G_v(z,1)
& = & d\+\binom{k-j+d-2}{d-1}, \\[6pt]
[z^k]\+ G_{vv}(z,1)
& = & d\+(d-1)\+\binom{k-2\+j+d-3}{d-2}.
\end{array}
$$
Hence, with $d=\ceil{\delta\+\lambda\+k}$, applying Proposition~\ref{propMoments}
and taking limits gives
$$\liminfty[k]\mathbb{E}[\gamma_k(j)] \;=\; \frac{\delta\+\lambda}{(1+\delta\+\lambda)^{j+1}}
\qquad\text{and}\qquad
\liminfty[k] k \+ \mathbb{V}[\gamma_k(j)] \;=\; \upsilon_\gamma(j),$$
where $\upsilon_\gamma(j)$ is some rational function in $\delta$ and $\lambda$.
So,
$\gamma_k(j)$ satisfies the conditions for Proposition~\ref{propConcentration} and is thus concentrated at $\mu_\gamma(j)$ as required.

Also, since
$$
\liminfty[k]\mathbb{E}[\gamma_k(>\!j)] \;=\; 1\:-\:\sum\limits_{i=0}^j\mu_\gamma(i) \;=\; \frac{1}{(1+\delta\+\lambda)^{j+1}}
,
$$ $\gamma_k(>\!j)$ is concentrated at $\mu_\gamma(>\!j)$ as required.
\end{proof}
Thirdly, we consider red forests.
Recall that
the random variable
$\rho_k(\smallF)$ records the
proportion of positions in a $k$-vertex red tree whose red forest is isomorphic to $\smallF$.
% --------------------------------------
% *** omegaTreePatt1.nb; notes p.145 ***
% --------------------------------------
\begin{prop}\label{propOmega}
Let $m=|\smallF|$.
$\rho_k(\smallF)$ is concentrated at
$$
\mu_\rho(\smallF)
\;=\;
\frac{1}{2^{2m+1}}.
$$
\end{prop}
\begin{proof}
If $\smallF$ has $h$ components, then an occurrence of $\smallF$ in a red tree
comprises the leftmost $h$ subtrees of some vertex $x$ that has at least one additional child vertex to the right.
See Figure~\ref{figRedFringes} for an illustration.
Hence, if $\RRR$ is the class of red trees augmented by marking occurrences of $\smallF$ with~$w$, then $\RRR$ satisfies the structural equation
$$
\RRR \;=\; z \left( \seq{\RRR} \:+\: (w-1) \+ z^m \+ \seqplus{\RRR} \right) .
$$
So the corresponding bivariate generating function, $R(z,w)$, satisfies the functional equation
$$
R(z,w) \;=\; \frac{z \left( 1 \:+\: (w-1) \+ z^m \+ R(z,w) \right)}{1-R(z,w)} ,
$$
and hence
$$
R(z,w) \;=\; \thalf \+ \Big( 1 + (1-w)\+z^{m+1} - \sqrt{\big( 1 + (1-w)\+z^{m+1} \big)^2 - 4\+z} \,\Big) .
$$
Extracting coefficients then yields
$$
\begin{array}{lcl}
[z^k]\+ R(z,1) & = & \frac{1}{k} \binom{2k-2}{k-1}, \\[6pt]
[z^k]\+ R_w(z,1)
& = & \binom{2k-2m-3}{k-m-1}, \\[6pt]
[z^k]\+ R_{ww}(z,1)
& = & (k-2m-2)\+ \binom{2k-4m-4}{k-2m-2}.
\end{array}
$$
Hence, applying Proposition~\ref{propMoments}
and taking limits gives
$$\liminfty[k]\mathbb{E}[\rho_k(\smallF)] \;=\; \frac{1}{2^{2\+m+1}}
\qquad\text{and}\qquad
\liminfty[k] k \+ \mathbb{V}[\rho_k(\smallF)] \;=\; \upsilon_\rho(\smallF),$$
where $\upsilon_\rho(\smallF)$ depends only on $|\smallF|$.
So,
$\rho_k(\smallF)$ satisfies the conditions for Proposition~\ref{propConcentration} and is thus concentrated at $\mu_\rho(\smallF)$ as required.
\end{proof}

\begin{figure}[ht]
  $$
  \begin{tikzpicture}[scale=0.24,line join=round]
    \draw [thin,fill=gray!30!white] (9.6,12.4) to[out=45,in=166](10,12.5657) to[out=346,in=121](12.4,10.4) to[out=301,in=45](12.4,9.6)--(11.4,8.6) to[out=225,in=301](10.6,8.6) to[out=121,in=270](9.4343,12) to[out=90,in=225](9.6,12.4);
    \draw [thin,fill=gray!30!white] (7.6,13.4) to[out=45,in=180](8,13.5657) to[out=0,in=135](10.4,12.4) to[out=315,in=45](10.4,11.6)--(9.4,10.6) to[out=225,in=315](8.6,10.6) to[out=135,in=270](7.4343,13) to[out=90,in=225](7.6,13.4);
    \draw [thin] (9.6,12.4) to[out=45,in=166](10,12.5657) to[out=346,in=121](12.4,10.4) to[out=301,in=45](12.4,9.6)--(11.4,8.6) to[out=225,in=301](10.6,8.6) to[out=121,in=270](9.4343,12) to[out=90,in=225](9.6,12.4);
    \draw [thin,fill=gray!30!white] (1.6,18.4) to[out=45,in=180](2,18.5657) to[out=0,in=135](4.4,17.4) to[out=315,in=45](4.4,16.6)--(3.4,15.6) to[out=225,in=315](2.6,15.6) to[out=135,in=270](1.4343,18) to[out=90,in=225](1.6,18.4);
    \draw [ultra thick] (3,16)--(4,17);
    \draw [ultra thick] (9,11)--(10,12);
    \draw [ultra thick] (11,9)--(12,10);
    \draw [red,very thick] (2,18)--(1,15)--(3,16);
    \draw [red,very thick] (7,14)--(6,8)--(8,13);
    \draw [red,very thick] (9,11)--(6,8)--(5,1)--(11,9);
    \draw [red,very thick] (15,7)--(14,6)--(13,3)--(16,5);
    \draw [red,very thick] (17,4)--(13,3)--(5,1)--(18,2);
    \plotpermnobox[red!30!black]{18}{15,0,0,0,1,8,14,0,0,0,0,0,3,6,7,5,4,2}
    \plotpermnobox{18}{0,18,16,17,0,0,0,13,11,12,9,10}
  \end{tikzpicture}
  \qquad\qquad
  \raisebox{36pt}{
  \begin{tikzpicture}[scale=0.33,line join=round]
    \draw[help lines] (0,0) grid (18,6);
    \draw[red,thick] (0,0)--(4,4)--(5,4)--(6,2);
    \draw[red,thick] (11,5)--(12,6)--(13,3)--(14,1);
    \draw[red,thick] (17,3)--(18,2);
    \draw[ultra thick] (6,2)--(7,3)--(8,3)--(9,4)--(10,4)--(11,5);
    \draw[ultra thick] (14,1)--(15,2)--(16,2)--(17,3);
  \end{tikzpicture}
  }
  $$
  \caption{A partial red tree and the corresponding {\L}uka\-sie\-wicz path; % (reproduced from Figure~\ref{figLukaPath});
  the three marked red fringes correspond to the occurrences of the pattern $\mathbf{1,0,1}$}
  \label{figRedTreePatterns}
\end{figure}
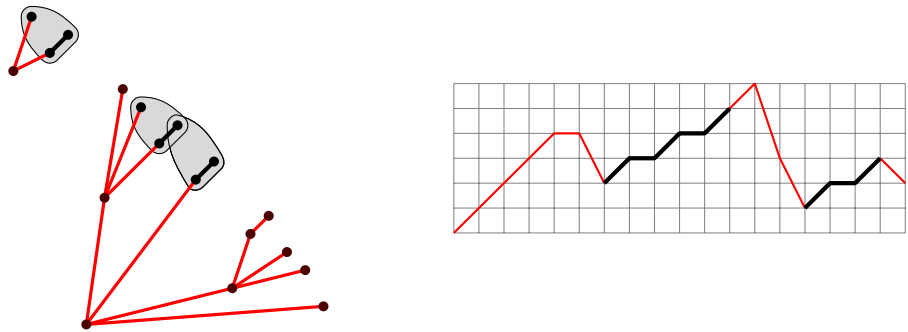
Our fourth and final concentration result concerns red fringes.
Recall that the random variable
$\rho_k(\smallF^+)$ records the
proportion of positions in a red tree whose red forest has at least $|\smallF|$ vertices,
and for which the graph induced by the rightmost $|\smallF|$ vertices of the forest is isomorphic to $\smallF$.

We would like to determine the bivariate generating function for red trees in which occurrences of the red fringe $\smallF$ are marked.
This is considerably less straightforward than was the case for the other parameters.
Primarily, this is because distinct occurrences of $\smallF$ may overlap. See the left of Figure~\ref{figRedTreePatterns} for an illustration.
To achieve our goal, it is convenient to rephrase our problem in terms of {\L}uka\-sie\-wicz paths.

Recall from Section~\ref{sectIntro} that a {\L}uka\-sie\-wicz path of length $n$ is a
sequence of integers $y_0,\ldots,y_n$ such that $y_0=0$,
$y_i\geqslant1$ for $i\geqslant1$,
and each step $s_i=y_i-y_{i-1}\leqslant1$.
It is easy to see that {\L}uka\-sie\-wicz paths
are in bijection with
red trees: visit the vertices of the tree from \emph{right to left} and let the height of the path be equal to the number of components in the forest induced by the vertices visited so far.
Thus, for each leaf vertex, the path contains an up-step, and for each internal vertex with $r$ children, the path contains a $(1\!-\!r)$-step.
See Figure~\ref{figRedTreePatterns} for an illustration.

Recall also that a pattern $\omega$ of length $m$ in a {\L}uka\-sie\-wicz path is a sequence of contiguous steps $\omega_1,\ldots,\omega_m$ in the path such that $\sum_{j=1}^i \omega_j > 0$
for $1\leqslant i\leqslant m$.
%Note that we restrict our definition of a pattern to those sequences of steps that can occur \emph{at any point} in a {\L}uka\-sie\-wicz path.
We do not consider sequences of steps for which the height drops to zero or below.
Thus, a pattern in a {\L}uka\-sie\-wicz path corresponds to an occurrence of a red fringe in a red tree.
Again, see Figure~\ref{figRedTreePatterns}, where this is illustrated.

% ================================================================
\section{Patterns in {\L}uka\-sie\-wicz paths}\label{sectLukasPatterns}

The asymptotic distribution of patterns in \emph{words} has been investigated before.
For an exposition, see~\cite[Examples~I.12, III.26 and IX.13]{FS2009}.
The approach taken there makes use of the \emph{correlation polynomial} of a pattern, introduced by
Guibas \& Odlyzko in~\cite{GO1981} to analyse pattern-matching in strings, and also employs the
\emph{cluster method} of Goulden \& Jackson~\cite{GJ2004}.
We refine this approach for use with patterns in
{\L}uka\-sie\-wicz paths by utilising a generalisation of the correlation polynomial
and combining it with an application of the kernel method.

It is readily seen that the bivariate generating function, $L(z,y)$, for
{\L}uka\-sie\-wicz paths, in which $z$ marks length and $y$ marks height, satisfies the functional equation
\begin{equation}\label{eqLukaBGFHeightFuncEq}
  L(z,y) \;=\; z\+y \:+\: \frac{z\+y}{1-y}\big(L(z,1)-y\+L(z,y)\big) .
\end{equation}

Given a pattern $\omega=\omega_1,\ldots,\omega_m$,
let us use $h_i(\omega)=\sum_{j=1}^i \omega_j$ to denote the height after the $i$th step of~$\omega$, and let us call $h_m(\omega)$
the \emph{final height} of $\omega$.

The correlation polynomial of Guibas \& Odlyzko is univariate.
For our purposes, we define the \emph{bivariate} \emph{autocorrelation polynomial}, $\widehat{a}_\omega(z,y)$, for a pattern $\omega=\omega_1,\ldots,\omega_m$ in a {\L}uka\-sie\-wicz path as follows:
$$
\widehat{a}_\omega(z,y) \;=\; \sum_{i=1}^{m-1} c_i\+z^i\+y^{h_i(\omega)} ,
$$
where
$$
c_i  \;=\;
\begin{cases}
1, &\text{if $\omega_{i+1},\ldots,\omega_m = \omega_1,\ldots,\omega_{m-i}$;}\\
0, &\text{otherwise.}
\end{cases}
$$
Thus, $c_i$ records whether $\omega$ matches itself when shifted (left or right) by $i$, the variable $z$ marks the shift, and $y$ marks the height.
For example,
$\widehat{a}_\mathbf{1,1,0,1,1}(z,y) = z^3\+y^2+z^4\+y^3$.

Given a fixed pattern $\omega$ of length $m$ and final height $h$, we want to determine the trivariate generating function, $L_\omega(z,y,u)$, for {\L}uka\-sie\-wicz paths, where $u$ marks the number of occurrences of the pattern~$\omega$ in a path.
In order to achieve this, we first consider the class of {\L}uka\-sie\-wicz paths augmented by distinguishing an arbitrary selection of occurrences of $\omega$. Let $M_\omega(z,y,v)$ be the corresponding generating function, in which $v$ marks distinguished occurrences of the pattern in a path. By the standard inclusion-exclusion principle (see~\cite[p.208]{FS2009}), we know that
\begin{equation}\label{eqLMInclExcl}
  L_\omega(z,y,u) \;=\; M_\omega(z,y,u-1).
\end{equation}
In order to construct a functional equation for $M_\omega$, we consider subpaths each consisting of a maximal collection of overlapping distinguished occurrences of $\omega$. These collections are called \emph{clusters}.
It is readily seen that the generating function for clusters is
\begin{equation}\label{eqClusterGF}
  C_\omega(z,y,v) \;=\; \frac{z^m\+y^h\+v}{1-v\+\widehat{a}_\omega(z,y)},
\end{equation}
where $v$ is used to mark distinguished occurrences of $\omega$ in a cluster.

Furthermore, we have
\begin{equation}\label{eqMFuncEq}
  M_\omega(z,y,v) \;=\;
  z\+y
  \:+\: \frac{z\+y}{1-y}\big(M_\omega(z,1,v)-y\+M_\omega(z,y,v)\big)
  \:+\: M_\omega(z,y,v) \+ C_\omega(z,y,v),
\end{equation}
since a path grows either by adding an arbitrary step, as in~\eqref{eqLukaBGFHeightFuncEq}, or else by adding a
cluster.\footnote{This equation excludes distinguished occurrences of $\omega$ that begin with the first step of the path; this simplifies the algebra somewhat while having no effect on the asymptotics.}

Combining equations~\eqref{eqLMInclExcl}, \eqref{eqClusterGF} and \eqref{eqMFuncEq} and rearranging
gives us the following functional equation for $L_\omega(z,y,u)$:
$$
L_\omega(z,y,u) \;=\;
\frac
{z\+y\+\big(1 + (1-u)\+\widehat{a}_\omega(z,y)\big)\+\big(1 - y + L_\omega(z,1,u)\big)}
{z^m\+y^h\+(1-y)\+(1-u)+(1-y+z\+y^2)\+\big(1 + (1-u)\+\widehat{a}_\omega(z,y)\big)} .
$$
This equation is susceptible to the kernel method, so $L_\omega(z,1,u)=y_0(z,u)-1$, where $y_0$ is the appropriate root for $y$ of the denominator.
Rearranging, we obtain the following polynomial functional equation for $L=L(z,u)=L_\omega(z,1,u)$,
the bivariate generating function for {\L}uka\-sie\-wicz paths in which $u$ marks occurrences of $\omega$:
\begin{equation}\label{eqLukaBGFPatFuncEq}
  L \;=\; z\+(1+L)^2 \:-\: (1-u)\+\Big(z^m\+L\+(1+L)^h + \big(L - z\+(1+L)^2\big)\+\widehat{a}_\omega(z,1+L)\Big) .
\end{equation}
The fact that $L$ satisfies this equation enables us to demonstrate that patterns in {\L}uka\-sie\-wicz paths are concentrated, and moreover are
distributed normally in the limit.
The following proposition gives very general conditions for this to be the case for some parameter.
\begin{prop}[{\cite[Proposition~IX.17 with Theorem~IX.12]{FS2009}}; see also {\cite[Theorem~1]{Drmota1997}}]\label{propGaussian}
Let $F(z,u)$ be a bivariate function, analytic at $(0,0)$ and with non-negative Taylor coefficients, and
let $\xi_n$ be the sequence of random variables with probability generating functions
$$
\frac{[z^n]F(z,u)}{[z^n]F(z,1)} .
$$
Assume that $F(z,u)$ is a solution for $y$ of the equation
$$y\;=\;\Phi(z,u,y),$$
where $\Phi$ is a polynomial of degree at least two in $y$,
$\Phi(z,1,y)$ has non-negative Taylor coefficients and is analytic in some domain $|z|<R$ and $|y|<S$,
$\Phi(0,1,0)=0$,
$\Phi_y(0,1,0)\neq1$,
$\Phi_{yy}(z,1,y)\nequiv0$,
and there exist positive $z_0<R$ and $y_0<S$ satisfying the pair of equations
$$
\Phi(z_0,1,y_0)\;=\;y_0, \qquad\quad \Phi_y(z_0,1,y_0)\;=\;0.
$$
Then, as long as its asymptotic variance is non-zero, $\xi_n$
converges in law to a Gaussian distribution with mean and standard deviation asymptotically linear in~$n$.
\end{prop}
All that remains is to check that $L$ satisfies the relevant requirements.
\begin{repthm}{thmLukaPatternsGaussian}
The number of occurrences of a fixed pattern
in a {\L}uka\-sie\-wicz path of length $n$
exhibits
a Gaussian limit distribution
with mean
and standard deviation
asymptotically linear in~$n$.
\end{repthm}
\begin{proof}
From~\eqref{eqLukaBGFPatFuncEq}, it can easily be seen that
$L(z,u)$ satisfies the requirements of Proposition~\ref{propGaussian}, with $\Phi(z,1,y)=z\+(1+y)^2$, $z_0=\frac{1}{4}$ and $y_0=1$.
\end{proof}

% ================================================================
\section{Summing up}\label{sectLowerBound1324}

Since patterns in {\L}uka\-sie\-wicz paths are in bijection with
red fringes in red trees,
$L(z,u)$ is also the bivariate generating function
for red trees in which $u$ marks occurrences of the red fringe $\smallF$ corresponding to the pattern $\omega$, with
$m=|\smallF|$ and $h$ the number of components of $\smallF$.
Thus, we know that $\rho_k(\smallF^+)$ is concentrated.
It remains for us to determine the limiting mean.
% ---------------------------
% *** omegaRecGeneral4[a].nb ***
% ---------------------------
\begin{prop}\label{propOmegaPlus}
Let $m=|\smallF|$ and $h$ be the number of components of
$\smallF$.
$\rho_k(\smallF^+)$ is concentrated at
$$
\mu_\rho(\smallF^+) \;=\; \frac{1}{2^{2m-h}}.
$$
\end{prop}
\begin{proof}
Let $F(z)=\tfrac{1}{2z} \+ \big(1-\sqrt{1-4\+z}\big)$ be the generating function for plane forests.

Solving~\eqref{eqLukaBGFPatFuncEq} with $u=1$ gives $L(z,1)=F(z)-1$ (as expected).

Similarly, differentiating~\eqref{eqLukaBGFPatFuncEq} with respect to $u$, setting $u=1$, and solving the resulting equation gives
$$
L_u(z,1) \;=\; \frac{z^m\+F(z)^h\+\big(1-(1-2\+z)\+F(z)\big)}{1-4\+z}.
$$
Then, extracting coefficients yields
$$
\begin{array}{lcl}
[z^k]\+ L(z,1) & = & \frac{1}{k+1} \binom{2k}{k}, \\[6pt]
[z^k]\+ L_u(z,1) & = & \binom{2k-2m+h}{k-m-1}.
\end{array}
$$
Hence, applying Proposition~\ref{propMoments}
and taking limits,
$$
\liminfty[k]\mathbb{E}[\rho_k(\smallF^+)] \;=\; \frac{1}{2^{2m-h}}
.
$$
Concentration follows from Theorem~\ref{thmLukaPatternsGaussian}.
\end{proof}

We are finally in a position to compute a lower bound for the growth rate of the class of permutations avoiding $\pdiamond$, proving our main theorem.
\begin{repthm}{thm1324LowerBound}
$\gr(\av(\pdiamond)) > 9.81$.
\end{repthm}
\begin{proof}
We calculate the contribution to the growth rate from pairs consisting of a tree and a forest of bounded size.
From Proposition~\ref{propLB}, we know that, for each $N>0$, the growth rate is at least
$$
g_N(\lambda,\delta)
\;=\;
E(\lambda,\delta)^{\nfrac{1}{(1+\lambda)}}
\:\times\:
\prod\limits_{|\subT|+|\subF|\leqslant N}
\!\!
\,
Q(\smallT,\smallF)^{2\+\delta\+\lambda\+\mu(\subT,\subF)/(1+\lambda)},
$$
where
$$
\mu(\smallT,\smallF) \;=\;
\mu_\beta(\smallT)\+
\big(
\mu_\gamma(|\smallF|)\+\mu_\rho(\smallF^+)  \:+\:
\mu_\gamma(>\!|\smallF|)\+\mu_\rho(\smallF)
\big) ,
$$
as follows from~\eqref{eqChi} and~\eqref{eqZeta} and Propositions~\ref{propPsi}, \ref{propPhi}, \ref{propOmega} and~\ref{propOmegaPlus}.

Using \emph{Mathematica}~\cite{Mathematica} to evaluate $Q(\smallT,\smallF)$ and $\mu(\smallT,\smallF)$ and then to apply numerical maximisation over values of $\lambda$ and $\delta$ yields
$$
g_{14}(\lambda,\delta)
\;>\;
9.81056
$$
with $\lambda\approx0.69706$ and $\delta\approx0.75887$.
\end{proof}
The determination of this value requires the processing of more than 1.6 million pairs consisting of a tree and a forest.
Larger values of $N$ would require more sophisticated programming techniques.
However, increasing $N$ is unlikely to lead to a significantly improved lower bound;
although the rate of convergence at $N\!=\!14$ is still quite slow, numerical analysis of the computational data suggests that $\liminfty[N]\max\limits_{\lambda,\delta}g_N(\lambda,\delta)$ is probably not far from 9.82.

We conclude with the observation that in the construction that gives our bound, the mean number of vertices in a blue subtree, $1/\delta$, is less than 1.32.
We noted earlier that
the cigar-shaped boundary regions
of
a typical $\pdiamond$-avoider
contain numerous small subtrees (although it is not immediately obvious how one should identify such a boundary tree).
Is it the case that
the mean size of these
subtrees is asymptotically bounded?
Perhaps, on the contrary, their average size grows unboundedly (but very slowly), and understanding how (and the rate at which) this occurs would lead to an improved lower bound.
In the meantime, the following question might be somewhat easier to answer:
\begin{question}
Asymptotically, what proportion of the points in a typical $\pdiamond$-avoider are left-to-right minima or right-to-left maxima?
\end{question}

% ================================================================
\cleardoublepage

%} %\HIDE

\backmatter

%\HIDE{
\bibliographystyle{plain}
{\footnotesize\bibliography{mybib}}

\begin{thebibliography}{100}

\bibitem{AZ2014}
Martin Aigner and G{\"u}nter~M. Ziegler.
\newblock {\em Proofs from {THE BOOK}}.
\newblock Springer-Verlag, fifth edition, 2014.

\bibitem{AAB2011}
M.~H. Albert, M.~D. Atkinson, and R.~Brignall.
\newblock The enumeration of permutations avoiding 2143 and 4231.
\newblock {\em Pure Math. Appl.}, 22(2):87--98, 2011.

\bibitem{AAB2012}
M.~H. Albert, M.~D. Atkinson, and Robert Brignall.
\newblock The enumeration of three pattern classes using monotone grid classes.
\newblock {\em Electron. J. Combin.}, 19(3):~Paper 20, 34~pp, 2012.

\bibitem{AERWZ2006}
M.~H. Albert, M.~Elder, A.~Rechnitzer, P.~Westcott, and M.~Zabrocki.
\newblock On the {S}tanley-{W}ilf limit of 4231-avoiding permutations and a
  conjecture of {A}rratia.
\newblock {\em Adv. in Appl. Math.}, 36(2):96--105, 2006.

\bibitem{AL2009}
M.~H. Albert and S.~A. Linton.
\newblock Growing at a perfect speed.
\newblock {\em Combin. Probab. Comput.}, 18(3):301--308, 2009.

\bibitem{PermLab}
Michael Albert.
\newblock Perm{L}ab: Software for permutation patterns.
\newblock
  \href{http://www.cs.otago.ac.nz/PermLab}{http:/$\!$/www.cs.otago.ac.nz/PermLab},
  2012.

\bibitem{AAB2010}
Michael Albert, Mike Atkinson, and Robert Brignall.
\newblock Notes on the basis of monotone grid classes.
\newblock Unpublished, 2010.

\bibitem{AAL2010}
Michael Albert, Mike Atkinson, and Steve Linton.
\newblock Permutations generated by stacks and deques.
\newblock {\em Ann. Comb.}, 14(1):3--16, 2010.

\bibitem{ABM2015}
Michael Albert and Mireille Bousquet-M{\'e}lou.
\newblock Permutations sortable by two stacks in parallel and quarter plane
  walks.
\newblock {\em European J. Combin.}, 43:131--164, 2015.

\bibitem{AABRV2011}
Michael~H. Albert, M.~D. Atkinson, Mathilde Bouvel, Nik Ru{\v{s}}kuc, and
  Vincent Vatter.
\newblock Geometric grid classes of permutations.
\newblock {\em Trans. Amer. Math. Soc.}, 365(11):5859--5881, 2013.

\bibitem{AAV2014}
Michael~H. Albert, M.~D. Atkinson, and Vincent Vatter.
\newblock Inflations of geometric grid classes: three case studies.
\newblock {\em Australasian J. Combin.}, 58(1):27--47, 2014.

\bibitem{AB2014}
Michael~H. Albert and Robert Brignall.
\newblock Enumerating indices of {S}chubert varieties defined by inclusions.
\newblock {\em J.~Combin. Theory Ser.~A}, 123(1):154--168, 2014.

\bibitem{ALR2005}
Michael~H. Albert, Steve Linton, and Nik Ru{\v{s}}kuc.
\newblock The insertion encoding of permutations.
\newblock {\em Electron. J. Combin.}, 12:~Research paper 47, 31~pp, 2005.

\bibitem{ARV2012}
Michael~H. Albert, Nik Ru{\v{s}}kuc, and Vincent Vatter.
\newblock Inflations of geometric grid classes of permutations.
\newblock {\em Israel J. Math.}, 205(1):73--108, 2015.

\bibitem{AF2000}
Noga Alon and Ehud Friedgut.
\newblock On the number of permutations avoiding a given pattern.
\newblock {\em J.~Combin. Theory Ser.~A}, 89(1):133--140, 2000.

\bibitem{Apery1979}
Roger Ap{\'e}ry.
\newblock Irrationalit\'e de {$\zeta(2)$} et {$\zeta(3)$}.
\newblock {\em Ast{\'e}risque}, 61:11--13, 1979.

\bibitem{Apery1981}
Roger Ap{\'e}ry.
\newblock Interpolation de fractions continues et irrationalit\'e de certaines
  constantes.
\newblock {\em Bull. Sec. Sci. CTHS}, III:37--53, 1981.

\bibitem{Arratia1999}
Richard Arratia.
\newblock On the {S}tanley-{W}ilf conjecture for the number of permutations
  avoiding a given pattern.
\newblock {\em Electron. J. Combin.}, 6:~Note 1, 4~pp, 1999.

\bibitem{AM2014}
Mahshid Atapour and Neal Madras.
\newblock Large deviations and ratio limit theorems for pattern-avoiding
  permutations.
\newblock {\em Combin. Probab. Comput.}, 23:161--200, 2014.

\bibitem{Atkinson1998}
M.~D. Atkinson.
\newblock Permutations which are the union of an increasing and a decreasing
  subsequence.
\newblock {\em Electron. J. Combin.}, 5:~Research paper 6, 13~pp, 1998.

\bibitem{Atkinson1999}
M.~D. Atkinson.
\newblock Restricted permutations.
\newblock {\em Discrete Math.}, 195:27--38, 1999.

\bibitem{Atkinson2010}
M.~D. Atkinson.
\newblock Permuting machines and permutation patterns.
\newblock In {\em Permutation patterns}, volume 376 of {\em London Math. Soc.
  Lecture Note Ser.}, pages 67--88. Cambridge Univ. Press, Cambridge, 2010.

\bibitem{AMR2002}
M.~D. Atkinson, M.~M. Murphy, and N.~Ru{\v{s}}kuc.
\newblock Partially well-ordered closed sets of permutations.
\newblock {\em Order}, 19(2):101--113, 2002.

\bibitem{AMR2002a}
M.~D. Atkinson, M.~M. Murphy, and N.~Ru{\v{s}}kuc.
\newblock Sorting with two ordered stacks in series.
\newblock {\em Theoret. Comput. Sci.}, 289(1):205--223, 2002.

\bibitem{BG1997}
M.~Baake and U.~Grimm.
\newblock Coordination sequences for root lattices and related graphs.
\newblock {\em Zeitschrift f\"ur Kristallographie}, 212(4):253--256, 1997.

\bibitem{BS2000}
Eric Babson and Einar Steingr{\'{\i}}msson.
\newblock Generalized permutation patterns and a classification of the
  {M}ahonian statistics.
\newblock {\em S\'em. Lothar. Combin.}, 44:~Art. B44b, 18~pp., 2000.

\bibitem{BW2000}
Eric Babson and Julian West.
\newblock The permutations {$123p_4\cdots p_m$} and {$321p_4\cdots p_m$} are
  {W}ilf-equivalent.
\newblock {\em Graphs Combin.}, 16(4):373--380, 2000.

\bibitem{BWX2007}
J{\"o}rgen Backelin, Julian West, and Guoce Xin.
\newblock Wilf-equivalence for singleton classes.
\newblock {\em Adv. in Appl. Math.}, 38(2):133--148, 2007.

\bibitem{BBM2006}
J{\'o}zsef Balogh, B{\'e}la Bollob{\'a}s, and Robert Morris.
\newblock Hereditary properties of ordered graphs.
\newblock In {\em Topics in Discrete Mathematics}, volume~26 of {\em Algorithms
  Combin.}, pages 179--213. Springer, 2006.

\bibitem{BBDFGG2002}
Cyril Banderier, Mireille Bousquet-M{\'e}lou, Alain Denise, Philippe Flajolet,
  Dani{\`e}le Gardy, and Dominique Gouyou-Beauchamps.
\newblock Generating functions for generating trees.
\newblock {\em Discrete Math.}, 246:29--55, 2002.
\newblock Formal power series and algebraic combinatorics (Barcelona, 1999).

\bibitem{BD2013}
Cyril Banderier and Michael Drmota.
\newblock Coefficients of algebraic functions: formulae and asymptotics.
\newblock {\em DMTCS Proceedings}, pages 1065--1076, 2013.
\newblock 25th International Conference on Formal Power Series and Algebraic
  Combinatorics (FPSAC 2013).

\bibitem{BHKS2011}
Ant\^onio~J.O. Bastos, Carlos Hoppen, Yoshiharu Kohayakawa, and Rudini~M.
  Sampaio.
\newblock Every hereditary permutation property is testable.
\newblock {\em Electronic Notes in Discrete Mathematics}, 38:123--128, 2011.

\bibitem{BP2012}
Andrew~M. Baxter and Lara~K. Pudwell.
\newblock Enumeration schemes for vincular patterns.
\newblock {\em Discrete Math.}, 312(10):1699--1712, 2012.

\bibitem{Baxter1964}
Glen Baxter.
\newblock On fixed points of the composite of commuting functions.
\newblock {\em Proc. Amer. Math. Soc.}, 15:851--855, 1964.

\bibitem{Bevan2006}
David Bevan.
\newblock Sets of points determining only acute angles and some related
  colouring problems.
\newblock {\em Electron. J. Combin.}, 13(1):~Research paper 12, 24~pp, 2006.

\bibitem{Bevan2012}
David Bevan.
\newblock Permutation grid classes.
\newblock
  \href{http://demonstrations.wolfram.com/PermutationGridClasses}{http:/$\!$/demonstrations.wolfram.com/PermutationGrid}
  \href{http://demonstrations.wolfram.com/PermutationGridClasses}{Classes},
  Wolfram Demonstrations Project, 2012.

\bibitem{Bevan2013b}
David Bevan.
\newblock Growth rates of geometric grid classes of permutations.
\newblock {\em Electron. J. Combin.}, 21(4):~Paper 4.51, 17~pp, 2014.

\bibitem{Bevan2014b}
David Bevan.
\newblock Intervals of permutation class growth rates.
\newblock {\em Preprint.
  \href{http://arxiv.org/pdf/1410.3679}{arXiv:1410.3679}}, 2014.

\bibitem{Bevan2013}
David Bevan.
\newblock Growth rates of permutation grid classes, tours on graphs, and the
  spectral radius.
\newblock {\em Trans. Amer. Math. Soc.}, 367(8):5863--5889, 2015.

\bibitem{Bevan2014Thin}
David Bevan.
\newblock Permutations avoiding 1324 and patterns in {{\L}}ukasiewicz paths.
\newblock {\em J.~London Math. Soc.}, in press;
  \href{http://arxiv.org/pdf/1406.2890}{arXiv:1406.2890}.

\bibitem{Bevan2014a}
David Bevan.
\newblock The permutation classes {A}v(1234,2341) and {A}v(1243,2314).
\newblock {\em Australasian J. Combin.}, in press;
  \href{http://arxiv.org/pdf/1407.0570}{arXiv:1407.0570}.

\bibitem{Bona1997a}
Mikl{\'o}s B{\'o}na.
\newblock Exact enumeration of {$1342$}-avoiding permutations: a close link
  with labeled trees and planar maps.
\newblock {\em J.~Combin. Theory Ser.~A}, 80(2):257--272, 1997.

\bibitem{Bona1998a}
Mikl{\'o}s B{\'o}na.
\newblock The permutation classes equinumerous to the smooth class.
\newblock {\em Electron. J. Combin.}, 5:~Research paper 31, 12~pp., 1998.

\bibitem{Bona1999}
Mikl{\'o}s B{\'o}na.
\newblock The solution of a conjecture of {S}tanley and {W}ilf for all layered
  patterns.
\newblock {\em J.~Combin. Theory Ser.~A}, 85(1):96--104, 1999.

\bibitem{Bona2003}
Mikl{\'o}s B{\'o}na.
\newblock A survey of stack-sorting disciplines.
\newblock {\em Electron. J. Combin.}, 9(2):~Article 1, 16~pp., 2003.
\newblock Permutation patterns (Otago, 2003).

\bibitem{Bona2004}
Mikl{\'o}s B{\'o}na.
\newblock A simple proof for the exponential upper bound for some tenacious
  patterns.
\newblock {\em Adv. in Appl. Math.}, 33(1):192--198, 2004.

\bibitem{Bona2005}
Mikl{\'o}s B{\'o}na.
\newblock The limit of a {S}tanley-{W}ilf sequence is not always rational, and
  layered patterns beat monotone patterns.
\newblock {\em J.~Combin. Theory Ser.~A}, 110(2):223--235, 2005.

\bibitem{Bona2007}
Mikl{\'o}s B{\'o}na.
\newblock New records in {S}tanley-{W}ilf limits.
\newblock {\em European J. Combin.}, 28(1):75--85, 2007.

\bibitem{Bona2012}
Mikl{\'o}s B{\'o}na.
\newblock {\em Combinatorics of Permutations}.
\newblock Discrete Mathematics and its Applications. CRC Press, second edition,
  2012.

\bibitem{Bona2012a+}
Mikl{\'o}s B{\'o}na.
\newblock On the best upper bound for permutations avoiding a pattern of a
  given length.
\newblock {\em Preprint.
  \href{http://arxiv.org/pdf/1209.2404}{arXiv:1209.2404}}, 2012.

\bibitem{Bona2014+}
Mikl{\'o}s B{\'o}na.
\newblock A new record for 1324-avoiding permutations.
\newblock {\em European J. Math.}, 1(1):198--206, 2015.

\bibitem{Bousquet-Melou1996}
Mireille Bousquet-M{\'e}lou.
\newblock A method for the enumeration of various classes of column-convex
  polygons.
\newblock {\em Discrete Math.}, 154:1--25, 1996.

\bibitem{Bousquet-Melou2006}
Mireille Bousquet-M{\'e}lou.
\newblock Rational and algebraic series in combinatorial enumeration.
\newblock In {\em International {C}ongress of {M}athematicians. {V}ol. {III}},
  pages 789--826. Eur. Math. Soc., Z\"urich, 2006.

\bibitem{Bousquet-Melou2011}
Mireille Bousquet-M{\'e}lou.
\newblock Counting permutations with no long monotone subsequence via
  generating trees and the kernel method.
\newblock {\em J.~Algebraic Combin.}, 33(4):571--608, 2011.

\bibitem{BMB2007}
Mireille Bousquet-M{\'e}lou and Steve Butler.
\newblock Forest-like permutations.
\newblock {\em Ann. Comb.}, 11:335--354, 2007.

\bibitem{BS2010}
Katie~L. Bright and Carla~D. Savage.
\newblock The geometry of lecture hall partitions and quadratic permutation
  statistics.
\newblock In {\em 22nd {I}nternational {C}onference on {F}ormal {P}ower
  {S}eries and {A}lgebraic {C}ombinatorics ({FPSAC} 2010)}, pages 569--580.
  Assoc. Discrete Math. Theor. Comput. Sci., 2010.

\bibitem{BN1989}
A.~E. Brouwer and A.~Neumaier.
\newblock The graphs with spectral radius between {$2$} and
  {$\sqrt{2+\sqrt{5}}$}.
\newblock {\em Linear Algebra Appl.}, 114/115:273--276, 1989.

\bibitem{BH2012}
Andries~E. Brouwer and Willem~H. Haemers.
\newblock {\em Spectra of Graphs}.
\newblock Universitext. Springer, 2012.

\bibitem{CF1969}
P.~Cartier and D.~Foata.
\newblock {\em Probl\`emes combinatoires de commutation et r\'earrangements}.
\newblock Lecture Notes in Mathematics, No. 85. Springer-Verlag, 1969.

\bibitem{CS1963}
N.~Chomsky and M.~P. Sch{\"u}tzenberger.
\newblock The algebraic theory of context-free languages.
\newblock In {\em Computer programming and formal systems}, pages 118--161.
  North-Holland, Amsterdam, 1963.

\bibitem{CW1999}
Timothy Chow and Julian West.
\newblock Forbidden subsequences and {C}hebyshev polynomials.
\newblock {\em Discrete Math.}, 204(1-3):119--128, 1999.

\bibitem{CGHK1978}
F.~R.~K. Chung, R.~L. Graham, V.~E. Hoggatt, Jr., and M.~Kleiman.
\newblock The number of {B}axter permutations.
\newblock {\em J. Combin. Theory Ser. A}, 24(3):382--394, 1978.

\bibitem{Cibulka2009}
Josef Cibulka.
\newblock On constants in the {F}\"uredi-{H}ajnal and the {S}tanley-{W}ilf
  conjecture.
\newblock {\em J.~Combin. Theory Ser.~A}, 116(2):290--302, 2009.

\bibitem{CJS2012}
Anders Claesson, V{\'{\i}}t Jel{\'{\i}}nek, and Einar Steingr{\'{\i}}msson.
\newblock Upper bounds for the {S}tanley--{W}ilf limit of 1324 and other
  layered patterns.
\newblock {\em J.~Combin. Theory Ser.~A}, 119(8):1680--1691, 2012.

\bibitem{CG2015}
Andrew~R. Conway and Anthony~J. Guttmann.
\newblock On 1324-avoiding permutations.
\newblock {\em Adv. in Appl. Math.}, 64(0):50--69, 2015.

\bibitem{CS1997}
J.~H. Conway and N.~J.~A. Sloane.
\newblock Low-dimensional lattices. {VII}. {C}oordination sequences.
\newblock {\em Proc. Royal Soc. London Ser.~A}, 453:2369--2389, 1997.

\bibitem{CDG1982}
Drago{\v{s}} Cvetkovi{\'c}, Michael Doob, and Ivan Gutman.
\newblock On graphs whose spectral radius does not exceed
  {$(2+\sqrt{5})^{1/2}$}.
\newblock {\em Ars Combin.}, 14:225--239, 1982.

\bibitem{CRS2010}
Drago{\v{s}} Cvetkovi{\'c}, Peter Rowlinson, and Slobodan Simi{\'c}.
\newblock {\em An Introduction to the Theory of Graph Spectra}, volume~75 of
  {\em London Mathematical Society Student Texts}.
\newblock Cambridge University Press, 2010.

\bibitem{Drmota1997}
Michael Drmota.
\newblock Systems of functional equations.
\newblock {\em Random Structures Algorithms}, 10(1-2):103--124, 1997.

\bibitem{DGW1996}
S.~Dulucq, S.~Gire, and J.~West.
\newblock Permutations with forbidden subsequences and nonseparable planar
  maps.
\newblock In {\em Proceedings of the 5th {C}onference on {F}ormal {P}ower
  {S}eries and {A}lgebraic {C}ombinatorics ({F}lorence, 1993)}, volume 153,
  pages 85--103, 1996.

\bibitem{EV2005}
Murray Elder and Vince Vatter.
\newblock Problems and conjectures presented at the {T}hird {I}nternational
  {C}onference on {P}ermutation {P}atterns, {U}niversity of {F}lorida, {M}arch
  7-11, 2005.
\newblock {\em Preprint.
  \href{http://arxiv.org/pdf/math/0505504}{arXiv:math/0505504}}, 2005.

\bibitem{EI1971}
S.~Even and A.~Itai.
\newblock Queues, stacks, and graphs.
\newblock In {\em Theory of machines and computations ({P}roc. {I}nternat.
  {S}ympos., {T}echnion, {H}aifa, 1971)}, pages 71--86. Academic Press, New
  York, 1971.

\bibitem{Farrell1979}
E.~J. Farrell.
\newblock An introduction to matching polynomials.
\newblock {\em J.~Combin. Theory Ser.~B}, 27(1):75--86, 1979.

\bibitem{FK2008}
Ji{\v{r}}{\'{\i}} Fiala and Jan Kratochv{\'{\i}}l.
\newblock Locally constrained graph homomorphisms---structure, complexity, and
  applications.
\newblock {\em Computer Science Review}, 2(2):97--111, 2008.

\bibitem{FGS2005}
Philippe Flajolet, Stefan Gerhold, and Bruno Salvy.
\newblock On the non-holonomic character of logarithms, powers, and the {$n$}th
  prime function.
\newblock {\em Electron. J. Combin.}, 11(2):~Article 2, 16~pp, 2005.

\bibitem{FS2001}
Philippe Flajolet and Robert Sedgewick.
\newblock Analytic combinatorics: Functional equations, rational and algebraic
  functions.
\newblock Rapport de recherche 4103, INRIA, January 2001.

\bibitem{FS2009}
Philippe Flajolet and Robert Sedgewick.
\newblock {\em Analytic Combinatorics}.
\newblock Cambridge University Press, 2009.

\bibitem{Fox2013}
J.~{Fox}.
\newblock {Stanley--Wilf limits are typically exponential}.
\newblock {\em Preprint.
  \href{http://arxiv.org/pdf/1310.8378}{arXiv:1310.8378}}, 2013.

\bibitem{FH1992}
Zolt{\'a}n F{\"u}redi and P{\'e}ter Hajnal.
\newblock Davenport-{S}chinzel theory of matrices.
\newblock {\em Discrete Math.}, 103(3):233--251, 1992.

\bibitem{Furstenberg1967}
Harry Furstenberg.
\newblock Algebraic functions over finite fields.
\newblock {\em J.~Algebra}, 7:271--277, 1967.

\bibitem{GP2015a}
Scott Garrabrant and Igor Pak.
\newblock Pattern avoidance is not {P}-recursive.
\newblock {\em Preprint.
  \href{http://arxiv.org/pdf/1505.0650}{arXiv:1505.0650}}, 2015.

\bibitem{Gessel1990}
Ira~M. Gessel.
\newblock Symmetric functions and {P}-recursiveness.
\newblock {\em J.~Combin. Theory Ser.~A}, 53(2):257--285, 1990.

\bibitem{Gire1993}
S.~Gire.
\newblock {\em Arbres, permutations \`a motifs exclus et cartes planaires:
  quelques probl\`emes algorithmiques et combinatoires}.
\newblock PhD thesis, Universit\'e Bordeaux I, 1993.

\bibitem{GGKK2013}
Roman Glebov, Andrzej Grzesik, Tereza Klimo{\v{s}}ov{\'a}, and Daniel
  Kr{\'a}l'.
\newblock Finitely forcible graphons and permutons.
\newblock {\em J.~Combin. Theory Ser.~B}, 110:112--135, 2015.

\bibitem{Godsil1981}
C.~D. Godsil.
\newblock Matchings and walks in graphs.
\newblock {\em J.~Graph Theory}, 5(3):285--297, 1981.

\bibitem{Godsil1993}
C.~D. Godsil.
\newblock {\em Algebraic combinatorics}.
\newblock Chapman \& Hall, 1993.

\bibitem{GG1981}
C.~D. Godsil and I.~Gutman.
\newblock On the theory of the matching polynomial.
\newblock {\em J.~Graph Theory}, 5(2):137--144, 1981.

\bibitem{GS2000}
Massimiliano Goldwurm and Massimo Santini.
\newblock Clique polynomials have a unique root of smallest modulus.
\newblock {\em Inform. Process. Lett.}, 75(3):127--132, 2000.

\bibitem{GW1996}
I.~P. Goulden and J.~West.
\newblock Raney paths and a combinatorial relationship between rooted
  nonseparable planar maps and two-stack-sortable permutations.
\newblock {\em J.~Combin. Theory Ser.~A}, 75(2):220--242, 1996.

\bibitem{GJ2004}
Ian~P. Goulden and David~M. Jackson.
\newblock {\em Combinatorial Enumeration}.
\newblock Dover Publications Inc., 2004.

\bibitem{GO1981}
L.~J. Guibas and A.~M. Odlyzko.
\newblock String overlaps, pattern matching, and nontransitive games.
\newblock {\em J.~Combin. Theory Ser.~A}, 30(2):183--208, 1981.

\bibitem{Gutman1977}
Ivan Gutman.
\newblock The acyclic polynomial of a graph.
\newblock {\em Publ. Inst. Math. (Beograd) (N.S.)}, 22(36):63--69, 1977.

\bibitem{Guttmann2005}
Anthony~J. Guttmann.
\newblock The analytic structure of lattice models --- {W}hy can't we solve
  most models?
\newblock {\em Pramana}, 64(6):829--846, 2005.

\bibitem{Guttmann2009}
Anthony~J. Guttmann, editor.
\newblock {\em Polygons, Polyominoes and Polycubes}, volume 775 of {\em Lecture
  Notes in Physics}.
\newblock Springer, 2009.

\bibitem{HJS2013}
H.~Hatami, S.~Janson, and B.~Szegedy.
\newblock Graph properties, graph limits and entropy.
\newblock {\em Preprint.
  \href{http://arxiv.org/pdf/1312.5626}{arXiv:1312.5626}}, 2013.

\bibitem{HL1970}
Ole~J. Heilmann and Elliott~H. Lieb.
\newblock Monomers and dimers.
\newblock {\em Phys. Rev. Lett.}, 24:1412--1414, 1970.

\bibitem{HN2004}
Pavol Hell and Jaroslav Ne{\v{s}}et{\v{r}}il.
\newblock {\em Graphs and homomorphisms}, volume~28 of {\em Oxford Lecture
  Series in Mathematics and its Applications}.
\newblock Oxford University Press, 2004.

\bibitem{Hoffman1972}
Alan~J. Hoffman.
\newblock On limit points of spectral radii of non-negative symmetric integral
  matrices.
\newblock In {\em Graph Theory and Applications}, volume 303 of {\em Lecture
  Notes in Mathematics}, pages 165--172. Springer, 1972.

\bibitem{HS1975}
Alan~J. Hoffman and John~Howard Smith.
\newblock On the spectral radii of topologically equivalent graphs.
\newblock In {\em Recent Advances in Graph Theory ({P}roc. {S}econd
  {C}zechoslovak {S}ympos., {P}rague, 1974)}, pages 273--281. Academia, 1975.

\bibitem{HV2013+}
Cheyne Homberger and Vincent Vatter.
\newblock On the effective and automatic enumeration of polynomial permutation
  classes.
\newblock {\em Preprint.
  \href{http://arxiv.org/pdf/1308.4946}{arXiv:1308.4946}}, 2013.

\bibitem{HKMRS2013}
Carlos Hoppen, Yoshiharu Kohayakawa, Carlos~Gustavo Moreira, Bal{\'a}zs
  R{\'a}th, and Rudini~Menezes Sampaio.
\newblock Limits of permutation sequences.
\newblock {\em J.~Combin. Theory Ser.~B}, 103(1):93--113, 2013.

\bibitem{HKMS2011b}
Carlos Hoppen, Yoshiharu Kohayakawa, Carlos~Gustavo Moreira, and Rudini~Menezes
  Sampaio.
\newblock Limits of permutation sequences through permutation regularity.
\newblock {\em Preprint.
  \href{http://arxiv.org/pdf/1106.1663}{arXiv:1106.1663}}, 2011.

\bibitem{HKMS2011}
Carlos Hoppen, Yoshiharu Kohayakawa, Carlos~Gustavo Moreira, and Rudini~Menezes
  Sampaio.
\newblock Testing permutation properties through subpermutations.
\newblock {\em Theoret. Comput. Sci.}, 412(29):3555--3567, 2011.

\bibitem{HKS2012}
Carlos Hoppen, Yoshiharu Kohayakawa, and Rudini~M. Sampaio.
\newblock A note on permutation regularity.
\newblock {\em Discrete Appl. Math.}, 160(18):2716--2727, 2012.

\bibitem{HV2006}
Sophie Huczynska and Vincent Vatter.
\newblock Grid classes and the {F}ibonacci dichotomy for restricted
  permutations.
\newblock {\em Electron. J. Combin.}, 13(1):~Research paper 54, 14~pp, 2006.

\bibitem{Janson2011}
Svante Janson.
\newblock Graph limits and hereditary properties.
\newblock {\em Preprint.
  \href{http://arxiv.org/pdf/1102.3571}{arXiv:1102.3571}}, 2011.

\bibitem{JN2014}
Fredrik Johansson and Brian Nakamura.
\newblock Using functional equations to enumerate 1324-avoiding permutations.
\newblock {\em Adv. in Appl. Math.}, 56:20--34, 2014.

\bibitem{KK2003}
Tom{\'a}{\v{s}} Kaiser and Martin Klazar.
\newblock On growth rates of closed permutation classes.
\newblock {\em Electron. J. Combin.}, 9(2):~Research paper 10, 20~pp, 2003.

\bibitem{Kitaev2011}
Sergey Kitaev.
\newblock {\em Patterns in Permutations and Words}.
\newblock Springer, 2011.

\bibitem{Klazar2000}
Martin Klazar.
\newblock The {F}\"uredi-{H}ajnal conjecture implies the {S}tanley-{W}ilf
  conjecture.
\newblock In {\em Formal Power Series and Algebraic Combinatorics ({M}oscow,
  2000)}, pages 250--255. Springer, 2000.

\bibitem{Klazar2004}
Martin Klazar.
\newblock On the least exponential growth admitting uncountably many closed
  permutation classes.
\newblock {\em Theoret. Comput. Sci.}, 321(2-3):271--281, 2004.

\bibitem{Knuth1969}
Donald~E. Knuth.
\newblock {\em The Art of Computer Programming. {V}ol. 1: {F}undamental
  Algorithms}.
\newblock Addison-Wesley Publishing Co., 1969.

\bibitem{Knuth1973}
Donald~E. Knuth.
\newblock {\em The art of computer programming. {V}ol. 3: {S}orting and
  searching}.
\newblock Addison-Wesley Publishing Co., 1973.

\bibitem{Komornik2011}
Vilmos Komornik.
\newblock Expansions in noninteger bases.
\newblock {\em Integers}, 11B:~A9, 30 pp, 2011.

\bibitem{Kremer2000}
Darla Kremer.
\newblock Permutations with forbidden subsequences and a generalized
  {S}chr\"oder number.
\newblock {\em Discrete Math.}, 218:121--130, 2000.

\bibitem{Kremer2003}
Darla Kremer.
\newblock Postscript: ``{P}ermutations with forbidden subsequences and a
  generalized {S}chr\"oder number'' [{D}iscrete {M}ath. 218 (2000) 121--130].
\newblock {\em Discrete Math.}, 270:333--334, 2003.

\bibitem{KS2003}
Darla Kremer and Wai~Chee Shiu.
\newblock Finite transition matrices for permutations avoiding pairs of length
  four patterns.
\newblock {\em Discrete Math.}, 268:171--183, 2003.

\bibitem{Le2005}
Ian Le.
\newblock Wilf classes of pairs of permutations of length 4.
\newblock {\em Electron. J. Combin.}, 12:~Research paper 25, 26~pp., 2005.

\bibitem{LS1973}
P.~W.~H. Lemmens and J.~J. Seidel.
\newblock Equiangular lines.
\newblock {\em J.~Algebra}, 24:494--512, 1973.

\bibitem{LRV2010}
Steve Linton, Nik Ru{\v{s}}kuc, and Vincent Vatter, editors.
\newblock {\em Permutation Patterns}, volume 376 of {\em London Mathematical
  Society Lecture Note Series}.
\newblock Cambridge University Press, 2010.
\newblock Papers from the 5th Conference held at the University of St. Andrews,
  St. Andrews, June 11--15, 2007.

\bibitem{Lipshitz1988}
L.~Lipshitz.
\newblock The diagonal of a {$D$}-finite power series is {$D$}-finite.
\newblock {\em J.~Algebra}, 113(2):373--378, 1988.

\bibitem{LP1973}
L.~Lov{\'a}sz and J.~Pelik{\'a}n.
\newblock On the eigenvalues of trees.
\newblock {\em Period. Math. Hungar.}, 3:175--182, 1973.

\bibitem{Lovasz2012}
L{\'a}szl{\'o} Lov{\'a}sz.
\newblock {\em Large networks and graph limits}.
\newblock Amer. Math. Soc., 2012.

\bibitem{LP2009}
L{\'a}szl{\'o} Lov{\'a}sz and Michael~D. Plummer.
\newblock {\em Matching Theory}.
\newblock AMS Chelsea Publishing, 2009.
\newblock Corrected reprint of the 1986 original.

\bibitem{LS2006}
L{\'a}szl{\'o} Lov{\'a}sz and Bal{\'a}zs Szegedy.
\newblock Limits of dense graph sequences.
\newblock {\em J.~Combin. Theory Ser.~B}, 96(6):933--957, 2006.

\bibitem{MacMahon1915}
Percy~A. MacMahon.
\newblock {\em Combinatory analysis}, volume~I.
\newblock Cambridge University Press, 1915.

\bibitem{ML2010}
Neal Madras and Hailong Liu.
\newblock Random pattern-avoiding permutations.
\newblock In {\em Algorithmic Probability and Combinatorics}, volume 520 of
  {\em Contemp. Math.}, pages 173--194. Amer. Math. Soc., 2010.

\bibitem{MT2004}
Adam Marcus and G{\'a}bor Tardos.
\newblock Excluded permutation matrices and the {S}tanley-{W}ilf conjecture.
\newblock {\em J.~Combin. Theory Ser.~A}, 107(1):153--160, 2004.

\bibitem{McKay2014Thin}
Brendan McKay.
\newblock Roots of matching polynomial of graph.
\newblock MathOverflow.
  \href{http://mathoverflow.net/q/171721}{http:/$\!$/mathoverflow.net}
  \href{http://mathoverflow.net/q/171721}{/q/171721}, 2014.

\bibitem{Mowshowitz1972}
Abbe Mowshowitz.
\newblock The characteristic polynomial of a graph.
\newblock {\em J.~Combin. Theory Ser.~B}, 12:177--193, 1972.

\bibitem{MV2003}
Maximillian~M. Murphy and Vincent~R. Vatter.
\newblock Profile classes and partial well-order for permutations.
\newblock {\em Electron. J. Combin.}, 9(2):~Research paper 17, 30~pp, 2003.

\bibitem{NZ1996}
John Noonan and Doron Zeilberger.
\newblock The enumeration of permutations with a prescribed number of
  ``forbidden'' patterns.
\newblock {\em Adv. in Appl. Math.}, 17(4):381--407, 1996.

\bibitem{OEIS}
The OEIS Foundation~Inc.
\newblock The {O}n-{L}ine {E}ncyclopedia of {I}nteger {S}equences.
\newblock Published electronically at
  \href{https://oeis.org}{https:/$\!$/oeis.org}.

\bibitem{Pantone2013}
Jay Pantone.
\newblock The enumeration of permutations avoiding 3124 and 4312.
\newblock {\em Preprint.
  \href{http://arxiv.org/pdf/1309.0832}{arXiv:1309.0832}}, 2013.

\bibitem{Pascal1670}
Blaise Pascal.
\newblock {\em Pens\'ees de M. Pascal sur la religion et sur quelques autres
  sujets}.
\newblock 1670.

\bibitem{Pedicini2005}
Marco Pedicini.
\newblock Greedy expansions and sets with deleted digits.
\newblock {\em Theoret. Comput. Sci.}, 332(1-3):313--336, 2005.

\bibitem{vdPoorten1979}
Alfred van~der Poorten.
\newblock A proof that {E}uler missed {$\ldots $} {A}p\'ery's proof of the
  irrationality of {$\zeta (3)$}.
\newblock {\em Math. Intelligencer}, 1(4):195--203, 1979.

\bibitem{Pratt1973}
Vaughan~R. Pratt.
\newblock Computing permutations with double-ended queues. {P}arallel stacks
  and parallel queues.
\newblock In {\em Fifth {A}nnual {ACM} {S}ymposium on {T}heory of {C}omputing
  ({A}ustin, {T}ex., 1973)}, pages 268--277. Assoc. Comput. Mach., New York,
  1973.

\bibitem{Prodinger2004}
Helmut Prodinger.
\newblock The kernel method: a collection of examples.
\newblock {\em S\'em. Lothar. Combin.}, 50:~Art. B50f, 19~pp., 2004.

\bibitem{Pudwell2010}
Lara Pudwell.
\newblock Enumeration schemes for permutations avoiding barred patterns.
\newblock {\em Electron. J. Combin.}, 17(1):~Research paper 29, 27~pp, 2010.

\bibitem{Regev1981}
Amitai Regev.
\newblock Asymptotic values for degrees associated with strips of {Y}oung
  diagrams.
\newblock {\em Adv. in Math.}, 41(2):115--136, 1981.

\bibitem{Renyi1957}
A.~R{\'e}nyi.
\newblock Representations for real numbers and their ergodic properties.
\newblock {\em Acta Math. Acad. Sci. Hungar.}, 8:477--493, 1957.

\bibitem{Riordan2002}
John Riordan.
\newblock {\em An introduction to combinatorial analysis}.
\newblock Dover Publications Inc., 2002.
\newblock Reprint of the 1958 original.

\bibitem{Sachs1964}
Horst Sachs.
\newblock Beziehungen zwischen den in einem {G}raphen enthaltenen {K}reisen und
  seinem charakteristischen {P}olynom.
\newblock {\em Publ. Math. Debrecen}, 11:119--134, 1964.

\bibitem{Shearer1989}
James~B. Shearer.
\newblock On the distribution of the maximum eigenvalue of graphs.
\newblock {\em Linear Algebra Appl.}, 114/115:17--20, 1989.

\bibitem{Simic1987a}
Slobodan~K. Simi{\'c}.
\newblock On the largest eigenvalue of unicyclic graphs.
\newblock {\em Publ. Inst. Math. (Beograd) (N.S.)}, 42(56):13--19, 1987.

\bibitem{Simic1989}
Slobodan~K. Simi{\'c}.
\newblock On the largest eigenvalue of bicyclic graphs.
\newblock {\em Publ. Inst. Math. (Beograd) (N.S.)}, 46(60):1--6, 1989.

\bibitem{SS1985}
Rodica Simion and Frank~W. Schmidt.
\newblock Restricted permutations.
\newblock {\em European J. Combin.}, 6(4):383--406, 1985.

\bibitem{Smith1970}
John~H. Smith.
\newblock Some properties of the spectrum of a graph.
\newblock In {\em Combinatorial {S}tructures and their {A}pplications}, pages
  403--406. Gordon and Breach, 1970.

\bibitem{SW2002}
Zvezdelina Stankova and Julian West.
\newblock A new class of {W}ilf-equivalent permutations.
\newblock {\em J.~Algebraic Combin.}, 15(3):271--290, 2002.

\bibitem{Stankova1994}
Zvezdelina~E. Stankova.
\newblock Forbidden subsequences.
\newblock {\em Discrete Math.}, 132:291--316, 1994.

\bibitem{Stanley1999}
Richard~P. Stanley.
\newblock {\em Enumerative Combinatorics. {V}ol. 2}, volume~62 of {\em
  Cambridge Studies in Advanced Mathematics}.
\newblock Cambridge University Press, 1999.

\bibitem{Stanley2013}
Richard~P. Stanley.
\newblock Catalan addendum.
\newblock
  \href{http://www-math.mit.edu/~rstan/ec/catadd.pdf}{http:/$\!$/www-math.mit.edu/~rstan/ec/catadd.pdf},
  2013.

\bibitem{Steingrimsson2013}
Einar Steingr{\'{\i}}msson.
\newblock Some open problems on permutation patterns.
\newblock In {\em Surveys in Combinatorics 2013}, volume 409 of {\em London
  Math. Soc. Lecture Note Ser.}, pages 239--263. Cambridge Univ. Press, 2013.

\bibitem{Tarjan1972}
Robert Tarjan.
\newblock Sorting using networks of queues and stacks.
\newblock {\em J.~Assoc. Comput. Mach.}, 19:341--346, 1972.

\bibitem{vanRensburg2000}
E.~J.~Janse van Rensburg.
\newblock {\em The Statistical Mechanics of Interacting Walks, Polygons,
  Animals and Vesicles}, volume~18 of {\em Oxford Lecture Series in Mathematics
  and its Applications}.
\newblock Oxford University Press, 2000.

\bibitem{Vatter2008}
Vincent Vatter.
\newblock Enumeration schemes for restricted permutations.
\newblock {\em Combin. Probab. Comput.}, 17(1):137--159, 2008.

\bibitem{Vatter2010b}
Vincent Vatter.
\newblock Permutation classes of every growth rate above 2.48188.
\newblock {\em Mathematika}, 56(1):182--192, 2010.

\bibitem{Vatter2011}
Vincent Vatter.
\newblock Small permutation classes.
\newblock {\em Proc. Lond. Math. Soc.}, 103(5):879--921, 2011.

\bibitem{Vatter2012}
Vincent Vatter.
\newblock Finding regular insertion encodings for permutation classes.
\newblock {\em J.~Symbolic Comput.}, 47(3):259--265, 2012.

\bibitem{Vatter2014}
Vincent Vatter.
\newblock Permutation classes.
\newblock In Mikl{\'o}s B{\'o}na, editor, {\em The Handbook of Enumerative
  Combinatorics}. CRC Press, in preparation;
  \href{http://arxiv.org/pdf/1409.5159}{arXiv:1409.5159}.

\bibitem{VW2011}
Vincent Vatter and Steve Waton.
\newblock On partial well-order for monotone grid classes of permutations.
\newblock {\em Order}, 28(2):193--199, 2011.

\bibitem{WatonThesis}
Stephen~D. Waton.
\newblock {\em On permutation classes defined by token passing networks,
  gridding matrices and pictures: Three flavours of involvement}.
\newblock PhD thesis, University of St Andrews, 2007.

\bibitem{West1990}
Julian West.
\newblock {\em Permutations with restricted subsequences and stack-sortable
  permutations}.
\newblock PhD thesis, Massachusetts Institute of Technology, 1990.

\bibitem{West1993}
Julian West.
\newblock Sorting twice through a stack.
\newblock {\em Theoret. Comput. Sci.}, 117(1-2):303--313, 1993.
\newblock Conference on Formal Power Series and Algebraic Combinatorics
  (Bordeaux, 1991).

\bibitem{West1995}
Julian West.
\newblock Generating trees and the {C}atalan and {S}chr\"oder numbers.
\newblock {\em Discrete Math.}, 146(1-3):247--262, 1995.

\bibitem{West1996}
Julian West.
\newblock Generating trees and forbidden subsequences.
\newblock In {\em Proceedings of the 6th {C}onference on {F}ormal {P}ower
  {S}eries and {A}lgebraic {C}ombinatorics ({N}ew {B}runswick, {NJ}, 1994)},
  volume 157, pages 363--374, 1996.

\bibitem{Wigner1960}
Eugene~P. Wigner.
\newblock The unreasonable effectiveness of mathematics in the natural
  sciences. {R}ichard {C}ourant lecture in mathematical sciences delivered at
  {N}ew {Y}ork {U}niversity, {M}ay 11, 1959.
\newblock {\em Communications on Pure and Applied Mathematics}, 13(1):1--14,
  1960.

\bibitem{WikiEnumPermClassesThin}
Wikipedia.
\newblock Enumerations of specific permutation classes.
\newblock
  \href{http://en.wikipedia.org/wiki/Enumerations\_of\_specific\_permutation\_classes}{http:/$\!$/en.wikipedia.org/wiki/Enumera}
  \href{http://en.wikipedia.org/wiki/Enumerations\_of\_specific\_permutation\_classes}{tions\_of\_specific\_permutation\_classes}.

\bibitem{Wilf1982}
Herbert~S. Wilf.
\newblock What is an answer?
\newblock {\em Amer. Math. Monthly}, 89(5):289--292, 1982.

\bibitem{Wilf2006}
Herbert~S. Wilf.
\newblock {\em generatingfunctionology}.
\newblock A K Peters Ltd., third edition, 2006.

\bibitem{Mathematica}
{Wolfram Research, Inc.}
\newblock Mathematica. {V}ersion 9.0.
\newblock
  \href{http://www.wolfram.com/mathematica}{www.wolfram.com/mathematica}, 2012.

\bibitem{WN2007}
Renee Woo and Arnold Neumaier.
\newblock On graphs whose spectral radius is bounded by {$\frac32\sqrt2$}.
\newblock {\em Graphs Combin.}, 23(6):713--726, 2007.

\bibitem{YFI2010}
J.~Yarkony, C.~Fowlkes, and A.~Ihler.
\newblock Covering trees and lower-bounds on quadratic assignment.
\newblock In {\em IEEE Conference on Computer Vision and Pattern Recognition},
  pages 887--894, 2010.

\bibitem{Zeilberger1992}
Doron Zeilberger.
\newblock A proof of {J}ulian {W}est's conjecture that the number of
  two-stack-sortable permutations of length {$n$} is
  {$2(3n)!/((n+1)!(2n+1)!)$}.
\newblock {\em Discrete Math.}, 102(1):85--93, 1992.

\bibitem{Zeilberger1998}
Doron Zeilberger.
\newblock Enumeration schemes and, more importantly, their automatic
  generation.
\newblock {\em Ann. Comb.}, 2(2):185--195, 1998.

\bibitem{Zeilberger2008}
Doron Zeilberger.
\newblock Enumerative and algebraic combinatorics.
\newblock In Timothy Gowers, editor, {\em The Princeton Companion to
  Mathematics}. Princeton University Press, 2008.

\end{thebibliography}
%} %\HIDE

\end{document}